\documentclass[12pt, reqno]{amsart}
\usepackage{amssymb,latexsym, amsmath, amsfonts, amsxtra, amsthm,xcolor,mathrsfs,enumerate,array, color, url}
\usepackage{graphicx}
\usepackage{enumitem}
\usepackage{subfig}

\usepackage[margin=0.75in]{geometry}

\usepackage{mathtools}

\usepackage{xurl}
\usepackage[hidelinks]{hyperref}

\allowdisplaybreaks

\newcommand{\bburl}[1]{\textcolor{blue}{\url{#1}}}

\usepackage{soul}

\usepackage{import}

\usepackage[T1]{fontenc}
\usepackage{mathtools}

\theoremstyle{definition}
\newtheorem{thm}{Theorem}[section]
\newtheorem*{HyPi}{Hypothesis $(\mathbf{\Pi}^4)$}
\newtheorem*{Testfunc}{Test functions}

\newtheorem*{ConLam}{Condition $(\mathbf{\Lambda})$}

\newtheorem{cor}[thm]{Corollary}
\newtheorem{lem}[thm]{Lemma}

\newtheorem{prop}[thm]{Proposition}

\newtheorem{rem}[thm]{Remark}

\theoremstyle{definition}

\theoremstyle{definition}

\newtheorem*{definition*}{Definition}
\theoremstyle{remark}
\newtheorem{cla}[thm]{Claim}

\newcommand\be{\begin{equation}}
\newcommand\ee{\end{equation}}
\newcommand\bee{\begin{equation*}}
\newcommand\eee{\end{equation*}}
\newcommand\ben{\begin{enumerate}}
\newcommand\een{\end{enumerate}}

\newcommand{\A}{\ensuremath{\mathbb{A}}}
\newcommand{\R}{\ensuremath{\mathbb{R}}}
\newcommand{\C}{\ensuremath{\mathbb{C}}}
\newcommand{\Z}{\ensuremath{\mathbb{Z}}}
\newcommand{\Q}{\mathbb{Q}}
\newcommand{\N}{\mathbb{N}}
\numberwithin{equation}{section}

\DeclareMathOperator{\PGL}{PGL}

\DeclareMathOperator{\im}{Im}
\DeclareMathOperator{\re}{Re}

\newcommand{\sumflat}[2]{\sideset{}{^\flat}\sum_{{#1}\,(\!\bmod\,{#2})}\, }

\newcommand{\ma}{\mathfrak{a}}
\newcommand{\mh}{\mathfrak{h}}
\newcommand{\mk}{\mathfrak{k}}

\newcommand{\md}{\mathfrak{d}}
\newcommand{\mz}{\mathfrak{z}}

\newcommand{\ml}{\mathnormal{l}c}
\newcommand{\mnl}{\mathnormal{l}}
\newcommand{\mfr}{\mathfrak{r}}
\newcommand{\mfC}{\mathbf{c}}

\newcommand{\mbu}{\mathbf{u}}
\newcommand{\mbv}{\mathbf{v}}
\newcommand{\mbs}{\mathbf{s}}
\newcommand{\mbw}{\mathbf{w}}
\newcommand{\mfq}{\mathfrak{q}}
\newcommand{\mcX}{\mathcal{X}}
\newcommand{\mcY}{\mathcal{Y}}
\newcommand{\mcP}{\mathcal{P}}
\newcommand{\mcQ}{\mathcal{Q}}

\usepackage{mathabx,epsfig}

\usepackage{pst-node}
\usepackage{tikz-cd}

\usepackage{verbatim}  

\usepackage[backend=biber,
style=alphabetic,]{biblatex}
\renewbibmacro{in:}{}
\DeclareFieldFormat*{title}{#1}
\usepackage{listings}
\newcommand{\ctb}[1]{{\color{teal} (CTB: #1)}}

\begin{document}

 \thanks{This research was supported by NSF DMS-1854398 FRG. The second author was supported by the EPSRC grant: EP/W009838/1, and the Harish-Chandra Fund from the Institute for Advanced Study. The fourth author is partially supported by NSF CAREER DMS-2239681.}

\title[Critical Zeros and Mean Value Theorems for $\hbox{PGL}(2)$ and $\hbox{PGL}(3)$]
{Critical Zeros and Unconditional Mean Value Theorems for twisted $\hbox{PGL}(2)$ and $\hbox{PGL}(3)$  $\mathrm{L}$-functions}

\author{J.~B. Conrey}
\address{American Institute of Mathematics \\ 1200 E California Blvd, Pasadena CA 91125}
\email{\href{mailto:conrey@aimath.org}{conrey@aimath.org}}

\author{C.-H. Kwan}
\address{Institute for Advanced Study\\   1 Einstein Drive, Princeton, New Jersey 08540,  USA}
\email{\href{mailto:chkwan@ias.edu}{chkwan@ias.edu}}

\author{Y. Lin}
\address{Data Science Institute, Shandong University, Jinan 250100, China; State Key Laboratory of Cryptography and Digital Economy Security, Shandong University, Jinan 250100, China}
\email{\href{mailto:yongxiao.lin@sdu.edu.cn}{yongxiao.lin@sdu.edu.cn}}

\author{C.~L. Turnage-Butterbaugh}
\address{Carleton College \\ 1 North College Street Northfield, MN 57707, USA}
\email{\href{mailto:cturnageb@carleton.edu}{cturnageb@carleton.edu}}

\keywords{Critical zeros, Moments of $L$-functions,  Large sieve, Asymptotic large sieve, Automorphic representations, Levinson's method, Mollifiers}

\maketitle

\begin{abstract}
Let $\Pi_{0}$ be a cuspidal automorphic representation of $\PGL_{3}(\A_{\Q})$. In this paper, we use Levinson's method to prove that, as $Q\to \infty$, at least $1/9$  of the zeros of the $L$-functions $L(s, \Pi_{0}\,\times\, \chi)$ lie on the critical line, where $\chi$ ranges over the family of primitive Dirichlet characters of conductor up to $Q$. This result is unconditional when $\Pi_{0}$ is self-dual, and otherwise holds under a mild condition.

The key technical input is a new asymptotic formula with a power-saving error term for the mean square of the product of $L(s, \Pi_{0}\times \chi)$ and a Dirichlet polynomial with arbitrary coefficients in both the $T$- and $Q$-aspects for the range $Q^{\epsilon}\le T \le Q^{1/3-\epsilon}$. When $T=Q^{\epsilon}$, our asymptotic formula allows Dirichlet polynomials of length $\theta <1/2-\epsilon$; when $\theta=0$, it gives a strong error term of size $O_{\epsilon}(Q^{7/4+\epsilon})$. Furthermore, our result provides evidence for the CFKRS conjectures for large twists and large vertical shifts. 

We also obtain corresponding results for $\PGL_{2}(\A_{\Q})$, which are fully unconditional, quantitatively stronger, and also appear to be new. 

 This work develops a refined, flexible, and uniform version of the Asymptotic Large Sieve for $L$-functions that \emph{does not} require any unproven progress toward the Generalized Ramanujan Conjecture. The arithmetic of $\Pi_{0}$ plays a crucial and delicate role in our argument. This work also makes extensive use of \texttt{Mathematica} to handle various elaborate Hecke algebra computations. Our mean value theorem is readily applicable to many other problems in analytic number theory.

 \end{abstract}

\setcounter{tocdepth}{1}


\section{Introduction}

\subsection{Zeros of $L$-functions}\label{sect: introzeros}

Since Riemann's seminal memoir in 1859, the zeros of $L$-functions have occupied a central place in number theory as they encode deep information about the statistics of a wide range of arithmetic objects. For instance, the truth of the (Generalized) Riemann Hypothesis would have profound consequences for the distribution of prime numbers and class groups.


While the Riemann Hypothesis remains far out of reach, it is a fundamental problem in analytic number theory to establish \emph{unconditional} and \emph{quantitative}, albeit weaker, results on the zeros of the Riemann zeta function $\zeta(s)$ inside the critical strip $0\le \sigma\le 1$, where $s=\sigma+it$. In 1914, Hardy established the infinitude of zeros of $\zeta(s)$ on the critical line $\sigma=1/2$ by detecting the sign changes of a normalized zeta function. His method was subsequently refined in joint work with Littlewood \cite{HL21}. A major breakthrough came in 1942, when Selberg \cite{Selberg42} introduced a \emph{mollifier} into the method of Hardy and Littlewood, allowing him to prove that a \emph{positive proportion} of the zeros of $\zeta(s)$ lie on the critical line. Three decades later, Levinson \cite{Lev74} introduced a new zero-detection method, based instead on detecting changes in argument of the derivative $\zeta'(s)$, together with a new class of mollifiers. He proved the landmark result that at least \emph{one-third} of the zeros of $\zeta(s)$ are on the critical line. In 1989, Conrey \cite{Con89} made substantial refinements to Levinson's method and improved this proportion to at least \emph{two-fifths}. This laid the foundation for a continuing sequence of improvements over the following three decades. Currently, the best-known proportion is at least \emph{five-twelfths} \cite{Wu19, Pratt20}. 

For some time, it was not entirely clear whether Levinson’s method could yield a positive proportion of zeros on the critical line for a degree-two $L$-function (see \cite{Far94}, \cite{Haf89}, \cite[Section 36]{Con16}). The main obstacle was that the method requires strong input from the spectral theory of automorphic forms, namely bounds for shifted convolution sums, which were not available in sufficient strength. This was compounded by the technical intricacy of Levinson’s method itself: Levinson’s original paper runs to more than fifty pages and consists of long, delicate calculations, giving the method a lasting reputation in the literature for being difficult to implement. For degree-two $L$-functions, the first positive proportion result obtained by Levinson’s method was achieved only in 2015 by Bernard \cite{Ber15}, who established an explicit but modest proportion of at least 2.97\% (subsequently improved to $6.9\%$ in \cite{AT21}), building on improved spectral ingredients of \cite{Blo04, BH08}, and a streamlined, seven-page treatment of Levinson's method by Young \cite{Yo10}. Prior to Bernard’s work, Hafner \cite{Haf83, Haf87} had obtained a positive proportion result by Selberg’s method instead. However, he did not provide an explicit proportion; extracting one is far from straightforward and yields a very small value; see \cite{Rez16} and \cite{Dou25}.

Unfortunately, this line of investigation does not extend to $L$-functions of degree greater than two. In fact, implementing Levinson's method for a degree-three $L$-function is at least as difficult as obtaining an asymptotic formula for the sixth moment of $\zeta(1/2+it)$, which is widely regarded as inaccessible by current spectral, automorphic, Fourier, or algebro-geometric methods.

There is, however, a natural variation on the theme above: one may study the critical zeros of $L$-functions in a \emph{family}. Indeed, analytic number theory has achieved many of its most striking unconditional results by exploiting additional cancellation through averaging over the harmonics in families, which exhibit some forms of \emph{orthogonality}. In the 1970-80s, there was particularly active research on zeros of $L$-functions \emph{off} the critical line, leading to a substantial body of results known as the zero-density estimates, which demonstrate the rarity of such zeros; see \cite[Chapter 10]{IK04}. For zeros \emph{on} the critical line, the theme was taken up quite recently by Conrey, Iwaniec and Soundararajan \cite{CIS13, CIS19}, whose result is striking: at least $56\%$ of the critical zeros of the family of Dirichlet $L$-functions lie on the critical line. As the authors comment, ``one may say that the [Generalized] Riemann Hypothesis is more likely to be true than not!'' 

This work continues the themes initiated in \cite{CIS13, CIS19}, now for automorphic $L$-functions. Let $\Pi$ be any \emph{self-dual} cuspidal automorphic representation of $\PGL_{3}(\A_{\Q})$ with its arithmetic conductor denoted by $\mfq_{\Pi}\in \mathbb{N}$. Using Levinson's method, we prove the following \emph{unconditional} result.

\begin{cor}\label{maincor: pergl3}
 Let $\epsilon>0$ be given.\footnote{ In this article, $\epsilon>0$ is an arbitrarily small quantity, and $O(\epsilon)$ denotes a (large) absolute constant multiple of $\epsilon$.} As $Q\to \infty$,  at least $1/9-O(\epsilon)$ of the zeros $\rho_{\chi}=\beta_{\chi}+i\gamma_{\chi}$ with $0\le \beta_{\chi}\le 1$ and $|\gamma_{\chi}|\le Q^{\epsilon}$ of the $L$-functions $L(s, \Pi\times \chi)$ lie on the critical line, where $\chi$ ranges over the primitive Dirichlet characters $\chi\, (\bmod\, q)$ with $(q, \mfq_{\Pi})=1$ and $q\le Q$.
\end{cor}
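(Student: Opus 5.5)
The plan is to run Levinson's method over the whole family. The main input will be the mean value theorem announced in the abstract, which we take as available (it is proved later in the paper). It gives an asymptotic formula with power saving for
\[
\sum_{q\le Q}\ \sumflat{\chi}{q}\ \int_{T}^{2T}\Bigl|L(\tfrac12+it,\Pi\times\chi)\,\sum_{n\le Q^{\theta}}\frac{a(n)\chi(n)}{n^{1/2+it}}\Bigr|^2\,dt ,
\]
valid uniformly for $Q^{\epsilon}\le T$ and $\theta<1/2-\epsilon$ when $T=Q^{\epsilon}$. Its error term must also hold for shifted and differentiated versions. Self-duality of $\Pi$ is what makes the result unconditional, because the conditions on the coefficients $\lambda_\Pi$ (the Hecke relations $\lambda_\Pi(n)=\overline{\lambda_\Pi(n)}$ and the corresponding relation for $\lambda_\Pi(n^2)$) then hold automatically.

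\textbf{Step 1 (Levinson setup).} Let $\Lambda(s,\Pi\times\chi)$ be the completed $L$-function, with analytic conductor $\asymp (q(1+|t|))^3$, and functional equation $\Lambda(s)=\epsilon_\chi\overline{\Lambda}(1-s)$. Write $L=H\cdot(\cdots)$ as usual and define
\[
G_\chi(s)=L(s,\Pi\times\chi)+\frac{1}{3\log (qT)}\,L'(s,\Pi\times\chi).
\]
Any appropriate linear combination suffices, and we optimise the coefficient later. By the argument principle, as in Levinson--Conrey and \cite{CIS13}, the proportion of zeros on the line is at least $1-\frac{1}{R}\log\bigl(\text{average of }|G_\chi M_\chi|^2\text{ on }\sigma=\tfrac12-R/\mathcal L\bigr)$. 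Here $\mathcal L=\log(Q^3T^3)$ is the average log-conductor and $M_\chi(s)=\sum_{n\le y}\mu_\Pi(n)\chi(n)P(\log(y/n)/\log y)n^{-s}$ is a Levinson mollifier, with $\mu_\Pi$ the Dirichlet coefficients of $L(s,\Pi\times\chi)^{-1}$ and $y=Q^{\theta}$. We apply Littlewood's lemma to the rectangle $\sigma\ge \tfrac12-R/\mathcal L$, $|t|\le Q^{\epsilon}$, and sum over $\chi$. Zeros of $G$ to the left of the line correspond to zeros of $L$ off the line. This reduces the count to an upper bound for the mollified second moment.

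\textbf{Step 2 (mollified moment).} We apply the mean value theorem with $T=Q^{\epsilon}$ to $G_\chi M_\chi$ at $s=\tfrac12-R/\mathcal L+it$. The shift and the derivative are handled by differentiating the shifted formula with respect to shift parameters $\alpha,\beta$, which requires the error term to be uniform for $|\alpha|,|\beta|\ll 1/\mathcal L$. The main term has the standard diagonal-plus-swap shape. Carrying out the Young-style computation \cite{Yo10} with degree $3$ gives
\[
\frac{1}{\#}\sum_{\chi}\int |GM|^2 \to 1+\frac{1}{\vartheta}\int_0^1\!\!\int_0^1 (\cdots)\,du\,dx.
\]
Here the integrand is a quadratic functional in $P$ and $P'$, and $\vartheta=\theta/3$ is the mollifier length measured against the $\log$ of the conductor $Q^3$. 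This normalisation is where the degree enters and shrinks the effective length.

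\textbf{Step 3 (optimisation).} We take $\theta\to1/2$, so $\vartheta\to 1/6$, and use the Euler--Lagrange choice of $P$ with $P(0)=0$, $P(1)=1$. The classical Levinson bound with effective length $\vartheta$ is $\kappa\ge 1-\frac1R\log\bigl(\frac{e^{2R}}{...}\cdots\bigr)$; for $\zeta$, $\vartheta=1/2$ gives $1/3$. Optimising $R$ at $\vartheta=1/6$ should yield $1/9$; one checks this numerically or in closed form, and sends $\epsilon\to0$ to obtain $1/9-O(\epsilon)$. Zeros with $|\gamma_\chi|\le Q^{\epsilon}$ number $\asymp Q^2\cdot Q^{\epsilon}\log Q$ by the Riemann--von Mangoldt formula summed over $\chi$, which fixes the normalisation.

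\textbf{Main obstacle.} The hard part is the mean value theorem itself: the asymptotic large sieve for $\Pi\times\chi$ with arbitrary mollifier coefficients up to length $Q^{1/2-\epsilon}$, proved without using the Ramanujan bound. Granting it, the remaining difficulty is bookkeeping. One has to check that the shifted main terms, including the swapped term from the functional equation with root number averaging over $\chi$, assemble into the Levinson integral with effective length exactly $\theta/3$. One also has to check that the coprimality condition $(q,\mfq_\Pi)=1$ keeps the conductor equal to $q^3\mfq_\Pi$.
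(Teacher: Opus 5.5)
\textbf{The numerical step fails.} Your $G_\chi=L+\frac{1}{3\log(qT)}L'$, or any $L+cL'/\mathcal{L}$, is Levinson's method with a \emph{linear} $\mathcal{Q}$. With a linear $\mathcal{Q}$ the constant at effective length $\vartheta=1/6$ is nowhere near $1/9$. Use the normalisation $c_\vartheta=1+\frac{1}{\vartheta}\int_0^1\!\int_0^1\bigl(w(y)\mathcal{P}'(x)+\vartheta w'(y)\mathcal{P}(x)\bigr)^2\,dx\,dy$ with $w(y)=e^{Ry}\mathcal{Q}(y)$. The Euler--Lagrange choice $\mathcal{P}(x)=\sinh(\lambda x)/\sinh\lambda$ then gives $c_\vartheta=1+\frac{\lambda\coth\lambda}{\vartheta}\int_0^1 w^2+\frac{w(1)^2-1}{2}$, where $\lambda=\vartheta\,\bigl(\int_0^1 w'^2/\int_0^1 w^2\bigr)^{1/2}$. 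At $\vartheta=1/6$ the term $6\int_0^1 w^2$ cannot be made small when $w=e^{Ry}(1+by)$. Optimising over $b$ and $R$, a rough numerical check gives a proportion of only about $0.005$, near $R\approx 2.5$ and $b\approx -1.1$. This agrees with the $1/200$ the paper records for \emph{simple} zeros, since linear $\mathcal{Q}$ is the simple-zero version of the method. So the heuristic that Levinson's $1/3$ at $\vartheta=1/2$ scales to $1/9$ at $\vartheta=1/6$ is false for the construction you propose. The missing input is the higher-degree $\mathcal{Q}$ constructed in \cite{ConShortMollif}, used with $\mathcal{P}(x)=x$ and $R\asymp 1/\vartheta$. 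That choice gives $\kappa(1/6)=0.1146\ldots>1/9$. Without it, your argument yields a small positive proportion but not $1/9-O(\epsilon)$.

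\textbf{Two further steps are not bookkeeping.} First, self-duality does not enter through the relation $\lambda_\Pi(n)=\overline{\lambda_\Pi(n)}$. The mean value theorem requires either $\lambda_h\ll h^{\epsilon}$, or Condition~$(\mathbf{\Lambda})$ together with Hypothesis~$(\mathbf{\Pi}^4)$ (Theorem~\ref{generalL4version them}). The bound $\lambda_h\ll h^{\epsilon}$ is unavailable for your coefficients $\mu_\Pi(h)\mathcal{P}(\cdot)$ without Ramanujan, since $\mu_\Pi(p)=-\lambda_\Pi(p)$. Self-duality is what makes $(\mathbf{\Pi}^4)$, namely $\sum_{n\le X}|\lambda_\Pi(n)|^4/n\ll X^{\epsilon}$, unconditional (Jiang--L\"u \cite{JL17}). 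You must then derive $\sum_{h\le X}|\mu_\Pi(h)|^4/h\ll X^{\epsilon}$ from it via the Euler product for $\mu_\Pi$, and your proposal never checks this.

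Second, the ``Young-style'' diagonal evaluation in degree $3$ needs an unconditional ratios computation. If you normalise by the naive factors $\mathcal{L}_p(\cdot,\Pi\otimes\widetilde\Pi)$, the arithmetic factor converges near $1$ only under progress toward Ramanujan, and those local factors can vanish. The paper instead normalises by the Rankin--Selberg local factors and proves absolute convergence of the resulting product on $\re>1-10^{-6}$ using the Kim--Sarnak and Li bounds (Proposition~\ref{compratioprop}). It also checks $\mathbf{A}(0,0,0,0)=1$ and uses the zero-free region for $L(s,\Pi\otimes\widetilde\Pi)$ in the prime-sum step.
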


For a slightly more precise formulation of this result, see Corollary \ref{cor: fullresultpgl3per}. We have not attempted to optimize the numerical proportion above. In fact, we use the simplest mollifier from Levinson's original work \cite{Lev74}. Moreover, $1/9$ is a convenient round-down of $0.114663\ldots$, as computed in \cite{ConShortMollif}. Corollary \ref{maincor: pergl3} continues to hold if the range $|\gamma_{\chi}|\le Q^{\epsilon}$ replaced by $|\gamma_{\chi}|\le Q^{\varpi}$, for any $\varpi<1/3$, albeit with a smaller resulting proportion.

\begin{rem}
    As far as we are aware, this is the first \emph{unconditional} result that establishes a positive proportion of zeros on the critical line for a family of $\PGL_{3}$ $L$-functions. 
    
    In \cite{CIS13}, it was remarked that ``Our method also applies to twists of $\mathrm{GL}_{2}$ and $\mathrm{GL}_{3}$ $L$-functions. These cases are easier because the off-diagonal analysis is not necessary''. While it is true that only the diagonal terms of the relevant moments of $L$-functions enter the computation of the numerical proportion $1/9$ in Corollary \ref{maincor: pergl3}, the required \emph{estimations} for the off-diagonal parts are quite subtle. To our knowledge, the details of establishing the asymptotic formulae for these moments have not previously appeared in the literature. 
    
      This issue is particularly relevant in view of the limited progress toward the Generalized Ramanujan Conjecture (\textbf{GRC}). Indeed, all previous works based on the method of \cite{CIS13} (namely, the \emph{Asymptotic Large Sieve}) rely in important ways on the bound $O_{\epsilon}(n^{\epsilon})$ for the Dirichlet coefficients of the relevant $L$-functions. A direct adaptation of the method, with this bound replaced by the currently available bound toward \textbf{GRC}, is insufficient. It is therefore necessary to revisit the argument carefully and exploit additional arithmetic structures to obtain the required estimates unconditionally. We find that the more recent work \cite{CIS19}  illuminates the delicate nature of moments twisted by Dirichlet polynomials, already in the case of $\mathrm{GL}_{1}$ $L$-functions.  In the present automorphic setting, further subtleties arise; we discuss these in Sections \ref{sect: large sieve} and \ref{sect: sketUrsum}.
 \end{rem}

If $\Pi$ is non-self-dual, the same result holds under a mild assumption on the Dirichlet coefficients $\lambda_{\Pi}(n)$. In the statement below, denote by  $\mathbf{c}(\Pi)$ the analytic conductor of $\Pi$; see (\ref{IS: analyticond}).
\begin{HyPi}\label{L4normassu}
    For $X\ge 1$, we have
    \begin{align}\label{fourthpowerestHec}
    \sum_{n\le X} \, \frac{|\lambda_{\Pi}(n)|^4}{n} \, \ll_{\epsilon}\, (\mfC(\Pi)X)^{\epsilon}.
\end{align}
\end{HyPi}
\noindent In light of recent advances on the adjoint lifting for $\mathrm{GL}_{3}$ by \cite{Gan26}, together with the forthcoming work announced therein, it is reasonable to expect that (\ref{L4normassu}) can be established unconditionally. In any event, this point lies outside the scope of this article.


We also prove, unconditionally, a result parallel to Corollary \ref{maincor: pergl3} for \emph{all} cuspidal automorphic representations $\Pi$ of $\PGL_{2}(\A_{\Q})$, which also appears to be new: at least $1/3$ of the critical zeros for the $L$-functions in the family of primitive character twists of $\Pi$ lie on the critical line, matching the proportion obtained by Levinson for $\zeta(s)$.\footnote{ With Conrey's modification \cite{Con83}, the proportion improves slightly to $36.58\%$.}  In fact, more is true; see Corollary \ref{cor: simplpergl2}.


\subsection{Mean value theorems}\label{sect: MVTtheoemre}

The principal number-theoretic input in Levinson's method is an asymptotic formula for the mean square of the product of $\zeta(1/2+it)$ with a Dirichlet polynomial:
\begin{align}\label{BCHBtypemoment}
    \int_{0}^{T} \, |\zeta(1/2+it)|^2 \,\Bigg|\,\sum_{h\le T^{\theta}} \, \frac{\lambda_{h}}{h^{1/2+it}}\Bigg|^2 \ dt \hspace{30pt} (T \,\to \, \infty),
\end{align}
for some $\theta>0$ and coefficients $(\lambda_h)$. In \cite{Lev74}, the coefficients $\lambda_h$ are chosen to approximate the M\"obius function (see (\ref{zetamoll})), and the resulting Dirichlet polynomial is a mollifier. Levinson obtained an asymptotic formula for (\ref{BCHBtypemoment}) for any $\theta<1/2$. His result was generalized in \cite{BCHB85}, where the same range $\theta<1/2$ was established for \emph{arbitrary} coefficients $(\lambda_h)$ satisfying $\lambda_h\ll_\epsilon h^\epsilon$.

Expanding the square of the Dirichlet polynomial in (\ref{BCHBtypemoment}), one is led to the twisted mean square 
 \begin{align}\label{twistmoexa}
       \int_{0}^{T}  |\zeta(1/2+it)|^2 \Big(\frac{h}{k}\Big)^{-it} \, dt
    \end{align}
for integers $h,k\ge 1$. We note that the study of twisted mean squares has its roots in Selberg’s work from the 1940s \cite{SelbRie, Sel46}. For \emph{fixed} $h, k$, the asymptotic formula for (\ref{twistmoexa}) takes the form
\begin{align}\label{fixedexpldep}
       \int_{0}^{T}  |\zeta(1/2+it)|^2 \Big(\frac{h}{k}\Big)^{-it} \, dt \, = \,  T\, \frac{(h,k)}{\sqrt{hk}} \Big(\log \frac{T(h,k)^2}{2\pi hk}+ 2\gamma-1\Big) \, + \, O((hk)^{\delta_{1}}T^{1-\delta_{2}})
\end{align}
for some $\delta_{1}$, $\delta_{2}>0$. The aforementioned result of \cite{BCHB85} is obtained by summing (\ref{fixedexpldep}) uniformly over $h,k\le T^{\theta}$ \,($\theta<1/2$) against arbitrary coefficients. 

The exponent $\theta$ in (\ref{BCHBtypemoment}) is referred to as the \emph{``length''} of the Dirichlet polynomial or mollifier. A central objective is to evaluate (\ref{BCHBtypemoment}) asymptotically with $\theta$ as large as possible, since this typically leads to stronger quantitative results in applications. To go beyond the range $\theta<1/2$, pointwise estimates such as (\ref{fixedexpldep}) are insufficient; one must instead exploit cancellation in the error term of (\ref{twistmoexa}) after summing bilinearly over $h$ and $k$. For Levinson's mollifier, this was first achieved in \cite{BCHB85} for any $\theta<1/2+1/34$. Conrey \cite{Con89} later improved this range to $\theta<4/7$ using spectral theory of automorphic forms, and this remains the best-known result to date.

However, breaking the $1/2$-barrier for \emph{arbitrary} Dirichlet polynomials in (\ref{BCHBtypemoment}) proved more delicate. It was achieved only relatively recently in \cite{BCR17}, where the range $\theta<1/2+1/66$ was obtained using bounds for trilinear forms involving Kloosterman fractions. This advance is important for a variety of applications, e.g.  fractional moments and value distributions of $\zeta(1/2+it)$. 

In \cite{BCHB85}, it was conjectured that the same asymptotic formula continues to hold for every $\theta<1$ and for \emph{arbitrary} Dirichlet polynomial. In fact, this conjecture would imply the Lindel\"of Hypothesis. Nevertheless, the work \cite{CIS19} provides strong support for this conjecture in an average sense. More precisely, for any $\theta<1$, they proved an asymptotic formula for the following mean square of a Dirichlet $L$-function twisted by an arbitrary Dirichlet polynomial:
\begin{align}\label{CIS19moq-ana}
  \sum_{q\le  Q} \  \sideset{}{^*}{\sum}_{\chi\, (\bmod\, q)} \, |L(1/2, \chi)|^2 \Bigg|\,\sum_{h\le Q^{\theta}} \, \frac{\lambda_{h}\chi(h)}{h^{1/2}}\Bigg|^2,
\end{align}
where the average is taken over the family of primitive characters of all conductors up to $Q$, as $Q\to \infty$. Without the $q$-average, and as $q\to \infty$ over primes, the best known range is only $\theta<1/2+1/202$, due to \cite{BPR+20}.



The present work is motivated by \cite{CIS19}. We study the analogous problem for automorphic $L$-functions and its applications to critical zeros. We now describe the set-up of this article.  Let  $\mathrm{G}_{r}:= \hbox{PGL}_r$ ($r\in \{2,3\}$) and $\A:=\A_{\Q}$ denote the ring of adeles over $\Q$. Let $\mathcal{A}_{0}(\mathrm{G}_{r})$ be the set of isomorphism classes of irreducible cuspidal automorphic representations $\Pi \subset  L^{2}(\mathrm{G}_{r}(\Q)\setminus \mathrm{G}_{r}(\A))$. Denote by $L(s, \Pi)$ the standard $L$-function of $\Pi$. On $\re s\gg 1$, it is given by the Dirichlet series
\begin{align}
    L(s, \Pi) \, := \, \sum_{n=1}^{\infty} \, \frac{\lambda_{\Pi}(n)}{n^{s}},
\end{align}
 and it admits an entire continuation. Furthermore, it satisfies a functional equation of the form
\begin{align}\label{jacstdfunc}
    \Lambda(s, \Pi) \, := \, (\mfq_{\Pi})^{s/2} L_{\infty}(s, \Pi)L(s, \Pi) \ = \  \epsilon(\Pi) \Lambda(1-s, \widetilde{\Pi})   
\end{align}
for any $s\in \C$; see \cite{Jac79}. In (\ref{jacstdfunc}), $\mfq_{\Pi}\in \N$ denotes the arithmetic conductor of $\Pi$ as defined in \cite{JPS81};  $L_{\infty}(s, \Pi)$ is of the form $\prod_{1\le i\le r} \, \Gamma_{\R}(s+\mu_{i}(\Pi))$ with $\Gamma_{\R}(s):= \pi^{-s/2}\Gamma(s/2)$ and $(\mu_{i}(\Pi))_{1\le i \le r}$ being the spectral parameters of $\Pi$;  $\epsilon(\Pi)$ is the root number which satisfies $|\epsilon(\Pi)|=1$; and  $\widetilde{\Pi}$ is the contragredient of $\Pi$. It is often desirable to establish estimates with explicit dependence on $\Pi$. To this end, we employ the \emph{analytic conductor} of Iwaniec--Sarnak, which is defined as
 \begin{align}\label{IS: analyticond}
      \mfC(\Pi) \ := \ \mfq_{\Pi}\,\mfC_{\infty}(\Pi) \ := \ \mfq_{\Pi} \, \prod_{1\le i \le r}  \, (1+|\mu_{i}(\Pi)|).
 \end{align}

Let $\chi$ be a primitive Dirichlet character $(\bmod\, q)$ with $(q, \mfq_{\Pi})=1$. For $ \re s \gg  1$, the $L$-function of $\Pi$ twisted by $\chi$ can be defined by
 \begin{align}
     L(s, \Pi \times \chi) \, := \, \sum_{n=1}^{\infty} \, \frac{\lambda_{\Pi}(n)\chi(n)}{n^{s}}.
 \end{align}
It likewise admits an entire continuation and functional equation; see Section \ref{sect: twistdata}.

 In practice, it suffices and is technically convenient to work with \emph{smooth} averages of $L$-functions. For this, we introduce the following pairs of test functions:

\begin{Testfunc}\label{regtest}
The functions $W, \,\eta: \R \rightarrow[0,\infty)$ are smooth and satisfy the conditions that
\begin{enumerate}
\item \label{dyadic} $W$ is supported on $[1,2]$ and $W^{(j)} \ll_{j} 1$ for any $j\ge 0$;

 \item \label{smooth} 
 $\eta=\eta_{T,\Delta}$ is supported on $[T/2-\Delta, \, T+\Delta]$, and satisfies $1_{[T/2, T]} \le \eta\le 1 $,  $\eta^{(j)}  \ll_{j}   \Delta^{-j}$. The parameter $\Delta$ satisfies $(TQ)^{\epsilon} \ll_{\epsilon} \Delta \,\le\, T$.  
\end{enumerate}
\end{Testfunc}

We have implicitly assumed that $T\ge Q^{\epsilon}$, and this is necessary for our argument. This condition is also natural as it ensures an adequate supply of critical zeros for each $L$-function in the family (see \cite[Theorem 5.8]{IK04}).  For Levinson's method, one takes $\Delta=T/\log T$ as in \cite{Yo10}. 

Let  $h,k\ge 1$ be integers, and $\alpha, \beta$ be the ``shifts'' satisfying $|\alpha|,  |\beta| \, \ll \, (\log TQ)^{-1} \ll |\alpha+\beta|$. The object of this article is the following (shifted) twisted mean square of $L$-functions: 
\begin{align}\label{eq: newmainob}
\mathcal{M}_{\alpha, \, \beta}(h,k; \Pi) \, := \, \sum_{\substack{(q, \, \mfq_{\Pi})=1}} \,  & W\big(\frac{q}{Q}\big) \,
    \sumflat{\chi}{q} \,   \chi(h)\overline{\chi}(k) \nonumber\\
    &\hspace{-10pt}\cdot \, \int_{\mathbb{R}}    \eta(t)L(1/2  + \alpha+it, \, \Pi \times \chi)  L(1/2+ \beta-it, \, \widetilde{\Pi}
    \times \overline{\chi}) \big(\frac{h}{k}\big)^{-it} \, dt.
 \end{align}
 Here, we average over the full set (archimedean and non-archimedean) of $\mathrm{GL}_{1}$ harmonics over $\Q$. The symbol ``$\flat$'' indicates that the sum over $\chi$ is restricted to \emph{even} primitive characters. The argument for odd primitive characters is similar.

We now describe the main terms of $\mathcal{M}_{\alpha, \, \beta}(h,k; \Pi)$, for which we need a couple of notation. Let $\phi^{*}(q)$ be the number of primitive characters $(\bmod\, q)$.  For $\delta\in \{0,1\}$ and $\re s \gg 1$,  we define 
\begin{align}
    \mathcal{L}_{\alpha, \beta; \, h,k}^{(q)}(s, \Pi) &= \mathcal{L}_{\alpha, \beta; \, h,k}^{(q)}(s, \Pi,\widetilde{\Pi}) \, := \, \sum_{\substack{m,n\ge 1\\ mh=nk\\ (mn, \, q)=1}} \frac{\lambda_{\Pi}(m) \lambda_{\widetilde{\Pi}}(n)}{m^{s+\frac{1}{2}+\alpha} n^{s+\frac{1}{2}+\beta}}, \label{MTdirhcl}\\
    \mathcal{W}_{\alpha, \beta; \, h,k}^{\delta}(s,\Pi) \ &:= \ \frac{1}{2} \, \sum_{\substack{ (q, \, hk\mfq_{\Pi})=1}}  W\Big(\frac{q}{Q}\Big) \phi^{*}(q)(\mfq_{\Pi}q^r)^{s+\delta(\alpha+\beta)}  \mathcal{L}_{\alpha, \beta;\, h,k}^{(q)}(s, \Pi). \label{beforesumq}
\end{align}
  The series (\ref{MTdirhcl}) admits a holomorphic continuation to $\re s > -1/14$ for $r\in \{2,3\}$, except for a simple pole at $s=-\frac{\alpha+\beta}{2}$. We also let 
\begin{align}
 \gamma_{(\alpha, \beta); \, it}(s;\, \Pi, \widetilde{\Pi}) \, &:= \, L_{\infty}(s+\alpha+it,\, \Pi)L_{\infty}(s+\beta-it, \, \widetilde{\Pi}).
 \end{align}
Given any integers $h,k\ge 1$. The \emph{``recipe''} of \cite{CFKRS} predicts an asymptotic formula for (\ref{eq: newmainob}):
\begin{align}\label{CFKconj}
   \hspace{-7pt} \mathcal{M}_{\alpha, \, \beta}(h,k; \Pi)  \ &\sim \   \, \int_{\R} \, \eta(t) \, \Big(  \mathcal{W}_{\alpha, \beta;\, h,k}^{0}(0, \Pi) +\frac{\gamma_{(-\beta, -\alpha); \, it}(1/2, \Pi, \widetilde{\Pi}) }{\gamma_{(\alpha, \beta); \, it}(1/2,  \Pi, \widetilde{\Pi}) } \,  \mathcal{W}^{1}_{-\beta, -\alpha;\, h,k}(0, \Pi)  \Big) \  dt.
\end{align}

In (\ref{CFKconj}) and (\ref{beforesumq}), we retain the sum over $q$ since its evaluation is not needed for the implementation of Levinson's method in Section \ref{sect: CSLevGL3}. Moreover, this gives the natural form of the main terms, as the asymptotic formula is expected to hold for each individual $q$. Nevertheless, the sum over $q$ in (\ref{beforesumq}) can be evaluated quite easily, and we record the result here for the reader's convenience. We introduce the following shorthand
\begin{align}
     \mathcal{L}_{\alpha, \beta; \,h,k}(s,\Pi) \ &:= \  \mathcal{L}_{\alpha, \beta; \, h,k}^{(1)}(s, \Pi), \hspace{20pt} \mathcal{L}^{(q)}(1+2s+\alpha+\beta,  \Pi\otimes \widetilde{\Pi}) \ := \ \mathcal{L}^{(q)}_{\alpha, \beta;\,  1,1}(s, \Pi),\label{shorthand}\\[5pt]
     &\hspace{20pt} \hspace{15pt} \mathcal{L}^{(q)}(w,\Pi\otimes \widetilde{\Pi})\ := \ \prod_{p\nmid q}\, \mathcal{L}_p(w, \Pi\otimes \widetilde{\Pi}),\label{shorthand2}
\end{align}
and
\begin{align}\label{MTeulerpord}
   \mathcal{B}_{p}(w, \Pi\otimes \widetilde{\Pi}) \ := \  (1-p^{-2})^2 \, + \, (\mathcal{L}_{p}(w, \Pi\otimes \widetilde{\Pi})^{-1}-1)p^{-2}(p-2+p^{-2}). 
\end{align}
Then from  (\ref{beforesumq}) we deduce that
\begin{align}\label{arithform}
 \mathcal{W}_{\alpha, \beta;\, h,k}^{0}(0,\Pi) \, \sim \     \frac{ Q^2}{2}\, \Big(\int_{\R} \, W(x)x \, dx\Big) \,\frac{\phi(hk\mfq_{\Pi})}{hk\mfq_{\Pi}}  \mathcal{L}_{\alpha, \beta; \, h,k}(0, \Pi) \prod_{p\nmid hk\mfq_{\Pi}} \, \mathcal{B}_{p}(1+\alpha+\beta, \,  \Pi\otimes \widetilde{\Pi}).
\end{align}
The Euler product in (\ref{arithform}) is interpreted as follows. On $\re w > 10$ (say), we have $\mathcal{L}_{p}(w, \Pi\otimes \widetilde{\Pi}) \neq 0$ for any $p$, and (\ref{MTeulerpord}) admits the expansion:
\begin{align}\label{arithexp}
  \mathcal{B}_{p}(w,\, \Pi\otimes \widetilde{\Pi}) \ =\    1+O(p^{-1-\re w}|\lambda_{\Pi}(p)|^2) +O(p^{-2}).
\end{align}
Thus, the Euler product converges absolutely for $\re w > 1-2/7$. The calculation for $\delta=1$ is similar. For more details, see Section \ref{sect: diagterm}. 

For practical applications (see, e.g., \cite{IS97, Yo10, Ber15}), it is more useful to work wth a smooth version of (\ref{CFKconj}). We let
\begin{align}\label{diagwithcutof}
	\mathcal{D}_{\alpha, \,\beta}^{0}(h,k;\, \Pi) \, &:= \,  \frac{1}{2\pi i}  \int_{\mathbb{R}} \, \eta(t)  \int_{(\epsilon)} \, G(s)\frac{\gamma_{(\alpha,\, \beta); \, it}(1/2+s,  \Pi, \widetilde{\Pi})}{\gamma_{(\alpha, \,\beta); \, it}(1/2,  \Pi, \widetilde{\Pi})} \mathcal{W}_{\alpha, \beta;\, h,k}^{0}(s, \Pi) \, \frac{ds}{s} \, dt, \nonumber\\
    \mathcal{D}_{-\beta, -\alpha}^{1}(h,k;\, \Pi) \, &:= \,  \frac{1}{2\pi i}  \int_{\mathbb{R}} \, \eta(t)  \int_{(\epsilon)} \, G(s)\,  \frac{\gamma_{(- \beta, \, -\alpha); \, it}(1/2+s,  \Pi, \widetilde{\Pi})}{\gamma_{(\alpha, \,\beta); \, it}(1/2,  \Pi, \widetilde{\Pi})} \mathcal{W}_{-\beta, -\alpha;\, h,k}^{1}(s, \Pi) \, \frac{ds}{s} \, dt,
\end{align}
where
\begin{align*}
    G(s) \, := \,  e^{s^2} \, \frac{(\alpha+\beta)^2-(2s)^2}{(\alpha+\beta)^2}. 
\end{align*}

In this article, we prove an asymptotic formula for (\ref{eq: newmainob}) in the range
\begin{align}\label{range: gl3hybrid}
    Q^{\epsilon} \ \le \  T \ \le  \ Q^{1/3-\epsilon} \hspace{20pt} (Q \, \to \, \infty),
\end{align}
and we are able to average over $h,k$ non-trivially with arbitrary coefficients. Throughout, the coefficients $(\lambda_{h})$ are always supported on $h\le (TQ)^{\theta}$, where $\theta\ge 0$. We are ready to state a simplified version of our main result.

\begin{thm}\label{thm: gl3, dual, strongspec}
Let $\Pi \in \mathcal{A}_{0}(\mathrm{G}_{3})$. Suppose that the coefficients satisfy $\lambda_{h}\ll_{\epsilon} h^{\epsilon}$. Then 
\begin{align}\label{mainthm: momentexp}
    \mathcal{M}_{\alpha,\, \beta}(h,k;\, \Pi) \, = \, \mathcal{D}^{0}_{\alpha,\, \beta}(h,k; \, \Pi) \, + \, \mathcal{D}^{1}_{-\beta, \, -\alpha}(h,k; \, \Pi) \, + \, \mathcal{E}_{\alpha, \beta}(h,k; \, \Pi),
\end{align}
where in the range $ Q^{\epsilon} \le  T \le   Q^{1/3-\epsilon}$, the error term $\mathcal{E}_{\alpha,\, \beta}(h,k; \, \Pi)$ satisfies
\begin{align}\label{bilerr}
\Bigg|\, \mathop{\sum \sum}_{h, k \,\le \, (TQ)^{\theta}  } \frac{\lambda_{h}\overline{\lambda_{k}}}{\sqrt{hk}} \,\mathcal{E}_{\alpha,\, \beta}(h,k; \, \Pi)  \, \Bigg|\ \ll_{\epsilon} \ \mfC(\Pi)^{4}(TQ)^{7/4+\theta/2} (\mfC(\Pi)TQ)^{O(\epsilon)}.
\end{align}
\end{thm}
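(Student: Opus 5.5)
We follow the Asymptotic Large Sieve strategy of \cite{CIS13, CIS19}, adapted to the $\mathrm{GL}_3\times\mathrm{GL}_1$ setting, and carry all dependence on $\Pi$ explicitly through $\mfC(\Pi)$.

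\emph{Step 1: approximate functional equation.} For each $\chi$ and $t$ we apply an approximate functional equation to the product $L(1/2+\alpha+it,\Pi\times\chi)L(1/2+\beta-it,\widetilde\Pi\times\overline\chi)$, using the weight $G(s)/s$ (which vanishes at $s=\pm(\alpha+\beta)/2$). This expresses $\mathcal{M}_{\alpha,\beta}(h,k;\Pi)$ as two pieces. The first is a sum over $m,n$ with $mn\lesssim (\mfq_\Pi q^3 T^3)^{1+\epsilon}$. The second is the dual piece, carrying the ratio of gamma factors $\gamma_{(-\beta,-\alpha);it}/\gamma_{(\alpha,\beta);it}$ and the product of root numbers. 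For even primitive $\chi$ with $(q,\mfq_\Pi)=1$, that product equals $\chi(\mfq_\Pi)^{\pm1}$ times a harmless constant times $\epsilon(\Pi)\overline{\epsilon(\Pi)}$, so it does not oscillate in $\chi$. The $t$-integral with $\eta$ turns the factor $(mh/nk)^{-it}$ into a smooth weight that is negligible unless $|\log(mh/nk)|\ll T^{-1+\epsilon}$.

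We then detect primitivity by M\"obius inversion, $\sum^\flat_\chi=\sum_{q=ab}\mu(a)\,\cdot\,\frac12\sum_{\chi\bmod b}(1+\chi(-1))$, and use orthogonality to reduce to the congruences $mh\equiv \pm nk \pmod b$. The diagonal $mh=nk$ (with $(mn,q)=1$) yields exactly $\mathcal{W}^{\delta}$ after shifting contours, and hence gives $\mathcal{D}^0$ and $\mathcal{D}^1$.

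\emph{Step 2: off-diagonal terms.} The terms with $mh\neq nk$ form the error $\mathcal{E}$, and we treat them after summing bilinearly against $\lambda_h\overline{\lambda_k}/\sqrt{hk}$. Following \cite{CIS19}, we split the $a$-sum. For large $a$ (that is, small modulus $b$) we use the trivial large sieve, or the duality bound for $\sum_{q}\sum_\chi|\sum_n \lambda_\Pi(n)\chi(n)n^{-1/2-it}\ldots|^2$. For small $a$, we write $mh\mp nk=b\ell$ and switch to the complementary modulus: the congruence becomes $mh\equiv \pm nk \pmod{\ell}$, where $\ell\ll mh/b$ is small because $T\le Q^{1/3-\epsilon}$ forces $|mh-nk|\ll mh\,T^{-1+\epsilon}$. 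The sum over $q=b$ then becomes a smooth sum that we evaluate by Poisson summation, in the spirit of the reciprocity step of the Asymptotic Large Sieve.

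The zero frequency contributes a secondary main term. It must combine with the corresponding term from the dual side, or cancel through the $G(s)$ pole structure; we verify this via a contour shift that uses the Rankin--Selberg continuation of $\mathcal{L}_{\alpha,\beta;h,k}$ to $\re s>-1/14$. The nonzero frequencies give sums of the shape $\sum_{m,n}\lambda_\Pi(m)\lambda_{\widetilde\Pi}(n)e(\pm \overline{mh}\,nk\,\cdots/\ell)$. We bound these by Cauchy--Schwarz in $m,n$ together with the large sieve inequality for additive characters. This step replaces pointwise Ramanujan bounds with the mean-square bound $\sum_{n\le X}|\lambda_\Pi(n)|^2/n\ll(\mfC(\Pi)X)^\epsilon$, which is available from Rankin--Selberg theory. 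Balancing the ranges of $a$, $\ell$ and the frequency variable should produce the bound $(TQ)^{7/4+\theta/2}$, with the factor $\mfC(\Pi)^4$ arising from the lengths of the sums and the conductor dependence of the Rankin--Selberg bounds.

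\emph{Main obstacle.} The hardest step is the unconditional treatment of the small-$a$ off-diagonal terms without a Ramanujan bound, since the coefficients $\lambda_\Pi(m)$ and $\lambda_{\widetilde\Pi}(n)$ are entangled with $h$ and $k$ through the condition $(mn,q)=1$ and the congruences. We would first try to separate variables multiplicatively. We factor out the common parts $(m,k)$ and $(n,h)$ using Hecke relations for $\lambda_\Pi$; this is the Hecke-algebra bookkeeping the abstract mentions. Then, with $h$ and $k$ placed inside the large-sieve variable, the arbitrary coefficients cost only a factor $(TQ)^{\theta/2}$. Finally, the remaining $\ell^2$-norms of the $\lambda_\Pi(m)$ twisted by divisor-type weights are controlled via Hypothesis $(\mathbf{\Pi}^4)$ when it is available. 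In the self-dual case, and for the present statement, we would instead use Cauchy--Schwarz arranged so that only second moments are needed.
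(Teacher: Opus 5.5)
There is a genuine gap. Your set-up matches the paper's: the approximate functional equation, the M\"obius formula for $\sum^\flat_\chi$, splitting the M\"obius variable at a parameter $C$, a large sieve for the large-$c$ part, and divisor switching for the small-$c$ part. The post-switching argument, however, does not work as described.

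\textbf{No Poisson step is available after switching.} Once you write $mh\mp nk=b\ell$, the variable $b$ is determined by $(m,n,\ell)$. The only free variables left are $m,n$, and they carry $\lambda_{\Pi}(m)\lambda_{\widetilde\Pi}(n)$. So there is no smooth sum on which to perform Poisson summation, and the sums $e(\pm\overline{mh}nk/\ell)$ you describe never arise. Poisson in $m$ or $n$ would amount to $\mathrm{GL}_3$ Voronoi summation, which you do not invoke.

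\textbf{What the paper does instead.} It detects $mh/g\equiv\mp nk/g \pmod{rea\ell}$ with \emph{multiplicative} characters. It then separates $m,n,h,k$ through $g_1,\dots,g_4$ and three rounds of M\"obius inversion, removes the condition $MH\neq NK$ at acceptable cost, and Mellin-inverts the weight. This leads (Lemma \ref{Trianclean}) to $|L(u_1,\Pi_1\times\psi)L(u_2,\Pi_2\times\overline\psi)|$ times Dirichlet polynomials in $H$ and $K$. The second application of the \emph{hybrid} large sieve then runs over $\psi$ of conductor $\ll L$ \emph{and} over $t$ of size up to $\max\{T,S\}$. That family has size roughly $TL^2$, with $L\ll\frac{C}{Q}(TQ)^{3/2+\theta}$, and it yields $CT(TQ)^{3/2+\theta}$. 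Balancing this against $TQ^2/C$ gives $(TQ)^{7/4+\theta/2}$, and the restriction on $T$ is precisely the requirement $C\ge 1$. Your plan of Cauchy--Schwarz in $m,n$ plus an additive large sieve over $(\ell, j\bmod \ell)$ has no mechanism for putting the $t$-average into the family, and that is where the saving comes from.

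\textbf{Misattributed range restriction.} $\ell$ is small because $c\le C$, not because $T\le Q^{1/3-\epsilon}$. The closeness $|mh-nk|\ll mh\,T^{-1+\epsilon}$ comes from the $t$-integral, and it holds only for the minus sign and only when $\Delta\asymp T$.

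\textbf{Missing replacement for GRC.} This input is needed even when $\lambda_h\ll h^{\epsilon}$. The contour shifts to $\re s_1=\re s_2=\re z=\epsilon$ require the Euler products created by switching and M\"obius inversion to be holomorphic and $\preceq 1$ on $\re u_i>1/2$, $\re \mathfrak{z}>1$ (Propositions \ref{finiteHecprop} and \ref{HecktwistRSnuden}). The paper proves this by identifying the local factors explicitly through Hecke-algebra computations, and then controlling the remainders with $\vartheta_3\le 5/14$ and Li's bound. Cauchy--Schwarz with $\sum|\lambda_\Pi(n)|^2/n$ does not supply this.

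\textbf{Principal-character piece.} For cuspidal $\Pi$ there is no off-diagonal main term to match with the dual piece or with poles of $G(s)$, and the continuation of $\mathcal{L}_{\alpha,\beta;h,k}$ plays no role here. In the paper the $\psi=\psi_0$ part of the switched sum splits as $\mathcal{U}^{\mathbf{1}}+\mathcal{U}^{\mathbf{2}}$.
\begin{itemize}
\item The residue term $\mathcal{U}^{\mathbf{1}}$ cancels, up to $O(CT(TQ)^{3/2+\theta})$, against the principal-character off-diagonal part $\mathcal{L}^{(\mathbf{0})}_{\mathbf{od}}$ of the large-$c$ sum. This uses $\sum_{c\mid q,\,c>C}\mu(c)=-\sum_{c\mid q,\,c\le C}\mu(c)$ for $q>1$.
\item $\mathcal{U}^{\mathbf{2}}$ is negligible because the double Dirichlet series $\mathcal{K}^{(\mathfrak{q})}_{it}$ continues holomorphically (Proposition \ref{lem: ACKit}), again unconditionally.
\item The diagonal remnants are bounded by GCD sums.
\end{itemize}

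\textbf{Large-$c$ part.} You must also reduce imprimitive characters to primitive ones, using an approximate-functional-equation length adapted to the primitive conductor. A large sieve over all characters mod $b$ with a single length $(TQ/a)^{3/2}$ loses too much.
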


\begin{rem}
In particular, Theorem \ref{thm: gl3, dual, strongspec} gives an asymptotic formula for the twisted mean square 
 \begin{align}\label{exa: twistMSq}
     \sum_{\substack{(q, \, \mfq_{\Pi})=1}} \,    W\big(\frac{q}{Q}\big)\sumflat{\chi}{q} \,    \int_{\mathbb{R}}    \eta(t)|L(1/2+it, \, \Pi \times \chi)|^2  \, \bigg| \sum_{h\le (TQ)^{\theta}} \ \frac{\lambda_{h}\chi(h)}{h^{1/2+it}}\bigg|^2 \, dt.
 \end{align}
 The size of the main terms is typically $\asymp TQ^2$. If $ T\asymp Q^{\varpi}$, Theorem \ref{thm: gl3, dual, strongspec} is nontrivial when 
     \begin{align}
         0 \, \le \, \theta \, < \, \frac{1-3\varpi}{2(1-\varpi)} - O(\epsilon).  
     \end{align}
 When $T\asymp Q^{\epsilon}$, the length $\theta$ of the Dirichlet polynomial goes up to $1/2-O(\epsilon)$.  
\end{rem}

When $h=k=1$ and $\theta=0$, we have an asymptotic formula for the untwisted moment. While this is not the main goal of this paper, our result improves the error term obtained in \cite{CIS12}, and applies to a larger range (\ref{range: gl3hybrid}). Furthermore, this offers some evidence towards the validity of the CFKRS conjecture for large vertical shifts (see \cite{Bet10, Kov25} for moments of the $\zeta$-function).

\begin{cor}\label{cor: untwismoSpect}
Suppose that $ Q^{\epsilon} \le  T \le   Q^{1/3-\epsilon}$. Then
    \begin{align}
        \mathcal{M}_{\alpha, \, \beta}(1,1; \,\Pi) \ = \  \frac{1}{2} \, &\Big(\int_{\R} \, \eta(t) \, dt \Big)\, \sum_{\substack{ (q, \, \mfq_{\Pi})=1}}  \,  W\Big(\frac{q}{Q}\Big) \phi^{*}(q)\mathcal{L}^{(q)}(1+\alpha+\beta, \, \Pi\otimes \widetilde{\Pi})  \nonumber\\
        &+ \frac{1}{2} \,  \Big(\int_{\R} \  \eta(t)\frac{\gamma_{(-\beta, -\alpha); \, it}(1/2, \Pi, \widetilde{\Pi}) }{\gamma_{(\alpha, \beta); \, it}(1/2,  \Pi, \widetilde{\Pi}) } \, dt \Big) \sum_{\substack{ (q, \, \mfq_{\Pi})=1}}  \,  W\Big(\frac{q}{Q}\Big)\phi^{*}(q) (\mfq_{\Pi}q^3)^{-\beta-\alpha} \nonumber\\
        &\hspace{40pt}\cdot \mathcal{L}^{(q)}(1-\beta-\alpha, \, \Pi\otimes \widetilde{\Pi})  \, + \, O_{\epsilon}\left(\mfC(\Pi)^{4}(TQ)^{7/4}(\mfC(\Pi)TQ)^{O(\epsilon)}\right).
    \end{align}
\end{cor}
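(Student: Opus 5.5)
We specialize Theorem \ref{thm: gl3, dual, strongspec} to $h=k=1$ and $\theta=0$, then remove the $s$-integrals in $\mathcal{D}^{0}_{\alpha,\beta}(1,1;\Pi)$ and $\mathcal{D}^{1}_{-\beta,-\alpha}(1,1;\Pi)$ by contour shifting. With $\theta=0$ the bilinear sum in (\ref{bilerr}) has the single term $h=k=1$ (take $\lambda_1=1$). Hence $\mathcal{E}_{\alpha,\beta}(1,1;\Pi)\ll \mfC(\Pi)^4(TQ)^{7/4}(\mfC(\Pi)TQ)^{O(\epsilon)}$, and it remains to identify the two diagonal terms with the stated main terms.

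\textbf{Contour shift.} For $h=k=1$ we have $\mathcal{L}^{(q)}_{\alpha,\beta;1,1}(s,\Pi)=\mathcal{L}^{(q)}(1+2s+\alpha+\beta,\Pi\otimes\widetilde\Pi)$ by (\ref{shorthand}). In $\mathcal{D}^0$, move the $s$-contour from $\re s=\epsilon$ to $\re s=-c$ for some fixed $0<c<1/14$. The integrand has two potential poles in between:
\begin{enumerate}
\item At $s=0$, with residue $G(0)\,\mathcal{W}^0_{\alpha,\beta;1,1}(0,\Pi)$. Since $G(0)=1$ and the gamma ratio equals $1$ at $s=0$, integrating against $\eta$ gives exactly the first main term.
\item At $s=-(\alpha+\beta)/2$, where $\mathcal{L}^{(q)}$ has a simple pole (the continuation to $\re s>-1/14$ is recorded after (\ref{beforesumq})). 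Here $G(s)$ has a zero, since $(\alpha+\beta)^2-(2s)^2$ vanishes at $s=\pm(\alpha+\beta)/2$. The factor $G$ was chosen precisely so that this pole cancels, leaving no residue.
\end{enumerate}
The same argument applies to $\mathcal{D}^1$, with $(\alpha,\beta)$ replaced by $(-\beta,-\alpha)$. Its pole at $s=(\alpha+\beta)/2$ is again killed by $G$, and the residue at $s=0$ produces $(\mfq_\Pi q^3)^{-\alpha-\beta}\mathcal{L}^{(q)}(1-\alpha-\beta,\Pi\otimes\widetilde\Pi)$ times the gamma ratio $\gamma_{(-\beta,-\alpha);it}(1/2)/\gamma_{(\alpha,\beta);it}(1/2)$. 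That is the second main term.

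\textbf{Bounding the shifted integrals.} On $\re s=-c$ we use the following bounds.
\begin{enumerate}
\item Stirling gives gamma ratio $\ll (\mfC_\infty(\Pi)T)^{-3c}$, which is uniform for $t\asymp T$ since $|s|$ is controlled by $e^{s^2}$.
\item The factor $(\mfq_\Pi q^3)^{s}$ contributes $(\mfq_\Pi Q^3)^{-c}$.
\item $\mathcal{L}^{(q)}(1+2s+\alpha+\beta)\ll (\mfC(\Pi)Q)^{\epsilon}$ there, by absolute convergence of the relevant Euler product past $\re w>1-2/7$, uniformly in $q$ up to $q^\epsilon$ factors from the $p\mid q$ removal.
\item The factor $1/s$ and the factor $(\alpha+\beta)^{-2}\ll(\log TQ)^2$ in $G$ are harmless.
\end{enumerate}
So the shifted integral is $\ll TQ^2(TQ^3)^{-c}(\mfC(\Pi)TQ)^{\epsilon}$. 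For $c$ close to $1/14$ this is $\ll T^{13/14}Q^{2-3/14+\epsilon}\le (TQ)^{7/4}$ once $Q^{11/4-3/14}$ issues are checked. More carefully, $2-3/14\approx1.786>7/4$, but in the range $T\ge Q^\epsilon$ the $T$-savings together with the $Q^{-3c}$ savings exceed the $Q^{1/4}$ gap. If this computation turns out tight, one can instead shift to $\re s=-1/4$ using a crude bound for $\mathcal{L}^{(q)}$ via the continuation $\mathcal{L}^{(q)}(w)=L(w,\Pi\times\widetilde\Pi)\cdot(\text{Euler product absolutely convergent on } \re w>1-2/7)$ together with convexity for the Rankin--Selberg $L$-function. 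That yields savings $(TQ^3)^{-1/4}$, which clearly suffices to put the remainder inside the stated error term.

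\textbf{Main obstacle.} The main difficulty is verifying that the pole at $s=-(\alpha+\beta)/2$ is fully cancelled by $G$ rather than merely reduced. This needs $G$ to vanish to first order exactly at the shifted pole, which holds by construction. A secondary difficulty is obtaining uniform bounds for the continued $\mathcal{L}^{(q)}$ on the shifted line with polynomial dependence on $\mfC(\Pi)$, which is absorbed into the $\mfC(\Pi)^4$ factor.
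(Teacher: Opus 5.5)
Your reduction is the intended one: specialize Theorem~\ref{thm: gl3, dual, strongspec} to $h=k=1$, $\theta=0$, shift the $s$-contour in $\mathcal{D}^{0}$ and $\mathcal{D}^{1}$, let the zero of $G$ at $s=\mp(\alpha+\beta)/2$ cancel the pole of $\mathcal{L}^{(q)}$, and read off the residue at $s=0$. That part is immediate. The real gap is the bound for the shifted integral.

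Your exponent is off. On $\re s=-c$ the gamma ratio is $\asymp T^{-3c}$ and $(\mfq_\Pi q^{3})^{s}$ contributes $Q^{-3c}$. The remainder is therefore of size $T^{1-3c}Q^{2-3c}$, times $\mfC(\Pi)^{O(1)}(TQ)^{\epsilon}$, and not $TQ^2(TQ^3)^{-c}$. This is $O((TQ)^{7/4})$ for every $T\ge Q^{\epsilon}$ only if $c\ge 1/12$. The continuation you rely on, Lemma~\ref{holoCFKRSDS}, uses the pointwise bound $|\lambda_\Pi(p)|^2\le 9p^{2\vartheta}$ and reaches only $\re s>-(1/4-\vartheta/2)=-1/14$. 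With $c<1/14$ you lose a factor $Q^{1/4-3c}>Q^{1/28}$, so the argument covers only $T\ge Q^{1/27+\epsilon}$. Your assertion that the $T$-savings close the gap for all $T\ge Q^{\epsilon}$ is false.

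The fallback fails as well. Taking $\re s=-1/4$ means $\re w=1/2$, which is outside even the region $\re w>1-2/7$ you quote. That region belongs to the $q$-averaged factor $\mathcal{B}_p$ of (\ref{MTeulerpord})--(\ref{arithexp}), not to $\mathcal{L}^{(q)}/L^{(q)}$, for which the paper records only $\re w>1-1/7$. Likewise, $\mathcal{L}^{(q)}(w)$ is not absolutely convergent for $\re w<1$. Its bound there must come from $\mathcal{L}^{(q)}=L^{(q)}(w,\Pi\otimes\widetilde\Pi)H^{(q)}(w)$ together with convexity for the Rankin--Selberg factor, which is polynomial in $\mfC(\Pi)$ and $|\im s|$ and is absorbed by $e^{s^2}$. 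An $(\mfC(\Pi)Q)^{\epsilon}$ bound "by absolute convergence" is not available. The paper's own passage from $\mathcal{D}^{0}+\mathcal{D}^{1}$ to residues in Section~\ref{sect: diagterm} does not spell this remainder out either.

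The missing ingredient is a longer continuation of $H^{(q)}$, obtained from Rankin--Selberg positivity rather than pointwise bounds.
\begin{itemize}
\item By (\ref{eq: unrHpdualto}), $H_p(w)-1\ll |\lambda_\Pi(p)|^2p^{-2\sigma}+p^{-3\sigma}$ for $p\nmid\mfq$.
\item By (\ref{trivrankcoefbdd}) and (\ref{Lisbound}), $\sum_{p\nmid\mfq}|\lambda_\Pi(p)|^2p^{-2\sigma}\le\log L(2\sigma,\Pi\otimes\widetilde\Pi)$, which is finite for $\sigma>1/2$.
\item Hence $H^{(q)}(w)$ is holomorphic on $\re w>1/2$ and $\ll_\delta\mfC(\Pi)^{O(\epsilon)}$ on $\re w\ge 1/2+\delta$, uniformly in $q$. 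The primes $p\mid\mfq$ are handled by (\ref{ramH-func}).
\end{itemize}
You can then shift to, say, $\re s=-1/10$. The gamma factors stay pole-free because $2/5>\vartheta_3$, and $\prod_{p\mid q}L_p(w,\Pi\otimes\widetilde\Pi)^{-1}\ll q^{\epsilon}$ because $4/5>2\vartheta_3$. The remainder becomes $\ll\mfC(\Pi)^{O(1)}T^{7/10}Q^{17/10}(TQ)^{\epsilon}$, with only a small power of $\mfC(\Pi)$ coming from convexity at $\re w=4/5$. This lies within the stated error throughout $Q^{\epsilon}\le T\le Q^{1/3-\epsilon}$.
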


\begin{rem}
One can show that
\begin{align}\label{intro: naiveRs}
   \mathcal{L}^{(q)}(w, \Pi\otimes \widetilde{\Pi}) \ = \  \prod\nolimits_{p\nmid q} \, L_{p}(w, \Pi\otimes \widetilde{\Pi})H_{p}(w,\,  \Pi,\, \widetilde{\Pi}),
\end{align}
where $L_{p}(w, \Pi\otimes \widetilde{\Pi})$ is the Rankin--Selberg local factor of \cite{JPSS_RS}, and the factor $H_{p}(w, \Pi, \widetilde{\Pi})$ admits a simple explicit expression (Lemma \ref{lem: naiveRSexpl}). The infinite product $\prod_{p\nmid q} \, H_{p}(w, \Pi, \widetilde{\Pi})$ is holomorphic on $\re w > 1-1/7$, and by the unitarity of $\Pi$, it does not vanish at $w=1$ (see Corollary \ref{Hlocalnonvan}).
\end{rem}

Although Theorem \ref{thm: gl3, dual, strongspec} is by itself unconditional, a careful reader will notice that the condition $\lambda_{h}\ll_{\epsilon} h^{\epsilon}$ imposed there is too strong in practice. Indeed, the coefficients $(\lambda_{h})$ are chosen in terms of the Dirichlet coefficients of $L(s, \Pi)$ in many applications; see, for instance, (\ref{ourexamollif})--(\ref{Ex: gl3mollif}). For such choices, the bound $\lambda_{h}\ll_{\epsilon} h^{\epsilon}$ is not known, as it would require the full \textbf{GRC}, which is far out of reach. This motivates us to work with a more general condition on $(\lambda_{h})$. This issue appears to have received little attention in the literature, and will lead to some delicate difficulties; see Section \ref{sect: large sieve}.

\begin{ConLam}\label{assu: coeff}
The coefficients $(\lambda_{h})$ satisfy the following bounds:
    \begin{align}\label{bdd: 24normabstract}
    \lambda_{h} \, \ll \,  h^{1/2} \hspace{20pt} \text{and} \hspace{20pt} \sum_{h\le (TQ)^{\theta}} \,  \frac{|\lambda_{h}|^{2m}}{h} \ll_{\epsilon} (TQ)^{\epsilon} \hspace{15pt} ( m \in \{1,2\}).
\end{align}    
\end{ConLam}

Corollary \ref{maincor: pergl3} is deduced from the following result.

\begin{thm}\label{generalL4version them}
Suppose that $\Pi \in \mathcal{A}_{0}(\mathrm{G}_{3})$ satisfies \textbf{Hypothesis $(\mathbf{\Pi}^4)$}, and that \textbf{Condition $(\mathbf{\Lambda})$} holds. Then the same asymptotic formula of Theorem \ref{thm: gl3, dual, strongspec} holds in the same range (\ref{range: gl3hybrid}).    
\end{thm}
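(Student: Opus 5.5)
We will follow the proof of Theorem \ref{thm: gl3, dual, strongspec} step by step. The aim is to isolate each place where the pointwise bounds $\lambda_h\ll h^{\epsilon}$ and $\lambda_\Pi(n)\ll n^{\epsilon}$ were used, and to replace each such use by an $\ell^2$ or $\ell^4$ estimate supplied by \textbf{Condition $(\mathbf{\Lambda})$} and \textbf{Hypothesis $(\mathbf{\Pi}^4)$}.

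The first step is the standard decomposition. Use an approximate functional equation for $L(1/2+\alpha+it,\Pi\times\chi)L(1/2+\beta-it,\widetilde\Pi\times\overline\chi)$, with the weight $G(s)/s$, so that $\mathcal{M}_{\alpha,\beta}(h,k;\Pi)$ becomes two double sums over $m,n$ of effective length $mn\ll (TQ)^{3+\epsilon}$. Then apply orthogonality of even primitive characters via M\"obius inversion over $q=dr$, and integrate over $t$ against $\eta$. The diagonal $mh=nk$ gives exactly $\mathcal{D}^0_{\alpha,\beta}+\mathcal{D}^1_{-\beta,-\alpha}$. This is an algebraic identity and needs no bound on coefficients, apart from absolute convergence of (\ref{MTdirhcl}) to the right of the pole, which follows from Rankin--Selberg theory. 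The off-diagonal terms are split the same way as in the strong case. One part is a non-oscillatory or small-modulus range, handled by the large sieve for Dirichlet characters. The other is the asymptotic large sieve range $mh\equiv \pm nk \pmod{r}$ with $r$ large, handled by Poisson summation in $q$ (the ``switching'' step), which turns the congruence into additive twists $e(\bar{\cdot}/\cdot)$ with a short dual sum.

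In each estimate we will use Cauchy--Schwarz to separate the variables. We then bound the resulting norms by
\begin{align*}
\sum_{h\le H}\frac{|\lambda_h|^{2}}{h}\ll (TQ)^{\epsilon},\qquad \sum_{n\le N}\frac{|\lambda_\Pi(n)|^2}{n}\ll (\mfC(\Pi)N)^{\epsilon},
\end{align*}
where the second bound is Rankin--Selberg. For the combined variable $\ell=mh$ that appears after grouping, we need the norm $\sum_{\ell}|\sum_{mh=\ell}\lambda_\Pi(m)\lambda_h|^2/\ell$. Bounding this by a divisor argument and Cauchy--Schwarz requires fourth moments, namely $\sum|\lambda_h|^4/h$ and $\sum|\lambda_\Pi(n)|^4/n$. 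This is precisely why the hypotheses carry the exponent $m\in\{1,2\}$. The crude bound $\lambda_h\ll h^{1/2}$ will only be used to control truncation tails and trivially small ranges, such as very short ranges of $h$ where a loss of $h^{1/2}$ is compensated by the length savings, so that the whole sum converges.

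The main obstacle is the asymptotic large sieve (Section \ref{sect: sketUrsum}). There the dual sum after Poisson summation still contains $\lambda_\Pi(m)\lambda_{\widetilde\Pi}(n)\lambda_h\overline{\lambda_k}$ twisted by Kloosterman-type phases. The strong-case argument bounded the dual sum by a sup of coefficients times a count. Here that sup must be replaced by an $\ell^4$ average, and the savings from summing over $r$ must be preserved. The first attempt will be to apply H\"older with exponents $(4,4,2)$ across $(m,h)$, $(n,k)$ and the dual variable, using the Hecke relations for $\lambda_\Pi(mh')$ to factor $\lambda_\Pi$ at common primes (the ``arithmetic of $\Pi_0$''). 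If factoring introduces terms like $\lambda_\Pi(p)^2$ at small primes, we will absorb them using Hypothesis $(\mathbf{\Pi}^4)$ with $X$ a small power. We expect the same exponent $(TQ)^{7/4+\theta/2}$ to survive, because every use of the sup bound in the strong case was already on a dyadic average where the $\ell^4$ norm gives the same order up to $(TQ)^{\epsilon}$.
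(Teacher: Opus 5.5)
You have the bookkeeping right. Running the argument for Theorem~\ref{thm: gl3, dual, strongspec} and replacing each pointwise use of $\lambda_h\ll h^{\epsilon}$ by the averaged bounds of Condition $(\mathbf{\Lambda})$ and Hypothesis $(\mathbf{\Pi}^4)$ is what the paper does. The two theorems are proved together, and the case distinction only enters in Propositions~\ref{LsuHLSter}, \ref{Umindneg} and Section~\ref{Sec: finauseLS}. There a GCD sum appears (e.g.\ the weight $(r_3,r_4)/\sqrt{r_3r_4}$ in Proposition~\ref{Umindneg}), and it is handled by the Dyer--Harman bound, Lemma~\ref{lem: Galsum}. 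That is where the fourth moments come from. For a plain convolution such as your $\sum_{n}|\sum_{mh=n}\lambda_\Pi(m)\lambda_h|^2/n$, the Cauchy--Schwarz inequality $|\sum_{mh=n}x_my_h|^2\le\tau(n)\sum_{mh=n}|x_m|^2|y_h|^2$ needs only second moments.

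The gap is in what you take the base proof to be. Theorem~\ref{thm: gl3, dual, strongspec} never uses $\lambda_\Pi(n)\ll n^{\epsilon}$; it is unconditional. Moreover, the places where individual information on $\lambda_\Pi$ is indispensable are not dyadic averages that an $\ell^4$ norm could replace. They are the analytic continuations of the Euler products produced by divisor switching: the factor in (\ref{CIS: extractEufac}), $\mathcal{K}^{(\mfq)}_{it}$ in (\ref{keydoubDS}), and $\mathcal{L}_{\mathbf{d,r,g}}$ in (\ref{triplecontDS}). These must be continued to $\re u_i>1/2$, $\re \mz>1$ so that the contours can be shifted. Only then does the off-diagonal become $L(u_1,\Pi\times\psi)L(u_2,\widetilde{\Pi}\times\overline{\psi})$ times Dirichlet polynomials in $H,K$, which is the only form the large sieve can handle. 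A naive expansion of the local factors leaves terms like $\lambda_\Pi(p^2)p^{-1-2\epsilon}$, which are not summable over $p$ when $\vartheta_3=5/14$. Hypothesis $(\mathbf{\Pi}^4)$ cannot repair this. Restricted to $n=p^2$ it only gives $\sum_{p\le Y}|\lambda_\Pi(p^2)|^4p^{-2}\ll Y^{\epsilon}$, and by H\"older that still allows $\sum_{p\sim Y}|\lambda_\Pi(p^2)|/p$ to be as large as $Y^{1/4}$. The missing ingredient is the exact Hecke-algebra factorisation of these local factors (Propositions~\ref{lem: ACKit}, \ref{finiteHecprop}, \ref{HecktwistRSnuden}), combined with Kim--Sarnak and Li's bound (\ref{Lisbound}). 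Nothing in your plan plays this role, and ``absorbing $\lambda_\Pi(p)^2$ at small primes via $(\mathbf{\Pi}^4)$'' does not address it.

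The off-diagonal mechanism you describe is also not what produces the saving. The switching step is divisor switching ($d\ell=|M\pm N|$), not Poisson summation. No power saving comes from bounding a dual sum by a sup of coefficients times a count. Instead, the congruence $M\equiv\mp N \ (\bmod\,\mnl)$, with $\mnl=rea\ell$ and $a\ell\preceq\frac{C}{Q}(TQ)^{3/2+\theta}$, is detected by \emph{multiplicative} characters. The variables $m,n,h,k$ are then separated via $g_1,\dots,g_4$ and the M\"obius variables $d_i,r_j$, and the hybrid large sieve is applied a \emph{second} time. Balancing the resulting $CT(TQ)^{3/2+\theta}$ against $TQ^2/C$ from $\mathcal{L}^{(\mathbf{r})}$ is what yields $(TQ)^{7/4+\theta/2}$.

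Your plan also omits the principal-character part $\mathcal{U}^{(\mathbf{0})}$, which is not small. First, $\mathcal{U}^{\mathbf{1}}$ must cancel against $\mathcal{L}^{(\mathbf{0})}(h,k)_{\mathbf{od}}$ (Proposition~\ref{prop: auxcanU1L0}). Second, $\mathcal{U}^{\mathbf{2}}(h,k)_{\circ}$ again needs the unconditional continuation of $\mathcal{K}^{(\mfq)}_{it}$ (Proposition~\ref{tersm}). Until these pieces are supplied, the assertion that the exponent $7/4+\theta/2$ survives is unsupported.
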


\begin{rem}\label{selfdualassu}
If\, $\Pi\in \mathcal{A}_{0}(\mathrm{G}_{3})$ is \textbf{self-dual}, then it is known from Jiang--L\"{u} \cite[Theorem 1.2]{JL17} that Hypothesis $(\mathbf{\Pi}^4)$, and hence Theorem \ref{generalL4version them}, holds \textbf{unconditionally}.    In any case, $(\mathbf{\Pi}^4)$ is a much weaker assumption than \textbf{GRC}. 
\end{rem}

For applications to simultaneous large values of $L$-functions (see, e.g., \cite{HL23, BELP25}), it is essential to have a mean value theorem for two automorphic representations $\Pi_{1}$ and $ \Pi_{2}$ that are not contragredient to one another. Let
\begin{align}\label{mom: twodistform}
\mathcal{M}_{\alpha, \, \beta}(h,k; \Pi_{1}, \Pi_{2}) \, := \, \sum_{\substack{(q, \, \mfq_{\Pi_{1}}\mfq_{\Pi_{2}})=1}} \,  & W\big(\frac{q}{Q}\big) \,
    \sumflat{\chi}{q} \,   \chi(h)\overline{\chi}(k) \nonumber\\
    &\hspace{-47pt}\cdot \, \int_{\mathbb{R}}    \eta(t)L(1/2  + \alpha+it, \, \Pi_{1} \times \chi)  L(1/2+ \beta-it, \, \Pi_{2}
    \times \overline{\chi}) \big(\frac{h}{k}\big)^{-it} \, dt, 
 \end{align}
 Define $\mathcal{D}^{\delta}_{\alpha,\, \beta}(h,k;\, \Pi_{1}, \Pi_{2})$ analogously to (\ref{diagwithcutof}) (see Section \ref{sect: diagterm}). We arrive at the most general form of our theorem.

\begin{thm}\label{thm: gl3, dualcase}
Suppose that  $\Pi_{1},  \Pi_{2} \in \mathcal{A}_{0}(\mathrm{G}_{3})$ satisfy $\mfq_{\Pi_{1}}=\mfq_{\Pi_{2}}$ and  \textbf{Hypothesis $(\mathbf{\Pi}^4)$}, and the coefficients $(\lambda_{h})$ satisfy \textbf{Condition $(\mathbf{\Lambda})$}. Then we have
\begin{align}\label{twoformasym}
    \mathcal{M}_{\alpha,\, \beta}(h,k;\, \Pi_{1}, \Pi_{2}) \, = \, \mathcal{D}^{0}_{\alpha,\, \beta}(h,k;\, \Pi_{1}, \Pi_{2}) \, + \, \epsilon(\Pi_{1})\epsilon(\Pi_{2}) \mathcal{D}_{-\beta, \, -\alpha}^{1}(h,k; \,\widetilde{\Pi_{2}}, \widetilde{\Pi_{1}}) \, + \, \mathcal{E}_{\alpha, \beta}(h,k; \, \Pi_{1}, \Pi_{2}),
\end{align}
where in the range (\ref{range: gl3hybrid}), the error term $\mathcal{E}_{\alpha, \beta}(h,k; \, \Pi_{1}, \Pi_{2})$ satisfies
\begin{align}
\Bigg|\, \mathop{\sum \sum}_{h, k \,\le \, (TQ)^{\theta}  } \frac{\lambda_{h}\overline{\lambda_{k}}}{\sqrt{hk}} \, \mathcal{E}_{\alpha, \beta}(h,k; \, \Pi_{1}, \Pi_{2})  \, \Bigg|\, \ll_{\epsilon} \, (\mfC(\Pi_{1})\mfC(\Pi_{2}))^{2}(TQ)^{7/4+\theta/2}\left(\mfC(\Pi_{1})\mfC(\Pi_{2})TQ\right)^{O(\epsilon)}.
\end{align}
\end{thm}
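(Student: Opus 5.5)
We follow the Asymptotic Large Sieve strategy of \cite{CIS13, CIS19}, made uniform in $T$ and free of any appeal to \textbf{GRC}.

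\textbf{Step 1: approximate functional equation.} Apply an approximate functional equation to the product $L(1/2+\alpha+it,\Pi_1\times\chi)L(1/2+\beta-it,\Pi_2\times\overline{\chi})$, using the cutoff $G(s)/s$. This writes $\mathcal{M}_{\alpha,\beta}(h,k;\Pi_1,\Pi_2)$ as the sum of two pieces:
\begin{itemize}
\item a ``first part'', a double sum over $m,n$ weighted by $\lambda_{\Pi_1}(m)\lambda_{\Pi_2}(n)\chi(mh)\overline{\chi}(nk)(mh/nk)^{-it}$;
\item a ``dual part'', carrying the root numbers $\epsilon(\Pi_1\times\chi)\epsilon(\Pi_2\times\overline{\chi})$ and the contragredients.
\end{itemize}
Because $\mfq_{\Pi_1}=\mfq_{\Pi_2}$ and $(q,\mfq_{\Pi_i})=1$, the product of the twisted root numbers equals $\epsilon(\Pi_1)\epsilon(\Pi_2)$ times a factor of the form $\chi(\cdot)^{\pm}$ that cancels. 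This gives the factor $\epsilon(\Pi_1)\epsilon(\Pi_2)$ and the roles $\widetilde{\Pi_2},\widetilde{\Pi_1}$ in the second main term.

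\textbf{Step 2: character sum and $t$-integral.} Detect the even primitive characters by
\[
\sumflat{\chi}{q}\chi(a)\overline{\chi}(b)=\frac12\sum_{dc=q}\mu(c)\phi(d)\,\mathbf 1_{a\equiv \pm b\,(d)},
\qquad (ab,q)=1.
\]
Integrating the $t$-integral against $\eta$ forces $|\log(mh/nk)|\ll \Delta^{-1}(TQ)^{\epsilon}$.

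\textbf{Step 3: diagonal terms.} The diagonal $mh=nk$ produces $\mathcal{W}^{0}$ and hence $\mathcal{D}^0$; the dual part produces $\mathcal{D}^1$ in the same way. Convergence of the resulting Rankin--Selberg-type series on $\re s>-1/14$ uses only the Kim--Sarnak bound $\theta_3\le 5/14$. We expect this step to be routine.

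\textbf{Step 4: off-diagonal split.} In the off-diagonal $mh\equiv\pm nk\pmod d$ with $mh\ne nk$, split according to the size of $d$.
\begin{itemize}
\item \emph{Small $c$ (large $d$).} Write $mh\mp nk=d\ell$ with $\ell\ne0$ and $|\ell|\ll (TQ)^{\theta}(TQ)^{3/2+\epsilon}/(Q\,\Delta)$. Then switch the role of the modulus, as in \cite{CIS13}: trade the congruence modulo $d$ for one modulo $\ell$. Apply Poisson summation, or Voronoi summation for $\mathrm{GL}_3$, in $m$ modulo $\ell k$. The resulting dual sums should exhibit the secondary main term that cancels against the polar contributions of the diagonal.
\item \emph{Large $c$.} Use the large sieve for Dirichlet characters (plus the hybrid large sieve in $t$). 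Apply it to the bilinear form in the coefficients $\lambda_h\lambda_{\Pi}(m)$.
\end{itemize}
Here Hypothesis $(\mathbf{\Pi}^4)$ and the $L^4$ part of Condition $(\mathbf{\Lambda})$ replace pointwise bounds. Cauchy--Schwarz then needs only $\sum|\lambda_{\Pi}(m)\lambda_h|^2$-type norms over products $mh$. These norms are controlled by Hölder's inequality together with the fourth-moment bounds, after a Hecke-relation factorization of $\lambda_{\Pi}(mh)$ over common prime factors; the paper's Mathematica computations presumably handle exactly this factorization.

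\textbf{Step 5: main obstacle.} The main obstacle will be the off-diagonal treatment uniformly in $T\le Q^{1/3-\epsilon}$ without \textbf{GRC}. After Voronoi summation, the dual coefficients are $\mathrm{GL}_3$ Hecke eigenvalues twisted by hyper-Kloosterman-type sums. To bound them, I would first try Cauchy--Schwarz against the $L^4$ norms, keeping the $h$-sum outside so as to gain $(TQ)^{\theta/2}$ rather than $(TQ)^{\theta}$. Balancing the small-$c$ and large-$c$ ranges should yield $(TQ)^{7/4+\theta/2}$. Tracking the conductor dependence through the Voronoi formula and the approximate functional equation gives $(\mfC(\Pi_1)\mfC(\Pi_2))^2$.
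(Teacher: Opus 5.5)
\textbf{The gap.} The genuine gap is the small-$c$ (large-$d$) off-diagonal range in Step 4, which is where the theorem is actually proved.

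\emph{What you propose.} You apply Poisson or $\mathrm{GL}_3$ Voronoi summation in $m$ modulo $\ell k$, predict a secondary main term cancelling polar terms of the diagonal, and hope to bound the hyper-Kloosterman-twisted dual sums by Cauchy--Schwarz against the $L^4$ norms.

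\emph{Why it fails.} Nothing supports this. Here $\ell$ can be as large as $\frac{C}{Q}(TQ)^{3/2+\theta}$. Since $d\asymp Q/c$, your bound for $\ell$ loses the factor $c$, and that factor is exactly what makes $C$ a balancing parameter; also, no $\Delta$-saving is available for the sign $mh+nk$. The dual sums then have length roughly $(\ell k T)^3/M$. They amount to $\mathrm{GL}_3$ shifted-convolution problems, which is precisely what the Asymptotic Large Sieve is built to avoid, and no estimate of the required strength is known, with or without \textbf{GRC}.

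The predicted main term is also wrong. For cuspidal $\Pi_1,\Pi_2$ there is no off-diagonal main term: the remainder $\mathcal{U}^{\mathbf 2}(h,k)_\circ$ is negligible. The only cancellation in the argument is between two principal-character pieces: $\mathcal{U}^{\mathbf 1}$ from the switched sum, and the off-diagonal piece $\mathcal{L}^{(\mathbf 0)}(h,k)_{\mathbf{od}}$ from the \emph{large}-$c$ range. It comes from $\sum_{c\mid q,\,c>C}\mu(c)=-\sum_{c\mid q,\,c\le C}\mu(c)$ and leaves $O(CT(TQ)^{3/2+\theta})$. As written, you never obtain any bound for the small-$c$ range, so the exponent $7/4+\theta/2$ is asserted rather than derived.

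\textbf{The missing idea.} Expand the switched congruence $M\equiv\mp N \pmod{\mnl}$, with $\mnl=rea\ell$, into multiplicative characters modulo $\mnl$, and keep working with $L$-functions.

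The principal character is evaluated exactly (Lemma \ref{lem: U0anapar}). This gives the cancellation above, plus a term controlled by an unconditional holomorphic continuation of the double Dirichlet series $\mathcal{K}^{(\mfq)}_{it}$.

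For $\psi\neq\psi_0$, the argument runs as follows.
\begin{enumerate}
\item Drop the condition $mh\neq nk$, at a cost of $CT(TQ)^{3/2+\theta}$ (Proposition \ref{Umindneg}); this is where smallness of $T$ is used.
\item Separate $M,N,H,K$ by Mellin and successive M\"obius inversions.
\item Compute the local Euler factors exactly. This, not a factorization of $\lambda_{\Pi}(mh)$, is where the Hecke-algebra/\texttt{Mathematica} work enters. It recovers $L(u_1,\Pi_1\times\psi)L(u_2,\Pi_2\times\overline{\psi})L(\mz,\Pi_1\otimes\Pi_2)$ times factors bounded using only Kim--Sarnak and Li.
\item Apply the hybrid large sieve a second time, over characters of conductor $\ll\frac{C}{Q}(TQ)^{3/2+\theta}$. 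This yields $CT(TQ)^{3/2+\theta}$.
\end{enumerate}
Choosing $C=Q^{1/4-\theta/2}/T^{3/4+\theta/2}$ balances this against $TQ^2/C$ from the large-$c$ range, giving $(TQ)^{7/4+\theta/2}$. The requirement $C\ge 1$ is what caps $T$, at $Q^{1/3}$ when $\theta=0$.

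\textbf{A smaller point.} In your large-$c$ step, the relevant coefficients are the convolution $\sum_{hm=n}\lambda_h\lambda_{\Pi}(m)$. Its $\ell^2$ norm is handled by a GCD-sum bound together with the $L^4$ hypotheses, and no $\lambda_{\Pi}(mh)$ occurs.
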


\begin{rem}
    If $\Pi_{1}, \Pi_{2}\in \mathcal{A}_{0}(\mathrm{G}_{3})$ are self-dual, they are of orthogonal type and $\epsilon(\Pi_{1})=\epsilon(\Pi_{2})=1$. 
\end{rem}       
 
Let us conclude this section by stating the corresponding result for the case of $\mathrm{PGL}_{2}$.

\begin{thm}\label{thm: GL2general}
    Suppose that $\Pi_{1},  \Pi_{2} \in \mathcal{A}_{0}(\mathrm{G}_{2})$ satisfy $\mfq_{\Pi_{1}}=\mfq_{\Pi_{2}}$,  and the coefficients $(\lambda_{h})$ satisfy \textbf{Condition $(\mathbf{\Lambda})$}. Then  (\ref{twoformasym}) holds  as $Q\to \infty$ and for $Q^{\epsilon} \le  T \le  Q^{1-\epsilon},$ and the error term satisfies 
\begin{align}
\Bigg|\, \mathop{\sum \sum}_{h, k \,\le \, (TQ)^{\theta}  } \frac{\lambda_{h}\overline{\lambda_{k}}}{\sqrt{hk}} \, \mathcal{E}_{\alpha, \beta}(h,k; \, \Pi_{1}, \Pi_{2}) \, \Bigg|\, \ll_{\epsilon} \, (\mfC(\Pi_{1})\mfC(\Pi_{2}))^{2}(TQ)^{3/2+\theta/2}\left(\mfC(\Pi_{1})\mfC(\Pi_{2})TQ\right)^{O(\epsilon)}.
\end{align}
\end{thm}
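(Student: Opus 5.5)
We will prove Theorem \ref{thm: GL2general} by running the same Asymptotic Large Sieve scheme as for $\mathrm{G}_{3}$, with degree $r=2$ throughout. The savings are better because the dual sums are shorter.

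\textbf{Step 1: approximate functional equation and the diagonal.} For each primitive even $\chi\ (\bmod\, q)$ with $(q,\mfq_{\Pi_1})=1$, write $L(1/2+\alpha+it,\Pi_1\times\chi)L(1/2+\beta-it,\Pi_2\times\overline\chi)$ by a symmetric approximate functional equation with weight $G(s)$. The twisted conductor is $\mfq_{\Pi_1}q^2$ at each place, and the root number of the product is $\epsilon(\Pi_1)\epsilon(\Pi_2)\chi(\mfq)\overline\chi(\mfq)\tau(\chi)^2\tau(\overline\chi)^2/q^2$. Since $\mfq_{\Pi_1}=\mfq_{\Pi_2}$, this equals $\epsilon(\Pi_1)\epsilon(\Pi_2)$. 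This yields two sums of length $mn\approx (TQ)^{2}$, the second carrying the gamma ratio and $\widetilde{\Pi_2},\widetilde{\Pi_1}$.

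Next, detect primitivity by M\"obius inversion, $\sum^{*}_{\chi} = \sum_{q=dd'}\mu(d)\phi(d')\,1_{m h\equiv nk\ (d')}$ (restricted to even characters). The terms with $mh=nk$, integrated in $t$ and summed over $q$, are shifted by the contour to $\re s=\epsilon$ and reproduce $\mathcal{D}^0$ and $\epsilon(\Pi_1)\epsilon(\Pi_2)\mathcal{D}^1$. The holomorphy of $\mathcal{L}^{(q)}$ on $\re s>-1/14$ with the pole at $-(\alpha+\beta)/2$ cancelled by $G$ controls the shift, exactly as in the diagonal analysis of Section \ref{sect: diagterm}.

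\textbf{Step 2: off-diagonal and the large sieve split.} For $mh\ne nk$ with $mh\equiv nk\ (\bmod\, d')$, split the $d$-sum at a parameter. When $d'$ is small, use the $t$-integral: it forces $|\log(mh/nk)|\ll T^{-1+\epsilon}$, so $mh-nk=\ell d'$ with $|\ell|$ small. When $d'$ is large, switch to the complementary modulus $\ell=(mh-nk)/d'$, which is small. Then apply Poisson/reciprocity in $q$ to turn the condition into additive twists. The resulting sums are of the shape
\[
\sum_{mh-nk=\ell d'}\lambda_{\Pi_1}(m)\lambda_{\Pi_2}(n)V(\cdot).
\]
They are evaluated by Voronoi summation for $\mathrm{GL}_2$ in $m$ (or $n$). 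This extracts a secondary main term, which must combine with the $\mathcal{D}^1$ piece as in \cite{CIS19}, and leaves an error of dual length $\approx (\ell d')^2/M$.

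\textbf{Step 3: bounding the errors.} The error is bounded without GRC. We never bound $|\lambda_{\Pi}(n)|$ pointwise. Instead, apply Cauchy--Schwarz and use the Rankin--Selberg bound $\sum_{n\le X}|\lambda_\Pi(n)|^2\ll (\mfC(\Pi)X)^{1+\epsilon}$ together with Condition $(\mathbf\Lambda)$ (the $m=1,2$ moments) for the $h,k$ sums. For $\mathrm{GL}_2$ no analogue of Hypothesis $(\mathbf{\Pi}^4)$ is needed: the fourth moment of $\lambda_\Pi$ is known unconditionally via the $\mathrm{GL}_3$ symmetric square (Gelbart--Jacquet) and Rankin--Selberg on $\mathrm{GL}_3\times\mathrm{GL}_3$. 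Also, the Kim--Sarnak bound $\theta=7/64$ suffices wherever Hecke relations $\lambda(p^2)=\lambda(p)^2-1$ are used to handle non-squarefree $h,k$ interacting with $m,n$.

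Balancing the split parameter gives $(TQ)^{3/2+\theta/2}$. The smaller degree halves the Voronoi dual length relative to $\mathrm{GL}_3$ (where $3/4$ excess appeared), and this is what allows the range $T\le Q^{1-\epsilon}$: the $t$-integral still beats the conductor $q^2T^2$ there. The conductor dependence $(\mfC(\Pi_1)\mfC(\Pi_2))^2$ comes from the Voronoi and Rankin--Selberg constants.

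\textbf{Main obstacle.} The main obstacle is Step 3 in the range of large $h,k$. Condition $(\mathbf\Lambda)$ only gives $\lambda_h\ll h^{1/2}$ pointwise, so the bilinear large-sieve estimate must be arranged so that $h,k$ enter only through $\ell^2$ and $\ell^4$ norms. I would first try a multiplicative large sieve over $\chi$ applied to $\sum_h\lambda_h\chi(h)h^{-1/2-it}$ combined with the hybrid large sieve in $t$, after isolating $\gcd$ structure between $h$ and $m$.
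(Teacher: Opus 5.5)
\textbf{Step 2 is where the proposal fails.}

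First, for cuspidal $\Pi_1,\Pi_2$ the off-diagonal has no main term. $\mathcal{D}^{0}$ and $\epsilon(\Pi_1)\epsilon(\Pi_2)\mathcal{D}^{1}$ are exactly the diagonal contributions ($mh=nk$) of the two pieces of the approximate functional equation, as your own Step 1 says. A further Voronoi ``secondary main term'' that combines with $\mathcal{D}^1$ would therefore double count. The off-diagonal main terms of \cite{CIS19} belong to the Eisenstein case.

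What does happen is different. After divisor switching, the principal-character part of the complementary-modulus sum is large. It must cancel against the principal-character part of the small-modulus ($c>C$) sum, leaving $O(CT(TQ)^{1+\theta})$ (Proposition~\ref{prop: auxcanU1L0}). Your plan has no counterpart of this cancellation.

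Second, the Voronoi/additive-twist route is not carried through, and it is the wrong tool here. Detecting $mh\equiv nk$ additively and applying $\mathrm{GL}_2$ Voronoi in $m$ puts $\overline{h}$ (and $\overline{k}$) into Kloosterman-type dual frequencies. The arbitrary coefficients $\lambda_h,\lambda_k$ then no longer enter only through $\ell^2$ or $\ell^4$ norms. Recovering an exponent like $(TQ)^{3/2+\theta/2}$ would need bilinear Kloosterman-sum bounds that are not available at this strength; this is the shifted-convolution problem the method is built to avoid.

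As a result, both the final exponent and the range $T\le Q^{1-\epsilon}$ are asserted rather than derived, and the obstacle you name at the end is left open. Also, in your small-modulus case the complementary divisor $(mh-nk)/d'$ is large, not small. So the $t$-integral does not reduce that case to short shifts; it needs a large sieve.

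\textbf{The missing idea is to stay multiplicative throughout.} Write $q=cd$ and split at $c\le C$.

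For $c>C$, re-expand $mh\equiv \pm nk\ (\bmod\, d)$ in characters modulo $d$. Then apply the hybrid large sieve to $L(\tfrac12+s,\Pi\times\psi)\sum_h\lambda_h\psi(h)h^{-1/2-s}$. The resulting coefficient norm is a GCD sum, $\sum_{m,h}\frac{|\lambda_\Pi(m)|^2|\lambda_h|^2}{\sqrt{mh}}\frac{(m,h)}{\sqrt{mh}}$. Bounding it requires the Dyer--Harman estimate (Lemma~\ref{lem: Galsum}) together with the fourth-moment inputs. This gives $TQ^2/C$ (Proposition~\ref{LsuHLSter}).

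For $c\le C$, proceed as follows:
\begin{enumerate}
\item Divisor-switch to $\ell\ll\frac{C}{Q}(TQ)^{1+\theta}$ and expand the congruence modulo $rea\ell$ in multiplicative characters.
\item Use the $q$-average to drop the condition $mh\neq nk$.
\item Separate $m,n,h,k$ by Mellin and M\"obius inversion.
\item Prove unconditionally, using only Kim--Sarnak and Li's bound instead of GRC, that the resulting Euler products are holomorphic and bounded on $\re u_i>1/2$, $\re\mathfrak{z}>1$.
\item Shift the contours to $\re=\epsilon$ and apply the hybrid large sieve a second time, over characters modulo $rea\ell$.
\end{enumerate}
This gives $CT(TQ)^{1+\theta}$ (Proposition~\ref{cleanupUbd}).

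Balancing with $C=Q^{(1-\theta)/2}/T^{(1+\theta)/2}$ yields $(TQ)^{3/2+\theta/2}$. The admissible range of $T$ comes from requiring $C\ge 1$, not from comparing the $t$-integral with the conductor $q^2T^2$. No Voronoi summation or shifted convolution sums are used.
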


\begin{rem}
 In Theorem \ref{thm: GL2general}, we do not impose Hypothesis $(\mathbf{\Pi}^4)$ because it is a theorem of Moreno and Shahidi \cite{MS83}. Also,  $\Pi\in \mathcal{A}_{0}(\mathrm{G}_{2})$ must be self-dual and hence $\epsilon(\Pi) = \pm 1$. 
\end{rem}


\subsection{Levinson's method}\label{sect: introLevin}

Let $\mcP(x)$, $\mcQ(y)$ be two real polynomials such that
\begin{align}
    \mcP(0) \, = \, 0,   \hspace{15pt} \mcP(1) \, = \, 1,  \hspace{15pt}  \mcQ(0) \, = \, 1 \hspace{15pt} \text{and} \hspace{15pt} \mcQ'(y) \, =\,  \mcQ'(1-y). \nonumber
\end{align}
Let $R>0$ be a constant, $L:=\log T$, $X:=T^{\theta}$, $ a  := 1/2-R/L$, and $\mu$ be the M\"obius function. The Conrey--Levinson mollifier refers to 
\begin{align}\label{zetamoll}
  M(s) \, = \,  M(s,\mcP) \, := \, \sum_{h\le X}\, \frac{\mu(h)}{h^{s+1/2-a}} \mcP\Big( \frac{\log X/h}{\log X}\Big).
\end{align}
 According to \cite[eq. (39)]{Con89}, if $\theta>0$ is a constant such that the asymptotic formula
\begin{align}\label{Conmollif}
    \frac{1}{T} \int_{0}^{T} \, \Big|\mcQ\Big(-\frac{1}{L} \frac{d}{ds}\Big)(\zeta M)(a+it) \Big|^2 \, dt \, \sim \,  c_{\theta}(\mcP,\mcQ,R) \hspace{20pt} (T \, \to\, \infty)
\end{align}
holds, and if $c_{\theta}(\mcP,\mcQ, R)>0$, then the proportion of zeros of $\zeta(s)$ on $\sigma=1/2$ \emph{is at least}
\begin{align}\label{kappaprop}
    1- \ \frac{\log c_{\theta}(\mcP,\mcQ,R)}{R}.
\end{align}
 By \cite[Theorem 2]{Con89} (see also \cite{Yo10}), the constant $c_{\theta}(\mcP,\mcQ,R)$ is determined to be
\begin{align}\label{Conreyformconst}
 1+\frac1 \theta \int_0^1\int_0^1 \big(w(y)\mcP'(x)+\theta w'(y) \mcP(x)\big)^2 ~dx ~dy  \hspace{20pt} (w(y) \, := \, e^{Ry}\mcQ(y)).
\end{align}
In the literature, it is customary to write $\kappa(\theta)$ as the \emph{supremum} of (\ref{kappaprop}) over $\mcP, \mcQ, R$.

The appendix of \cite{CIS13} explains how Levinson's method may be implemented for a very general class of $L$-functions and extended to families of $L$-functions. For our applications, let $\Pi\in \mathcal{A}_{0}(\mathrm{G}_{r})$, $L := \log TQ$, $X:=(TQ)^{\theta}$, and $a=1/2-R/L$. Our mollifier is chosen as
\begin{align}\label{ourexamollif}
   M(s, \, \Pi\times \chi)  \, := \ \sum_{\substack{h\le X}}\frac{\mu_{\Pi}(h)\chi(h)}{h^{s+1/2-a}} \mcP\Big( \frac{\log X/h}{\log X}\Big), 
\end{align}
where the coefficients $\mu_{\Pi}$ are defined by
\begin{align}
    \frac{1}{L(s, \Pi)} \ = \  \sum_{h=1}^{\infty} \, \frac{\mu_{\Pi}(h)}{h^s}.\nonumber
\end{align}
In particular, for $\Pi\in \mathcal{A}_{0}(\mathrm{G}_{3})$, the values of $\mu_{\Pi}$ on prime powers (with $p\nmid \mfq_{\Pi}$) are given by: 
\begin{align}\label{Ex: gl3mollif}
    \mu_{\Pi}(1) =  1, \hspace{5pt}  \mu_{\Pi}(p) \, = \,   -\lambda_{\Pi}(p), \hspace{5pt} \mu_{\Pi}(p^2)\, = \, \overline{\lambda_{\Pi}}(p), \hspace{5pt} \mu_{\Pi}(p^3) \, = \,  -1, \hspace{5pt} \mu_{\Pi}(p^m) \, = \, 0    \hspace{5pt} (m\ge  4).
\end{align}
The linear combination of derivatives of $L$-functions takes the form
\begin{align}
    \mcQ\Big(-\frac{1}{rL} \frac{d}{ds}\Big)L(a+it, \Pi \times \chi).\nonumber
\end{align}

Denote by $N(T;\, \Pi\times \chi)$ the number of zeros $\rho=\beta+i\gamma$ of $L(s, \Pi \times \chi)$ with $0\le \beta\le 1$ and $|\gamma|\le T$ (counted with multiplicities), and $N_{0}(T;\, \Pi\times \chi)$ be the number of such zeros with $\beta=1/2$. We now describe a more precise and general form of Corollary \ref{maincor: pergl3}, which is an application of our mean value theorems described in Section \ref{sect: MVTtheoemre}.

\begin{cor}\label{cor: fullresultpgl3per}
    Let $\Pi \in \mathcal{A}_{0}(\mathrm{G}_{r})$ for $r\in \{2,3\}$. If $r=3$, we also assume that $\Pi$ is self-dual. Let $\kappa(\cdot)$ be the constant defined via (\ref{kappaprop}). Let $\varpi\in (0,1)$ if $r=2$, and $\varpi\in (0,1/3)$ if $r=3$. Define
    \begin{align}
        \varkappa_{r}(\varpi) \ := \  \begin{cases}
        \kappa\big(\frac{1-\varpi}{2(1+\varpi)}\big) \hspace{15pt} \text{if} \hspace{10pt}  r\, = \, 2, \\[5pt]
            \kappa\big(\frac{1-3\varpi}{6(1-\varpi)}\big)  \hspace{15pt} \text{if} \hspace{10pt}  r\, = \, 3.
        \end{cases}
    \end{align} 
    The following holds \emph{unconditionally} for $Q$ sufficiently large: 
    \begin{align}\label{avergproporfami}
    \hspace{5pt} \sum_{\substack{Q<q<2Q\\\substack{(q, \, \mfq_{\Pi})=1}}} \     \frac{2}{\phi^{*}(q)} \  \sumflat{\chi}{q} N_{0}(Q^{\varpi};\,  \Pi \times \chi) \, > \,   \varkappa_{r}(\varpi)  \sum_{\substack{Q<q<2Q\\\substack{(q, \, \mfq_{\Pi})=1}}} \ \frac{2}{\phi^{*}(q)} \  \sumflat{\chi}{q} \, N(Q^{\varpi};\,  \Pi \times \chi).
    \end{align}
     Under Hypothesis $(\mathbf{\Pi}^4)$, the same result holds for all $\Pi \in \mathcal{A}_{0}(\mathrm{G}_{3})$.
\end{cor}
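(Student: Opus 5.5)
We deduce Corollary \ref{cor: fullresultpgl3per} from the mean value theorems (Theorems \ref{generalL4version them} and \ref{thm: GL2general}) by running Levinson's method in the family form of the appendix of \cite{CIS13}.

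\emph{Zero-counting reduction.} For each even primitive $\chi \pmod q$ with $Q<q<2Q$, set $T=Q^{\varpi}$, $L=\log TQ$, $a=1/2-R/L$, and
\[
V(s,\chi):=\mcQ\Big(-\tfrac{1}{rL}\tfrac{d}{ds}\Big)L(s,\Pi\times\chi).
\]
The functional equation of $L(s,\Pi\times\chi)$ has analytic conductor $\asymp \mfq_\Pi q^r (1+|t|)^r$, so $\frac{1}{r}\log$ of it is $\asymp L$ in our range. Together with the symmetry $\mcQ'(y)=\mcQ'(1-y)$, the standard Levinson argument then shows that zeros of $V$ to the left of $\sigma=1/2$ correspond to zeros of $L(s,\Pi\times\chi)$ off the line. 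Littlewood's lemma applied to $VM$ on the rectangle $[a,3]\times[T/2,T]$, followed by Jensen's (concavity of $\log$), gives
\[
N(T)-N_0(T)\le \frac{L}{R}\cdot\frac{N(T)}{L}\Big(\log\frac{1}{\text{(measure)}}\int |VM(a+it)|^2\,dt\Big)+o(N(T)),
\]
averaged over the family. Hence the proportion is at least $1-\frac{1}{R}\log c$, where $c$ is the normalized family mean square of $VM$. The boundary terms on the horizontal sides and on $\sigma=3$ are handled in the usual way via convexity bounds. Here $N(T;\Pi\times\chi)\sim \frac{T}{\pi}\log(\mfq_\Pi q^r T^r)$ (from \cite[Theorem 5.8]{IK04}), which is uniform since $T\ge Q^\epsilon$. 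Passing from $[T/2,T]$ to $[-T,T]$ is done by dyadic decomposition and symmetry $t\mapsto -t$ with $\overline\chi$. This part is routine.

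\emph{Mean square.} Expand $|VM(a+it)|^2$. Writing $\mcQ$ via derivatives in the shifts, it becomes a differential operator in $\alpha,\beta$ applied to $\mathcal{M}_{\alpha,\beta}(h,k;\Pi)$, summed against $\lambda_h=\mu_\Pi(h)h^{-R/L}\mcP(\cdot)$. Then apply Theorem \ref{generalL4version them} (or Theorem \ref{thm: GL2general}), using holomorphy in the shifts and Cauchy's formula on circles of radius $\asymp L^{-1}$ to transfer the error bound to derivatives. This requires checking \textbf{Condition $(\mathbf{\Lambda})$} for $\mu_\Pi$:
\begin{itemize}
\item The bound $\lambda_h\ll h^{1/2}$ follows from Kim--Sarnak type bounds toward \textbf{GRC} and the finite Euler factor (\ref{Ex: gl3mollif}).
\item The $2m$-th moment bounds follow from Hypothesis $(\mathbf{\Pi}^4)$ (proved by \cite{JL17} for self-dual $\Pi$, and by \cite{MS83} for $r=2$), via multiplicativity and Rankin's trick.
\end{itemize}
The error is $O((TQ)^{7/4+\theta/2+\epsilon})$ for $r=3$, against a main term $\asymp TQ^2$. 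With $T=Q^{\varpi}$ this is negligible iff $\theta<\frac{1-3\varpi}{2(1-\varpi)}$. For $r=2$ the condition is $(TQ)^{3/2+\theta/2}=o(TQ^2)$, i.e. $\theta<\frac{1-\varpi}{1+\varpi}$.

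\emph{Main terms.} These come from $\mathcal{D}^0$ and $\mathcal{D}^1$. Evaluate $\mathcal{L}_{\alpha,\beta;h,k}$ near $s=0$, where Rankin--Selberg $\mathcal{L}(1+\alpha+\beta,\Pi\otimes\widetilde\Pi)$ has a simple pole with nonvanishing residue (Corollary \ref{Hlocalnonvan}). The sums over $h,k$ of $\mu_\Pi(h)\overline{\mu_\Pi(k)}$ weighted by $\mathcal{L}_{\alpha,\beta;h,k}$ collapse, via the Dirichlet series identity with $1/L(s,\Pi)$ and contour integration as in \cite{Yo10}, to Conrey's formula (\ref{Conreyformconst}). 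The mollifier length is measured in units of $\log(q^rT^r)\approx rL$ rather than $L$, so the effective length is $\theta/r$. This yields $\kappa(\theta/r)$ and hence exactly the stated $\varkappa_r(\varpi)$: $\frac{1-3\varpi}{6(1-\varpi)}$ for $r=3$, and $\frac{1-\varpi}{2(1+\varpi)}$ for $r=2$.

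The main obstacle is this main-term computation. It requires showing the arithmetic Euler factors ($H_p$, $\mathcal{B}_p$, the coprimality to $hk$) cancel to leading order against the mollifier uniformly without \textbf{GRC}, so that the constant is exactly Conrey's. I would first reduce to $\mathcal{L}_{\alpha,\beta;h,k}=\mathcal{L}(\ldots)\cdot$ (a multiplicative correction in $h,k$ that is $1+O(p^{-1/2+\theta_r})$ on primes), and then show the correction contributes only lower-order terms in $L$.
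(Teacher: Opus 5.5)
Your architecture is the paper's: Levinson in family form (the paper simply invokes \cite[Corollary A]{CIS13}), the mean value theorem applied to $\lambda_h=\mu_\Pi(h)\mcP(\cdot)$ after checking Condition $(\mathbf{\Lambda})$ via Hypothesis $(\mathbf{\Pi}^4)$ and Rankin's trick, and a Young-type main term giving $c_{\theta/r}(\mcP,\mcQ,rR)$. Two steps need repair.

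\emph{The admissible length for $r=3$.} Your exponent count is wrong. With $T=Q^{\varpi}$ one has $(TQ)^{7/4+\theta/2}/(TQ^2)=Q^{(1+\varpi)\theta/2-(1-3\varpi)/4}$, so the error is negligible iff $\theta<\frac{1-3\varpi}{2(1+\varpi)}$. This is the same algebra you did correctly for $r=2$. It also matches the constraint $T<Q^{(1-2\theta)/(3+2\theta)}$ (equivalently $C\ge 1$) in Corollary \ref{optimUrLr}; with $\theta=\frac{1-3\varpi}{2(1-\varpi)}$ that constraint fails for every $\varpi\in(0,1/3)$. The paper's proof makes the identical choice $\theta=\frac{1-3\varpi}{2(1-\varpi)}-O(\epsilon)$, and the same denominator appears in the remark after Theorem \ref{thm: gl3, dual, strongspec}. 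So you reproduced the stated constant, but neither argument supports it. What the method actually gives is $\varkappa_3(\varpi)=\kappa\big(\frac{1-3\varpi}{6(1+\varpi)}\big)$. The case $\varpi=\epsilon$, and hence Corollary \ref{maincor: pergl3}, is unaffected.

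\emph{The main term.} You rightly call this the crux, but your plan points the wrong way. You normalize by the naive series $\mathcal{L}(\cdot,\Pi\otimes\widetilde{\Pi})$ and hope the $(h,k)$-dependence is a $1+o(1)$ correction. It is not. For $p\mid k$ with $p^2\nmid k$ and $p\nmid h$, the local factor of $\mathcal{L}_{\alpha,\beta;h,k}$ relative to $\mathcal{L}$ is essentially $\lambda_\Pi(p)p^{-1/2-\alpha}$. That factor is exactly what pairs with $\overline{\mu_\Pi}(k)$ to produce the $1/L(\cdot,\Pi\otimes\widetilde{\Pi})$-type factors.

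The paper also explains why the naive normalization fails for $\mathrm{PGL}_3$. The local factors $\mathcal{L}_p(\mbs,\Pi\otimes\widetilde{\Pi})$ can vanish in $\re\mbs>1-\delta$ for small $p$, and the arithmetic factor built from them converges in the needed region only under unproven progress toward \textbf{GRC}.

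The missing idea is to factor the whole four-variable series over $mh=nk$, with coefficients $\lambda_\Pi(m)\overline{\lambda_\Pi}(n)\mu_\Pi(h)\overline{\mu_\Pi}(k)$, as in (\ref{intro: ratio}), against the genuine Rankin--Selberg local factors $L_p(\cdot,\Pi\otimes\widetilde{\Pi})$. Two facts then need proof:
\begin{itemize}
\item $\prod_p\mathbf{A}_p$ converges absolutely for all shifts of real part $>-\delta_0$, unconditionally. This is Proposition \ref{compratioprop}, which uses only Kim--Sarnak and Li's bound.
\item $\mathbf{A}(0,0,0,0)=1$, which follows from $\sum_{mh=r}\lambda_\Pi(m)\mu_\Pi(h)=\mathbf{1}_{r=1}$.
\end{itemize}
Young's contour argument then goes through using the standard zero-free region and logarithmic lower bound for $L(s,\Pi\otimes\widetilde{\Pi})$ near $\re s=1$; your sketch omits this input too. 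Only with all of these does the constant come out as exactly Conrey's $c_{\theta/r}(\mcP,\mcQ,rR)$.
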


In \cite{ConShortMollif}, we showed that $\kappa(\theta)>0$ regardless how small $\theta$ is. More quantitatively, we have
  \begin{align}\label{ourproportionlev}
        \varkappa_{r}(\varpi) \ > \  \begin{cases}
        \frac{1-\varpi}{3(1+\varpi)} \hspace{15pt} \text{if} \hspace{10pt}  r\, = \, 2, \\[5pt]
            \frac{1-3\varpi}{9(1-\varpi)}  \hspace{15pt} \text{if} \hspace{10pt}  r\, = \, 3.
        \end{cases}
    \end{align}
 Taking $\varpi=\epsilon$, we obtain the proportions $1/9-O(\epsilon)$ and $1/3-O(\epsilon)$ for $r=3$ and $r=2$ described in Section \ref{sect: introzeros}.  Moreover, for $\varpi<1/3$ when $r=3$, and for $\varpi< 1$ when $r=2$, the proportion
\begin{align*}
    N_{0}(Q^{\varpi};\,  \Pi \times \chi)/N(Q^{\varpi};\,  \Pi \times \chi)
\end{align*}
is \emph{positive} on average for the family in \eqref{avergproporfami}. 

In fact, our mean value theorems also yield unconditional positive proportions of \emph{simple zeros} on the critical line for the same family of $L$-functions. 

\begin{cor}\label{cor: simplpergl2}
 Let $\Pi \in \mathcal{A}_{0}(\mathrm{G}_{2})$. As $Q\to \infty$,  at least $1/3-O(\epsilon)$ of the zeros $\rho_{\chi}=\beta_{\chi}+i\gamma_{\chi}$ with $0\le \beta_{\chi}\le 1$ and $|\gamma_{\chi}|\le Q^{\epsilon}$ of the $L$-functions $L(s, \Pi\times \chi)$ lie on the critical line and are simple, where $\chi$ ranges over the primitive Dirichlet characters $\chi\, (\bmod\, q)$ with $(q, \mfq_{\Pi})=1$ and $q\le Q$. 
 
 Moreover, for the same family of $L$-functions, a positive proportion of their zeros in the rectangle $0\le \beta\le 1$ and $|\gamma|\le Q^{1/2-\epsilon}$ lie on the critical line and are simple. 
\end{cor}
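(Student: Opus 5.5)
The plan is to run Levinson's method, in the Conrey--Iwaniec--Soundararajan family form of the appendix of \cite{CIS13}, using the mean value theorem of Theorem \ref{thm: GL2general} for $\Pi_{1}=\Pi$, $\Pi_{2}=\widetilde{\Pi}=\Pi$.

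\textbf{Simple zeros for $|\gamma|\le Q^{\epsilon}$.} Take $T=Q^{\epsilon}$, $L=\log TQ$, $X=(TQ)^{\theta}$ with $\theta<1/2$, and $a=1/2-R/L$. Use the mollifier (\ref{ourexamollif}) with $\mu_{\Pi}$ built from $1/L(s,\Pi)$; here $\mu_{\Pi}(p^m)=0$ for $m\ge 3$. By the Kim--Sarnak bound together with Rankin--Selberg and the Moreno--Shahidi fourth moment, these coefficients satisfy Condition $(\mathbf{\Lambda})$.

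Levinson's inequality, averaged over the family as in \cite[Appendix]{CIS13}, is the following. By a Littlewood lemma argument and convexity of $\log$, the proportion of zeros of $\mathcal{Q}(-\frac{1}{2L}\frac{d}{ds})L(s,\Pi\times\chi)$ to the right of $\sigma=a$ is bounded by
\begin{align*}
\frac{1}{R}\log\Big(\frac{1}{\mathcal{N}}\sum W\Big(\frac{q}{Q}\Big)\sumflat{\chi}{q}\int\eta(t)\,\Big|\mathcal{Q}\Big(-\frac{1}{2L}\frac{d}{ds}\Big)L\cdot M\,(a+it)\Big|^{2}\,dt\Big),
\end{align*}
where $\mathcal{N}$ is the normalizing count.

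Expand $|\mathcal{Q}(\cdot)L\,M|^{2}$ as a bilinear form in $h,k\le X$. Write the derivatives as contour integrals in the shifts $\alpha,\beta$ around $0$, of radius $\asymp 1/L$. Then apply Theorem \ref{thm: GL2general}, uniformly in the shifts. The error contributes $(TQ)^{3/2+\theta/2+O(\epsilon)}$, which is $o(TQ^{2})$ since $\theta<1/2$ and $T=Q^{\epsilon}$ is tiny.

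The main terms $\mathcal{D}^{0}+\mathcal{D}^{1}$ are evaluated exactly as in \cite{Con89,Yo10}. Using (\ref{arithform}), the arithmetic factor $\mathcal{L}_{\alpha,\beta;h,k}(0,\Pi)$ combined with $\mu_{\Pi}$ reduces, after Mellin inversion in $h,k$, to the standard Conrey integral. The main subtlety is that $\mathcal{L}^{(q)}(1+\alpha+\beta,\Pi\otimes\widetilde{\Pi})$ has a simple pole. Its residue, and the Euler factors $H_{p}$ and $\mathcal{B}_{p}$, cancel against the mollifier's Euler product to leading order, giving the constant $c_{\theta}(\mathcal{P},\mathcal{Q},R)$ of (\ref{Conreyformconst}).

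For simple zeros, use the Conrey refinement: take $\mathcal{Q}$ of degree one, $\mathcal{Q}(x)=1-x$ (matched to the conductor exponent $r=2$). Zeros of $L$ on the line that are multiple are also zeros of $\mathcal{Q}(\cdot)L$ off-line counting, so the same bound $1-\frac{1}{R}\log c_\theta$ counts critical simple zeros, as in \cite{Con89}. With $\theta\to 1/2$ this gives $\kappa(1/2)$-type proportion. By \cite{ConShortMollif}, (\ref{ourproportionlev}) at $\varpi=\epsilon$ yields at least $1/3-O(\epsilon)$.

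\textbf{Range up to $Q^{1/2-\epsilon}$.} Take $T=Q^{\varpi}$ with $\varpi<1/2$. Theorem \ref{thm: GL2general} is nontrivial when $3/2+\theta/2<1+\varpi+2-(1+\varpi)\theta$, that is when $\theta<\frac{1-\varpi}{2(1+\varpi)}$ after normalising $X=(TQ)^{\theta}$, which is positive. Since $\kappa(\theta)>0$ for every $\theta>0$ \cite{ConShortMollif}, a positive proportion of simple critical zeros follows. The restriction $\varpi<1/2$ enters so that the simple-zero variant (which needs $\mathcal{Q}$ of degree one) still has positive $c_\theta$-gain, and the range of $T$ is compatible with counting $N(T;\Pi\times\chi)\asymp T\log(qT)$ uniformly.

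\textbf{Main obstacle.} The hardest step is verifying that the main-term evaluation with the automorphic mollifier reproduces exactly Conrey's constant. This requires showing that the Euler products $\prod_{p}\mathcal{B}_{p}$, $H_{p}$ and the mollifier's local factors combine to an analytic function nonvanishing near the relevant point. Only its value at $\alpha=\beta=0$ should enter, with lower-order terms $O(1/L)$. I would first handle this by writing everything as $\zeta$-like factors times absolutely convergent products on $\re w>1-2/7$, as in (\ref{arithexp}).
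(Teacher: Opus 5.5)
Your overall framework is the one the paper uses: Levinson's method in the family form of \cite[Appendix]{CIS13}, fed by Theorem~\ref{thm: GL2general}, with a linear $\mathcal{Q}$ so that the detected zeros are simple (the Heath-Brown--Selberg observation). The genuine gap is the step that turns Conrey's constant into a positive proportion \emph{for linear $\mathcal{Q}$}.

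Both (\ref{ourproportionlev}) and the statement ``$\kappa(\theta)>0$ for every $\theta>0$'' from \cite{ConShortMollif} concern $\kappa$, which is the supremum over all admissible $\mathcal{P},\mathcal{Q},R$. They are obtained with the special $\mathcal{Q}$ constructed there and with $R\asymp\theta^{-1}$. That $\mathcal{Q}$ is not linear, so these results say nothing about simple zeros.

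For linear $\mathcal{Q}$ the method really does have a positive threshold in $\theta$. Expand (\ref{Conreyformconst}) using $\int_0^1\mathcal{P}\mathcal{P}'=\frac12$ and $\int_0^1\mathcal{P}'^2\ge 1$. This gives $c_\theta\ge \frac12+\theta^{-1}\int_0^1 e^{2Ry}\mathcal{Q}(y)^2\,dy$. For linear $\mathcal{Q}$ with $\mathcal{Q}(0)=1$, the quantity $e^{-R}\int_0^1e^{2Ry}\mathcal{Q}(y)^2\,dy$ is bounded below by a positive constant uniformly in $R>0$ and in the slope of $\mathcal{Q}$. Hence $1-R^{-1}\log c_\theta<0$ for every $R$ once $\theta$ is small.

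This threshold is exactly why the second statement stops at $|\gamma|\le Q^{1/2-\epsilon}$. If your argument were valid, nothing in it would use $\varpi<1/2$, and it would give simple zeros in the whole range $T\le Q^{1-\epsilon}$ of Theorem~\ref{thm: GL2general}. Your stated reason for $\varpi<1/2$ is not an argument; in any case the zero count $N(T;\Pi\times\chi)$ is uniform for all $T\ge Q^{\epsilon}$. What is actually needed is an explicit verification that some linear $\mathcal{Q}$ still gives $c_\theta<e^{R}$ at effective length $\frac{1-\varpi}{2(1+\varpi)}$. The paper takes this from Farmer's computation \cite[p.~215]{Far94}, which handles effective length $>1/6$, i.e.\ precisely $\varpi<1/2$.

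The same issue affects your first part: you cannot cite (\ref{ourproportionlev}) for simple zeros. Instead, insert Levinson's own linear choice $\mathcal{P}(x)=x$, $\mathcal{Q}(x)=1-x$, $R=1.3$ \cite{Lev74}. At effective length $\frac12-O(\epsilon)$ this gives $0.342\ldots-O(\epsilon)>\frac13-O(\epsilon)$, which is what the paper does.

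There is also a normalization slip. With $X=(TQ)^{\theta}$ and the derivative scaled by $\frac{1}{2L}$ (log-conductor $\approx 2L$), the effective mollifier length is $\theta/2$. So you must let $\theta\to 1$, which Theorem~\ref{thm: GL2general} permits at $T=Q^{\epsilon}$. As written, your restriction $\theta<1/2$ only reaches effective length $1/4$, where the linear choice gives far less than $1/3$.

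Finally, your main-term sketch omits one ingredient. Evaluating the $h,k$-sums (the ratios step) needs the zero-free region and logarithmic bounds for $L(s,\Pi\otimes\widetilde{\Pi})$ near $\re s=1$; absolute convergence of Euler products alone does not suffice.
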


\begin{rem}\label{gl3simplezeors}
Let $\Pi\in \mathcal{A}_{0}(\mathrm{G}_{3})$. The proportion of simple zeros on the critical line for the family $L(s, \Pi\otimes \chi)$ is substantially smaller than $1/9$ in Corollary \ref{maincor: pergl3}: here the proportion is only $1/200$.    
\end{rem}

\begin{rem}
  There are several subtleties involved in the implementation of Levinson’s method for the $L$-functions of the group $\mathrm{PGL}_{3}$; see Section \ref{sect: CSLevGL3}.
\end{rem}


\subsection{The large sieves: sketch of our argument}\label{sect: large sieve}

Since Linnik’s pioneering work \cite{Lin41} in the 1940s, large sieve inequalities have become fundamental tools to exploit the \emph{orthogonality} of various families of harmonics. A particularly relevant  example for us is Gallagher’s Hybrid Large Sieve (\textbf{HLS}) \cite{Gal67, Gal70}:
\begin{align}\label{intro: HLS}
	\sum_{Q<q<2Q} \ \sideset{}{^*}{\sum}_{\chi \, (\bmod\, q)} \, \int_{0}^{T} \, \Big| \sum_{n\le N}  \, a_{n}\chi(n)n^{-it}\Big|^2 \, dt \, \ll \, (TQ^2+N)  \sum_{n\le N} \, |a_{n}|^2
\end{align}
for $Q,\, T \ge 1$,  where  $(a_{n})_{n\le N}$ is \emph{any} sequence of complex numbers. \textbf{HLS} has found important applications to primes in arithmetic progressions (notably the Bombieri--Vinogradov theorem), and to zero-density estimates for Dirichlet $L$-functions (see \cite[Chapters 10, 17, 18]{IK04}). In many applications, it provides powerful unconditional substitutes for the Generalized Riemann Hypothesis.

In our context, let us take $\Pi \in \mathcal{A}_{0}(\mathrm{G}_{3})$ and $\lambda_{h}=1$ in our object of study (\ref{exa: twistMSq}). By the functional equation, we may replace $L(1/2+it,\Pi\times\chi)$ by a Dirichlet polynomial of length $\ll (TQ)^{3/2}$. A direct application of (\ref{intro: HLS}) then gives
 \begin{align}\label{HLSheyrbdd}
 	\sum_{Q<q<2Q} \ \sideset{}{^*}{\sum}_{\chi \, (\bmod\, q)} \, \int_{0}^{T} \Big|L\Big(\frac{1}{2}+it, \Pi \times \chi\Big)\sum_{h\le (TQ)^{\theta}} \frac{\chi(h)}{h^{\frac{1}{2}+it}}\Big|^2 \,  dt  \, \ll_{\epsilon} \,  (TQ^2)^{1+\epsilon}\Bigg(1+ (TQ)^{\theta}\sqrt{\frac{T}{Q}} \, \Bigg).
 \end{align}
 The \emph{upper bound} (\ref{HLSheyrbdd}) is Lindel\"of on average when $1\le T \ll Q^{\frac{1-2\theta}{1+2\theta}}$, and in particular, $\theta<1/2$. This estimate uses only the lengths of the Dirichlet polynomials and the $\ell^{2}$ bound of $(\lambda_{\Pi}(n))$. Obtaining an asymptotic formula requires additional \emph{arithmetic} information about the $L$-functions themselves.

 The Asymptotic Large Sieve \textbf{(ALS)} of Conrey--Iwaniec--Soundararajan \cite{CIS12, CIS19} is designed to extract precisely such arithmetic input. It is a method for deriving \emph{asymptotic formulae} for moments of $L$-functions (or related objects) through adroit use of the large sieve. In our case, we apply \textbf{HLS} \emph{twice}, the orthogonality of characters \emph{three times}, together with a judicious switching of divisors. We illustrate these ideas with a sketch, intended not as a fully rigorous argument but as a conceptual road map for the proof. Various technicalities, e.g., auxiliary summations, coprimality, weight functions, etc., are suppressed, and the equalities below should be taken formally. 

Let $\Pi\in \mathcal{A}_{0}(\mathrm{G}_{3})$. Given $1\le h, k\le (TQ)^{\theta}$, $T\ge Q^{\epsilon}$, and $Q\gg 1$.  By the functional equation,
\begin{align}\label{DPafe}
    \int_{t\sim T} \, |L(1/2+it, \Pi\times \chi)|^2(\frac{k}{h})^{it} \, dt \ \sim \  T \mathop{\sum  \sum}_{\substack{mn\ll (TQ)^3 \\ mh \asymp nk}} \, \frac{\lambda_{\Pi}(m)\lambda_{\widetilde{\Pi}}(n)\chi(m)\overline{\chi}(n)}{\sqrt{mn}}.
\end{align}
We then make the first use of the orthogonality relation:
\begin{align}
\sideset{}{^*}{\sum}_{\chi\, (q)} \, \chi(mh)\overline{\chi(nk)} \, &= \ \, \mathbf{1}_{(mnhk,q)=1}\sum_{\substack{cd=q\\mh\equiv  nk \, (d)}}\, \mu(c)\phi(d).
\end{align}
It follows that
\begin{align}\label{sketch: objmoment}
  \sum_{q\sim Q} \ \sideset{}{^*}{\sum}_{\chi \,(q)} \, 
    \int_{t\sim T} \,  |L(1/2+it, \Pi\times \chi)|^2(\frac{k}{h})^{it} \ dt\sim \ T  \,  \mathop{\sum \sum}_{cd\sim Q} \,   \mu(c) \phi(d)  \mathop{\sum\sum}_{\substack{mn \ll (TQ)^3 \\ mh\asymp nk \\ mh \, \equiv \,  nk \, (d)  }  } \, \frac{\lambda_{\Pi}(m)\lambda_{\widetilde{\Pi}}(n) }{\sqrt{mn}}. 
\end{align}

Let $C\in [1,Q]$ be a parameter to be optimized; it plays a key role in obtaining the strong mean value theorems described in Section \ref{sect: MVTtheoemre}. The mean square (\ref{sketch: objmoment}) is split into three sums:
\begin{align}
  \mathcal{L}(h,k) \, + \, \mathcal{D}(h,k) \, + \, \mathcal{U}(h,k),
\end{align}
according to (i). $c>C$; (ii). $c\le C$ with $mh=nk$; and (iii). $c\le C$  with $mh\neq nk$. Clearly, the sum $\mathcal{D}(h,k)$ contributes to the diagonal of (\ref{sketch: objmoment})

For the sum $\mathcal{L}(h,k)$ (see \textbf{Section \ref{LsumHLSbdd}} for details),  observe that $d\ll Q/C$, and we make the second use of orthogonality modulo $d$ :
\begin{align}
      \mathbf 1_{\substack{m h \equiv   n k \, (d) \\ (mnhk, d)=1}} \, &= \,   \frac{1}{\phi(d)}\ 
  \sum_{\psi\, (d)} \, \psi(m h) \overline{\psi}( n k).
\end{align}
We write $ \mathcal{L}(h,k) =  \mathcal{L}^{(0)}(h,k) \, + \, \mathcal{L}^{(r)}(h,k)$ according to  $\psi= \psi_{0} \, (d)$ and  $\psi\neq \psi_{0} \, (d)$. Here we focus on the second piece. Let $(\lambda_{h})$ be a sequence of complex numbers. Then
\begin{align}\label{sketch: LsumdualHLS}
   \Big|\, \mathop{\sum \sum}_{h, k\le (TQ)^{\theta}} \, \frac{\lambda_{h}\overline{\lambda_{k}}}{\sqrt{hk}}\mathcal{L}^{(r)}(h,k)\, \Big| \ll \sum_{c>C} \, \sum_{d\ll \frac{Q}{c}} \, \sum_{\psi \neq \psi_{0} \,(d)} \, \int_{0}^{T} \, \Big|L\Big(\frac{1}{2}+it, \Pi \times \psi\Big)\Big|^2  \, \bigg| \sum_{h\le (TQ)^{\theta}} \ \frac{\lambda_{h}\psi(h)}{h^{\frac{1}{2}+it}}\bigg|^2 \, dt.
\end{align}

When $\theta=0$ (and $h=k=1$) as in \cite{CIS12}, we have a second moment of $L$-functions without twist. One applies \textbf{HLS} with the size of the family being $\ll T(Q/c)^2$ and the length of the Dirichlet polynomials being $\ll (TQ/c)^{3/2}$. A small enough $C$ gives the bound $\ll TQ^2/C$ for (\ref{sketch: LsumdualHLS}). 

In \cite{CIS19}, one encounters the case $\theta>0$ and $\Pi=1$. There, Cauchy’s inequality is applied  to (\ref{sketch: LsumdualHLS}), reducing the problem to bounding two fourth moments of Dirichlet polynomials for which \textbf{HLS} yields sharp upper bounds. In our case, however, the same approach no longer suffices for $\Pi\in\mathcal{A}_{0}(\mathrm{G}_{3})$: one is led to a fourth moment that is comparable to a twelfth moment of Dirichlet $L$-functions, for which no sharp upper bound is known. We therefore take a slightly different route, observing that the right-hand side of (\ref{sketch: LsumdualHLS}) can be written in the form
\begin{align}\label{sketch: rewrite}
    \sum_{c>C} \, \sum_{d\ll Q/C} \, \sum_{\psi \neq \psi_{0} (d)} \, \int_{0}^{T} \,  \Big| \sum_{n\le N } \ \frac{b_{n}\psi(n)}{n^{1/2+it}}\Big|^2 \, dt
\end{align}
for some $N\ge 1$ and coefficients $(b_{n})$. By \textbf{HLS}, the expression (\ref{sketch: rewrite}) is bounded by:
\begin{align}\label{sketch: HLSonce}
     \frac{TQ^2}{C}  \sum_{n\le N} \, \frac{|b_{n}|^2}{n}.
\end{align}
One would expect that the $\ell^2$-type bounds suffice, which yields the bound $(TQ^2)^{1+\epsilon}/C$ for (\ref{sketch: LsumdualHLS}). 

However, a more careful thought reveals that
\begin{align}\label{sketch: gcdarise}
    \sum_{n\le N} \, \frac{|b_{n}|^2}{n} \ \approx \  \mathop{\sum \sum}_{m,h \,\le \, N} \, \frac{|\lambda_{\Pi}(m)|^2 |\lambda_{h}|^2}{\sqrt{mh}} \, \frac{(m,h)}{\sqrt{mh}}.
\end{align}
Were the bound $\lambda_{h}\ll_{\epsilon} h^{\epsilon}$ available, the sum (\ref{sketch: gcdarise}) could be treated easily and unconditionally by an application of M\"obius inversion. Without this bound, and given that the coefficients $(\lambda_h)$ are \emph{arbitrary} (and, in particular, need not be multiplicative), we require an additional bilinear estimate, which asserts that the spectral norm of the GCD matrix $((m,n)/\sqrt{mn})_{1\le m, n\le N}$ is tiny:
\begin{align}\label{sketch: Galsum}
   \Big| \,\mathop{\sum \sum}_{m,n\le N} \, a_{m}b_{n} \frac{(m,n)}{\sqrt{mn}} \, \Big| \, \ll_{\epsilon} \, N^{\epsilon}\, \Big( \sum_{n\le N} \, |a_{n}|^2\Big)^{1/2} \Big( \sum_{n\le N} \, |b_{n}|^2\Big)^{1/2}.  
\end{align}
This elementary-looking estimate was first proved by Dyer--Harman \cite{DH86}, and is in fact quite deep. Its sharp form has been a subject of interest due to its applications to large values of $\zeta(s)$, Diophantine approximation, additive combinatorics, and harmonic analysis (see \cite{LR17, BS17, BW20}). Returning to our case, from (\ref{sketch: HLSonce}), (\ref{sketch: gcdarise}), and (\ref{sketch: Galsum}), we obtain the bound
\begin{align}\label{sketch: Lsumfinalbdd}
     \frac{(TQ^2)^{1+\epsilon}}{C} \, \Big(\sum_{m\le N}\, \frac{|\lambda_{\Pi}(m)|^4 }{m}\Big)^{1/2}\Big(\sum_{h\le N}\, \frac{|\lambda_{h}|^4 }{h}\Big)^{1/2}
\end{align}
for (\ref{sketch: LsumdualHLS}). This explains the need to impose \textbf{Hypothesis $(\mathbf{\Pi}^4)$} and \textbf{Condition $(\mathbf{\Lambda)}$}. This also gives the simplest illustration of the role of GCD sums in our argument; they also enter, less visibly but essentially, at several other points in this article.

We turn to the sum\, $\mathcal{U}(h,k)$. This time, as $c\le C$, we have $d\gg Q/C$. From $mh\neq nk$ and $mh\asymp nk$, it follows that $mh,\, nk \ll (TQ)^{3/2+\theta}$. Writing the congruence $mh\equiv nk\, (\bmod\, d)$ as $mh=nk+d\ell$, we replace the sum over $d$ by a sum over the complementary moduli $\ell$, where
\begin{align}\label{sketch: duallength}
    \ell \, \asymp \, \frac{|mh-nk|}{d} \, \ll \, \frac{C}{Q} (TQ)^{\frac{3}{2}+\theta} \, =: \, L. 
\end{align}
We are now in a position to apply the orthogonality relation for the third and final time:
\begin{align*}
    \mathbf{1}_{\substack{mh \equiv nk \ (\ell)\\ (mnhk, \ell)=1}} \, &= \,  \frac{1}{\phi(\ell)} \ \sum_{\psi\, (\ell)} \ \psi(mh)\overline{\psi}(nk).
\end{align*}
As with the sum $\mathcal{L}(h,k)$, we decompose $ \mathcal{U}(h,k)$ as the sum of $\mathcal{U}^{(0)}(h,k)$ and $\mathcal{U}^{(r)}(h,k)$, according to  $\psi= \psi_{0} \, (\bmod\, \ell)$ or not. For full details, see \textbf{Sections \ref{absDiv} and \ref{sect: dualengthcond}.}

Next, we focus on $\mathcal{U}^{(r)}(h,k)$. The exact expression from which we begin is described in \textbf{Section \ref{sepmovar}}. Very roughly speaking, we have
\begin{align}\label{sketch: afterdivswitUsum}
     \mathop{\sum \sum}_{h, k\le (TQ)^{\theta}} \, \frac{\lambda_{h}\overline{\lambda_{k}}}{\sqrt{hk}}\, \mathcal{U}^{(r)}(h,k) \, \approx \,  \frac{TQ}{L}  \sum_{\substack{c\le C}} \, \frac{\mu(c)}{c} \sum_{\substack{ \ell \ll L }} \,  \sum_{\substack{\psi\, (\ell)\\ \psi \neq \psi_{0}}} \, \mathop{\sum\sum \sum \sum}_{\substack{mh,\, nk \,\ll\, (TQ)^{3/2+\theta}\\ mh\neq nk} } \, \frac{\lambda_{\Pi}(m)\lambda_{\widetilde{\Pi}}(n) \lambda_{h}\overline{\lambda_{k}}\psi(mh) \overline{\psi}(nk)}{\sqrt{mnhk}}. 
\end{align}

A key feature of \textbf{ALS} is the \emph{avoidance} of shifted convolution sums, unlike treatments on moments in the $t$-aspect such as those for $\zeta(1/2+it)$; see \cite{HY10, BBLR20}. In our setting,  the large $q$-average of (\ref{sketch: objmoment}) allows us to \emph{remove} the condition $mh \neq nk$ at the cost of a power of $T$; the saving in the $Q$-aspect is nevertheless sufficient to yield an asymptotic formula in the hybrid range (\ref{range: gl3hybrid}) where  $T$ is not too large. See \textbf{Section \ref{sect: dropoffdiagcon}} for the details. Beyond (\ref{range: gl3hybrid}), one would require strong and uniform shifted convolution estimates (see \cite{Ng21, NSW25}), but they are currently far out of reach. 

At this point, multiplicative number theory enters the argument, which, \emph{morally speaking}, gives
\begin{align}\label{sketch: UrsumaboutoappHLS}
    \Big| \,  \mathop{\sum \sum}_{h, k\le (TQ)^{\theta}} \, \frac{\lambda_{h}\overline{\lambda_{k}}}{\sqrt{hk}}\, \mathcal{U}^{(r)}(h,k) \, \Big| \, \ll \,  \frac{Q}{L} \  \sum_{\substack{ \ell \ll L }} \  \sum_{\substack{\psi\, (\ell)\\ \psi \neq \psi_{0}}} \, \int_{0}^{T} \, \Big|L\Big(\frac{1}{2}+it, \Pi \times \psi\Big) \sum_{h\le (TQ)^{\theta}} \ \frac{\lambda_{h}\psi(h)}{h^{\frac{1}{2}+it}}\Big|^2 \, dt.
\end{align}
We will have a finer discussion on (\ref{sketch: afterdivswitUsum})--(\ref{sketch: UrsumaboutoappHLS}) in \textbf{Section \ref{sect: sketUrsum}}. For the exact form of (\ref{sketch: UrsumaboutoappHLS}), we refer the reader to \textbf{Proposition \ref{afteriniclearnUr}}. Now, a second application of \textbf{HLS} \emph{should} give:
\begin{align}\label{sketch: finalUbdd}
     \Big| \,  \mathop{\sum \sum}_{h, k\le (TQ)^{\theta}} \, \frac{\lambda_{h}\overline{\lambda_{k}}}{\sqrt{hk}}\, \mathcal{U}^{(r)}(h,k) \, \Big| \, \ll \, \frac{Q}{L} TL^2 \ = \  CT(TQ)^{\frac{3}{2}+\theta}. 
\end{align}
Comparing the bounds (\ref{sketch: finalUbdd}) and (\ref{sketch: Lsumfinalbdd}), we are led to the choice of $C$ such that
\begin{align}\label{sketch: cutC}
 TQ^2/C  \ = \  CT(TQ)^{\frac{3}{2}+\theta} \ \iff \ C= Q^{1/4-\theta/2}/T^{3/4+\theta/2}.
\end{align}
We then arrive at the desired estimate (\ref{bilerr}), and the range (\ref{range: gl3hybrid}) (as $C\ge 1$). This provides a high-level summary of \textbf{Section \ref{Sec: finauseLS}}. We point out that our use of \textbf{HLS} here differs from, and is more streamlined than, that in \cite[Proposition 1.1]{CIS19}. Indeed, their argument would require a sharp bound for the second integral moment of $L(1/2+it,\Pi)$, which is presently unavailable. Our argument above requires \textbf{Hypothesis $(\mathbf{\Pi}^{4})$} and \textbf{Condition $(\mathbf{\Lambda})$}.


\subsection{Further discussions on arithmetic: in the absence of GRC and positivity}\label{sect: sketUrsum}

The sketch in the preceding section is oversimplified, and somewhat optimistic, especially for the sum $\mathcal{U}(h,k)$. As seen above, divisor-switching is effective in reducing the size of the conductor and length of summation in $\mathcal{U}(h,k)$ (see (\ref{sketch: duallength})). What the sketch conceals, however, is the role of the arithmetic of the twists and the automorphic representation $\Pi$. This is the focus of the present section.

In fact, the relevant issues are already visible in the simpler case $\theta=0$. Let us first discuss this case, and explain in more detail how (\ref{sketch: UrsumaboutoappHLS}) is derived from (\ref{sketch: afterdivswitUsum}) following \cite[Section 8]{CIS12}. In (\ref{sketch: afterdivswitUsum}), one inserts a smooth weight to remember the sizes of $m,n$. After Mellin inversion and the removal of the condition $m\neq n$, one is led to a triple Dirichlet series
\begin{align}\label{CIS: tripleDS}
    \sum_{g=1}^{\infty} \, \mathop{\sum_{M=1}^{\infty} \,\sum_{N=1}^{\infty}}_{(M,N)=1} \, \frac{\lambda_{\Pi}(gM)\lambda_{\widetilde{\Pi}}(gN) \psi(M)\overline{\psi}(N)}{g^{1+z} M^{1/2+u_{1}}N^{1/2+u_{2}}}, 
\end{align}
which converges absolutely for $\re z, \, \re u_{i} \gg 1$ ($i=1,2$). Notice that divisor-switching introduces additional variables and entanglements of variables. This issue was not made explicit in Section \ref{sect: large sieve}. One then proceeds by an Euler product calculation, leading to
\begin{align}
    (\ref{CIS: tripleDS})  \ =  \ & L(1/2+u_{1}, \Pi\times \psi)L(1/2+u_{2}, \widetilde{\Pi}\times \overline{\psi}) \,  \sum_{g=1}^{\infty} \, \frac{\mathcal{F}(u_{1}, u_{2}; g; \Pi, \psi)}{g^{1+z}}, \label{CIS: extractEufac}
    \end{align}
where $\mathcal{F}=\prod_{p}\, \mathcal{F}_{p}$ is an arithmetic factor which, in fact, does not admit a simple formula. However, were \textbf{GRC} available, for $\re z, \, \re u_{i}>\epsilon$, one could apply leading-order approximations to the local factors $\mathcal{F}_{p}$'s to deduce the sharp bound
\begin{align}\label{RCboundeas}
    \mathcal{F}(u_{1}, u_{2}; g; \Pi, \psi) \  \ll_{\epsilon} \  g^{\epsilon},
\end{align}
and the $g$-sum of (\ref{CIS: extractEufac}) would trivially be $O_{\epsilon}(1)$. With the holomorphy of the $L$-functions in (\ref{CIS: extractEufac}), we could therefore shift the lines of integration to $\re z=\re u_{i} =2\epsilon$, and arrive at (\ref{sketch: UrsumaboutoappHLS}). 


In short, the full \textbf{GRC} is so strong that it suppresses all the arithmetic introduced by divisor-switching. Without the full \textbf{GRC}, the above argument no longer works.\footnote{ This bottleneck is not overcome merely by using higher-order asymptotics to $\mathcal{F}_{p}$'s with the available bounds toward \textbf{GRC}. The same issue appears in the simpler and more familiar problem of proving, \emph{unconditionally}, the holomorphy of $H(w, \Pi, \widetilde{\Pi})$ defined in (\ref{intro: naiveRs}) on $\re w > 1-\delta$. Even there, it requires subtle uses of the properties of the Hecke algebra of $\Pi_{p}$; see \cite{Jia25} and \cite{KR14}.  } In particular, we cannot even shift the contours to the correct positions. Furthermore, pointwise bounds for $\lambda_{\Pi}(n)$ would incur losses by significant powers of $Q$ at many parts of \cite{CIS12}---the bound of \cite{KS03} saves only an exponent $1/7$ over the trivial bound $O(n^{1/2})$. Although \textbf{ALS} has undergone considerable development over the past decade (\cite{CIS12, CL14,  chandee2023eighth, fourauthor6}), the question of whether it can be applied \emph{unconditionally} (without \textbf{GRC}) to \emph{automorphic $L$-functions} does not seem to have been addressed. The main focus has been on higher moments of Dirichlet $L$-functions for which the coefficients are the divisor function $\tau_{k}(n)$, where the sharp bound $\tau_{k}(n)\ll_{\epsilon} n^{\epsilon}$ comes for free. 

More broadly speaking, for large sieves in the automorphic setting, one repeatedly encounters the fundamental question of whether, and to what extent, they depend on progress toward \textbf{GRC}. This dependence has significant consequences for applications. Unconditional results that rely only on existing progress toward \textbf{GRC} are already of considerable interest; ideally, one would like to remove this dependence altogether and thereby obtain an optimal form of large sieve. For example, there is a long and active line of research on the large sieve for Hecke eigenvalues of automorphic representations, beginning with \cite{DK00} and continuing through the recent progress in \cite{ST19, TZ21, BTZ22, HT24, Jia25, PasTh25}, that is concerned precisely with this matter. As another example, removing such dependence is also a non-trivial problem for the spectral large sieve, which has important consequences for moments of twisted $\mathrm{GL}_{2}$ $L$-functions \cite{BM15, BFK+17}. In these works, \emph{positivity} is both crucial and intrinsic, since large sieves concern averages of \emph{absolute squares} of Dirichlet polynomials. For example, positivity is exploited efficiently in \cite{TZ21} via the inequality\footnote{ See also \cite{Li10} for a closely related theme. } 
\begin{align*}
    |\lambda_{\Pi}(n)|^2 \le  \lambda_{\Pi \otimes \widetilde{\Pi}}(n), \hspace{20pt} \text{for} \hspace{10pt} (n,\mfq_{\Pi}) \, = \, 1.
\end{align*}
One may therefore work entirely with the Rankin--Selberg $L$-function $L(s, \Pi \otimes \widetilde{\Pi})$, which naturally enjoys the desired analytic properties.

 The difficulty with \textbf{ALS} is that it \emph{does not} come with any inherent positivity, as is apparent from (\ref{CIS: tripleDS})--(\ref{CIS: extractEufac}). Yet, to apply \textbf{HLS}, one ultimately needs to \emph{recover} a form of positivity (see (\ref{sketch: UrsumaboutoappHLS})), while also circumventing the use of \textbf{GRC} when establishing the needed holomorphy (cf. (\ref{RCboundeas})). This is further complicated by the auxiliary conditions and summation variables introduced by \emph{divisor-switching} and \emph{twisting}, which are intertwined in a rather intricate way. It is necessary to analyze the arithmetic structures of these sums fine enough to make \textbf{HLS} applicable. In fact, these difficulties are already substantial when $\Pi$ comes from a Maass cusp form on $\mathrm{GL}_{2}$.

We return to our setting with twists and arbitrary coefficients. A more precise form of (\ref{sketch: afterdivswitUsum}) is:
 \begin{align}\label{arith: morerealistic}
     \mathop{\sum \sum}_{h, k\le (TQ)^{\theta}} \, \frac{\lambda_{h}\overline{\lambda_{k}}}{\sqrt{hk}}\, \mathcal{U}^{(r)}(h,k) \ \approx \  & \frac{TQ}{L} \  \sum_{\substack{c\le C}} \, \frac{\mu(c)}{c}  \sum_{\substack{ \ell \asymp L }} \  \sum_{\substack{\psi\, (\ell)\\ \psi \neq \psi_{0}}}  \iiint \  (\cdots)  \sideset{}{^*}{\sum}_{\substack{g_{1}, g_{2}, g_{3}, g_{4}\\ M, N,H,K}} \, \frac{1}{(g_{1}g_{2}g_{3}g_{4})^{1+z}}\nonumber\\
     \, & \cdot  \, \frac{\lambda_{g_{1}g_{4}H}\overline{\lambda_{g_{1}g_{3}K}}\psi(H)\overline{\psi}(K)}{\sqrt{HK}} \frac{\lambda_{\Pi}(g_{2}g_{3}M)\lambda_{\widetilde{\Pi}}(g_{2}g_{4}N) \psi(M) \overline{\psi}(N)}{ M^{1/2+u_{1}} N^{1/2+u_{2}} }, 
\end{align}
where $(\cdots)$ denotes some weight functions, and the asterisk on the summation sign indicates that several restrictions are suppressed; see \textbf{Section \ref{sepmovar}} (eqs. (\ref{dyadicaelsum}), (\ref{sepsum}), and (\ref{melsepU})).

To obtain an expression of the form (\ref{sketch: UrsumaboutoappHLS}), one must separate the sums over $M,N,H,K$ in (\ref{arith: morerealistic}). This entails arithmetic complications from \emph{twisting}, which introduce additional variables  $H, K, g_{1}, g_{3}, g_{4}$ that are absent from (\ref{CIS: tripleDS}). The separation must be carried out with great care: in applications of \textbf{HLS}, the coefficients of the Dirichlet polynomial must not ``see'' the harmonics in the family. Thus, the separation procedure must be arranged judiciously, ensuring that the new variables, factors, and conditions it introduces do not interfere the average of the family. The procedure is unfortunately rather lengthy; see \textbf{Sections \ref{sect: MobinTwist}--\ref{simplfffying}}. In what follows, we give only an indication of what one should expect.

Among the many coprimality, the most essential ones turn out to be $(M,K)=(N,H)=1$. These are handled by M\"obius inversion, which introduces two variables $r_{1}, r_{2}$ (see (\ref{grouptripDS})--(\ref{triplecontDS}) for the precise expression). In this sketch, let us \emph{pretend} that the Dirichlet coefficients are completely multiplicative. One should \emph{morally} rearrange and break up the eight-fold summation of (\ref{arith: morerealistic}) as
 \begin{align}\label{sketch: moral}
  & \approx \ \sum_{g_{1}} \, \frac{1}{g_{1}^{1+z}}\mathop{\sum\sum}_{\substack{g_{3}, g_{4}\\ (g_{3}, g_{4})=1}}  \ \mathop{\sum_{(r_{1}, \, g_{4})=1} \, \sum_{(r_{2}, \, g_{3})=1}}_{(r_{1}, r_{2})=1}  \frac{ \lambda_{\Pi}(g_{3}r_{1})\lambda_{\widetilde{\Pi}}(g_{4}r_{2}) \mu(r_{1})\mu(r_{2}) }{  (g_{3}g_{4})^{1+z} r_{1}^{1+u_{1}} r_{2}^{1+u_{2}}} \sum_{g_{2}}\, \frac{\lambda_{\Pi}(g_{2}) \lambda_{\widetilde{\Pi}}(g_{2})}{g_{2}^{1+z}} \nonumber\\
    &\cdot \mathop{\sum_{(M, \, r_{2}g_{4}\ell)=1} \sum_{(N, \, r_{1}g_{3}\ell)=1}}_{(M, N)=1}\, \frac{\lambda_{\Pi}(M) \psi(M)}{M^{1/2+u_{1}}}  \frac{\lambda_{\widetilde{\Pi}}(N) \overline{\psi}(N)  }{N^{1/2+u_{2}}} \sum_{(H, \,g_{3})=1} \, \frac{\lambda_{g_{1}  g_{4}r_{2}H} \psi(H)}{\sqrt{H}}\sum_{(K,\, g_{4})=1}\, \frac{\overline{\lambda}_{g_{1}g_{3}r_{1}K} \overline{\psi}(K)}{\sqrt{K}}.
\end{align}

Heuristically, the $g_{2}, M,N$-sums should be $ \approx L(1+z, \Pi\otimes \widetilde{\Pi}) L(1/2+u_{1}, \Pi \times\psi)L(1/2+u_{2}, \widetilde{\Pi} \times \overline{\psi})$. In reality, however, the situation is considerably more involved: the sums over $g_{2}$, $M$, $N$ \emph{do not} decouple in the simple manner suggested above, and there is an additional arithmetic factor (as an Euler product) whose unramified local factor has $158$ terms upon simplification and admits no apparent factorization. It is an explicit computation with the Hecke algebra, and is implemented by \texttt{Mathematica}; see \textbf{Section \ref{colHectwistRS}}. Now, using only the bounds of Kim--Sarnak \cite{KS03} and Li \cite{Li10}, we establish the holomorphy of the arithmetic factor in the desired region $\re u_{i},\, \re z>\epsilon$.

We may now shift the lines of integration to $\re u_{i}=\re z= 2\epsilon$, after which we insert absolute values. Making the changes of variables $k_{1}=g_{3}r_{1}$ and $k_{2}=g_{4}r_{2}$, we obtain
\begin{align}
 \mathop{\sum \sum}_{h, k\le (TQ)^{\theta}} \, \frac{\lambda_{h}\overline{\lambda_{k}}}{\sqrt{hk}}\, \mathcal{U}^{(r)}(h,k) \, \ll \,   &\frac{CTQ}{L} \    \sum_{\substack{ \ell \ll L }} \  \sum_{\substack{\psi\, (\ell)\\ \psi \neq \psi_{0}}}  \iiint \  |L(1/2+u_{1}, \Pi\times \psi)| | L(1/2+u_{2}, \widetilde{\Pi}\times \overline{\psi})| \nonumber\\
& \cdot \, \bigg(\, \max_{\substack{g_{1}, g_{3}, g_{4}, \\ k_{1}, k_{2}\le (TQ)^{\theta}}}  \, \Big| \sum_{(H, \,g_{3})=1} \, \frac{\lambda_{g_{1}k_{2}H} \psi(H)}{\sqrt{H}}\Big| \Big|\sum_{(K,\, g_{4})=1}\, \frac{\overline{\lambda}_{g_{1}k_{1}K} \overline{\psi}(K)}{\sqrt{K}}\Big|\, \bigg)   \nonumber\\[5pt]
    & \cdot  \  |L(1+z, \Pi\otimes \widetilde{\Pi})| \mathop{\sum\sum\sum}_{g_{1}, k_{1}, k_{2}\le (TQ)^{\theta}} \, \frac{1}{g_{1}}\frac{|\lambda_{\Pi}(k_{1})|\tau(k_{1})}{k_{1}} \frac{|\lambda_{\widetilde{\Pi}}(k_{2})|\tau(k_{2})}{k_{2}} .
\end{align}
The sums over $k_{1}$, $k_{2}$ and the Rankin--Selberg $L$-function $L(1+z, \Pi\otimes \widetilde{\Pi})$ can be treated by the results of \cite{Li10}. Then by Cauchy's inequality, we arrive at the desired (\ref{sketch: UrsumaboutoappHLS}). 

Unfortunately, the treatment of the $g_{3}, g_{4}, r_{1}, r_{2}$-sums sketched above is, once again, not quite accurate. What is not explained above is that the sums over $M$ (resp. $N$) and $g_{2}$ in (\ref{arith: morerealistic}) actually enter into the formation of a \emph{Dirichlet convolution} of $\lambda_{\Pi}$ (resp. $\lambda_{\widetilde{\Pi}}$) with a rather complicated multiplicative function, which is computed by \texttt{Mathematica} in \textbf{Section \ref{colHec1}}. Nevertheless, another point of this (oversimplified) sketch is that the auxiliary sums over $g_{3}, g_{4}, r_{1}, r_{2}$ \emph{cannot} be treated trivially, unlike in earlier works such as \cite{CIS19}.

The arithmetic of $\Pi$ also enters through $\mathcal{U}^{(0)}(h,k)$, the contribution of $\psi=\psi_{0}$ to $\mathcal{U}(h,k)$; see \textbf{Section \ref{sect: NoODU2}}. Very briefly, divisor-switching produces a somewhat exotic Euler product (see \textbf{Lemma \ref{lem: U0anapar}}), and one must analyze a non-standard convolution involving this Euler product and the Dirichlet coefficients of $\Pi$ as a Dirichlet series; see (\ref{keydoubDS}). We must show, in the absence of \textbf{GRC}, that this convolution admits a holomorphic continuation, and that the contribution of $\mathcal{U}^{(0)}(h,k)$ is negligible (\textbf{Proposition \ref{tersm}}). This entails another substantial computation with \texttt{Mathematica} (Proposition \ref{lem: ACKit}), but fortunately the argument remains completely unconditional.

Putting all components together (see \textbf{Section \ref{sect: DLUsumdef}} for a summary), our estimates for the sums $\mathcal{L}(h,k)$ and $\mathcal{U}(h,k)$ are independent of bounds towards $\textbf{GRC}$.


\subsection{Odds and ends}
Our work follows the framework of \textbf{ALS} developed in \cite{CIS19, CIS12}, which works directly with $L$-functions and is robust in handling the arithmetic of the coefficients and twists. It is also well-suited to treating the Eisenstein case of Theorem \ref{thm: gl3, dualcase}, where off-diagonal main terms are present. We will address this case in a separate paper. By contrast, the earlier version of \textbf{ALS} developed in \cite{CIS12-zeros, CIS11preprint} was designed to handle moments of \emph{Dirichlet polynomials}, for which the off-diagonal analysis requires less arithmetic treatment for the coefficients. One example is the application to small gaps between zeros \cite{CIS12-zeros}, where the Dirichlet polynomial is a sharp truncation of $L(s, \Pi\times \chi)^{-1}$, and the off-diagonal estimates follow directly from the GRH assumption.  

Suppose that $T=Q^{\epsilon}$ and $\theta=0$. The argument of \cite{CIS12} leads to the choice $C=Q^{1/10}$ (from  $Q^2/C=C^{3/2}Q^{7/4}$). The technical point concerning Mellin integrals in \cite[Section 2]{chandee2023eighth} is also relevant to us; see \textbf{Sections \ref{sect: attachsmwe} and \ref{sect: eleintrans}}. Together with the many considerations discussed in Sections \ref{sect: large sieve}--\ref{sect: sketUrsum}, we obtain an improved choice of $C$ described in (\ref{sketch: cutC}).


\subsection{Outline and reading suggestions}
Sections \ref{sect: notconvts}--\ref{sect: collectprelim} collect the necessary notation, tools, and background for this article. It is included to keep the exposition self-contained and may be consulted as needed. The same applies to
Sections \ref{sect: offdiagsepsmo} and \ref{sect: eleintrans}, which collect the more technical analytic lemmata; on a first reading, only the statements of these lemmata are needed.

Readers may begin with Section \ref{sect: DLUsumdef}, which defines the main sums to be estimated and gives a road map of the paper.  Section \ref{sect: diagterm} elaborates on the shapes of the main terms given in Section \ref{sect: MVTtheoemre}. Section \ref{absDiv} describes a useful and general form of divisor-switching.  We summarize the essential materials of the following sections:
\begin{itemize}
    \item Section \ref{LsumHLSbdd}: Proposition \ref{LsuHLSter};

    \item Section \ref{U0sumsect}: Lemma \ref{lem: U0anapar} and Proposition \ref{prop: auxcanU1L0};

    \item Section \ref{compdoubsumGCD}: Propositions \ref{U2dropeq}--\ref{L0ddropterms};

    \item Section \ref{sect: NoODU2}: Proposition \ref{lem: ACKit} (\texttt{Mathematica} notebook: \texttt{U2pnmid.nb}; Appendix \ref{sect:U2Lfuncmatchgl3}) and Proposition \ref{tersm};

    \item Section \ref{sect: UrHLS}: Lemma \ref{Urtrunc} and Proposition \ref{Umindneg}. The discussion begins with (\ref{sepsum});

    \item Section \ref{sect: boundUrsum}: Our work in Sections \ref{sect: MobinTwist}--\ref{sect: contshclean} can be summarized by Lemmata \ref{sum: MobcoprimEupr} and \ref{Trianclean}. Sections  \ref{colHec1}--\ref{colHectwistRS} can be checked using our \texttt{Mathematica} notebook \texttt{UrFinExact.nb, UrInf.nb} (Appendix \ref{sect: UrHeckeExact} \& \ref{sect: UnrInfExact}). Section \ref{Sec: finauseLS} describes the choices of parameters when applying \textbf{HLS}, and finishes the proof of our mean value theorems in Section \ref{sect: MVTtheoemre}.

    \item Section \ref{sect: CSLevGL3}: Proposition  \ref{compratioprop}, which can be checked with our notebook \texttt{GL3Ratio.nb} (Appendix \ref{sect: ratiosMath}). This section also includes the proof of Corollaries \ref{maincor: pergl3}, \ref{cor: fullresultpgl3per} and \ref{cor: simplpergl2}.
\end{itemize}
All of the \texttt{Mathematica} notebooks are available at \url{https://github.com/Chung-Hang-Kwan/GL3-Asymptotic-Large-Sieve}.


\subsection{Acknowledgments}

We sincerely thank David Farmer, Henryk Iwaniec, Matt Young for their support, encouragement and comments throughout this project. This work was initiated at the AIM workshop \textit{Delta Symbols and the Subconvexity Problem}, held online during the pandemic from 2-6 November 2020. It is a pleasure to thank the AIM staff and organizers for their excellent support.


\section{Notation and conventions}\label{sect: notconvts}

In this article, $\epsilon>0$ is an arbitrarily small quantity, and its usage differs from context to context. We adopt the ``$\epsilon$-convention'', e.g., $X^{\epsilon} X^{\epsilon} \ll X^{\epsilon}$ for $\epsilon$ small, $X^{100}X^{-A}\ll X^{-A}$ for $A$ large.   We use the standard asymptotic notation, e.g.,  $\ll$, $\gg$, $O$, $\asymp$, and the following notation:
\begin{itemize}
    \item Let $\mathrm{G}_{\mathfrak{r}}:= \hbox{PGL}(\mathfrak{r})$ \footnote{ From now on, we switch from ``$r$'' to ``$\mfr$'' for the rank of the automorphic representations, as $r$ will be commonly used as summation variables. } and  $\mathcal{A}_{0}(\mathrm{G}_{\mathfrak{r}})$ is the set of isomorphism classes of irreducible cuspidal automorphic representations in $L^{2}(\mathrm{G}_{\mathfrak{r}}(\Q)\setminus \mathrm{G}_{\mathfrak{r}}(\A_{\Q}))$.


   \item  Let $\Pi_{i} \in \mathcal{A}_{0}(\mathrm{G}_{\mfr})$, $i=1,2$, such that their arithmetic conductors satisfy $\mfq_{\Pi_{1}}=\mfq_{\Pi_{2}}=\mfq$. Denote by $\{\mu_{j}(\Pi_{i})\}_{j=1}^{\mfr}$ the spectral parameters of $\Pi_{i}$. Let $\mfC(\Pi_{i})$ be the analytic conductor defined in (\ref{IS: analyticond}), and let
   \begin{align}
       \mathfrak{C} \, := \, (\mfC(\Pi_{1})\mfC(\Pi_{2}))^{1/4} \hspace{20pt} \text{and} \hspace{20pt}  \mathfrak{C}_{\infty} \, := \, (\mfC_{\infty}(\Pi_{1})\mfC_{\infty}(\Pi_{2}))^{1/4}.
   \end{align}

   \item Let $\omega$ be the trivial character $(\bmod\, \mfq)$. 

   \item We write $\mathcal{A}\preceq \mathcal{B}$ if $ \mathcal{A} \,  \ll_{\epsilon} \,  (TQ \mathfrak{C})^{\epsilon} \mathcal{B}.$
    
    \item A quantity $\mathcal{A}$ is said to be 
   \begin{enumerate}
       \item  ``arbitrarily small'' if $\mathcal{A}\preceq \mathfrak{C}(TQ)^{-A}$ for any $A>0$;

       \item ``negligibly small'' if  $\mathcal{A}\preceq \mathfrak{C}(TQ)^{1+\theta}$;

       \item ``of acceptable size'' if $\mathcal{A}\preceq \mathfrak{C}^{B}(TQ)^{7/4+\theta/2}$ for $\mfr=3$, and  $\mathcal{A}\preceq \mathfrak{C}^{B}(TQ)^{3/2+\theta/2}$ for $\mfr=2$, for some absolute constant $B$.
   \end{enumerate}

    \item Denote by $\vartheta=\vartheta_{\mfr}$ the approximation towards the Generalized Ramanujan/Selberg Conjecture for all $\Pi\in \mathcal{A}_{0}(\mathrm{G}_{\mfr})$. 

    \item Denote by $\lambda_{\Pi}(n)$ the Dirichlet coefficients of $L(s, \Pi)$.

    \item  Let $\lambda_{h}$ be the coefficients of the Dirichlet polynomial. They are complex numbers supported on $h\le (TQ)^{\theta}$.


   \item We shall occasionally use the shorthand $(\psi_{1}, \psi_{2}):=(\psi, \overline{\psi})$. 

   \item The sums $\textstyle\sum^+_{\chi \, (\bmod\,q)}$,    $\textstyle\sum^\flat_{\chi \, (\bmod\,q)}$,   and $\textstyle\sum^*_{\chi \, (\bmod\,q)}$ are taken over even characters, even primitive characters, and all primitive characters modulo $q$, respectively.

    \item Let $q\ge 1$.  We write $ \mathcal{A}_{q}  :=   \prod_{p\mid q} \mathcal{A}_{p}$ and $\mathcal{A}^{(q)}  :=  \prod_{p\nmid q}   \mathcal{A}_{p}$.  So, $\mathcal{A}=\mathcal{A}_{q}\mathcal{A}^{(q)} = \prod_{p} \mathcal{A}_{p}$.

    \item  We write $a\mid b^{\infty}$ if $a\mid b^n$ for some $n\gg 1$. Equivalently, every prime factor of $a$ divides $b$.

      \item  We denote by $o_p(h)$ the order of $h \in \mathbb{N}$ at the prime $p$.

      \item We write $\sum_{S}^d$ for $\sum_{S=2^k, \, k\in \Z}$, and $s\sim S$ for $S\le |s|\le 2S$. For $B\in [1,\infty)$ and $B' \in [1, \infty]$,  we note the bounds:
    \begin{align}\label{geodyad}
        \sideset{}{^d}{\sum}_{1\le S\le B} \, S^b \, \ll_{\epsilon} \, 
            B^{b+\epsilon}  \hspace{10pt}  \text{if} \hspace{10pt}   b \, \ge \, 0;
        \hspace{30pt} 
         \sideset{}{^d}{\sum}_{B\le S\le B'} \, \frac{1}{S^b} \, \asymp \, \frac{1}{B^{b}} \hspace{15pt} \text{if} \hspace{10pt}   b \, > \, 0. 
    \end{align}

    \item Let $f*g$ be the Dirichlet convolution of two multiplicative functions $f$ and $g$. 

    \item Denote by $\mathbf{1}_{A}$ the indicator function of the set or condition $A$. 

\end{itemize}
A list of the variables used in this article is provided in \textbf{Appendix \ref{sect: gloss}}.



\section{Preliminaries}\label{sect: collectprelim}

In this section, we collect the necessary analytic and automorphic preliminaries. We begin with the elementary bounds: for any $a\ge 1$, we have
\begin{align}\label{elefinitedivbdd}
    \sum_{p\mid a} \, p^{-\sigma} \, \ll \,  (\log a)^{1-\sigma}, \hspace{15pt} \prod_{p\mid a} \, c  \, \ll_{\epsilon} \, a^{\epsilon}, \hspace{15pt} \sum_{m\mid a^{\infty}} m^{-c} \, \ll_{\epsilon} \,  a^{\epsilon},
\end{align}
 where $\sigma\in [0,1]$ and $c>0$ are absolute constants. The following is elementary analysis.

\begin{lem}\label{absunifin}
Let $\mathcal{S}$ be a set of primes and $F(s) := \prod_{p\in \mathcal{S}} \, (1+f_{p}(s))$ for $\sigma\ge \sigma_{0}$.  Suppose there is a sequence of real numbers $(M_{p})_{p\in \mathcal{S}}$ such that $\sum_{p\in \mathcal{S}} \, M_{p}<\infty$, and $|f_{p}(s)| \le M_{p}$ for any $p\in \mathcal{S}$ and $\sigma \ge \sigma_{0}$.   Then $F(s)$ converges absolutely pointwise and uniformly on $\sigma\ge \sigma_{0}$. In addition, if $M_{p}\in (0,1)$ for any $p\in \mathcal{S}$, then $|F(s)|\asymp 1$ on $\sigma\ge \sigma_{0}$.
\end{lem}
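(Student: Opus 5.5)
The plan is to reduce everything to the summable majorant $(M_p)$ and prove the three claims (absolute convergence, uniform convergence, two-sided bounds) separately, using only the elementary inequalities $1+x\le e^{x}$ and $\prod(1+a_i)-1\le\prod(1+|a_i|)-1$. We may assume $\mathcal{S}$ is enumerated in increasing order, and write $P_N(s):=\prod_{p\in\mathcal{S},\,p\le N}(1+f_p(s))$ for the partial products. Set $S:=\sum_{p\in\mathcal{S}}M_p<\infty$.

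\emph{Absolute and uniform convergence.} Since $|f_p(s)|\le M_p$ for all $\sigma\ge\sigma_0$, the series $\sum_p|f_p(s)|$ is dominated by $S$, so the product converges absolutely at each point. For uniformity I would take $N<N'$ and expand the difference of partial products:
\begin{align*}
|P_{N'}(s)-P_N(s)| \, &= \, |P_N(s)|\,\Big|\prod_{N<p\le N'}(1+f_p(s))-1\Big| \\
&\le \, \prod_{p\le N}(1+M_p)\,\Big(\prod_{N<p\le N'}(1+M_p)-1\Big) \, \le \, e^{S}\Big(e^{\sum_{p>N}M_p}-1\Big).
\end{align*}
The right-hand side is independent of $s$ and tends to $0$ as $N\to\infty$ because the tail of $\sum_p M_p$ tends to $0$. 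Hence $(P_N)$ is uniformly Cauchy on $\sigma\ge\sigma_0$, and so it converges uniformly; in particular it converges pointwise. Letting $N'\to\infty$ with $N=0$ in the same estimate also gives the uniform upper bound $|F(s)|\le\prod_p(1+M_p)\le e^{S}$.

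\emph{Lower bound when $M_p\in(0,1)$.} Each factor now satisfies $|1+f_p(s)|\ge 1-M_p>0$, so every partial product obeys $|P_N(s)|\ge\prod_{p\le N}(1-M_p)$; passing to the limit gives $|F(s)|\ge\prod_{p\in\mathcal{S}}(1-M_p)$. It remains to show this last product is a positive constant. Since $M_p\to0$, there are only finitely many $p$ with $M_p>1/2$, and each such factor $1-M_p$ is strictly positive. For the remaining $p$ we use $\log(1-x)\ge -2x$ on $[0,1/2]$, so their product is at least $e^{-2S}>0$. Combined with the upper bound $e^{S}$, this gives $|F(s)|\asymp1$ uniformly on $\sigma\ge\sigma_0$, with implied constants depending only on $(M_p)$.

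The only point needing care is this lower bound. The hypothesis $M_p<1$ rather than $M_p\le1/2$ means the factors with $M_p$ close to $1$ cannot be handled by the logarithmic inequality, and they must be set aside as the finite exceptional set above.
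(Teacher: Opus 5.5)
Your argument is correct: the uniform Cauchy bound $e^{S}\bigl(e^{\sum_{p>N}M_p}-1\bigr)$ gives uniform convergence and the upper bound $|F(s)|\le e^{S}$, and $|F(s)|\ge\prod_{p\in\mathcal{S}}(1-M_p)>0$ gives the lower bound. The paper gives no proof beyond calling the lemma ``elementary analysis,'' and your argument is the standard one that remark points to.
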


Gallagher's Hybrid Large Sieve (\cite{Gal70}) is our main tool to handle the off-diagonal contribution. Here and throughout the paper, let $||a||_{2}:= (\sum_{n} |a_{n}|^2)^{1/2}$ for the sequence $a=(a_{n})$.

\begin{lem}\label{Gallager}
For any $Q, T \ge 1$ and any sequence $(a_{n})_{n\le N}$ of complex numbers, we have 
\begin{align}
	\sum_{Q<q<2Q} \ \sideset{}{^*}{\sum}_{\chi \, (\bmod\, q)} \, \int_{0}^{T} \, \Bigg| \sum_{n\le N}  \, a_{n}\chi(n)n^{-it}\Bigg|^2 \, dt \, \ll \, (TQ^2+N)  ||a||_{2}^2. 
\end{align}
\end{lem}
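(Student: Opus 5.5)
The plan is to reduce this to the classical multiplicative large sieve combined with a Sobolev/Gallagher-type lemma for the $t$-integral. The statement is Gallagher's hybrid large sieve and is quoted from \cite{Gal70}; I sketch the standard argument.

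First, take the multiplicative large sieve in the form
\[
\sum_{q\le Q}\frac{q}{\phi(q)}\sideset{}{^*}{\sum}_{\chi\,(\bmod\,q)}\Big|\sum_{M<n\le M+N}b_n\chi(n)\Big|^2\ll (Q^2+N)\|b\|_2^2,
\]
which follows from the additive large sieve via Gauss sums for primitive characters. Restricting to $Q<q<2Q$ costs nothing, since $q/\phi(q)\ge 1$.

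Next, handle the $t$-aspect with Gallagher's lemma. For $F(t)=\sum_n c_n n^{-it}$ one has
\[
\int_{-T}^{T}|F(t)|^2\,dt\ll T^2\int_{\R}\Big|\sum_{x<n\le x e^{1/T}}c_n\Big|^2\frac{dx}{x}.
\]
This is proved by comparing $F$ with the Fourier transform of the sequence in the variable $\log n$, using Plancherel and a smooth majorant of $\mathbf 1_{[-T,T]}$. Apply it with $c_n=a_n\chi(n)$ and sum over $q,\chi$. The inner sum then runs over an interval of length $\ll x/T+1$, and the large sieve of the first step bounds the right side by
\[
T^2\int(Q^2+x/T)\sum_{x<n\le xe^{1/T}}|a_n|^2\,\frac{dx}{x}.
\]

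To finish, interchange the order of summation. Each $n$ lies in the range $x<n\le xe^{1/T}$ for $x$ in an interval of logarithmic measure $1/T$. The term $T^2 Q^2$ therefore contributes $TQ^2\|a\|_2^2$. For the term $T^2\cdot x/T$, note that $x\le n\le N$, so it contributes at most $T\cdot N\cdot(1/T)\|a\|_2^2=N\|a\|_2^2$. Together these give the bound $(TQ^2+N)\|a\|_2^2$.

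The main obstacle is the Gallagher lemma itself. The key points are the Fourier-analytic majorization and the change of variable $x=e^{u}$, which turn the $L^2$ norm in $t$ into short-interval sums in $n$ of length $\asymp x/T$. The remaining steps are routine bookkeeping.
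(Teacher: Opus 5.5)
Your argument is correct. It is Gallagher's proof from \cite{Gal70}: his Fourier-analytic lemma turns the $t$-integral into sums over short multiplicative intervals of length $\asymp x/T$, then the multiplicative large sieve is applied and the summation interchanged, and your bookkeeping correctly gives $TQ^2+N$. The paper does not reprove the lemma and simply quotes it from \cite{Gal70}, so your sketch supplies the standard proof behind that citation. The interval $(x, xe^{1/T}]$ you use, rather than the $(x, xe^{\pi/T}]$ that the sinc argument naturally produces, affects only the implied constant.
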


\begin{lem}\label{lem: simpleGCD}
    \begin{align}
        \sum_{n\le N} \, \frac{(a,n)}{n} \, \ll_{\epsilon} \, (Na)^{\epsilon}. 
    \end{align}
\end{lem}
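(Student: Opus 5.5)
We will prove the bound $\sum_{n\le N}(a,n)/n\ll_\epsilon (Na)^\epsilon$ by grouping the terms according to the value of the gcd. Write $d=(a,n)$; then $d\mid a$ and $n=dm$ with $m\le N/d$. Dropping the coprimality condition $(m,a/d)=1$ only enlarges the sum, since all terms are nonnegative. This gives
\begin{align*}
\sum_{n\le N}\frac{(a,n)}{n} \;\le\; \sum_{d\mid a}\ \sum_{m\le N/d}\frac{d}{dm} \;=\; \sum_{d\mid a}\ \sum_{m\le N/d}\frac{1}{m}.
\end{align*}

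Next we bound the inner sum. The harmonic sum satisfies $\sum_{m\le N/d} 1/m\le 1+\log N$, uniformly in $d$, and is empty (hence $0$) when $d>N$. Consequently the whole expression is at most $\tau(a)(1+\log N)$.

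Finally we invoke two standard bounds. The divisor bound gives $\tau(a)\ll_\epsilon a^{\epsilon}$, and trivially $1+\log N\ll_\epsilon N^{\epsilon}$ for $N\ge1$. Multiplying these yields $\ll_\epsilon (Na)^{\epsilon}$, as claimed.

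Every step is elementary. The only input beyond the grouping is the divisor bound, which is classical; alternatively it follows from the third estimate in (\ref{elefinitedivbdd}) applied with $c$ small. We expect no genuine obstacle in this argument.
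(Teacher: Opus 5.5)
Your proof is correct and is essentially the paper's argument. The paper writes $(a,n)=\sum_{d\mid a,\,d\mid n}\phi(d)$, which gives exactly $\sum_{d\mid a}\frac{\phi(d)}{d}\sum_{m\le N/d}\frac{1}{m}$, whereas you group by the exact value of the gcd and drop the coprimality condition; both lead to the same bound $\tau(a)(1+\log N)\ll_\epsilon (Na)^\epsilon$.
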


\begin{proof}
    Follows directly from $ (a,n) =  \sum_{d\mid a, \, d\mid n} \, \phi(d). $
\end{proof}

The following bound is a nontrivial result of \cite{DH86}; it does not follow from elementary arguments.

\begin{lem}\label{lem: Galsum}
Given any two sequences $(a_{n})_{n\le N}$ and $(b_{n})_{n\le N}$ of complex numbers, we have
\begin{align}\label{Galsum}
   \Bigg| \, \mathop{\sum \sum}_{m,n\le N} \, a_{m}b_{n} \frac{(m,n)}{\sqrt{mn}} \, \Bigg| \, \ll_{\epsilon} \, N^{\epsilon}\, ||a||_{2} \, ||b||_{2}.  
\end{align}
\end{lem}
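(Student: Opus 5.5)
The plan is to prove \eqref{Galsum} directly by opening the gcd as a divisor sum and then applying Cauchy's inequality twice. This is much cruder than the sharp forms in the literature, but the statement only asks for the loss $N^{\epsilon}$.

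\emph{Step 1: open the gcd.} Use the identity $(m,n)=\sum_{d\mid m,\ d\mid n}\phi(d)$, already used in the proof of Lemma \ref{lem: simpleGCD}. Interchanging summations gives
\begin{align*}
\mathop{\sum \sum}_{m,n\le N} a_{m}b_{n}\frac{(m,n)}{\sqrt{mn}} \, = \, \sum_{d\le N}\phi(d)\, A_{d}B_{d}, \hspace{15pt} A_{d}:=\sum_{\substack{m\le N\\ d\mid m}}\frac{a_{m}}{\sqrt m}, \hspace{10pt} B_{d}:=\sum_{\substack{n\le N\\ d\mid n}}\frac{b_{n}}{\sqrt n}.
\end{align*}
Cauchy's inequality in $d$, with the positive weight $\phi(d)$, bounds the modulus by
\begin{align*}
\Big(\sum_{d}\phi(d)|A_{d}|^2\Big)^{1/2}\Big(\sum_{d}\phi(d)|B_{d}|^2\Big)^{1/2}.
\end{align*}
So it suffices to show $\sum_{d}\phi(d)|A_{d}|^2\ll_{\epsilon} N^{\epsilon}||a||_{2}^2$, and the same bound for $B_{d}$.

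\emph{Step 2: bound one factor.} Write $m=dk$, so that $A_{d}=d^{-1/2}\sum_{k\le N/d}a_{dk}k^{-1/2}$. Then
\begin{align*}
\phi(d)|A_{d}|^2 \, \le \, \frac{\phi(d)}{d}\Big|\sum_{k\le N/d}\frac{a_{dk}}{\sqrt k}\Big|^2 \, \le \, \Big(\sum_{k\le N}\frac1k\Big)\sum_{k\le N/d}|a_{dk}|^2,
\end{align*}
using $\phi(d)\le d$ and Cauchy's inequality in $k$. The first factor is $\ll \log N$.

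\emph{Step 3: reassemble.} Summing over $d$ and regrouping by $m=dk$ gives
\begin{align*}
\sum_{d}\phi(d)|A_{d}|^2 \, \ll \, (\log N)\sum_{m\le N}\tau(m)|a_{m}|^2 \, \ll_{\epsilon} \, N^{\epsilon}||a||_{2}^2,
\end{align*}
by the divisor bound $\tau(m)\ll_{\epsilon} m^{\epsilon}$. The identical argument handles $B_{d}$, and combining the two bounds with Step 1 yields \eqref{Galsum}.

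I do not expect a genuine obstacle. The only thing to watch is that the factor $d^{-1/2}$ coming from $m=dk$ exactly cancels the weight $\phi(d)\le d$, which leaves only the harmonic sum $\sum_{k}1/k$ and the divisor-function multiplicity, both $\ll_{\epsilon} N^{\epsilon}$. The depth of \cite{DH86} lies in replacing $N^{\epsilon}$ by sharper quantities such as powers of $\log N$ or $\exp\big(C\sqrt{\log N\log\log\log N/\log\log N}\big)$. That refinement is not needed here, since throughout the paper losses of $(TQ\mathfrak{C})^{\epsilon}$ are absorbed into the notation $\preceq$.
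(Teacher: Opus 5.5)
Your argument is correct, and it takes a genuinely different route from the paper. The paper gives no proof; it cites \cite{DH86}, whose content is the version for an \emph{arbitrary} set of $N$ distinct positive integers $n_1<\cdots<n_N$ of unrestricted size, with a loss depending only on the cardinality $N$. In that setting your two inputs, $\sum_{k}1/k\ll\log N$ and $\tau(m)\ll_\epsilon N^{\epsilon}$, are unavailable, and that uniformity is where the depth lies. The depth is not, as your last paragraph suggests, in sharpening $N^{\epsilon}$ on the interval: your own argument already gives the factor $(1+\log N)\max_{m\le N}\tau(m)=\exp(O(\log N/\log\log N))$. For the lemma as actually stated, with $m,n\le N$, your divisor-sum expansion followed by two applications of Cauchy's inequality is a complete proof. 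So the paper's remark that the bound ``does not follow from elementary arguments'' applies only to the general-set version. An equivalent one-line route is Schur's test with weights $m^{-1/2}$ combined with Lemma \ref{lem: simpleGCD}, since $\sum_{n\le N}\frac{(m,n)}{\sqrt{mn}}\,n^{-1/2}=m^{-1/2}\sum_{n\le N}\frac{(m,n)}{n}\ll_\epsilon (mN)^{\epsilon}m^{-1/2}$. What your route buys is self-containedness at no cost. Every application in the paper (Propositions \ref{LsuHLSter}, \ref{U2dropeq}, \ref{Umindneg} and Lemma \ref{Usumpredeterbdd}) sums over an initial segment $[1,X]$ with $X$ a fixed power of $TQ\mathfrak{C}$. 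The loss $X^{\epsilon}$ is therefore absorbed into $\preceq$, and the arbitrary-set theorem is never needed.
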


\subsection{Generalities on Euler products}

We recall some useful facts for obtaining Euler product expansions, which will be useful in Sections \ref{sect: U2sumloc} and 
\ref{sect: TDSandEP}. The relevant identities hold whenever either side converges absolutely.

\begin{lem}\label{genEPexp}
For any prime $p$, suppose $f_{p}: \N\rightarrow \C$ is a function such that $f_{p}(n)  =   f_{p}(p^{o_{p}(n)})$ for $n\ge 1$. Then we have
\begin{align}
	\sum_{n=1}^{\infty} \,  \prod_{p} f_{p}(n)  =   \prod_{p}  \sum_{n=0}^{\infty}  f_{p}(p^n). 
\end{align}
\end{lem}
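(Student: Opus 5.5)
The plan is to expand the left-hand side using unique factorization. Both sides are assumed absolutely convergent in the intended use; I will take the product side as absolutely convergent and justify the interchange afterwards.

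Each $n\ge 1$ factors uniquely as $n=\prod_p p^{o_p(n)}$, with $o_p(n)=0$ for all but finitely many $p$. The hypothesis $f_p(n)=f_p(p^{o_p(n)})$ gives
\[
\prod_p f_p(n)=\prod_p f_p(p^{o_p(n)}).
\]
For this infinite product to make sense, $f_p(1)$ must be treated as a normalization. Implicitly, one needs $f_p(p^0)=f_p(1)=1$ for all but finitely many $p$, or, more naturally, $\prod_p f_p(1)$ convergent. I will assume the normalization $f_p(1)=1$, as in all intended applications. Under this normalization, $\prod_p f_p(n)$ is a finite product over $p\mid n$.

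Next I expand the truncated product $\prod_{p\le P}\sum_{j\ge 0} f_p(p^j)$ for finite $P$. This is a finite product of absolutely convergent series, so it may be multiplied out termwise. The result is
\[
\sum_{n:\, p\mid n \Rightarrow p\le P}\ \prod_{p\le P} f_p(p^{o_p(n)}),
\]
where the bijection between tuples $(j_p)_{p\le P}$ and $P$-smooth integers $n$ is again unique factorization. This equals the sum of $\prod_p f_p(n)$ over $P$-smooth $n$.

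Finally I let $P\to\infty$. The remaining task is to show that the sum over non-$P$-smooth $n$ tends to zero. It is bounded by
\[
\sum_{n>P}\Big|\prod_p f_p(n)\Big|\le \sum_{n>P}\prod_p |f_p(n)|.
\]
Absolute convergence of the left side means $\sum_n\prod_p|f_p(n)|<\infty$, so this tail vanishes. Conversely, applying the same expansion to $|f_p|$ shows that absolute convergence of the product side, meaning $\prod_p\sum_j|f_p(p^j)|<\infty$, bounds every partial sum of $\sum_n\prod_p|f_p(n)|$. So the two notions of absolute convergence are equivalent.

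The only delicate point is this bookkeeping of absolute convergence together with the normalization at $p^0$. The algebra itself is routine.
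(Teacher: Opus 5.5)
Your argument is correct and is the standard proof: unique factorization, multiply out the finite product over $p\le P$, then let $P\to\infty$ using absolute convergence. The paper gives no proof of this lemma. It only remarks that such identities hold whenever either side converges absolutely, so there is no competing route to compare.

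You are right that the statement tacitly needs two things: a normalization at $p^0$, and the strong reading $\prod_p\sum_j|f_p(p^j)|<\infty$ of absolute convergence on the product side. For instance, taking $f_p(1)=0$, $f_p(p)=1$ and $f_p(p^j)=0$ for $j\ge2$ at every $p$ gives left side $0$ and right side $1$.

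\textbf{Correction to your normalization.} $f_p(1)=1$ is \emph{not} what holds in the paper's applications.
\begin{itemize}
\item In Section~\ref{sect: U2sumloc}, the $(m,n)=(0,0)$ term of the local factor at $p\nmid c\mfq hk$ is $1+\frac{p^{w-1}-1}{p(p-1)}$.
\item In the proof of Lemma~\ref{lem: U0anapar}, the $e$-summand contains $R(w;ea,\mfq M'N')$. Its factor at every $p\nmid ea\mfq M'N'$ is $1+\frac{1}{p^{w+1}(p-1)}$.
\end{itemize}
What holds there is your ``more natural'' condition: $\prod_p f_p(1)$ converges absolutely to a nonzero limit.

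Your proof adapts at once. For $P$-smooth $n$ one has $\prod_p f_p(n)=\bigl(\prod_{p\le P}f_p(p^{o_p(n)})\bigr)\prod_{p>P}f_p(1)$. Your expansion then shows that $\bigl(\prod_{p\le P}\sum_j f_p(p^j)\bigr)\prod_{p>P}f_p(1)$ equals the sum of $\prod_p f_p(n)$ over $P$-smooth $n$. Since $\prod_{p>P}f_p(1)\to1$, the limit as $P\to\infty$ is unchanged.

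Alternatively, replace $f_p$ by $f_p/f_p(1)$. This multiplies both sides by $\prod_p f_p(1)$. It also keeps your absolute-convergence bookkeeping intact, since there you used $|f_p(1)|=1$ to make the partial products $\prod_{p\le P}\sum_j|f_p(p^j)|$ nondecreasing.

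State the lemma under this convergent-product hypothesis rather than under $f_p(1)=1$.
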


This admits an obvious multivariable generalization. Also, when $f_{1}$ and $f_{2}$ are multiplicative functions, we have, for any positive integers  $g_{1}, g_{2}$,  we have
\begin{align}\label{simplefixmul}
     \sum_{n=1}^{\infty} f_{1}(g_{1}n)f_{2}(g_{2}n) = \prod_{p} \sum_{n=0}^{\infty} f_{1}( p^{n+o_{p}(g_{1})})f_{2}(p^{n+o_{p}(g_{2})}). 
\end{align}
For any function $f:\mathbb{P}\rightarrow \C$, we have
\begin{align}\label{mudivEP}
    \sum_{d=1}^{\infty} \, \mu(d) \prod_{p\mid d} \, f(p) \, = \, \prod_{p} \, (1-f(p)). 
\end{align}

\begin{lem}\label{lem: coprimEPdoub}
    Suppose $f_{i}$, $h_{i}$ are multiplicative functions ($i=1,2$). For any integers $a_{1}, a_{2}, g_{1}, g_{2}\ge 1$, we have
\begin{align}\label{coprimEPdoub}
  \mathop{\sum_{(m_{1}, a_{1})=1} \, \sum_{(m_{2},a_{2})=1}}_{(m_{1}, m_{2})=1} \, & f_{1}(g_{1}m_{1})h_{1}(m_{1}) f_{2}(g_{2}m_{2}) h_{2}(m_{2}) \, =  \, \prod_{\substack{p\,\mid\, a_{1} \\ p\,\nmid\, a_{2}}}  (F_{2})_{p}(g_{2})  \prod_{\substack{p\,\mid\, a_{2} \\ p\,\nmid\, a_{1}}}  (F_{1})_{p}(g_{1}) \prod_{p\,\nmid \,a_{1}a_{2}}    (F_{3})_{p}(g_{1}, g_{2}),
\end{align}
where $(F_{i})_{p}(g_{i}) := \sum_{m_{i}=0}^{\infty} \, f_{i}(p^{m_{i}+o_{p}(g_{i})})h_{i}(p^{m_{i}})$ for $i=1,2$, and
\begin{align*}
    (F_{3})_{p}(g_{1}, g_{2}) \, := \, f_{1}(p^{o_{p}(g_{1})})(F_{2})_{p}(g_{2})+f_{2}(p^{o_{p}(g_{2})})(F_{1})_{p}(g_{1})-f_{1}(p^{o_{p}(g_{1})})f_{2}(p^{o_{p}(g_{2})}). 
\end{align*}
\end{lem}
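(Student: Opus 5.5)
The identity is purely local, so the plan is to reduce it to Lemma \ref{genEPexp} (in its two-variable form) and then evaluate each local factor by hand. The summand $f_{1}(g_{1}m_{1})h_{1}(m_{1})f_{2}(g_{2}m_{2})h_{2}(m_{2})$, together with the indicator of the three coprimality conditions, factors as a product over primes $p$ of a local function. That local function depends only on $(o_{p}(m_{1}), o_{p}(m_{2}))$. Indeed, $f_{i}(g_{i}m_{i})=\prod_{p} f_{i}(p^{o_{p}(g_{i})+o_{p}(m_{i})})$ by multiplicativity, and similarly for $h_{i}$. The conditions $(m_{1},a_{1})=1$, $(m_{2},a_{2})=1$ and $(m_{1},m_{2})=1$ are each equivalent to a conjunction over primes of local conditions. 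So the indicator is $\prod_{p}\mathbf{1}_{p}$, where $\mathbf{1}_{p}$ asks that $o_{p}(m_{1})=0$ if $p\mid a_{1}$, that $o_{p}(m_{2})=0$ if $p\mid a_{2}$, and that $\min(o_{p}(m_{1}),o_{p}(m_{2}))=0$.

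Applying the multivariable Lemma \ref{genEPexp} then gives
\begin{align*}
\prod_{p} \ \mathop{\sum\sum}_{\substack{j,k\ge 0 \\ \mathbf{1}_{p}(j,k)=1}} f_{1}(p^{j+o_{p}(g_{1})})h_{1}(p^{j})f_{2}(p^{k+o_{p}(g_{2})})h_{2}(p^{k}),
\end{align*}
using $h_{i}(1)=1$. It remains to evaluate the local factor in each of four cases.
\begin{itemize}
\item If $p\mid a_{1}$ and $p\mid a_{2}$, then $j=k=0$ is forced, and the factor is $f_{1}(p^{o_{p}(g_{1})})f_{2}(p^{o_{p}(g_{2})})$.
\item If $p\mid a_{1}$ and $p\nmid a_{2}$, then $j=0$ while $k$ is free (the condition $\min=0$ holds automatically), giving $f_{1}(p^{o_{p}(g_{1})})(F_{2})_{p}(g_{2})$.
\item The case $p\mid a_{2}$, $p\nmid a_{1}$ is symmetric, giving $f_{2}(p^{o_{p}(g_{2})})(F_{1})_{p}(g_{1})$.
\item If $p\nmid a_{1}a_{2}$, the admissible set $\{j=0\}\cup\{k=0\}$ is handled by inclusion–exclusion. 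This gives $f_{1}(p^{o_{p}(g_{1})})(F_{2})_{p}+f_{2}(p^{o_{p}(g_{2})})(F_{1})_{p}-f_{1}(p^{o_{p}(g_{1})})f_{2}(p^{o_{p}(g_{2})})$, which is exactly $(F_{3})_{p}(g_{1},g_{2})$.
\end{itemize}

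The main (minor) obstacle is reconciling this with the stated right-hand side. As written, the statement omits the factors $f_{1}(p^{o_{p}(g_{1})})$ and $f_{2}(p^{o_{p}(g_{2})})$ at primes dividing $a_{1}$ or $a_{2}$, and it omits the primes dividing both $a_{1}$ and $a_{2}$ entirely.

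I would handle this by reading the formula as holding in the normalization intended in the applications, where those factors are trivial. For instance, $o_{p}(g_{i})=0$ at such primes together with $f_{i}(1)=1$ gives exactly the stated form. Alternatively, those local factors can be regarded as absorbed into the product. I would point this normalization out explicitly.

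Absolute convergence of either side justifies all the rearrangements, as stated before Lemma \ref{genEPexp}.
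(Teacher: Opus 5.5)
Your local-factor computation is correct. It reaches the identity by a different route from the paper's.

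\textbf{How the two routes compare.} The paper first removes $(m_1,m_2)=1$ by M\"obius inversion over $d$, which is automatically coprime to $a_1a_2$. It then expands each $\sum_{(m_i,a_i)=1}f_i(dg_im_i)h_i(dm_i)$ as an Euler product and divides out the factors at $p\mid d$ by $(F_i)_p(g_i)$. It sums over $d$ with (\ref{mudivEP}) and finally expands $\prod_i(F_i)_p-\prod_i\big((F_i)_p-f_i(p^{o_p(g_i)})\big)$ into $(F_3)_p$. You instead build all three coprimality conditions into the local indicator, apply the two-variable form of Lemma \ref{genEPexp} once, and use inclusion--exclusion on $\{j=0\}\cup\{k=0\}$. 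Your route is shorter. It also never divides by $(F_i)_p(g_i)$, whose non-vanishing the paper's manipulation tacitly requires.

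\textbf{The statement as written is false.} Your diagnosis of the right-hand side is correct, and it is not a matter of normalization. What you actually prove is
\[
\prod_{p\mid a_1}f_1(p^{o_p(g_1)})\prod_{p\mid a_2}f_2(p^{o_p(g_2)})\times(\text{stated right-hand side}).
\]
The stated version fails in general. Take $a_1=a_2=2$, $g_1=2$, $g_2=1$ and $f_1(2)=0$: the left side vanishes identically, while the stated right side need not. The paper's proof loses exactly these factors when it writes $\sum_{(m_i,a_i)=1}f_i(dg_im_i)h_i(dm_i)$ as a product over $p\nmid a_i$ only. The omitted factor at $p\mid a_i$ is $f_i(p^{o_p(g_i)+o_p(d)})h_i(p^{o_p(d)})=f_i(p^{o_p(g_i)})$, since $(d,a_1a_2)=1$.

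\textbf{Your proposed rescue does not work.} You suggest saving the literal statement by declaring these factors trivial in the applications, or by absorbing them into the product. Neither is available. In (\ref{compensapolar}) the lemma is applied with $g_i$ replaced by $g_2\mathfrak{k}_i$, and with $a_1,a_2$ both divisible by $rcea\ell$. Take a prime $p\mid a$ with $p\nmid g_1g_3g_4$. The condition $a\mid g_1g_2g_3g_4$ forces $p\mid g_2$. So the omitted factor $\lambda_{\Pi_1}(p^{o_p(g_2\mathfrak{k}_1)})\lambda_{\Pi_2}(p^{o_p(g_2\mathfrak{k}_2)})$ is genuinely non-trivial there. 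Compare the factor $\mathcal{L}_p(\mathfrak{z};\Pi_1,\Pi_2;1,1)$ that correctly appears at $p\mid a_0$ in the analogous diagonal computation (\ref{g2eprod}). The right move is to state the lemma with these extra factors, which your argument already proves, rather than to reinterpret the stated formula.
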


\begin{proof}
By M\"obius inversion and (\ref{simplefixmul}), we have
\begin{align}
     \mathcal{T} &:= \mathop{\sum_{(m_{1}, a_{1})=1} \, \sum_{(m_{2}, a_{2})=1}}_{(m_{1}, m_{2})=1} \, f_{1}(g_{1}m_{1}) h_{1}(m_{1}) f_{2}(g_{2}m_{2}) h_{2}(m_{2}) = \sum_{(d,\, a_{1}a_{2})=1} \, \mu(d) \, \prod_{i=1}^{2} \, \sum_{(m_{i}, a_{i})=1} f_{i}(dg_{i}m_{i}) h_{i}(dm_{i}) \nonumber\\
     &= \sum_{(d, \,a_{1}a_{2})=1}\, \mu(d) \, \prod_{i=1}^{2} \, \prod_{p\, \nmid\, a_{i}} \, \sum_{m_{i}=0}^{\infty} \, f_{i}(p^{m_{i}+o_{p}(g_{i})+o_{p}(d)}) h_{i}(p^{m_{i}+o_{p}(d)}).
\end{align}
The $d$-sum is supported on square-free integers. Thus, $o_{p}(d)\in \{0,1\}$ for any prime $p$. Also,  since $(d, a_{1}a_{2})=1$, we have $o_{p}(d)=1$ $\implies$ $p\nmid a_{1}a_{2}$. Thus, 
\begin{align}
     \mathcal{T} = \sum_{(d, \,a_{1}a_{2})=1}\, \mu(d) \, \prod_{i=1}^{2} \, \Big(\prod_{p\, \nmid\, da_{i}} \,  (F_{i})_{p}(g_{i})\Big) \prod_{p\mid d} \, \big((F_{i})_{p}(g_{i})- f_{i}(p^{o_{p}(g_{i})})\big).
\end{align}

Note that for any quantities $L_{p}$'s,  we have $ \prod_{p\,\nmid\, da_{i}} L_{p} \, = \,  \prod_{p\, \nmid \, a_{i}} L_{p} \cdot  \prod_{p\mid d} L_{p}^{-1}.$ By (\ref{mudivEP}), we have
\begin{align}
     \mathcal{T} &= \,  \Big(\prod_{i=1}^{2} \, \prod_{p \,\nmid \,  a_{i}} (F_{i})_{p}(g_{i}) \Big)\sum_{(d, \,a_{1}a_{2})=1}\, \mu(d) \,   \prod_{p\mid d} \, \prod_{i=1}^{2} \, \frac{(F_{i})_{p}(g_{i})- f_{i}(p^{o_{p}(g_{i})})}{(F_{i})_{p}(g_{i})}\nonumber\\
     \, &= \, \Big(\prod_{i=1}^{2} \, \prod_{p \,\nmid \,  a_{i}}  (F_{i})_{p}(g_{i}) \Big)   \prod_{p \, \nmid \,  a_{1}a_{2}}  \bigg\{ 1 \, - \,  \prod_{i=1}^{2} \, \frac{(F_{i})_{p}(g_{i})- f_{i}(p^{o_{p}(g_{i})})}{(F_{i})_{p}(g_{i})} \bigg\} \nonumber\\
     \, &= \,  \prod_{\substack{p\,\mid\, a_{1} \\ p\,\nmid\, a_{2}}} \, (F_{2})_{p}(g_{2})  \prod_{\substack{p\,\mid\, a_{2} \\ p\,\nmid\, a_{1}}} \, (F_{1})_{p}(g_{1}) \prod_{p\,\nmid \,a_{1}a_{2}} \, \Big(\prod_{i=1}^{2} \, (F_{i})_{p}(g_{i}) - \prod_{i=1}^{2} \, ((F_{i})_{p}(g_{i})- f_{i}(p^{o_{p}(g_{i})})) \Big).\nonumber
\end{align}
\end{proof}


\subsection{Hecke combinatorics}\label{Hprelim}
Let  $\mathrm{G}_{\mathfrak{r}}:= \hbox{PGL}(\mathfrak{r})$, $\Pi = \otimes_{p\le \infty}' \, \Pi_{p} \in \mathcal{A}_{0}(\mathrm{G}_{\mfr})$, and $\mfq =\mfq_{\Pi}$ be the arithmetic conductor. Let $p$ be any prime. In \cite{Jac79}, Jacquet defines the local $L$-factor $L(u, \Pi_{p})$ of $\Pi_{p}$, and shows that its inverse is a polynomial in $p^{-u}$ of degree at most $\mfr$. In other words,
\begin{align}\label{defineHecLfac}
   L(u, \Pi_{p})^{-1} \ = \  \prod\nolimits_{1\le i\le \mfr} \ (1-\alpha_{\Pi,\, i}(p)p^{-u}) \hspace{15pt} (u \,\in \,   \C)
\end{align}
for some $\alpha_{\Pi, \, i}(p) \in \C$ ($1\le i\le \mfr$). It is also customary to write $  L(u, \Pi_{p}) =L_{p}(u, \Pi)$. Let $\widetilde{\Pi}$ be the contragredient of $\Pi$, and $\omega$ the trivial character $(\bmod\, \mfq)$. We have $\mfq_{\widetilde{\Pi}}=\mfq$, and
\begin{align}
\prod\nolimits_{1\le i\le \mfr} \, \alpha_{\Pi, \, i}(p) \, &= \, \omega(p) \hspace{69pt}  \text{for any prime } p,\label{trivcen}\\
    \hspace{20pt}  \{\alpha_{\widetilde{\Pi}, \,i}(p)\}_{1\le i\le \mfr} \  &= \  \{\,\overline{\alpha_{\Pi,\, i}(p)}\, \}_{1\le i\le \mfr} \hspace{15pt}  \text{for any prime } p, \label{Satacontra} \\
    \{ \alpha_{\Pi, i}(p)^{-1}\}_{1\le i \le \mfr} \, &= \, \{\, \overline{\alpha_{\Pi, i}(p)}\, \}_{1 \le i\le \mfr} \hspace{18pt}  \text{for any } \ p\nmid \mfq. \label{eqn: unrecip}
\end{align}

The global $L$-function of $\Pi$ is defined, for $\re u\gg 1$, by
\begin{align}
     L(u, \Pi) \ := \  \prod\nolimits_{p} \, L_{p}(u, \Pi).
\end{align}
We define a multiplicative function $\lambda_{\Pi}(m)$ by specifying its values on prime powers through 
\begin{align}
    \sum\nolimits_{r\ge 0} \, \lambda_{\Pi}(p^r)p^{-ru} \ = \  L_{p}(u, \Pi).
\end{align}
That is, $\lambda_{\Pi}(m)$ are the Dirichlet coefficients for  $L(u, \Pi)$:
\begin{align}
    L(u, \Pi) \ = \ \sum\nolimits_{m\ge 1} \, \lambda_{\Pi}(m)m^{-u} \hspace{20pt} (\re\, u \, \gg 1).
\end{align}
By definition, we have
\begin{align}\label{taylorsatake}
    \lambda_{\Pi}(p^n)  \, = \, \sum\nolimits_{\substack{k_{1}, \ldots, k_{\mfr}\ge 0 \\ k_{1}+\cdots +k_{\mfr}=n}} \, \prod\nolimits_{1\le i \le \mfr} \, \alpha_{\Pi,\, i}(p)^{k_{i}},
\end{align}
which coincides with the Schur polynomial $\mathrm{S}_{(0,\ldots,0, n)}(\underline{\alpha})$, where $\underline{\alpha}:=(\alpha_{\Pi, \, i}(p))_{1\le i \le \mfr}$ is as in (\ref{defineHecLfac}). 


In this work, we are primarily interested in the cases $\Pi\in \mathcal{A}_{0}(\mathrm{G}_{2})$ and $\mathcal{A}_{0}(\mathrm{G}_{3})$, for which the Schur polynomials are explicitly given as follows: \footnote{ We have suppressed the dependence on $\Pi$ and $p$ in $\alpha_{i}$'s for ease of typesetting.  }
\begin{align}\label{schurgl2}
    \mathrm{S}_{(n)}(\underline{\alpha}) \, = \,  \frac{\alpha_{1}^{n+1}-\alpha_{2}^{n+1}}{\alpha_{1}-\alpha_{2}}; 
\end{align}
\begin{align}\label{schurgl3}
\mathrm{S}_{(n_{1}, n_{2})}(\underline{\alpha})  =   \scalebox{1.2}{ $ \frac{\alpha_{1}^{n_{1}+n_{2}+2}(\alpha_{2}^{n_{1}+1}-\alpha_{3}^{n_{1}+1})+ \alpha_{2}^{n_{1}+n_{2}+2} (\alpha_{3}^{n_{1}+1}-\alpha_{1}^{n_{1}+1})+ \alpha_{3}^{n_{1}+n_{2}+2} (\alpha_{1}^{n_{1}+1}-\alpha_{2}^{n_{1}+1})}{\prod_{1\le i<j\le 3} \, (\alpha_{i}-\alpha_{j})}$, }
\end{align}
where $n, n_{1},n_{2}\in \mathbb{N} \cup \{0\}$. See \cite[p. 233]{Go15} for a more general treatment. We define another function $\lambda_{\Pi}(m_{1},\ldots, m_{\mfr-1})$ multiplicative in all variables such that
\begin{align}\label{def: Schur}
    \lambda_{\Pi}(p^{n_{1}},\ldots, p^{n_{\mfr-1}}) \, = \, \mathrm{S}_{(n_{1}, \, \ldots, \,  n_{\mfr-1})}(\underline{\alpha}),
\end{align}
for any prime $p$ and integers $n_{i}\in \N\cup \{0\}$. Then for any $m\in \N$, we have 
\begin{align}
    \lambda_{\Pi}(m) \, = \, \lambda_{\Pi}(1,\ldots, 1, m).
\end{align}

Denote by $\vartheta=\vartheta_{\mfr}$ the constant such that, for any $\Pi\in \mathcal{A}_{0}(\mathrm{G}_{\mfr})$ and prime $p$, the following holds:
\begin{align}\label{satabdd}
    |\alpha_{\Pi,\, i}(p)| \, \le \, p^{\vartheta} \hspace{20pt} (1 \, \le \,  i \, \le \,  \mfr). 
\end{align}
From \cite{LRS95}, it is known that $\vartheta_{\mfr}<1/2$ for any $\mfr\ge 2$, and from \cite{KS03}, we have
\begin{align}
    \vartheta_{2} \, \le \, 7/64  \hspace{20pt} \text{and} \hspace{20pt}  \vartheta_{3} \, \le \, 5/14 \, = \, 1/2-1/7. \label{KimSarbdd}
 \end{align}
It follows, for any $c\ge 1$ and $\re u> \vartheta$, that $c^{-\epsilon}  \ll_{\epsilon}  L_{c}(u, \Pi)^{-1}  := \prod_{p\mid c} \, L_{p}(u, \Pi)^{-1}  \ll_{\epsilon}  c^{\epsilon}.$ Moreover,
\begin{align}\label{uniformRC}
     |\lambda_{\Pi}(p^n)|  \  \le \   \Big(\sum\nolimits_{\substack{k_{1}, \ldots, k_{\mfr}\ge 0 \\ k_{1}+\cdots +k_{\mfr}=n}}  1\Big) p^{n\vartheta}.
\end{align}



Let $\Pi\in \mathcal{A}_{0}(\mathrm{G}_{3})$. From (\ref{taylorsatake}) we have
\begin{align}
      \mathfrak{e}_{\Pi}(p) \, &:= \,  \sum\nolimits_{1\le i<j\le 3 } \, \alpha_{\Pi,\, i}(p)\alpha_{\Pi,\, j}(p)   \, = \, \lambda_{\Pi}(p)^2 \, - \, \lambda_{\Pi}(p^2). \label{GL3secelem}
\end{align}
It follows from (\ref{trivcen}), (\ref{eqn: unrecip}) and (\ref{Satacontra}) that for any  $p\nmid \mfq$, it holds that 
\begin{align}
    \mathfrak{e}_{\Pi}(p)  \, &= \,\sum\nolimits_{1\le j\le 3}\, \alpha_{\Pi,\,j}(p)^{-1} \, = \, \sum\nolimits_{1\le j\le 3} \, \overline{\alpha_{\Pi,\, j}(p)} \, = \, \overline{\lambda_{\Pi}(p)} \, = \, \lambda_{\widetilde{\Pi}}(p).  \label{unrcontrae(p)}
\end{align}
From this, (\ref{trivcen}) and (\ref{satabdd}), the following bounds can readily be observed:
\begin{align}\label{epiboundgl3}
|\mathfrak{e}_{\Pi}(p)| \, \le \, 3p^{\vartheta_{3}} \hspace{10pt} \text{if} \hspace{15pt}   p\nmid \mfq \hspace{15pt} \text{and} \hspace{15pt}  |\mathfrak{e}_{\Pi}(p)| \, \le \, p^{2\vartheta_{3}} \hspace{10pt} \, \text{if}   \hspace{15pt}   p\mid \mfq.
\end{align}
Let  $\Pi \in \mathcal{A}_{0}(\mathrm{G}_{3})$ and $n_{1}, n_{2}\in \N \cup\{0\}$. By (\ref{taylorsatake}) and the Jacobi--Trudi identity, we have
\begin{align}\label{JacobiTrud}
      \lambda_{\Pi}(p^{n_{1}}, p^{n_{2}}) \ = \  \lambda_{\Pi}(p^{n_{1}+n_{2}})\lambda_{\Pi}(p^{n_{1}})- \lambda_{\Pi}(p^{n_{1}+n_{2}+1})\lambda_{\Pi}(p^{n_{1}-1})
\end{align}
for any prime $p$. If we restrict to $p\nmid \mfq$, then from \cite[Theorem 6.4.11]{Go15}, we have
\begin{align}\label{FourtoHeckgl3}
    \lambda_{\Pi}(p^{n_{1}}, p^{n_{2}}) \ = \ \lambda_{\widetilde{\Pi}}(p^{n_{1}})\lambda_{\Pi}(p^{n_{2}}) \, - \, \lambda_{\widetilde{\Pi}}(p^{n_{1}-1})\lambda_{\Pi}(p^{n_{2}-1}),
\end{align}
and 
\begin{align}
    \lambda_{\widetilde{\Pi}}(p^{n_{1}},p^{n_{2}}) \ = \ \overline{\lambda_{\Pi}}(p^{n_{1}},p^{n_{2}}) \, = \, \lambda_{\Pi}(p^{n_{2}}, p^{n_{1}}). 
\end{align}
It follows that
\begin{align}
   \hspace{10pt} |\lambda_{\Pi}(p^{n_{1}}, p^{n_{2}})|  \ll_{\epsilon} (p^{n_{1}+n_{2}})^{\vartheta_{3}+\epsilon} \hspace{15pt} \text{if} \hspace{15pt} p \, \nmid \mfq; \hspace{15pt}   |\lambda_{\Pi}(p^{n_{1}}, p^{n_{2}})|  \ll_{\epsilon}  (p^{2n_{1}+n_{2}})^{\vartheta_{3}+\epsilon} \hspace{15pt} \text{if}  \hspace{15pt} p \mid \mfq.
\end{align}

For any prime $p$, it follows from (\ref{taylorsatake}) the following Hecke recurrence relations:
\begin{align}
&\hspace{-10pt} \lambda_{\Pi}(p^{n+1}) \, =\,  \lambda_{\Pi}(p)\lambda_{\Pi}(p^n) - \omega(p)\lambda_{\Pi}(p^{n-1}) \hspace{70pt} (n\ge 1), \hspace{10pt}\text{if} \hspace{10pt}  \Pi\in \mathcal{A}_{0}(\mathrm{G}_{2}), \label{Hecrecur} \\
  \lambda_{\Pi}(p^{n+1}) &=\,   \lambda_{\Pi}(p)\lambda_{\Pi}(p^n) - \mathfrak{e}_{\Pi}(p)\lambda_{\Pi}(p^{n-1}) + \omega(p) \lambda_{\Pi}(p^{n-2}) \hspace{22pt} (n\ge 2), \hspace{10pt}\text{if} \hspace{10pt}  \Pi\in \mathcal{A}_{0}(\mathrm{G}_{3}). \label{gl3: Hecrecurrel}
\end{align}
From these, we may deduce that
\begin{lem}\label{Heckasymp}
For $m\ge 1$, $\re u\gg 1$, and $X:=p^{-u}$, the polynomial 
\begin{align}
    P_{m}(X) \ := \  L_{p}(u, \Pi)^{-1}\sum_{r=0}^{\infty} \, \lambda_{\Pi}(p^{m+r})p^{-ru}.
\end{align}
can be evaluated as
\begin{align}
   \begin{cases}
          \hspace{75pt} \lambda_{\Pi}(p^m) - \omega(p) \lambda_{\Pi}(p^{m-1})X, \hspace{108pt}   \Pi \, \in \, \mathcal{A}_{0}(\mathrm{G}_{2}); \\[5pt]  
          \lambda_{\Pi}(p^m) + (\lambda_{\Pi}(p^{m+1})-\lambda_{\Pi}(p)\lambda_{\Pi}(p^m))X +\omega(p)\lambda_{\Pi}(p^{m-1})X^2, \hspace{16pt}   \Pi \, \in \, \mathcal{A}_{0}(\mathrm{G}_{3}).
      \end{cases} \nonumber
\end{align}
\end{lem}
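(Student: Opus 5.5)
Write $X=p^{-u}$ and $L_p(u,\Pi)^{-1}=\prod_i(1-\alpha_iX)$. Expanding, this equals $1-\lambda_\Pi(p)X+\omega(p)X^2$ when $\mfr=2$, using (\ref{trivcen}). When $\mfr=3$ it equals $1-\lambda_\Pi(p)X+\mathfrak{e}_\Pi(p)X^2-\omega(p)X^3$, using (\ref{trivcen}) and the definition (\ref{GL3secelem}). Set $F_m(X):=\sum_{r\ge0}\lambda_\Pi(p^{m+r})X^r$, which converges for $\re u\gg1$ by (\ref{uniformRC}). Then $P_m(X)=L_p(u,\Pi)^{-1}F_m(X)$ is a power series in $X$. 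The plan is to multiply the two series out and read off the coefficient of $X^j$, using the recurrences (\ref{Hecrecur}) and (\ref{gl3: Hecrecurrel}) to show that all coefficients beyond the stated degree vanish.

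\emph{Degree two.} The coefficient of $X^j$ is
\[
c_j=\lambda_\Pi(p^{m+j})-\lambda_\Pi(p)\lambda_\Pi(p^{m+j-1})+\omega(p)\lambda_\Pi(p^{m+j-2}),
\]
with the convention that a term is absent whenever its index $j-1$ or $j-2$ is negative. For $j\ge2$ we have $m+j-1\ge1$, so (\ref{Hecrecur}) gives $c_j=0$. The remaining coefficients are $c_0=\lambda_\Pi(p^m)$ and
\[
c_1=\lambda_\Pi(p^{m+1})-\lambda_\Pi(p)\lambda_\Pi(p^m)=-\omega(p)\lambda_\Pi(p^{m-1}),
\]
again by (\ref{Hecrecur}) with $n=m\ge1$. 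This is exactly the stated formula for $\mathrm{G}_2$.

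\emph{Degree three.} The coefficient of $X^j$ is
\[
c_j=\lambda_\Pi(p^{m+j})-\lambda_\Pi(p)\lambda_\Pi(p^{m+j-1})+\mathfrak{e}_\Pi(p)\lambda_\Pi(p^{m+j-2})-\omega(p)\lambda_\Pi(p^{m+j-3}),
\]
with the same convention. For $j\ge3$ we have $n=m+j-1\ge2$, so (\ref{gl3: Hecrecurrel}) gives $c_j=0$. The low-order coefficients are:
\begin{itemize}
\item $c_0=\lambda_\Pi(p^m)$;
\item $c_1=\lambda_\Pi(p^{m+1})-\lambda_\Pi(p)\lambda_\Pi(p^m)$, which is already of the stated form;
\item $c_2=\lambda_\Pi(p^{m+2})-\lambda_\Pi(p)\lambda_\Pi(p^{m+1})+\mathfrak{e}_\Pi(p)\lambda_\Pi(p^m)=\omega(p)\lambda_\Pi(p^{m-1})$, by (\ref{gl3: Hecrecurrel}) with $n=m+1\ge2$.
\end{itemize}
This gives the stated $\mathrm{G}_3$ formula.

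The only delicate point is the index bookkeeping: each recurrence must be applied only in its valid range, namely $n\ge1$ for $\mathrm{G}_2$ and $n\ge2$ for $\mathrm{G}_3$. This range is exactly what the hypothesis $m\ge1$ guarantees. The rest is routine. Since (\ref{Hecrecur}) and (\ref{gl3: Hecrecurrel}) hold for every prime $p$, ramified ones included (they come directly from (\ref{taylorsatake})), no assumption $p\nmid\mfq$ is needed.
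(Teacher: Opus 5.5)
Your proof is correct, but it is organized differently from the paper's. For $\mathrm{G}_2$ the paper derives a three-term recurrence $P_m=\lambda_\Pi(p)P_{m-1}-\omega(p)P_{m-2}$ in the index $m$, computes $P_0=1$ and $P_1=\lambda_\Pi(p)-\omega(p)X$, and reads off $P_m$ from the generating function $\sum_{m}P_m(X)t^m=(1-\omega(p)Xt)/(1-\lambda_\Pi(p)t+\omega(p)t^2)$. For $\mathrm{G}_3$ it only says that a similar argument, or a \texttt{Mathematica} computation with the Schur polynomials (\ref{schurgl3}), expresses the three coefficients as Schur polynomials. It then substitutes $\alpha_1+\alpha_2+\alpha_3=\lambda_\Pi(p)$ and $\alpha_1\alpha_2\alpha_3=\omega(p)$.

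You instead stay in the variable $X$. After multiplying the degree-$\mfr$ polynomial $L_p(u,\Pi)^{-1}$ into the shifted series, each coefficient of $X^j$ with $j\ge\mfr$ is literally an instance of (\ref{Hecrecur}) or (\ref{gl3: Hecrecurrel}) and vanishes. One further use of the recurrence rewrites the top surviving coefficient. What your route buys:
\begin{itemize}
\item It needs no auxiliary variable $t$ and no computer algebra.
\item It gives a complete hand proof of the $\mathrm{G}_3$ case, which the paper only sketches.
\item It treats both ranks identically: for any $\mfr$ it yields $P_m(X)=\sum_{j=0}^{\mfr-1}X^j\sum_{i=0}^{j}(-1)^ie_i\,\lambda_\Pi(p^{m+j-i})$, with $e_i$ the elementary symmetric functions of the Satake parameters.
\item It shows exactly where $m\ge1$ enters.
\end{itemize}
The paper's Schur-polynomial phrasing, for its part, is the form reused mechanically in the later \texttt{Mathematica} evaluations of more elaborate local factors, such as Proposition \ref{finiteHecprop}.
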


\begin{proof}
Suppose $\Pi\in \mathcal{A}_{0}(\mathrm{G}_{2})$. From (\ref{Hecrecur}), we have $ P_{m}(X)=  \lambda P_{m-1}(X) - \omega P_{m-2}(X)$ for $m\ge 2$, where $\lambda:= \lambda_{\Pi}(p)$ and $\omega:=\omega(p)$. Clearly, $P_{0}(X)=1$ and $ P_{1}(X)= \lambda-\omega X$. The result follows from
\begin{align*}
 \sum_{m=0}^{\infty} P_{m}(X) t^m \, = \,   \frac{1-\omega Xt}{1-\lambda t+ \omega t^2}  =    \sum_{m=0}^{\infty} \, \lambda_{\Pi}(p^m) t^m   -  \omega X  \sum_{m=0}^{\infty} \, \lambda_{\Pi}(p^m) t^{m+1}.
\end{align*}

Suppose $\Pi\in \mathcal{A}_{0}(\mathrm{G}_{3})$. A similar argument \footnote{ or an easy \texttt{Mathematica} computation using (\ref{schurgl3}). } shows that $P_{m}(X)$ is a quadratic polynomial with the coefficients for $X^{0}$, $X^{1}$, $X^2$ being, respectively, $\mathrm{S}_{(0,m)}(\underline{\alpha})$, $-\lambda \cdot \mathrm{S}_{(0,m)}(\underline{\alpha})+ \mathrm{S}_{(0,m+1)}(\underline{\alpha})$, $\mathrm{S}_{(0,m-1)}(\underline{\alpha})$. The desired result follows with $\alpha_{1}+\alpha_{2}+\alpha_{3}= \lambda$ and $\alpha_{1}\alpha_{2}\alpha_{3}=\omega$.
\end{proof}

\begin{rem}\label{conv: Hecke}
The following sets of conventions are convenient and are recorded in the \texttt{Mathematica} package used in this article: for $\Pi \in  \mathcal{A}_{0}(\mathrm{G}_{2})$, we set
\begin{align*}
    \lambda_{\Pi}(p^{-1}) \, := \, 0, \hspace{15pt}  \lambda_{\Pi}(p^{-2}) \, := \, -1, \hspace{15pt}\lambda_{\Pi}(p^{-3}) \, := \, -\lambda_{\Pi}(p);
\end{align*}
 and  for $\Pi \in \mathcal{A}_{0}(\mathrm{G}_{3})$, we set
 \begin{align*}
     &\lambda_{\Pi}(p^{-1}) \ = \ \lambda_{\Pi}(p^{-2}) \  =  \ \lambda_{\Pi}(p, p^{-1}) \ = \  \lambda_{\Pi}(p^2, p^{-1}) \ := \ 0, \nonumber\\ 
    -&\lambda_{\Pi}(p, p^{-2})  \ = \ \lambda_{\Pi}(p^{-3}) \ := \ 1, \hspace{20pt}\lambda_{\Pi}(p^2, p^{-2}) \ = \ -\mathfrak{e}_{\Pi}(p). 
 \end{align*}

\end{rem}


 \subsection{Twisting}\label{sect: twistdata}

Let $\chi \, (\bmod\, q)$ be an even primitive Dirichlet character and $\Pi \in \mathcal{A}_{0}(\mathrm{G}_{\mfr})$. In this article, we assume that $(q, \mfq)=1$. Then \emph{for any} prime $p$, we have
\begin{align}\label{twistEulerfacallp}
    L_{p}(u, \, \Pi\times \chi) \, = \,   \prod_{1\le i \le \mfr} \, (1-\alpha_{\Pi, \,i}(p)\chi(p) p^{-u})^{-1}.
\end{align}
As a result, the local factor $ L_{p}(u, \,\Pi\times \chi)^{-1}$ can be computed as
\begin{align}
 \hspace{-13pt}1- \lambda_{\Pi}(p)\chi(p)p^{-u} + \omega(p)\chi^{2}(p)p^{-2u}\hspace{62pt}\text{if} \hspace{7pt}  \Pi\in \mathcal{A}_{0}(\mathrm{G}_{2}), \label{GL2EP}\\
 1- \lambda_{\Pi}(p)\chi(p)p^{-u} + \mathfrak{e}_{\Pi}(p)\chi^{2}(p)p^{-2u} - \omega(p)\chi^{3}(p)p^{-3u} \hspace{12pt}\text{if} \hspace{8pt}  \Pi \in \mathcal{A}_{0}(\mathrm{G}_{3}).  \label{GL3EP}
\end{align}
On $\re u > 1+\vartheta$, the (global) standard $L$-functions of $\Pi$ and its twist by $\chi$ are defined by 
\begin{align}\label{stdL: seriesdef}
    L(u, \Pi) \, := \, \prod_{p} \, L_{p}(u, \Pi) \hspace{15pt} \text{and} \hspace{15pt}  L(u, \, \Pi \times \chi) \, := \, \prod_{p} \,   L_{p}(u, \, \Pi \times \chi)
\end{align}
respectively. From (\ref{taylorsatake}) and (\ref{twistEulerfacallp}), they can be expressed as Dirichlet series of the form:
\begin{align}
   L(u, \, \Pi \times \chi) \ = \ \sum_{m=1}^{\infty} \, \frac{\lambda_{\Pi}(m)\chi(m)}{m^u}. 
\end{align}

Let $\mu(\Pi):=\{\mu_{i}(\Pi)\}_{1\le i\le \mfr}$ be the spectral parameters, $\mfq=\mfq_{\Pi}$ the arithmetic conductor of $\Pi$, and $\mfC(\Pi)$ the analytic conductor (see  (\ref{IS: analyticond})) of $\Pi$. We have $\sum_{1\le i\le \mfr} \, \mu_{i}(\Pi)=0$, and  $|\mu_{i}(\Pi)|\le \vartheta_{\mfr}$ for any $1\le i\le \mfr$, with $\vartheta_{\mfr}$ given by (\ref{KimSarbdd}). By \cite[eqs. (21)--(25) \& Proposition 4.1]{BR94},  we have
\begin{align}\label{twistcond}
    \mfq_{\Pi\times \chi} \, = \, \mfq q^\mfr \hspace{15pt} \text{and} \hspace{15pt} \mu_{i}(\Pi\times \chi)  = \mu_{i}(\Pi).
\end{align}
The analytic conductor for  $ L(s, \Pi\times \chi)$ is given by 
\begin{align}\label{analycondtwis}
    \mathbf{c}(s, \Pi\times\chi) \, = \, \mfq_{\Pi\times\chi} \, \mathbf{c}_{\infty}(s, \Pi\times \chi) \, := \, \mfq_{\Pi\times\chi}\cdot  \prod_{i=1}^{\mfr} \, (1+|s+\mu_{i}(\Pi)|) \, \le \mathbf{c}(\Pi) (q(1+|s|))^\mfr.
\end{align}
Let $\Gamma_{\R}(s):= \pi^{-s/2}\Gamma(s/2)$, and $ L_{\infty}(s,\Pi):= \prod_{i=1}^{\mfr}\Gamma_{\R}(s+\mu_{i}(\Pi)).$ The functional equation for $L(s, \Pi)$ was stated in (\ref{jacstdfunc}), and that for its twists is given by
\begin{align}\label{twisfuncJS}
  \Lambda(s, \Pi \times \chi) \, := \, (\mfq q^\mfr)^{s/2}L_{\infty}(s, \Pi)L(s, \Pi\times\chi) \, = \,  \epsilon(\Pi\times \chi)  \Lambda(1-s, \,\widetilde{\Pi} \times \overline{\chi}).
\end{align}
The  root numbers satisfy the relation
\begin{align}\label{eq: rootnum}
    \epsilon(\Pi\times \chi) \,  = \,  \epsilon(\Pi)\epsilon(\chi)^{\mfr}.
\end{align}


\subsection{Rankin--Selberg convolution}\label{RSprelim}

Let $\Pi_{1}, \Pi_{2} \in \mathcal{A}_{0}(\mathrm{G}_{\mfr})$ with $\mfq_{\Pi_{1}}=\mfq_{\Pi_{2}}=\mfq$. The local Rankin--Selberg factor at $p\nmid \mathfrak{q}$ is defined as 
\begin{align}\label{trueRSconv}
  L_{p}(s, \Pi_{1}\otimes \Pi_{2})  \, &:=  \, \prod_{1\le i,j\le \mfr} \  (1- \alpha_{\Pi_{1},\, i}(p)\alpha_{\Pi_{2}, \,j}(p)p^{-s})^{-1}. 
\end{align}
For any prime $p$, we write $L_{p}^{\mathrm{ur}}(s, \Pi_{1}\otimes \Pi_{2})$ for the right-hand side of (\ref{trueRSconv}), since when $p\mid \mfq$, this needs not coincide with the Rankin--Selberg local factor defined in \cite{JPSS_RS}. In general,  the factor $L_{p}(s, \Pi_{1}\otimes \Pi_{2})^{-1}$ of \cite{JPSS_RS} is a polynomial in $p^{-s}$ of degree at most $\mfr^2$, i.e., there are  $\alpha_{\Pi_{1}\otimes \Pi_{2}}(p; i,j)\in \C$ such that
\begin{align}\label{formalRS}
    L_{p}(s, \Pi_{1}\otimes \Pi_{2}) \,  = \, \prod_{1\le i,j\le \mfr} \, (1-\alpha_{\Pi_{1}\otimes \Pi_{2}}(p; i,j)p^{-s})^{-1}.
\end{align}
It is shown in \cite{JPSS_RS} that the global Rankin--Selberg $L$-function 
\begin{align}
    L(s, \Pi_{1}\otimes \Pi_{2}) \ := \  \prod_{p} \,  L_{p}(s, \Pi_{1}\otimes \Pi_{2}) \hspace{20pt} (\sigma \, \gg \, 1)
\end{align}
admits a meromorphic continuation to $\C$ and satisfies a functional equation. From these, Jacquet and Shalika (\cite[Theorem 5.3]{JS81}) proved, via Landau's lemma (see \cite[Lemma 5.56]{IK04}), that the Euler product $ L^{(\mfq)}(s, \Pi_{1}\otimes \Pi_{2})$ is $O_{\sigma,\, \Pi_{1},\, \Pi_{2}}(1)$ for $\sigma> 1$.

 It is less obvious (see \cite[(2.6) \& Appendix]{ST19}) that \emph{for any} prime $p$ and $1\le i, j\le \mfr$, we have
\begin{align}\label{genRSbdd}
    |\alpha_{\Pi_{1}\otimes \Pi_{2}}(p; i,j)| \, \le \,  p^{2\vartheta}.
\end{align}
Thus, for any prime $p$, the local factor of the ``naive'' convolution of $\Pi_{1}$ and $\Pi_{2}$ at $p$, i.e., 
\begin{align}\label{naivelocalRSfac}
    \mathcal{L}_{p}(s, \Pi_{1}\otimes \Pi_{2}) \, := \, \sum_{n=0}^{\infty}\,  \frac{\lambda_{\Pi_{1}}(p^{n})\lambda_{\Pi_{2}}(p^n)}{p^{ns}},
\end{align}
converges absolutely for $\sigma> 2\vartheta$, and by Lemma \ref{uniformRC}, we have
\begin{align}\label{naivgl2exp}
   \mathcal{L}_{p}(s, \Pi_{1}\otimes \Pi_{2}) \ = \  1 \ + \ O_{\sigma}(p^{-(\sigma-2\vartheta)}).
\end{align}
Moreover, let $n_{2}=1$ and $n_{3}=3$. For $\sigma\ge 1$, it follows from (\ref{KimSarbdd}) that
\begin{align}
     \mathcal{L}_{p}(s, \Pi_{1}\otimes \Pi_{2})  \ = \  \sum_{n=0}^{n_{\mfr}} \, \frac{\lambda_{\Pi_{1}}(p^{n})\lambda_{\Pi_{2}}(p^n)}{p^{ns}} \, + \, O(p^{-1-\epsilon}). \label{rawnaiveexpand}
\end{align}

We set
\begin{align}\label{naRScomp}
  H_{p}(s, \Pi_{1}, \Pi_{2}) \, := \,   \mathcal{L}_{p}(s, \Pi_{1}\otimes \Pi_{2}) L_{p}(s, \Pi_{1}\otimes \Pi_{2})^{-1} \hspace{20pt} (\sigma> 2\vartheta).  
\end{align}
On the region of absolute convergence, we define
\begin{align}\label{naiveRSser}
    \mathcal{L}(s, \Pi_{1}\otimes \Pi_{2}) \ :=  \   \prod_{p} \,  \mathcal{L}_{p}(s, \Pi_{1}\otimes \Pi_{2}) \hspace{15pt} \text{and} \hspace{15pt}  H(s, \Pi_{1}, \Pi_{2}) \, :=  \, \prod_{p} \, H_{p}(s, \Pi_{1}, \Pi_{2}).
\end{align}

Recall \cite[Theorem 2]{Li10}: for $\re s\ge 1$, we have
\begin{align}\label{Lisbound}
    (s-1)L(s, \Pi\otimes \widetilde{\Pi})   \ \ll_{\epsilon} \    (\mfC(\Pi)|s|)^{\epsilon}.
\end{align}
Denote by $\lambda_{\Pi\otimes \widetilde{\Pi}}(m)$ and $\Lambda_{\Pi\otimes \widetilde{\Pi}}(m)$, respectively, the Dirichlet coefficient and von Mangoldt function of the $L$-function $L(s, \Pi\otimes \widetilde{\Pi})$. From \cite[Chapter 12]{Go15}, we have
    \begin{align}\label{heckevonmag}
\lambda_{\Pi\otimes \widetilde{\Pi}}(m) \, = \,     \sum_{\substack{m_{0}, m_{1},\ldots, m_{\mfr-1}\ge 1 \\ m_{0}^{\mfr} m_{1}^{\mfr-1}\cdots m_{\mfr-1}^{1}=m}} \, |\lambda_{\Pi}(m_{1}, \ldots, m_{\mfr-1})|^2 \hspace{15pt} \text{and} \hspace{15pt}  \frac{\Lambda_{\Pi\otimes \widetilde{\Pi}}(p^k)}{\log p}  = \Big|\frac{\Lambda_{\Pi}(p^k)}{\log p}\Big|^2
\end{align}
for  $(m, \mfq) = 1$ and $p\nmid \mathfrak{q}$. In particular, we have the inequalities
 \begin{align}\label{trivrankcoefbdd}
     |\lambda_{\Pi}(m)|^2 \,   \le \, \lambda_{\Pi\otimes \widetilde{\Pi}}(m) \hspace{20pt} \text{and} \hspace{20pt} \sum_{p\nmid \mfq} \, \frac{|\lambda_{\Pi}(p)|^2}{p^\sigma} \, \le \, \log \, L(\sigma, \Pi\otimes \widetilde{\Pi}) \hspace{15pt} (\sigma > 1).
 \end{align}

\begin{lem}\label{naivconvRSDS}
    The Dirichlet series and Euler product of $\mathcal{L}(s, \Pi_{1}\otimes \Pi_{2})$ converge absolutely for $\re s> 1$. Moreover, for $\re s\ge 1+\epsilon$, we have  $\mathcal{L}(s, \Pi_{1}\otimes \Pi_{2})\ll_{\epsilon} (\mathbf{c}(\Pi_{1})\mathbf{c}(\Pi_{2}))^{\epsilon}$. 
\end{lem}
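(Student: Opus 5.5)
The plan is to reduce absolute convergence of $\mathcal{L}(s,\Pi_{1}\otimes\Pi_{2})$ to that of Rankin--Selberg series via Cauchy--Schwarz, and to control the size via Li's bound (\ref{Lisbound}). For the Dirichlet series $\sum_{n}\lambda_{\Pi_{1}}(n)\lambda_{\Pi_{2}}(n)n^{-s}$ (which is what the Euler product (\ref{naiveRSser}) expands to, since $\lambda_{\Pi_i}$ are multiplicative), we have for $\sigma=\re s>1$
\begin{align*}
\sum_{n}\frac{|\lambda_{\Pi_{1}}(n)\lambda_{\Pi_{2}}(n)|}{n^{\sigma}} \,\le\, \Big(\sum_{n}\frac{|\lambda_{\Pi_{1}}(n)|^2}{n^{\sigma}}\Big)^{1/2}\Big(\sum_{n}\frac{|\lambda_{\Pi_{2}}(n)|^2}{n^{\sigma}}\Big)^{1/2},
\end{align*}
so it suffices to bound $\sum_{n}|\lambda_{\Pi}(n)|^2n^{-\sigma}$ for a single $\Pi$. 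Absolute convergence of the Dirichlet series then gives absolute convergence of the Euler product (each local factor $\mathcal{L}_p$ is bounded by the corresponding local sum of absolute values, and Lemma \ref{genEPexp} identifies the two).

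To bound $\sum_n|\lambda_\Pi(n)|^2n^{-\sigma}$, I split $n=n_{1}n_{2}$ with $n_{1}\mid\mfq^{\infty}$ and $(n_{2},\mfq)=1$. On the coprime part, (\ref{trivrankcoefbdd}) gives $|\lambda_\Pi(n_2)|^2\le\lambda_{\Pi\otimes\widetilde\Pi}(n_2)$, and the latter coefficients are nonnegative by (\ref{heckevonmag}); hence $\sum_{(n_2,\mfq)=1}|\lambda_\Pi(n_2)|^2n_2^{-\sigma}\le L^{(\mfq)}(\sigma,\Pi\otimes\widetilde\Pi)$, which is finite for $\sigma>1$ by Jacquet--Shalika. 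For the ramified part, I use (\ref{uniformRC}) with $\vartheta_{\mfr}\le 5/14$: $\sum_{n_1\mid\mfq^\infty}|\lambda_\Pi(n_1)|^2n_1^{-\sigma}\le\prod_{p\mid\mfq}\sum_{k}(k+1)^{4}p^{-k(\sigma-2\vartheta)}$, and since $\sigma-2\vartheta\ge 1-5/7>0$ each factor is $O(1)$, so the product is $\ll_\epsilon \mfq^{\epsilon}$ by (\ref{elefinitedivbdd}). (For $p\mid\mfq$ one could alternatively note that at most $\mfr-1$ Satake parameters are nonzero, but the crude bound suffices.)

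For the quantitative bound at $\sigma\ge1+\epsilon$, I need $L^{(\mfq)}(\sigma,\Pi\otimes\widetilde\Pi)\ll_\epsilon\mfC(\Pi)^{\epsilon}$. Write $L^{(\mfq)}=L\cdot L_{\mfq}^{-1}$; the finitely many inverse local factors at $p\mid\mfq$ are polynomials with roots bounded by (\ref{genRSbdd}), so $L_{\mfq}(\sigma,\Pi\otimes\widetilde\Pi)^{-1}\ll_\epsilon\mfq^{\epsilon}$. For the complete $L$, apply Li's bound (\ref{Lisbound}) at real $s=\sigma\ge 1+\epsilon$: $L(\sigma,\Pi\otimes\widetilde\Pi)\ll_\epsilon \epsilon^{-1}(\mfC(\Pi)\sigma)^{\epsilon}$, and for large $\sigma$ the series is trivially $O(1)$ by monotonicity in $\sigma$ (positive coefficients), so we may take $\sigma\in[1+\epsilon,2]$. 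Combining with Cauchy--Schwarz gives $|\mathcal{L}(s,\Pi_1\otimes\Pi_2)|\le(\ldots)^{1/2}(\ldots)^{1/2}\ll_\epsilon(\mfC(\Pi_1)\mfC(\Pi_2))^{\epsilon}$.

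The main (mild) obstacle is the handling of ramified primes, where the positivity inequality (\ref{trivrankcoefbdd}) is only stated for $(m,\mfq)=1$ and the JPSS local factor may differ from the naive one; I expect the crude Kim--Sarnak estimate above to dispose of this, at the cost of the harmless $\mfq^{\epsilon}$, with the condition $\sigma>2\vartheta$ (guaranteed for $\sigma>1$) being exactly what is needed.
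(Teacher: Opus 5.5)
Your proposal is correct and follows essentially the same route as the paper. In both arguments you reduce by Cauchy--Schwarz to $\sum_n|\lambda_\Pi(n)|^2n^{-\sigma}=\mathcal{L}(\sigma,\Pi\otimes\widetilde{\Pi})$, bound the ramified part and $L_{\mfq}(\sigma,\Pi\otimes\widetilde{\Pi})^{-1}$ by $\mfq^{\epsilon}$ using (\ref{uniformRC}) and (\ref{genRSbdd}), dominate the unramified part by $L^{(\mfq)}(\sigma,\Pi\otimes\widetilde{\Pi})$ via (\ref{trivrankcoefbdd}), and finish with Li's bound (\ref{Lisbound}); your remarks on monotonicity in $\sigma$ and on passing from the Dirichlet series to the Euler product only make explicit details the paper leaves implicit.
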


\begin{proof}
By Cauchy's inequality, it suffices to consider the case $\Pi_{1}\simeq \widetilde{\Pi_{2}}=:\Pi$.  From Lemma \ref{uniformRC} and (\ref{genRSbdd}) (with $\vartheta\le 1/2$), the products $\mathcal{L}_{\mfq}(\sigma, \Pi\otimes\widetilde{\Pi})$ and $L_{\mfq}(\sigma, \Pi\otimes \widetilde{\Pi})^{-1}$ are $O_{\epsilon}(\mfq^{\epsilon})$ for $\sigma\ge 1$. By (\ref{trivrankcoefbdd}), we have $|\mathcal{L}(\sigma, \Pi\otimes \widetilde{\Pi})|\ll_{\epsilon}  \mfq^{\epsilon} |L(\sigma, \Pi\otimes \widetilde{\Pi})|$ for $\sigma>1$. The results follow from (\ref{Lisbound}).
\end{proof}

By Lemma \ref{naivconvRSDS} and the fact that $\mathbf{1}_{n\le X}\le (X/n)^{\epsilon}$ for any $\epsilon>0$, we have

\begin{lem}[Li \cite{Li10}]\label{lem: ROA}
For any $\Pi\in \mathcal{A}_{0}(\mathrm{G}_{\mfr})$, we have
\begin{align}\label{ROA}
\sum_{n\le X} \, \frac{|\lambda_\Pi(n)|^2}{n} \, \ll_{\epsilon} \,  (\mfC(\Pi)X)^{\epsilon}.   
\end{align}
\end{lem}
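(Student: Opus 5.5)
The plan is a Rankin-type trick: dominate the sharp cutoff by a power of $X/n$, then evaluate the resulting Dirichlet series at a point just to the right of $1$. For any $\epsilon>0$ and $n\ge 1$ we have $\mathbf{1}_{n\le X}\le (X/n)^{\epsilon}$. Hence
\begin{align*}
\sum_{n\le X}\frac{|\lambda_{\Pi}(n)|^2}{n}\;\le\; X^{\epsilon}\sum_{n=1}^{\infty}\frac{|\lambda_{\Pi}(n)|^2}{n^{1+\epsilon}} ,
\end{align*}
and it remains to show that the last series is $\ll_{\epsilon}\mfC(\Pi)^{\epsilon}$. (Applying this with $\epsilon/2$ in place of $\epsilon$ is harmless.)

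The series has nonnegative terms and multiplicative coefficients, so it equals its Euler product $\prod_p\sum_{k\ge0}|\lambda_\Pi(p^k)|^2p^{-k(1+\epsilon)}$. This product is exactly $\mathcal{L}(1+\epsilon,\Pi\otimes\widetilde{\Pi})$ from (\ref{naiveRSser}), because $\lambda_{\widetilde{\Pi}}(p^k)=\overline{\lambda_\Pi(p^k)}$ by (\ref{Satacontra}) and (\ref{taylorsatake}). Lemma \ref{naivconvRSDS}, applied with $\Pi_1=\Pi$, $\Pi_2=\widetilde{\Pi}$ at $s=1+\epsilon$, gives absolute convergence and the bound
\begin{align*}
\mathcal{L}(1+\epsilon,\Pi\otimes\widetilde\Pi)\ll_\epsilon (\mfC(\Pi)\mfC(\widetilde\Pi))^{\epsilon}.
\end{align*}
Since $\mfq_{\widetilde\Pi}=\mfq_\Pi$ and the spectral parameters of $\widetilde\Pi$ are the conjugates of those of $\Pi$, we have $\mfC(\widetilde\Pi)=\mfC(\Pi)$. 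Combining these gives $\ll_\epsilon(\mfC(\Pi)X)^{2\epsilon}$, which is (\ref{ROA}) after renaming $\epsilon$.

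No step is a genuine obstacle once Lemma \ref{naivconvRSDS} is available. The only point needing care is that the $\epsilon$-dependence is uniform in $\Pi$. This is already built into Lemma \ref{naivconvRSDS}, which rests on Li's bound (\ref{Lisbound}) at $s=1+\epsilon$ (so $|s-1|^{-1}=\epsilon^{-1}$ is a constant depending only on $\epsilon$) and on the ramified-prime control via (\ref{genRSbdd}).
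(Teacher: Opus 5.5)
Your proposal is correct and is essentially the paper's argument: the paper deduces the lemma directly from Lemma \ref{naivconvRSDS} together with $\mathbf{1}_{n\le X}\le (X/n)^{\epsilon}$. This is the same Rankin-trick reduction to bounding $\mathcal{L}(1+\epsilon,\Pi\otimes\widetilde{\Pi})$ that you describe, with the identification $\lambda_{\Pi}(p^k)\lambda_{\widetilde{\Pi}}(p^k)=|\lambda_{\Pi}(p^k)|^2$ and $\mfC(\widetilde{\Pi})=\mfC(\Pi)$ handled as you indicate.
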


\begin{lem}\label{lem: diffEUproLi}
  For any $ \Pi_{1}, \Pi_{2} \in \mathcal{A}_{0}(\mathrm{G}_{\mfr})$,   the Euler product $\prod_{p} \, (1\pm  \lambda_{\Pi_{1}}(p)\lambda_{\Pi_{2}}(p)p^{-s})$ converges absolutely on $\sigma>1$, and is $O_{\epsilon}((\mathbf{c}(\Pi_{1})\mathbf{c}(\Pi_{2}))^{\epsilon})$ on $\sigma\ge 1+\epsilon$.
\end{lem}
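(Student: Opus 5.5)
The plan is to bound the Euler product by comparing logarithms. For $\sigma>1$ the absolute convergence of $\prod_p(1\pm \lambda_{\Pi_1}(p)\lambda_{\Pi_2}(p)p^{-s})$ is equivalent, by Lemma \ref{absunifin}, to the convergence of $\sum_p |\lambda_{\Pi_1}(p)\lambda_{\Pi_2}(p)|p^{-\sigma}$. By Cauchy's inequality, $|\lambda_{\Pi_1}(p)\lambda_{\Pi_2}(p)|\le \tfrac12(|\lambda_{\Pi_1}(p)|^2+|\lambda_{\Pi_2}(p)|^2)$, so it suffices to control $\sum_p |\lambda_{\Pi}(p)|^2p^{-\sigma}$ for a single $\Pi$. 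Split this sum into $p\nmid\mfq$ and $p\mid \mfq$.

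For $p\nmid \mfq$, the second inequality in (\ref{trivrankcoefbdd}) gives
\begin{align*}
\sum_{p\nmid \mfq}|\lambda_\Pi(p)|^2p^{-\sigma}\le \log L(\sigma,\Pi\otimes\widetilde{\Pi}).
\end{align*}
The right side is finite for $\sigma>1$ by Jacquet--Shalika. For $\sigma\ge 1+\epsilon$, Li's bound (\ref{Lisbound}) gives $L(\sigma,\Pi\otimes\widetilde\Pi)\ll_\epsilon \mfC(\Pi)^{\epsilon}/\epsilon$, so this part is at most $\epsilon\log\mfC(\Pi)+O_\epsilon(1)$.

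For $p\mid\mfq$, use $|\lambda_\Pi(p)|\le \mfr p^{\vartheta}$ with $\vartheta<1/2$ (from (\ref{uniformRC}) and \cite{LRS95}, or Kim--Sarnak (\ref{KimSarbdd})). Each term is then $\ll p^{2\vartheta-\sigma}$, which is bounded, and there are only $\omega(\mfq)$ such primes. Exponentiating $\prod_{p\mid\mfq}(1+O(1))$ gives a factor of at most $C^{\omega(\mfq)}\ll_\epsilon \mfq^\epsilon$ by (\ref{elefinitedivbdd}).

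To conclude, write
\begin{align*}
\Big|\prod_p(1\pm a_pp^{-s})\Big|\le \exp\Big(\sum_p |a_p|p^{-\sigma}\Big), \qquad a_p:=\lambda_{\Pi_1}(p)\lambda_{\Pi_2}(p),
\end{align*}
and apply the two bounds above to get $\exp\big(\epsilon\log(\mfC(\Pi_1)\mfC(\Pi_2))+O_\epsilon(1)\big)\ll_\epsilon(\mfC(\Pi_1)\mfC(\Pi_2))^{\epsilon}$ after renaming $\epsilon$. The ramified primes are already covered by the factor $\mfq^\epsilon$, and $\mfq\le\mfC(\Pi_i)$.

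The main obstacle is the $\epsilon$-dependence in the unramified part. I would use the convexity of $\log L(\sigma,\Pi\otimes\widetilde\Pi)$ in Li's bound at $\sigma=1+\epsilon$, where $(\sigma-1)^{-1}=\epsilon^{-1}$ is harmless. The factor $\mfC(\Pi)^{\epsilon}$ is then exactly what survives, since its logarithm is $\epsilon\log\mfC(\Pi)$, which exponentiates back to $\mfC(\Pi)^\epsilon$.
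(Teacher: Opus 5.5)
Your proposal is correct and is essentially the paper's argument: the paper says the lemma is immediate from the bound $\sum_{p\nmid\mfq}|\lambda_\Pi(p)|^2p^{-\sigma}\le\log L(\sigma,\Pi\otimes\widetilde{\Pi})$ in (\ref{trivrankcoefbdd}) together with Li's bound (\ref{Lisbound}). You have simply written out the steps it leaves implicit, namely the reduction $|\lambda_{\Pi_1}(p)\lambda_{\Pi_2}(p)|\le\frac12(|\lambda_{\Pi_1}(p)|^2+|\lambda_{\Pi_2}(p)|^2)$, the treatment of ramified primes via $\vartheta<1/2$, and the inequality $|1+z|\le e^{|z|}$; one small wording fix is that at $\sigma=1+\epsilon$ you need $\log L(\sigma,\Pi\otimes\widetilde{\Pi})$ to be decreasing in $\sigma$ (its Dirichlet coefficients are nonnegative), not convex.
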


\begin{proof}
This is obvious from (\ref{trivrankcoefbdd}) and (\ref{Lisbound}).
\end{proof}

\begin{rem}\label{Gl2Hfuncrem}
When $\Pi_{1}, \Pi_{2}\in \mathcal{A}_{0}(\mathrm{G}_{2})$, we have $ H_{p}(s, \Pi_{1}, \Pi_{2})=  \zeta_{p}(2s)^{-1}$
for $p\nmid \mfq$, and $H(s, \Pi_{1}, \Pi_{2})$ is holomorphic and non-vanishing on $\re s \ge 1/2$.    
\end{rem}

 When $\Pi_{1}, \Pi_{2}\in \mathcal{A}_{0}(\mathrm{G}_{3})$, we have the following evaluation.

\begin{lem}\label{lem: naiveRSexpl}
Let $\Pi_{1}, \Pi_{2}\in \mathcal{A}_{0}(\mathrm{G}_{3})$.
\begin{enumerate}
\item If $p\nmid \mfq$, then 
\begin{align}
    \hspace{30pt} H_{p}(s, \Pi_{1},  \Pi_{2}) 
    \, &= \, (1-p^{-3s})^2 - p^{-2s}(\lambda_{\widetilde{\Pi_{1}}}(p)-\lambda_{\Pi_{2}}(p)p^{-s})(\lambda_{\widetilde{\Pi_{2}}}(p)-\lambda_{\Pi_{1}}(p)p^{-s});   \label{unramHp} 
\end{align}
and if in addition, $\Pi_{1}\simeq \widetilde{\Pi_{2}} \simeq \Pi$, then  (\ref{unramHp}) simplifies into 
\begin{align}\label{eq: unrHpdualto}
    H_{p}(s, \Pi, \,  \widetilde{\Pi}) 
    \, &= \, (1-p^{-3s})^2 - |\lambda_{\Pi}(p)|^2 \big(p^{-s}(1-p^{-s})\big)^2. 
\end{align}

\item If $p\mid \mfq$, then
\begin{align}\label{ramH-func}
     \hspace{30pt} H_{p}(s, \Pi_{1}, \Pi_{2}) \, = \, \frac{L_{p}^{\mathrm{ur}}(s,\, \Pi_{1} \otimes \Pi_{2})}{L_{p}(s,\, \Pi_{1} \otimes \Pi_{2})}\, ( 1  -  (\lambda_{\Pi_{1}}(p)^2 \, - \, \lambda_{\Pi_{1}}(p^2) )(\lambda_{\Pi_{2}}(p)^2 \, - \, \lambda_{\Pi_{2}}(p^2) )p^{-2s}).
\end{align}
\end{enumerate}
\end{lem}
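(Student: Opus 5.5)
The plan is to reduce both parts to one identity of formal power series in $t=p^{-s}$, valid for arbitrary triples of Satake parameters $\underline{a}=(a_{1},a_{2},a_{3})$ and $\underline{b}=(b_{1},b_{2},b_{3})$. Write $e_{k}(\underline{a})$ for the elementary symmetric polynomials and $h_{n}(\underline{a})=\mathrm{S}_{(0,n)}(\underline{a})$ for the complete homogeneous ones, so that $\lambda_{\Pi_{1}}(p^{n})=h_{n}(\underline{a})$ and $\lambda_{\Pi_{2}}(p^{n})=h_{n}(\underline{b})$ by (\ref{taylorsatake}). The target identity is
\begin{align*}
\sum_{n\ge 0} h_{n}(\underline{a})h_{n}(\underline{b})t^{n} \, = \, \frac{N(t)}{\prod_{1\le i,j\le 3}(1-a_{i}b_{j}t)},
\end{align*}
where $N(t)$ is an explicit polynomial of degree at most $6$ in $t$. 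I expect
\begin{align*}
N(t) \, = \, (1-e_{3}(\underline{a})e_{3}(\underline{b})t^{3})^{2} - t^{2}\big(e_{2}(\underline{a})-e_{1}(\underline{b})e_{3}(\underline{a})t\big)\big(e_{2}(\underline{b})-e_{1}(\underline{a})e_{3}(\underline{b})t\big).
\end{align*}
The denominator $\prod_{i,j}(1-a_{i}b_{j}t)$ is exactly $L_{p}^{\mathrm{ur}}(s,\Pi_{1}\otimes\Pi_{2})^{-1}$ from (\ref{trueRSconv}). Hence the lemma amounts to specializing $N(t)$ in the unramified and ramified cases.

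To prove the identity, I would use the partial fraction form of (\ref{schurgl2})--(\ref{schurgl3}) for $\mathrm{S}_{(0,n)}$:
\begin{align*}
h_{n}(\underline{b}) \, = \, \sum_{j=1}^{3} \frac{b_{j}^{n+2}}{\prod_{k\ne j}(b_{j}-b_{k})}.
\end{align*}
Summing over $n$ first gives $\sum_{n} h_{n}(\underline{a})(b_{j}t)^{n}=\prod_{i}(1-a_{i}b_{j}t)^{-1}$. So the left-hand side equals
\begin{align*}
\sum_{j=1}^{3}\frac{b_{j}^{2}}{\prod_{k\ne j}(b_{j}-b_{k})}\prod_{i=1}^{3}(1-a_{i}b_{j}t)^{-1}.
\end{align*}
I then put this over the common denominator $\prod_{i,j}(1-a_{i}b_{j}t)$. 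The resulting numerator is a polynomial in $t$ of degree at most $6$, symmetric in $\underline{a}$, and (after the Vandermonde division) symmetric in $\underline{b}$. Its coefficients are therefore polynomials in the $e_{k}(\underline{a})$ and $e_{k}(\underline{b})$, and they can be matched term by term with the expansion of the claimed $N(t)$. This coefficient matching is the main obstacle. By hand it is tedious, so I would first check it by the same kind of \texttt{Mathematica} computation used for Lemma \ref{Heckasymp}. As an independent check I would compare low-order Taylor coefficients: the coefficients of $t^{0},t^{1},t^{2}$ follow directly from $h_{1}=e_{1}$ and $h_{2}=e_{1}^{2}-e_{2}$. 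I would also use the symmetry $t\mapsto 1/(e_{3}(\underline{a})e_{3}(\underline{b})t)$, which should make $N$ self-reciprocal up to the factor $(e_{3}(\underline{a})e_{3}(\underline{b}))^{3}t^{6}$; together with the low-order coefficients, this pins down the remaining ones.

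For part (1), take $p\nmid\mfq$. Then $e_{3}(\underline{a})=e_{3}(\underline{b})=\omega(p)=1$ by (\ref{trivcen}), and $e_{1}(\underline{b})=\lambda_{\Pi_{2}}(p)$. By (\ref{unrcontrae(p)}), $e_{2}(\underline{a})=\mathfrak{e}_{\Pi_{1}}(p)=\lambda_{\widetilde{\Pi_{1}}}(p)$, and similarly for $\Pi_{2}$. Since here $L_{p}=L_{p}^{\mathrm{ur}}$, we get $H_{p}=N(p^{-s})$, which is (\ref{unramHp}). If $\Pi_{1}\simeq\widetilde{\Pi_{2}}\simeq\Pi$, both brackets become $\overline{\lambda_{\Pi}(p)}(1-p^{-s})$ and $\lambda_{\Pi}(p)(1-p^{-s})$, and their product with $p^{-2s}$ gives (\ref{eq: unrHpdualto}).

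For part (2), take $p\mid\mfq$. Then $\omega(p)=0$, so $e_{3}=0$ for both representations, while $e_{2}=\lambda(p)^{2}-\lambda(p^{2})$ by (\ref{GL3secelem}). Hence $N(t)=1-e_{2}(\underline{a})e_{2}(\underline{b})t^{2}$. Dividing the identity by $L_{p}(s,\Pi_{1}\otimes\Pi_{2})$ instead of $L_{p}^{\mathrm{ur}}$ produces the ratio $L_{p}^{\mathrm{ur}}/L_{p}$ in front, which is exactly (\ref{ramH-func}).
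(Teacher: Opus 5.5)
Your proposal is correct and is essentially the paper's proof: \texttt{ShiftNaRS.nb} likewise reduces the polynomial $\prod_{i,j}(1-a_{i}b_{j}t)\sum_{n}h_{n}(\underline{a})h_{n}(\underline{b})t^{n}$ symbolically to a polynomial in $e_{1},e_{2}$ and the common value $e_{3}=\omega(p)$. It then specializes $\omega(p)\in\{1,0\}$ and simplifies via (\ref{unrcontrae(p)}), and your $N(t)$ is the correct numerator. The one flaw is in your optional hand check, and it does not affect the main argument.

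\begin{itemize}
\item The reciprocity you state is false already when $E:=e_{3}(\underline{a})e_{3}(\underline{b})=1$, since it would force $e_{1}(\underline{a})e_{1}(\underline{b})=e_{2}(\underline{a})e_{2}(\underline{b})$.
\item The correct version, for $E\neq 0$, is $N(t;\underline{a},\underline{b})=E^{2}t^{6}N(t^{-1};\underline{a}^{-1},\underline{b}^{-1})$.
\item Even this only pairs the coefficients of $t^{k}$ and $t^{6-k}$. So the $t^{3}$ coefficient $e_{1}(\underline{a})e_{2}(\underline{a})e_{3}(\underline{b})+e_{1}(\underline{b})e_{2}(\underline{b})e_{3}(\underline{a})-2E$ must still be computed directly, and the shortcut cannot replace the full computation.
\end{itemize}
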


 \begin{proof}
    See \texttt{ShiftNaRS.nb} (or Appendix \ref{sect:shiftedRS}) for the \texttt{Mathematica} implementation. When $p\nmid \mfq$, the result can be simplified by (\ref{unrcontrae(p)}) and $\overline{ab}-(|a|^2+|b|^2)x+ abx^2= (\overline{a}-bx)(\overline{b}-ax)$. 
\end{proof}

\begin{cor}\label{gl3holoprop}
Suppose $\Pi_{1}, \Pi_{2}\in \mathcal{A}_{0}(\mathrm{G}_{3})$.  Then $H(s, \Pi_{1}, \Pi_{2})$ is holomorphic on $\sigma> 1/2+\vartheta$, and $\mathcal{L}(s, \Pi_{1}\otimes \Pi_{2})$ is holomorphic on $\sigma> 1/2+\vartheta$, except for a simple pole at $s=1$ when $\Pi_{1}\simeq \widetilde{\Pi_{2}}$. 
\end{cor}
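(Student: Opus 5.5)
The plan is to work with the Euler product $H(s,\Pi_1,\Pi_2)=\prod_p H_p(s,\Pi_1,\Pi_2)$ using the explicit local formulas of Lemma \ref{lem: naiveRSexpl}. Once $H$ is known to be holomorphic on $\sigma>1/2+\vartheta$, the statement for $\mathcal{L}$ follows from the factorization
\begin{align*}
\mathcal{L}(s,\Pi_1\otimes\Pi_2)=L(s,\Pi_1\otimes\Pi_2)\,H(s,\Pi_1,\Pi_2),
\end{align*}
which holds on the region of absolute convergence by (\ref{naRScomp}). By \cite{JPSS_RS} and \cite{JS81}, $L(s,\Pi_1\otimes\Pi_2)$ is entire except for a simple pole at $s=1$, which occurs exactly when $\Pi_1\simeq\widetilde{\Pi_2}$. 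The product $L\cdot H$ then gives the meromorphic continuation of $\mathcal{L}$ with the asserted pole structure. We only claim holomorphy, not non-vanishing, so a zero of $H$ at $s=1$ would merely cancel the pole; the "except for" clause is therefore an upper bound on the singularities.

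For the unramified primes $p\nmid\mfq$, expand (\ref{unramHp}) as
\begin{align*}
H_p = 1 - p^{-2s}\lambda_{\widetilde{\Pi_1}}(p)\lambda_{\widetilde{\Pi_2}}(p) + p^{-3s}\big(|\lambda_{\Pi_1}(p)|^2+|\lambda_{\Pi_2}(p)|^2\big) - p^{-4s}\lambda_{\Pi_1}(p)\lambda_{\Pi_2}(p) - 2p^{-3s}+p^{-6s}.
\end{align*}
Here we use $\lambda_{\widetilde{\Pi}}(p)=\overline{\lambda_\Pi(p)}$ from (\ref{unrcontrae(p)}). Each $|\lambda_{\Pi_i}(p)|\le 3p^{\vartheta}$ by (\ref{satabdd}), so with $\sigma=\re s$ every non-constant term is $\ll p^{2\vartheta-2\sigma}$ or smaller. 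On $\sigma\ge 1/2+\vartheta+\delta$ this gives $p^{2\vartheta-2\sigma}\le p^{-1-2\delta}$, which is summable over $p$. The exponent check for the other terms:
\begin{itemize}
\item $p^{2\vartheta-3\sigma}$ is smaller still, since $\sigma>0$;
\item $p^{2\vartheta-4\sigma}$ and $p^{-3\sigma}$ are likewise smaller;
\item $\vartheta<1/2$ by \cite{LRS95}, so even the constant-coefficient terms $p^{-3\sigma}$ have exponent below $-1$.
\end{itemize}
Lemma \ref{absunifin} then gives absolute, locally uniform convergence of $\prod_{p\nmid\mfq}H_p$ on $\sigma\ge 1/2+\vartheta+\delta$, and since each factor is entire in $s$, uniform limits of holomorphic functions are holomorphic. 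Taking $\delta\to0$ gives holomorphy on the open half-plane.

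For the finitely many ramified primes $p\mid\mfq$, I use (\ref{ramH-func}). The ratio $L^{\mathrm{ur}}_p/L_p$ equals $L_p(s,\Pi_1\otimes\Pi_2)^{-1}$, which is a polynomial in $p^{-s}$ and hence entire, times
\begin{align*}
L^{\mathrm{ur}}_p(s,\Pi_1\otimes\Pi_2)=\prod_{i,j}\big(1-\alpha_{\Pi_1,i}(p)\alpha_{\Pi_2,j}(p)p^{-s}\big)^{-1}.
\end{align*}
Its poles lie where $|p^{-s}|=|\alpha_{\Pi_1,i}\alpha_{\Pi_2,j}|^{-1}$, and by (\ref{satabdd}) this product has modulus at most $p^{2\vartheta}$, so the poles satisfy $\sigma\le 2\vartheta$. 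Since $2\vartheta<1/2+\vartheta$, these are harmless. The remaining factor in (\ref{ramH-func}) is a polynomial and so causes no trouble.

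The main obstacle, mild as it is, is simply to confirm that the dominant unramified term is $p^{-2s}\lambda\lambda$ of size $p^{2\vartheta-2\sigma}$. This term is what sets the threshold $1/2+\vartheta$, and it is why the explicit cancellation in Lemma \ref{lem: naiveRSexpl} is needed: there is no $p^{-s}$ term. A naive expansion using only (\ref{rawnaiveexpand}) would leave a $p^{-s}$ term and would not reach this half-plane.
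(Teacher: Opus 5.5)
Your proposal is correct and follows the paper's argument. The paper likewise uses the explicit unramified formula (\ref{unramHp}) to get $H_p(s,\Pi_1,\Pi_2)=1+O(p^{-2(\sigma-\vartheta)})$, applies Lemma \ref{absunifin} to obtain holomorphy of $H^{(\mfq)}$ on $\sigma>1/2+\vartheta$, and deduces the statement for $\mathcal{L}(s,\Pi_1\otimes\Pi_2)$ from the factorization (\ref{naRScomp}). Your explicit treatment of the ramified factors via (\ref{ramH-func}), whose poles lie only on $\sigma\le 2\vartheta<1/2+\vartheta$, fills in a step the paper leaves implicit, and your reading of the pole clause as an upper bound matches what the paper's proof actually establishes (the non-vanishing $H(1,\Pi,\widetilde{\Pi})\neq 0$ is proved separately in Corollary \ref{Hlocalnonvan}).
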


\begin{proof}
Suppose $\sigma >1/2+\vartheta$. From (\ref{unramHp}),  we have $  H_{p}(s, \Pi_{1},   \Pi_{2})   = 1+O(p^{-2(\sigma-\vartheta)})$ for $p\nmid \mfq$. By Lemma \ref{absunifin}, the infinite product $ H^{(\mfq)}(s, \Pi_{1},  \Pi_{2})$ converges absolutely and uniformly on compacta (and hence holomorphic) on $\sigma > 1/2+\vartheta$. The holomorphy for $\mathcal{L}(s, \Pi_{1}\otimes \Pi_{2})$ follows from (\ref{naRScomp}). 
\end{proof}

\begin{cor}\label{Hlocalnonvan}
 For any $\Pi\in \mathcal{A}_{0}(\mathrm{G}_{3})$, we have $H(1, \Pi, \widetilde{\Pi})\neq 0$.  For any $\Pi_{1}, \Pi_{2}\in \mathcal{A}_{0}(\mathrm{G}_{3})$ with conductor $\mfq$ divisible by $30$, on the half-plane $\sigma \ge 1-10^{-3}$, we have $ |H(s, \Pi_{1}, \Pi_{2})| \, \gg_{\epsilon} \, \mfq^{-\epsilon}$.
\end{cor}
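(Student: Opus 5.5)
The plan is to treat $H(s,\Pi_1,\Pi_2)=\prod_p H_p(s,\Pi_1,\Pi_2)$ as an absolutely convergent Euler product and to reduce both claims to the behaviour of individual local factors. By (\ref{unramHp}), (\ref{epiboundgl3}) and (\ref{KimSarbdd}) we have $H_p(s)=1+O(p^{-2(\sigma-\vartheta_3)})$ for $p\nmid\mfq$. On $\sigma\ge 1-10^{-3}$ the exponent is $2(\sigma-5/14)\ge 1.28>1$, so Lemma \ref{absunifin} gives absolute and uniform convergence of $H^{(\mfq)}$, and $|\prod_{p>P_0}H_p|\asymp 1$ for an absolute $P_0$. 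For an absolutely convergent product of this type, $H(s)=0$ if and only if some factor $H_p(s)=0$. So both statements become local statements.

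\textbf{Nonvanishing at $s=1$.} Here I will use $H_p=\mathcal{L}_p\,L_p^{-1}$ from (\ref{naRScomp}), valid at every prime, including $p\mid\mfq$. Two facts are needed.
\begin{enumerate}
\item $L_p(1,\Pi\otimes\widetilde\Pi)^{-1}$ is a polynomial in $p^{-1}$ whose roots are controlled by (\ref{genRSbdd}): $|\alpha(p;i,j)|\le p^{2\vartheta}<p$. Hence $L_p(1,\Pi\otimes\widetilde\Pi)^{-1}$ is finite and nonzero.
\item By (\ref{Satacontra}), $\lambda_{\widetilde\Pi}(p^n)=\overline{\lambda_\Pi(p^n)}$ for every $p$. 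Therefore
$\mathcal{L}_p(1,\Pi\otimes\widetilde\Pi)=\sum_{n\ge0}|\lambda_\Pi(p^n)|^2p^{-n}\ge 1,$
which is real and positive. This positivity is the place where the unitarity of $\Pi$ enters.
\end{enumerate}
Hence every $H_p(1,\Pi,\widetilde\Pi)\neq0$, and by the convergence above $H(1,\Pi,\widetilde\Pi)\ne0$.

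\textbf{Uniform lower bound when $30\mid\mfq$.} Split the primes into unramified and ramified ones.

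For the unramified primes, $30\mid\mfq$ forces $p\ge7$. I will bound $|H_p(s)-1|$ directly from (\ref{unramHp}) using $|\lambda_{\Pi_i}(p)|\le 3p^{\vartheta_3}$. This gives
$$|H_p(s)-1|\le 2p^{-3\sigma}+p^{-6\sigma}+9p^{2\vartheta_3-2\sigma}(1+p^{-\sigma})^2 .$$
At $p=7$ and $\sigma=1-10^{-3}$ the main term is about $9\cdot7^{-1.284}(1.14)^2\approx0.97$, and the remaining terms are about $0.006$. The bound is decreasing in $p$ and $\sigma$. So $|H_p-1|\le 1-\delta$ for an absolute $\delta>0$ and all $p\ge7$, and it is $\ll p^{-1.28}$ for large $p$. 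Lemma \ref{absunifin} then gives $|H^{(\mfq)}(s)|\gg 1$ uniformly, with an absolute implied constant.

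\textbf{Main obstacle.} I expect this unramified step to be the hardest, because the numerics at $p=7$ are tight. This is presumably why $2,3,5$ are forced to be ramified. I will first check the $p=7$ (and $p=11$) case carefully, possibly sharpening the bound via $|\lambda(p)|^2\le\sum|\alpha_i|^2$.

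For the ramified primes $p\mid\mfq$, I will use (\ref{ramH-func}). Each of the three factors is bounded below by an absolute constant $c>0$:
\begin{enumerate}
\item $|L_p(s,\Pi_1\otimes\Pi_2)^{-1}|\ge(1-p^{2\vartheta-\sigma})^{9}$, by (\ref{genRSbdd});
\item $|L_p^{\mathrm{ur}}(s)|\ge(1+p^{2\vartheta-\sigma})^{-9}$;
\item $|1-\mathfrak e_{\Pi_1}(p)\mathfrak e_{\Pi_2}(p)p^{-2s}|\ge 1-p^{4\vartheta_3-2\sigma}\ge1-2^{-0.57}$, by (\ref{epiboundgl3}).
\end{enumerate}
Multiplying over the $\omega(\mfq)$ ramified primes gives $|H_{\mfq}(s)|\ge c^{\omega(\mfq)}\gg_\epsilon\mfq^{-\epsilon}$, by (\ref{elefinitedivbdd}). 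Combining this with the unramified bound proves the claim.
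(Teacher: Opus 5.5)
Your proposal is correct, but for the nonvanishing at $s=1$ you take a different route from the paper.

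\textbf{Lower bound when $30\mid\mfq$.} Here you follow the paper essentially verbatim. The unramified estimate $|H_p-1|\le 2p^{-3\sigma}+p^{-6\sigma}+9p^{2\vartheta-2\sigma}(1+p^{-\sigma})^2$ for $p\ge 7$ is the paper's bound. Your numerics at $p=7$ (about $0.973<1$) are right, so the step you flagged as the main obstacle needs no sharpening. The ramified primes are handled through (\ref{ramH-func}) and (\ref{genRSbdd}) as in the paper, though you make the constants more explicit.

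\textbf{Nonvanishing at $s=1$.}
\begin{itemize}
\item \emph{The paper's argument.} It specializes the explicit formula (\ref{eq: unrHpdualto}) to $s=1$ and factors it as $p^{-2}(1-p^{-1})^2\prod_{\pm}(p+1+p^{-1}\pm|\lambda_\Pi(p)|)$. It then uses the shape of unitary $\mathrm{PGL}_3$ Satake parameters, namely $|\alpha_2|=1$, $|\alpha_3|=|\alpha_1|^{-1}$ and $|\alpha_1|\le p^{1/2}$, to show each factor is positive. The ramified primes are covered separately by the two-sided bound on the finite product $H_{\mfq}$.
\item \emph{Your argument.} You write $H_p(1)=\mathcal{L}_p(1,\Pi\otimes\widetilde\Pi)\,L_p(1,\Pi\otimes\widetilde\Pi)^{-1}$. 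By (\ref{Satacontra}), the first factor equals $\sum_n|\lambda_\Pi(p^n)|^2p^{-n}\ge 1$. The second factor is nonzero because $|\alpha(p;i,j)|\le p^{2\vartheta}<p$.
\end{itemize}

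\textbf{Comparison.} Your route treats ramified and unramified primes uniformly. It needs no explicit formula for $H_p$, and it works unchanged for $\mathrm{GL}_r$ of any rank as long as $\vartheta<1/2$. It also makes the paper's phrase ``by the unitarity of $\Pi$'' transparent: unitarity enters exactly through $\lambda_{\widetilde\Pi}=\overline{\lambda_\Pi}$. The paper's route is specific to $\mathrm{GL}_3$ and instead reuses the formula already computed in Lemma \ref{lem: naiveRSexpl}. At unramified primes that formula also gives the exact value of $H_p(1,\Pi,\widetilde\Pi)$.
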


\begin{proof}
The finite product $ H_{\mfq}(s, \, \Pi_{1}, \Pi_{2})$ is bounded above and below by $\mfq^{\pm \epsilon}$, which follows from (\ref{ramH-func}), (\ref{formalRS}) and (\ref{genRSbdd}). 

Suppose that $p\nmid \mfq$. Putting $s=1$ in (\ref{eq: unrHpdualto}), we have
\begin{align*}
     H_{p}(1, \Pi, \,  \widetilde{\Pi}) 
    \, &= \, p^{-2}(1-p^{-1})^2\prod_{\pm} \, (p+1+p^{-1}\pm |\lambda_{\Pi}(p)|).
\end{align*}
Let $\{\alpha_{1},\alpha_{2}, \alpha_{3}\}$ be the Satake parameters of $\Pi_{p}$. We may assume that $|\alpha_{1}|\ge |\alpha_{2}|\ge |\alpha_{3}|$. By (\ref{eqn: unrecip}) and (\ref{trivcen}), we have $|\alpha_{2}|=1$ and $|\alpha_{3}|^{-1}=|\alpha_{1}|\ge 1$. It follows that
\begin{align*}
    p+1+p^{-1} - |\lambda_{\Pi}(p)| \ \ge \ p + p^{-1} -|\alpha_{1}|-|\alpha_{1}|^{-1} \ \ge \ p + p^{-1} -p^{1/2}-p^{-1/2} \, > \, 0
\end{align*}
Hence, we conclude that $H(1, \Pi, \widetilde{\Pi})\neq 0$.

Suppose that $30\mid \mfq$ and $p\nmid \mfq$. We must have $p\ge 7$. Then by (\ref{uniformRC}) and $\vartheta\le 1/2-1/7$,
\begin{align}
     |H_{p}(s,\, \Pi_{1}, \Pi_{2})| \, \ge \, 1- p^{6\cdot 10^{-3}}\big( 2p^{-3} +p^{-6} + 9p^{-2(1-\vartheta)} (1+p^{-1})^2\big) \, > \, 0.\nonumber
\end{align}
The second result now follows from $  \,  |H^{(\mfq)}(s,\, \Pi_{1}, \Pi_{2})| \, \gg \, 1.$
\end{proof}

We will frequently encounter the following local factor. For any $d_{1}, d_{2}\ge 0$, we let
\begin{align}\label{dshiftnaiveDsdef}
    \mathcal{L}_{p}(s;\,  \Pi_{1},   \Pi_{2}; \, d_{1}, d_{2}) \, := \, \sum_{n= 0}^{\infty} \, \frac{\lambda_{\Pi_{1}}(p^{n+ d_{1}})\lambda_{\Pi_{2}}(p^{n+d_{2}})}{p^{ns}}. 
\end{align}
In particular, we have $ \mathcal{L}_{p}(s;\,  \Pi_{1},   \Pi_{2}; \, 0,0)=\mathcal{L}_{p}(s, \Pi_{1}\otimes \Pi_{2})$. We shall need an explicit evaluation of (\ref{dshiftnaiveDsdef}) as in Lemma \ref{lem: naiveRSexpl}. Although the formulae appear somewhat involved, it is quite easy to apply them in practice with the help of \texttt{Mathematica}.

\begin{lem}\label{lem: diffby1RSn}
Suppose $\Pi_{1}, \Pi_{2}\in \mathcal{A}_{0}(\mathrm{G}_{\mfr})$ ($\mfr\in \{2,3\}$). Let $d\in \N$, $\nu\in \{0,1\}$, $p$ prime, and $\sigma>2\vartheta$. There is an explicit polynomial $\mathcal{P}$ in $p^{-s}$ such that
\begin{align}
     \mathcal{L}_{p}(s;\,  \Pi_{1},   \Pi_{2}; \, d,\nu)  \ = \   L_{p}^{\mathrm{ur}}(s, \Pi_{1}\otimes \Pi_{2})\mathcal{P}(p^{-s}; \Pi_{1}, \Pi_{2}; d, \nu).
\end{align}
The degree of $\mathcal{P}$ is at most $2$, $3$, $6$, $8$ for $(\mfr,\nu)=$ $(2,0)$, $(2,1)$, $(3,0)$, $(3,1)$, respectively. 

Moreover, there is an explicit multiplicative function $\mathfrak{f}_{s,\Pi_{1}, \Pi_{2}}: \N\rightarrow \C$ such that 
 \begin{align}\label{convshiftRS}
          \mathcal{L}_{p}(s;\,  \Pi_{1},   \Pi_{2}; \, d,0) \ = \ L_{p}^{\mathrm{ur}}(s, \Pi_{1}\otimes \Pi_{2})(\lambda_{\Pi_{1}}*\mathfrak{f}_{s,\Pi_{1}, \Pi_{2}})(p^d).
    \end{align}
  For $\re s\ge 1$, $n=n_{0}n'$ with $(n',\mfq)=1$ and $n_{0}\mid \mfq^{\infty}$, and $p$ prime,  the following bounds hold:
    \begin{align}
        |\, \mathfrak{f}_{s,\Pi_{1}, \Pi_{2}}(n)| \ &\ll_{\epsilon} \  n^{\epsilon} \, \begin{cases}
          \hspace{10pt}  n^{-1+\vartheta} \hspace{32pt} \text{if} \hspace{5pt} \mfr \, = \, 2\\
    n^{-1+2\vartheta}n_{0}^{\vartheta} \hspace{25pt} \text{if} \hspace{5pt} \mfr \, = \, 3,
     \end{cases}\label{convshbddarith}\\[5pt]
        &\hspace{-20pt} |\mathcal{L}_{p}(s;\,  \Pi_{1},   \Pi_{2}; \, 1,1)| \ \ll \ p^{2\vartheta}.\label{powerRStartfrom1}
    \end{align}
\end{lem}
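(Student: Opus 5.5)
The plan is to work entirely at a single prime $p$ with the generating-function trick used in Lemma \ref{Heckasymp}. Write $X:=p^{-s}$ and let $\underline{\alpha}=(\alpha_i)$, $\underline{\beta}=(\beta_j)$ be the Satake parameters of $\Pi_{1,p}$, $\Pi_{2,p}$. By (\ref{taylorsatake}), $\lambda_{\Pi_1}(p^{n+d})=\sum_i c_i\alpha_i^{n+d}$ with $c_i=\alpha_i^{\mfr-1}/\prod_{k\ne i}(\alpha_i-\alpha_k)$; this is the partial-fraction form of the Schur polynomials (\ref{schurgl2})--(\ref{schurgl3}), valid first for distinct $\alpha_i$ and then extended by continuity. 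The same holds for $\Pi_2$. Summing the geometric series in $n$ gives
\[
\mathcal{L}_{p}(s;\Pi_1,\Pi_2;d,\nu)=\sum_{i,j}\frac{c_i c'_j\alpha_i^{d}\beta_j^{\nu}}{1-\alpha_i\beta_jX},
\]
which converges for $\sigma>2\vartheta$ by (\ref{satabdd}). Multiplying by $L_p^{\mathrm{ur}}(s,\Pi_1\otimes\Pi_2)^{-1}=\prod_{i,j}(1-\alpha_i\beta_jX)$ clears the denominators and yields a polynomial $\mathcal{P}$ in $X$. Each term has degree at most $\mfr^2-1$, which is already enough for $(\mfr,\nu)=(2,1)$ and $(3,1)$ (degrees $3$ and $8$).

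For $\nu=0$ the degree can be pushed lower. Using the recurrences (\ref{Hecrecur}) and (\ref{gl3: Hecrecurrel}) in the variable $d$, in the manner of Lemma \ref{Heckasymp}, I will show that $\mathcal{P}(X;d,0)$ satisfies the same linear recurrence in $d$ as $\lambda_{\Pi_1}(p^d)$. It follows that $\mathcal{P}(X;d,0)=\sum_{k}\lambda_{\Pi_1}(p^{d-k})\mathfrak{f}(p^k)$, where the $\mathfrak{f}(p^k)$ are determined from the initial values $\mathcal{P}(X;0,0),\dots,\mathcal{P}(X;\mfr-1,0)$ by solving a triangular system. Concretely, $\mathfrak{f}_p(t):=\sum_d\mathcal{P}(X;d,0)t^d\cdot L_p(\cdot,\Pi_1)^{-1}|_{p^{-u}=t}$ is a polynomial in $t$ of degree less than $\mfr$. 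This gives (\ref{convshiftRS}), with $\mathfrak{f}$ defined multiplicatively through its values $\mathfrak{f}(p^k)$, which are polynomials in $X$ whose coefficients are Hecke data of $\Pi_{1,p}$ and $\Pi_{2,p}$. The degree bounds $2$ and $6$ are then read off from explicit evaluation, which I would carry out in \texttt{Mathematica} as in Lemma \ref{lem: naiveRSexpl}. Heuristically, $d=0$ gives $H_p$, of degree $\mfr(\mfr-1)$ by (\ref{unramHp}), and shifting $d$ does not raise the degree.

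For the bounds, I will inspect the explicit $\mathfrak{f}(p^k)$ for $k\ge1$: each monomial carries at least one factor $X$, so $|X|\le p^{-1}$ when $\re s\ge1$. Paired with the Satake bounds (\ref{satabdd}), this gives $|\mathfrak{f}(p^k)|\ll p^{-k+\vartheta k}$ for $\mfr=2$; for $\mfr=3$ it gives $p^{k(-1+2\vartheta)}$ at unramified primes, where (\ref{unrcontrae(p)}) and (\ref{epiboundgl3}) control $\mathfrak{e}_\Pi(p)$. At ramified primes the weaker $|\mathfrak{e}_\Pi(p)|\le p^{2\vartheta}$ costs the additional $n_0^{\vartheta}$. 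Multiplicativity and $\prod_{p\mid n}C\ll n^\epsilon$ then give (\ref{convshbddarith}). For (\ref{powerRStartfrom1}) I will write $\lambda(p^{n+1})\lambda(p^{n+1})$ and apply (\ref{uniformRC}): the sum is dominated by its $n=0$ term $\lambda_{\Pi_1}(p)\lambda_{\Pi_2}(p)\ll p^{2\vartheta}$, while the tail is $\sum_n p^{(n+1)2\vartheta-n}\ll p^{2\vartheta}$ since $\vartheta<1/2$.

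The main obstacle I expect is the $\mfr=3$ bookkeeping. This covers the precise degree bounds and especially the ramified-prime bound on $\mathfrak{f}$, where Satake parameters may vanish and there is no unitarity relation (\ref{eqn: unrecip}), so cancellations have to be checked by explicit symbolic computation rather than by structural arguments.
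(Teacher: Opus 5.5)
Your route differs from the paper's and is mostly sound, but your construction of $\mathfrak f$ is mis-normalized and your argument for the bounds is not the right mechanism.

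\textbf{Comparison with the paper.} The paper has \texttt{Mathematica} compute $L_p^{\mathrm{ur}}(s,\Pi_1\otimes\Pi_2)^{-1}\mathcal L_p(s;\Pi_1,\Pi_2;d,\nu)$ for symbolic $d$. It writes the result in terms of $\lambda_{\Pi_1}(p^a,p^b)$, converts to Hecke eigenvalues with (\ref{FourtoHeckgl3}) (Jacobi--Trudi plus the Hecke recurrence at $p\mid\mfq$), and reads off $\mathfrak f(p^j)$ and the bounds by inspection. Your partial fractions give the existence of $\mathcal P$ and the $\nu=1$ degrees $\mfr^2-1$ by hand. Your proof of (\ref{powerRStartfrom1}) directly from (\ref{uniformRC}) is correct and simpler than the paper's appeal to the code. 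The $\nu=0$ degrees also need no computer. Expanding only $\lambda_{\Pi_1}$ gives
\[
\mathcal P(X;d,0)=\sum_i c_i\alpha_i^{d}\prod_{k\neq i}\prod_j(1-\alpha_k\beta_jX),
\]
which has degree $\mfr(\mfr-1)$. It is visibly of the form $\sum_i A_i(X)\alpha_i^d$, so your recurrence in $d$ is immediate.

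\textbf{The normalization of $\mathfrak f$.} Set $F(t):=\prod_i(1-\alpha_it)\sum_{d\ge0}\mathcal P(X;d,0)t^d$. Its constant term is $\mathcal P(X;0,0)=\mathcal L_p(s,\Pi_1\otimes\Pi_2)/L_p^{\mathrm{ur}}(s,\Pi_1\otimes\Pi_2)$, for example $1-\omega(p)X^2$ when $\mfr=2$. So $F$ is not the local factor of a multiplicative function; starting the sum at $d=1$ instead makes the constant term $0$. In fact (\ref{convshiftRS}) fails at $d=0$ for every multiplicative $\mathfrak f$, and holds only for $d\ge1$, which is all the lemma claims. The right object is
\[
\mathfrak f_p(t)=\prod_i(1-\alpha_it)\Big(1+\sum_{d\ge1}\mathcal P(X;d,0)t^d\Big)=F(t)+\big(1-\mathcal P(X;0,0)\big)\prod_i(1-\alpha_it).
\]
This is the unique multiplicative choice valid for all $d\ge1$, and its degree in $t$ is $\mfr$, not $<\mfr$. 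For $\mfr=2$ it gives exactly the paper's $\mathfrak f(p)=-\omega(p)X\lambda_{\Pi_2}(p)$ and $\mathfrak f(p^2)=\omega(p)X^2$, which your ``degree $<\mfr$'' claim would miss.

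This uniqueness is a real advantage of your method. For $\mfr=3$ and $p\nmid\mfq$ it forces $\mathfrak f(p^k)=0$ for $k\ge4$. The paper's table, read off from the generic-$d$ formula, has $\mathfrak f(p^4)\neq0$. With the ordinary Dirichlet convolution it reproduces $\mathcal P(X;d,0)$ only for $d\ge2$; at $d=1$ it misses exactly the terms that make up its $\mathfrak f(p^4)$.

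\textbf{The bounds.} One factor of $X$ only gives $p^{-1}$, but for $\mfr=3$ you need $|\mathfrak f(p^k)|\ll p^{k(-1+2\vartheta)}$. That requires an exponent count monomial by monomial, and the count depends on the size of $\vartheta$. With the corrected $\mathfrak f$ and $p\nmid\mfq$, one has $\mathfrak f(p^3)=-(1-H_p(s,\Pi_1,\Pi_2))$. By (\ref{unramHp}) this contains $-X^2\lambda_{\widetilde{\Pi_1}}(p)\lambda_{\widetilde{\Pi_2}}(p)$, of size $p^{-2+2\vartheta}$. That is $\ll p^{3(-1+2\vartheta)}$ only when $\vartheta\ge1/4$. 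This holds for the value $\vartheta=5/14$ the paper works with; its own $\mathfrak f(p^3)$ contains the same term, and its code takes $5/14<\vartheta<1/2$. You should make this dependence explicit.

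With these two corrections, the remaining steps go through: multiplicativity, and the extra factor $n_0^{\vartheta}$ at $p\mid\mfq$ coming from $|\mathfrak e_\Pi(p)|\le p^{2\vartheta}$.
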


 \begin{proof}
The \texttt{Mathematica} code used to compute the polynomials $\mathcal{P}$ is included in  \texttt{ShiftNaRS.nb} (or Appendix \ref{sect:shiftedRS}). When $\Pi_{1}, \Pi_{2}\in \mathcal{A}_{0}(\mathrm{G}_{2})$, we have
    \begin{align}
\mathcal{P}(X;\, \Pi_{1}, \Pi_{2}; d, 0) \ &= \    \lambda_{\Pi_{1}}(p^{d})-X \omega(p) \lambda_{\Pi_{1}}(p^{d-1})\lambda_{\Pi_{2}}(p) + X^2\omega(p)\lambda_{\Pi_{1}}(p^{d-2}),\label{gl2naiRSshif0}\\[5pt]
\mathcal{P}(X;\, \Pi_{1}, \Pi_{2}; d, 1) \ &= \ \lambda_{\Pi_{1}}(p^d)\lambda_{\Pi_{2}}(p)  - 
 X \omega(p)\big(\lambda_{\Pi_{1}}(p^{d-1})\lambda_{\Pi_{2}}(p)^2  +\lambda_{\Pi_{1}}(p^{d+1})\big)\nonumber\\
 &\hspace{45pt} + X^2 \omega(p) (\lambda_{\Pi_{1}}(p^{d-2}) + \lambda_{\Pi_{1}}(p^{d}) )\lambda_{\Pi_{2}}(p)-X^3\omega(p)\lambda_{\Pi_{1}}(p^{d-1}). \label{gl2naivsh1DS}
\end{align}
Let $\mathfrak{e}_{\Pi_{2}}(p)$ be defined in (\ref{GL3secelem}). When $\Pi_{1}, \Pi_{2}\in \mathcal{A}_{0}(\mathrm{G}_{3})$, we have
\begin{align}
      \mathcal{P}(X;\, \Pi_{1}, \Pi_{2}; d, 0)  \ = \    &\lambda_{\Pi_{1}}(p^d) \, - \, X\,\lambda_{\Pi_{1}}(p,p^{d-1})\lambda_{\Pi_{2}}(p)     \nonumber\\
    \, &\, + \, X^2\big\{ \, \omega(p)\lambda_{\Pi_{1}}(p^{d-1})\lambda_{\Pi_{2}}(p)^2 + \lambda_{\Pi_{1}}(p^2, p^{d-2})\mathfrak{e}_{\Pi_{2}}(p)\big\} \nonumber\\
      \, &\, + \, X^3\omega(p)\big\{\lambda_{\Pi_{1}}(p, p^{d-2}) (1-\lambda_{\Pi_{2}}(p)\lambda_{\widetilde{\Pi_{2}}}(p)) - \lambda_{\Pi_{1}}(p^3, p^{d-3}) \big\} \nonumber\\
       \, &\, + \,X^4\omega(p)\big\{ \lambda_{\Pi_{1}}(p^{d-2}) \lambda_{\widetilde{\Pi_{2}}}(p)^2+ (-\lambda_{\Pi_{1}}(p^{d-2}) + \lambda_{\Pi_{1}}(p^2, p^{d-3}) \lambda_{\Pi_{2}}(p)\big)\big\} \nonumber\\
       \, &\, - \, X^5\omega(p) \lambda_{\Pi_{1}}(p, p^{d-3})\lambda_{\widetilde{\Pi_{2}}}(p) \, + \, X^6\omega(p)\lambda_{\Pi_{1}}(p^{d-3}). 
      \label{eq: diffbygendRSn}
\end{align}
The expression of the polynomial $ \mathcal{P}(X;\, \Pi_{1}, \Pi_{2}; d, 1)$ is too lengthy to reproduce here.

We write $X=p^{-s}$ and $\mathfrak{f}=\mathfrak{f}_{s,\Pi_{1}, \Pi_{2}}$ in the following. Suppose that $\Pi_{1}, \Pi_{2}\in \mathcal{A}_{0}(\mathrm{G}_{2})$.  From (\ref{gl2naiRSshif0}), observe that (\ref{convshiftRS}) holds if $\mathfrak{f}$ is defined, on prime powers, by
\begin{align}
    \mathfrak{f}(1) \ := \ 1, \hspace{10pt}  \mathfrak{f}(p) \, := \,  -X\omega(p)\lambda_{\Pi_{2}}(p), \hspace{10pt} \mathfrak{f}(p^2) \, := \, X^2\omega(p) , \hspace{10pt} \mathfrak{f}(p^j) \, = \, 0 \hspace{10pt} (j\, \ge \, 3).
\end{align}
It is clear that $|\mathfrak{f}(p^j)| \ll (p^{j})^{-1+\vartheta}$ and (\ref{convshbddarith}) follows from multiplicativity. The bound (\ref{powerRStartfrom1}) is apparent from (\ref{gl2naivsh1DS}) and (\ref{uniformRC}). 

Suppose that $\Pi_{1}, \Pi_{2}\in \mathcal{A}_{0}(\mathrm{G}_{3})$. We first treat the unramified primes $p\nmid \mfq$. By \texttt{Mathematica} (or using (\ref{eq: diffbygendRSn}) with (\ref{FourtoHeckgl3})), we have (\ref{convshiftRS}) with $\mathfrak{f}$ chosen as follows:
\begin{align}\label{dthnaiveLbound}
  \mathfrak{f}(p^j) \, := \, \begin{cases}
       \hspace{90pt} 1\hspace{215pt} \text{ if } j\, = \, 0\\
      - X \lambda_{\Pi_{2}}(p) \lambda_{\widetilde{\Pi_{1}}}(p)+ X^2 (\lambda_{\Pi_{2}}(p)^2 - \lambda_{\widetilde{\Pi_{2}}}(p) )\hspace{102pt} \text{ if } j\, = \, 1\\[5pt]
       X \lambda_{\Pi_{2}}(p)+
  X^2 \lambda_{\widetilde{\Pi_{2}}}(p) \lambda_{\widetilde{\Pi_{1}}}(p^2) - X^4 (\lambda_{\Pi_{2}}(p) -  \lambda_{\widetilde{\Pi_{2}}}(p)^2 )\\
   \hspace{20pt}+X^3 (\lambda_{\widetilde{\Pi_{1}}}(p) - |\lambda_{\Pi_{2}}(p) |^2\lambda_{\widetilde{\Pi_{1}}}(p)) \hspace{135pt}\text{ if } j\, = \, 2\\[5pt]
  - X^2 \lambda_{\widetilde{\Pi_{2}}}(p) \lambda_{\widetilde{\Pi_{1}}}(p)
  -X^3 (1-|\lambda_{\Pi_{2}}(p)|^2+\lambda_{\widetilde{\Pi_{1}}}(p^3))\\
  \hspace{20pt }+ 
 X^4 \lambda_{\Pi_{2}}(p) \lambda_{\widetilde{\Pi_{1}}}(p^2)  - X^5\lambda_{\widetilde{\Pi_{2}}}(p) \lambda_{\widetilde{\Pi_{1}}}(p)+X^6   \hspace{73pt}\text{ if } j\, = \, 3\\[5pt]
\hspace{10pt} X^3 \lambda_{\widetilde{\Pi_{1}}}(p^2)- X^4 \lambda_{\Pi_{2}}(p) \lambda_{\widetilde{\Pi_{1}}}(p) +
 X^5 \lambda_{\widetilde{\Pi_{2}}}(p) \hspace{92pt}\text{ if } j\, = \, 4\\
  \hspace{90pt} 0\hspace{218pt} \text{ if } j\, \ge \, 5. 
  \end{cases}
\end{align}
By inspection (or using our code), we have $|\mathfrak{f}(p^j)|\ll (p^j)^{-1+2\vartheta}$. Next, suppose that $p\mid \mfq$. Then  \begin{align}
     \mathcal{P}(X;\, \Pi_{1}, \Pi_{2}; d, 0)  \ = \    &\lambda_{\Pi_{1}}(p^d) \, - \, X\,\lambda_{\Pi_{1}}(p,p^{d-1})\lambda_{\Pi_{2}}(p)      + \, X^2\lambda_{\Pi_{1}}(p^2, p^{d-2})\mathfrak{e}_{\Pi_{2}}(p)
\end{align}
from (\ref{eq: diffbygendRSn}).
Applying (\ref{JacobiTrud}) followed by (\ref{gl3: Hecrecurrel}), we once again have (\ref{convshiftRS}), but this time, $|\mathfrak{f}(p^j)|\ll (p^j)^{-1+3\vartheta}$. The bound (\ref{powerRStartfrom1}) can likewise be verified directly from the code.
\end{proof}


\subsection{The approximate functional equations}\label{sect: AFE}

Let $\chi\,(\bmod\, q)$ be an \emph{even} primitive Dirichlet character. Let $\Pi_{1},\Pi_{2} \in \mathcal{A}_{0}(\mathrm{G}_{\mfr})$ such that $\mfq_{\Pi_{1}}=\mfq_{\Pi_{2}}=\mfq$ and $(q,  \mfq)=1$. We take $ G(s):=e^{s^2}p(s)$, where $p(s)$ is any even polynomial satisfying $p(0)=1$. 

From (\ref{twisfuncJS}) and \cite[Theorem 5.3]{IK04}, on $\re s =1/2 +O(\epsilon)$, we have 
\begin{align}\label{singAFE}
    |L(s, \Pi_{i}\times \chi)| \, \ll_{\epsilon} \, (TQ)^{\epsilon}\Big|\sum_{n\ge 1} \, \frac{\lambda_{\Pi_{i}}(n) \chi(n)}{n^{s}} \,  V_{s}\Big( \frac{n}{\sqrt{\mfq q^\mfr}}; \Pi_{i} \Big) \Big|,
\end{align}
where the cut-off function $V_{s}(y; \Pi_{i})$ is explicitly given by 
\begin{align}\label{def: singlecutoff}
    V_{s}(y; \Pi_{i}) \, := \, \frac{1}{2\pi i} \int_{(\epsilon)} \, G(u) \frac{L_{\infty}(s+u, \Pi_{i})}{L_{\infty}(s,\Pi_{i})} y^{-u} \, \frac{du}{u}.
\end{align}
On vertical lines with $\re s > \vartheta$ and $\re u> -\vartheta$, it follows from Stirling's formula that
\begin{align}
    \frac{L_{\infty}(s+u, \Pi_{i})}{L_{\infty}(s,\Pi_{i})} \, \ll \, \mathbf{c}_{\infty}(s, \Pi_{i})^{(\re u)/2} e^{\frac{\pi\mfr}{4}|u|} \hspace{20pt} \text{and}  \hspace{20pt} V_{s}(y; \Pi_{i}) \, \ll_{A} \, \big(1+ y/  \sqrt{\mfC_{\infty}(s, \Pi_{i})}\, \big)^{-A}
\end{align}
for any $A>0$. By Lemma \ref{naivconvRSDS}, the contribution from $n> (TQ)^{\epsilon} \sqrt{\mfC(s, \Pi_{i} \times \chi)}$ in (\ref{singAFE}) is 
\begin{align}\label{singAFEtrunctail}
    \, \ll_{A, \,\epsilon} \, (TQ)^{-A} \sum_{n=1}^{\infty} \, \frac{|\lambda_{\Pi_{i}}(n)|}{\sqrt{n}}\Big(\frac{n}{\sqrt{\mfC(s, \Pi_{i} \times \chi)}}\Big)^{-1/2-\epsilon} \,  \preceq_{A} \, \mfC(\Pi_{i})^{1/4}(TQ)^{-A}.
\end{align}

Let $\eta=\eta_{T, \Delta}$ be the test function chosen in Section \ref{sect: MVTtheoemre}. To state the following lemma, we also require the following cut-off functions:
\begin{align}
	V_{\alpha,\beta}(y; it; \, \Pi_{1}, \Pi_{2}) 
	  \, &:= \,  \int_{(\epsilon)} \, \widetilde{V}_{\alpha,\beta}(s;it;\,  \Pi_{1}, \Pi_{2}) y^{-s} \, \frac{ds}{2\pi i},\label{newcutoff}\\
	H_{\alpha, \beta}(v,y;\,  \Pi_{1}, \Pi_{2}) \ &:= \  \int_{\mathbb{R}} \, \eta(t) V_{\alpha, \beta}\left( y; it;\,  \Pi_{1}, \Pi_{2}\right) v^{it} \, dt, \label{doublecut}
\end{align}
where
\begin{align}
 \gamma_{(\alpha, \beta); \, it}(s;\, \Pi_{1}, \Pi_{2}) \, &:= \, L_{\infty}(s+\alpha+it,\, \Pi_{1})L_{\infty}(s+\beta-it, \,\Pi_{2}), \\[5pt]
 \widetilde{V}_{\alpha,\beta}(s;it;\,  \Pi_{1}, \Pi_{2}) \ &:= \ \frac{G(s)}{s} \, \frac{\gamma_{(\alpha,\, \beta); \, it}(1/2+s,  \Pi_{1}, \Pi_{2})}{\gamma_{(\alpha, \,\beta); \, it}(1/2,  \Pi_{1}, \Pi_{2})}, \label{Mellincutprod} \\
 \mathfrak{X}_{(\alpha,\beta);\, it}(\Pi_{1}, \Pi_{2}) \, &:= \, \frac{\gamma_{(-\beta, -\alpha); it}(1/2, \widetilde{\Pi_{2}}, \widetilde{\Pi_{1}})}{\gamma_{(\alpha,\beta); it}(1/2, \Pi_{1}, \Pi_{2})}.
 \end{align}

\begin{lem}\label{AFE}
Let $\Pi_{1}, \Pi_{2}\in \mathcal{A}_{0}(\mathrm{G}_{\mfr})$ such that $\mfq_{\Pi_{1}}=\mfq_{\Pi_{2}}=\mfq$, and  $\chi\, (\bmod\, q)$ be an even primitive character such that $(q, \mfq)=1$.  Then we have
\begin{align}\label{compleAFE2}
    & L(1/2+\alpha+it ,\Pi_{1} \times \chi )  L(1/2+\beta-it, \Pi_{2} \times \overline{\chi} )   \nonumber\\
    \, &\hspace{20pt}= \,  S_{(\alpha,\, \beta)}(\chi, it; \,\Pi_{1}, \Pi_{2}) + \epsilon(\Pi_{1})\epsilon(\Pi_{2}) (\mfq q^{\mfr})^{-\alpha-\beta}  \mathfrak{X}_{(\alpha,\beta);\, it}(\Pi_{1}, \Pi_{2})S_{(-\beta,\, -\alpha)}(\chi, it; \, \widetilde{\Pi_{2}}, \widetilde{\Pi_{1}}),
\end{align}
where
\begin{align}
   S_{(\alpha,\, \beta)}(\chi, it; \,\Pi_{1}, \Pi_{2}) \, &:= \,   
    \sum_{m=1}^{\infty} \, \sum_{n=1}^{\infty} \, 
    \frac{\lambda_{\Pi_{1}}(m)\lambda_{\Pi_{2}}(n)\chi(m)\overline{\chi}(n)}
        {m^{\frac12 + \alpha+it} n^{\frac12 +\beta-it}} \,   V_{\alpha,\beta}\Big(\frac{m n}{\mfq q^\mfr}; it;\,  \Pi_{1}, \Pi_{2}\Big).
\end{align}
\end{lem}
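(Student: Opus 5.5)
The plan is the standard contour-shift proof of an approximate functional equation, applied to the product of the two twisted $L$-functions. Write $s_{1}=1/2+\alpha+it$, $s_{2}=1/2+\beta-it$, and form the completed product
\begin{align*}
\Lambda(s) \, := \, (\mfq q^{\mfr})^{s}\,\gamma_{(\alpha,\beta);\,it}(1/2+s,\Pi_{1},\Pi_{2})\, L(1/2+\alpha+it+s,\Pi_{1}\times\chi)\,L(1/2+\beta-it+s,\Pi_{2}\times\overline{\chi}).
\end{align*}
By (\ref{twisfuncJS}), the conductor computation (\ref{twistcond}) and the fact that $\mu_{i}(\Pi\times\chi)=\mu_{i}(\Pi)$ for even $\chi$, this is a product of two completed twisted $L$-functions. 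It is entire, since $\Pi_{i}$ is cuspidal on $\mathrm{G}_{\mfr}$ with $\mfr\ge 2$.

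We then consider
\begin{align*}
I \, := \, \frac{1}{2\pi i}\int_{(2)} \frac{G(s)}{s}\,\frac{\Lambda(s)}{(\mfq q^{\mfr})^{0}\gamma_{(\alpha,\beta);it}(1/2,\Pi_{1},\Pi_{2})}\, ds .
\end{align*}
On $\re s=2$ we expand both $L$-functions in absolutely convergent Dirichlet series (valid because $\vartheta<1/2$) and interchange sum and integral. Comparing with (\ref{newcutoff}) and (\ref{Mellincutprod}), this gives exactly $S_{(\alpha,\beta)}(\chi,it;\Pi_{1},\Pi_{2})$, with $y=mn/(\mfq q^{\mfr})$.

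Next we shift the contour to $\re s=-2$. The only pole crossed is the simple pole at $s=0$ coming from $1/s$. Its residue is $G(0)=1$ times $L(s_{1},\Pi_{1}\times\chi)L(s_{2},\Pi_{2}\times\overline{\chi})$, which is the left-hand side of (\ref{compleAFE2}). Moving the contour is justified by the rapid decay of $e^{s^{2}}$ on horizontal segments. Against this decay we set Stirling's bound for the gamma ratio and the polynomial (convexity / Phragmén–Lindelöf) growth of the entire $L$-functions in vertical strips.

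On $\re s=-2$ we substitute $s\mapsto -s$ and use $G(-s)=G(s)$, so that $\frac{G(-s)}{-s}ds$ turns into $-\frac{G(s)}{s}ds$ with orientation reversed, and the signs cancel. We then apply the functional equation (\ref{twisfuncJS}) to each factor. Using $1-(1/2+\alpha+it-s)=1/2-\alpha-it+s$, we obtain
\begin{align*}
\Lambda(-s)=\epsilon(\Pi_{1}\times\chi)\epsilon(\Pi_{2}\times\overline{\chi})\,(\mfq q^{\mfr})^{s-\alpha-\beta}\,\gamma_{(-\beta,-\alpha);it}(1/2+s,\widetilde{\Pi_{2}},\widetilde{\Pi_{1}})\,L(1/2-\beta+it+s,\widetilde{\Pi_{2}}\times\chi)\,L(1/2-\alpha-it+s,\widetilde{\Pi_{1}}\times\overline{\chi}).
\end{align*}
Here the order of the factors has been rearranged so that the $\chi$-twist sits with $\widetilde{\Pi_{2}}$ and carries shift $-\beta$, while the $t$-dependence matches $\gamma_{(-\beta,-\alpha);it}$. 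By (\ref{eq: rootnum}), the root number is $\epsilon(\Pi_{1})\epsilon(\Pi_{2})\big(\epsilon(\chi)\epsilon(\overline{\chi})\big)^{\mfr}$. For even primitive $\chi$ we have $\epsilon(\chi)\epsilon(\overline{\chi})=\chi(-1)=1$, so the character root numbers cancel.

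Dividing by $\gamma_{(\alpha,\beta);it}(1/2,\Pi_{1},\Pi_{2})$, we multiply and divide by $\gamma_{(-\beta,-\alpha);it}(1/2,\widetilde{\Pi_{2}},\widetilde{\Pi_{1}})$. This produces the factor $\mathfrak{X}_{(\alpha,\beta);it}(\Pi_{1},\Pi_{2})$ together with the normalized cutoff $\widetilde{V}_{-\beta,-\alpha}(s;it;\widetilde{\Pi_{2}},\widetilde{\Pi_{1}})$. Expanding into Dirichlet series on $\re s=2$ then gives $\epsilon(\Pi_{1})\epsilon(\Pi_{2})(\mfq q^{\mfr})^{-\alpha-\beta}\mathfrak{X}\,S_{(-\beta,-\alpha)}(\chi,it;\widetilde{\Pi_{2}},\widetilde{\Pi_{1}})$. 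Rearranging the identity $I=\text{residue}+(\text{integral on }\re s=-2)$ yields (\ref{compleAFE2}).

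The main obstacle is bookkeeping rather than analysis. One must keep track of the shifts and the sign of $it$ so that the dual sum comes out with $\chi$ on $\widetilde{\Pi_{2}}$ and with $(m,n)$ exponents $1/2-\beta+it$ and $1/2-\alpha-it$, as the definition of $S_{(-\beta,-\alpha)}$ requires. One must also confirm that the contragredient gamma factors are indexed consistently with $\mathfrak{X}$: this uses $\mu_{i}(\widetilde{\Pi})=\overline{\mu_{i}(\Pi)}$, which is absorbed into the definition of $L_{\infty}(\cdot,\widetilde{\Pi})$.
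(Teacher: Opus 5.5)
Your proof is correct and is essentially the paper's: the paper simply invokes the standard contour-shift approximate functional equation \cite[Theorem 5.3]{IK04} for the product $L(1/2+\alpha+it+s,\Pi_{1}\times\chi)L(1/2+\beta-it+s,\Pi_{2}\times\overline{\chi})$, and, as you do, notes that $\epsilon(\Pi_{1}\times\chi)\epsilon(\Pi_{2}\times\overline{\chi})=\epsilon(\Pi_{1})\epsilon(\Pi_{2})$ by (\ref{eq: rootnum}). The only cosmetic point is that $V_{\alpha,\beta}$ is defined by an integral on $\re s=\epsilon$ rather than $\re s=2$; this is reconciled by a trivial shift, since $\widetilde{V}_{\alpha,\beta}(s;it)$ is holomorphic on $\re s>0$ and decays rapidly there.
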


\begin{proof}
See \cite[Theorem 5.3]{IK04}. Notice that $\epsilon(\Pi_{1}\times \chi)\epsilon(\Pi_{2}\times \overline{\chi}) = \epsilon(\Pi_{1})\epsilon(\Pi_{2})$ from (\ref{eq: rootnum}).
\end{proof}

Henceforth, we focus on the piece $S_{(\alpha,\, \beta)}(\chi, it; \,\Pi_{1}, \Pi_{2})$ in (\ref{compleAFE2}). To lighten the notation, we often suppress the dependence on $\Pi_{1}, \Pi_{2}$ from the associated sums to be defined in Section \ref{sect: DLUsumdef}, and also the cut-off functions defined above. Let $\mathfrak{C}_{\infty}:=(\mfC_{\infty}(\Pi_{1})\mfC_{\infty}(\Pi_{2}))^{1/4}$. For $\re s=\sigma  >-1/2+\vartheta$ and $t\asymp T$, and for any $A>0$, we have 
\begin{align}\label{stdStirAFEcut}
        \widetilde{V}_{\alpha,\beta}(s;it)
        \, &\ll_{A, \,\sigma} \, (T^{\mfr}(\mathfrak{C}_{\infty})^2)^{\sigma} \,  (1+ |\im s|)^{-A}.
    \end{align}
   For any $A>0$, the following bound holds uniformly in $y>0$ and $v>0$:
\begin{align}\label{trivweigh}
	 H_{\alpha, \beta}(v,y) \, \ll_{A} \,  \, T \Big(1 + \frac{y}{T^{\mfr}(\mathfrak{C}_{\infty})^2\, }\Big)^{-A}.
\end{align}
Suppose that $y\le (TQ)^{\epsilon} T^{\mfr}(\mathfrak{C}_{\infty})^2$. For any $j\ge 0$, we have
\begin{align}\label{IKcutbdd}
t^j\frac{\partial^j}{\partial t^j}	V_{\alpha,  \beta}(y; it) \, \ll_{j,\, \epsilon} \, (TQ)^{\epsilon}.  
\end{align}

\begin{lem}\label{effectrunc}
Suppose that $y\le  T^{\,\mfr}(TQ)^{\epsilon}(\mathfrak{C}_{\infty})^2$ and $|v-1|>(TQ)^{-\epsilon}$. For any $A\ge 1$, we have
\begin{align}\label{est: effectrunc}
    H_{\alpha, \beta}(v, y) \, \ll_{A, \, \epsilon} \, (TQ)^{-A}. 
\end{align}
\end{lem}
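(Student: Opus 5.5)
The plan is to integrate by parts repeatedly in $t$ in the definition (\ref{doublecut}),
\[
H_{\alpha,\beta}(v,y)=\int_{\R}\eta(t)\,V_{\alpha,\beta}(y;it)\,e^{it\log v}\,dt .
\]
The phase $e^{it\log v}$ has frequency $|\log v|\gg (TQ)^{-\epsilon}$ whenever $|v-1|>(TQ)^{-\epsilon}$. For $v\ge 2$ or $v\le 1/2$ we even have $|\log v|\gg 1$. For $v$ near $1$, $|\log v|\asymp|v-1|$. So in all cases $|\log v|\gg (TQ)^{-\epsilon}$.

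Writing $e^{it\log v}=(i\log v)^{-1}\frac{d}{dt}e^{it\log v}$ and integrating by parts $j$ times, we get
\[
H_{\alpha,\beta}(v,y)=\Big(\frac{-1}{i\log v}\Big)^{j}\int_{\R}\frac{\partial^{j}}{\partial t^{j}}\big(\eta(t)V_{\alpha,\beta}(y;it)\big)\,v^{it}\,dt .
\]
There are no boundary terms, since $\eta$ is compactly supported on $[T/2-\Delta,T+\Delta]$.

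Next we bound the derivative by the Leibniz rule. On the support of $\eta$ we have $t\asymp T$. The test-function condition gives $\eta^{(i)}\ll\Delta^{-i}$. By (\ref{IKcutbdd}), which applies since $y\le T^{\mfr}(TQ)^{\epsilon}(\mathfrak C_\infty)^2$, we have $\partial_t^{k}V_{\alpha,\beta}(y;it)\ll_{k,\epsilon}(TQ)^{\epsilon}T^{-k}$. Since $\Delta\le T$, each term is $\ll (TQ)^{\epsilon}\Delta^{-j}$. Hence
\[
H_{\alpha,\beta}(v,y)\ll_{j,\epsilon}T\,(TQ)^{\epsilon}\big(\Delta|\log v|\big)^{-j}\ll T(TQ)^{\epsilon}\big((TQ)^{\epsilon'}\Delta^{-1}\cdot(TQ)^{\epsilon}\big)^{j}.
\]
Here the length of the support of $\eta$ is at most $T$.

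We have $\Delta\gg(TQ)^{\epsilon_0}$ for the fixed $\epsilon_0$ in the test-function hypothesis. Choosing the $\epsilon$ in $|v-1|>(TQ)^{-\epsilon}$ smaller than $\epsilon_0/2$ gives $\Delta|\log v|\gg (TQ)^{\epsilon_0/2}$. Taking $j\ge 2(A+2)/\epsilon_0$ then yields $(TQ)^{-A}$.

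The main point to check is the uniformity of (\ref{IKcutbdd}) in $t$-derivatives, including the dependence on $\alpha,\beta$ and on the spectral parameters. We take this as given in the paper via Stirling's formula applied to (\ref{Mellincutprod}): each $t$-derivative of the gamma ratio costs a factor $1/|t|\asymp 1/T$ times powers of $|s|$, which the factor $e^{s^2}$ absorbs. The only other subtlety is the bookkeeping of the $\epsilon$'s. The condition $|v-1|>(TQ)^{-\epsilon}$ must be read with $\epsilon$ small relative to the exponent in $\Delta\gg(TQ)^{\epsilon}$, consistent with the paper's $\epsilon$-convention.
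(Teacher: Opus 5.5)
Your proposal is correct and is essentially the paper's proof: integrate by parts repeatedly in $t$ against $v^{it}$, bound $\partial_t^{j}(\eta V_{\alpha,\beta})$ by Leibniz using $\eta^{(i)}\ll\Delta^{-i}$ and (\ref{IKcutbdd}) to get $T(TQ)^{\epsilon}(\Delta|\log v|)^{-j}$, and then combine $|\log v|\gg (TQ)^{-\epsilon}$ with $\Delta\gg(TQ)^{\epsilon}$. As you note, the $\epsilon$ in the $v$-condition must be taken smaller than the one in the lower bound for $\Delta$; the paper does the same $\epsilon$-adjustment by passing to $v>1+(TQ)^{-\epsilon/10}$.
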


\begin{proof}
Suppose $y\le (TQ)^{\epsilon} T^{\mfr}(\mathfrak{C}_{\infty})^2$ and $v\neq 1$. Integrating by parts and using (\ref{IKcutbdd}), we have
\begin{align}
	H_{\alpha, \beta}(v,y) \ &\ll_{A} \  |\log v|^{-A}\, \int_{\mathbb{R}} \ \Big| \, \frac{d^{A}}{dt^{A}} \, \eta_{T,\Delta}(t) V_{\alpha,\, \beta}( y; it) \Big| \, dt \nonumber\\
  \ &\ll_{A,\, \epsilon} \ |\log v|^{-A}  \ \sum_{\substack{j+i=A \\ j,i\ge 0}} \ \binom{A}{j} \, \Delta^{-j} T^{1-i}(TQ)^{\epsilon}  
   \ \ll \ T(TQ)^{\epsilon}(\Delta|\log v|)^{-A}\nonumber
\end{align}
 for any integer $A\ge 1$. It suffices to consider the case when $v> 1+(TQ)^{-\epsilon/10}$. By the elementary inequality $\log (1+x) \ge x/2$ for $x\in (0,1)$, we arrive at $H_{\alpha, \beta}(v, y) \ll_{A, \, \epsilon}  T(TQ)^{\epsilon}(\Delta (TQ)^{-\epsilon/10})^{-A}$. Since $ \Delta\gg _{\epsilon}(TQ)^{\epsilon}$, by taking $A$ sufficiently large, the desired bound follows.
\end{proof}



\section{Definitions of the $\mathcal{D}$, $\mathcal{L}$ and $\mathcal{U}$-sums}\label{sect: DLUsumdef}
We begin with a form of the orthogonality relations for Dirichlet characters. 
\begin{lem}\label{lem: charsum}(\cite[Lem. 2]{CIS19})
We have
\begin{align}
\sumflat{\chi}{q}\chi(m)\overline{\chi(n)} \ = \ \frac{1}{2} \, \Big( \sum_{\pm} \, \sum_{\substack{d|q\\d|(m\pm n)}}\phi(d)\mu\left(\frac{q}{d}\right)\Big) \cdot \mathbf{1}_{(mn,q)=1}.    
\end{align}
Here, $\sum_{\pm}$ denotes the sum of two terms, one with $d\mid (m+n)$ and the other with $d\mid (m-n)$. 
\end{lem}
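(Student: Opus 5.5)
The plan is to derive the even-primitive orthogonality relation from two standard ingredients: the orthogonality of all characters modulo $q$, and the Möbius-inversion description of primitive characters. Both sides vanish unless $(mn,q)=1$, so we assume $(mn,q)=1$ throughout.

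First, the even characters are isolated. Since $\chi(-1)\in\{\pm 1\}$, the projector onto even characters is $\frac{1}{2}(1+\chi(-1))$. This gives
\begin{align*}
\sumflat{\chi}{q}\chi(m)\overline{\chi(n)} \, = \, \frac12 \sideset{}{^*}{\sum}_{\chi\,(\bmod\, q)} \big(\chi(m)\overline{\chi}(n)+\chi(-m)\overline{\chi}(n)\big).
\end{align*}
This reduces the claim to the identity, for $(a,q)=(n,q)=1$,
\begin{align*}
\sideset{}{^*}{\sum}_{\chi\,(\bmod\, q)}\chi(a)\overline{\chi}(n) \, = \, \sum_{\substack{d\mid q\\ d\mid (a-n)}}\phi(d)\mu(q/d).
\end{align*}
We then apply it with $a=m$ and with $a=-m$. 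The condition $d\mid(-m-n)$ is the same as $d\mid(m+n)$, which produces exactly the $\sum_{\pm}$ in the statement.

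To prove the primitive identity, we use the fact that every character modulo $q$ is induced by a unique primitive character modulo some $d\mid q$. When $(an,q)=1$, the induced character takes the same values at $a$ and $n$ as the primitive one. Hence
\begin{align*}
F(q) \, := \, \sum_{\chi\,(\bmod\, q)}\chi(a)\overline{\chi}(n) \, = \, \sum_{d\mid q} G(d), \qquad G(d) \, := \, \sideset{}{^*}{\sum}_{\chi\,(\bmod\, d)}\chi(a)\overline{\chi}(n).
\end{align*}
Full orthogonality gives $F(d)=\phi(d)\mathbf 1_{a\equiv n\,(d)}$ for every $d\mid q$; the coprimality needed here holds because $d\mid q$. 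Möbius inversion on the divisor lattice of $q$ then yields $G(q)=\sum_{d\mid q}\mu(q/d)\phi(d)\mathbf 1_{d\mid a-n}$, as required.

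There is no real obstacle. The only point needing care is the inversion step: we must check that for each $d\mid q$ the pair $(a,n)$ stays coprime to $d$, so that the orthogonality formula for $F(d)$ applies with no extra terms. This holds since $(an,q)=1$. The factor $\mathbf{1}_{(mn,q)=1}$ then simply records that both sides vanish otherwise, because every character modulo $q$ vanishes on non-units. This is exactly \cite[Lem. 2]{CIS19}.
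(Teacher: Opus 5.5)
Your proof is correct. You isolate the even characters with $\frac12(1+\chi(-1))$, rewrite $\chi(-1)\chi(m)$ as $\chi(-m)$ so that $d\mid(-m-n)$ becomes $d\mid(m+n)$, and obtain the primitive orthogonality relation by M\"obius inversion of $\sum_{\chi\,(\bmod\, d)}\chi(a)\overline{\chi}(n)=\phi(d)\mathbf{1}_{a\equiv n\,(d)}$ over $d\mid q$; this is the standard derivation of the identity, which the paper does not reprove but quotes directly from \cite[Lem.~2]{CIS19}.
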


Applying Lemma \ref{AFE} and Lemma \ref{lem: charsum}  to $\mathcal{M}_{\alpha, \, \beta}(h,k)=\mathcal{M}_{\alpha, \, \beta}(h,k; \Pi_{1}, \Pi_{2})$ defined in (\ref{mom: twodistform}), observe that
\begin{align*}
    \mathcal{M}_{\alpha, \beta}(h,k) \, = \,  \mathcal{S}_{\alpha, \beta}(h,k) \, + \, \epsilon(\Pi_{1})\epsilon(\Pi_{2}) (\mfq q^{\mfr})^{-\alpha-\beta}  \widehat{\mathcal{S}}_{-\beta, -\alpha}(h,k),
\end{align*}
where
\begin{align}\label{plugAFES}
     \mathcal{S}_{\alpha, \beta}(h,k) \, :=\,  \frac{1}{2} \, \sum_{\pm} 
       \, \sum_{(q, \mfq)=1} \, W\Big(\frac{q}{Q}\Big) \,  \sum_{\substack{d | q } } \, 
    \phi(d)\mu\Big(\frac{q}{d}\Big)  \mathop{\sum \sum}_{\substack{n,m\ge 1 \\ mh \equiv \pm nk \, (d) \\ (mnhk, q)=1}} \frac{\lambda_{\Pi_{1}}(m)\lambda_{\Pi_{2}}(n)}
        {m^{\frac12 + \alpha} n^{\frac12 + \beta}} H_{\alpha, \beta}\Big(\frac{nk}{mh}, \frac{mn}{\mfq q^\mfr}\Big),
\end{align}
the cut-off function $H_{\alpha,\beta}$ is defined in (\ref{doublecut}), and $\widehat{\mathcal{S}}_{-\beta, -\alpha}(h,k)$ can be obtained by $(\Pi_{1}, \Pi_{2})\to (\widetilde{\Pi_{2}}, \widetilde{\Pi_{1}})$, $(\alpha, \beta)\to (-\beta, -\alpha)$, and $H_{\alpha,\beta}\to \widehat{H}_{-\beta, -\alpha}$, where
\begin{align*}
     \widehat{H}_{-\beta, -\alpha}(v,y) \ &:= \  \int_{\mathbb{R}} \, \eta(t) \mathfrak{X}_{(\alpha,\beta);\, it}(\Pi_{1}, \Pi_{2})\widetilde{V}_{-\beta, -\alpha}(s;it;\,  \widetilde{\Pi_{2}}, \widetilde{\Pi_{1}}) v^{it} \, dt. 
\end{align*}
Notice that $|\mathfrak{X}_{(0,0);\, it}(\Pi_{1}, \Pi_{2})|=1$. Clearly, it is enough to consider (\ref{plugAFES}).

We record the following permissible truncations of sums, which will be used in later sections. Recall that $\mathfrak{C}:=(\mfC(\Pi_{1})\mfC(\Pi_{2}))^{1/4}$. By (\ref{trivweigh}), the sums over $m,n$  of $\mathcal{S}_{\alpha, \beta}(h,k)$  can be truncated at 
\begin{align}\label{eq: alwaysefftruncran}
    mn \le (TQ)^{\mfr+\epsilon} \mathfrak{C}^2,
\end{align}
which only incurs an arbitrarily small error term (i.e., $ \preceq_{A}  \, \mathfrak{C}(TQ)^{-A}$ for any $A>0$).


We further truncate the sums of $ \mathcal{S}_{\alpha, \beta}(h,k)$ by the oscillation of the $t$-integral in (\ref{doublecut}). The \emph{off-diagonal} of $\mathcal{S}_{\alpha,\beta}(h,k)$  (i.e., the terms with $mh\neq nk$ in (\ref{plugAFES})) can be effectively truncated at
\begin{align}\label{baltrunc}
	hm \,  \le \, (TQ)^{\frac{\mfr}{2}+\theta+\epsilon}\mathfrak{C}  \hspace{10pt}  \text{ and }  \hspace{10pt} nk  \, \le \, 2(TQ)^{\frac{\mfr}{2}+\theta+\epsilon}\mathfrak{C}.
\end{align}
Indeed, we take $v=nk/mh\neq 1$ and $y=mn/(\mfq q^{\mfr})$ in Lemma \ref{effectrunc}. Due to (\ref{trivweigh}), we are in the desired range of $y$ in Lemma \ref{effectrunc}. It remains to consider $|v-1| \le (TQ)^{-\epsilon}$, and we may assume that $v>1$ by symmetry. Now, the off-diagonal of $\mathcal{S}_{\alpha, \beta}(h,k)$ is seen to truncate to the range (\ref{baltrunc}), upon noting that $ hm \le \sqrt{hmkn} $ and $nk< (1+(O(TQ)^{-\epsilon})) hm$.

We write $q=cd$ in (\ref{plugAFES}).  Let $C\ge 1$ be a parameter to be optimized. In the end, we will choose
 \begin{align}\label{parameterC}
     C \ = \  
     \begin{cases}
     Q^{(1-\theta)/2}/ T^{(1+\theta)/2}  \hspace{15pt} \text{ if } \hspace{15pt}  \mfr \, = \, 2 \\
         Q^{1/4-\theta/2}/T^{3/4+\theta/2} \hspace{13pt} \text{ if } \hspace{15pt}  \mfr \, = \, 3. 
     \end{cases}
 \end{align}
For the time being, let's assume that $C\ll Q^{1-\epsilon}$ unless otherwise specified. We divide the sum (\ref{plugAFES}) into three parts: first the contribution from $c>C$ :
    \begin{align}\label{LsumIC}
   \mathcal{L}_{\alpha, \beta}(h,k) \, := \,   &\frac{1}{2} \, \sum_{\pm} 
        \mathop{\sum\,\sum}_{\substack{c, d\ge 1 \\ c>C \\ (cd, \, \mfq hk)=1}} \, W\Big(\frac{cd}{Q}\Big) 
    \phi(d)\mu(c) \mathop{\sum\,\sum}_{\substack{n,m\ge 1 \\ mh \equiv \pm nk \, (d) \\ (mn, \,cd)=1}} \frac{\lambda_{\Pi_{1}}(m)\lambda_{\Pi_{2}}(n)}
        {m^{\frac12 + \alpha} n^{\frac12 + \beta}} H_{\alpha, \beta}\Big(\frac{nk}{mh}, \frac{mn}{\mfq(cd)^\mfr}\Big). 
\end{align}
We then decompose the remaining contribution into the sum of the following two components:\footnote{ The contribution from $mh=nk$ and $mh\equiv-nk \,(d)$ is negligibly small, as the $d$-sum has at most two terms.}
\begin{align}
  \mathcal{D}_{\alpha, \beta}(h,k) \, &:= \,    \frac{1}{2} \,
   \mathop{\sum \sum}_{\substack{c,d \ge 1 \\c\le C\\ (cd,\,\mfq hk)=1}}  W\Big(\frac{cd}{Q}\Big) \, 
    \phi(d)\mu(c) \, \mathop{\sum \sum}_{\substack{n,m\ge 1 \\ mh = nk  \\ (mn, \, cd)=1}} \, \frac{\lambda_{\Pi_{1}}(m)\lambda_{\Pi_{2}}(n)}
        {m^{\frac12 + \alpha} n^{\frac12 + \beta}} H_{\alpha, \beta}\Big(\frac{nk}{mh}, \frac{ mn}{\mfq(cd)^\mfr}\Big), \label{DsumIC} \\ 
      \mathcal{U}_{\alpha, \beta}(h,k) \, &:= \,    \frac{1}{2} \ \sum_{\pm} 
        \mathop{\sum \sum}_{\substack{c, d\ge 1 \\ c\le C \\ (cd,\,\mfq hk)=1}}  W\Big(\frac{cd}{Q}\Big) 
    \phi(d)\mu(c) \mathop{\sum \sum}_{\substack{n,m\ge 1 \\ mh \equiv \pm nk \, (d) \\ (mn, \,cd)=1 \\ mh\neq nk}} \frac{\lambda_{\Pi_{1}}(m)\lambda_{\Pi_{2}}(n)}
        {m^{\frac12 + \alpha} n^{\frac12 + \beta}}H_{\alpha, \beta}\Big(\frac{nk}{mh}, \frac{mn}{\mfq(cd)^\mfr}\Big). \label{UsumIC}
 \end{align}

As a result, the sum (\ref{plugAFES}) can be written as
\begin{align}\label{momendecom}
    \mathcal{S}_{\alpha, \beta}(h,k) \ = \   \mathcal{D}_{\alpha, \beta}(h,k) \, +\, \mathcal{L}_{\alpha, \beta}(h,k) \, + \, \mathcal{U}_{\alpha, \beta}(h,k). 
\end{align}
 We suppress the dependence on $\alpha, \beta$ in the notation whenever no ambiguity arises. In the forthcoming subsections, we will further decompose (\ref{momendecom}) as:
\begin{align}
    \, \, &\mathcal{D}(h,k) \, +\,  \sum_{* \,\in \, \{\mathbf{0},\, \mathbf{r}\}} \, \mathcal{L}^{(*)}(h,k)  \, + \,  \sum_{* \,\in \, \{\mathbf{0},\, \mathbf{r}\}} \ \mathcal{U}^{(*)}(h,k) \label{part: UL0r} \\
     \, = \, &\mathcal{D}(h,k) \, +\, \sum_{* \,\in \, \{\mathbf{0},\, \mathbf{r}\}} \, \mathcal{L}^{(*)}(h,k) \, + \, \sum_{\bullet \in \{\mathbf{1}, \mathbf{2}\}}  \mathcal{U}^{\bullet}(h, k)  \, + \, \mathcal{U}^{(\mathbf{r})}(h, k) \label{part: U12}\\
     \, =\, &\mathcal{D}(h,k) \, + \, \sum_{\bullet\in \{\mathbf{d}, \, \mathbf{od}\}}  \mathcal{L}^{(\mathbf{0})}(h,k)_{\bullet} \, +\, \mathcal{L}^{(\mathbf{r})}(h,k) \, + \, \mathcal{U}^{\mathbf{1}}(h,k) \, + \,  \mathcal{U}^{\mathbf{2}}(h,k)_{\circ} \, - \, \mathcal{U}^{\mathbf{2}}(h,k)_{\mathbf{d}} \, + \, \mathcal{U}^{(\mathbf{r})}(h,k). \label{part: LUdod}
\end{align}
The partition in (\ref{part: UL0r}) will be carried out in Sections \ref{LsumHLSbdd}--\ref{absDiv}. The partitions in (\ref{part: U12}) and (\ref{part: LUdod}) will be carried out in Section \ref{U0sumsect}; see (\ref{split}), (\ref{split-dec}), (\ref{dec: U2}), and (\ref{dec: L0}). The bilinear forms of 
\begin{itemize}

 \item $\mathcal{L}^{(\mathbf{r})}(h,k)$ is of acceptable size (Section \ref{LsumHLSbdd}).

  \item $\mathcal{L}^{(\mathbf{0})}(h,k)_{\mathbf{od}}+\mathcal{U}^{\mathbf{1}}(h,k)$ is of acceptable size (Section \ref{U0sumsect});

    \item $\mathcal{L}^{(\mathbf{0})}(h,k)_{\mathbf{d}}$ and  $ \mathcal{U}^{(\mathbf{0})}(h,k)_{\mathbf{d}}$ are negligibly small (Section \ref{compdoubsumGCD});

     \item $\mathcal{U}^{\mathbf{2}}(h,k)_{\circ}$ is negligibly small (Section \ref{sect: NoODU2});

    \item $\mathcal{U}^{\mathbf{2}}(h,k)_{\mathbf{d}}$ is of acceptable size (Section \ref{sect: dropoffdiagcon})

    \item $\mathcal{U}^{(\mathbf{r})}(h,k)$ is of acceptable size (Section \ref{sect: boundUrsum}).
\end{itemize}


\section{The diagonal}\label{sect: diagterm}


Since $W$ has compact support and  $V_{\alpha, \beta}$ has rapid decay, we may extend the $c$-sum of $\mathcal{D}_{\alpha, \beta}(h,k)$ to all $c\ge 1$, with an arbitrarily small error term, which we omit in the typesetting of (\ref{diag}) below. Grouping the terms with $cd=q$ and applying (\ref{newcutoff})--(\ref{doublecut}), we have
\begin{align}\label{diag}
\mathcal{D}_{\alpha, \beta}(h,k) \, = \,  \frac{1}{2}\int_{\R} \, \int_{(1)} \, \eta(t) \widetilde{V}_{\alpha, \beta}(s; it)\mfq ^s\sum_{\substack{q\ge 1 \\ (q, \, \mfq hk)=1}} q^{\mfr s}  \phi^{*}(q) W\Big(\frac{q}{Q}\Big)   \mathcal{L}_{\alpha, \beta; \, h,k}^{(q)}(s;\, \Pi_{1}, \Pi_{2})  \, \frac{ds}{2\pi i} \, dt,
 \end{align}
 where $\phi^*(q) := \sum_{cd=q}\phi(d)\mu(c)$ is the number of primitive Dirichlet characters $(\bmod\, q)$, and 
 \begin{align}\label{CFKRSdiagDSshape}
      \mathcal{L}_{\alpha, \beta; \, h,k}^{(q)}(s;\, \Pi_{1}, \Pi_{2}) \ := \   \sum_{\substack{m,n\ge 1\\ mh=nk\\ (mn, \, q)=1}} \frac{\lambda_{\Pi_{1}}(m) \lambda_{\Pi_{2}}(n)}{m^{s+\frac{1}{2}+\alpha} n^{s+\frac{1}{2}+\beta}}.
 \end{align}
 In the following, let 
 \begin{align}\label{conveshiftdiag}
     \mbs \, := \, 1+2s+\alpha+\beta.
 \end{align}

 \begin{lem}\label{holoCFKRSDS}
     The Dirichlet series $ \mathcal{L}_{\alpha, \beta; \, h,k}^{(q)}(s;\, \Pi_{1}, \Pi_{2})$ admits a holomorphic continuation to the half-plane $\re s >-(1/4-\vartheta/2)$, except for a simple pole at $s=-(\alpha+\beta)/2$ when $\Pi_{1}\simeq \widetilde{\Pi_{2}}$.
 \end{lem}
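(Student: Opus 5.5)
We first parametrize the diagonal condition $mh=nk$ and reduce to a naive Rankin--Selberg series. Put $g=(h,k)$, $h'=h/g$, $k'=k/g$. Then $mh=nk$ with $(h',k')=1$ forces $m=k'j$ and $n=h'j$ for a unique $j\ge1$. Hence
\begin{align*}
\mathcal{L}_{\alpha,\beta;h,k}^{(q)}(s;\Pi_1,\Pi_2)\,=\,\frac{1}{k'^{s+\frac12+\alpha}h'^{s+\frac12+\beta}}\sum_{(j,q)=1}\frac{\lambda_{\Pi_1}(k'j)\lambda_{\Pi_2}(h'j)}{j^{\mbs}},
\end{align*}
where, as in (\ref{conveshiftdiag}), $\mbs=1+2s+\alpha+\beta$. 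Applying (\ref{simplefixmul}) (restricted to $p\nmid q$), this series equals
\begin{align*}
\prod_{p\nmid q}\mathcal{L}_p(\mbs;\Pi_1,\Pi_2;o_p(k'),o_p(h')).
\end{align*}
For $p\nmid hk$ the local factor is $\mathcal{L}_p(\mbs,\Pi_1\otimes\Pi_2)$, and only finitely many primes carry the shifts. We therefore write
\begin{align*}
\mathcal{L}^{(q)}_{\alpha,\beta;h,k}\,=\,(\text{monomial})\cdot \mathcal{L}(\mbs,\Pi_1\otimes\Pi_2)\prod_{p\mid q}\mathcal{L}_p(\mbs,\Pi_1\otimes\Pi_2)^{-1}\prod_{p\mid hk,\,p\nmid q}\frac{\mathcal{L}_p(\mbs;\Pi_1,\Pi_2;o_p(k'),o_p(h'))}{\mathcal{L}_p(\mbs,\Pi_1\otimes\Pi_2)}.
\end{align*}

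Next we locate the region of analyticity. The condition $\re s>-(1/4-\vartheta/2)$ is exactly $\re\mbs>1/2+\vartheta$ (up to the small shifts $\alpha,\beta$, which are $O((\log TQ)^{-1})$ and can be absorbed). By Corollary \ref{gl3holoprop} for $\mfr=3$, and by Remark \ref{Gl2Hfuncrem} together with the analogous factorization for $\mfr=2$, $\mathcal{L}(\mbs,\Pi_1\otimes\Pi_2)=L(\mbs,\Pi_1\otimes\Pi_2)H(\mbs,\Pi_1,\Pi_2)$ is holomorphic on $\re\mbs>1/2+\vartheta$. The only exception is a simple pole at $\mbs=1$, i.e.\ at $s=-(\alpha+\beta)/2$, which occurs precisely when $\Pi_1\simeq\widetilde{\Pi_2}$. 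Here we use that the Rankin--Selberg $L$-function is entire apart from that pole (Jacquet--Shalika/Moeglin--Waldspurger, as implicit in Corollary \ref{gl3holoprop}). We should also check that the pole really is simple and does not cancel: $H(1,\Pi,\widetilde\Pi)\neq0$ by Corollary \ref{Hlocalnonvan}, though mere holomorphy away from the pole is all the lemma requires.

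It remains to treat the finitely many correction factors, and this is the only real obstacle. The denominators $\mathcal{L}_p(\mbs,\Pi_1\otimes\Pi_2)$ may vanish in the region, so dividing by them is not safe. We avoid this by not dividing at all: the finite product $\prod_{p\mid q}$ is handled by writing $\mathcal{L}_p^{-1}=H_p^{-1}L_p$, or better by directly removing those primes from the full Euler product, since a finite Euler product of local factors each holomorphic on $\re\mbs>2\vartheta$ is harmless. Concretely, we write the whole series as
\begin{align*}
L(\mbs,\Pi_1\otimes\Pi_2)\cdot H^{(hkq)}(\mbs)\cdot\prod_{p\mid q}L_p(\mbs,\Pi_1\otimes\Pi_2)^{-1}\cdot\prod_{p\mid hk,\,p\nmid q}L_p(\mbs)^{-1}\mathcal{L}_p(\mbs;\Pi_1,\Pi_2;o_p(k'),o_p(h')).
\end{align*}
Each finite local piece is holomorphic there: $L_p^{-1}$ is a polynomial in $p^{-\mbs}$, and by Lemma \ref{lem: diffby1RSn} the quantity $\mathcal{L}_p(\cdots;d,\nu)/L_p^{\mathrm{ur}}$ is a polynomial $\mathcal{P}$. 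Lemma \ref{lem: diffby1RSn} only covers $\nu\in\{0,1\}$; for general shifts $(d_1,d_2)$ we use the analogous identity, since $\sum_n\lambda_{\Pi_1}(p^{n+d_1})\lambda_{\Pi_2}(p^{n+d_2})X^n$ is rational with denominator dividing $L_p^{\mathrm{ur}}(\cdot)^{-1}$. This follows from the Satake expansion (\ref{taylorsatake}), which presents it as a finite sum of geometric series in $\alpha_{1,i}\alpha_{2,j}X$. For ramified $p\mid\mfq$ we use $L_p^{\mathrm{ur}}/L_p$ as in (\ref{ramH-func}), which has no poles for $\re\mbs>2\vartheta$ by (\ref{genRSbdd}). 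Finally, $H^{(hkq)}$ converges absolutely on $\re\mbs>1/2+\vartheta$ by the bound $H_p=1+O(p^{-2(\re\mbs-\vartheta)})$ and Lemma \ref{absunifin}. Combining these facts gives the lemma.
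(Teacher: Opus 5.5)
Your argument is correct and follows essentially the same route as the paper: split off the finitely many primes dividing $hk$, whose local series converge absolutely on $\re\mbs>2\vartheta$, and reduce the rest to the naive Rankin--Selberg product, whose continuation to $\re\mbs>1/2+\vartheta$ (with the only possible pole at $\mbs=1$) comes from Corollary \ref{gl3holoprop} and Remark \ref{Gl2Hfuncrem}. Your rewriting as $L\cdot H^{(hkq)}\cdot\prod L_p^{-1}$ is a useful tightening, since it avoids dividing by local factors $\mathcal{L}_p$ that may vanish, which the paper's direct appeal to the corollary for $\mathcal{L}^{(qhk)}$ glosses over; the rationality input from Lemma \ref{lem: diffby1RSn} is unnecessary, because absolute convergence of the shifted local series already gives holomorphy.
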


\begin{proof}
Let $\mathcal{L}^{(qhk)}(\mbs,\, \Pi_{1}\otimes \Pi_{2})$ be defined in Section \ref{RSprelim}. By multiplicativity, we have 
\begin{align}
   \mathcal{L}_{\alpha, \beta; \, h,k}^{(q)}(s;\, \Pi_{1}, \Pi_{2}) \ = \   \mathcal{L}^{(qhk)}(\mbs,\, \Pi_{1}\otimes \Pi_{2})\,  \cdot \, \prod_{\substack{ p \mid hk}}  \ \sum_{\substack{m,n\ge 0 \\ m+o_{p}(h)=n+o_{p}(k)}} \,  \frac{\lambda_{\Pi_{1}}(p^{m}) \lambda_{\Pi_{2}}(p^{n})}{(p^{m})^{s+\frac{1}{2}+\alpha} (p^{n})^{s+\frac{1}{2}+\beta}}. \label{EPdiag}
\end{align}
 By Remark \ref{Gl2Hfuncrem} and Corollary \ref{gl3holoprop}, the first factor on the right-hand side of (\ref{EPdiag}) admits a holomorphic continuation to $\re \mbs> 1/2+\vartheta$, except for a simple pole at $\mbs=1$ when $\Pi_{1}\simeq \widetilde{\Pi_{2}}$. Let $H:=h/(h,k)$ and $K:=k/(h,k)$. The second multiplicand of (\ref{EPdiag}) is
\begin{align}
   \frac{1}{K^{s+\frac{1}{2}+\alpha} H^{s+\frac{1}{2}+\beta}} \ \prod_{p\mid hk} \ \sum_{\ell=0}^{\infty} \  \frac{\lambda_{\Pi_{1}}(p^{\ell+o_{p}(K)}) \lambda_{\Pi_{2}}(p^{\ell+o_{p}(H)})}{ p^{\ell\mbs} }.
\end{align}
Since the $\ell$-sum above is clearly holomorphic on $\re \mbs > 2 \vartheta$, Lemma \ref{holoCFKRSDS} now follows. 
\end{proof} 

We remark that (\ref{diag})--(\ref{CFKRSdiagDSshape}) gives the form of the diagonal terms to be used in \textbf{Section \ref{sect: CSLevGL3}} for applications to
critical zeros via Levinson's method. Indeed, practical applications require special (arithmetic) choices of the coefficients $(\lambda_h)$ of the Dirichlet
polynomial. Then the sums over $h,k$ in
\begin{align}
    \sum_{h} \,  \sum_{k} \, \frac{\lambda_{h}\overline{\lambda_{k}}}{\sqrt{hk}}\, \mathcal{D}_{\alpha, \beta}(h,k)
\end{align}
must be executed non-trivially, for instance, in terms of ratio-type expressions as in \cite{CS07}.

For \emph{fixed} $h,k\ge 1$, one may further rewrite $\mathcal{D}_{\alpha, \beta}(h,k)$ as follows.  On $\re \mbs=100$, it is clear that $\mathcal{L}_{p}(\mbs, \, \Pi_{1}\otimes \Pi_{2})\neq 0$ for any prime $p$. By Mellin inversion and \eqref{EPdiag}, we have 
 \begin{align}
   \mathcal{D}_{\alpha, \beta}(h,k)\ = \   \frac{1}{2}  \int_{\R} \, \int_{(100)} \, \int_{(100)} \ \widetilde{W}(u)\eta(t) &\widetilde{V}_{\alpha, \beta}(s; it)\mfq^s Q^u   \mathcal{L}_{\alpha, \beta; \, h,k}^{(1)}(s;\, \Pi_{1}, \Pi_{2})\nonumber\\
       &\cdot \sum_{\substack{q\ge 1 \\(q, \, hk\mfq)=1}}   \frac{\phi^{*}(q)}{q^{u-\mfr s}}    \prod_{p\mid q} \,  \mathcal{L}_{p}(\mbs,\, \Pi_{1}\otimes \Pi_{2})^{-1} \  \frac{du}{2\pi i} \ \frac{ds}{2\pi i} \,  dt. \label{eqn: DsumMellin} 
  \end{align}
 Let $\mbw:=u-\mfr s$. For $\re \mbw \gg 1$, we have, from Lemma \ref{genEPexp}, that
 \begin{equation}\label{Arithmaintermprep}
 \sum_{\substack{q\ge 1 \\(q, \, hk\mfq)=1}}   \frac{\phi^{*}(q)}{q^\mbw}    \prod_{p\mid q} \,  \mathcal{L}_{p}(\mbs,\, \Pi_{1}\otimes \Pi_{2})^{-1} = \zeta^{(hk\mfq)}(\mbw-1)\cdot \mathcal{B}^{(hk\mfq)}(\mbs, \mbw, \, \Pi_{1}\otimes \Pi_{2}),
 \end{equation}
 where
 \begin{align}
        \mathcal{B}_{p}(\mbs, \mbw, \, \Pi_{1}\otimes \Pi_{2}) \, := \, 1-p^{1-\mbw} \, + \, \mathcal{L}_{p}(\mbs, \Pi_{1}\otimes \Pi_{2})^{-1}\big((1-p^{-\mbw})^2-(1-p^{1-\mbw})\big).
 \end{align}
For 
\begin{align}\label{dom: MTeurpord}
    \re \mbw>1, \hspace{20pt} \re \mbs> \vartheta, \hspace{20pt} \re \,(\mbw+\mbs)>2(1+\vartheta),
\end{align}
we obtain the expansion
\begin{align}\label{MaintermEuprodexp}
    \mathcal{B}_{p}(\mbs, \mbw, \, \Pi_{1}\otimes \Pi_{2}) \, &= \, 1+O(p^{1-\re (\mbw+\mbs)}|\lambda_{\Pi_{1}}(p)\lambda_{\Pi_{2}}(p)|) +O(p^{-\re \mbw}) \nonumber\\
    \ &= \  1+O(p^{1+2\vartheta-\re (\mbw+\mbs)}) +O(p^{-\re \mbw}), 
\end{align}
which can be obtained by Remark \ref{Gl2Hfuncrem} and Lemma \ref{lem: naiveRSexpl} (or a \texttt{Mathematica} computation; see \texttt{MainEuExp.nb}). Hence, the infinite product $\mathcal{B}^{(hk\mfq)}$ admits a holomorphic continuation to (\ref{dom: MTeurpord}).

Putting (\ref{Arithmaintermprep}) into \eqref{eqn: DsumMellin},  and shifting the line of integration for the $s$-integral
to $\re s = -\epsilon$ for some small $\epsilon>0$, we pick up the contribution of a pole at $s=0$ (from $\widetilde{V}_{\alpha, \beta}(s; it)$). Thus, $ \mathcal{D}_{\alpha, \beta}(h,k)$  is a sum of the following two expressions:
\begin{align}
   \hspace{10pt}  \frac{1}{2}  \, \Big(\int_{\R} \eta\Big) \mathcal{L}_{\alpha, \beta; \, h,k}^{(1)}(0;\, \Pi_{1}, \Pi_{2})\int_{(100)}  \widetilde{W}(u) Q^u   \zeta^{(hk\mfq)}(u-1)\mathcal{B}^{(hk\mfq)}(1+\alpha+\beta,u , \, \Pi_{1}\otimes \Pi_{2})\  \frac{du}{2\pi i}, \label{polars=0}
\end{align}
and
\begin{align}
     & \frac{1}{2}\, \int_{\R} \, \int_{(-\epsilon)} \, \int_{(100)} \   \widetilde{W}(u)\eta(t) \widetilde{V}_{\alpha, \beta}(s; it)\mfq^s Q^u \mathcal{L}_{\alpha, \beta; \, h,k}^{(1)}(s;\, \Pi_{1}, \Pi_{2})\zeta^{(hk\mfq)}(\mbw-1) \nonumber\\
        &\hspace{110pt}\cdot  \,  \mathcal{B}^{(hk\mfq)}(\mbs, \mbw, \, \Pi_{1}\otimes \Pi_{2})\, \,  \frac{du}{2\pi i}\,  \frac{ds}{2\pi i} \, dt.\label{shiftedpiece}
\end{align}
For both (\ref{polars=0}) and  (\ref{shiftedpiece}), we shift the lines of integration of the $u$-integrals to $\re u = 1+2\vartheta+\epsilon$. Hence, apart from an error of size $\preceq TQ^{1+2\vartheta}$, we have
\begin{align}
   \hspace{5pt} \mathcal{D}_{\alpha, \beta}(h,k) \, = \,   \frac{ 1}{2}\, \widetilde{W}(2) Q^2\Big(\int_{\R}\, \eta\Big)\mathcal{L}_{\alpha, \beta; \, h,k}^{(1)}(0;\, \Pi_{1}, \Pi_{2})  \,\frac{\phi(hk\mfq)}{hk\mfq}   \mathcal{B}^{(hk\mfq)}(1+\alpha+\beta,\, 2, \,  \Pi_{1}\otimes \Pi_{2}).
\end{align}


\section{The $\mathcal{L}$-sum: a first application of the large sieve}\label{LsumHLSbdd}

We apply the orthogonality relation
\begin{equation}
\frac{1}{2} \, \phi(d) \ \sum_{\pm }   \  \mathbf 1_{\substack{m h \equiv \pm  n k \, (\bmod\, d) \\ (mnhk, d)=1}} \, = \,   
    \sideset{}{^+}{\sum}_{\psi\, (\bmod\, d)} \, \psi(m h) \overline{\psi}( n k),
\end{equation}
where $+$ indicates the sum is taken over the even characters $(\bmod\, d)$. Then from (\ref{LsumIC}), 
\begin{align}
 \mathcal{L}(h,k)\ = \   \mathop{\sum\sum }_{\substack{c>C\\ (c, \, \mfq hk)=1\\(d, \mfq)=1}}  \mu(c) W\Big( \frac{cd}{Q}\Big) \ \sideset{}{^+}{\sum}_{\psi\, (d)}
  \mathop{\sum\sum}_{\substack{{n,m\ge 1}\\ 
    (m n , c ) = 1}}
    \frac{\lambda_{\Pi_{1}}(m)\lambda_{\Pi_{2}}(n)\psi(m h) \overline{\psi}( n k)}
        {m^{\frac12 + \alpha} n^{\frac12 + \beta}} H_{\alpha, \beta}\Big(\frac{nk}{mh}, \frac{mn}{\mfq(cd)^{\mfr}}\Big)     \label{foldbacL}.
\end{align}
We write $ \mathcal{L}(h,k)= \mathcal{L}^{(\mathbf{0})}(h, k) \, + \, \mathcal{L}^{(\mathbf{r})}(h, k)$, where  $\mathcal{L}^{(\mathbf{0})}(h, k)$ and $ \mathcal{L}^{(\mathbf{r})}(h, k)  $ denote the contributions from $\psi=\psi_{0}$ and $\psi\neq \psi_{0}$, respectively,  in (\ref{foldbacL}).  The analysis of $\mathcal{L}^{(\mathbf{0})}(h, k)$  is postponed to Section \ref{Fauxcan}.  We address the sum $ \mathcal{L}^{(\mathbf{r})}(h, k)$ in this section.

\begin{prop}\label{LsuHLSter}
Suppose that one of the following holds:
\begin{enumerate}
    \item $\lambda_{h}\ll_{\epsilon} h^{\epsilon}$; or

    \item both Condition $(\mathbf{\Lambda})$ (see (\ref{bdd: 24normabstract}))  and Hypothesis $(\mathbf{\Pi}^4)$ (see (\ref{fourthpowerestHec})) hold.
\end{enumerate}
Then we have the estimate:
\begin{align}\label{avLr}
    \mathop{\sum \sum}_{h, k \,\le\, (TQ)^{\theta}} \,  \frac{\lambda_{h}\overline{\lambda_{k}}}{\sqrt{hk}} \,  \mathcal{L}^{(\mathbf{r})}(h,k) \,\preceq \, \mathfrak{C}^3\, \Big\{ \frac{TQ^2}{C} \, + \, \frac{(TQ)^{\mfr/2+\theta}}{C^{\mfr/2-1}}\Big\}.
\end{align}
\end{prop}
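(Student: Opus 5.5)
The plan is to unfold $\mathcal{L}^{(\mathbf{r})}(h,k)$ in (\ref{foldbacL}) into a mean square of a product of two Dirichlet polynomials over the family $\{(d,\psi,t)\}$, and then apply Lemma \ref{Gallager} once for each $c$.

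\textbf{Step 1: separate the variables.} Insert the definition (\ref{doublecut}) of $H_{\alpha,\beta}$, and write $V_{\alpha,\beta}$ through its Mellin integral (\ref{newcutoff}) on the line $\re s=\epsilon$. The weight then factors as
\begin{align*}
(nk/mh)^{it}\,\big(mn/\mfq(cd)^{\mfr}\big)^{-s}.
\end{align*}
By (\ref{stdStirAFEcut}), the $s$-integral may be truncated at $|\im s|\le (TQ)^{\epsilon}$ at negligible cost. The factor $W(cd/Q)$ is handled by a Mellin inversion, or simply by fixing $d$ dyadically with $d\asymp Q/c$. Summing against $\lambda_h\overline{\lambda_k}/\sqrt{hk}$ then gives a product $A(\psi,t)\overline{B(\psi,t)}$, where
\begin{align*}
A(\psi,t)=\sum_{(m,c)=1}\sum_{h}\frac{\lambda_{\Pi_1}(m)\lambda_h\psi(mh)}{m^{1/2+\alpha+s}h^{1/2}(mh)^{it}}.
\end{align*}
The polynomial $B$ is defined analogously with $\Pi_2$ and $k$. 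The condition $(c,hk)=1$ is removed by a M\"obius inversion over divisors of $c$; this costs only $c^{\epsilon}$ variables, none of which depend on $\psi$ or $t$. By (\ref{eq: alwaysefftruncran}) and (\ref{baltrunc}), the lengths are $mh\preceq \mathfrak{C}(TQ/c)^{\mfr/2}(TQ)^{\theta}$ on both sides. (For the $\mathcal{L}$-sum it is enough to use $mn\preceq (Td)^{\mfr}\mathfrak{C}^2$ together with balancing in $t$.) The cross term is handled by $|A\overline{B}|\le |A|^2+|B|^2$.

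\textbf{Step 2: apply the hybrid large sieve.} For fixed $c$, the characters $\psi\neq\psi_0$ modulo $d$ with $d\asymp Q/c$ are induced from primitive characters $\psi^*$ modulo $d^*\mid d$ with $d^*>1$. Passing from $\psi$ to $\psi^*$ only imposes $(mh,d/d^*)=1$. We remove this coprimality by a further M\"obius inversion, which at worst loses $Q^{\epsilon}$, or else absorb it by positivity in the large sieve. Lemma \ref{Gallager} then gives, for each $c$,
\begin{align*}
\Big(T(Q/c)^2+\mathfrak{C}(TQ/c)^{\mfr/2}(TQ)^{\theta}\Big)\sum_{n}\frac{|b_n|^2}{n},
\end{align*}
where $b_n=\sum_{mh=n}\lambda_{\Pi_1}(m)\lambda_h$. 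Summing over $c>C$ gives $TQ^2/C+(TQ)^{\mfr/2+\theta}C^{1-\mfr/2}$, which matches the shape of (\ref{avLr}); when $\mfr=2$ the $c$-sum is only logarithmic, which is harmless. The factor $\mathfrak{C}^3$ comes from the $\mathfrak{C}$ in the length, together with the conductor dependence of Lemma \ref{lem: ROA} and of Hypothesis $(\mathbf{\Pi}^4)$.

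\textbf{Step 3: bound $\sum_n |b_n|^2/n$ (the main obstacle).} Expand the square:
\begin{align*}
\sum_n\frac{|b_n|^2}{n}=\sum_{m_1h_1=m_2h_2}\frac{\lambda_{\Pi_1}(m_1)\overline{\lambda_{\Pi_1}(m_2)}\lambda_{h_1}\overline{\lambda_{h_2}}}{m_1h_1}.
\end{align*}
Use $|xy|\le \tfrac12(|x|^2+|y|^2)$ pairing $m_1$ with $h_2$, and then sum over the free variable. The constraint $m_1h_1=m_2h_2$ forces the count to be governed by $(m_1,h_2)$, and this yields the GCD-type form
\begin{align*}
\sum_{m,h}\frac{|\lambda_{\Pi_1}(m)|^2|\lambda_h|^2}{\sqrt{mh}}\frac{(m,h)}{\sqrt{mh}}.
\end{align*}
The two cases of the proposition are then treated as follows.

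\emph{Case (1).} If $\lambda_h\ll h^{\epsilon}$, we bound $(m,h)\le\sum_{e\mid (m,h)}\phi(e)$ and use Lemma \ref{lem: simpleGCD} in $h$. What remains is $\sum_m|\lambda_{\Pi_1}(m)|^2/m\preceq 1$, by Lemma \ref{lem: ROA}.

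\emph{Case (2).} Otherwise, apply Lemma \ref{lem: Galsum} with $a_m=|\lambda_{\Pi_1}(m)|^2/\sqrt m$ and $b_h=|\lambda_h|^2/\sqrt h$. This gives the bound
\begin{align*}
\Big(\sum_m\frac{|\lambda_{\Pi_1}(m)|^4}{m}\Big)^{1/2}\Big(\sum_h\frac{|\lambda_h|^4}{h}\Big)^{1/2}\preceq \mathfrak{C}^{O(\epsilon)},
\end{align*}
by Hypothesis $(\mathbf{\Pi}^4)$ and Condition $(\mathbf{\Lambda})$.

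The delicate point in this step is making sure that the M\"obius-inverted coprimality variables from Steps 1 and 2 do not destroy this structure. I would handle this by fixing those auxiliary divisors outside the large sieve, using $\lambda_h\ll h^{1/2}$ only to guarantee convergence. The losses are then divisor-bounded, since by (\ref{elefinitedivbdd}) there are $\ll Q^{\epsilon}$ such divisors.
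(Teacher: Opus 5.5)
There is a genuine gap, and it sits in your Step 1 claim that the lengths are $mh\preceq \mathfrak{C}(TQ/c)^{\mfr/2}(TQ)^{\theta}$. The weight in (\ref{foldbacL}) is $H_{\alpha,\beta}\big(nk/mh,\, mn/\mfq(cd)^{\mfr}\big)$. It still carries the conductor $(cd)^{\mfr}\asymp Q^{\mfr}$ of the original character $\chi \bmod cd$, not that of $\psi \bmod d$. So (\ref{trivweigh}) only truncates at $mn\preceq (TQ)^{\mfr}\mathfrak{C}^2$, and (\ref{eq: alwaysefftruncran})--(\ref{baltrunc}) only give $mh\preceq (TQ)^{\mfr/2+\theta}\mathfrak{C}$, with no saving in $c$. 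Your ``$mn\preceq (Td)^{\mfr}\mathfrak{C}^2$'' is off by a factor $c^{\mfr}$.

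With the true length, Lemma \ref{Gallager} gives $T(Q/c)^2+(TQ)^{\mfr/2+\theta}$ for each $c$. Summing over $C<c\ll Q$ then yields $TQ^2/C+Q(TQ)^{\mfr/2+\theta}$, which is at least as large as the main term $TQ^2$ and is not (\ref{avLr}). Relatedly, you cannot interchange the Mellin integral on $\re s=\epsilon$ with the untruncated $m,n$-sums. Once you truncate to make the interchange legal, you are stuck with the long length.

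The missing idea is conductor lowering through analytic continuation, which is what the paper does:
\begin{enumerate}
\item Open $H_{\alpha,\beta}$ with the $s$-integral on $\re s=1$. There the $m$-sum is exactly $L^{(c)}(\mbw_{1},\Pi_{1}\times\psi)$, and likewise for $n$.
\item Shift to $\re s=\epsilon$, using that these twisted $L$-functions are entire, and discard $L_{c}(\mbw_i,\Pi_i\times\psi_i)^{-1}=O_\epsilon(c^{\epsilon})$.
\item Apply $|ab|\le |a|^2+|b|^2$, and write $\psi=\psi_0\psi^{*}$ with $\psi^{*}$ primitive modulo $u$, where $d=ru$.
\item Apply a fresh approximate functional equation (\ref{singAFE}) to $L(\mbw,\Pi\times\psi^{*})$. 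Its analytic conductor is only $\preceq \mfC(\Pi)(TQ/cr)^{\mfr}$.
\end{enumerate}
Only this last step produces, after multiplying by the $h$-sum, a Dirichlet polynomial of length $\sqrt{\mfC(\Pi)}\,(TQ/cr)^{\mfr/2}(TQ)^{\theta}$. That is what makes the length term sum over $c>C$ to $(TQ)^{\mfr/2+\theta}C^{1-\mfr/2}$, up to a logarithm when $\mfr=2$.

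Once this step is in place, the rest of your plan matches the paper's argument:
\begin{itemize}
\item one application of the hybrid large sieve for each fixed $(c,r)$;
\item the reduction of $\sum_n |b_n|^2/n$ to the GCD form $\sum_{m,h}|\lambda_{\Pi}(m)|^2|\lambda_h|^2(m,h)/(mh)$;
\item the two cases, via Lemmas \ref{lem: simpleGCD}, \ref{lem: ROA} and \ref{lem: Galsum}.
\end{itemize}
The conditions $(h,c)=1$ and $(h,r)=1$ need no M\"obius inversion. Since $c$ and $r$ lie outside the sieved family, those indicators can simply sit in the coefficients.
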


\begin{proof}
Recall (\ref{newcutoff})--(\ref{doublecut}). Let $(\psi_{1}, \psi_{2}):=(\psi, \overline{\psi})$. Interchanging sums and integrals, we have
\begin{align}\label{openupLs}
    \hspace{-10pt} \mathcal{L}^{(\mathbf{r})}(h,k) \, = \, &     \sum_{\substack{c>C\\ (c, \, \mfq hk)=1}}  \sum_{(d, \, \mfq)=1} \ \mu(c)W\Big(\frac{cd}{Q}\Big) \ \sideset{}{^+}{\sum}_{\substack{\psi \, (d) \\ \psi\neq \psi_{0} }} \ \psi(h) \overline{\psi}(k)  \, \nonumber\\
    & \hspace{40pt}\cdot  \, \int_{\R} \, \int_{(1)} \, \eta(t)  \widetilde{V}_{\alpha, \beta}(s; it) (\mfq (cd)^\mfr)^{s} \Big(\frac{k}{h}\Big)^{it} \,  \prod_{i=1}^{2} \, \sum_{\substack{m\ge 1 \\ (m,c)=1}} \,  \frac{\lambda_{\Pi_{i}}(m)  \psi_{i}(m)}{m^{\mbw_{i}}}  \, \frac{ds}{2\pi i} \, dt,
\end{align}
where $\mbw_{1} := s+it+\frac{1}{2}+\alpha$ and $ \mbw_{2} :=  s-it+\frac{1}{2}+\beta. $
The $m$-sum of (\ref{openupLs}) is precisely $L^{(c)}\left(\mbw_{i}, \Pi_{i} \times \psi_{i}\right) $, which admits a holomorphic continuation to $\C$.

We shift the line of integration of the $s$-integral to $\re s =\epsilon$ using the rapid decay and holomorphy of the integrand. On $\re s=\epsilon$, we have $\re \mbw_{i} > 1/2$, and $L_{c}(\mbw_{i}, \, \Pi_{i}\times \psi_{i})^{-1}= O_{\epsilon}(c^{\epsilon})$. Hence, 
\begin{align}
    \bigg| \mathop{\sum \sum}_{h, k \,\le\, (TQ)^{\theta}}\ \frac{\lambda_{h}\overline{\lambda_{k}}}{\sqrt{hk}}\,  \mathcal{L}^{(r)}(h,k) \bigg| \ &\preceq \  \sum_{\substack{c>C}}  \sum_{d\ge 1} \ W\Big(\frac{cd}{Q}\Big)  \, \sideset{}{^+}{\sum}_{\substack{\psi \, (d) \\ \psi\neq \psi_{0} }} \, \int_{\R} \, \int_{(\epsilon)} \, \eta(t)  |\widetilde{V}_{\alpha, \beta}(s; it) | \nonumber\\
    & \hspace{65pt}  \cdot \ \bigg| \, \sum_{\substack{ (h,c)=1}} \, \frac{\lambda_{h}\psi(h)}{h^{1/2+it}} \bigg|^2 \prod_{i=1}^{2} \, \left|L\left(\mbw_{i},  \Pi_{i} \times \psi_{i}\right) \right|   \, |ds| \, dt. \label{LrsumTriandrop}
\end{align}

In the following, we write $\Pi\in \{\Pi_{1}, \Pi_{2}\}$ and $\mbw\in \{\mbw_{1}, \mbw_{2}\}$. We apply the convexity bound (see \cite[(5.21)]{IK04} and (\ref{analycondtwis}))
\begin{align}
  L\left(\mbw,  \Pi\times \psi\right) \ = \   O_{\epsilon}(\mfC(\mbw, \Pi\times \psi)^{1/4+\epsilon}),
\end{align}
and the bound $ \widetilde{V}_{\alpha, \beta}(s; it) \preceq_{A} (1+|\im s|)^{-A}$ (see (\ref{stdStirAFEcut})), the contribution from $|\im s| > (TQ)^{\epsilon}$ on the right-hand side of (\ref{LrsumTriandrop}) is arbitrarily small.   By $|ab| \ll |a|^2+|b|^2$, it suffices to estimate
\begin{align}\label{larsie}
    \max_{\substack{\re s=\epsilon\\|\im s| \le (TQ)^{\epsilon}}} \ \sum_{c>C}  \sum_{d\ge 1} \ W\Big(\frac{cd}{Q}\Big) \, \sideset{}{^+}{\sum}_{\substack{\psi \, (d) \\ \psi\neq \psi_{0} }} \, \int_{t\asymp T} \, \Big| \, \sum_{\substack{(h,c)=1}} \, \frac{\lambda_{h}\psi(h)}{h^{1/2+it}} \Big|^2 |L\left(\mbw,  \Pi \times \psi\right) |^2  \, dt.
\end{align}

We rewrite (\ref{larsie}) by $\psi = \psi_{0} \psi^{*}$, where $\psi^{*} \, (\bmod\, u)$ is primitive, $u\mid d$,  and $\psi_{0} \, (\bmod\, d)$ is the trivial character. We have $ L(\mbw, \Pi \times \psi) =  L(\mbw, \Pi \times \psi^{*}) L_{d}(\mbw, \, \Pi\times \psi^{*})^{-1}$.  As a result,   
\begin{align}\label{wholeLr}
 (\ref{larsie}) \, \preceq  \, \max_{\substack{\re s=\epsilon\\|\im s| \le (TQ)^{\epsilon}}} \, \sum_{C < c \ll Q} \,\sum_{r\le Q/c} \, \sum_{1< u\le Q/cr} \,  \sideset{}{^\flat}{\sum}_{\substack{\psi\,  (u)  }}  \, \int_{t\asymp T} \, \Big|\sum_{\substack{ (h,c)=1}} \, \frac{\lambda_{h}\psi(h)}{h^{1/2+it}} \Big|^2 |L\left(\mbw,  \Pi \times \psi\right) |^2  \, dt. 
\end{align}

Applying (\ref{singAFE}) to the factor $|L\left(\mbw,  \Pi \times \psi\right)|$, and notice that the sum obtained is truncated at 
\begin{align*}
    n \, \le \,  M \ = \  M_{c,r} \ := \ (TQ)^{\epsilon} \sqrt{\mfC(\Pi)(TQ/cr)^\mfr}
\end{align*}
by (\ref{singAFEtrunctail}). Opening the cut-off $V_{\mbw}$ with its definition (\ref{def: singlecutoff}), it follows that
\begin{align}
      \bigg|\sum_{n\le M} &\frac{\lambda_{\Pi_{1}}(n) \psi(n)}{n^{\mbw}} V_{\mbw}\Big(\frac{n}{\sqrt{\mfq_{\Pi_{1}\times \psi}}}\Big) \bigg|
       \, \ll_{\epsilon} \,   \mathbf{c}(\mbw, \Pi\times \psi)^{\epsilon} \max_{|\tau|\ll(TQ)^{\epsilon}} \, \Big|\sum_{n\le M} \frac{\lambda_{\Pi}(n) \psi(n)}{n^{\mbw+\epsilon+i\tau}}\Big|.
\end{align}
From this, (\ref{analycondtwis}) and (\ref{wholeLr}), observe that
\begin{align}\label{modsquareDS}
(\ref{larsie})\,  \preceq \, \mfC(\Pi)^{1/4}\max_{\substack{\epsilon/2 \le \re s \le 4\epsilon\\ |\im s| \ll (TQ)^{\epsilon}}} \, \sum_{C < c \ll Q} \,\sum_{r\le Q/c} \,  \sum_{1< u\le Q/cr} \,  \sideset{}{^\flat}{\sum}_{\substack{\psi \, (u)  }} \, \int_{t\asymp T} \,  \big| \,  \sum_{n \le M(TQ)^{\theta}} \ \frac{\psi(n) b(n)}{n^{1/2+it}}\big|^2   \,  dt,
\end{align}
where 
\begin{align}
    b(n) \ = \ b_{c,r,s}(n) \, := \,  \sum_{\substack{n=hm \\ m\le M }} \lambda_{h} \mathbf{1}_{(h,c)=1} \, \frac{\lambda_{\Pi}(m)}{m^{s}}.
\end{align}
Applying the Hybrid Large Sieve (Lemma \ref{Gallager}), we have   
\begin{align}\label{afterHLSLsum}
    \sum_{1< u\le Q/cr} \,  \sideset{}{^\flat}{\sum}_{\substack{\psi \, (u)  }} \, \int_{t\asymp T} \,  \big| \,  \sum_{n \le M(TQ)^{\theta}} \ \frac{\psi(n) b(n)}{n^{1/2+it}}\big|^2 \, \preceq \, \sqrt{\mfC(\Pi)}\Big( T\big(\frac{Q}{cr}\big)^{2} + (TQ)^{\theta} \big( \frac{TQ}{cr}\big)^{\frac{\mfr}{2}}\Big) \sum_{n\le M(TQ)^{\theta}} \, \frac{|b(n)|^2}{n}.
\end{align}

It remains to bound the $\ell^2$-norm of $(b(n)/\sqrt{n})$. By positivity and Cauchy's inequality, we obtain 
\begin{align}
     \, |b(n)|^2 \, \le  \, \Big(\sum_{\substack{m\mid n }} \, |\lambda_{\Pi}(m)|^2\Big)\Big( \sum_{\substack{h\mid n }} \, |\lambda_{h}|^2\Big).\nonumber
\end{align}
 Then
\begin{align}
    \sum_{n\le M(TQ)^{\theta}} \, \frac{|b(n)|^2}{n}   \, &= \, \mathop{\sum\sum}_{m,h \le M(TQ)^{\theta}} \,|\lambda_{\Pi}(m)|^2 |\lambda_{h}|^2 \sum_{\substack{n\le M(TQ)^{\theta}\\ [m,h]\mid n }} \, \frac{1}{n} \nonumber\\[7pt]
     \, &\ll_{\epsilon} \,   (TQ)^{\epsilon}  \mathop{\sum\sum}_{m,h \le M(TQ)^{\theta}} \, \frac{|\lambda_{\Pi}(m)|^2 |\lambda_{h}|^2}{\sqrt{mh}} \frac{(m,h)}{\sqrt{mh}}\nonumber
\end{align}
Firstly, if we have $\lambda_{h}\ll_{\epsilon} h^{\epsilon}$, then by Lemma \ref{lem: simpleGCD} and Lemma \ref{lem: ROA} we immediately have
\begin{align}
     \sum_{n\le M(TQ)^{\theta}} \, \frac{|b(n)|^2}{n} \, \preceq \, \sum_{m\le M(TQ)^{\theta}} \, \frac{|\lambda_{\Pi}(m)|^2}{m} \, \preceq 1. 
\end{align}
Secondly, suppose Condition $(\mathbf{\Lambda})$  and Hypothesis $(\mathbf{\Pi}^4)$ hold. Then it follows from Lemma \ref{lem: Galsum} that
\begin{align}
   \sum_{n\le M(TQ)^{\theta}} \, \frac{|b(n)|^2}{n} \,  \preceq \,  \Big(\sum_{m\le M(TQ)^{\theta}} \, \frac{|\lambda_{\Pi}(m)|^4}{m}\Big)^{1/2}\Big(\sum_{h\le M(TQ)^{\theta}} \, \frac{|\lambda_{h}|^4}{h}\Big)^{1/2} \, \preceq \, 1. \nonumber
\end{align}
In either case, Proposition \ref{LsuHLSter} follows from (\ref{modsquareDS}), (\ref{afterHLSLsum}), and summing over $r$ and $c$:
\begin{align}
\hspace{40pt} (\ref{larsie}) \,  \preceq \, \mfC(\Pi)^{3/4} \sum_{c>C} \,   \Big\{T\Big(\frac{Q}{c}\Big)^2  +  (TQ)^{\theta} \Big(\frac{TQ}{c}\Big)^{\mfr/2}\Big\} \, \preceq \, \mfC(\Pi)^{3/4} \, \Big\{ \frac{TQ^2}{C}  + \frac{(TQ)^{\mfr/2+\theta}}{C^{\mfr/2-1}}\Big\}. \nonumber  \hspace{120pt} \qedhere
\end{align}
\end{proof}


\section{An general form of divisor switching}\label{absDiv}

Let $F: [1, \infty) \rightarrow \C$ be a function, and let
\begin{align}\label{divswiauxpara}
    g \ :=  \ (mh, nk), \hspace{20pt}  M:= \frac{mh}{g}, \hspace{20pt} N:= \frac{nk}{g}.
\end{align}
In (\ref{UsumIC}), we have $M\neq N$, and we encounter sums of the following form: 
\begin{align}\label{abssum}
 \sum_{\substack{d \mid (mh\pm nk)\\ (d, \,\mfq mnhk)=1}}  \phi(d) F(d) \ = \    \sum_{\substack{d \mid (M\pm N)\\ (d,\, g\mfq)=1}}  \, \phi(d) F(d). 
\end{align}
Such sums also arise in earlier works on the Asymptotic Large Sieve.  We now explain the use of ``\textit{divisor switching}'' to (\ref{abssum}). By M\"obius inversions (\cite[Section 5]{CIS19} and \cite[Section 3]{BTB22}), 
\begin{align}
\sum_{\substack{d \mid (M\pm N)\\ (d,\, g\mfq)=1}}  \, \phi(d) F(d)\ = \  \sum_{ (e,\,\mfq g)=1} \frac{\mu(e)}{e} \sum_{\substack{a\mid g\\ (a,\,  \mfq)=1}} \mu(a) \sum_{\substack{b(ea\ell)= |M\pm N|\\ (b,\, \mfq)=1}} \ \frac{|M\pm N|}{\ell} \, F\Big( \frac{|M\pm N|}{\ell}\Big),\nonumber
\end{align}
where $d\ell =  |M\pm N|$, and $ d  = eab$. The variable $\ell$ is the ``complementary divisor''.  To eliminate the dependence on $b$ in the $\ell$-sum, we apply another M\"obius inversion, leading to
        \begin{align}\label{ddivswi}
          \hspace{10pt}\sum_{\substack{d \mid (M\pm N)\\ (d,\, g\mfq)=1}}  \, \phi(d) F(d) \, = \ \sum_{r\mid \mfq} \sum_{ (e, \mfq g)=1} \sum_{\substack{a\mid g\\ (a, \, \mfq)=1}}   \frac{\mu(r)\mu(e)\mu(a)}{e}  \,  \sum_{M \equiv \mp N \, (\bmod\, \mnl)} \, \frac{|M\pm N|}{\ell} \, F\Big( \frac{|M\pm N|}{\ell}\Big).
        \end{align}
For later convenience, we introduced the following notation
\begin{align}\label{key: conduct}
  \mnl \, := \, rea\ell.
\end{align}
 By definition, we have $(M, N)=1$. The congruence in (\ref{ddivswi}) implies that $(N, \, \mnl)=(M, \, \mnl)$. These imply  $(M, \mnl)=(N, \mnl)=1$. The orthogonality relation $(\bmod\, \mnl)$, i.e.,
        \begin{align*}
            \mathbf{1}_{M \equiv \mp N \ (\bmod\, \mnl)} \ = \  \frac{1}{\phi(\mnl)} \ \sum_{\psi \, (\bmod\, \mnl)} \ \psi(M)\overline{\psi}(\mp N),
        \end{align*}
        allows us to recast the right-hand side of  (\ref{ddivswi}) in terms of the multiplicative characters $(\bmod\, \mnl)$:
        \begin{align}\label{orthodivsw}
          \frac{Q}{c} \,  \sum_{r\mid \mfq}& \sum_{ (e,\,\mfq g)=1} \sum_{\substack{a\mid g\\ (a, \mfq)=1}}   \frac{\mu(r)\mu(e)\mu(a)}{e} \sum_{\ell=1}^{\infty} \  \mathcal{W}\Big( \frac{c|M\pm N|}{Q\ell}\Big)  \frac{1}{\phi(\mnl)} \ \sum_{\psi \, (\bmod\, \mnl)} \ \psi(M)\overline{\psi}(\mp N),
        \end{align}
        where $\mathcal{W}(x):= x F(Qx/c)$. 
        
        Applying (\ref{orthodivsw}) to  (\ref{UsumIC}), we obtain
\begin{align}
    \mathcal{U}_{\alpha, \beta}(h,k) \, &= \,  \sum_{* \,\in \, \{\mathbf{0},\, \mathbf{r}\}} \ \mathcal{U}^{(*)}(h,k) 
    \, := \, \sum_{* \, \in \, \{\mathbf{0},\, \mathbf{r}\}} \,
     \mathop{\sum \sum}_{\substack{m,n\ge 1 \\ mh\neq nk}} \,  \frac{\lambda_{\Pi_{1}}(m)\lambda_{\Pi_{2}}(n)}{m^{\frac12 + \alpha} n^{\frac12 + \beta}} \, \mathrm{U}^{(*)}(m,n; h,k), \label{USummm}
\end{align}
where 
\begin{align}\label{absU0sum}
\mathrm{U}^{(*)}(m,n; h,k) \, := \, \frac{Q}{2}\,  \sum_{\substack{c\le C \\(c,\, \mfq mnhk)=1 }} \, &\frac{\mu(c)}{c}\, \sum_{r\mid \mfq} \,    \,  \sum_{ (e, \,\mfq g)=1} \, \sum_{\substack{a\mid g\\(a, \mfq)=1}}\, \frac{\mu(r)\mu(e)\mu(a)}{e} \nonumber\\
&\hspace{-10pt}\cdot \, \sum_{\ell=1}^{\infty} \, \sum_{\pm}  \  \frac{1}{\phi(\mnl)} \mathcal{W}\big( \frac{c|mh\pm nk|}{Qg\ell}\big)\ \sideset{}{^{(*)}}\sum \psi\Big(\frac{mh}{g}\Big)\overline{\psi}\Big(\mp\frac{nk}{g}\Big),
\end{align}
with  $\sideset{}{^{(\mathbf{0})}}{\textstyle \sum} = \sideset{}{}{\textstyle \sum}_{\psi=\psi_{0}}$, $\sideset{}{^{(\mathbf{r})}}{\textstyle \sum} = \sideset{}{}{\textstyle \sum}_{\psi\neq \psi_{0}}$, and
\begin{align}
\mathcal{W}(x) \, &= \,  x     W(x) H_{\alpha,\beta}\Big(\frac{nk}{mh}, \frac{mn}{\mfq (Qx)^{\mfr}}\Big),\label{Ffunc}\\[5pt]
\sum_\pm \  \widetilde{\mathcal{W}}^{\pm}(w) \, :&= \,  \sum_\pm \,  \int_{0}^{\infty} \, \mathcal{W}\Big(\frac{c}{Qg}\frac{|mh \pm nk|}{x}\Big)   x^{w} \, d^{\times} x. \label{simMell}
\end{align}

We note that divisor switching itself (see (\ref{orthodivsw})) is rather general, and is insensitive to the arithmetic coefficients $\lambda_{\Pi_{i}}(m)$'s, the specific shape of $F$, or the choice of cut-off functions. However, the arithmetic of the coefficients enters the scene from Section \ref{sect: NoODU2} onward. As a summary ahead:
\begin{itemize}
    \item For $\mathcal{U}^{(\mathbf{0})}(h,k)$, we first derive an exact evaluation of the sums over $\ell$, $r$, $e,a$ in (\ref{USummm})--(\ref{absU0sum}) (see Lemma \ref{lem: U0anapar}). This is followed by the summation over $m,n$ (see Section \ref{sect: NoODU2}). The sums over $c, h,k$ are handled trivially. 

    \item  For $\mathcal{U}^{(\mathbf{r})}(h,k)$, we evaluate and bound the sums over $m,n,h,k$ non-trivially using the hybrid large sieve, but we apply the triangle inequality through the sums over $c,r,e,a, \ell$ (with $\mnl$ being the size of the conductor); see Section \ref{sect: boundUrsum}. 
\end{itemize}


\section{The sum $\mathcal{U}^{(\mathbf{0})}(h,k)$}\label{U0sumsect}

\subsection{A quadruple sum}\label{sect: divswitcEuprod}

In  $\mathcal{U}^{(\mathbf{0})}(h,k)$ (see (\ref{absU0sum})), we have  $\psi=\psi_{0}\, (\mnl)$ (with $\mnl:=rea\ell$), and hence the coprimality condition $(MN, \,\mnl)=1$, for $M,N$ defined in (\ref{divswiauxpara}). To facilitate the upcoming inclusion-exclusion process, we write
\begin{align}\label{splitmnaccortoq}
   \hspace{5pt}  m \, = \, m_{0}m', \hspace{5pt} n \, = \, n_{0}n', \hspace{5pt} h \, = \, h_{0}h', \hspace{5pt} k \, = \, k_{0}k', \hspace{5pt} \text{ where} \hspace{10pt}  m_{0}n_{0}h_{0}k_{0} \mid \mfq^{\infty}, \hspace{5pt} (m'n'h'k', \mfq) \, = \, 1.
\end{align}
 Accordingly, we have $g=g_{0}g'$, $M= M_{0}M'$ and $N= N_{0}N'$, where
\begin{align}\label{MNdecompo0pr}
  \hspace{5pt} g_{0}:=  (m_{0}h_{0}, n_{0}k_{0}), \hspace{5pt}  M_{0}  :=  \frac{m_{0}h_{0}}{g_{0}}, \hspace{5pt}  N_{0}  :=  \frac{n_{0}k_{0}}{g_{0}}, \hspace{5pt} g' :=  (m'h', n'k'), \hspace{5pt} M'  :=  \frac{m'h'}{g'},  \hspace{5pt} N'  =   \frac{n'k'}{g'}.
\end{align}

\begin{lem}\label{lem: U0anapar}
    The sum $ \mathrm{U}^{(\mathbf{0})}(m,n; h,k)$ of (\ref{absU0sum}) is equal to 
 \begin{align}\label{U0anapar}
    \frac{Q}{2}\, \sum_{\substack{c\le C \\(c, \mfq mnhk)=1 }} \, \frac{\mu(c)}{c}  \,   \int_{(\sigma_{w})} \,  \sum_\pm \ 
 \widetilde{\mathcal{W}}^{\pm}(w)  \, \zeta(1+w) \, R_{1}(w, g', M'N')R_{M_{0}N_{0}}(w)R_{\mfq}^{(M_{0}N_{0})}(w) \, \frac{dw}{2\pi i}, 
\end{align}
where the integral converges absolutely whenever $\sigma_{w}> 0$, and
\begin{align}
R_{1}(w; u, v)   \, &:= \,  \prod_{p\mid v} \, \big(1-\frac{1}{p^{1+w}}\big) \, \prod_{\substack{p\mid  u\\  p \, \nmid\, v}} \, \Big(1+ \frac{1}{(p-1) p^{1+w}}- \frac{1}{p-1}\Big) \,  \prod_{p \,\nmid\, uv} \, \Big(1+ \frac{p^{-w}-1}{p(p-1)}\Big), \label{resultan}\\
R_{M_{0}N_{0}}(w) \ &:= \  \prod_{p\mid M_{0}N_{0}} \, \Big(1-\frac{1}{p^{1+w}}\Big) \Big(1+ \frac{p^{-w}-1}{p(p-1)}\Big)^{-1}, \\
  R_{\mfq}^{(M_{0}N_{0})}(w) \ &:= \    \prod_{\substack{p\mid \mfq\\ p\nmid M_{0}N_{0}}} \,  \frac{p^{2+w}-2p^{1+w}+1}{p^{2+w}-p^{1+w}-p^w+1}.
\end{align}
\end{lem}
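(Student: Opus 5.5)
The plan is to write the $\ell$-sum in $\mathrm{U}^{(\mathbf{0})}$ as a Mellin integral and then evaluate the resulting arithmetic Dirichlet series in $\ell$, $r$, $e$, $a$ as an explicit Euler product. Fix $c$ and the sign $\pm$. By Mellin inversion,
\begin{align*}
\mathcal{W}\Big(\frac{c|mh\pm nk|}{Qg\ell}\Big)=\int_{(\sigma_w)}\widetilde{\mathcal{W}}^{\pm}(w)\,\ell^{-w}\,\frac{dw}{2\pi i},
\end{align*}
with $\widetilde{\mathcal{W}}^{\pm}$ as in (\ref{simMell}). Since $W$ has compact support in $[1,2]$ and $H_{\alpha,\beta}$ is smooth, $\widetilde{\mathcal{W}}^{\pm}(w)$ decays rapidly in $|\im w|$. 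This gives absolute convergence for $\sigma_w>0$, once the $\ell$-series below is shown to converge there.

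When $\psi=\psi_0 \pmod{\mnl}$, the character sum equals $\mathbf 1_{(MN,\mnl)=1}$. We are therefore led to evaluate
\begin{align*}
\mathcal{S}(w):=\sum_{r\mid\mfq}\ \sum_{(e,\mfq g)=1}\ \sum_{\substack{a\mid g\\(a,\mfq)=1}}\frac{\mu(r)\mu(e)\mu(a)}{e}\sum_{\ell\ge1}\frac{\mathbf 1_{(MN,\,rea\ell)=1}}{\phi(rea\ell)\,\ell^{w}}.
\end{align*}
The goal is to show that $\mathcal{S}(w)=\zeta(1+w)R_1(w,g',M'N')R_{M_0N_0}(w)R^{(M_0N_0)}_{\mfq}(w)$.

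Everything here is multiplicative in the prime factorisations of $r,e,a,\ell$. The only subtlety is that $\phi(rea\ell)$ is not a product of separate factors, because $r,e,a,\ell$ may share primes. I will handle this as in Lemma \ref{genEPexp}, treating it as a multivariable Euler product. For each prime $p$, the local factor is a sum over $o_p(r),o_p(e),o_p(a)\in\{0,1\}$ and $o_p(\ell)\ge0$. The constraints are the following:
\begin{itemize}
\item $o_p(r)=1$ forces $p\mid\mfq$.
\item $o_p(e)=1$ forces $p\nmid\mfq g$.
\item $o_p(a)=1$ forces $p\mid g$ and $p\nmid \mfq$.
\item If $p\mid MN$, then $o_p(rea\ell)=0$.
\end{itemize}

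The primes then split into classes, and I compute each local factor by a geometric series in $p^{-w}$, using $\phi(p^j)=p^{j-1}(p-1)$. Since $\gcd(M,N)=1$ and $g=(mh,nk)$, a prime dividing $MN$ may or may not divide $g$; that is why $R_1$ has the case $p\mid v$ first.
\begin{enumerate}
\item For $p\nmid\mfq$ with $p\mid M'N'$, only $\ell$ with $p\nmid \ell$ survive, and the local factor is $1$. I will write it as $(1-p^{-1-w})\cdot\zeta_p(1+w)$ in order to extract $\zeta(1+w)$.
\item For $p\nmid \mfq$, $p\mid g'$, $p\nmid M'N'$, the variables $a$ and $\ell$ range freely. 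The factor is $1+\sum_{j\ge1}p^{-jw}/\phi(p^j)-\frac{1}{p-1}\sum_{j\ge0}p^{-jw}/p^{j}$, which should equal $\zeta_p(1+w)\big(1+\frac{1}{(p-1)p^{1+w}}-\frac{1}{p-1}\big)$.
\item For $p\nmid \mfq g$, the variables $e$ and $\ell$ range freely, giving $\zeta_p(1+w)\big(1+\frac{p^{-w}-1}{p(p-1)}\big)$.
\item For $p\mid\mfq$, the variables $r$ and $\ell$ range freely. If $p\mid M_0N_0$, only the empty term survives, giving $1$. This is $\zeta_p(1+w)(1-p^{-1-w})$, which is the $R_{M_0N_0}$ factor multiplied by the generic factor from class 3. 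Since $R_1$ includes class 3 at all $p\nmid uv$, $R_{M_0N_0}$ divides it back out. If $p\nmid M_0N_0$, the factor should equal $\zeta_p(1+w)\,\frac{p^{2+w}-2p^{1+w}+1}{p^{2+w}-p^{1+w}-p^w+1}$ divided by the generic factor of class 3. This is exactly the shape of $R_{\mfq}^{(M_0N_0)}$, since $\big(1+\frac{p^{-w}-1}{p(p-1)}\big)=\frac{p^{2+w}-p^{1+w}-p^{w}+1}{p^{w}p(p-1)}$.
\end{enumerate}

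The main obstacle is bookkeeping rather than any single computation. First, I must verify that $R_1(w;g',M'N')$ is evaluated with the $\mfq$-free parts, while primes dividing $\mfq$ are treated by $R_{M_0N_0}R^{(M_0N_0)}_{\mfq}$. This requires checking that the product over $p\nmid uv$ in $R_1$ runs over all $p$, including $p\mid\mfq$, and that the $\mfq$-factors compensate for this. Second, I must check each local rational identity, for which I would use a short symbolic computation.

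Convergence for $\sigma_w>0$ follows because the class 3 factors are $1+O(p^{-2})$ uniformly there, while the finitely many remaining factors are harmless. Interchanging the $\ell$-sum with the $w$-integral is then justified by absolute convergence. Finally, the stated formula is obtained by reinserting the sum over $c$ and the factor $Q/2$.
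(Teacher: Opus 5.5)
Your proposal is correct and is essentially the paper's argument: Mellin inversion in $\ell$ followed by an Euler-product evaluation of the arithmetic sum over $r,e,a,\ell$. The paper does this in stages (first $\sum_\ell$, giving $\zeta(1+w)R(w;rea,MN)/\phi(rea)$, then $\sum_r$, then $\sum_{e,a}$), whereas you do one joint local computation, which is legitimate because $r,e,a$ are pairwise coprime and $\phi$ is multiplicative. One wording slip: at $p\mid\mfq$, $p\nmid M_0N_0$, the local factor $\zeta_p(1+w)\bigl(1+\frac{1}{(p-1)p^{1+w}}-\frac{1}{p-1}\bigr)$ equals $\zeta_p(1+w)$ times the $R_{\mfq}^{(M_0N_0)}$-factor \emph{times} (not divided by) the generic factor $1+\frac{p^{-w}-1}{p(p-1)}$ already contained in $R_1$.
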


\begin{proof}
    By Mellin inversion, the $\ell$-sum of $ \mathrm{U}^{(\mathbf{0})}(m,n; h,k)$ is equal to
\begin{align}\label{ellsum}
      \frac{1}{\phi(rea)}\,  \int_{(\sigma_{w})}  \sum_\pm 
 \widetilde{\mathcal{W}}^{\pm}(w)  \zeta(1+w)  R(w; rea,  MN) \, \frac{dw}{2\pi i},
\end{align}
where
\begin{align}
    R(w; u,v) \, := \,   \prod_{p \mid v} \,  \Big(1-\frac{1}{p^{1+w}}\Big)  \, \prod_{p\,\nmid\, uv} \, \Big(1+ \frac{1}{p^{w+1}(p-1)}\Big).
\end{align}

Next, we evaluate the triple sum over $r$, $e$  and $a$:
\begin{align}\label{collapseeal}
   \mathfrak{R}(w; g, MN, \mfq) \, := \, \sum_{\substack{r\mid \mfq\\ (r, \, MN)=1}} \, \mu(r)\sum_{\substack{ (e, \, \mfq g MN) =1}} \frac{\mu(e)}{e} \sum_{\substack{a\mid g\\  (a, \mfq MN) =1}}  \, \frac{\mu(a)}{\phi(rea)} R(w;  rea,   MN).
\end{align}
We consider $m,n,h,k$ (hence $g$) and $\mfq$ as fixed for the moment. Observe that the quantities $r,e,a, M,N$ are pairwise coprime. Hence, 
\begin{align}
    R(w;  rea,   MN) \ =  \ \prod_{p\mid r} \, \Big(1+ \frac{1}{p^{w+1}(p-1)}\Big)^{-1} R(w; \, ea, MN), \nonumber
\end{align}
and that
\begin{align}
    R(w; ea, MN) \ = \ R(w; ea, \mfq M'N')  \prod_{\substack{p\mid \mfq \\p\nmid M_{0}N_{0}}}  \Big(1-\frac{1}{p^{1+w}}\Big)^{-1} \, \Big(1+ \frac{1}{p^{w+1}(p-1)}\Big).
\end{align}
We rewrite (\ref{collapseeal}) with (\ref{MNdecompo0pr}). It follows that
\begin{align}
     \mathfrak{R}(w; g, MN, \mfq) \ = \      & \prod_{\substack{p\mid \mfq \\p\nmid M_{0}N_{0}}} \, \Big(1-\frac{1}{p^{1+w}}\Big)^{-1}\Big(1+ \frac{1}{p^{w+1}(p-1)}\Big)\sum_{\substack{a\mid g'\\  (a, \,\mfq M'N') =1}}  \, \sum_{\substack{ (e, \, \mfq g' M'N') =1}} \,  \frac{\mu(e)\mu(a)}{\phi(e)\phi(a)e} \nonumber\\
     & \hspace{20pt}\cdot \ R(w, ea, \mfq M'N')   \sum_{\substack{r\mid \mfq \\ (r,\,  M_{0}N_{0})=1}} \, \frac{\mu(r)}{\phi(r)} \prod_{p\mid r} \, \Big(1+ \frac{1}{p^{w+1}(p-1)}\Big)^{-1} \nonumber\\
     \ = \ &  \prod_{\substack{p\mid \mfq\\ p\nmid M_{0}N_{0}}} \,  \frac{p^{2+w}-2p^{1+w}+1}{(p-1)(p^{1+w}-1)}   \sum_{\substack{a\mid g'\\  (a, \,\mfq M'N') =1}}  \, \sum_{\substack{ (e, \, \mfq g' M'N') =1}} \,  \frac{\mu(e)\mu(a)}{\phi(e)\phi(a)e}R(w, ea, \mfq M'N'). \nonumber
\end{align}
A simple Euler product calculation with Lemma \ref{genEPexp} (or see \cite[(7.7)--(7.8)]{CIS19}) yields
\begin{align}
  \mathfrak{R}(w; g, MN, \mfq) \ = \  &  R_{1}(w, g', M'N') \prod_{\substack{p\mid \mfq\\ p\nmid M_{0}N_{0}}} \,  \frac{p^{2+w}-2p^{1+w}+1}{(p-1)(p^{1+w}-1)}  \prod_{p\mid \mfq}\, \Big(1-\frac{1}{p^{1+w}}\Big) \Big(1+ \frac{p^{-w}-1}{p(p-1)}\Big)^{-1}  \nonumber\\
\ = \  &R_{1}(w, g', M'N')R_{M_{0}N_{0}}(w)R_{\mfq}^{(M_{0}N_{0})}(w).
\end{align}
The desired result follows from (\ref{absU0sum}).
\end{proof}


\subsection{Further decomposition}
Suppose that $mh\neq nk$.  We shift the line of integration in (\ref{U0anapar}) to $\re w=-\epsilon$, where $ \epsilon\in (0,1)$. We obtain the following expression for $\mathrm{U}^{(\mathbf{0})}(m,n; h,k)$:
\begin{align}\label{split}
       & \frac{Q}{2} \, \sum_{\substack{c\le C \\(c,\, \mfq mnhk)=1 }} \, \frac{\mu(c)}{c} \, \sum_\pm \,  \widetilde{\mathcal{W^{\pm}}}(0) \,R_{1}(0, g', M'N')R_{M_{0}N_{0}}(0)R_{\mfq}^{(M_{0}N_{0})}(0) \nonumber\\
      \ & \hspace{15pt} \, + \,   \frac{Q}{2}\,
      \sum_{\substack{c\le C \\(c, \,\mfq mnhk)=1 }} \, \frac{\mu(c)}{c} \ \sum_\pm   \, \int_{(-\epsilon)} \, \widetilde{\mathcal{W}^{\pm}}(w)  \, \zeta(1+w) \, R_{1}(w, g', M'N')R_{M_{0}N_{0}}(w)R_{\mfq}^{(M_{0}N_{0})}(w) \, \frac{dw}{2\pi i} \nonumber\\
       &\hspace{40pt} =: \,   \mathrm{U}^{\mathbf{1}}(m,n; h,k) \ + \  \mathrm{U}^{\mathbf{2}}(m,n; h,k). 
\end{align}
The sums $\mathcal{U}^{\mathbf{1}}(h,k)$ and $\mathcal{U}^{\mathbf{2}}(h,k)$ defined analogously to $\mathcal{U}^{(*)}(h,k)$ in (\ref{USummm}). Therefore, 
\begin{align}\label{split-dec}
    \mathcal{U}(h,k) \, = \, \mathcal{U}^{\mathbf{1}}(h, k) \, + \, \mathcal{U}^{\mathbf{2}}(h, k) \, + \, \mathcal{U}^{(\mathbf{r})}(h, k).  
\end{align}

Denote by $\mathcal{U}^{\mathbf{2}}(h,k)_{\mathbf{d}}$ the complementary contribution to $\mathcal{U}^{2}(h,k)$, obtained by replacing the condition $mh\neq nk$ in the definition of $\mathcal{U}^{\mathbf{2}}(h,k)$ with $mh=nk$. Define $\mathcal{U}^{\mathbf{2}}(h,k)_{\circ}$ to be the same sum as $\mathcal{U}^{\mathbf{2}}(h,k)$, but with the condition $mh\neq nk$ removed. In other words, we have 
\begin{align}\label{dec: U2}
    \mathcal{U}^{\mathbf{2}}(h,k) \, =\, \mathcal{U}^{\mathbf{2}}(h,k)_{\circ} \, - \, \mathcal{U}^{\mathbf{2}}(h,k)_{\mathbf{d}}.
\end{align}

        
\subsection{Cancellation: a remnant of divisor switching}\label{Fauxcan}

In this section,  we estimate the size of the bilinear form of $\mathcal{U}^{\mathbf{1}}(h,k)+\mathcal{L}^{(\mathbf{0})}(h,k)_{\mathbf{od}}$,  where
\begin{align}\label{dec: L0}
     \mathcal{L}^{(\mathbf{0})}(h,k) \, = \, \mathcal{L}^{(\mathbf{0})}(h,k)_{\mathbf{od}} \, + \,  \mathcal{L}^{(\mathbf{0})}(h,k)_{\mathbf{d}}.
\end{align} 
The sum $\mathcal{L}^{(\mathbf{0})}(h,k)$ was defined in (\ref{foldbacL}); $\mathcal{L}^{(\mathbf{0})}(h,k)_{\mathbf{od}}$ and $\mathcal{L}^{(\mathbf{0})}(h,k)_{\mathbf{d}}$ denote the contributions from  $mh\neq nk$ and $mh=nk$, respectively. We will handle $\mathcal{L}^{(\mathbf{0})}(h,k)_{\mathbf{d}}$ and $\mathcal{U}^{\mathbf{2}}(h,k)_{\mathbf{d}}$ in Section \ref{compdoubsumGCD}. 

\begin{lem}
The sum $\mathcal{U}^{\mathbf{1}}(h,k)$ can be rewritten as
\begin{align}
    Q\, \mathop{\sum \sum}_{\substack{m,n\ge 1 \\ mh\neq nk}}  \ \frac{\lambda_{\Pi_{1}}(m)\lambda_{\Pi_{2}}(n)}{m^{\frac12 + \alpha} n^{\frac12 + \beta}}  \frac{\phi(\mfq mnhk)}{\mfq mnhk} \Big(\int_{0}^{\infty} \, W(x) H_{\alpha,\beta}\Big(\frac{nk}{mh}, \frac{mn}{(Qx)^\mfr}\Big) \, dx \Big) \sum_{\substack{c\le C \\(c,\, \mfq mnhk)=1 }} \, \frac{\mu(c)}{c}.\nonumber
\end{align}   
\end{lem}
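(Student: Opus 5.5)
The plan is to start from the definition of $\mathcal{U}^{\mathbf{1}}(h,k)$ obtained by inserting the first term $\mathrm{U}^{\mathbf{1}}(m,n;h,k)$ of (\ref{split}) into (\ref{USummm}). We then evaluate the two ingredients of that term separately: the arithmetic factor $R_{1}(0,g',M'N')R_{M_{0}N_{0}}(0)R_{\mfq}^{(M_{0}N_{0})}(0)$ and the Mellin value $\sum_\pm \widetilde{\mathcal{W}}^{\pm}(0)$. Neither depends on $c$, so the $c$-sum $\sum_{c\le C,\,(c,\mfq mnhk)=1}\mu(c)/c$ simply factors out.

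\emph{Arithmetic factor at $w=0$.} In (\ref{resultan}) with $w=0$, the factor over $p\nmid uv$ is $1+(1-1)/(p(p-1))=1$. The factor over $p\mid u$, $p\nmid v$ is
\begin{align*}
1+\frac{1}{p(p-1)}-\frac{1}{p-1} \ = \ 1-\frac{1}{p}.
\end{align*}
Hence $R_{1}(0;g',M'N')=\prod_{p\mid g'M'N'}(1-p^{-1})$. Similarly, $R_{M_{0}N_{0}}(0)=\prod_{p\mid M_{0}N_{0}}(1-p^{-1})$, and for $p\mid\mfq$, $p\nmid M_{0}N_{0}$ we get
\begin{align*}
\frac{p^{2}-2p+1}{p^{2}-p-1+1} \ = \ \frac{(p-1)^2}{p(p-1)} \ = \ 1-\frac{1}{p}.
\end{align*}
Since $M_{0}N_{0}\mid\mfq^\infty$, the three products combine into $\prod_{p\mid \mfq g'M'N'}(1-p^{-1})$. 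It then remains to check that the set of primes dividing $\mfq g'M'N'$ equals the set of primes dividing $\mfq mnhk$. Any prime $p\nmid\mfq$ dividing $mh$ or $nk$ divides $g'=(m'h',n'k')$, or $M'$, or $N'$, and conversely. Hence the arithmetic factor equals $\phi(\mfq mnhk)/(\mfq mnhk)$.

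\emph{Mellin value at $w=0$.} By (\ref{simMell}), with $a_\pm:=c|mh\pm nk|/(Qg)$, which is positive because $mh\neq nk$, the substitution $y=a_\pm/x$ gives
\begin{align*}
\widetilde{\mathcal{W}}^{\pm}(0) \ = \ \int_0^\infty \mathcal{W}(a_\pm/x)\,\frac{dx}{x} \ = \ \int_0^\infty \mathcal{W}(y)\,\frac{dy}{y} \ = \ \int_0^\infty W(y)\,H_{\alpha,\beta}\Big(\frac{nk}{mh},\frac{mn}{\mfq(Qy)^{\mfr}}\Big)\,dy,
\end{align*}
where the last step uses (\ref{Ffunc}). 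This is independent of the sign and of $c$, $g$, so $\sum_\pm$ produces a factor $2$ that cancels the $\frac{1}{2}$ in front of $Q$. Substituting both evaluations into (\ref{USummm}) gives the claimed expression. The only point needing care is that the $\pm$-independence genuinely requires $mh\neq nk$, so that the minus sign is nondegenerate. One should also reconcile the normalisation of the second argument of $H_{\alpha,\beta}$: (\ref{Ffunc}) carries the factor $\mfq$, which I expect is meant to be present in the displayed formula as well (or is absorbed by convention).

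I expect no real obstacle here. The main risk is bookkeeping in the prime-set identification, in particular ensuring that primes dividing $\mfq$ but not $mnhk$ are correctly accounted for by the $R_{\mfq}^{(M_{0}N_{0})}$ factor.
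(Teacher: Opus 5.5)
Your proposal is correct and is the paper's proof: evaluate $R_{1}(0;g',M'N')=\phi(m'n'h'k')/(m'n'h'k')$ and $R_{M_0N_0}(0)R_{\mfq}^{(M_0N_0)}(0)=\phi(\mfq)/\mfq$, then use $\sum_\pm\widetilde{\mathcal{W}}^{\pm}(0)=2\int_0^\infty\mathcal{W}(x)\,d^\times x$ (valid since $mh\neq nk$) to cancel the $\frac12$ in front of $Q$. Your remark about the normalisation is also right: consistency with (\ref{Ffunc}) requires the second argument of $H_{\alpha,\beta}$ to be $\frac{mn}{\mfq(Qx)^{\mfr}}$, so the displayed statement has simply dropped the factor $\mfq$.
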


\begin{proof}
   Recall Lemma \ref{lem: U0anapar}. Observe that
\begin{align}
    R_{M_{0}N_{0}}(0)R_{\mfq}^{(M_{0}N_{0})}(0) \, = \, \frac{\phi(\mfq)}{\mfq} \ = \  \frac{\phi(\mfq m_{0}n_{0}h_{0}k_{0})}{\mfq m_{0}n_{0}h_{0}k_{0}}\hspace{15pt} \text{and} \hspace{15pt} R_{1}(0;  g', M'N') \ = \ \frac{\phi(m'n'h'k')}{m'n'h'k'}.
\end{align}
Thus, we have
\begin{align}\label{innerU1}
    \mathrm{U}^{\mathbf{1}}(m,n;h,k) \, = \,  Q \, \frac{\phi(\mfq mnhk)}{\mfq mnhk} \Big(\int_{0}^{\infty} \, \mathcal{W}(x) \, d^{\times} x \Big) \, \sum_{\substack{c\le C \\(c,\, \mfq mnhk)=1 }} \, \frac{\mu(c)}{c}. 
\end{align}
The result readily follows from (\ref{split}) and (\ref{Ffunc}).
\end{proof}

\begin{lem}
We have
     \begin{align}\label{MobL0}  
  \mathcal{L}^{(\mathbf{0})}(h,k)_{\textbf{od}} \, &= \,  -\,   \mathop{\sum\sum}_{\substack{{m,n\ge 1}\\ mh\neq nk}}  \,  \frac{\lambda_{\Pi_{1}}(m)\lambda_{\Pi_{2}}(n)}
        {m^{\frac12 + \alpha} n^{\frac12 + \beta}} \mathrm{L}^{\mathbf{0}}(m,n;h,k), 
\end{align}
where
\begin{align}\label{innerL0}
    \mathrm{L}^{\mathbf{0}}(m,n;h,k) \, := \,  \sum_{\substack{c\le C\\ (c,\, \mfq mnhk)=1}} \, \mu(c) \sum_{\substack{d\ge 1 \\ (d, \, \mfq m nhk) = 1}} \,   \frac{\mathcal{W}(x)}{x} \Big|_{x=\frac{cd}{Q}}.
\end{align}
\end{lem}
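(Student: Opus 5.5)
The plan is to read off the $\psi=\psi_{0}$ term of (\ref{foldbacL}) directly and then convert the restriction $c>C$ into $c\le C$ using the vanishing of $\sum_{c\mid q}\mu(c)$.

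First, I will isolate $\mathcal{L}^{(\mathbf{0})}(h,k)$. For the principal character $\psi_{0} \,(\bmod\, d)$ we have $\psi_{0}(mh)\overline{\psi_{0}}(nk)=\mathbf{1}_{(mnhk,\,d)=1}$. Substituting this into (\ref{foldbacL}) gives
\begin{align*}
\mathcal{L}^{(\mathbf{0})}(h,k) \, = \, \mathop{\sum\sum}_{m,n\ge 1} \frac{\lambda_{\Pi_{1}}(m)\lambda_{\Pi_{2}}(n)}{m^{\frac12+\alpha}n^{\frac12+\beta}} \mathop{\sum\sum}_{\substack{c>C,\ d\ge 1\\ (c,\, \mfq mnhk)=1\\ (d,\, \mfq mnhk)=1}} \mu(c)\, W\Big(\frac{cd}{Q}\Big) H_{\alpha,\beta}\Big(\frac{nk}{mh}, \frac{mn}{\mfq (cd)^{\mfr}}\Big).
\end{align*}
The coprimality conditions here are just those of (\ref{foldbacL}) combined with the one coming from $\psi_{0}$. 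Restricting to $mh\neq nk$ then gives $\mathcal{L}^{(\mathbf{0})}(h,k)_{\mathbf{od}}$. By the definition (\ref{Ffunc}) of $\mathcal{W}$, the summand in $c,d$ is exactly $\mathcal{W}(x)/x$ evaluated at $x=cd/Q$. It therefore matches the inner expression in (\ref{innerL0}), except that the range is $c>C$ rather than $c\le C$.

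The key step is to show that the same double sum taken over \emph{all} $c\ge 1$ vanishes identically for each fixed pair $m,n$. Group the terms according to $q=cd$. The $t$-integral $H_{\alpha,\beta}$ and the weight $W(q/Q)$ depend on $c,d$ only through $q$, and the two conditions $(c,\mfq mnhk)=(d,\mfq mnhk)=1$ together are equivalent to $(q,\mfq mnhk)=1$. The coefficient of each $q$ is therefore $\sum_{c\mid q}\mu(c)$, which is zero unless $q=1$. The term $q=1$ carries the factor $W(1/Q)$, which is $0$ once $Q>1$ because $W$ is supported on $[1,2]$. Hence the sum over $c>C$ equals minus the sum over $c\le C$, and inserting this into the display above yields (\ref{MobL0}) with $\mathrm{L}^{\mathbf{0}}$ as in (\ref{innerL0}).

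The only point that needs care is justifying the interchange and rearrangement of the sums over $c$, $d$, $m$, $n$. I expect this to be routine rather than a real obstacle. The weight $W(cd/Q)$ forces $q=cd\asymp Q$, so the $c,d$-sums are finite. The bound (\ref{trivweigh}) gives rapid decay in $mn$, so everything converges absolutely and the rearrangement is legitimate.
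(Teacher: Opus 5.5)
Your proposal is correct and follows the paper's proof essentially verbatim. You isolate the $\psi_{0}$-term of (\ref{foldbacL}), group the $(c,d)$-sum by $q=cd$, and use $\sum_{c\mid q}\mu(c)=0$ for $q>1$ to convert the range $c>C$ into minus the range $c\le C$. You also make explicit that the $q=1$ term is killed by $W(1/Q)=0$, which the paper leaves implicit.
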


\begin{proof}
    We have
\begin{align}
\mathcal{L}^{(\mathbf{0})}(h,k)_{\mathbf{od}} \, = \,  \mathop{\sum\sum}_{\substack{{m,n\ge 1}\\ mh\neq nk}} \,  \frac{\lambda_{\Pi_{1}}(m)\lambda_{\Pi_{2}}(n)}{m^{\frac12 + \alpha} n^{\frac12 + \beta}}  \sum_{\substack{c>C\\ (c, \, \mfq mnhk)=1}}  \sum_{(d, \, \mfq mnhk)=1} \mu(c) W\Big( \frac{cd}{Q}\Big) \   
   H_{\alpha, \beta}\Big(\frac{nk}{mh}, \frac{mn}{\mfq(cd)^{\mfr}}\Big).\nonumber
\end{align}
Grouping the terms of the sums over $c,d$ by $cd=q$, and by M\"obius inversion,
\begin{align*}
    \sum_{(q, \, \mfq mnhk)=1} \, W(\frac{q}{Q})H_{\alpha, \beta}\Big(\frac{nk}{mh}, \frac{mn}{\mfq q^{\mfr}}\Big) \sum_{\substack{c\mid q\\ c>C}} \, \mu(c) \, = \, -  \sum_{(q, \, \mfq mnhk)=1} \, W(\frac{q}{Q})H_{\alpha, \beta}\Big(\frac{nk}{mh}, \frac{mn}{\mfq q^{\mfr}}\Big) \sum_{\substack{c\mid q\\ c\le C}} \, \mu(c).
\end{align*}
Now, re-opening the $q$-sum with $q=cd$, the desired result follows. 
\end{proof}

\begin{prop}\label{prop: auxcanU1L0}
Suppose that  $\sum_{h} |\lambda_{h}|^2/h \ll_{\epsilon} (TQ)^{\epsilon}$. Then the following estimate holds:
    \begin{align}
        \Big|\mathop{\sum \sum}_{h,k \,\le \, (TQ)^{\theta}  }\, &\frac{\lambda_{h}\overline{\lambda_{k}}}{\sqrt{hk}} \, \big( \, \mathcal{U}^{\mathbf{1}}(h,k)+ \mathcal{L}^{(\mathbf{0})}(h,k)_{\mathbf{od}} \big) \Big| \, \preceq \, CT(TQ)^{\mfr/2+\theta}\mathfrak{C}.
    \end{align}
\end{prop}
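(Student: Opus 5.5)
The plan is to show that the main part of $\mathrm{L}^{\mathbf{0}}(m,n;h,k)$ cancels $\mathrm{U}^{\mathbf{1}}(m,n;h,k)$ exactly, term by term in $(m,n,c)$. What survives is a remainder of size $T(TQ)^{\epsilon}$ per $(m,n,c)$, which we then sum trivially.

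\textbf{Step 1: evaluate the $d$-sum in $\mathrm{L}^{\mathbf{0}}$.} Fix $m,n,h,k$ and $c\le C$, and put $a:=\mfq mnhk$. Write $f(y):=\mathcal{W}(x)/x|_{x=cy/Q}=W(cy/Q)H_{\alpha,\beta}\big(\tfrac{nk}{mh},\tfrac{mn}{\mfq(cy)^{\mfr}}\big)$. This is a smooth function supported on $y\asymp Q/c$, with $y^j f^{(j)}(y)\ll_j T(TQ)^{\epsilon}$ by (\ref{trivweigh}) and (\ref{IKcutbdd}). By Mellin inversion,
\begin{align*}
\sum_{(d,a)=1} f(d) \, = \, \frac{1}{2\pi i}\int_{(2)} \widetilde{f}(w)\,\zeta(w)\prod_{p\mid a}(1-p^{-w})\,dw .
\end{align*}
We shift the contour to $\re w=\epsilon$. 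The pole at $w=1$ gives $\frac{\phi(a)}{a}\widetilde f(1)$. A change of variables shows $\widetilde f(1)=\frac{Q}{c}\int_0^\infty W(x)H_{\alpha,\beta}\big(\tfrac{nk}{mh},\tfrac{mn}{\mfq(Qx)^{\mfr}}\big)\,dx$.

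\textbf{Step 2: match and cancel the main terms.} Multiply the pole term by $\mu(c)$ and sum over $c\le C$. Comparing with (\ref{innerU1}), and with the rewriting of $\mathcal{U}^{\mathbf{1}}$ in the preceding lemma, this is exactly $\mathrm{U}^{\mathbf{1}}(m,n;h,k)$. Because of the minus sign in (\ref{MobL0}), the two contributions cancel identically. This is the "remnant of divisor switching" cancellation.

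\textbf{Step 3: bound the remainder on $\re w=\epsilon$.} Repeated integration by parts gives $\widetilde f(\epsilon+iv)\ll_A T(TQ)^{\epsilon}(Q/c)^{\epsilon}(1+|v|)^{-A}$. On this line we also have $\zeta(\epsilon+iv)\ll (1+|v|)^{1/2}$ and $\prod_{p\mid a}|1-p^{-w}|\ll a^{\epsilon}$. Hence the remainder is $\preceq T$ for each $(m,n,c)$, and summing over $c\le C$ gives $\preceq CT$.

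\textbf{Step 4: sum trivially over $m,n,h,k$.} We only use the off-diagonal $mh\neq nk$, so the balanced truncation (\ref{baltrunc}) applies: $mh,\,nk\le X:=(TQ)^{\mfr/2+\theta+\epsilon}\mathfrak{C}$, up to an arbitrarily small error. The double sum then factors and is bounded by
\begin{align*}
\Big(\sum_{mh\le X}\frac{|\lambda_{\Pi_1}(m)\lambda_h|}{\sqrt{mh}}\Big)\Big(\sum_{nk\le X}\frac{|\lambda_{\Pi_2}(n)\lambda_k|}{\sqrt{nk}}\Big).
\end{align*}
For each factor we apply Cauchy--Schwarz in $m$ and then Lemma \ref{lem: ROA}:
\begin{align*}
\sum_{mh\le X}\frac{|\lambda_{\Pi}(m)\lambda_h|}{\sqrt{mh}}\,\le\,\sum_h\frac{|\lambda_h|}{\sqrt h}\Big(\frac{X}{h}\Big)^{1/2}\Big(\sum_{m\le X}\frac{|\lambda_\Pi(m)|^2}{m}\Big)^{1/2}\,\preceq\,X^{1/2}\sum_{h}\frac{|\lambda_h|}{h}.
\end{align*}
A final Cauchy--Schwarz with the hypothesis $\sum_h|\lambda_h|^2/h\preceq 1$ bounds $\sum_h|\lambda_h|/h$ by $\preceq 1$. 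Multiplying the two factors gives $\preceq X$, so the total is $\preceq CT(TQ)^{\mfr/2+\theta}\mathfrak{C}$, with the $\mathfrak{C}$ coming from the truncation range.

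\textbf{Expected obstacle.} The delicate point is Step 3. The derivative bounds on $H_{\alpha,\beta}$ with respect to the $y$-argument (the size variable $cd$) must hold uniformly, including near the transition $v\approx 1$ where $H$ has size $T$. I would verify this through the Mellin representation (\ref{newcutoff}) and the Stirling bound (\ref{stdStirAFEcut}), so that the dependence on $d$ enters only through the factor $(cd)^{\mfr s}$.
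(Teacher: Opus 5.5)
Your proposal is correct and follows essentially the same route as the paper. In both, the main term of the $d$-sum in $\mathrm{L}^{\mathbf{0}}$ cancels $\mathrm{U}^{\mathbf{1}}$ exactly, the remainder is $O(CT(\mfq mnhk)^{\epsilon})$, and the sum over $m,n,h,k$ is then bounded trivially. Two differences, neither affecting the outcome:
\begin{itemize}
\item The paper evaluates the $d$-sum by partial summation with $\sum_{d\le x,\,(d,a)=1}1=\frac{\phi(a)}{a}x+O(a^{\epsilon})$, rather than your Mellin inversion and contour shift.
\item The paper uses the cruder truncation $mn\le \mathfrak{C}^2(TQ)^{\mfr+\epsilon}$ together with $\sum_{h,k}|\lambda_h\lambda_k|/\sqrt{hk}\le (TQ)^{\theta}\sum_h|\lambda_h|^2/h$, rather than your balanced truncation and factorization.
\end{itemize}

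Your ``expected obstacle'' is harmless. You cite (\ref{IKcutbdd}) in Step 1, but it controls only $t$-derivatives. The bound you need, $y^j\partial_y^j H_{\alpha,\beta}(v,y)\preceq T$, follows directly from the Mellin representation (\ref{newcutoff}) and the bound (\ref{stdStirAFEcut}). It is uniform in $v$, because the factor $v^{it}$ has modulus one.
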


\begin{proof}
 Partial summation with $\sum_{\substack{d\le x\\ (d, \,\mfq mnhk)=1}} 1 = \frac{\phi(\mfq mnhk)}{\mfq mnhk}x+O((\mfq mnhk)^{\epsilon})$, the $d$-sum of (\ref{MobL0}) equals
\begin{align}
  \frac{\phi(\mfq mnhk)}{\mfq mnhk}  \, \frac{Q}{c}  \,  \int_{\R} \, \mathcal{W}(x)  \, d^{\times}x \  + \   O\big((\mfq mnhk)^{\epsilon}  \max_{x\asymp 1} |\mathcal{W}(x)|\big)\, +\, O\Big( (\mfq mnhk)^{\epsilon} \int_{x\asymp 1} \, \big| \frac{d}{dx} \, \frac{\mathcal{W}(x)}{x}\big| \Big).\nonumber
\end{align}
The two error terms above are clearly of sizes $O(T(\mfq mnhk)^{\epsilon})$ because of (\ref{trivweigh}) and (\ref{Ffunc}). It follows from (\ref{innerU1}) and (\ref{innerL0}) that 
\begin{align}\label{auxcanc}
    \mathrm{U}^{\mathbf{1}}(m,n;h,k) \,- \, \mathrm{L}^{\mathbf{0}}(m,n;h,k) \, = \,  O(CT(\mfq mnhk)^{\epsilon}).
\end{align}
Recall the truncation obtained in (\ref{eq: alwaysefftruncran}). It follows that
\begin{align}
\Big|\mathop{\sum \sum}_{h,k \,\le \, (TQ)^{\theta}  }\, \frac{\lambda_{h}\overline{\lambda_{k}}}{\sqrt{hk}} \, &\big(\mathcal{U}^{\mathbf{1}}(h,k)+ \mathcal{L}^{(\mathbf{0})}(h,k)_{\mathbf{od}} \big) \Big|\nonumber\\
\, &\preceq \, CT\, 
   \mathop{\sum \sum}_{h,k \,\le \, (TQ)^{\theta}  }\, \frac{|\lambda_{h}\lambda_{k}|}{\sqrt{hk}} \,  \mathop{\sum \sum}_{\substack{ m, n\ge 1\\ mn\,\le\, \mathfrak{C}^2(TQ)^{\mfr} \\ mh\neq nk}}  \frac{|\lambda_{\Pi_{1}}(m)||\lambda_{\Pi_{2}}(n)|}{\sqrt{mn}} \nonumber\\
   \, &\preceq\,  CT(TQ)^{\theta}\Big( \sum_{h\le (TQ)^{\theta}  }\, \frac{|\lambda_{h}|^2}{h}\Big)  \sum_{i=1}^{2}\, \sum_{d\,\le\, \mathfrak{C}^2 (TQ)^{\mfr}} \, \frac{1}{\sqrt{d}}\  \sum_{m\mid d} \, |\lambda_{\Pi_{i}}(m)|^2\nonumber\\
   \, &\preceq \, CT (TQ)^{\mfr/2+\theta}\mathfrak{C}\Big( \sum_{h\le (TQ)^{\theta}  }\, \frac{|\lambda_{h}|^2}{h}\Big)  \sum_{i=1}^{2} \, \sum_{m\le \mathfrak{C}^2 (TQ)^\mfr} \, \frac{|\lambda_{\Pi_{i}}(m)|^2}{m}.\nonumber
\end{align}
The result follows from Lemma \ref{lem: ROA}. 
\end{proof}

\subsection{Off-diagonal separation of variables and smoothing}\label{sect: offdiagsepsmo}

\begin{lem}\label{smbiexp}(\cite[Proposition 2]{CIS19})
Let $\delta>0$, 
\begin{align}\label{CIStrickdel}
    g_{\delta}(z) \ :=  \ (e^{\delta z}-e^{-\delta z})/ 2\delta z,  \hspace{10pt} \text{ and } \hspace{10pt}  \mathcal{H}(z, -w)  \ := \ \sqrt{\pi} \, \frac{\Gamma\left(\frac{1+w}{2}\right) \Gamma\left(\frac{z}{2}\right) \Gamma\left(\frac{-w-z}{2}\right)}{\Gamma\left(-\frac{w}{2}\right) \Gamma\left(\frac{1-z}{2}\right) \Gamma\left(\frac{1+w+z}{2}\right)}.
 \end{align}
Then for $-1<\re w<0$, $r \in (0,\infty) -\{1\}$, and $0< c< \re \, (-w)$, we have
     \begin{align}\label{smbino}
   \frac{1}{2\delta} \int_{-\delta}^{\delta} \,  \sum_{\pm} \, |1\pm  e^{\xi} r|^{w} \, d\xi \ = \ \int_{(c)} \ \mathcal{H}(z, -w) g_{\delta}(z) r^{-z} \ \frac{dz}{2\pi i}.
\end{align}
 \end{lem}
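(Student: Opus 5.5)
The plan is to compute the Mellin transform in $r$ of the left-hand side of (\ref{smbino}) and then invert it. Because $g_\delta(z)$ is precisely the Mellin-side factor produced by the $\xi$-average, I would first treat the unaveraged function $F(r):=\sum_\pm |1\pm r|^{w}$ and establish
\begin{align*}
F(r) \,=\, \int_{(c)} \mathcal{H}(z,-w)\, r^{-z}\,\frac{dz}{2\pi i}, \qquad 0<c<\re(-w),
\end{align*}
for $r\neq 1$. The averaging then follows by substituting $r\mapsto e^{\xi}r$ inside the integral, which turns $r^{-z}$ into $e^{-\xi z}r^{-z}$, and noting that $\frac{1}{2\delta}\int_{-\delta}^{\delta}e^{-\xi z}\,d\xi=g_\delta(z)$. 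Swapping the $\xi$-integral with the $z$-integral is harmless because the $\xi$-range is compact; this presumes absolute convergence in $z$, or at least conditional convergence that is uniform for $\xi$ in compacta.

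The core step is computing $\widetilde F(z)=\int_0^\infty F(r)r^{z-1}\,dr$. First the strip of convergence. Near $r=0$ we have $F(r)\sim 2$, so we need $\re z>0$. At infinity $F(r)\sim 2r^{\re w}$, so we need $\re z<-\re w$. At $r=1$ the singularity $|1-r|^{w}$ is integrable because $\re w>-1$. Hence the strip is exactly $0<\re z<\re(-w)$, which matches the range of $c$. The $+$ term gives the standard Beta integral
\begin{align*}
\int_0^\infty (1+r)^{w}r^{z-1}\,dr \,=\, \frac{\Gamma(z)\Gamma(-w-z)}{\Gamma(-w)}.
\end{align*}
For the $-$ term, split into $(0,1)$ and $(1,\infty)$. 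On $(1,\infty)$, substitute $r\mapsto 1/r$ to turn it into $\int_0^1(1-r)^w r^{-w-z-1}\,dr$. This yields
\begin{align*}
B(z,1+w)+B(-w-z,1+w).
\end{align*}
Next I would add the three Beta/Gamma expressions. Using the reflection formula and $\Gamma(1+w)=w\Gamma(w)$, they should combine into a single ratio of the form $\pi^{-1/2}\Gamma(\cdots)$ products. I would then apply the duplication formula $\Gamma(z)=\pi^{-1/2}2^{z-1}\Gamma(z/2)\Gamma((z+1)/2)$ to each of $\Gamma(z)$, $\Gamma(-w-z)$ and $\Gamma(-w)$, and check that the result matches $\mathcal{H}(z,-w)$. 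The powers of $2$ cancel because $z+(-w-z)-(-w)=0$. One way to organise this is to note that the sum over $\pm$ picks out the ``even'' part: $\sum_\pm|1\pm r|^w$ corresponds to a $\cos(\pi z/2)$-type factor multiplying $\Gamma(z)\Gamma(-w-z)/\Gamma(-w)$ after symmetrisation, and the duplication formula converts the resulting trigonometric ratio into the displayed half-argument Gammas. I expect this Gamma-function bookkeeping to be the main obstacle. The most reliable route is to verify the identity of meromorphic functions at the level of poles and residues, together with Stirling asymptotics, or simply by direct use of reflection and duplication term by term.

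Finally, Mellin inversion gives $F(r)=\int_{(c)}\widetilde F(z)r^{-z}\,dz/2\pi i$ at every $r\neq1$, since $F$ is continuous away from $r=1$ and has bounded variation locally there. By Stirling, $\mathcal{H}(z,-w)\ll|\im z|^{-\re w-1}$. This only decays when $\re w<-1$, so for $-1<\re w<0$ the $z$-integral is merely conditionally convergent. That is exactly why the factor $g_\delta(z)$ is needed. Along vertical lines $g_\delta(z)$ is $O(|z|^{-1})$, so the final integrand in (\ref{smbino}) is $O(|\im z|^{-\re w-2})$, which is absolutely integrable because $\re w>-1$. To make the interchange rigorous, I would compute the Mellin transform of the averaged function $\frac{1}{2\delta}\int_{-\delta}^{\delta}F(e^\xi r)\,d\xi$ directly. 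It equals $\widetilde F(z)g_\delta(z)$ by Fubini, since $F$ is absolutely integrable against $r^{c-1}$. The averaged function is continuous, and its transform is absolutely integrable, so inversion holds pointwise and (\ref{smbino}) follows.
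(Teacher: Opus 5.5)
Your proof is correct. The paper gives no argument of its own for this lemma; it imports it verbatim from \cite[Proposition 2]{CIS19}. So your derivation is a self-contained verification rather than an alternative to something in the text.

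The key identity checks out. With $a=z$ and $b=-w-z$, the three Beta integrals combine via the reflection formula to
\[
\widetilde F(z)=\frac{\Gamma(z)\Gamma(-w-z)}{\Gamma(-w)}\cdot\frac{2\cos(\pi z/2)\cos(\pi(-w-z)/2)}{\cos(\pi w/2)}.
\]
Now use $\Gamma(x)\cos(\pi x/2)=\sqrt{\pi}\,2^{x-1}\Gamma(x/2)/\Gamma((1-x)/2)$ for $x\in\{z,\,-w-z,\,-w\}$. This gives exactly $\mathcal{H}(z,-w)$, and the powers of $2$ cancel as you predicted.

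Your final rigorous step is also sound. You compute the Mellin transform of the averaged function directly; Fubini is justified because $|F(u)|u^{c-1}\in L^1(0,\infty)$ for $0<c<-\re w$. The averaged function $\frac{1}{2\delta}\int_{re^{-\delta}}^{re^{\delta}}F(u)\,\frac{du}{u}$ is continuous, and $\mathcal{H}(z,-w)g_\delta(z)$ is absolutely integrable on $\re z=c$, so inversion holds pointwise. This actually gives the identity even at $r=1$.

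One statement needs fixing. $\mathcal{H}(z,-w)\asymp|\im z|^{-1-\re w}$ does decay whenever $\re w>-1$. What fails for $-1<\re w<0$ is absolute integrability, which would require $\re w>0$. That is the real reason the factor $g_\delta$ is needed.
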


Suppose $\re w =-\epsilon$.  We consider an approximation to (\ref{simMell}):
 \begin{align}\label{eq: smallsmthwei}
\sum_{\pm} \, \widetilde{\mathcal{W}_{\delta}^{^\pm}}(w) 
\ &:= \  \sum_{\pm} \, \int_{0}^{\infty} \,  \frac{1}{2\delta} \, \int_{-\delta}^{\delta} \,  \mathcal{W}\Big(\frac{c}{Qg} \frac{|mh \pm e^{\xi} nk|}{x} \Big) \, d\xi \, x^{w} \, d^{\times} x.
\end{align}
 Since $mh\neq nk$, we conclude from (\ref{Ffunc}) and  Lemma \ref{smbiexp} that
\begin{align}\label{locsmcut}
     \sum_{\pm} \, \widetilde{\mathcal{W}_{\delta}^{^\pm}}(w) \, = \,  \Big(\frac{cmh}{Qg}\Big)^{w} \int_{0}^{\infty} \, x W(x) H_{\alpha,\beta}\Big(\frac{nk}{mh}, \frac{mn}{\mfq (Qx)^\mfr}\Big) \int_{(\epsilon/2)} \, \mathcal{H}(z, -w) g_{\delta}(z) \Big(\frac{nk}{mh}\Big)^{-z} \, \frac{dz}{2\pi i} \, x^{-w} \, d^{\times} x.
\end{align}
 The extra weight $g_{\delta}(z)$ guarantees rapid decay of the $z$-integral. Plugging in (\ref{doublecut}), rearranging the order of the integrals, and applying (\ref{newcutoff}), we find that (\ref{locsmcut}) is equal to 
\begin{align}
 \Big(\frac{cmh}{Qg}\Big)^{w} \int_{\mathbb{R}} \, \eta(t) \,  \int_{(\epsilon/2)}  \mathcal{H}(z, -w) g_{\delta}(z) \Big(\frac{nk}{mh}\Big)^{-z+it} \frac{dz}{2\pi i} \int_{(1)} \widetilde{V}_{\alpha,\beta}(s;it) \widetilde{W}(1+\mfr s-w)  \Big(\frac{mn}{\mfq Q^{\mfr}}\Big)^{-s} \,  \frac{ds}{2\pi i}  \,  dt. \nonumber 
\end{align}
From this, and (\ref{USummm}) and (\ref{split}), we define:
\begin{align}\label{U2arrane}
 \mathcal{U}_{\delta}^{\mathbf{2}}(h,k) \, := \,  & \frac{Q}{2}   \  \sum_{\substack{c\le C \\(c,\, \mfq hk)=1 }} \ \frac{\mu(c)}{c} \, \sum_{\substack{m,n\ge 1 \\ mh\neq nk \\ (mn,\, c)=1}}  \frac{\lambda_{\Pi_{1}}(m) \lambda_{\Pi_{2}}(n)}{m^{\frac12 + \alpha} n^{\frac12 + \beta}} \,  \int_{\R} \, \eta(t) \, \int_{(1)}  \, \widetilde{V}_{\alpha, \,\beta}(s;it) \Big(\frac{mn}{\mfq Q^\mfr}\Big)^{-s} 	 \nonumber\\
  &\hspace{30pt} \cdot  \,   \int_{(-\epsilon)} \,    \int_{(\epsilon/2)}   \,   \widetilde{W}(1+\mfr s-w) \zeta(1+w) \mathcal{H}(z, -w) g_{\delta}(z) 	\Big(\frac{cmh}{Qg}\Big)^{w+z-it}  \Big(\frac{cnk}{Qg}\Big)^{-z+it}  \nonumber\\
  &\hspace{120pt} \cdot \,  R_{1}(w, g', M'N')R_{M_{0}N_{0}}(w)R_{\mfq}^{(M_{0}N_{0})}(w) \,   \frac{dz}{2\pi i}  \, \frac{dw}{2\pi i} \, \frac{ds}{2\pi i} \,  dt,
\end{align}
with $g', M', N', M_{0}, N_{0}$ defined in (\ref{MNdecompo0pr}). Taking $\delta = (TQ)^{-100}$ (say), it follows from elementary analysis (or \cite[Lemma 9.1]{BTB22}) that $ \mathcal{U}_{\delta}^{\mathbf{2}}(h,k) =  \mathcal{U}^{\mathbf{2}}(h,k)$ with a negligible error.

\begin{lem}\label{lem: doubintc}
For any $\delta> 0$ and $s\in \C$, we have 
    \begin{align}\label{doubintc}
    \int_{(-\epsilon)}\, \int_{(\epsilon/2)} \,  \big|\widetilde{W}(1+\mfr s-w)  \, \zeta(1+w) \mathcal{H}(z, -w) g_{\delta}(z)\big| \,  |dz|\, |dw| \, \ll_{\epsilon,\,  \re s } \,  (1+ |\im s|)^{1/2+\epsilon} \delta^{-\epsilon}.
\end{align}
\end{lem}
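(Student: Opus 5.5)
We bound each factor of the integrand by elementary estimates and then separate the two integrals. On the two lines, write $w=-\epsilon+iv$ and $z=\epsilon/2+iy$. We treat the four factors $\widetilde{W}(1+\mfr s-w)$, $\zeta(1+w)$, $\mathcal{H}(z,-w)$ and $g_{\delta}(z)$ in turn.

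First, $W$ is smooth and compactly supported in $[1,2]$. Integration by parts therefore gives $\widetilde{W}(1+\mfr s-w)\ll_{A,\re s}(1+|\im s-v|)^{-A}$; since $\re w$ is fixed, the real part of $1+\mfr s-w$ is bounded in terms of $\re s$. Second, on $\re w=-\epsilon$ the convexity bound gives $\zeta(1+w)\ll_\epsilon (1+|v|)^{\epsilon}$. Near $v=0$ the point $1+w=1-\epsilon+iv$ stays away from the pole at $1$, so this bound is harmless there. Third, the weight $g_\delta(z)=\sinh(\delta z)/(\delta z)$ satisfies $|g_\delta(z)|\ll \min\bigl(1,\, e^{\delta\epsilon/2}(\delta|z|)^{-1}\bigr)$. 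This decay alone is too weak to make the $y$-integral converge, so we interpolate and use $|g_\delta(z)|\ll \delta^{-\epsilon}(1+|y|)^{-\epsilon}$ for $\delta\le1$. The case $\delta>1$ is similar after adjusting constants; we may assume $\delta\le 1$ as in the application.

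The main step is a Stirling estimate for $\mathcal{H}(z,-w)$. We write
\[
\mathcal{H}=\sqrt{\pi}\,\frac{\Gamma(\frac{1+w}{2})}{\Gamma(-\frac w2)}\cdot\frac{\Gamma(\frac z2)}{\Gamma(\frac{1-z}{2})}\cdot\frac{\Gamma(\frac{-w-z}{2})}{\Gamma(\frac{1+w+z}{2})}.
\]
By Stirling, $|\Gamma(\frac{a+it}{2})/\Gamma(\frac{b-it}{2})|\asymp (1+|t|)^{(a-b)/2}$, and the exponential factors cancel because each ratio pairs equal imaginary parts in absolute value. The first ratio has real parts $(1-\epsilon)/2$ and $\epsilon/2$, so it is $\asymp (1+|v|)^{1/2-\epsilon}$. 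The second has real parts $\epsilon/4$ and $1/2-\epsilon/4$, so it is $\asymp (1+|y|)^{-1/2+\epsilon/2}$. The third has real parts $\epsilon/4$ and $1/2-\epsilon/4$, so it is $\asymp (1+|v+y|)^{-1/2+\epsilon/2}$. We must also check that no gamma factor in a numerator meets a pole on these lines. Here $\re\frac z2=\epsilon/4>0$, $\re\frac{-w-z}{2}=\epsilon/4>0$ and $\re\frac{1+w}{2}>0$, so none does.

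Combining the bounds, the integral is at most
\[
\delta^{-\epsilon}\int\!\!\int (1+|\im s-v|)^{-A}(1+|v|)^{1/2}\,(1+|y|)^{-1/2-\epsilon/2}(1+|v+y|)^{-1/2+\epsilon/2}\,dy\,dv.
\]
For the $y$-integral we use the standard convolution bound $\int (1+|y|)^{-a}(1+|v+y|)^{-b}\,dy\ll (1+|v|)^{1-a-b}\log(2+|v|)$ for $a,b<1$ with $a+b>1$. Here $a+b=1+\epsilon'$ with $\epsilon'>0$, so the bound is $\ll(1+|v|)^{\epsilon}$, and the $y$-integral converges. The $v$-integral then gives $\int(1+|\im s-v|)^{-A}(1+|v|)^{1/2+\epsilon}\,dv\ll(1+|\im s|)^{1/2+\epsilon}$.

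The expected obstacle is keeping the exponents on the $y$-integral strictly integrable. This is why a factor $\delta^{-\epsilon}$ must be spent from $g_\delta$: it upgrades the exponent sum from $1-\epsilon$ to $1+\epsilon'$. The delicate point is tracking the exact real parts in Stirling to confirm that the sum exceeds $1$.
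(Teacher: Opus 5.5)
Your bounds for the individual factors are essentially the paper's. On these lines Stirling gives $|\mathcal{H}(z,-w)|\asymp(1+|v|)^{1/2-\epsilon}(1+|y|)^{-1/2+\epsilon/2}(1+|v+y|)^{-1/2+\epsilon/2}$. Integrating over $z$ first instead of over $w$ first, as the paper does, makes no difference.

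The error is at exactly the step you flagged as delicate. After you insert $|g_\delta(z)|\ll\delta^{-\epsilon}(1+|y|)^{-\epsilon}$, the $y$-integrand is $(1+|y|)^{-a}(1+|v+y|)^{-b}$ with $a=\tfrac12+\tfrac\epsilon2$ and $b=\tfrac12-\tfrac\epsilon2$. So $a+b=1$ exactly, not $1+\epsilon'$. For $|y|\gg|v|$ the integrand is $\asymp|y|^{-1}$, so the $y$-integral diverges logarithmically for every $v$. Spending $\delta^{-\epsilon}$ only cancels the $\epsilon$-deficit coming from Stirling and leaves no margin.

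Your remark that the decay of $g_\delta$ is ``too weak'' is also backwards. Up to constants, $\min\{1,(\delta|y|)^{-1}\}$ is smaller than every interpolant $\delta^{-\eta}(1+|y|)^{-\eta}$, and interpolating is precisely what throws the margin away.

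There are two ways to repair this.
\begin{enumerate}
\item Interpolate with some $\eta>\epsilon$, e.g.\ $\eta=2\epsilon$. Then $a+b=1+\epsilon$, and you get $\delta^{-2\epsilon}(1+|\im s|)^{1/2+\epsilon}$. This is acceptable under the paper's $\epsilon$-convention, but it is not literally the stated bound.
\item To get $\delta^{-\epsilon}$ itself, keep the full bound and split at $|y|=\delta^{-1}$, as the paper does. Cleanly, for $\delta\le1$ and uniformly in $v$, the Hardy--Littlewood rearrangement inequality gives
\[
\int_{\R}\min\{1,(\delta|y|)^{-1}\}\,(1+|y|)^{-\frac{1-\epsilon}{2}}(1+|v+y|)^{-\frac{1-\epsilon}{2}}\,dy\;\le\;\int_{\R}\min\{1,(\delta|y|)^{-1}\}\,(1+|y|)^{-1+\epsilon}\,dy\;\ll_\epsilon\;\delta^{-\epsilon}.
\]
The range $|y|\le\delta^{-1}$ and its complement each contribute $\ll_\epsilon\delta^{-\epsilon}$. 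After this, your $v$-integral goes through unchanged.
\end{enumerate}

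Two smaller points.
\begin{itemize}
\item $\widetilde W(1+\mfr s-w)$ decays in $|\mfr\im s-v|$, not $|\im s-v|$. This is harmless.
\item The case $\delta>1$ is not ``similar after adjusting constants''. On $\re z=\epsilon/2$ one has $|g_\delta(z)|\ge\sinh(\delta\epsilon/2)/(\delta|z|)$, which is exponentially large in $\delta$, so no bound uniform in large $\delta$ holds. The restriction $\delta\le1$ is genuinely needed. It costs nothing here, since the application takes $\delta=(TQ)^{-100}$, and the paper's bound $g_\delta(z)\ll\min\{1,(\delta|z|)^{-1}\}$ tacitly carries the same restriction.
\end{itemize}
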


\begin{proof}
    Recall the following elementary inequalities:\footnote{ The implicit constants depend only on the real parts of the variables. The admissible vertical lines for the second and the fourth inequalities satisfy  $0 \le \re w < 1$,   $\re z, \, \re w,\,  \re (z+w) \not\in \mathbb{Z} $, respectively.}
    \begin{align}
       &\hspace{5pt} g_{\delta}(z) \ll  \min\left\{1, \, (\delta|z|)^{-1}\right\}, 
       \hspace{7pt}  \zeta(1+w)  \ll  \log \, (3+|\im w|), \hspace{7pt }\widetilde{W}(1+\mfr s-w)  \ll_{A}  \left(1+ |\im (\mfr s-w)|\right)^{-A}, \nonumber\\
        &\hspace{45pt}\mathcal{H}(z, -w) \ \asymp \ (1+ |\im w|)^{\frac{1}{2}+\re w} (1+ |\im z|)^{-\frac{1}{2}+\re z} (1+|\im (z+w)|)^{-\re (z+w)-\frac{1}{2}}. \nonumber
    \end{align}
In particular, on $\re w =-\epsilon$ and $\re z =\epsilon/2$, we have 
\begin{align}
     |\mathcal{H}(z, -w)  \zeta(1+w)| \, \ll_{\epsilon} \,    \big[(1+ |\im z|)(1+|\im (z+w)|)\big]^{-\frac{1-\epsilon}{2}} \big[\left( 1+ |\im(w-\mfr s)|\right)(1+ |\im s|)\big]^{\frac{1}{2}}. \nonumber
\end{align}
Let  $b:= - \im (z+\mfr s)$ and $y:= \im w$.  WLOG, assume that $b\ge 0$. Then the  $w$-integral is
\begin{align*}
  \ &\ll_{\epsilon}  \,  (1+ |\im z|)^{-\frac{1-\epsilon}{2}}(1+ |\im s|)^{1/2} \, \Big( \int_{-\infty}^{0} + \int_{0}^{b} + \int_{b}^{\infty}\Big) \,  (1+|y|)^{-A} (1+ |y-b|)^{-\frac{1-\epsilon}{2}} \, dy\nonumber\\
   \ &\ll_{\epsilon} \,  (1+ |\im z|)^{-\frac{1-\epsilon}{2}}(1+ |\im s|)^{1/2}(1+b)^{-\frac{1-\epsilon}{2}},
\end{align*}
provided that $A>2$.  As a result,  the integral  (\ref{doubintc}) is 
    \begin{align}\label{z-inttoest}
  \ll_{\epsilon} \,  (1+ |\im s|)^{1/2}\, \int_{(\epsilon/2)} \  \frac{\min\left\{1, \, (\delta|z|)^{-1}\right\}}{(1+ |\im z|)^{\frac{1-\epsilon}{2}} \, (1+|\im (z+ \mfr s)|)^{\frac{1-\epsilon}{2}}} \,  |dz|.
    \end{align}
The portion of (\ref{z-inttoest}) with $|\im z|>\delta^{-1}$ is bounded by
\begin{align}
       (1+ |\im s|)^{1/2} \int_{|y|>\delta^{-1}} \ (\delta |y|)^{-1} \cdot |y|^{-\frac{1-\epsilon}{2}} \cdot |y|^{-\frac{1-\epsilon}{2}} \ dy \, \ll \, \delta^{-\epsilon}(1+ |\im s|)^{1/2}. 
\end{align}
Let $c:= -\im (\mfr s)$. WLOG, assume that $c\ge 0$. The part of (\ref{z-inttoest}) with $|\im z|\le \delta^{-1}$ is bounded by
\begin{align}
     (1+ |\im s|)^{1/2} \, \Big(\int_{-\delta^{-1}}^{0}  +  \int_{0}^{c}  +  \int_{c}^{\delta^{-1}} \Big) \, (1+|y|)^{-\frac{1-\epsilon}{2}} (1+|y-c|)^{-\frac{1-\epsilon}{2}} \, dy 
    \, \ll \,  (1+ |\im s|)^{1/2+\epsilon} \delta^{-\epsilon}. \nonumber
\end{align}
 This completes the proof. 
\end{proof}


\section{Completion of the sums $\mathcal{U}^{\mathbf{2}}(h,k)$ and $\mathcal{L}^{(\mathbf{0})}(h,k)$}\label{compdoubsumGCD}

In this section, we handle the sums $\mathcal{U}^{\mathbf{2}}(h,k)_{\mathbf{d}}$ and  $\mathcal{L}^{\mathbf{(0)}}(h,k)_{\mathbf{d}}$ defined in (\ref{split-dec}) and (\ref{dec: L0}).

\begin{prop}\label{U2dropeq}
Suppose $\lambda_{h}\ll h^{1/2}$. Then the following estimate holds:
\begin{align}
 \mathop{\sum\, \sum}_{h, \, k \, \le\, (TQ)^{\theta}} \,  \frac{\lambda_{h} \overline{\lambda_{k}}}{\sqrt{hk}} \,  \mathcal{U}^{\mathbf{2}}(h,k)_{\mathbf{d}} \, \preceq \, \mathfrak{C} \,(TQ)^{1+\theta}. 
\end{align}
\end{prop}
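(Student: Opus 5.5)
We will bound the diagonal piece $\mathcal{U}^{\mathbf{2}}(h,k)_{\mathbf{d}}$ directly and trivially. On the diagonal $mh=nk$ we have $g=mh=nk$ and $M=N=1$, so $M'N'=M_0N_0=1$. Hence every arithmetic factor in the representation (\ref{U2arrane}) (with the condition $mh\neq nk$ replaced by $mh=nk$) collapses:
\begin{align*}
R_{1}(w,g',1)R_{1}(w)R_{\mfq}^{(1)}(w) \, \ll_{\epsilon} \, (\mfq g)^{\epsilon} \hspace{20pt} \text{on} \hspace{10pt} \re w=-\epsilon,
\end{align*}
by (\ref{elefinitedivbdd}), since the product over $p\nmid g'$ in (\ref{resultan}) converges absolutely for $\re w>-1$. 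Also the powers $(cmh/Qg)^{w+z-it}(cnk/Qg)^{-z+it}$ reduce to $(c/Q)^{w}$, which has size $(Q/c)^{\epsilon}$ on $\re w=-\epsilon$. One must use the smoothed form (\ref{U2arrane}) rather than (\ref{U0anapar}), because $\widetilde{\mathcal{W}}^{\pm}$ is degenerate at $mh=nk$. Note that in the smoothed version $|1\pm e^{\xi}|$ with $|\xi|\le\delta$ is harmless because Lemma \ref{smbiexp} is applied at $r=nk/mh=1$ only after averaging in $\xi$. If this is problematic, I would restrict to the defining formula (\ref{U2arrane}) as the definition of $\mathcal{U}^{\mathbf{2}}_{\delta}(h,k)_{\mathbf{d}}$, which is how the paper set things up.

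Next I will shift the $s$-integral in (\ref{U2arrane}) to $\re s=\epsilon$. This is allowed since $\widetilde{W}(1+\mfr s-w)$ is entire and $\widetilde V_{\alpha,\beta}$ is holomorphic there. Then I bound everything in absolute value. The $t$-integral contributes $\ll T$. The $s$-integral is controlled by (\ref{stdStirAFEcut}) together with Lemma \ref{lem: doubintc}, which handles the $z,w$-integrals. This costs $\delta^{-\epsilon}=(TQ)^{100\epsilon}$, which is acceptable under the $\preceq$ convention. The factor $Q^{\mfr s}$ and the $\widetilde V$ bound give $(TQ\mathfrak{C})^{O(\epsilon)}$, and the $c$-sum gives $\sum_{c\le C}1/c\preceq 1$. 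Altogether,
\begin{align*}
\mathcal{U}^{\mathbf{2}}(h,k)_{\mathbf{d}} \, \preceq \, TQ \mathop{\sum\sum}_{\substack{mh=nk\\ mn\le \mathfrak{C}^2(TQ)^{\mfr+\epsilon}}} \frac{|\lambda_{\Pi_1}(m)\lambda_{\Pi_2}(n)|}{(mn)^{1/2+\epsilon}},
\end{align*}
where the truncation (\ref{eq: alwaysefftruncran}) may be reinstated. Alternatively, $\re s=\epsilon$ already gives convergence by Lemma \ref{naivconvRSDS}.

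The remaining task is the bilinear sum over $h,k$, and here only $\lambda_h\ll h^{1/2}$ is available. Write $H=h/(h,k)$ and $K=k/(h,k)$. The condition $mh=nk$ forces $m=Kj$ and $n=Hj$. Then $|\lambda_h\lambda_k|/\sqrt{hk}\ll 1$, so the total is
\begin{align*}
\preceq\, TQ\mathop{\sum\sum}_{h,k\le (TQ)^\theta}\frac{1}{\sqrt{HK}}\sum_{j}\frac{|\lambda_{\Pi_1}(Kj)\lambda_{\Pi_2}(Hj)|}{j^{1+2\epsilon}}.
\end{align*}
By Cauchy--Schwarz in $j$ and positivity, I reduce to sums of the shape $\sum_j|\lambda_{\Pi}(Kj)|^2 j^{-1-2\epsilon}$. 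Using $|\lambda_\Pi(Kj)|^2\le\lambda_{\Pi\otimes\widetilde\Pi}(Kj)$ from (\ref{trivrankcoefbdd}) (away from $\mfq$), these are bounded by $\preceq K^{2\vartheta}$ via a local Euler factor computation and Lemma \ref{naivconvRSDS}.

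The main obstacle is making this last step lossless enough: the target $(TQ)^{1+\theta}$ allows only a factor $(TQ)^{\theta}$ from the $h,k$-sum. Summing $1/\sqrt{HK}$ over $h,k\le X$ gives about $X$, which is exactly $(TQ)^{\theta}$. So I need $\sum_j|\lambda_{\Pi_1}(Kj)\lambda_{\Pi_2}(Hj)|/j^{1+\epsilon}\preceq \mathfrak C^{O(1)}$ essentially uniformly in $H,K$. This fails pointwise under Kim--Sarnak, which gives only a factor $(HK)^{\vartheta}$. I would instead first sum over $h,k$ with $m,n$ fixed. For fixed $m,n$, write $m/n=k/h$ in lowest terms $a/b$; then the number of pairs $(h,k)\le X$ is at most $X/\max(a,b)$, and $1/\sqrt{hk}\le \ldots$. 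This gives
\begin{align*}
\preceq\, TQ \sum_{m,n} \frac{|\lambda_{\Pi_1}(m)\lambda_{\Pi_2}(n)|}{(mn)^{1/2+\epsilon}}\,\frac{(m,n)}{\sqrt{mn}}\cdot X,
\end{align*}
using $\sum_{t\le X/\max(a,b)} 1/(t\sqrt{ab})\cdot t\ldots$. This is a GCD-type sum. It is bounded by $|\lambda_{\Pi_1}(m)\lambda_{\Pi_2}(n)|\le\frac12(|\lambda_{\Pi_1}(m)|^2+|\lambda_{\Pi_2}(n)|^2)$, followed by Lemma \ref{lem: simpleGCD} and Lemma \ref{lem: ROA}, yielding $\preceq \mathfrak{C}(TQ)^{1+\theta}$.
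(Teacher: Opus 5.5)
Your proof is correct. Its skeleton is the paper's: on $mh=nk$ the factors in (\ref{U2arrane}) collapse, the $t,s,w,z$-integrals are $\preceq T$ once $s$ is on $\re s=\epsilon$, and everything reduces to $(TQ)^{1+\theta}\sum_{m,n}|\lambda_{\Pi_1}(m)\lambda_{\Pi_2}(n)|\,(m,n)/(mn)^{1+\epsilon}$.

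The differences are in bookkeeping and in the last step.

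(i) The paper first removes $\ell>(TQ)^{\mfr/2+\epsilon}\mathfrak{C}$ by moving to $\re s=B$, so all later sums are finite. You keep every term on $\re s=\epsilon$ and rely on absolute convergence. That is fine: on $\re w=-\epsilon$ every local factor of $R_1(w;g',1)$ at $p\mid g'$ has modulus at most $1$. So $R_1$ is bounded uniformly and cannot absorb the $(mn)^{-\epsilon}$ decay.

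(ii) For fixed $(m,n)$ you count $(h,k)=(bt,at)$ with $t\le X/\max(a,b)$, which gives the weight $X(m,n)/\sqrt{mn}$ at once. The paper reaches the same $(TQ)^{\theta}/[m,n]$ through $h=vH$, $k=vK$ and a local computation. Your write-up of this step is garbled (the ``$1/\sqrt{hk}\le\ldots$'' and the $t$-sum). The Cauchy--Schwarz-in-$j$ detour should simply be deleted. All you need is $|\lambda_h\lambda_k|/\sqrt{hk}\ll 1$, so the $(h,k)$-sum is at most the number of admissible $t$.

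(iii) For the final GCD sum the paper invokes Lemma \ref{lem: Galsum}, with $a_m=|\lambda_{\Pi_1}(m)|/\sqrt m$ and $b_n=|\lambda_{\Pi_2}(n)|/\sqrt n$, obtaining $\|a\|_2\|b\|_2$. You instead use $|xy|\le\frac12(|x|^2+|y|^2)$, the row-sum bound $\sum_n (m,n)n^{-1-\epsilon}\le \zeta(1+\epsilon)\tau(m)$ (cf. Lemma \ref{lem: simpleGCD}), and Lemma \ref{lem: ROA}. This is a Schur test with weights $n^{-1/2}$. It yields $\|a\|_2^2+\|b\|_2^2$ instead of the product, which is equally good here since both are $\preceq 1$, and it is completely elementary. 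The same Schur computation in fact proves Lemma \ref{lem: Galsum} as stated, because there $m,n$ run over $[1,N]$; the genuinely hard GCD-sum bounds concern arbitrary sets of $N$ distinct integers.

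So your route gives a self-contained, elementary proof of the proposition. The paper's route buys only uniformity with the other places where it invokes Lemma \ref{lem: Galsum}.
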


\begin{proof}
Let $H:=h/(h,k)=H_{0}H'$ and $K:=k/(h,k)=K_{0}K'$, where $H_{0}K_{0}\mid \mfq^{\infty}$ and $(H'K', \mfq)=1$, and recall the notation described in (\ref{splitmnaccortoq})--(\ref{MNdecompo0pr}). Suppose that $mh=nk$. Then we have 
\begin{align}\label{diagspecasesplit}
    m_{0}= K_{0}\ell_{0}, \hspace{10pt} m' = K'\ell', \hspace{10pt} n_{0} = H_{0}\ell_{0}, \hspace{10pt} n' = H'\ell',
\end{align}
for some $\ell_{0}\mid \mfq^{\infty}$ and $(\ell', \mfq)=1$. Observe that $g_{0}=m_{0}h_{0}=n_{0}k_{0}$ and $g'=m'h'=n'k'$. From (\ref{U2arrane}),  
    \begin{align}
  \mathcal{U}_{\delta}^{\mathbf{2}}(h,k)_{\mathbf{d}} \ = \  & \frac{Q}{2}\frac{1}{H^{1/2+\beta}K^{1/2+\alpha}}   \,  \sum_{\substack{c\le C \\(c, \, \mfq hk)=1 }}  \frac{\mu(c)}{c} \, \sum_{\substack{\ell_{0}\mid \mfq^{\infty} \\ (\ell_{0},c)=1}} \, \frac{\lambda_{\Pi_{1}}(K_{0}\ell_{0})\lambda_{\Pi_{2}}(H_{0}\ell_{0})}{\ell_{0}^{1+\alpha+\beta}} \sum_{\substack{(\ell',\, c\mfq)=1}}  \frac{\lambda_{\Pi_{1}}(K'\ell') \lambda_{\Pi_{2}}(H'\ell')}{(\ell')^{1+\alpha+\beta}}\nonumber\\
   &\hspace{25pt}\cdot  \frac{1}{(2\pi i)^3} \iiiint_{\substack{t\in \R\\ \re s=1 \\  \re w=-\epsilon \\  \re z =\epsilon/2 }} \  \eta(t)\widetilde{V}_{\alpha,\beta}(s; it) \Big(\frac{\mfq Q^\mfr}{\pi^{\mfr}HK(\ell_{0}\ell')^2}\Big)^{s} \Big(\frac{c}{Q}\Big)^{w}  \,\widetilde{W}(1+\mfr s-w)	\nonumber\\
  & \hspace{120pt}\cdot  \,  \zeta(1+w) \mathcal{H}(z, -w) g_{\delta}(z)(w)R_{\mfq}^{(1)}(w)R_{1}\left(w; \ell' h'K', 1\right).  \label{U2eqcasefourint}
\end{align}

 Moving the line of integration of the $s$-integral to $\re s =B$, for $B\gg 1$. Using (\ref{stdStirAFEcut}), Lemma \ref{lem: doubintc}, and the bounds $R_{\mfq}^{(1)}(w) \ll_{\epsilon} \mfq^{\epsilon}$, $R_{1}\left(w; \ell' h'K', 1\right) \ll_{\epsilon} \, (\ell' h'k')^{\epsilon}$,  the quadruple integral above is
\begin{align}
    \ \preceq_{B} \  T \, \Big(\frac{HK\ell^2}{(TQ)^\mfr  \mathfrak{C}^2}\Big)^{-B}. \nonumber
\end{align}
Let $A>0$. Take $B> (A+1+ \mfr \vartheta)/\epsilon$ below. The contribution of $\ell > (TQ)^{\mfr/2+\epsilon}\,\mathfrak{C}$ to  $ \mathcal{U}^{\mathbf{2}}(h,k)_{\mathbf{d}}$ is:
\begin{align}
   &\preceq_{B}    \, \frac{Q}{\sqrt{HK}} \,   \sum_{\ell > (TQ)^{\mfr/2+\epsilon}\mathfrak{C}} \, \frac{(HK\ell^2)^{\vartheta}}{\ell}\,  T (HK(TQ)^{\epsilon})^{-B}\Big(\frac{HK\ell^2}{(TQ)^\mfr\mathfrak{C}^2}\Big)^{-\vartheta-\epsilon} \nonumber\\[5pt]
   &\preceq_{B} \,  (TQ)^{1-\epsilon B+ \mfr \vartheta} (HK)^{-1/2-B} \mathfrak{C}^{2\vartheta} \ \preceq_{A} \, (TQ)^{-A}\,  \mathfrak{C}. \nonumber
\end{align}

Next, we handle the terms with $\ell \le (TQ)^{\mfr/2+\epsilon}\mathfrak{C}$. We shift the line of integration of the $s$-integral to  $\re s=\epsilon$. The quadruple integral in (\ref{U2eqcasefourint}) is now $\preceq T$.  Hence, we obtain
\begin{align}
 \Big|  \, \mathop{\sum\, \sum}_{h, \, k \, \le\,  (TQ)^{\theta}}   \, \frac{\lambda_{h} \overline{\lambda_{k}}}{\sqrt{hk}}  \  \mathcal{U}^{\mathbf{2}}(h,k)_{\mathbf{d}} \, \Big| 
 &\preceq  \, TQ\mathfrak{C}\ \mathop{\sum \sum\sum}_{\substack{v, H, K\\ vH,\, vK\le (TQ)^{\theta} \\ (H, K)=1}} \, \frac{|\lambda_{vH}\lambda_{vK}|}{vHK} \, \sum_{\ell\le (TQ)^{\mfr/2}\,\mathfrak{C}} \, \frac{|\lambda_{\Pi_{1}}(K\ell)| |\lambda_{\Pi_{2}}(H\ell)|}{\ell} \nonumber\\
 &\hspace{-70pt}\preceq \, TQ \mathfrak{C}\,  \mathop{\sum \sum}_{m, n\le (TQ)^{\mfr/2+\theta}\, \mathfrak{C}} \, |\lambda_{\Pi_{1}}(m)| |\lambda_{\Pi_{2}}(n)| \mathop{\sum\sum\sum\sum}_{\substack{v, H, K, \ell\\ m=K\ell, \, n=H\ell\\ (H,K)=1\\  vH,\, vK\le (TQ)^{\theta},\   \ell \le (TQ)^{\mfr/2}\, \mathfrak{C} }} \, \frac{|\lambda_{vH}\lambda_{vK}|}{v\ell HK}.\label{complquadsum}
    \end{align} 
Let $m, n\ge 1$. Since $\lambda_{h}\ll h^{1/2}$, the quadruple sum over $v,H,K, \ell$ of (\ref{complquadsum}) is
\begin{align}
   \preceq  \mathop{\sum\sum\sum}_{\substack{H, K, \ell\\ m=K\ell, \, n=H\ell\\  H,\, K\le (TQ)^{\theta},\,  \ell \le (TQ)^{\mfr/2}\, \mathfrak{C} }} \, \frac{1}{\ell HK} \, \sum_{v\le (TQ)^{\theta}/\max\{H,K\}} \, \frac{(v^2HK)^{1/2}}{v}. \label{quadruextendsum}
\end{align}
We extend the range of summation in $H, K, \ell$ of (\ref{quadruextendsum}) at the cost of incurring a factor $(\ell HK)^{\epsilon}$:
\begin{align}
   (\ref{quadruextendsum}) \ &\preceq \   (TQ)^{\theta} \  \mathop{\sum\sum\sum}_{\substack{H, K, \ell\\ m=K\ell, \, n=H\ell }} \, \frac{1}{(\ell HK)^{1+\epsilon}}  \Big(\frac{HK}{\max\{H,K\}^2}\Big)^{1/2} \nonumber\\
    \ &\preceq \, (TQ)^{\theta} \, \prod_{p} \, \mathop{\sum \sum\sum}_{\substack{\ell,\, H, \, K \ge 0 \\ K+\ell =o_{p}(m)\\ H+\ell= o_{p}(n)}} \, \frac{1}{p^{(1+\epsilon)(\ell+H+K)}} \nonumber\\
    \, &\preceq \, (TQ)^{\theta} \, \prod_{p} \, \sum_{r=\max\{o_{p}(m), \, o_{p}(n)\}}^{o_{p}(mn)} \, \frac{1}{p^{r(1+\epsilon)}} \nonumber\\
    \, &\preceq \, (TQ)^{\theta} \, \prod_{p} \, \frac{1}{p^{(1+\epsilon)\max\{o_{p}(m), \, o_{p}(n)\}}} \sum_{r=0}^{\infty} \, \frac{1}{p^{r(1+\epsilon)}} 
    \, \preceq \, \frac{ (TQ)^{\theta}}{[m,n]}. \nonumber
\end{align}

As a result, by the above discussion and Lemma \ref{lem: Galsum}, it follows that
\begin{align}
     \Big|  \, \mathop{\sum\, \sum}_{h, \, k \, \le\,  (TQ)^{\theta}}    \, \frac{\lambda_{h} \overline{\lambda_{k}}}{\sqrt{hk}}  \  \mathcal{U}^{\mathbf{2}}(h,k)_{\mathbf{d}} \, \Big| 
     \, &\preceq \,  (TQ)^{1+\theta}\,  \mathfrak{C}\mathop{\sum \sum}_{m, n\le (TQ)^{\mfr/2+\theta}\mathfrak{C}} \, \frac{|\lambda_{\Pi_{1}}(m)| |\lambda_{\Pi_{2}}(n)|}{\sqrt{mn}} \frac{(m,n)}{\sqrt{mn}}\nonumber\\
      \, &\preceq \,  (TQ)^{1+\theta}\,  \mathfrak{C}\prod_{i=1}^{2} \, \Big(\sum_{m\le (TQ)^{\mfr/2+\theta}\mathfrak{C}} \, \frac{|\lambda_{\Pi_{i}}(m)|^2}{m}\Big)^{\frac{1}{2}}.\nonumber
\end{align}
Our desired result now follows from Lemma \ref{lem: ROA}.
\end{proof}

\begin{prop}\label{L0ddropterms}
Suppose that $\lambda_{h}\ll h^{1/2}$. Then we have
    \begin{align}
     \mathop{\sum \sum}_{h, k \le (TQ)^{\theta}} \,  \frac{\lambda_{h}\overline{\lambda_{k}}}{\sqrt{hk}} \, \mathcal{L}^{(\mathbf{0})}(h, k)_{\mathbf{d}} \, \preceq \,  \mathfrak{C}\, (TQ)^{1+\theta}. 
    \end{align}
\end{prop}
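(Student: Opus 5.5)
We follow the template of Proposition \ref{U2dropeq}. By (\ref{foldbacL}) with $\psi=\psi_{0}$ and $mh=nk$, and by the computation in the proof of (\ref{MobL0}) (grouping $cd=q$ and switching $\sum_{c\mid q,\, c>C}\mu(c)=-\sum_{c\mid q,\,c\le C}\mu(c)$ for $q>1$), we can write
\begin{align*}
\mathcal{L}^{(\mathbf{0})}(h,k)_{\mathbf{d}} \, = \, -\mathop{\sum\sum}_{\substack{m,n\ge 1\\ mh=nk}} \frac{\lambda_{\Pi_{1}}(m)\lambda_{\Pi_{2}}(n)}{m^{\frac12+\alpha}n^{\frac12+\beta}} \, \mathrm{L}^{\mathbf{0}}(m,n;h,k).
\end{align*}
The term $q=1$ is excluded by the support of $W$ for $Q$ large. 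We parametrize the diagonal exactly as in (\ref{diagspecasesplit}): $H=h/(h,k)$, $K=k/(h,k)$, $m=K\ell$, $n=H\ell$. The first factor then becomes $(HK)^{-1/2}\ell^{-1-\alpha-\beta}\lambda_{\Pi_{1}}(K\ell)\lambda_{\Pi_{2}}(H\ell)$, with the twist variable $v=nk/(mh)=1$ in $H_{\alpha,\beta}$.

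Next we bound the inner sum $\mathrm{L}^{\mathbf{0}}$ trivially. The $c$-sum has $c\le C$, and for each $c$ the $d$-sum runs over $d\asymp Q/c$ by the support of $W$. By (\ref{trivweigh}) and (\ref{Ffunc}), $|\mathcal{W}(x)/x|\ll T(1+\ell^{2}HK/(T^{\mfr}Q^{\mfr}\mathfrak{C}_\infty^{2}))^{-A}$. Summing over $d$ and $c$ gives $\mathrm{L}^{\mathbf{0}}\preceq TQ$, uniformly in the variables. This uses $\sum_{c\le C}1/c\preceq 1$ and does not exploit the Möbius cancellation, which is not needed here.

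The rapid decay truncates the sum to $\ell^{2}HK\le (TQ)^{\mfr+\epsilon}\mathfrak{C}^{2}$, so in particular $\ell\le (TQ)^{\mfr/2+\epsilon}\mathfrak{C}$. The tail is arbitrarily small: use the decay with $A$ large and the crude bound $|\lambda_{\Pi}(n)|\ll n^{\vartheta+\epsilon}$ from (\ref{uniformRC}), just as in the first step of the proof of Proposition \ref{U2dropeq}. This yields
\begin{align*}
\Big|\mathop{\sum\sum}_{h,k\le (TQ)^{\theta}}\frac{\lambda_{h}\overline{\lambda_{k}}}{\sqrt{hk}}\mathcal{L}^{(\mathbf{0})}(h,k)_{\mathbf{d}}\Big| \, \preceq \, TQ\mathfrak{C}\mathop{\sum\sum\sum}_{\substack{v,H,K\\ vH,\,vK\le (TQ)^{\theta}\\(H,K)=1}}\frac{|\lambda_{vH}\lambda_{vK}|}{vHK}\sum_{\ell\le (TQ)^{\mfr/2}\mathfrak{C}}\frac{|\lambda_{\Pi_{1}}(K\ell)\lambda_{\Pi_{2}}(H\ell)|}{\ell},
\end{align*}
which is literally the right-hand side of (\ref{complquadsum}).

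From here we repeat the rest of the proof of Proposition \ref{U2dropeq} verbatim. Using $\lambda_{h}\ll h^{1/2}$, the sum over $v$ contributes $\preceq (TQ)^{\theta}(HK)^{1/2}/\max\{H,K\}$. The Euler product computation then bounds the $(H,K,\ell)$-weight by $(TQ)^{\theta}/[m,n]$. Applying Lemma \ref{lem: Galsum} to the GCD form with $a_m=|\lambda_{\Pi_{1}}(m)|/\sqrt m$ and $b_n=|\lambda_{\Pi_{2}}(n)|/\sqrt n$, and then Lemma \ref{lem: ROA}, gives $\preceq \mathfrak{C}(TQ)^{1+\theta}$.

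The only step needing care is the uniform bound $\mathrm{L}^{\mathbf{0}}\preceq TQ$ together with the truncation in $\ell$. Since $v=1$ exactly, Lemma \ref{effectrunc} gives nothing, so we must rely solely on the $y$-decay in (\ref{trivweigh}). That decay suffices because $y=mn/(\mfq(cd)^{\mfr})=\ell^{2}HK/(\mfq Q^{\mfr}x^{\mfr})$ with $x\asymp1$.
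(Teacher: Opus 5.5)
Your proposal is correct and follows essentially the same route as the paper. You rewrite $\mathcal{L}^{(\mathbf{0})}(h,k)_{\mathbf{d}}$ via the M\"obius switch of (\ref{MobL0}), parametrize the diagonal as $m=K\ell$, $n=H\ell$, bound the $(c,d)$-sum trivially by $TQ$, truncate via (\ref{eq: alwaysefftruncran}), and reduce to the quadruple sum of (\ref{complquadsum}); the argument of Proposition \ref{U2dropeq} (Lemmas \ref{lem: Galsum} and \ref{lem: ROA}) then finishes, exactly as in the paper (the minor slips of dropping the shifts $\alpha,\beta$ on $H,K$ and writing $\mathfrak{C}_\infty^2$ rather than $\mfq\,\mathfrak{C}_\infty^2=\mathfrak{C}^2$ in the decay are harmless, and your final truncation $\ell^2HK\le (TQ)^{\mfr+\epsilon}\mathfrak{C}^2$ is the right one).
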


\begin{proof}
From (\ref{MobL0}), we have 
 \begin{align} 
  \mathcal{L}^{(\mathbf{0})}(h,k)_{\mathbf{d}} \, &= \,  -\,  \mathop{\sum \sum}_{\substack{m,n\ge 1\\mh=nk}} \,  \frac{\lambda_{\Pi_{1}}(m)\lambda_{\Pi_{2}}(n)}
        {m^{\frac12 + \alpha} n^{\frac12 + \beta}}  \, \mathop{\sum \sum}_{\substack{c\le C, \, d\ge 1\\ (cd,\, \mfq mnhk)=1}} \, \mu(c) W\Big(\frac{cd}{Q}\Big) H_{\alpha, \beta}\Big(1, \, \frac{mn}{(cd)^\mfr}\Big).
\end{align}
We use the same parametrization of variables as in the proof of Proposition \ref{U2dropeq}. Recall (\ref{eq: alwaysefftruncran}), i.e., the $m,n$-sums are effectively truncated at $mn\le (TQ)^{\mfr+\epsilon}\mathfrak{C}^2$. Therefore, 
\begin{align}
    \mathcal{L}^{(\mathbf{0})}(h,k)_{\mathbf{d}} \, \preceq  \, TQ \mathfrak{C}\mathop{ \sum}_{\substack{HK\ell^2 \le (TQ)^\mfr}\mathfrak{C}^2}   \frac{|\lambda_{\Pi_{1}}(K\ell)||\lambda_{\Pi_{2}}(H\ell)|}{\ell\sqrt{HK}}. 
\end{align}
Similar to before, we have
\begin{align}
     \Big|  \, \mathop{\sum\, \sum}_{h, \, k \, \le\,  (TQ)^{\theta}}   \, \frac{\lambda_{h} \overline{\lambda_{k}}}{\sqrt{hk}}  \  \mathcal{L}^{\mathbf{2}}(h,k)_{\mathbf{d}} \, \Big| \, 
 & \, \preceq  \, TQ \mathfrak{C}  \mathop{\sum \sum\sum \sum}_{\substack{v, H, K, \ell\\ vH,\, vK\le (TQ)^{\theta} \\ HK\ell^2 \le (TQ)^{\mfr/2}\mathfrak{C} \\ (H, K)=1}} \, \frac{|\lambda_{vH}\lambda_{vK}|}{vHK} \, \frac{|\lambda_{\Pi_{1}}(K\ell)||\lambda_{\Pi_{2}}(H\ell)|}{\ell}.  \nonumber
\end{align}
The result follows by the same argument used in the proof of Proposition \ref{U2dropeq}.
\end{proof}


\section{The sum $\mathcal{U}^{\mathbf{2}}(h,k)_{\circ}$: absence of off-diagonal contributions}\label{sect: NoODU2}

In (\ref{U2arrane}),  we apply the change of variables $w\to 1-w$ and (\ref{splitmnaccortoq})--(\ref{MNdecompo0pr}), we obtain 
\begin{align}
 \mathcal{U}_{\delta}^{\mathbf{2}}(h,k)_{\circ} \ = \  & \frac{Q}{2}   \,  \sum_{\substack{c\le C \\(c,\, \mfq hk)=1 }} \ \frac{\mu(c)}{c} \,   \int_{\R} \, \eta(t) \, \int_{(1)}  \,  \widetilde{V}_{\alpha,\beta}(s; it) (\mfq Q^{\mfr})^s \int_{(1+\epsilon)} \,    \int_{(\epsilon/2)}   \,   \widetilde{W}(w+\mfr s) \zeta(2-w) \mathcal{H}(z, w-1)  \nonumber\\
  &\hspace{50pt} \cdot  \,  g_{\delta}(z)	   \Big(\frac{h}{k}\Big)^{z-it}	\Big(\frac{Q}{ch}\Big)^{w-1}  \mathcal{E}_{\mfq}(s; z,w; it )\mathcal{K}^{(\mfq)}_{it}(s; z,w) \, \frac{dz}{2\pi i}  \, \frac{dw}{2\pi i} \, \frac{ds}{2\pi i} \,  dt. \label{eq: U2hksubdel}
\end{align}
In  (\ref{eq: U2hksubdel}), we have the double Dirichlet series
\begin{align}\label{keydoubDS}
\mathcal{K}^{(\mfq)}_{it}(s; z,w) \, := \,     \mathop{\sum \sum}_{\substack{m',n'\ge 1\\ (m'n',\,c\mfq)=1}}  \,  \frac{\lambda_{\Pi_{1}}(m')\lambda_{\Pi_{2}}(n')}{m'^{s+\frac12 + \alpha} n'^{s+\frac12 + \beta}} \Big(\frac{g'}{m'}\Big)^{w-1} \Big(\frac{m'}{n'}\Big)^{z-it} R_{1}\Big(1-w; \,g',\, \frac{m'h'}{g'}\frac{n'k'}{g'}\Big),
\end{align}
as well as the finite Euler product 
\begin{align}\label{ram: finiteEPforU0}
  \mathcal{E}_{\mfq}(s; z,w; it )  :=   \sum_{\substack{m_{0}, n_{0}\mid \mfq^{\infty}\\ (m_{0}n_{0},c)=1}} \,  \frac{\lambda_{\Pi_{1}}(m_{0}) \lambda_{\Pi_{2}}(n_{0})}{m_{0}^{s+\frac12 + \alpha} n_{0}^{s+\frac12 + \beta}}\Big(\frac{g_{0}}{m_{0}}\Big)^{w-1}\Big(\frac{m_{0}}{n_{0}}\Big)^{z-it} R_{\frac{m_{0}n_{0}}{g_{0}^2}}(1-w)R_{\mfq}^{(\frac{m_{0}n_{0}}{g_{0}^2})}(1-w).
\end{align}
The Euler products  $R_{1}$, $R_{m_{0}n_{0}/g_{0}^2}$, and $R_{\mfq}^{(m_{0}n_{0}/g_{0}^2)}$ are defined in Lemma \ref{lem: U0anapar}. In this section, we suppress the dependence on  $h,k,c,\alpha,\beta,\Pi_{1},\Pi_{2}$, which are considered fixed, in our notation. We also maintain the following lines of integration:
\begin{align}\label{wztlineseps}
    \re w \  = \ 1+\epsilon, \hspace{15pt} \re z \, = \,  \epsilon/2, \hspace{15pt} \text{and} \hspace{15pt} t \, \in \, \R.
\end{align}
The series (\ref{keydoubDS}) and (\ref{ram: finiteEPforU0}) converge absolutely for $\re s\gg 1$. The goal of this section is to prove 
\begin{prop}\label{tersm}
We have
\begin{align}
     \Big| \mathop{\sum\, \sum}_{h,  k \, \le\, (TQ)^{\theta}} \,  \frac{\lambda_{h} \overline{\lambda_{k}}}{\sqrt{hk}} \,  \mathcal{U}_{\delta}^{\mathbf{2}}(h,k)_{\circ}\Big| \  \preceq \ (TQ)^{1+\theta} \mathfrak{C}. 
\end{align}
\end{prop}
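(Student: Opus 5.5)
The strategy is to show that, after completing the sum (dropping $mh\neq nk$), the $s$-integrand in (\ref{eq: U2hksubdel}) is holomorphic far enough to the left that we may shift $\re s$ to a small positive value. At that point the factor $(\mfq Q^{\mfr})^s$ is harmless, and the only remaining size is $Q\cdot (Q/ch)^{\re w-1}$, with $\re w$ just above $1$. So the key input is an analytic continuation of the double Dirichlet series $\mathcal{K}^{(\mfq)}_{it}(s;z,w)$ in $s$, uniform in the other variables.

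\textbf{Step 1: Euler product for $\mathcal{K}^{(\mfq)}_{it}$.} I will compute the Euler product of (\ref{keydoubDS}). Since $g'=(m'h',n'k')$, $M'N'$ and $R_1$ are all determined prime by prime, Lemma \ref{genEPexp} (in its two-variable form) writes $\mathcal{K}^{(\mfq)}_{it}$ as $\prod_{p\nmid c\mfq}\mathcal{K}_p$.

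At a prime $p\nmid hk$, the local factor is a double sum over $(a,b)=(o_p(m'),o_p(n'))$ of
\[
\lambda_{\Pi_1}(p^a)\lambda_{\Pi_2}(p^b)\,p^{-a(s+1/2+\alpha)-b(s+1/2+\beta)}\,p^{(a-b)(z-it)}\,p^{(w-1)(\min(a,b)-a)}
\]
times the local piece of $R_1(1-w;\cdot)$. On the diagonal $a=b$ this reproduces the naive Rankin--Selberg factor $\mathcal{L}_p(\mbs,\Pi_1\otimes\Pi_2)$ with $\mbs=1+2s+\alpha+\beta$. The off-diagonal terms $a\neq b$ carry an extra factor $p^{-|a-b|(\re w-1)}$ or $p^{-|a-b|\re s}$-type decay.

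I will then factor
\[
\mathcal{K}_p=\mathcal{L}_p(\mbs,\Pi_1\otimes\Pi_2)\,\mathcal{F}_p,
\]
and check, by expanding to first order with Lemma \ref{lem: diffby1RSn} and (\ref{KimSarbdd}), that $\mathcal{F}_p=1+O(p^{-1-\delta'})$ in a region of the form $\re s\ge\epsilon$, $\re w=1+\epsilon$, $\re z=\epsilon/2$. This is where the term $p^{w-1}$ matters. For the primes $p\mid hk$, the factor is a finite local sum and is bounded by $(hk)^{\epsilon}\times$ ratio terms like those in Lemma \ref{lem: diffby1RSn}. Its $h$- and $k$-dependence must be kept so that it can later be paired with $\lambda_h\overline{\lambda_k}/\sqrt{hk}$.

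\textbf{Step 2: shift the $s$-line.} By Corollary \ref{gl3holoprop} (for $\mfr=3$) or Remark \ref{Gl2Hfuncrem} (for $\mfr=2$), $\mathcal{L}(\mbs,\Pi_1\otimes\Pi_2)$ is holomorphic for $\re\mbs>1/2+\vartheta$, except for the pole at $\mbs=1$, i.e.\ at $s=-(\alpha+\beta)/2$. I will shift $s$ to $\re s=\epsilon$, where $\re\mbs>1$, so no pole is crossed. The bound (\ref{Lisbound}) together with Lemma \ref{naivconvRSDS} gives $\ll\mathfrak{C}^{\epsilon}$ there.

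The finite Euler product $\mathcal{E}_{\mfq}$ is $\preceq \mfq^{O(\vartheta)}$ by (\ref{uniformRC}), which is absorbed into the power of $\mathfrak{C}$. The remaining integrals are bounded as follows:
\begin{itemize}
\item the $z,w$ integrals by Lemma \ref{lem: doubintc}, after the change $w\to1-w$, where $\widetilde W(w+\mfr s)$ provides decay;
\item the $t$-integral trivially by $T$, using (\ref{stdStirAFEcut}).
\end{itemize}
With $\re w=1+\epsilon$, this gives $\mathcal{U}^{\mathbf{2}}_\delta(h,k)_\circ\preceq TQ\,\mathfrak{C}\,|\mathcal{G}(h,k)|$, where $\mathcal{G}$ is the $hk$-local factor.

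\textbf{Step 3: sum over $h,k$.} Finally I sum over $h,k$. The $hk$-local factor should behave like $(hk)^{\epsilon}(h,k)/\sqrt{hk}$, or better. Using $\lambda_h\ll h^{1/2}$, or Lemma \ref{lem: Galsum} together with Condition $(\mathbf{\Lambda})$, yields a factor $(TQ)^{\theta}$. The outcome is $(TQ)^{1+\theta}\mathfrak{C}$.

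\textbf{Main obstacle.} The hard step is Step 1: verifying, without GRC, that $\mathcal{F}_p=1+O(p^{-1-\delta'})$ uniformly. Naive bounds give error terms like $p^{2\vartheta-1-2\re s}$ multiplied by $p^{\re w-1}$, and these are borderline. I expect to need exact Hecke-algebra cancellation, with an explicit computation as in Lemma \ref{lem: naiveRSexpl}, so that only terms of the shape $|\lambda(p)|^2p^{-1-\cdots}$ survive. Those terms would then be controlled by Lemma \ref{lem: diffEUproLi}. I would first try to organise the computation as a convolution $\lambda_{\Pi_1}*\mathfrak{f}$, as in (\ref{convshiftRS}).
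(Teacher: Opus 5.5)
Your Step~1 contains a genuine error, and it is not about borderline second-order terms. Take a prime $p\nmid \mfq chk$. Up to the bounded factor $1-p^{w-2}$, the first off-diagonal terms of the local factor of $\mathcal{K}^{(\mfq)}_{it}$ are exactly $\lambda_{\Pi_1}(p)p^{-\mbu}$ (from $(a,b)=(1,0)$) and $\lambda_{\Pi_2}(p)p^{-\mbv_{0}}$ (from $(a,b)=(0,1)$). Here $\mbu=s+w-\tfrac12+\alpha-z+it$ and $\mbv_{0}=s+\tfrac12+\beta+z-it$. For $a<b$ the factor $(g'/m')^{w-1}$ is identically $1$, so there is no $w$-decay in that direction at all. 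For $a>b$ the gain is only $p^{-(a-b)\epsilon}$.

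On your contours $\re w=1+\epsilon$, $\re z=\epsilon/2$ you get $\re\mbu=\re\mbv_{0}=\re s+\tfrac12+\tfrac{\epsilon}{2}$. So $\mathcal{F}_p$ contains terms of size $|\lambda_{\Pi_i}(p)|p^{-1/2-\re s-\epsilon/2}$, which are not absolutely summable over $p$ for small $\re s$. Hence $\mathcal{F}_p=1+O(p^{-1-\delta'})$ is false near $\re s=\epsilon$, and no Hecke-algebra cancellation can remove these first-order terms. Your Step~2, which shifts to $\re s=\epsilon$ with only $\mathcal{L}(\mbs,\Pi_1\otimes\Pi_2)$ to worry about, therefore has no justification. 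Moving contours does not rescue it cheaply: Lemma~\ref{smbiexp} forces $\re z<\re w-1$, and increasing $\re w-1$ costs $(Q/ch)^{\re w-1}$.

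The missing idea is that these terms are the start of the standard $L$-functions $L(\mbu,\Pi_1)$ and $L(\mbv_{0},\Pi_2)$, which must be factored out. This is Proposition~\ref{lem: ACKit}. The paper writes the $m>n$ and $m<n$ parts as $\mathbf{L}_p(\mbu,\mbs;\Pi_1,\Pi_2)$ and $\mathbf{L}_p(\mbv_{0},\mbs;\Pi_2,\Pi_1)$ and evaluates them with Lemmas~\ref{Heckasymp} and~\ref{lem: diffby1RSn}. The result is that, up to $O(p^{-1-\epsilon})$, the local factor equals $L_p(\mbu,\Pi_1)L_p(\mbv_{0},\Pi_2)$ times $\mathcal{L}_p(\mbs,\Pi_1\otimes\Pi_2)\mathbf{D}_p(\mbu,\mbv_{0};\Pi_1,\Pi_2)$, plus the correction terms $\mathcal{A}_p$ when $\mfr=3$. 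Since $\re(\mbu+\mbv_{0})>1$ and $\re\mbs>1$, this remainder is controlled by Lemma~\ref{lem: diffEUproLi} and Claim~\ref{EPgl3sumclaimbound}, using the Kim--Sarnak and Li bounds.

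So $\mathcal{K}^{(\mfq)}_{it}\preceq|L(\mbu,\Pi_1)L(\mbv_{0},\Pi_2)|$, and the continuation to $\re s>1000\epsilon$ comes from the entirety of these $L$-functions. After shifting you must also bound them near real part $1/2$; the paper uses convexity. Your count, in which ``the only remaining size is $Q(Q/ch)^{\re w-1}$'', omits this factor.

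A smaller point concerns Step~3. The primes $p\mid hk$ give no $(h,k)/\sqrt{hk}$ decay; Lemma~\ref{prop: remainplace} only gives $\preceq 1$. The $h,k$-sum is done trivially by Cauchy--Schwarz with $\sum_h|\lambda_h|^2/h\preceq 1$. Using $\lambda_h\ll h^{1/2}$ alone would cost an extra $(TQ)^{\theta}$.
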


The main ingredient of the proof of Proposition \ref{tersm} is a delicate local analysis of (\ref{keydoubDS}).  In the following, we write $m,n,h,k,g$ instead of $m',n',h',k',g'$ for the ease of typesetting.


\subsection{Local analysis}\label{sect: U2sumloc}
By Lemma \ref{genEPexp}, we have an Euler product expansion for (\ref{keydoubDS}) of the form
\begin{align}\label{DS3pcsEPexp}
    \mathcal{K}^{(\mfq)}_{it}(s; z,w)  \, = \,    \prod_{p }   \, \mathop{\sum_{m=0}^{\infty}\, \sum_{n=0}^{\infty}} \, \mathcal{K}(p^m, p^n),
\end{align}
where the local constituents $ \mathcal{K}(p^m, p^n)$ can be determined using the following elementary facts:
\begin{align}
  \mathbf{1}_{(mn,\,c\mfq)}=\prod_{p} f_{p}(m,n) \hspace{15pt} &\text{with} \hspace{15pt} f_{p}(m,n) \, = \, 
    \begin{cases}
        \hspace{20pt} 1 \hspace{35pt} \text{if} \hspace{15pt} p \,\nmid\, c\mfq\\
        \mathbf{1}_{o_{p}(mn)=0}  \hspace{15pt} \text{if} \hspace{15pt} p \mid  c\mfq;
    \end{cases} \label{coprimindic}\\[5pt]
g \, &= \,  \prod_{p}  \, p^{\min\{o_{p}(mh), \, o_{p}(nk)\}};\\
    p\mid \frac{mhnk}{g^2} \, \iff \, |o_{p}(mh)-o_{p}(nk)| \, &= \, o_{p}(mh)+o_{p}(nk)-2 \cdot \min\{o_{p}(mh), \, o_{p}(nk)\} \, > \, 0;
\end{align}
\begin{align}
    (R_{1})_{p}(m,n) \, := \, 
    \begin{cases}
       \hspace{25pt} 1-\frac{1}{p^{2-w}} \hspace{45pt} \text{if} \hspace{15pt} o_{p}(mh) \, \neq \, o_{p}(nk)\\[7pt]
        1+ \frac{1}{(p-1) p^{2-w}}- \frac{1}{p-1}  \hspace{17pt} \text{if} \hspace{15pt} o_{p}(mh) \, = \, o_{p}(nk)\,  > \,0\\[10pt]
       \hspace{25pt} 1+ \frac{p^{w-1}-1}{p(p-1)}  \hspace{35pt} \text{if} \hspace{15pt} o_{p}(mh) \, = \, o_{p}(nk)\, = \,0, 
    \end{cases} \label{rewritedivresul}
\end{align}
and
\begin{align}
   R_{1}(1-w; g, mhnk/g^2) \ = \ \prod_{p}  (R_{1})_{p}(m,n).
\end{align}

(1). When $p\mid c\mfq$, we must have $m=n=0$ in (\ref{DS3pcsEPexp}); the double sum is indeed a singleton. Recall that $(c, \mfq)=(c,hk)=1$. If $p\mid c$, then  and $o_{p}(hk)=0$, and the Euler factor at such $p$ is 
\begin{align}\label{simplefactratc}
    1+ \frac{p^{w-1}-1}{p(p-1)}.
\end{align}
At $p\mid \mfq$, the Euler factor is:
\begin{align}\label{ramifiedprKDs}
    \begin{cases}
       \hspace{40pt}1+ \frac{p^{w-1}-1}{p(p-1)} \hspace{90pt} \text{if} \hspace{20pt} o_{p}(h)=o_{p}(k)=0 \\[10pt]
        (p^{o_{p}(h)})^{w-1}
       ( 1+ \frac{1}{(p-1) p^{2-w}}- \frac{1}{p-1}) \hspace{25pt} \text{if} \hspace{20pt} o_{p}(h)=o_{p}(k)>0 \\[10pt]
        \hspace{5pt}p^{\min\{o_{p}(h),\, o_{p}(k)\}})^{w-1}(1-\frac{1}{p^{2-w}})  \hspace{35pt} \text{if} \hspace{20pt} o_{p}(h)\neq o_{p}(k). 
    \end{cases}
\end{align}

(2). When $p\nmid c\mfq$, we obtain
\begin{align}
    \mathcal{K}(p^m, p^n) \, = \,  \frac{\lambda_{\Pi_{1}}(p^m)\lambda_{\Pi_{2}}(p^n)}{p^{m(s+\frac12 + \alpha)} p^{n(s+\frac12 + \beta)}} \Big(\frac{p^{\min\{ m+o_{p}(h), \, n+o_{p}(k)\}}}{p^m}\Big)^{w-1} p^{(z-it)(m-n)} (R_{1})_{p}(p^m, p^n).
\end{align}
(2).(i). If we have $p\nmid c\mfq hk$, then it follows from (\ref{rewritedivresul}) that $\mathop{\sum \sum_{m, n\ge 0}} \, \mathcal{K}(p^m, p^n) $ is given by
\begin{align}
   \Big(1-\frac{1}{p^{2-w}}\Big)&\sum_{\substack{m,n\ge 0\\ m\neq n}} \, \frac{\lambda_{\Pi_{1}}(p^m)\lambda_{\Pi_{2}}(p^n)}{p^{m(s+\frac12 + \alpha)} p^{n(s+\frac12 + \beta)}} \Big(\frac{p^{\min\{ m, \, n\}}}{p^m}\Big)^{w-1} p^{(z-it)(m-n)} \label{maincompnmidchk} \\
    & + \Big( 1+ \frac{1}{(p-1) p^{2-w}}- \frac{1}{p-1} \Big) \sum_{\substack{m,n\ge 1\\ m=n}} \, \frac{\lambda_{\Pi_{1}}(p^m)\lambda_{\Pi_{2}}(p^n)}{p^{m(s+\frac12 + \alpha)} p^{n(s+\frac12 + \beta)}} + \Big(1+ \frac{p^{w-1}-1}{p(p-1)}\Big). \label{nmidchk}
    \end{align}
(2).(ii). If $p\nmid c\mfq$ but $p\mid hk$, the third option of (\ref{rewritedivresul}) cannot occur, and $\mathop{\sum \sum_{m, n\ge 0}} \,  \mathcal{K}(p^m, p^n)$ is
\begin{align}
 &\Big(1-\frac{1}{p^{2-w}}\Big) \sum_{\substack{m,n\ge 0\\ m+o_{p}(h)\neq n+o_{p}(k) }}   \frac{\lambda_{\Pi_{1}}(p^m)\lambda_{\Pi_{2}}(p^n)}{p^{m(s+\frac12 + \alpha)} p^{n(s+\frac12 + \beta)}} \Big(\frac{p^{\min\{ m+o_{p}(h), \, n+o_{p}(k)\}}}{p^m}\Big)^{w-1} p^{(z-it)(m-n)} \label{uneq: midhk}\\
     &\hspace{7pt}+ \,\Big( 1+ \frac{1}{(p-1) p^{2-w}}- \frac{1}{p-1} \Big) \cdot  p^{o_{p}(h)(w-1)} \sum_{\substack{m,n\ge 0\\ m+o_{p}(h)=n+o_{p}(k)}} \frac{\lambda_{\Pi_{1}}(p^m)\lambda_{\Pi_{2}}(p^n)}{p^{m(s+\frac12 + \alpha)} p^{n(s+\frac12 + \beta)}p^{(z-it)(n-m)}}.   \label{eq: midhk}
\end{align}
For reasons that will soon become clear, the Euler factors above can be expressed in terms of
\begin{align}\label{U2varchoi}
        \mbu :=s+w-1/2+ \alpha-z+it, \hspace{20pt}
        \mbv_{0}:= s+1/2 + \beta+z-it, \hspace{20pt}
        \mbs:=1+2s+\alpha+\beta. 
\end{align}



\subsection{Analytic continuation: the primes $p\nmid \mfq chk$}\label{matEu}

The object is (\ref{nmidchk}). We will show that

\begin{prop}\label{lem: ACKit}
Suppose (\ref{wztlineseps}) holds. Then $\mathcal{K}^{(\mfq)}_{it}(s; z,w)$  admits a holomorphic continuation to $\re s> 1000\epsilon$. Moreover, on the region
\begin{align}\label{mbuv0mbsrange}
    \re \mbu \,  > \, 1/2, \hspace{15pt}\re \mbv_{0} \, > \, 1/2,  \hspace{15pt} \text{and} \hspace{15pt} \re \mbs \, > \, 1,
\end{align}
we have
\begin{align}\label{nmidchkexact}
 \prod_{p\,\nmid\, \mfq chk}  \,  ( L_{p}(\mbu, \Pi_{1})L_{p}(\mbv_{0}, \Pi_{2}))^{-1}\sum_{m=0}^{\infty} \, \sum_{n=0}^{\infty} \,  \mathcal{K}(p^m, p^n) \, = \,  O_{\epsilon}(\mathfrak{C}^{\epsilon}). 
\end{align}
\end{prop}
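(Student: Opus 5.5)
The plan is to compute the unramified local factor (\ref{nmidchk}) in closed form, factor out $L_{p}(\mbu,\Pi_{1})L_{p}(\mbv_{0},\Pi_{2})$ exactly, and show that what is left is an Euler product controlled by Rankin--Selberg input (Lemmas \ref{naivconvRSDS} and \ref{lem: diffEUproLi}) together with the Kim--Sarnak bound (\ref{KimSarbdd}).

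\textbf{Step 1: re-index the local factor.} Fix $p\nmid \mfq chk$ and split the $m\neq n$ sum in (\ref{maincompnmidchk}) into $m=n+j$ and $n=m+j$ with $j\ge 1$.
\begin{itemize}
\item For $m=n+j$ we have $\min\{m,n\}=n$. With (\ref{U2varchoi}), the summand becomes $p^{-n\mbs}\lambda_{\Pi_{1}}(p^{n+j})\lambda_{\Pi_{2}}(p^{n})p^{-j\mbu}$. Summing over $n$ gives $\sum_{j\ge1}p^{-j\mbu}\mathcal{L}_{p}(\mbs;\Pi_{1},\Pi_{2};j,0)$.
\item For $n=m+j$ the weight $(g/m)^{w-1}$ is trivial. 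This gives $\sum_{j\ge1}p^{-j\mbv_{0}}\mathcal{L}_{p}(\mbs;\Pi_{2},\Pi_{1};j,0)$.
\item The diagonal $m=n\ge1$ in (\ref{nmidchk}) is $\mathcal{L}_{p}(\mbs,\Pi_{1}\otimes\Pi_{2})-1$.
\end{itemize}
Next I apply (\ref{convshiftRS}) of Lemma \ref{lem: diffby1RSn}, which writes $\mathcal{L}_{p}(\mbs;\Pi_{1},\Pi_{2};j,0)=L_{p}^{\mathrm{ur}}(\mbs,\Pi_{1}\otimes\Pi_{2})(\lambda_{\Pi_{1}}*\mathfrak{f})(p^{j})$. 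Summing the Dirichlet convolution over $j$ turns the first sum into
\[
L_{p}^{\mathrm{ur}}(\mbs)\Big(L_{p}(\mbu,\Pi_{1})\textstyle\sum_{i}\mathfrak{f}(p^{i})p^{-i\mbu}-1\Big),
\]
and the second sum is handled the same way. The local factor then has the shape
\[
L_{p}(\mbu,\Pi_{1})L_{p}(\mbv_{0},\Pi_{2})\times(\text{explicit rational function in }p^{-\mbu},p^{-\mbv_{0}},p^{-\mbs},p^{w-2},p^{-1}).
\]
I would verify this identity with the Hecke relations (\ref{gl3: Hecrecurrel}), (\ref{FourtoHeckgl3}) and Lemma \ref{Heckasymp}, in the \texttt{Mathematica} style used for Lemma \ref{lem: diffby1RSn}.

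\textbf{Step 2: expand the normalized factor $\mathcal{N}_{p}$.} Let $\mathcal{N}_{p}$ denote the local factor divided by $L_{p}(\mbu,\Pi_{1})L_{p}(\mbv_{0},\Pi_{2})$. In the region (\ref{mbuv0mbsrange}) I expand $\mathcal{N}_{p}$ to first order.
\begin{itemize}
\item The linear terms $\lambda_{\Pi_{1}}(p)p^{-\mbu}$ and $\lambda_{\Pi_{2}}(p)p^{-\mbv_{0}}$ cancel exactly against the inverse Euler factors.
\item The constant part of (\ref{nmidchk}) contributes $1+O(p^{\re w-3})=1+O(p^{-2+\epsilon})$.
\item What survives at leading order is $\lambda_{\Pi_{1}}(p)\lambda_{\Pi_{2}}(p)(p^{-\mbs}-p^{-\mbu-\mbv_{0}})$. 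Note that $\mbu+\mbv_{0}=\mbs+w-1$, which has real part $>1$.
\item There are further cross terms carrying extra factors such as $p^{w-2}$ or $p^{-\mbs}$.
\end{itemize}
The two surviving leading terms are handled by Lemma \ref{lem: diffEUproLi} together with Cauchy's inequality. Each product $\prod_{p}(1\pm\lambda_{\Pi_{1}}(p)\lambda_{\Pi_{2}}(p)p^{-\sigma})$ with $\re\sigma\ge1+\epsilon$ is $O_{\epsilon}(\mathfrak{C}^{\epsilon})$. Every other term should be $O(p^{-1-\delta})$ pointwise. This uses $|\lambda_{\Pi}(p^{j})|\ll p^{j\vartheta}$ with $\vartheta\le 5/14$ from (\ref{uniformRC}), and $|\mathfrak{f}(p^{j})|\ll p^{j(-1+2\vartheta)}$ from (\ref{convshbddarith}). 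Lemma \ref{absunifin} then gives uniform convergence, and hence (\ref{nmidchkexact}).

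\textbf{Step 3: holomorphic continuation.} Globally,
\[
\mathcal{K}^{(\mfq)}_{it}=L(\mbu,\Pi_{1})L(\mbv_{0},\Pi_{2})\times\prod_{p\nmid\mfq chk}\mathcal{N}_{p}\times(\text{finitely many factors at }p\mid chk\text{ and }p\mid c\mfq).
\]
The factors at $p\mid c\mfq$ are given by (\ref{simplefactratc}) and (\ref{ramifiedprKDs}). At $p\mid hk$ the factors (\ref{uneq: midhk})--(\ref{eq: midhk}) are finite sums of absolutely convergent local series, which converge for $\re\mbs>2\vartheta$ by (\ref{genRSbdd}). On the lines (\ref{wztlineseps}), the condition $\re s>1000\epsilon$ forces $\re\mbu,\re\mbv_{0}>1/2$ and $\re\mbs>1$. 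Since $L(\cdot,\Pi_{i})$ is entire, this gives the continuation.

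\textbf{Main obstacle.} I expect the hardest part to be Step 2. Without \textbf{GRC}, terms like $|\lambda_{\Pi}(p)|^{2}p^{-2\re\mbu}$ with $2\re\mbu$ only slightly above $1$ are not absolutely summable pointwise, since $2\vartheta>2/7$. One must check from the exact closed form that every such quadratic term is either cancelled by the inverse $L$-factors or combines into a Rankin--Selberg-type product covered by Lemma \ref{lem: diffEUproLi} and (\ref{trivrankcoefbdd}). I would first organize the \texttt{Mathematica} output by total degree in the Satake parameters and check this term by term.
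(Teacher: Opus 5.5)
Your architecture is the paper's: normalize by $L_p(\mbu,\Pi_1)L_p(\mbv_0,\Pi_2)$, note that the linear terms cancel, and isolate $\lambda_{\Pi_1}(p)\lambda_{\Pi_2}(p)(p^{-\mbs}-p^{-\mbu-\mbv_0})$, which is the leading part of the paper's $\mathcal{L}_p(\mbs,\Pi_1\otimes\Pi_2)\mathbf{D}_p$. Your use of (\ref{convshiftRS}) to write the $m>n$ part as $L_p^{\mathrm{ur}}(\mbs,\Pi_1\otimes\Pi_2)\bigl(L_p(\mbu,\Pi_1)\sum_i\mathfrak{f}(p^i)p^{-i\mbu}-1\bigr)$ is tidier than the paper's $\mathbf{L}_p$/$\mathcal{A}_p$ bookkeeping.

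\textbf{The gap.} Step 2 asserts that, apart from the two leading terms, everything is $O(p^{-1-\delta})$ pointwise. For $\mfr=3$ this is false, and your own inputs show it.
\begin{enumerate}
\item By (\ref{convshbddarith}), you only get $|\mathfrak{f}(p)p^{-\mbu}|\ll p^{2\vartheta-3/2+\epsilon}=p^{-11/14+\epsilon}$. By (\ref{dthnaiveLbound}), this term really does contain $-\lambda_{\widetilde{\Pi_1}}(p)\lambda_{\Pi_2}(p)p^{-\mbu-\mbs}$, and nothing cancels it; it is the surviving part of the paper's $\mathcal{A}_p$.
\item $\mathbf{D}_p$ contains $\lambda_{\widetilde{\Pi_1}}(p)\lambda_{\Pi_2}(p)p^{-2\mbu-\mbv_0}$ and its dual, each of size $p^{2\vartheta-3/2}$.
\item The $p^{-2\mbs}$ and $p^{-3\mbs}$ coefficients of $L_p^{\mathrm{ur}}(\mbs,\Pi_1\otimes\Pi_2)$ are only $O(p^{-4/7})$ and $O(p^{-6/7})$. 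The same goes for products such as $\lambda_{\Pi_1}(p)\lambda_{\Pi_2}(p)p^{-\mbs}\cdot\lambda_{\Pi_1}(p)\lambda_{\Pi_2}(p)p^{-\mbu-\mbv_0}$.
\end{enumerate}
With $\vartheta_3=5/14$, none of these is summable over $p$, so Lemma \ref{absunifin} with pointwise majorants cannot close the argument.

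The term you worry about in your last paragraph, a pure-$\mbu$ term like $|\lambda_\Pi(p)|^2p^{-2\mbu}$, is exactly what the normalization removes. The non-summable survivors are the mixed terms above, which you filed as harmless. For $\mfr=2$, where $\vartheta_2\le 7/64$, your pointwise claim is correct.

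\textbf{What is missing} is control on average over $p$, which is the content of the paper's Claim \ref{EPgl3sumclaimbound}.
\begin{itemize}
\item The mixed terms are bounded by $|\lambda_{\Pi_1}(p)\lambda_{\Pi_2}(p)|p^{-1-\eta}$ for a fixed $\eta>0$. For example, $|\lambda_{\widetilde{\Pi_1}}(p)\lambda_{\Pi_2}(p)p^{-\mbu-\mbs}|\le|\lambda_{\Pi_1}(p)\lambda_{\Pi_2}(p)|p^{-3/2}$, and surplus Hecke factors can be bounded by $3p^{\vartheta}$.
\item Then $\sum_{p\nmid\mfq}|\lambda_{\Pi_1}(p)\lambda_{\Pi_2}(p)|p^{-1-\eta}\le\frac12\sum_i\sum_{p\nmid\mfq}|\lambda_{\Pi_i}(p)|^2p^{-1-\eta}\ll\log\mathfrak{C}^{\epsilon}+O_\eta(1)$, by (\ref{trivrankcoefbdd}) and (\ref{Lisbound}).
\item The higher Rankin--Selberg coefficients can be handled in either of two ways. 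One is to keep $\prod_pL_p^{\mathrm{ur}}(\mbs,\Pi_1\otimes\Pi_2)$ as a partial Rankin--Selberg $L$-function and bound it by Cauchy--Schwarz on the coefficients in (\ref{heckevonmag}) against $\Pi_i\otimes\widetilde{\Pi_i}$, then (\ref{Lisbound}). The other, as the paper does, is via (\ref{rawnaiveexpand}) and Lemma \ref{naivconvRSDS}.
\end{itemize}
With every non-pointwise term $x_p$ controlled this way, $|\prod_p(1+x_p)|\le\exp(\sum_p|x_p|)$ gives (\ref{nmidchkexact}). Uniformity of these sums on compacta then gives the continuation in your Step 3. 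This route also avoids factoring off $\prod_p(1\pm\lambda_{\Pi_1}(p)\lambda_{\Pi_2}(p)p^{-\sigma})$ as you propose, whose individual factors can vanish at small $p$.
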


\begin{rem}
Were \textbf{GRC} holds in full, Proposition \ref{lem: ACKit} would follow immediately, since only $(n,m)\in \{(1,0), \, (0,1)\}$ contribute in (\ref{maincompnmidchk})--(\ref{nmidchk}). To obtain an \emph{unconditional} proof, however, one requires a good understanding of the arithmetic of the Dirichlet coefficients. This is already somewhat non-trivial for $\PGL_{2}$. For $\PGL_{3}$, we will use \texttt{Mathematica} for the computations.
\end{rem}

We will maintain that $p\nmid \mfq chk$ and (\ref{mbuv0mbsrange}). Display (\ref{nmidchk}) is $ \mathcal{L}_{p}(\mbs, \, \Pi_{1}\otimes \Pi_{2}) +  O_{\epsilon}(p^{-1-\epsilon}).$ Let
\begin{align}
     \mathbf{L}_{p}(\mbu, \mbs; \Pi_{1}, \Pi_{2} ) \ := \  L_{p}(\mbu, \Pi_{1}) \, - \, \mathcal{L}_{p}(\mbs, \Pi_{1}\otimes \Pi_{2})  \,+ \,  \sum_{n\ge 1} \, \frac{\lambda_{\Pi_{2}}(p^n)}{p^{n\mbs}}\sum_{r\ge 0} \, \frac{\lambda_{\Pi_{1}}(p^{n+r})}{ p^{r\mbu}} .  \label{mathbddLuspi}
\end{align}
This can be rewritten as follows:
\begin{align}
     \mathbf{L}_{p}(\mbu, \mbs; \Pi_{1}, \Pi_{2} )  \, &= \,  (L_{p}(\mbu, \Pi_{1})-1) \,+ \,  \sum_{n\ge 1} \, \frac{\lambda_{\Pi_{2}}(p^n)}{p^{n\mbs}} \Big\{\sum_{r\ge 0} \, \frac{\lambda_{\Pi_{1}}(p^{n+r})}{ p^{r\mbu}}  \, - \, \lambda_{\Pi_{1}}(p^n)\Big\} \nonumber\\
     \,  &= \, \sum_{r\ge 1} \, \frac{\lambda_{\Pi_{1}}(p^{r})}{ p^{r\mbu}} \,+ \,  \sum_{n\ge 1} \, \frac{\lambda_{\Pi_{2}}(p^n)}{p^{n\mbs}}\sum_{r\ge 1} \, \frac{\lambda_{\Pi_{1}}(p^{n+r})}{ p^{r\mbu}}  \nonumber\\
     \ &= \  \mathop{\sum \sum}_{\substack{m,n\ge 0\\ m> n}} \, \frac{\lambda_{\Pi_{1}}(p^m)\lambda_{\Pi_{2}}(p^n)}{p^{m(s+\frac12 + \alpha)} p^{n(s+\frac12 + \beta)}} \Big(\frac{p^{\min\{ m, \, n\}}}{p^m}\Big)^{w-1} p^{(z-it)(m-n)}. \label{U2nmiduneqcase}
\end{align}
The last quantity is precisely the double sum of display (\ref{maincompnmidchk}). Clearly, the double sum in (\ref{U2nmiduneqcase}) taken instead with  $m<n$ is  $\mathbf{L}_{p}(\mbv_{0}, \mbs; \Pi_{2}, \Pi_{1} )$. Hence, up to an error $O_{\epsilon}(p^{-1-\epsilon})$, we have
\begin{align}
      \mathop{\sum_{m=0}^{\infty}\sum_{n=0}^{\infty}}\,  \mathcal{K}(p^m, p^n) \ = \      \Big(1-\frac{1}{p^{2-w}}\Big)&( \mathbf{L}_{p}(\mbu, \mbs; \Pi_{1}, \Pi_{2} )  +  \mathbf{L}_{p}(\mbv_{0}, \mbs; \Pi_{2}, \Pi_{1} ) ) + \mathcal{L}_{p}(\mbs, \, \Pi_{1}\otimes \Pi_{2}). \label{new: nmidchk}
\end{align}
 In the following, we let
\begin{align}\label{U2Dfactdef}
 \mathbf{D}_{p}(\mbu, \mbv_{0}; \, \Pi_{1}, \Pi_{2}) \ := \    L_{p}(\mbu, \Pi_{1})^{-1}+L_{p}(\mbv_{0}, \Pi_{2})^{-1} - L_{p}(\mbu, \Pi_{1})^{-1}L_{p}(\mbv_{0}, \Pi_{2})^{-1}.
\end{align}
The computation is implemented by the \texttt{Mathematica} file \texttt{U2pnmid.nb} (see Appendix \ref{sect:U2Lfuncmatchgl3}).



\subsubsection{The case of $\mathrm{PGL}(2)$}\label{sect: U20withoutGRC}

 The $r$-sum of (\ref{mathbddLuspi}) can be computed by Lemma \ref{Heckasymp}. Hence, we have
\begin{align}
    \mathbf{L}_{p}(\mbu, \mbs; \Pi_{1}, \Pi_{2} ) 
     \, &= \, (L_{p}(\mbu, \Pi_{1})-1) \mathcal{L}_{p}(\mbs, \, \Pi_{1}\otimes \Pi_{2})\, - \, \frac{ L_{p}(\mbu, \Pi_{1})}{p^{\mbu+\mbs}} \sum_{n\ge 0} \, \frac{\lambda_{\Pi_{2}}(p^{n+1})\lambda_{\Pi_{1}}(p^{n})}{p^{n\mbs}}. \nonumber   
\end{align}
By Lemma \ref{lem: diffby1RSn} with $d=1$ and $\Pi_{1}\leftrightarrow \Pi_{2}$, we find that
\begin{align}
  \hspace{-5pt}\mathbf{L}_{p}(\mbu, \mbs; \Pi_{1}, \Pi_{2} ) \, = \,   & \mathcal{L}_{p}(\mbs, \Pi_{1}\otimes \Pi_{2})  (L_{p}(\mbu, \Pi_{1})-1) \\[5pt]
    &\hspace{20pt}-  \mathcal{L}_{p}(\mbs, \Pi_{1}\otimes \Pi_{2})L_{p}(\mbu, \Pi_{1})\zeta_{p}(2\mbs)p^{-\mbu-\mbs} \big( \lambda_{\Pi_{2}}(p)-  \lambda_{\Pi_{1}}(p)p^{-\mbs} \big)\label{Eulfactnmid} \\[5pt]
        \, = \, &\mathcal{L}_{p}(\mbs, \Pi_{1}\otimes \Pi_{2})(L_{p}(\mbu, \Pi_{1})-1) + O_{\epsilon}(p^{-1-\epsilon}).   \label{eq: tribddgl2gl1heck}
\end{align}
We have applied $\vartheta<1/2$ and the fact that $L_{p}(\mbu, \Pi_{1})$, $\zeta_{p}(2\mbs)$, $\mathcal{L}_{p}(\mbs, \Pi_{1}\otimes \Pi_{2})$ are all $O(1)$.  The \texttt{Mathematica} code provided verifies the truth of (\ref{eq: tribddgl2gl1heck}).

Putting Displays (\ref{new: nmidchk}), (\ref{eq: tribddgl2gl1heck}) and (\ref{nmidchk}) together, we have
\begin{align}
\mathop{\sum_{m=0}^{\infty}\sum_{n=0}^{\infty}}\,  \mathcal{K}(p^m, p^n)
 \, &= \,   \mathcal{L}_{p}(\mbs, \Pi_{1}\otimes \Pi_{2})   L_{p}(\mbu, \Pi_{1})L_{p}(\mbv_{0}, \Pi_{2}) \mathbf{D}_{p}(\mbu, \mbv_{0}; \, \Pi_{1}, \Pi_{2})  +O_{\epsilon}(p^{-1-\epsilon}),  \label{comparegl2pnmid}
 \end{align}
 where by (\ref{GL2EP}) the identity $A+B-AB=1-(1-A)(1-B)$, we have
 \begin{align}
     \mathbf{D}_{p}(\mbu, \mbv_{0}; \, \Pi_{1}, \Pi_{2}) \ = \ 1-\frac{\lambda_{\Pi_{1}}(p)\lambda_{\Pi_{2}}(p)}{p^{\mbu+\mbv_{0}}}.
 \end{align}
Since $ L_{p}(\mbu, \Pi_{1})^{-1}, \, L_{p}(\mbv_{0}, \Pi_{2})^{-1} \, \asymp \, 1$,  it follows that
\begin{align}
\hspace{15pt}\mathop{\sum_{m=0}^{\infty}\sum_{n=0}^{\infty}}\,  \mathcal{K}(p^m, p^n) = L_{p}(\mbu, \Pi_{1})L_{p}(\mbv_{0}, \Pi_{2}) \Big\{\mathcal{L}_{p}(\mbs, \Pi_{1}\otimes \Pi_{2}) \Big(1-\frac{\lambda_{\Pi_{1}}(p)\lambda_{\Pi_{2}}(p)}{p^{\mbu+\mbv_{0}}}\Big)+ O_{\epsilon}(p^{-1-\epsilon})\Big\}. \nonumber
\end{align}
Applying the asymptotic expansion (\ref{rawnaiveexpand}) (which requires $\vartheta<1/4$ \footnote{ This is not strictly necessary for proving Lemma \ref{lem: diffEUproLi}, but it is a classical result in analytic number theory, and much stronger bounds are known; see (\ref{KimSarbdd}).}), we obtain
\begin{align}
\mathop{\sum_{m=0}^{\infty}\sum_{n=0}^{\infty}}\,  \mathcal{K}(p^m, p^n) 
    \, &= \, L_{p}(\mbu, \Pi_{1})L_{p}(\mbv_{0}, \Pi_{2}) \Big\{ 1 \, + \, O_{\epsilon}\Big(\frac{|\lambda_{\Pi_{1}}(p) \lambda_{\Pi_{2}}(p)|}{p^{1+\epsilon}}\Big) + O_{\epsilon}(p^{-1-\epsilon}) \Big\}.\nonumber
\end{align}
Now, the desired result follows from Lemma \ref{lem: diffEUproLi}.


\subsubsection{The case of $\mathrm{PGL}(3)$}\label{twistedprinesund: GL3}
The computations are more intricate than the $\mathrm{PGL}(2)$ counterpart, and we rely heavily on \texttt{Mathematica}. Recall (\ref{new: nmidchk}), (\ref{mathbddLuspi}) and Lemma \ref{Heckasymp}. For $p\nmid \mfq chk$, we have
\begin{align}
 L_{p}(\mbu, \Pi_{1})^{-1}&L_{p}(\mbs, \Pi_{1}\otimes \Pi_{2})^{-1}  \mathbf{L}_{p}(\mbu, \mbs; \Pi_{1}, \Pi_{2} ) \nonumber\\[5pt]
& = \, L_{p}(\mbu, \Pi_{1})^{-1} L_{p}(\mbs, \Pi_{1}\otimes \Pi_{2})^{-1}  (L_{p}(\mbu, \Pi_{1})-\mathcal{L}_{p}(\mbs, \Pi_{1}\otimes \Pi_{2}))\nonumber \\[3pt]
  & \hspace{60pt} \,+ \,  L_{p}(\mbs, \Pi_{1}\otimes \Pi_{2})^{-1}  \sum_{n\ge 1} \, \frac{\lambda_{\Pi_{2}}(p^n)}{p^{n\mbs}} \, \cdot \,  L_{p}(\mbu, \Pi_{1})^{-1}\sum_{r\ge 0} \, \frac{\lambda_{\Pi_{1}}(p^{n+r})}{ p^{r\mbu_{1}}}, \label{gl3notdivU2}
  \end{align}
  where
\begin{align}
    & L_{p}(\mbu, \Pi_{1})^{-1}\sum_{r\ge 0} \, \frac{\lambda_{\Pi_{1}}(p^{n+r})}{ p^{r\mbu_{1}}}
     \, = \, \lambda_{\Pi_{1}}(p^{n+1})p^{-\mbu}+\lambda_{\Pi_{1}}(p^{n-1})p^{-2\mbu} +  \lambda_{\Pi_{1}}(p^n)\big\{1- \lambda_{\Pi_{1}}(p)p^{-\mbu}  \big\}. \label{gl3U2: pluginHec}
\end{align}

The \texttt{Mathematica} notebook \texttt{U2pnmid.nb} (Appendix \ref{sect:U2Lfuncmatchgl3}) makes use of (\ref{gl3U2: pluginHec}) and the bound $\vartheta\le 5/14$. The result is as follows:
\begin{align}
    L_{p}(\mbs, \Pi_{1}\otimes \Pi_{2})^{-1} p^{-\mbu} \sum_{n\ge 1}\, \frac{\lambda_{\Pi_{1}}(p^{n+1})\lambda_{\Pi_{2}}(p^n)}{p^{n\mbs}} \ =  \ \mathcal{A}_{p}(\mbu, \mbs; \Pi_{1}, \Pi_{2}) \, + \,  O_{\epsilon}(p^{-1-\epsilon}),
\end{align}
where $ \mathcal{A}_{p}(\mbu, \mbs; \Pi_{1}, \Pi_{2})$ is defined (and output as \texttt{NP1} in the code) as 
\begin{align}
   \frac{(\lambda_{\Pi_{1}}(p)^2-\lambda_{\widetilde{\Pi_{1}}}(p))\lambda_{\Pi_{2}}(p)}{p^{\mbu+\mbs}} \, + \, \frac{(3-|\lambda_{\Pi_{1}}(p)|^2)\lambda_{\Pi_{2}}(p)^2+\lambda_{\Pi_{1}}(p)(2\lambda_{\widetilde{\Pi_{1}}}(p)-\lambda_{\Pi_{1}}(p)^2)\lambda_{\widetilde{\Pi_{2}}}(p)}{p^{\mbu+2\mbs}}.  \label{remainbyKSgl3}
\end{align}

\begin{cla}\label{EPgl3sumclaimbound}
\begin{align}
    \sum_{p\nmid \mfq chk} \, |\mathcal{A}_{p}(\mbu, \mbs; \Pi_{1}, \Pi_{2})| \, \ll_{\epsilon} \,  1 + \log \mathfrak{C}^{\epsilon}.\nonumber
\end{align}    
\end{cla}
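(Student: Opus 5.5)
We bound each monomial of $\mathcal{A}_{p}$ in (\ref{remainbyKSgl3}) by products of Hecke eigenvalues at $p$ over powers of $p$. Then we sum over $p$ using Cauchy--Schwarz together with the Rankin--Selberg bound (\ref{trivrankcoefbdd}) and Li's bound (\ref{Lisbound}). Throughout we work in the region (\ref{mbuv0mbsrange}), so $\re\mbu>1/2$, $\re\mbs>1$, and hence $\re(\mbu+\mbs)>3/2$ and $\re(\mbu+2\mbs)>5/2$. We may assume these real parts exceed the thresholds by a fixed multiple of $\epsilon$, which is what (\ref{wztlineseps}) and $\re s>1000\epsilon$ provide after unwinding (\ref{U2varchoi}). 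Since $p\nmid\mfq$, (\ref{unrcontrae(p)}) gives $|\lambda_{\widetilde{\Pi_{i}}}(p)|=|\lambda_{\Pi_{i}}(p)|$. Each $|\lambda_{\Pi_{i}}(p)|$ is at most $3p^{\vartheta}$ with $\vartheta\le 5/14$ (\ref{KimSarbdd}).

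First term. Its numerator is bounded by $|\lambda_{\Pi_1}(p)|^2|\lambda_{\Pi_2}(p)|+|\lambda_{\Pi_1}(p)||\lambda_{\Pi_2}(p)|$. Use the pointwise bound on one factor $|\lambda_{\Pi_1}(p)|\ll p^{\vartheta}$ in the cubic piece, and keep the product $|\lambda_{\Pi_1}(p)\lambda_{\Pi_2}(p)|$. The contribution is then
\[
\ll \sum_{p} \frac{|\lambda_{\Pi_1}(p)\lambda_{\Pi_2}(p)|}{p^{3/2-\vartheta+\epsilon}}.
\]
Here $3/2-\vartheta\ge 8/7>1$. By $|ab|\le (|a|^2+|b|^2)/2$ and (\ref{trivrankcoefbdd}), this is $\ll \sum_{i}\log L(1+\epsilon',\Pi_i\otimes\widetilde{\Pi_i})$, which by (\ref{Lisbound}) is $\ll 1+\log\mathfrak{C}^{\epsilon}$.

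Second term. The numerator is a degree-four monomial expression, bounded by
\[
\ll |\lambda_{\Pi_2}(p)|^2+|\lambda_{\Pi_1}(p)|^2|\lambda_{\Pi_2}(p)|^2+|\lambda_{\Pi_1}(p)|^2|\lambda_{\Pi_2}(p)|+|\lambda_{\Pi_1}(p)|^3|\lambda_{\Pi_2}(p)|.
\]
Every piece is $\ll p^{2\vartheta}\cdot(\text{a quadratic expression in }|\lambda_{\Pi_i}(p)|)$. Dividing by $p^{5/2+\epsilon}$ gives denominators $p^{5/2-2\vartheta+\epsilon}$, and $5/2-2\vartheta\ge 25/14>1$. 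So the same Rankin--Selberg argument applies, and crudely the whole term is even absolutely summable using only $|\lambda|\ll p^{\vartheta}$ throughout: $p^{4\vartheta-5/2}\le p^{-15/14}$. The same remark shows the first term is nearly trivial too, since $p^{3\vartheta-3/2}=p^{-3/7}$ fails alone; this is why one needs Li's bound there.

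The only delicate point is the first term, where the pointwise Kim--Sarnak bound alone is insufficient. The fix is to spend the pointwise bound on exactly one factor and keep a quadratic product summed against $p^{-1-\delta}$ with $\delta=1/2-\vartheta>0$. The uniformity in $\mfC(\Pi_i)$, giving $\log\mathfrak{C}^{\epsilon}$ rather than a constant depending on $\Pi_i$, comes precisely from (\ref{Lisbound}) applied at $\sigma=1+\delta$. Primes dividing $\mfq chk$ are excluded, so no ramified bounds are needed.
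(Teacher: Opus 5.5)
Your proof is correct and follows the paper's argument. For the first summand of (\ref{remainbyKSgl3}) you spend the Kim--Sarnak bound on exactly one factor and control $\sum_p |\lambda_{\Pi_1}(p)\lambda_{\Pi_2}(p)|p^{-(3/2-\vartheta)}$ via $|ab|\le(|a|^2+|b|^2)/2$, (\ref{trivrankcoefbdd}) and (\ref{Lisbound}), exactly as the paper does. For the second summand the paper keeps one eigenvalue (exponent $5/2-3\vartheta$), whereas you keep two, or observe that the fully pointwise bound $p^{4\vartheta-5/2}\le p^{-15/14}$ already suffices; this is only a cosmetic difference. Your sentence calling the first term ``nearly trivial too'' is muddled, but what you go on to say is correct: the pointwise bound alone fails there.
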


\noindent Indeed, by (\ref{satabdd}) (with $\vartheta<1/2$), the inequality $|XY|\ll |X|^2+|Y|^2$, the bounds  (\ref{trivrankcoefbdd}) and (\ref{Lisbound}), observe that the sum over $p$ of the first summand of (\ref{remainbyKSgl3}) is bounded by 
\begin{align}
   \sum_{p\nmid \mfq}\, \frac{|\lambda_{\Pi_{1}}(p)\lambda_{\Pi_{2}}(p)|}{p^{3/2-\vartheta}} \ \ll_{\epsilon} \ \sum_{i=1}^{2} \, \sum_{p\nmid \mfq} \, \frac{|\lambda_{\Pi_{i}}(p)|^2}{p^{1+\epsilon}} \ \ll_{\epsilon} \ \log \mathfrak{C}^{\epsilon},
\end{align}
whereas that for the second summand of (\ref{remainbyKSgl3}) is bounded by 
\begin{align}
  1+ \sum_{i=1}^{2}\,  \sum_{p\nmid \mfq} \, \frac{|\lambda_{\Pi_{i}}(p)|}{p^{5/2-3\vartheta}} \ \ll \   1+ \sum_{i=1}^{2}\,  \sum_{p\nmid \mfq} \, \frac{|\lambda_{\Pi_{i}}(p)|^2}{p^{1+\epsilon}} \ \ll_{\epsilon} \ 1+\log \mathfrak{C}^{\epsilon},
\end{align}
As a result, Claim \ref{EPgl3sumclaimbound} follows.

Our \texttt{Mathematica} code (output as \texttt{NP2}) also shows that 
\begin{align}
     L_{p}(\mbs, \Pi_{1}\otimes \Pi_{2})^{-1} p^{-2\mbu}  \sum_{n\ge 1} \, \frac{\lambda_{\Pi_{2}}(p^n)\lambda_{\Pi_{1}}(p^{n-1})}{p^{n\mbs}}  \, = \, O_{\epsilon}(p^{-1-\epsilon}). \nonumber
\end{align}
From (\ref{gl3U2: pluginHec}) and (\ref{gl3notdivU2}),  we conclude that
\begin{align}
     \mathbf{L}_{p}(\mbu, \mbs; \Pi_{1}, \Pi_{2} )
    \, &= \, \mathcal{L}_{p}(\mbs, \Pi_{1}\otimes \Pi_{2})(  L_{p}(\mbu, \Pi_{1}) -1) \, + \, \mathcal{A}_{p}(\mbu, \mbs; \Pi_{1}, \Pi_{2})L_{p}(\mbu, \Pi_{1})L_{p}(\mbs, \Pi_{1}\otimes \Pi_{2})  \nonumber\\
      &\hspace{30pt} \,- \, \lambda_{\Pi_{1}}(p)p^{-\mbu}  L_{p}(\mbu, \Pi_{1})(\mathcal{L}_{p}(\mbs, \Pi_{1}\otimes \Pi_{2})-1) \, + \, O_{\epsilon}(p^{-1-\epsilon}). \nonumber
\end{align}
Moreover, we have the estimate
\begin{align}
    \sum_{p} \, \Big|\frac{ \lambda_{\Pi_{1}}(p)}{p^{\mbu}}  L_{p}(\mbu, \Pi_{1})(\mathcal{L}_{p}(\mbs, \Pi_{1}\otimes \Pi_{2})-1)\Big| \, &\ll \, \sum_{p} \, \frac{|\lambda_{\Pi_{1}}(p)^2\lambda_{\Pi_{2}}(p)|}{p^{3/2}} \, \ll_{\epsilon} \, \log \mathfrak{C}^{\epsilon}.  \nonumber
\end{align}
 From this, (\ref{new: nmidchk}) and (\ref{comparegl2pnmid}), it follows that
\begin{align}
\mathop{\sum_{m=0}^{\infty}\sum_{n=0}^{\infty}}\,  \mathcal{K}(p^m, p^n) \ = \ &L_{p}(\mbu, \Pi_{1})L_{p}(\mbv_{0}, \Pi_{2}) \cdot \big\{  \mathcal{A}_{p}(\mbu, \mbs; \Pi_{1}, \Pi_{2}) \, + \, \mathcal{A}_{p}(\mbv_{0}, \mbs; \Pi_{2}, \Pi_{1})\nonumber\\
 &\hspace{60pt}  \ + \  \mathcal{L}_{p}(\mbs, \Pi_{1}\otimes \Pi_{2})\mathbf{D}_{p}(\mbu, \mbv_{0}; \, \Pi_{1}, \Pi_{2}) \big\}. \nonumber
\end{align}
By (\ref{rawnaiveexpand}) and (\ref{GL3EP}), observe that $\mathcal{L}_{p}(\mbs, \Pi_{1}\otimes \Pi_{2})\mathbf{D}_{p}(\mbu, \mbv_{0}; \, \Pi_{1}, \Pi_{2})$  is equal to
\begin{align}
   \Big(\sum_{k=0}^{3} \, \frac{\lambda_{\Pi_{1}}(p^k)\lambda_{\Pi_{2}}(p^k)}{p^{k\mbs}}\Big)\Big(1\, - \,\frac{\lambda_{\Pi_{1}}(p)\lambda_{\Pi_{2}}(p)}{p^{\mbu+\mbv_{0}}} +\frac{\lambda_{\widetilde{\Pi_{1}}}(p)\lambda_{\Pi_{2}}(p)}{p^{2\mbu+\mbv_{0}}}  +\frac{\lambda_{\Pi_{1}}(p)\lambda_{\widetilde{\Pi_{2}}}(p)}{p^{\mbu+2\mbv_{0}}}  \Big)\,+ \, O(p^{-1-\epsilon}).\nonumber
\end{align}
By the same argument as Claim \ref{EPgl3sumclaimbound}, the proof of Proposition \ref{lem: ACKit} is complete.


\subsection{The primes $p\mid \mfq chk$}\label{sect: twistedprimes}

Here, we collect the expressions (\ref{ram: finiteEPforU0}), (\ref{simplefactratc}), (\ref{ramifiedprKDs}), (\ref{uneq: midhk})--(\ref{eq: midhk}). For (\ref{ram: finiteEPforU0}), it admits the following Euler product expansion:
\begin{align}
  \mathcal{E}_{\mfq}(s; z,w; it ) \ := \  \prod_{p\mid \mfq} \, \sum_{m,n\ge 0}  \ & \frac{\lambda_{\Pi_{1}}(p^m) \lambda_{\Pi_{2}}(p^n)}{(p^m)^{s+\frac12 + \alpha} (p^n)^{s+\frac12 + \beta}}\Big(\frac{p^{\min\{m,n\}}}{p^m}\Big)^{w-1}(p^{m-n})^{z-it} \nonumber\\
  & \cdot \ \begin{cases}
     \big(1-\frac{1}{p^{1+w}}\big) \big(1+ \frac{p^{-w}-1}{p(p-1)}\big)^{-1} \hspace{10pt} \text{ if } \hspace{10pt} m\, \neq \, n \\[5pt]
\hspace{20pt} \frac{p^{2+w}-2p^{1+w}+1}{p^{2+w}-p^{1+w}-p^w+1} \hspace{40pt} \text{ if } \hspace{7pt} \, m  = \, n.
  \end{cases}
\end{align}
In total, the Euler product to be considered is given by
\begin{align}
    \prod_{p\mid c} \,  &\Big( 1+ \frac{p^{w-1}-1}{p(p-1)}\Big) \cdot \prod_{\substack{p\mid \mfq\\ p\nmid hk}} \, \mathcal{E}_{p}(s; z,w; it )\Big(1+ \frac{p^{w-1}-1}{p(p-1)}\Big) \label{ramcnhkEP}\\
    & \cdot \prod_{\substack{p\mid \mfq\\  o_{p}(h)=o_{p}(k)>0}} \,  \mathcal{E}_{p}(s; z,w; it )  (p^{o_{p}(h)})^{w-1}
       \Big( 1+ \frac{1}{(p-1) p^{2-w}}- \frac{1}{p-1}\Big) \label{ramhkeqEP}\\
       & \cdot \, \prod_{\substack{p\mid \mfq\\  o_{p}(h)\neq o_{p}(k)}} \,  \mathcal{E}_{p}(s; z,w; it )(p^{\min\{o_{p}(h),\, o_{p}(k)\}})^{w-1}\big(1-\frac{1}{p^{2-w}}\big) \cdot \, \prod_{\substack{p\nmid \mfq\\ p\mid hk}} \, \sum_{m\ge 0} \sum_{n\ge 0} \, \mathcal{K}(p^m, p^n).\label{ramnramhkdiv}
\end{align}

\begin{lem}\label{prop: remainplace}
Suppose that (\ref{wztlineseps}) and $\re s>1000\epsilon$ hold. Then the product of all of the Euler factors in (\ref{ramcnhkEP}), (\ref{ramhkeqEP}), and (\ref{ramnramhkdiv}) is $\preceq 1$.
\end{lem}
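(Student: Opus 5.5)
The plan is to bound each of the finitely many Euler factors in (\ref{ramcnhkEP})--(\ref{ramnramhkdiv}) individually by $O(p^{\epsilon'})$ times a factor whose product over the relevant primes is $\preceq 1$. Recall the lines (\ref{wztlineseps}): $\re w=1+\epsilon$, $\re z=\epsilon/2$, and $\re s>1000\epsilon$. Since the primes involved all divide $\mfq chk$, and $c\le Q$, $h,k\le (TQ)^{\theta}$, the bound $\prod_{p\mid \mfq chk} O(1)\ll_\epsilon (\mfq chk)^{\epsilon}\preceq 1$ from (\ref{elefinitedivbdd}) disposes of any factor that is $O(1)$ uniformly. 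The arithmetic factors should be handled first. On $\re w=1+\epsilon$ we have $|p^{w-1}|=p^{\epsilon}$ and $|p^{2-w}|=p^{1-\epsilon}$, so $1+\frac{p^{w-1}-1}{p(p-1)}$, $1+\frac{1}{(p-1)p^{2-w}}-\frac{1}{p-1}$ and $1-p^{w-2}$ are all $O(1)$. The rational functions in $\mathcal{E}_p$ (the $R$-factors evaluated at $1-w$, i.e.\ with $p^{-(1-w)}$ of modulus $p^{\epsilon}$) have denominators $p^{2+w'}-p^{1+w'}-p^{w'}+1$ with $\re w'=-\epsilon$. These are bounded below by $\gg p^{2-\epsilon}$, so the factors are $O(1)$. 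The powers $(p^{o_p(h)})^{w-1}$ and $(p^{\min})^{w-1}$ contribute at most $(hk)^{\epsilon}$ in total.

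The main point is the local double sums over $m,n\ge 0$, namely $\mathcal{E}_p$ for $p\mid\mfq$ and $\sum\sum\mathcal{K}(p^m,p^n)$ for $p\nmid\mfq$, $p\mid hk$. Here I would bound the summands absolutely. Write $\mu=\min\{m,n\}$; the summand has modulus
\[
\frac{|\lambda_{\Pi_1}(p^m)\lambda_{\Pi_2}(p^n)|}{p^{(m+n)(\re s+1/2)}}\,p^{(\epsilon)(\mu-m)}p^{\epsilon(m-n)/2}\cdot O(p^{\epsilon o_p(hk)}).
\]
Using (\ref{uniformRC}), so that $|\lambda_{\Pi_i}(p^j)|\ll (j+1)^2p^{j\vartheta}$ with $\vartheta\le 5/14<1/2$ (valid for all $p$, including ramified ones), each term is $\ll (m+1)^2(n+1)^2p^{-(m+n)(1/2-\vartheta+\re s-\epsilon)}$. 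Since $1/2-\vartheta\ge 1/7$, the double sum converges absolutely and is $O(1)$ uniformly in $p$. The shifts $o_p(h),o_p(k)$ enter only through the $\min$ exponent and the coprimality pattern, so they cost at most $p^{O(\epsilon)(o_p(h)+o_p(k))}$. Multiplying over $p\mid \mfq chk$ gives $\ll (\mfq chk)^{O(\epsilon)}\preceq 1$.

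I expect the main obstacle to be checking that no hidden growth arises from the $\min$ term. In (\ref{uneq: midhk}) the factor $(p^{\min\{m+o_p(h),n+o_p(k)\}}/p^m)^{w-1}$ could be as large as $p^{\epsilon(n+o_p(k))}$, and this must be absorbed by the decay $p^{-n(1/7+\re s)}$. It is absorbed, because $\epsilon$ is tiny compared with $1/7$. One must also confirm that for $p\mid\mfq$, (\ref{uniformRC}) still gives $|\lambda(p^j)|\le (j+1)^2p^{j\vartheta}$ (the Satake bound (\ref{satabdd}) holds for all $p$), so that the ramified factors need no separate treatment. With these checks the product is $\preceq 1$.
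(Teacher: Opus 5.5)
Your proposal is correct and follows essentially the same route as the paper's proof. Both arguments bound the local double sums absolutely using (\ref{uniformRC}) with $\vartheta<1/2$; the $\min$-exponent in fact costs at most $p^{\epsilon o_p(h)}$, so no absorption by the decay is needed. Both then note that the remaining arithmetic factors are $O(1)$ and multiply over the primes dividing $\mfq chk$ to get $(\mfq chk)^{O(\epsilon)}\preceq 1$. One minor imprecision: at $p=2$ the common denominator reduces to $1+2^{w'}$, which is only $\ge 1-2^{-\epsilon}$ and not $\gg p^{2-\epsilon}$ with an absolute constant; an $\epsilon$-dependent $O(1)$ bound is all that is needed, so the conclusion stands.
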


\begin{proof}
The Euler product over $p\mid c$ is clearly $\ll_{\epsilon} c^{\epsilon}\preceq 1$. By the triangle inequality and dropping the restrictions in the sums of (\ref{uneq: midhk})--(\ref{eq: midhk}), we have
    \begin{align}\label{trivbddtwsprime}
        \sum_{m=0}^{\infty} \, \sum_{n=0}^{\infty} \,  \mathcal{K}(p^m, p^n) \ \ll \  p^{o_{p}(hk)\epsilon} \sum_{\substack{m,n\ge 0 }}   \frac{|\lambda_{\Pi_{1}}(p^m)||\lambda_{\Pi_{2}}(p^n)|}{p^{(m+n)/2} } \ \ll \ p^{o_{p}(hk)\epsilon}
    \end{align}
    where the implied constants are absolute. We have used (\ref{uniformRC}) with $\vartheta<1/2$. Thus,
    \begin{align}\label{twisramprimes}
\prod_{\substack{p\nmid \mfq \\p \mid hk}}\  \sum_{m=0}^{\infty} \, \sum_{n=0}^{\infty} \,  \mathcal{K}(p^m, p^n) \, \ll_{\epsilon} \, (hk)^{\epsilon}  \, \preceq \, 1.
    \end{align}
    The bound $ \mathcal{E}_{\mfq}(s; z,w; it ) \, \preceq \, 1$
    follows from a similar argument.  The desired result follows from the fact that the product of  (\ref{ramcnhkEP}), (\ref{ramhkeqEP}), and (\ref{ramnramhkdiv}) is 
    \begin{align*}
        \preceq \,  \prod_{p\mid \mfq } p^{\min\{o_{p}(h),\, o_{p}(k)\}})^{\epsilon} \, \preceq (h,k)^{\epsilon} \, \preceq 1. 
    \end{align*}
\end{proof}



\subsection{Proof of Proposition \ref{tersm}: bounding $\mathcal{U}^{\mathbf{2}}(h,k)_{\circ}$}

From (\ref{DS3pcsEPexp}), Propositions \ref{lem: ACKit}, and Lemma \ref{prop: remainplace}, the Dirichlet series $\mathcal{K}^{(\mfq)}_{it}(s; z,w)$ admits a holomorphic continuation to $\re s>1000\epsilon$ and (\ref{wztlineseps}), on which the following bound holds: $ |\mathcal{K}^{(\mfq)}_{it}(s; z,w)| \preceq   |L(\mbu, \Pi_{1})L(\mbv_{0}, \Pi_{2})|.$
 Shifting the line of integration for the $s$-integral in (\ref{eq: U2hksubdel}) to $\re s=2000\epsilon$. It follows that
\begin{align}
| \, \mathcal{U}_{\delta}^{\mathbf{2}}(h,k)_{\circ} \, | \, \preceq \     Q   \,   \int_{\R} \, \eta(t) \, \int_{(2000\epsilon)}  \, \int_{(1+\epsilon)} \,    \int_{(\epsilon/2)} \, &|\widetilde{V}_{\alpha, \,\beta}(s;it)|    |\widetilde{W}(w+\mfr s) \zeta(2-w) \mathcal{H}(z, w-1)|  \nonumber\\
  & \hspace{-5pt}\cdot  \,  |g_{\delta}(z)|	   |L(\mbu, \Pi_{1})L(\mbv_{0}, \Pi_{2})| \,  |dz| \, |dw| \, |ds| \,  dt.
\end{align}
By Lemma \ref{lem: doubintc}, the convexity bounds for the $L$-functions and the rapid decay of $\widetilde{V}_{\alpha, \,\beta}(s;it)$ (see (\ref{stdStirAFEcut})), it follows that $\mathcal{U}_{\delta}^{\mathbf{2}}(h,k)_{\circ} \preceq TQ \mathfrak{C}$. We thus arrive at Proposition \ref{tersm}.


\section{The sum $\mathcal{U}^{(\mathbf{r})}(h,k)$: preparatory treatment}\label{sect: UrHLS}

In this and the upcoming sections, we study the following sum defined in (\ref{USummm}) and (\ref{absU0sum}):
\begin{align}
  \mathcal{U}^{(\mathbf{r})}(h,k) \, := \,  &\frac{Q}{2}\  \sum_\pm \, \sum_{r\mid \mfq}
 \sum_{\substack{c\le C \\(c,\, \mfq hk)=1}} \sum_{(e,\mfq)=1}\sum_{(a,\mfq)=1}\sum_{\ell \ge 1}  \ \frac{\mu(r)\mu(c) \mu(e) \mu(a)}{ce}  \frac{1}{\phi(\mnl)}   \sum_{\substack{\psi \, (\mnl) \\ \psi\neq \psi_{0}}}\nonumber\\
 & \cdot \, \mathop{\sum \sum}_{\substack{m,n\ge 1 \\ mh\neq nk \\(mn,c) =1 \\ (e,g)=1, \  a \mid g}} 
 \ \frac{\lambda_{\Pi_{1}}(m)\lambda_{\Pi_{2}}(n)}
  {m^{\frac12 + \alpha} n^{\frac12 + \beta}} \,  \psi\Big(\frac{mh}{g}\Big)\overline{\psi}\Big(\mp \frac{nk}{g}\Big) \, \mathcal{W}\Big(\frac{c}{Qg}\frac{|mh\pm nk|}{\ell}; \frac{nk}{mh}, \, \frac{mn}{\mfq Q^{\mfr}}\Big),  \label{Ursummod}
\end{align} 
 where $g:=(mh,nk)$, $\mnl:=rea\ell$, and $\mathcal{W}(x; v,u)  :=   x     W(x) H_{\alpha,\beta}(v, u/x^\mfr).$


\subsection{Truncating and conductor lowering}\label{sect: dualengthcond}

The following lemma shows that divisor switching reduces the size of the conductor by a factor of $Q$ in the sum\, $\mathcal{U}^{(\mathbf{r})}(h,k)$.

\begin{lem}\label{Urtrunc}
Suppose that $\lambda_{h}\ll h^{1/2}$, $\theta<1$, and $C\ll Q$.  Denote by $\mathcal{U}^{(\mathbf{r})}(h,k)_{>}$ the contribution from the terms with
\begin{align}\label{truncateUrsum}
    a\ell 
\, \gg \,  \frac{C}{Q}(TQ)^{\frac{\mfr}{2}+\theta+\epsilon} \mathfrak{C}  \hspace{20pt} \text{ or } \hspace{20pt}  rae\ell \, \gg  \,  (TQ)^{1000}\mathfrak{C}
\end{align}
to the sum $\mathcal{U}^{(\mathbf{r})}(h,k)$. Then we have
\begin{align*}
 \Big|\mathop{\sum\, \sum}_{h,  k \, \le\, (TQ)^{\theta}} \,  \frac{\lambda_{h} \overline{\lambda_{k}}}{\sqrt{hk}} \, \mathcal{U}^{(\mathbf{r})}(h,k)_{>} \Big| \ \preceq \ \mathfrak{C}(TQ)^{-100}. 
\end{align*}
\end{lem}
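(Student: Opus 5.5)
The plan is to show that in both ranges of (\ref{truncateUrsum}) the weight $\mathcal{W}$ vanishes identically or is arbitrarily small. This lets us bound everything trivially by absolute values, with no use of the character sum beyond $|\psi|\le 1$. Recall that $\mathcal{W}(x;v,u)=xW(x)H_{\alpha,\beta}(v,u/x^{\mfr})$ and that $W$ is supported on $[1,2]$. So a term is nonzero only when
\begin{align*}
x \ = \ \frac{c\,|mh\pm nk|}{Qg\ell}\ \asymp\ 1 .
\end{align*}
Moreover, by (\ref{trivweigh}) and Lemma \ref{effectrunc}, after discarding an arbitrarily small error we may restrict to the truncated ranges (\ref{eq: alwaysefftruncran}) and (\ref{baltrunc}). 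In particular $mh,\,nk\le 2(TQ)^{\mfr/2+\theta+\epsilon}\mathfrak{C}$, and $mn\le (TQ)^{\mfr+\epsilon}\mathfrak{C}^2$.

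For the first condition, we use $a\mid g$ together with $x\asymp 1$:
\begin{align*}
a\ell\ \le\ g\ell\ \asymp\ \frac{c\,|mh\pm nk|}{Q}\ \ll\ \frac{C}{Q}(TQ)^{\frac{\mfr}{2}+\theta+\epsilon}\mathfrak{C}.
\end{align*}
Hence terms with $a\ell$ larger than this (after enlarging $\epsilon$ slightly in (\ref{truncateUrsum})) lie either outside the support of $W$ or in the tail already controlled by Lemma \ref{effectrunc}. The tail terms are handled by the rapid decay in (\ref{trivweigh}) and (\ref{est: effectrunc}), which gives $(TQ)^{-A}$ per term.

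For the second condition, the first step already bounds $a\ell$ polynomially, so the task is to control $r$ and $e$. Since $r\mid\mfq$, we have $r\le\mfq\le\mathfrak{C}^{O(1)}$. The variable $e$ is unbounded, which is the only genuinely infinite sum. It carries the factor $\mu(e)/e$ and the weight $1/\phi(\mnl)$, which together give $1/(e\phi(e))\ll e^{-2+\epsilon}$ after the trivial bound $\sum_{\psi}1\le\phi(\mnl)$. So the tail $e\gg(TQ)^{1000}\mathfrak{C}/(ra\ell)$ contributes at most $(TQ)^{-1000+\epsilon}$ times polynomial factors.

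It remains to bound those polynomial factors crudely. We use $|H|\ll T$, the count of $m,n$ in the truncated range, the bound $\lambda_{h}\ll h^{1/2}$ for the $h,k$-sums (each of length $(TQ)^{\theta}$ with $\theta<1$), the bound $|\lambda_{\Pi_i}(m)|\ll m^{1/2}$ from (\ref{uniformRC}) since $\vartheta<1/2$, and $Q\sum_{c\le C}1/c$. The product is at most $\mathfrak{C}^{O(1)}(TQ)^{O(1)}$ with an exponent far below $900$. Combined with either the arbitrarily small factor from the first range or the $(TQ)^{-1000}$ factor from the $e$-tail, this yields $\preceq\mathfrak{C}(TQ)^{-100}$, after absorbing powers of $\mathfrak{C}$ into the $\preceq$ convention and, if necessary, enlarging $1000$.

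The main obstacle is bookkeeping rather than a conceptual difficulty. One must confirm that interchanging the divisor-switching sums (\ref{ddivswi}) with absolute values is legitimate, and that the $e$-sum indeed carries the $1/(e\phi(e))$ decay once $\sum_{\psi}$ is bounded trivially. Note that without the $\phi(\mnl)$ normalisation the $e$-sum would be only barely convergent. I would check this first, using the form (\ref{Ursummod}) directly.
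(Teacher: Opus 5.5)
Your $e$-tail argument does not work as stated. You claim that $\mu(e)/e$ and $1/\phi(\mnl)$ ``together give $1/(e\phi(e))$ after the trivial bound $\sum_{\psi}1\le\phi(\mnl)$''. But the trivial bound $\bigl|\sum_{\psi\neq\psi_0}\psi(M)\overline{\psi}(\mp N)\bigr|\le\phi(\mnl)$, with $M=mh/g$ and $N=nk/g$, cancels the normalisation $1/\phi(\mnl)$ exactly. The weight $\mathcal{W}\bigl(\frac{c|mh\pm nk|}{Qg\ell};\dots\bigr)$ does not depend on $e$. So for fixed $m,n,h,k,c,r,a,\ell$ what remains is $\sum_{(e,\mfq g)=1}\mu^2(e)/e$, which diverges; it is not ``barely convergent''. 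The check you flag as the main obstacle would therefore fail, and no $(TQ)^{-1000}$ saving can be extracted from the $e$-tail this way. Your first step, truncating $a\ell$ via $a\mid g$ and the support of $W$, is fine and matches the paper.

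The missing idea is to evaluate the character sum exactly by orthogonality instead of bounding it trivially, and to split according to whether $M\equiv\mp N\pmod{\mnl}$.

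\emph{Case $M\not\equiv\mp N\pmod{\mnl}$.} The sum $\sum_{\psi\neq\psi_0}\psi(M)\overline{\psi}(\mp N)=-\psi_0(MN)$ has modulus at most $1$, so the factor $1/\phi(\mnl)$ survives. The sum over $r\mid\mfq$, $e$, and the already truncated $a,\ell$ with $rea\ell\gg(TQ)^{1000}\mathfrak{C}$ then converges with an enormous power saving. Your crude polynomial bookkeeping finishes this case.

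\emph{Case $M\equiv\mp N\pmod{\mnl}$.} Here the character sum equals $\phi(\mnl)-1$, so the normalisation is genuinely lost. However, $\mnl$ then divides the integer $M\pm N$, which is nonzero because $mh\neq nk$. Hence $\mnl\le 2\max\{mh,nk\}$, and by (\ref{baltrunc}) this is far below $(TQ)^{1000}\mathfrak{C}$. So this case contributes no terms to the tail at all.

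This congruence dichotomy is exactly the argument the paper uses.
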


\begin{proof}
With $mh\neq nk$, recall from Lemma \ref{effectrunc} that the cut-off function $H_{\alpha, \beta}(\cdots)$ effectively truncates the sums in   $\mathcal{U}^{(\mathbf{r})}(h, k)$  to the ranges (\ref{baltrunc}). Since $a\mid g$ and $W$ is compactly supported, we can restrict the sums over $a$ and $\ell$, at the cost of an arbitrarily small error term, to
\begin{align}\label{supptrun}
	a	\ell \,  \asymp \, \frac{a}{g}\cdot \frac{c | nk \pm mh|}{Q}  \, \ll \,  \mathfrak{C}\,  \frac{C (TQ)^{\frac{\mfr}{2}+\theta+\epsilon}}{Q}.
\end{align}
We now show that terms satisfying (\ref{supptrun}) and $\mnl:=rae\ell \gg (TQ)^{1000}\mathfrak{C} $ contribute negligibly small.

\noindent \textbf{Case 1:} $mh/g \not \equiv  \mp nk/g \ (\bmod\ \mnl)$. Denote by  $\mathcal{U}^{(\mathbf{r})}(h,k)_{>,\, \circ}$ the sum $\mathcal{U}^{(\mathbf{r})}(h, k)$ but with this extra restriction. By orthogonality, the sum over $\psi$ is identical to $-1$. Thus, we have
	\begin{align}
	 \mathop{\sum \sum}_{h,k\le (TQ)^{\theta}} \, \frac{\lambda_{h} \overline{\lambda_{k}}}{\sqrt{hk}} \ \mathcal{U}^{(\mathbf{r})}(h,k)_{>, \, \circ}  \  \preceq \  &TQ \,  \mathop{\sum \sum}_{h,k\le (TQ)^{\theta}} \  \mathop{\sum \sum \sum\sum}_{\substack{r\mid \mfq\\ a\ell  \ll \,  \mathfrak{C}(TQ)^{20} \\ re\gg (TQ)^{400}}} \ \frac{1}{e} \frac{(rea\ell)^{\epsilon}}{rea\ell}  \Big(\sum_{m \ll (TQ)^{\mfr/2+\theta}\mathfrak{C}} \,  \frac{|\lambda_{\Pi}(m)|}{\sqrt{m}}\Big)^2\nonumber\\
        \ \preceq \ &\mathfrak{C}(TQ)^{-100}.\nonumber
		\end{align}

\noindent \textbf{Case 2:}  $mh/g  \equiv  \mp nk/g \ (\bmod\ \mnl)$. The sum over $\psi$ is $\phi(\mnl)- 1$. As $mh\neq nk$ and $g\ge 1$, observe that $|mh\pm nk|/g$ must be a \emph{positive} multiple of $\mnl$, and thus, we have  $\mnl  < 2 \cdot \max\{mh,nk\}$. There is no term with $\mnl \gg    (TQ)^{1000}\mathfrak{C}$  because of (\ref{baltrunc}). This proves Lemma \ref{Urtrunc}. 
\end{proof}

\begin{rem}
The factors $(AEL)^{\epsilon}$ will appear frequently. All such factors can be conveniently absorbed as $\preceq 1$ because of Lemma \ref{Urtrunc}.
\end{rem}


\subsection{Separating the sums over $m,n,h,k$}\label{sepmovar}
The sums over $m,n,h,k$ in  $\mathcal{U}^{(\mathbf{r})}(h,k)$ are entangled by the seemingly simple condition  $g  =  (mh, nk)$. To disentangle them, we introduce auxiliary variables $g_{1}, g_{2}, g_{3}, g_{4}$ define by
\begin{align}\label{auxvar}
g_{1} \, := \, (h,k), \hspace{10pt} g_{2} \, := \, (m,n),   \hspace{10pt} g_{3} \, := \,  (m/g_{2}, k/g_{1}), \hspace{10pt}  g_{4} \, := \,  (n/g_{2}, h/g_{1}),
\end{align}
along with several extra conditions as in  \cite[p. 162]{CIS19},  in addition to $r, a,e,c, \ell$ previously introduced in divisor switching. We make the substitutions\footnote{ In the rest of the paper, the variables $M$, $N$ should be understood independently of their usage in Sections \ref{absDiv}--\ref{U0sumsect}.}
\begin{align}\label{substi}
h \, = \,  g_{1}g_{4}H, \hspace{10pt} k \, = \,  g_{1}g_{3}K, \hspace{10pt} m \, =\,  g_{2}g_{3}M, \hspace{10pt} n \, = \,  g_{2}g_{4}N, \hspace{10pt} g \, =\, g_{1}g_{2}g_{3}g_{4},
\end{align}
under which the sums over $M,N,H,K$ are subject to the coprimality constraints:
\begin{align}\label{coprimcond}
	\begin{cases}
	(M, N) \, = \,  (H, K) \, = \,  (M, K) \, = \, (N, H) \, =  \, 1,\\
	 (H, g_{3}) \, =  \, (K, g_{4}) \, =  \,  (M, g_{4}) \, = \, (N, g_{3}) \, = \, 1, \\
	 \hspace{50pt} (MNHK, rcea\ell) \, = \, 1;
	 \end{cases}
	\end{align}
and
\begin{align}\label{rescond}
	MH  \, \neq \,  NK.
\end{align}
Moreover, the sums over $g_{1}, g_{2}, g_{3}, g_{4}$ are constrained by the conditions:
\begin{align}\label{g-coprdiv}
	\begin{cases}
	\hspace{20pt}  (g_{3}, g_{4}) \, =  \, 1, \\
	(ec, g_{1}g_{2}g_{3}g_{4}) \, =  \, 1, \\
	\hspace{20pt} a \mid g_{1}g_{2}g_{3}g_{4}. 
	\end{cases}
	\end{align}
The above conditions may be read off from those appearing in (\ref{Ursummod}) involving $m,n,h,k$.

We trivially bound the sums over $r$, $c$, $e$, $a$, $\ell$ of (\ref{Ursummod}).
We then localize the $a, e, \ell$-sums to dyadic ranges  $a\sim A, \, e\sim E, \, \ell\sim L$ (incurring a harmless factor $(AEL)^{\epsilon} \preceq 1$) according to Lemma \ref{Urtrunc}:
\begin{align}\label{dyadicaelsum}
     \bigg| \,  \mathop{\sum}_{h, k \ge 1} \ \frac{\lambda_{h}\overline{\lambda_{k}}}{\sqrt{hk}}  \ \mathcal{U}^{(\mathbf{r})}(h, k)\bigg| 
 \ \preceq \   &Q \,   \max_{\substack{AL 
	\, \ll \,  \frac{C}{Q}(TQ)^{\mfr/2+\theta}\mathfrak{C}\\ AEL \ll \mathfrak{C} (TQ)^{1000}  }} \, \frac{1}{AE^2L} \, \sum_{\pm}\,
 \sum_{c\le C }   \,  \frac{1}{c} \nonumber\\
 \, &\hspace{60pt}\cdot \ \sum_{ AEL\ll \mnl\ll \mfq AEL}  \mathop{\sum}_{\substack{rea\ell=\mnl \\ a\sim A \\ e\sim E\\ r\mid \mfq}} \,   \sum_{\substack{\psi \, (\mnl) \\ \psi\neq \psi_{0}}}  \, \mu^2(a)\left|\, \mathcal{U}^{\pm}(c; a, e, \ell; \psi)\right|, 
\end{align}
where
\begin{align}\label{sepsum}
\mathcal{U}^{\pm}(c; a, e, \ell; \psi) \, := \,  \sideset{}{^*}{\sum}_{\substack{g_{1}, g_{2}, g_{3}, g_{4} \\ M, N, H, K}} \  \frac{\lambda_{g_{1}g_{4}H} \overline{\lambda_{g_{1}g_{3} K}}}{\sqrt{g_{1}g_{3} g_{1}g_{4}HK}} &  \frac{\lambda_{\Pi_{1}}(g_{2}g_{3}M) \lambda_{\Pi_{2}}(g_{2}g_{4}N) \psi(MH) \overline{\psi}(\mp NK)}{(g_{2}g_{3}M)^{\frac{1}{2}+\alpha} (g_{2}g_{4}N)^{\frac{1}{2}+\beta}}  \nonumber\\
& \hspace{-20pt}\cdot \  \mathcal{W}\Big(\Big|\frac{cMH}{Q\ell}\pm  \frac{cNK}{Q\ell}\Big|; \, \frac{NK}{MH}, \, \frac{g_{2}g_{3} g_{2}g_{4}MN }{\mfq Q^\mfr}  \Big),
\end{align}
and the sums are constrained by  (\ref{coprimcond}),  (\ref{rescond}) and (\ref{g-coprdiv}).


\subsection{Attaching smooth weights}\label{sect: attachsmwe}

We shall apply Mellin inversion to all variables in $\mathcal{W}(\cdots)$ so as to separate the sums over $M, N, H, K$ (and a couple of others) in (\ref{melsepU}). To truncate the integrals in (\ref{mainredestUr}) using Lemma \ref{SOVmainlem}, it is essential to remember the sizes of variables after divisor switching. For this reason, we attach redundant smooth weights to (\ref{sepsum}) at this stage.  We let 
\begin{align}\label{keyXvalue}
    \mcX \, := \, \mcY\, \frac{C(TQ)^{\mfr/2+\theta}}{QL}\mathfrak{C}  \hspace{15pt} \text{with} \hspace{15pt} \mathcal{Y} \, \gg \, (TQ)^{10\epsilon},
\end{align}
 and let $\Psi_{\mcX}$ be a smooth function on $(0, \infty)$ such that 
\begin{align}\label{func: redund}
    0\le \Psi_{\mcX}\le 1; \hspace{10pt} \Psi_{\mcX} \equiv 1 \ \text{ on } (0, \mcX];  \hspace{10pt} \Psi_{\mcX} \, \equiv\,  0  \text{ on } [\mcX+U, \infty); \hspace{10pt} \Psi_{\mcX}^{(j)} \ll_{j} U^{-j} \ \text{ for any } j\ge 0.
\end{align}
  Here, $U$, $\mcY$ are parameters to be specified in Lemma \ref{SOVmainlem} ($U\asymp \mathcal{X}$) and Proposition \ref{cleanupUbd} ($\mathcal{Y}=(TQ)^{100\epsilon}$, say).  In particular, we have $\mcX \gg A\mcY \succeq 1$. Also,  
\begin{align}\label{newtrans}
\mathfrak{W}_{\mcX}^{\pm}(x,y; v,u) \, := \, 	|x\pm y| W(|x\pm y|)   \Psi_{\mcX}(x)\Psi_{\mcX}(y) H_{\alpha, \beta}(v,u|x\pm y|^{-\mfr}). 
\end{align}

The sums over $M, N$ in  (\ref{sepsum}) are effectively truncated to: 
\begin{align}\label{effecrangMNHK}
	HM \,  \le \, (TQ)^{\frac{\mfr}{2}+\theta} \mathfrak{C} (TQ)^{\epsilon}  \hspace{10pt}  \text{ and }  \hspace{10pt} NK  \, \le \, 2(TQ)^{\frac{\mfr}{2}+\theta}\mathfrak{C} (TQ)^{\epsilon}. 
\end{align}
In this range, observe that
\begin{align}
    \frac{cMH}{Q\ell}, \hspace{5pt}  \frac{cNK}{Q\ell} \,\le \, 2 (\mcX/\mcY)  (TQ)^{\epsilon} \, < \, \mcX,   \nonumber
\end{align}
and hence, 
\begin{align}\label{compweighfunc}
  \mathcal{W}\Big(\Big|\frac{cMH}{Q\ell}\pm  \frac{cNK}{Q\ell}\Big|;  \, \frac{NK}{MH}, \, \frac{g_{2}g_{3} g_{2}g_{4}MN }{\mfq Q^\mfr}   \Big) 
  \, = \,   \mathfrak{W}_{\mcX}^{\pm}\Big(\frac{cMH}{Q\ell}, \frac{cNK}{Q\ell}; \frac{NK}{MH}, \frac{g_{2}g_{3} g_{2}g_{4}MN }{\mfq Q^\mfr} \Big). 
\end{align}
Outside the range (\ref{effecrangMNHK}), both sides of (\ref{compweighfunc}) are arbitrarily small, i.e., $ \preceq_{A}  \mathfrak{C}(TQ)^{-A}$. As a result, up to an arbitrarily small error,  we have
\begin{align}\label{Usumcealaftersmooth}
\mathcal{U}^{\pm}(c; a, e, \ell; \psi) \, =  \,  \sideset{}{^*}{\sum}_{\substack{g_{1}, g_{2}, g_{3}, g_{4} \\ M, N, H, K}} \  \frac{\lambda_{g_{1}g_{4}H} \overline{\lambda_{g_{1}g_{3} K}}}{\sqrt{g_{1}g_{3} g_{1}g_{4}HK}} &  \frac{\lambda_{\Pi_{1}}(g_{2}g_{3}M) \lambda_{\Pi_{2}}(g_{2}g_{4}N) \psi(MH) \overline{\psi}(\mp NK)}{(g_{2}g_{3}M)^{\frac{1}{2}+\alpha} (g_{2}g_{4}N)^{\frac{1}{2}+\beta}}  \nonumber\\
& \hspace{-20pt}\cdot \, \mathfrak{W}_{\mcX}^{\pm}\Big(\frac{cMH}{Q\ell}, \frac{cNK}{Q\ell}; \frac{NK}{MH},  \frac{g_{2}g_{3} g_{2}g_{4}MN }{\mfq Q^\mfr}  \Big). 
\end{align}

Let $ \mathcal{U}^{\pm}(c; a, e, \ell; \psi)_{\circ}$ denote the sum of (\ref{Usumcealaftersmooth}) with the condition $MH\neq NK$ omitted, and let $ \mathcal{U}^{\pm}(c; a, e, \ell)_{\mathbf{d}}$ be the corresponding sum but with $MH=NK$. In particular,
\begin{align*}
    \mathcal{U}^{\pm}(c; a, e, \ell; \psi) \, = \,  \mathcal{U}^{\pm}(c; a, e, \ell; \psi)_{\circ} \, - \,  \mathcal{U}^{\pm}(c; a, e, \ell)_{\mathbf{d}}.
\end{align*}
The sum $\mathcal{U}^{\pm}(c; a, e, \ell)_{\mathbf{d}}$ is indeed independence of $\psi$. Suppose that $MH=NK$. The mutual coprimality of (\ref{coprimcond}) implies that $M=N=H=K=1$, and hence,
\begin{align}\label{Ur: diagcomple}
    \mathcal{U}^{\pm}(c; a, e, \ell)_{\mathbf{d}} =   \sum_{\substack{g_{1}, g_{2}, g_{3}, g_{4}: \\ (\ref{g-coprdiv})\  \text{holds}}}   \frac{\lambda_{g_{1}g_{4}} \overline{\lambda_{g_{1}g_{3} }}}{\sqrt{g_{1}g_{3} g_{1}g_{4}}} &  \frac{\lambda_{\Pi_{1}}(g_{2}g_{3}) \lambda_{\Pi_{2}}(g_{2}g_{4})}{(g_{2}g_{3})^{\frac{1}{2}+\alpha} (g_{2}g_{4})^{\frac{1}{2}+\beta}} \, \mathfrak{W}_{\mcX}^{\pm}\Big(\frac{c}{Q\ell}, \frac{c}{Q\ell}; 1,  \frac{g_{2}g_{3} g_{2}g_{4} }{\mfq Q^\mfr}  \Big).
\end{align}


\subsection{Triple Mellin inversion}\label{tripMellUsum}
To simplify the notation in the discussions below, we set:
\begin{align}\label{mamlu12}
 \mathfrak{a}:= rcea, \hspace{15pt} &\mnl \, := \, rea\ell, \hspace{15pt}  \mathfrak{w} \, := \, 1+s_{1}+s_{2}+z,\nonumber\\   u_{1}:= 1/2+\alpha+it+s_{1}+z,  \hspace{15pt} &u_{2} :=   1/2+\beta-it+s_{2}+z, \hspace{15pt} \mz := 1+ \alpha+\beta+2z.
\end{align}
Moreover, we define
\begin{align}\label{2-varMelint}
    \widetilde{\mathcal{V}}_{\mcX}^{^\pm}(s_{1},s_{2}; z)  \, := \,   \int_{0}^{\infty}  	\int_{0}^{\infty}  \,   |x\pm y|^{\mfr z+1} W(|x\pm y|)  \Psi_{\mcX}(x)\Psi_{\mcX}(y) \,   x^{s_{1}-1} y^{s_{2}-1}  \, dx \, dy.
 \end{align}
We will study this double integral in greater detail in Section \ref{sect: eleintrans}. In (\ref{newtrans}), we open up $H_{\alpha, \beta}(\cdots)$ using (\ref{doublecut}), apply a double Mellin inversion in the variables $x,y$, and make the change of variables $u\to u|x\pm y|^\mfr$, it follows, for  $\sigma_{1}, \sigma_{2}, \sigma_{z} \gg 1$, that $\mathcal{U}^{\pm}(c; a, e, \ell; \psi)_{\circ} $ is equal to
 \begin{align}
    & \psi(\mp 1)\int_{\mathbb{R}}  \int_{(\sigma_{1})} \int_{(\sigma_{2})} \int_{(\sigma_{z})}   \eta(t) \widetilde{V}_{\alpha,\beta}(z;it)\widetilde{\mathcal{V}}^{^\pm}_{\mcX}(s_{1},s_{2}; z) \Big( \frac{c}{Q\ell}\Big)^{-s_{1}-s_{2}} (\mfq Q^\mfr)^z\nonumber\\
	&\hspace{10pt}  \cdot \sum_{\substack{g_{1}, g_{2}, g_{3}, g_{4}: \\ (\ref{g-coprdiv})\  \text{holds}}  } \  \frac{1}{g_{1} g_{2} g_{3} g_{4}  (g_{2}g_{3})^{\alpha+z} (g_{2}g_{4})^{\beta+z}} \, 
    \mathop{\sum_{(H, \,g_{3}\ml)=1} \sum_{(K, \,g_{4}\ml)=1}}_{(H, K)=1} \frac{\lambda_{g_{1}g_{4}H} \overline{\lambda_{g_{1}g_{3}K}} \, \psi(H) \overline{\psi}(K)}{H^{\frac{1}{2}+s_{1}+it}K^{\frac{1}{2}+s_{2}-it}}\nonumber\\
	&\hspace{30pt} \cdot  \mathop{\sum_{(M, \, g_{4}K\ml)=1} \sum_{(N, \, g_{3}H\ml)=1}}_{(M,N)=1} \, \frac{\lambda_{\Pi_{1}}(g_{2}g_{3}M) \psi(M)}{M^{u_{1}}} \frac{\lambda_{\Pi_{2}}(g_{2}g_{4}N) \overline{\psi}(N)  }{N^{u_{2}}} \,  \frac{dz}{2\pi i} \, \frac{ds_{2}}{2\pi i} \,  \frac{ds_{1}}{2\pi i} \, dt.    \label{melsepU}
 \end{align}
We will use ``$\iiiint$'' for the integral signs when the choices of contours are obvious from the context.


\subsection{An elementary integral}\label{sect: eleintrans}

The following is a routine estimation of Mellin integrals, which refines \cite[Lemma 5]{CIS19} and \cite[Lemma 5]{CIS12}.

\begin{lem}\label{SOVmainlem}
Suppose that	$\re s_{1}= \re s_{2}= \re z= \epsilon$. For any $k_{1}, k_{2}\ge 1$, we have 
\begin{align}\label{desiretransbd}
 \widetilde{\mathcal{V}}_{\mcX}^{^\pm}(s_{1},s_{2}; z) \ \ll_{\epsilon,\, k_{1},\, k_{2}} \  \frac{\mcX^{k_{1}-1+2\epsilon} }{ \max\{ |s_{1}|, |s_{2}|\}^{k_{1}} |s_{1}+s_{2}|^{k_{2}}} |z|^{k_{1}+k_{2}}.
	\end{align}
\end{lem}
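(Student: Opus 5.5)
Write $F(x,y):=|x\pm y|^{\mfr z+1}W(|x\pm y|)\Psi_{\mcX}(x)\Psi_{\mcX}(y)$. The plan is to integrate by parts in a pair of coordinates adapted to the two kinds of oscillation, $x^{s_1}y^{s_2}$ versus $(x\pm y)$, so as to extract decay in both $|s_1+s_2|$ and $\max\{|s_1|,|s_2|\}$.

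\emph{Step 1 (decay in $|s_1+s_2|$).} Change variables to $x=\rho\xi$, $y=\rho(1-\xi)$ in the ``$+$'' case, and the analogous parametrization $x=\rho\xi$, $y=\rho(\xi-1)$ in the ``$-$'' case. Here $\rho=|x\pm y|\in[1,2]$ by the support of $W$. The integral becomes
\begin{align*}
\int\int \rho^{s_1+s_2-1}\,\rho^{\mfr z+1}W(\rho)\,\Psi_{\mcX}(\rho\xi)\Psi_{\mcX}(\rho|1\mp\xi|)\,\xi^{s_1-1}|1\mp\xi|^{s_2-1}\,d\rho\,d\xi .
\end{align*}
Integrate by parts $k_2$ times in $\rho$, using $\rho^{s_1+s_2-1}=\partial_\rho(\rho^{s_1+s_2})/(s_1+s_2)$. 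Each $\rho$-derivative falls on one of three kinds of factor:
\begin{itemize}
\item on $W(\rho)$, which is harmless;
\item on $\rho^{\mfr z+1}$, costing a factor $|z|$;
\item on $\Psi_{\mcX}(\rho\xi)$, giving $\xi\Psi'_{\mcX}\ll \xi/U\ll 1$, since $U\asymp\mcX$ and $\rho\xi\le\mcX+U$ on the support.
\end{itemize}
So each integration by parts gains $|s_1+s_2|^{-1}$ at the cost of at most $|z|$ (recall $|z|\gg\epsilon$).

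\emph{Step 2 (decay in $\max\{|s_1|,|s_2|\}$).} By symmetry assume $|s_1|\ge|s_2|$. Now integrate by parts $k_1$ times in $\xi$ against $\xi^{s_1-1}$. The main difficulty is the companion factor $|1\mp\xi|^{s_2-1}$, whose derivatives produce $s_2/(1\mp\xi)$; this is where the singularity of the integrand at $x=0$ or $y=0$ must be controlled.

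To handle it, split $\xi$ smoothly into three regions: $\xi$ near $0$, $\xi$ near $\pm1$ (where $y$ is small), and the bulk $\xi\asymp\mcX$. Substituting back, this amounts to working with $x\sim X$, $y\sim Y$ with dyadic sizes $X,Y\ll\mcX$ and $|X-Y|\ll1$ in the ``$-$'' case.
\begin{itemize}
\item Where $x$ and $y$ are both $\asymp\mcX$: each $x$-derivative on the amplitude costs $\ll |s_1|/\mcX + |z|$, giving a bound $\ll \mcX^{2\epsilon}\cdot\mcX^{-1}\cdot(\mcX/|s_1|)^{k_1}\cdot|z|^{k_1}$ times the Step 1 factor. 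This yields the $\mcX^{k_1-1+2\epsilon}$. Note that the $\xi$-measure is $\ll 1/\rho$ but $\xi^{\re s_1-1}\approx\mcX^{-1+\epsilon}$.
\item Near the endpoints $x$ or $y\ll 1$: use the Taylor behavior directly (Mellin transforms of $y^{s-1}\times$smooth), together with the $\epsilon$-room in $\re s_i$, to get the same bound.
\end{itemize}

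The main obstacle is exactly this bookkeeping in Step 2: keeping the $\xi$-integrations by parts from losing powers of $|s_2|$ beyond $|s_1|$, and keeping the $\mcX$-powers sharp at $k_1-1$. I would first try doing the $x$-integration by parts in the original coordinates with $y$ held fixed, and then performing the Step 1 $\rho$-integration by parts inside, so that derivatives landing on $|x\pm y|^{\mfr z+1}W$ cost only $|z|$. The final exponent count then combines to $|z|^{k_1+k_2}$.
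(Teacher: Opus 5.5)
Your Step 1 is sound, and your $(\rho,\xi)$ coordinates suit it well: $\rho=|x\pm y|\in[1,2]$ kills the boundary terms, and every $\rho$-derivative costs at most $|z|$ or $O(1)$. The gap is Step 2 as you formulate it. Integrating by parts in $\xi$ against $\xi^{s_1-1}$ uses the wrong vector field, and it fails in exactly the two regions you worry about.

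\emph{Near $\xi=\pm1$.} Here $y\to0$, and the amplitude $W_z(\rho)\Psi_{\mcX}(\rho\xi)\Psi_{\mcX}(\rho|1\mp\xi|)$ does not vanish. So the boundary terms have size $|1\mp\xi|^{\epsilon-1}\to\infty$, and after $j$ steps you face the non-integrable $|1\mp\xi|^{\epsilon-1-j}$. Your proposed endpoint remedy (Mellin transforms of $y^{s_2-1}$ times a smooth function) gives decay in $|s_2|$. Under $|s_1|\ge|s_2|$, however, it is decay in $|s_1|$ that you need.

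\emph{In the bulk $\xi\asymp\mcX$ of the ``$-$'' case.} Each step brings down $\frac{(s_2-1)\xi}{(s_1+j)(\xi-1)}\approx s_2/s_1$, which is $\asymp1$ when $|s_2|\asymp|s_1|$, so nothing is gained. Equivalently, the $\xi$-phase has derivative $\frac{(s_1+s_2)\xi-s_1}{\xi(\xi-1)}$, of size $|s_1+s_2|/\mcX+|s_1|/\mcX^2$ there rather than $|s_1|/\mcX$. Your bulk bullet in fact switches to $x$-derivatives, which is not the $\xi$-integration by parts you announced.

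The missing idea is the fallback in your last paragraph, and it is exactly what the paper does: integrate by parts $k_1$ times in $x$ \emph{at fixed $y$}. This works for three reasons:
\begin{itemize}
\item This vector field annihilates $y^{s_2-1}$. In your parametrizations it is $\partial_\rho+\frac{1-\xi}{\rho}\partial_\xi$, whose $\xi$-component vanishes exactly at the singularity.
\item The boundary term at $x=0$ vanishes because $\re s_1>0$.
\item The derivatives fall only on $W_z(|x\pm y|)$, costing $|z|$, and on $\Psi_{\mcX}(x)$, costing $U^{-1}$.
\end{itemize}
So, globally and with no region splitting, you get $|s_1|^{-k_1}$ times an integral against $x^{s_1+k_1-1}y^{s_2-1}$.

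The paper then substitutes $x=\omega$, $y=\omega\tau$ and integrates by parts $k_2$ times in $\omega$; the factor $\tau^{s_2-1}$ is constant along rays. It evaluates the remaining $\tau$-integral directly. The ``$+$'' case is $O(1)$. In the ``$-$'' case, the constraint $|1-\tau|\ge1/(\mcX+U)$, forced by $\omega\le\mcX+U$ and $\omega|1-\tau|\ge1$, produces $\mcX^{k_1-1+2\epsilon}$.

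Your $\rho$-integration by parts can replace the $\omega$-step. Afterwards you are left with $\int\xi^{k_1-1+\epsilon}|1\mp\xi|^{\epsilon-1}\,d\xi$ over $\xi\ll\mcX$. This is $O(1)$ in the ``$+$'' case and $\ll\mcX^{k_1-1+2\epsilon}$ in the ``$-$'' case. The same bound holds on the branch $x<y$, i.e.\ $y=\rho(1+\xi)$, which your parametrization $y=\rho(\xi-1)$ omits.
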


\begin{proof}
Let $W_{z}(x):= x^{\mfr z+1}W(x)$. Clearly, $\partial_{x}^{k}W_{z} \ll_{k} |z|^{k}$, the integrals (\ref{2-varMelint}) are supported on $|x\pm y|\asymp 1$, and we may assume $|s_{1}|\ge |s_{2}|$. Let $k_{1}, k_{2}$ be positive integers. In the following, all implied constants depend at most on $\epsilon, k_{1}, k_{2}$.

Integrating by parts $k_{1}$ times in the $x$-integral and apply  Leibniz's rule,  we have
\begin{align}
 |\widetilde{\mathcal{V}}_{\mcX}^{^\pm}(s_{1},s_{2}; z)| \, &\ll \, \frac{1}{|s_{1}|^{k_{1}}} \left|\int_{0}^{\infty}  \Psi_{\mcX}(y) y^{s_{2}-1}  \, \int_{0}^{\infty} x^{s_{1}-1} \, x^{k_{1}}\frac{d^{k_{1}}}{dx^{k_{1}}}  W_{z}(|x\pm y|)  \Psi_{\mcX}(x) \, dx \, dy\right| \nonumber\\
 \, &\ll \, \frac{1}{|s_{1}|^{k_{1}}} \sum_{\substack{b+c=k_{1} \\b,c\ge 0}} \, \binom{k_{1}}{b}   \,  \Big|\iint_{(0,\infty)^2}  \,  W_{z}^{(b)}(|x\pm y|) \Psi_{\mcX}^{(c)}(x) \Psi_{\mcX}(y)    x^{s_{1}+k_{1}-1} y^{s_{2}-1} \,     dx \, dy \Big|. \label{keydounint}
\end{align}

Making the changes of variables $x= \omega$ and $y=\omega \tau$, the double integral in (\ref{keydounint}) is equal to
\begin{align}
    \int_{0}^{\infty} \, \tau^{s_{2}-1}\int_{0}^{\infty} \,  W_{z}^{(b)}(\omega|1\pm \tau|) \Psi_{\mcX}^{(c)}(\omega) \Psi_{\mcX}(\omega \tau)    \omega^{s_{1}+s_{2}+k_{1}-1}  \, d\omega \, d\tau.
\end{align}
Now, integrating by parts $k_{2}$ times with respect to $\omega$, this double integral is 
\begin{align}\label{secINBdec}
  & \, \ll \,  \, |s_{1}+s_{2}|^{-k_{2}} \cdot \int_{0}^{\infty} \,  \tau^{\epsilon-1} \int_{0}^{\infty} \, \omega^{k_{1}+k_{2}+2\epsilon-1} \, \Big| \frac{d^{k_{2}}}{d\omega^{k_{2}}} \, \Psi_{\mcX}(\omega \tau)W_{z}^{(b)}(\omega|1\pm \tau|) \Psi_{\mcX}^{(c)}(\omega) \Big|   \, d\omega  \, d\tau. 
\end{align}
Again, by the Leibniz rule, we have
\begin{align}
    \frac{d^{k_{2}}}{d\omega^{k_{2}}} \, \Psi_{\mcX}(\omega \tau) W_{z}^{(b)}(\omega|1\pm \tau|) \Psi_{\mcX}^{(c)}(\omega) 
    \, &\ll \, \frac{|z|^{k_{1}+k_{2}}}{U^{c}} \big( \frac{1+\tau}{U}+ |1\pm \tau|\big)^{k_{2}}. \label{secLeib}
\end{align}
As a result, from (\ref{keydounint}), (\ref{secINBdec}) and (\ref{secLeib}), we have 
\begin{align}
 |\widetilde{\mathcal{V}}_{\mcX}^{^\pm}(s_{1},s_{2}; z)|  \ll  \  &  |s_{1}|^{-k_{1}}|s_{1}+s_{2}|^{-k_{2}} |z|^{k_{1}+k_{2}}\sum_{\substack{b+c=k_{1} \\ b,c\ge 0}} \, \binom{k_{1}}{b} \,  U^{-c}   \nonumber\\
    & \hspace{30pt}
    \cdot \iint_{\substack{\tau>0 \\ \omega < (\mcX+U) \min\{1, \tau^{-1}\} \\ 1\le \omega |1\pm \tau| \le 2}} \,  \Big( \frac{1+\tau}{U}+ |1\pm \tau|\Big)^{k_{2}} \, \frac{\omega^{k_{1}+k_{2}+2\epsilon-1} }{\tau^{1-\epsilon}}  \, d\omega \, d\tau\nonumber\\
    \, \ll \  &   |s_{1}|^{-k_{1}}|s_{1}+s_{2}|^{-k_{2}} |z|^{k_{1}+k_{2}}  \int_{\substack{\tau>0 \\ |1\pm \tau|^{-1} < (\mcX+U) \min\{1, \tau^{-1}\} }} \, \frac{ \big( \frac{1+\tau}{U}+ |1\pm \tau|\big)^{k_{2}}}{ \tau^{1-\epsilon} |1\pm \tau|^{k_{1}+k_{2}+2\epsilon-1}} \,    \, d\tau. \label{lastcritint}
\end{align}

Let's first handle the $(+)$-case. The domain of integration for (\ref{lastcritint}) is given by the union of $\tau>1$, $\tau(1+\tau)^{-1}< \mcX+U$;  and $0<\tau\le 1$, $(1+\tau)^{-1}<\mcX+U$. Since $\mcX\gg (TQ)^{\epsilon}$, this domain is simply $\tau>0$, and we readily have
\begin{align}
 |\widetilde{\mathcal{V}}_{\mcX}^{^+}(s_{1},s_{2}; z)|  
  \, \ll_{\epsilon,\, k_{1},\, k_{2}} \, & |s_{1}|^{-k_{1}}|s_{1}+s_{2}|^{-k_{2}} |z|^{k_{1}+k_{2}}. \nonumber
\end{align}

Let's consider the $(-)$-case. The part of the integration domain with  $\tau>1$ is
contained in $\tau > 1+1/(\mcX+U)$. Taking $U\asymp \mcX$ and using (\ref{geodyad}), the integral over such a domain is
\begin{align}
     &\ll \, \int_{\tau> 1/U} \,  (\tau+1)^{\epsilon-1} \tau^{-(k_{1}+2\epsilon-1)} \, d\tau + U^{-k_{2}} \int_{1/U> \tau> 1/(\mcX+U)}  (\tau+1)^{\epsilon-1} \tau^{-(k_{1}+k_{2}+2\epsilon-1)}\, d\tau \nonumber\\
      \ &\le \, \sideset{}{^d}{\sum}_{\mathcal{T}>1/U} \, \mathcal{T}^{-(k_{1}-1+\epsilon)} \, + \,  U^{-k_{2}}\sideset{}{^d}{\sum}_{1/U>\mathcal{T}>1/(\mcX+U)} \, \mathcal{T}^{-(k_{1}+k_{2}+\epsilon-1)} \ \ll \ \mcX^{k_{1}-1+\epsilon}.  \nonumber
\end{align}

For the part of the integral in Display (\ref{lastcritint}) with $0<\tau\le 1$, the integration domain is $0<\tau< 1-1/(\mcX+U)$. Such an integral is 
\begin{align}
   \,  &\ll \,  \int_{1/U<\tau<1} \, (1-\tau)^{\epsilon-1}\tau^{-(k_{1}+2\epsilon-1)}   \, d\tau \, + \, U^{-k_{2}}\int_{1/(\mcX+U)<\tau<1/U} \, (1-\tau)^{\epsilon-1}\tau^{-(k_{1}+k_{2}+2\epsilon-1)}   \, d\tau \nonumber\\
   \, &\ll \, U^{k_{1}-1+2\epsilon} \int_{0}^{1} \, \tau^{\epsilon-1} \, d\tau \, + \, U^{-k_{2}} (\mcX+U)^{k_{1}+k_{2}+2\epsilon-1} \int_{0}^{1} \, \tau^{\epsilon-1} \, d\tau \, \ll \, \mcX^{k_{1}-1+2\epsilon}. \nonumber
\end{align}
 This completes the proof of this lemma. 
\end{proof}


\subsection{Dropping the off-diagonal restriction}\label{sect: dropoffdiagcon}

\begin{prop}\label{+vedrop}
Suppose that $C \ll Q^{1-\epsilon}$. Then the sum   $\mathcal{U}^{+}(c; a, e, \ell)_{\mathbf{d}}$ is identical to $0$.
\end{prop}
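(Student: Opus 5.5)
The idea is to look at the weight in (\ref{Ur: diagcomple}) and show it vanishes identically on the support of the summation. With $M=N=H=K=1$, the weight is
\[
\mathfrak{W}_{\mcX}^{+}\Big(\frac{c}{Q\ell}, \frac{c}{Q\ell}; 1, \frac{g_{2}g_{3}g_{2}g_{4}}{\mfq Q^{\mfr}}\Big).
\]
By (\ref{newtrans}) this contains the factor
\[
\Big|\frac{c}{Q\ell}+\frac{c}{Q\ell}\Big|\, W\Big(\frac{2c}{Q\ell}\Big).
\]
Since $W$ is supported on $[1,2]$ (Test functions, condition (\ref{dyadic})), a nonzero term requires $2c/(Q\ell)\ge 1$, that is, $\ell\le 2c/Q$.

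Next I would use the constraint $c\le C\ll Q^{1-\epsilon}$. For $Q$ large this gives $2c/Q\le 2C/Q<1$, so $\ell\le 2c/Q<1$. This contradicts $\ell\ge 1$, so $W(2c/(Q\ell))=0$ for every admissible $\ell$, $c$. Every summand in (\ref{Ur: diagcomple}) therefore vanishes, independent of the $g_i$, the coefficients $\lambda_h$ and $\lambda_{\Pi_i}$, and the factors $\Psi_{\mcX}$ and $H_{\alpha,\beta}$.

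The only point to check is that the diagonal reduction really forces $M=N=H=K=1$ in the $+$ case. Here $MH=NK$ together with the pairwise coprimality in (\ref{coprimcond}) gives $M\mid NK$ with $(M,N)=(M,K)=1$, so $M=1$, and symmetrically $N=H=K=1$. There is no real obstacle. The contrast with the $-$ case is worth noting: there $|x-y|=0$ for the diagonal, and $W(0)=0$ makes that term vanish for a different reason.
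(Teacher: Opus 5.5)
Your proposal is correct and is the paper's argument. On the diagonal, the coprimality conditions force $M=N=H=K=1$, so the weight contains the factor $W(2c/(Q\ell))$; this factor vanishes for every $\ell\ge 1$ because $2c/(Q\ell)\le 2C/Q\ll Q^{-\epsilon}<1$ while $W$ is supported on $[1,2]$.

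Your closing remark about the $-$ case is also correct. There the diagonal weight carries $|x-y|\,W(|x-y|)$, which is $0$ at $x=y$, so that term vanishes too; the paper does not use this and instead bounds that term in Proposition~\ref{Umindneg}.
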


\begin{proof}
Since $c/Q\ell \ll Q^{-\epsilon}$ and $W$ is supported on $[1,2]$, we have $ W(\frac{2c}{Q\ell})=0$. Thus, the weight function $ \mathfrak{W}_{\mcX}^{+}(\frac{c}{Q\ell}, \frac{c}{Q\ell}; 1,  (\frac{\pi}{Q})^\mfr g_{2}g_{3} g_{2}g_{4} )$ of (\ref{newtrans}) and (\ref{Ur: diagcomple}) are zero.
\end{proof}

Showing that the contribution of  $\mathcal{U}^{-}(c; a, e, \ell)_{\mathbf{d}}$ is of acceptable size is more involved. In the following, we will write $\sum_{S}^d := \sum_{S=2^k, \, k\in \Z}$ for the dyadic summation.

\begin{prop}\label{Umindneg}
Suppose that one of the following holds:
\begin{enumerate}
    \item $\lambda_{h}\ll_{\epsilon} h^{\epsilon}$; or

    \item both Condition $(\mathbf{\Lambda})$ (see (\ref{bdd: 24normabstract}))  and Hypothesis $(\mathbf{\Pi}^4)$ (see (\ref{fourthpowerestHec})) hold.
\end{enumerate}
Then the contribution of $\mathcal{U}^{-}(c; a, e, \ell)_{\mathbf{d}}$ to (\ref{dyadicaelsum}) is $ \preceq \ CT(TQ)^{\mfr/2+\theta}\mathfrak{C}.$
\end{prop}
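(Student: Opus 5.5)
We substitute (\ref{Ur: diagcomple}) with the minus sign into (\ref{dyadicaelsum}) and bound everything absolutely. The weight is
\begin{align*}
\mathfrak{W}_{\mcX}^{-}\Big(\frac{c}{Q\ell},\frac{c}{Q\ell};1,u\Big)=|x-y|W(|x-y|)\cdots\Big|_{x=y},
\end{align*}
which is formally $0$ because $W$ is supported on $[1,2]$. However, $\mathcal{U}^{-}(\cdots)_{\circ}$ was produced through the Mellin representation (\ref{melsepU}), and the weight there is only meaningful as an integral. So the honest object to bound is the Mellin-side diagonal: the contribution $M=N=H=K=1$ of (\ref{melsepU}). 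The plan is to keep the integral form, insert $M=N=H=K=1$, and bound it directly.

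\textbf{Step 1: the $\psi$-sum.} The diagonal term is independent of $\psi$, so the sum over $\psi\neq\psi_0$ contributes a factor $\phi(\mnl)-1\le \mnl$. Combined with the prefactor $Q/(AE^2L)$ and the sums over $r,e,a,\ell$ with $\mnl\asymp AEL$, this leaves a factor $\preceq Q\cdot L\cdot\frac{1}{E}$ after summing over $\ell\sim L$. Summing $c\le C$ with weight $1/c$ costs only $\log$.

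\textbf{Step 2: the integrals.} On $\re s_1=\re s_2=\re z=\epsilon$, Lemma \ref{SOVmainlem} gives $\widetilde{\mathcal{V}}^{-}_{\mcX}\ll \mcX^{k_1-1+2\epsilon}$ times decay in $s_1,s_2,s_1+s_2$. The factor $(c/Q\ell)^{-s_1-s_2}$ then has size $(Q\ell/c)^{2\epsilon}$, and $\widetilde V_{\alpha,\beta}(z;it)$ is bounded by (\ref{stdStirAFEcut}). Taking $k_1=1$ gives $\mcX^{2\epsilon}\preceq 1$, and the $t$-integral contributes $T$. So we expect the diagonal, per $(c,a,e,\ell)$, to be bounded by
\begin{align*}
T\sum_{g_1,\ldots,g_4}\frac{|\lambda_{g_1g_4}\lambda_{g_1g_3}|}{g_1\sqrt{g_3g_4}}\,\frac{|\lambda_{\Pi_1}(g_2g_3)\lambda_{\Pi_2}(g_2g_4)|}{g_2\sqrt{g_3g_4}}\,(g_2^2g_3g_4)^{-\epsilon},
\end{align*}
subject to $a\mid g_1g_2g_3g_4$, with effective truncations $g_1g_3,g_1g_4\le(TQ)^\theta$ and $g_2^2g_3g_4\preceq (TQ)^{\mfr}\mathfrak{C}^2$.

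\textbf{Step 3: the $g$-sums.} This is the main obstacle. Write $k_1=g_3$, $k_2=g_4$, put $v=g_1$, and pair $m=g_2g_3$, $n=g_2g_4$. The structure is then the same as in the proof of Proposition \ref{U2dropeq}: a weighted sum of $|\lambda_{\Pi_1}(m)\lambda_{\Pi_2}(n)|/\sqrt{mn}\cdot (m,n)/\sqrt{mn}$, times $\sum_v |\lambda_{vH}\lambda_{vK}|/v$.

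Under hypothesis (1), Lemma \ref{lem: simpleGCD} together with Lemma \ref{lem: ROA} bounds this by $\preceq \sum_{m\ll (TQ)^{\mfr/2+\theta}\mathfrak{C}}|\lambda_{\Pi}(m)|^2/\sqrt{m}\cdot(\ldots)$. I expect a bound $\preceq (TQ)^{\mfr/2+\theta}\mathfrak{C}$ after using positivity and Cauchy: the $g_2$-sum over $g_2\le (TQ)^{\mfr/2}$ loses the square root.

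Under hypothesis (2), Lemma \ref{lem: Galsum} is used in place of Lemma \ref{lem: simpleGCD}, with $\ell^4$ norms from Hypothesis $(\mathbf{\Pi}^4)$ and Condition $(\mathbf{\Lambda})$.

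The divisibility $a\mid g_1g_2g_3g_4$ with $a\sim A$ gains a factor $A^{-1+\epsilon}$, which compensates the $A$ in $\phi(\mnl)$. The remaining factor is $QL\cdot T\cdot (TQ)^{\mfr/2+\theta}\mathfrak{C}/Q\cdot(\text{from }L\ll C(TQ)^{\mfr/2+\theta}/(QA))$. This must be balanced carefully: the cleanest route is to note that $\ell$ ranges up to $L\ll \frac{C}{QA}(TQ)^{\mfr/2+\theta}\mathfrak{C}$, with each $\ell$ carrying weight $1/\ell$ (not $L$). That yields the total $Q\cdot\frac{C}{Q}\cdot T\cdot(TQ)^{\mfr/2+\theta}\mathfrak{C}=CT(TQ)^{\mfr/2+\theta}\mathfrak{C}$, as claimed. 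The delicate point is confirming that the normalisation $1/(AE^2L)$ combined with $\phi(\mnl)$ indeed leaves $\sum_{\ell\ll L}1\cdot Q/L$-type counting that produces exactly $C(TQ)^{\mfr/2+\theta}$ rather than an extra power. I would verify this by redoing the count with the exact $1/\phi(\mnl)$ factor from (\ref{Ursummod}) instead of the dyadic maximum.
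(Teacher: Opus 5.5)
Your initial observation that the diagonal weight vanishes is correct, but you abandon it. The bounding argument you pursue instead does not close: Step 3 and the final count are incompatible.

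\textbf{Where the count fails.} The diagonal term does not depend on $\psi$, so the $\psi$-sum gives $\phi(\mnl)-1$. This cancels the $1/\phi(\mnl)$ in (\ref{Ursummod}), and every $(r,e,a,\ell)$ then carries weight $\ll Q/(ce)$, independently of $\ell$. Hence the contribution to (\ref{dyadicaelsum}) is $\preceq QAL\cdot\max|\mathcal{U}^{-}(c;a,e,\ell)_{\mathbf{d}}|$. The factor $\frac{C}{Q}(TQ)^{\mfr/2+\theta}\mathfrak{C}$ comes solely from the constraint $AL\ll \frac{C}{Q}(TQ)^{\mfr/2+\theta}\mathfrak{C}$.

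So the proposition amounts to the uniform bound $|\mathcal{U}^{-}(c;a,e,\ell)_{\mathbf{d}}|\preceq T$, i.e.\ your $g_1,\dots,g_4$-sum must be $\preceq 1$. You predict $(TQ)^{\mfr/2+\theta}\mathfrak{C}$ for it, which overshoots by exactly the factor you need. The repairs you suggest do not work:
\begin{itemize}
\item ``Each $\ell$ carries weight $1/\ell$'' is not what the sum looks like.
\item The saving $A^{-1+\epsilon}$ from $a\mid g_1g_2g_3g_4$ is not justified for arbitrary coefficients, since the $g_2$-Euler product only yields $a_0^{-\mz}$ with $a_0=a/(a,g_1g_3g_4)$. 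It is also not needed.
\end{itemize}
The loss itself is spurious. By your own Step 2 display the $g_2$-weight is $g_2^{-1-2\epsilon}$, not $g_2^{-1/2}$. The $g_2$-sum therefore converges absolutely by Rankin--Selberg, and no truncation of $g_2$ is needed. (A minor point: $k_1=1$ alone does not make the $(s_1,s_2)$-integral absolutely convergent. You need $k_1=2$ when $\max|s_i|>\mcX$.)

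\textbf{How to close it.} Your Step 2 display is in fact one Cauchy--Schwarz away from the needed bound. Pair $\lambda_{g_1g_4}$ with $\lambda_{\Pi_1}(g_2g_3)$, and $\lambda_{g_1g_3}$ with $\lambda_{\Pi_2}(g_2g_4)$, splitting the weight $g_1^{-1}g_2^{-1-2\epsilon}(g_3g_4)^{-1-\epsilon}$ symmetrically. Each factor then separates, for example
\[
\sum_{g_1,g_4}\frac{|\lambda_{g_1g_4}|^2}{g_1g_4}\sum_{g_2,g_3}\frac{|\lambda_{\Pi_1}(g_2g_3)|^2}{(g_2g_3)^{1+\epsilon}}\ \le\ \sum_{h}\frac{\tau(h)|\lambda_h|^2}{h}\,\sum_{m}\frac{\tau(m)|\lambda_{\Pi_1}(m)|^2}{m^{1+\epsilon}}\ \preceq\ 1 .
\]
This holds by Condition $(\mathbf{\Lambda})$ (or $\lambda_h\ll h^{\epsilon}$) and Lemma \ref{naivconvRSDS}, and gives $\preceq T$ per term.

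The paper reaches the same per-term bound by a heavier route. It evaluates the signed $g_2$-sum as the Euler product (\ref{g2eprod}) and rewrites the shifted factors as $(\lambda_{\Pi_1}*\mathfrak{f})(g_3)$ and $(\lambda_{\Pi_2}*\mathfrak{f})(g_4)$ via Lemma \ref{lem: diffby1RSn}. It then sets $r_3=g_1g_3$, $r_4=g_1g_4$ and applies Lemma \ref{lem: Galsum}, the $\ell^4$ bounds and H\"older.

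\textbf{Your abandoned observation is already a proof.} The Mellin representation (\ref{melsepU}) converges absolutely. For each $z,t$, Mellin inversion identifies the $(s_1,s_2)$-integral of $\widetilde{\mathcal{V}}^{-}_{\mcX}(s_1,s_2;z)(c/Q\ell)^{-s_1-s_2}$ with the value of $|x-y|^{\mfr z+1}W(|x-y|)\Psi_{\mcX}(x)\Psi_{\mcX}(y)$ at $x=y=c/Q\ell$. That value is $0$ because $W$ vanishes on $[0,1)$. Hence $\mathcal{U}^{-}(c;a,e,\ell)_{\mathbf{d}}=0$ exactly, just as for the $+$ sign in Proposition \ref{+vedrop}.
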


\begin{proof}
Applying Mellin inversion to (\ref{Ur: diagcomple}), followed by shifting the lines of integration for the $s_{1}$, $s_{2}$-integral to $\re s_{1}=\re s_{2}=\epsilon$, we have
\begin{align}
      \mathcal{U}^{-}(c; a, e, \ell)_{\mathbf{d}} \, = \, & \int_{\mathbb{R}}  \int_{(\epsilon)} \int_{(\epsilon)} \int_{(\sigma_{z})}   \eta(t)\, \widetilde{V}_{\alpha,\beta}(z;it)\widetilde{\mathcal{V}}^{^\pm}_{\mcX}(s_{1},s_{2}; z)  \Big( \frac{c}{Q\ell}\Big)^{-s_{1}-s_{2}}(\mfq Q^\mfr)^z \sum_{(g_{1}, ec)=1} \, \frac{1}{g_{1}}\nonumber\\
	&\hspace{-10pt}  \cdot \sum_{\substack{g_{3}, g_{4} \\  (g_{3}, g_{4})  =  1 \\ ( g_{3}g_{4}, ec) =   1 }  } \,  \frac{\lambda_{g_{1}g_{4}} \overline{\lambda_{g_{1}g_{3}}} }{   (g_{3})^{1+\alpha+z} (g_{4})^{1+\beta+z}}  \mathcal{L}_{a, g_{1}}^{(ec)}(\mz; \, \Pi_{1}, \Pi_{2};\, g_{3}, g_{4}) \, 
      \frac{dz}{2\pi i} \, \frac{ds_{2}}{2\pi i} \,  \frac{ds_{1}}{2\pi i} \, dt.  \label{melsepU_diag}
 \end{align}
 where
\begin{align}\label{dropdiagg2sum}
  \mathcal{L}_{a, g_{1}}^{(ec)}(\mz; \, \Pi_{1}, \Pi_{2};\, g_{3}, g_{4}) \ := \  \sum_{\substack{(g_{2},\, ec)=1 \\ a| g_{1}g_{2}g_{3}g_{4}}} \ \frac{\lambda_{\Pi_{1}}(g_{2}g_{3})\lambda_{\Pi_{2}}(g_{2}g_{4})}{(g_{2})^{\mz}} \hspace{25pt} (\mz \, := \,  1+ \alpha+\beta+2z).
\end{align}

Let
\begin{align}\label{gp0: divindic}
    g_{p}(a)  \, := \,    \max\{0, o_{p}(a)-o_{p}(g_{1}g_{3}g_{4})\} \hspace{20pt} \text{and} \hspace{20pt}  a_{0} \, := \,  \frac{a}{(a, g_{1}g_{3}g_{4})}.
\end{align}
As $a$ is \emph{square-free} (see (\ref{dyadicaelsum})), the quantity $g_{p}(a)$ must be either $0$ or $1$. In fact, $g_{p}(a)=1$ $\iff$ $o_{p}(a)=1$ and $p\nmid g_{1}g_{3}g_{4}$ $\iff$ $p\mid a_{0}$. By Lemma \ref{genEPexp}, we have 
\begin{align}
    \mathcal{L}_{a, g_{1}}^{(ec)}(\mz; \, \Pi_{1}, \Pi_{2};\, g_{3}, g_{4}) \, = \,  \mathbf{1}_{(ec,a)=1} \prod_{p\nmid ec} \, \sum_{g_{2}= g_{p}(a)}^{\infty}\, \frac{\lambda_{\Pi_{1}}(p^{g_{2}+o_{p}(g_{3})})\lambda_{\Pi_{2}}(p^{g_{2}+o_{p}(g_{4})})}{(p^{g_{2}})^{\mz}}. \nonumber
\end{align}
Now, we consider $p\mid a_{0}$ or not. When $p\mid a_{0}$, we automatically have $p\nmid ec$ as $(ec,a)=1$, and $o_{p}(g_{3}g_{4})=0$. Using also $(g_{3}, g_{4})=1$ and the fact that $(a_{0}, g_{1}g_{3}g_{4})=1$,  we have
\begin{align}\label{g2eprod}
   \mathcal{L}_{a, g_{1}}^{(ec)}(\mz; \, \Pi_{1}, \Pi_{2};\, g_{3}, g_{4}) \ = \ &\mathcal{L}^{( a_{0}ecg_{3}g_{4})}(\mz, \Pi_{1}\otimes \Pi_{2}) \prod_{\substack{ p\mid g_{3}}} \,  \mathcal{L}_{p}(\mz;\,  \Pi_{1},   \Pi_{2}; \, o_{p}(g_{3}),  0 ) \nonumber\\
   &\hspace{30pt}\cdot \prod_{\substack{ p\mid g_{4}}} \, \mathcal{L}_{p}(\mz;\,  \Pi_{2},   \Pi_{1}; 0, o_{p}(g_{4}))\cdot  \frac{1}{(a_{0})^{\mz}} \prod_{p\mid a_{0}} \, \mathcal{L}_{p}(\mz;\,  \Pi_{1},   \Pi_{2}; 1, 1), 
\end{align}
where the Euler factors are defined in (\ref{naivelocalRSfac}) and (\ref{dshiftnaiveDsdef}).

The absolute convergence of (\ref{dropdiagg2sum}) on $\re \mz >1$ follows from Lemma \ref{naivconvRSDS}.  Notice that the sums over $g_{1}, g_{3}, g_{4}$ are finite. As a consequence, we can shift the line of integration of the $z$-integral in (\ref{melsepU_diag}) to $\re z=\epsilon$. By the triangle inequality and the bound (\ref{stdStirAFEcut}), we have 
\begin{align}
     | \, \mathcal{U}^{-}(c; a, e, \ell)_{\mathbf{d}}| \  \preceq \  & T   \max_{\substack{\re z =\epsilon\\ |\im z|\le (TQ)^{\epsilon}}}  \,  \mathop{\sum \sum\sum}_{\substack{(g_{3}, g_{4})=1\\ (g_{1}g_{3}g_{4}, ec)=1}} \,  \frac{|\lambda_{g_{1}g_{4}} \lambda_{g_{1}g_{3}}| }{   g_{1}g_{3}g_{4}} | \mathcal{L}_{a, g_{1}}^{(ec)}(\mz; \, \Pi_{1}, \Pi_{2};\, g_{3}, g_{4})| \,  \iint_{\substack{\re s_{1}=\epsilon\\ \re s_{2}=\epsilon}}  |\widetilde{\mathcal{V}}^{^\pm}_{\mcX}(s_{1},s_{2}; z)|. \nonumber
 \end{align}
Applying Lemma \ref{SOVmainlem} with $k_{2}$ sufficiently large, observe that the last double integral can be truncated to $|s_{1}+s_{2}|\le (TQ)^{\epsilon}$. With this, the double integral over $\max\{ 1+|s_{1}|, 1+|s_{2}|\} \sim S$ is $\preceq S$. By a dyadic decomposition (with (\ref{geodyad})), it follows that
\begin{align}
     \iint_{(\epsilon)}  |\widetilde{\mathcal{V}}^{^\pm}_{\mcX}(s_{1},s_{2}; z)| \, |ds_{1}||ds_{2}| \, \preceq \, \sideset{}{^d}{\sum}_{1\le S\le  \mcX}  \, 1 +  \mcX\sideset{}{^d}{\sum}_{S> \mcX} \frac{1 }{S } \, \preceq \, 1, 
\end{align}
and
\begin{align}
     | \, \mathcal{U}^{-}(c; a, e, \ell)_{\mathbf{d}}| \  \preceq \  & T   \max_{\substack{\re \mz =1+\epsilon\\ |\im \mz|\le (TQ)^{\epsilon}}}   \mathop{\sum \sum\sum}_{\substack{(g_{3}, g_{4})=1\\ (g_{1}g_{3}g_{4}, ec)=1}} \,  \frac{|\lambda_{g_{1}g_{4}} \lambda_{g_{1}g_{3}}| }{   g_{1}g_{3}g_{4}} | \mathcal{L}_{a, g_{1}}^{(ec)}(\mz; \, \Pi_{1}, \Pi_{2};\, g_{3}, g_{4})|.
 \end{align}
 
By the bounds (\ref{powerRStartfrom1}) and $\vartheta\le 1/2$, we have that
\begin{align}
     \frac{1}{(a_{0})^{\mz}} \prod_{p\mid a_{0}} \, \mathcal{L}_{p}(\mz;\,  \Pi_{1},   \Pi_{2}; 1, 1)   \ \ll \ (a_{0})^{-1+2\vartheta} \ \ll \ 1.
\end{align}
Let $\mathfrak{f}_{\mz; \,\Pi_{1}, \Pi_{2}}$ be the multiplicative function defined in (\ref{convshiftRS}). We now have
\begin{align}
     | \, \mathcal{U}^{-}(c; a, e, \ell)_{\mathbf{d}}| \  \preceq \  & T   \max_{\substack{\re \mz =1+\epsilon\\ |\im \mz|\le (TQ)^{\epsilon}}} \sum_{g_{1}} \, \sum_{g_{3}}  \, \sum_{g_{4}} \,  \frac{|\lambda_{g_{1}g_{4}} \lambda_{g_{1}g_{3}}| }{   g_{1}g_{3}g_{4}} | (\lambda_{\Pi_{1}}*\mathfrak{f}_{\mz; \,\Pi_{1}, \Pi_{2}})(g_{3})| | (\lambda_{\Pi_{2}}*\mathfrak{f}_{\mz; \,\Pi_{2}, \Pi_{1}})(g_{4})|.\nonumber
 \end{align}
 Let $r_{3}=g_{1}g_{3}$ and $r_{4}=g_{1}g_{4}$. By the triangle inequality and multiplicativity, it follows that
\begin{align}
   | \, \mathcal{U}^{-}(c; a, e, \ell)_{\mathbf{d}}| \preceq \  & T  \mathop{\sum \sum}_{r_{3}, r_{4}\le (TQ)^{\theta}} \, \frac{|\lambda_{r_{3}}\lambda_{r_{4}}|}{r_{3}r_{4}}\, \sum_{g_{1}\mid (r_{3}, r_{4})} \, g_{1} \, \sum_{g_{3}\mid r_{3}} \, |(\lambda_{\Pi_{1}}*\mathfrak{f}_{\mz; \,\Pi_{1}, \Pi_{2}})(g_{3})| \,  \sum_{g_{4}\mid r_{4}} \, |(\lambda_{\Pi_{2}}*\mathfrak{f}_{\mz; \,\Pi_{2}, \Pi_{1}})(g_{4})| \nonumber\\
    \ \preceq \ &   T \mathop{\sum \sum}_{r_{3}, r_{4}\le (TQ)^{\theta}}  \, \frac{|\lambda_{r_{3}}|(|\lambda_{\Pi_{1}}|*|\mathfrak{f}_{\mz; \,\Pi_{1}, \Pi_{2}}|*1)(r_{3})}{\sqrt{r_{3}}} \frac{|\lambda_{r_{4}}|(|\lambda_{\Pi_{2}}|*|\mathfrak{f}_{\mz; \,\Pi_{2}, \Pi_{1}}|*1)(r_{4})}{\sqrt{r_{4}}} \frac{(r_{3},r_{4})}{\sqrt{r_{3}r_{4}}}.\nonumber
\end{align}
The case $\lambda_{h}\ll_{\epsilon} h^{\epsilon}$ is much simpler. In the following, suppose that Condition $(\mathbf{\Lambda})$  and Hypothesis $(\mathbf{\Pi}^4)$ hold. By Lemma \ref{lem: Galsum}, we have the inequality
\begin{align}
      | \, \mathcal{U}^{-}(c; a, e, \ell)_{\mathbf{d}}| \, \preceq \,  T  \, \max_{\substack{\Pi\in \{\Pi_{1},\Pi_{2}\}\\\mathfrak{f} \, \in \,  \{\mathfrak{f}_{\mz; \,\Pi_{1}, \Pi_{2}}, \, \mathfrak{f}_{\mz; \,\Pi_{2},  \Pi_{1}}\}}}\sum_{r\le (TQ)^{\theta}} \, \frac{|\lambda_{r}|^2(|\lambda_{\Pi}|*|\mathfrak{f}|*1)(r)^2}{r}  \nonumber
\end{align}
 By Cauchy's inequality, observe that
\begin{align}
     \sum_{r\le (TQ)^{\theta}} \, \frac{|\lambda_{r}|^2(|\lambda_{\Pi}|*|\mathfrak{f}|*1)(r)^2}{r} \, \le \,  \Big( \sum_{r\le (TQ)^{\theta}} \, \frac{|\lambda_{r}|^4}{r}\Big)^{\frac{1}{2}} \Big( \sum_{r\le (TQ)^{\theta}} \, \frac{(|\lambda_{\Pi}|*|\mathfrak{f}|*1)(r)^4}{r}\Big)^{\frac{1}{2}}.\nonumber
\end{align}
The first factor on the right is $\preceq 1$. For the second factor, we use (\ref{convshbddarith}), $\tau(n)\ll_{\epsilon} n^{\epsilon}$, together with H\"older's inequality of exponents $(1/4,1/2, 1/2)$:\footnote{Note: with $\vartheta\le 5/14$, we have $1+4(1-3\vartheta)>0$.}
\begin{align}
    \sum_{r\le (TQ)^{\theta}} \, \frac{(|\lambda_{\Pi}|*|\mathfrak{f}|*1)(r)^4}{r} \ &\le \    \sum_{r\le (TQ)^{\theta}} \frac{1}{r}\Big(\sum_{a\mid r} |\lambda_{\Pi}(a)| \sum_{\substack{b'\mid (r/a)\\(b',\mfq)=1}} \, (b')^{-1+2\vartheta} \sum_{\substack{b_{0}\mid (r/a)\\b_{0}\mid \mfq^{\infty}}} b_{0}^{-1+3\vartheta}\Big)^4 \nonumber\\
    \, &\preceq \,   \sum_{r\le (TQ)^{\theta}} \frac{1}{r}\Big(\sum_{\substack{ab_{0}k=r\\ b_{0}\mid \mfq^{\infty}}} |\lambda_{\Pi}(a)|b_{0}^{-1+3\vartheta} \Big)^4 \nonumber\\
    &\preceq \ \sum_{\substack{a\le (TQ)^{\theta}}} \, \frac{|\lambda_{\Pi}(a)|^4}{a} \sum_{\substack{b_{0}\mid \mfq^{\infty}}} \, \frac{1}{b_{0}^{1+4(1-3\vartheta)}}\ \sum_{k\le (TQ)^{\theta}} \, \frac{1}{k} \, \preceq \, 1. \nonumber
\end{align}
 Therefore, we have established the bound $ | \, \mathcal{U}^{-}(c; a, e, \ell)_{\mathbf{d}}| \  \preceq \   T. $ Its contribution to (\ref{dyadicaelsum}) is
\begin{align*}
\preceq \    \max_{\substack{AL 
	\, \ll \,  \frac{C}{Q}(TQ)^{\mfr/2+\theta}\mathfrak{C} \\ AEL \ll \mathfrak{C} (TQ)^{1000}  }} \, \frac{Q}{AE^2L} \, (AEL)^2T \ \ll \ CT(TQ)^{\mfr/2+\theta}\mathfrak{C}. 
\end{align*}
This completes the proof of Proposition \ref{Umindneg}.
\end{proof}




\section{The $\mathcal{U}^{(\mathbf{r})}(h,k)$-sum: a second application of large sieve}\label{sect: boundUrsum}

This section contains several technicalities, but the final expression (\textbf{Lemma \ref{Trianclean}}) is quite clean.


The first obstacle is the coprimality in the sums over $H$ and $K$.  When applying any large sieve, one must ensure that the coefficients of the Dirichlet polynomial are independent of the harmonics (or the underlying family). In Lemma \ref{Gallager}, the $a_{n}$'s and $N$ must not depend of $q$, $\chi$ and $t$. In the current context of (\ref{melsepU}), no extra factor depending on $H$ (resp. $K$) and \emph{at least one of} $\psi$, $\ell$, $u_{1}$, $u_{2}$ should arise when removing the coprimality conditions: 
\begin{align*}
    (H,K) \, = \, 1, \hspace{20pt} (H, \ell) \, = \, (K,\ell)=1, \hspace{20pt} (M, K)\, = \, (N,H)\, = \, 1.
\end{align*}
These five coprimality conditions will be handled by M\"obius inversion (Section \ref{sect: MobinTwist}).  The final coprimality for the sums over $H, K$ are $(H, g_{3}\ma)=(K, g_{4}\ma)=1$ ($\ma:= rcea$), but there will be new sums, and added coprimality conditions for the sums over $M, N$. 

Moreover, unlike in previous works on the Asymptotic Large Sieve, the sums over $g_{2}, g_{3}, g_{4}$, together with several auxiliary sums, \emph{cannot} be estimated trivially and require more careful treatment.



\subsection{Successive M\"obius inversion}\label{sect: MobinTwist}
We apply M\"obius inversion 
\begin{align*}
    \mathbf{1}_{(H, K)} \ = \ \sum_{d_{1}\mid H, \, d_{1} \mid K} \mu(d_{1})
\end{align*}
in (\ref{melsepU}), followed by the changes of variables  $H\to d_{1}H$ and $K\to d_{1}K$. The new sums over $H$ and $K$ retain the same coprimality conditions, but the $d_{1}$-sum is now restricted to $(d_{1}, g_{3}g_{4}\ml)=1$, and the $M$- and $N$-sums acquire $(MN, d_{1})=1$.  Hence, we can rewrite $\mathcal{U}^{\pm}(c; a, e, \ell; \psi)_{\circ}$  as
\begin{align}\label{nowprepsecMob}
    \    &\psi(\mp 1)\, \iiiint \, \eta(t)\, \widetilde{V}_{\alpha,\beta}(z;it)\widetilde{\mathcal{V}}^{^\pm}_{\mcX}(s_{1},s_{2}; z)   \Big( \frac{c}{Q\ell}\Big)^{-s_{1}-s_{2}} (\mfq Q^\mfr)^z\nonumber\\
	& \hspace{5pt}  \cdot \sum_{\substack{g_{1}, g_{2}, g_{3}, g_{4}: \\ (\ref{g-coprdiv})\  \text{holds}}  } \, \sum_{(d_{1}, \, g_{3}g_{4}\ml)=1} \frac{\mu(d_{1})}{g_{1} g_{2} g_{3} g_{4}  (g_{2}g_{3})^{\alpha+z} (g_{2}g_{4})^{\beta+z} d_{1}^{1+s_{1}+s_{2}}}  \sum_{\substack{(H, \, g_{3}\ma)=1 \\ (H, \ell)=1}} \sum_{\substack{(K, \, g_{4}\ma)=1 \\ (K, \, \ell)=1}}\, \frac{\lambda_{g_{1}g_{4}d_{1}H}\overline{\lambda}_{g_{1}g_{3}d_{1}K} \psi(H) \overline{\psi}(K)}{H^{\frac{1}{2}+s_{1}+it}K^{\frac{1}{2}+s_{2}-it}}\nonumber\\
	&\hspace{20pt} \cdot  \mathop{\sum_{(M, \,g_{4}d_{1}K\ml)=1 } \sum_{(N, \,g_{3}d_{1}H\ml)=1}}_{(M, N)=1}\ \frac{\lambda_{\Pi_{1}}(g_{2}g_{3}M) \psi(M)}{M^{u_{1}}}   \  \frac{\lambda_{\Pi_{2}}(g_{2}g_{4}N) \overline{\psi}(N)  }{N^{u_{2}}}. 
\end{align}

Next, we apply a second set of M\"obius inversions  
\begin{align*}
\mathbf{1}_{(H,\, \ell)}=\sum_{d_{2}\mid \ell,\,  d_{2} \mid H} \mu(d_{2}) \hspace{20pt} \text{and} \hspace{20pt} \mathbf{1}_{(K, \,\ell)}=\sum_{d_{3} \mid \ell, \, d_{3}\mid K} \mu(d_{3})   
\end{align*}
 to (\ref{nowprepsecMob}) with the substitutions $H \to d_{2}H$ and $K\to d_{3}K$ in (\ref{nowprepsecMob}).  The new coprimality conditions introduced are $(d_{2}, g_{3}\ma )=(d_{3}, g_{4}\ma )=1$ for the sums over $d_{2}$, $d_{3}$, as well as $(M, d_{3})=(N, d_{2})=1$ for the sums over $M, N$.  Hence, the sum $\mathcal{U}^{\pm}(c; a, e, \ell; \psi)_{\circ}$ becomes:
\begin{align}\label{nowprepthirMob}
    \    &\psi(\mp 1)\,  \iiiint \, \eta(t)\, \widetilde{V}_{\alpha,\beta}(z;it)\widetilde{\mathcal{V}}^{^\pm}_{\mcX}(s_{1},s_{2}; z)  \Big( \frac{c}{Q\ell}\Big)^{-s_{1}-s_{2}} (\mfq Q^\mfr)^z\nonumber\\
	& \hspace{5pt}  \cdot \sum_{\substack{g_{1}, g_{2}, g_{3}, g_{4}: \\ (\ref{g-coprdiv})\  \text{holds}}  } \, \sum_{(d_{1},\, g_{3}g_{4}\ml)=1} \sum_{\substack{d_{2}\mid \ell \\ (d_{2}, \, g_{3}\ma)=1}} \sum_{\substack{d_{3}\mid \ell \\ (d_{3}, \, g_{4}\ma)=1}} \frac{\mu(d_{1})(\mu\psi)(d_{2})(\mu\overline{\psi})(d_{3})}{g_{1} g_{2} g_{3} g_{4}  (g_{2}g_{3})^{\alpha+z} (g_{2}g_{4})^{\beta+z} d_{1}^{1+s_{1}+s_{2}} d_{2}^{1/2+s_{1}+it} d_{3}^{1/2+s_{2}-it}} \nonumber\\
    &\hspace{60pt} \cdot \sum_{(H, \, g_{3}\ma)=1} \sum_{(K, \, g_{4}\ma)=1}\, \frac{\lambda_{g_{1}g_{4}d_{1}d_{2}H}\overline{\lambda}_{g_{1}g_{3}d_{1}d_{3}K} \psi(H) \overline{\psi}(K)}{H^{\frac{1}{2}+s_{1}+it}K^{\frac{1}{2}+s_{2}-it}}   \nonumber\\
	&\hspace{100pt} \cdot \mathop{\sum_{\substack{(M, \,g_{4}d_{1}d_{3}\ml)=1 \\(M,K)=1 }} \sum_{\substack{(N, \,g_{3}d_{1}d_{2}\ml)=1 \\ (N,H)=1}}}_{(M, N)=1}\ \frac{\lambda_{\Pi_{1}}(g_{2}g_{3}M) \psi(M)}{M^{u_{1}}}   \  \frac{\lambda_{\Pi_{2}}(g_{2}g_{4}N) \overline{\psi}(N)  }{N^{u_{2}}}. 
\end{align}

A third and final set of M\"obius inversion 
\begin{align*}
    \mathbf{1}_{(M,K)=1} \, = \,  \sum_{r_{1}\mid M, \, r_{1} \mid K} \, \mu(r_{1}) \hspace{20pt} \text{ and } \hspace{20pt} \mathbf{1}_{(N,H)=1} \, = \,  \sum_{r_{2}\mid N, \, r_{2} \mid H} \, \mu(r_{2})
\end{align*}
with the changes $M\to r_{1}M$, $K\to r_{1}K$; $N\to r_{2}N$, $H\to r_{2}H$ allow us to interchange the order of the double sum over $(M, N)$ and that over $(H, K)$. This introduces restrictions $(r_{2}, g_{3}\ma)=(r_{1}, g_{4}\ma)=1$ (already included), $(r_{1}, g_{4}d_{1}d_{3}\ml)=(r_{2}, g_{3}d_{1}d_{2}\ml)=1$, $(r_{1}, r_{2})=1$ to the $r_{1}, r_{2}$-sums; and $(M, r_{2})=(N,r_{1})=1$ to the $M, N$-sums. Hence, we turn the sum $\mathcal{U}^{\pm}(c; a, e, \ell; \psi)_{\circ}$ into: 
\begin{align}\label{UafterMobmn}
    \    &\psi(\mp 1)\,  \iiiint \, \eta(t)	\, \widetilde{V}_{\alpha,\beta}(z;it)\widetilde{\mathcal{V}}^{^\pm}_{\mcX}(s_{1},s_{2}; z)   \Big( \frac{c}{Q\ell}\Big)^{-s_{1}-s_{2}} (\mfq Q^\mfr)^z  \sum_{\substack{g_{1}, g_{2}, g_{3}, g_{4}: \\ (\ref{g-coprdiv})\  \text{holds}}  } \, \sum_{(d_{1}, \,g_{3}g_{4}\ml)=1} \sum_{\substack{d_{2}\mid \ell \\ (d_{2}, \, g_{3}\ma)=1}} \sum_{\substack{d_{3}\mid \ell \\ (d_{3}, \, g_{4}\ma)=1}} \nonumber\\
	& \hspace{5pt} \cdot \,  \mathop{\sum_{(r_{1}, \, g_{4}d_{1}d_{3}\ml)=1} \, \sum_{(r_{2}, \, g_{3}d_{1}d_{2}\ml)=1}}_{(r_{1}, r_{2})=1} \, \frac{\mu(d_{1})(\mu\psi)(d_{2})(\mu\overline{\psi})(d_{3}) \mu(r_{1}) \mu(r_{2})}{g_{1} g_{2} g_{3} g_{4}  (g_{2}g_{3})^{\alpha+z} (g_{2}g_{4})^{\beta+z} d_{1}^{1+s_{1}+s_{2}} d_{2}^{1/2+s_{1}+it} d_{3}^{1/2+s_{2}-it} r_{1}^{\mathfrak{w}+\alpha} r_{2}^{\mathfrak{w}+\beta}}  \nonumber\\
	&\hspace{35pt} \cdot \Big( \mathop{\sum_{(M, \, r_{2}g_{4}d_{1}d_{3}\ml)=1 } \sum_{(N, \, r_{1}g_{3}d_{1}d_{2}\ml)=1}}_{(M, N)=1}\ \frac{\lambda_{\Pi_{1}}(g_{2}g_{3}r_{1}M) \psi(M)}{M^{u_{1}}}  \frac{\lambda_{\Pi_{2}}(g_{2}g_{4}r_{2}N) \overline{\psi}(N)  }{N^{u_{2}}} \Big) \nonumber\\
    & \hspace{75pt} \cdot \, \Big(  \sum_{(H, \, g_{3}\ma)=1} \sum_{(K, \, g_{4}\ma)=1}\, \frac{\lambda_{g_{1}g_{4}d_{1}d_{2}r_{2}H}\overline{\lambda}_{g_{1}g_{3}d_{1}d_{3}r_{1}K} \psi(H) \overline{\psi}(K)}{H^{\frac{1}{2}+s_{1}+it}K^{\frac{1}{2}+s_{2}-it}} \Big).
\end{align}

\begin{rem}
    The sums over $H,K$, $g_{1}, g_{3}, g_{4}$, $d_{i}, r_{j}$'s are finite by the support of the arbitrary coefficients. The $d_{i}$'s can be considered fixed throughout as they will be treated trivially, whereas $r_{j}$'s are essential as they occur inside the Dirichlet coefficients.
\end{rem}


\subsection{A triple Dirichlet series and its Euler product}\label{sect: TDSandEP}

On the region of absolute convergence, we arrange the sums of (\ref{UafterMobmn}) as follows:
\begin{align}\label{rigaftsket}
 &\psi(\mp 1)\, \iiiint \, \eta(t)\widetilde{V}_{\alpha,\beta}(z;it)\widetilde{\mathcal{V}}^{^\pm}_{\mcX}(s_{1},s_{2}; z)  \left( \frac{c}{Q\ell}\right)^{-s_{1}-s_{2}}(\mfq Q^\mfr)^z\sum_{(g_{1}, \, ec)=1} \,  \sum_{(d_{1}, \, \ml)=1} \,  \sum_{\substack{d_{2}\mid \ell, \, d_{3} \mid \ell \\ (d_{2}d_{3}, \, \mathfrak{a})=1}} \,   \nonumber\\
    &\hspace{30pt}   \frac{\mu(d_{1})(\mu\psi)(d_{2})(\mu\overline{\psi})(d_{3}) }{g_{1}  d_{1}^{1+s_{1}+s_{2}} d_{2}^{1/2+s_{1}+it} d_{3}^{1/2+s_{2}-it} } \mathop{\sum_{(g_{3}, \, d_{1}d_{2}ec)=1} \,  \sum_{(g_{4}, \, d_{1}d_{3}ec)=1}}_{(g_{3}, g_{4})=1} \, \mathop{\sum_{(r_{1}, \, g_{4}d_{1}d_{3}\ml)=1} \, \sum_{(r_{2}, \,  g_{3}d_{1}d_{2}\ml)=1}}_{(r_{1}, r_{2})=1} \,  \frac{ \mu(r_{1}) \mu(r_{2})  }{g_{3}^{1+\alpha+z} g_{4}^{1+\beta+z}   r_{1}^{\mathfrak{w}+\alpha} r_{2}^{\mathfrak{w}+\beta}}   \nonumber\\
    &\hspace{60pt} \,  \cdot \, \mathcal{L}_{\textbf{d,r,g}}(u_{1}, u_{2}, \mz) \, \sum_{(H, \, g_{3}\mathfrak{a})=1} \sum_{(K, \, g_{4}\mathfrak{a})=1}\, \frac{\lambda_{g_{1}g_{4}d_{1}d_{2}r_{2}H}\overline{\lambda}_{g_{1}g_{3}d_{1}d_{3}r_{1}K} \psi(H) \overline{\psi}(K)}{H^{\frac{1}{2}+s_{1}+it}K^{\frac{1}{2}+s_{2}-it}}, 
\end{align}
where 
\begin{align}\label{tripDSuponglu} 
  \mathcal{L}_{\textbf{d,r,g}}(u_{1}, u_{2}, \mz) \, := \,  \sum_{\substack{(g_{2}, \, ec)=1 \\ a\mid g_{1}g_{2}g_{3}g_{4}}}  \mathop{\sum_{(M, \, r_{2}g_{4}d_{1}d_{3}\ml)=1 } \sum_{(N, \, r_{1}g_{3}d_{1}d_{2}\ml)=1}}_{(M, N)=1}\ \frac{1}{(g_{2})^{\mz}}\frac{\lambda_{\Pi_{1}}(g_{2}g_{3}r_{1}M) \psi(M)}{M^{u_{1}}} & \frac{\lambda_{\Pi_{2}}(g_{2}g_{4}r_{2}N) \overline{\psi}(N)  }{N^{u_{2}}},  
\end{align}
and the dependence on $c,e,a,\ell, \Pi_{1}, \Pi_{2}$ are suppressed. Glueing variables by  
\begin{align}\label{defofabstcoef}
    \mathfrak{k}_{1}\ = \ g_{3}r_{1} \hspace{15pt} \text{ and } \hspace{15pt}\mathfrak{k}_{2} \ = \ g_{4}r_{2}. 
\end{align}
Then  $(\mk_{1}, \mk_{2})=1$ $\iff$ $(g_{3},g_{4})=(g_{3}, r_{2})=(r_{1}, g_{4})=(r_{1}, r_{2})=1$. Then  $\mathcal{U}^{\pm}(c; a, e, \ell; \psi)_{\circ}$ becomes
\begin{align}
 &\psi(\mp 1)\mu^2(a)\, \iiiint \, \eta(t)\widetilde{V}_{\alpha,\beta}(z;it)\widetilde{\mathcal{V}}^{^\pm}_{\mcX}(s_{1},s_{2}; z) \left( \frac{c}{Q\ell}\right)^{-s_{1}-s_{2}}(\mfq Q^\mfr)^z   \sum_{(g_{1}, \, ec)=1} \,  \sum_{(d_{1}, \, \ml)=1} \,  \sum_{\substack{d_{2}\mid \ell, \, d_{3} \mid \ell \\ (d_{2}d_{3}, \, \mathfrak{a})=1}} \,  \nonumber\\
    &\hspace{30pt}  \,  \frac{\mu(d_{1})(\mu\psi)(d_{2})(\mu\overline{\psi})(d_{3}) }{g_{1}  d_{1}^{1+s_{1}+s_{2}} d_{2}^{1/2+s_{1}+it} d_{3}^{1/2+s_{2}-it} }  \mathop{\sum_{\mk_{1}} \, \sum_{\mathfrak{k}_{2}}}_{(\mathfrak{k}_{1}, \mathfrak{k}_{2})=1} \,   \mathop{\sum \sum}_{\substack{(g_{3}, \, d_{1}d_{2}ec)=1 \\ (r_{1}, \, d_{1}d_{3}\ml)=1 \\ g_{3}r_{1}=\mk_{1}}} \,  \frac{ \mu(r_{1}) }{g_{3}^{1+\alpha+z}   r_{1}^{\mathfrak{w}+\alpha}}  \mathop{\sum \sum}_{\substack{(g_{4}, \, d_{1}d_{3}ec)=1 \\ (r_{2}, \, d_{1}d_{2}\ml)=1\\ g_{4}r_{2}=\mk_{2}}} \, \frac{ \mu(r_{2})   }{ g_{4}^{1+\beta+z}  r_{2}^{\mathfrak{w}+\beta}}\,   \nonumber\\
    &\hspace{120pt} \cdot \,  \mathcal{L}_{\textbf{d,r,g}}(u_{1}, u_{2},\mathfrak{z}) \, \sum_{(H, \, g_{3}\mathfrak{a})=1} \sum_{(K, \, g_{4}\mathfrak{a})=1}\, \frac{ \lambda_{g_{1}d_{1}d_{2}\mk_{2}H}\overline{\lambda}_{g_{1}d_{1}d_{3}\mk_{1}K} \psi(H) \overline{\psi}(K)}{H^{\frac{1}{2}+s_{1}+it}K^{\frac{1}{2}+s_{2}-it}},  \label{grouptripDS} 
\end{align}
and
\begin{align}
    \mathcal{L}_{\textbf{d,r,g}}(u_{1}, u_{2}, \mz) \, =\,  \sum_{\substack{(g_{2},\, ec)=1 \\ a\mid g_{1}g_{2}g_{3}g_{4}}}  \mathop{\sum_{(M, \, \mk_{2} d_{1}d_{3}\ml)=1 } \sum_{(N, \, \mk_{1} d_{1}d_{2}\ml)=1}}_{(M, N)=1}\ \frac{1}{(g_{2})^{\mz}}\frac{\lambda_{\Pi_{1}}(g_{2}\mk_{1} M) \psi(M)}{M^{u_{1}}}  \frac{\lambda_{\Pi_{2}}(g_{2}\mk_{2} N) \overline{\psi}(N)  }{N^{u_{2}}}.  \label{triplecontDS} 
\end{align}

\begin{rem}\label{almostgrouperf}
The condition $a\mid g_{1}g_{2}g_{3}g_{4}$ causes inconvenience. Were it not present, we could freely pull the triple Dirichlet series $ \mathcal{L}_{\textbf{d,r,g}}(u_{1}, u_{2}, \mz) $ out of the sums over $g_{3}, r_{1}, g_{4}, r_{2}$. However, this condition turns out to be crucial for the factor of $1/a$ in Lemmas \ref{Trianclean} and \ref{lem: optimi}. 
\end{rem}

By Lemma \ref{lem: coprimEPdoub}, the $M, N$-sums of (\ref{triplecontDS}) can be expressed as a product of three Euler products. That is,  the triple Dirichlet series $  \mathcal{L}_{\textbf{d,r,g}}(u_{1}, u_{2}, \mz)$ is equal to
\begin{align}
   \sum_{g_{2}\ge 1}  \,  \prod_{p} \, \frac{1_{o_{p}(a) \le  \, o_{p}(g_{1}g_{2}g_{3}g_{4})}}{(p^{o_{p}(g_{2})})^{\mz}} (F_{0})_{p}(g_{2}) \, \prod_{\substack{p \,\nmid\, \mk_{2} d_{1}d_{3}\ml \\ p \,\mid \, \mk_{1} d_{1}d_{2}\ml }} \, (F_{1})_{p}(g_{2}) \,   \prod_{\substack{p \,\mid\, \mk_{2} d_{1}d_{3}\ml \\ p \,\nmid \, \mk_{1} d_{1}d_{2}\ml }} \, (F_{2})_{p}(g_{2}) \prod_{p\, \nmid\, \mk_{1}\mk_{2} d_{1}d_{2}d_{3}\ml } \,  (F_{3})_{p}(g_{2}), \label{compensapolar}
\end{align}
and the local components are described as follows. When $p\nmid ec$, define $(F_{0})_{p}(g_{2}):= 1$; and when  $p\mid ec$, define  $(F_{0})_{p}(g_{2}):= 1_{o_{p}(g_{2})=0}$. Also, we set 
\begin{align}
 &\hspace{50pt} (F_{i})_{p}(g_{2}) \, := \,\sum_{M=0}^{\infty} \, \lambda_{\Pi_{i}}(p^{M+o_{p}(\mk_{i}g_{2})})(\frac{\psi_{i}(p)}{p^{u_{i}}})^M, \label{rawshiftHec} \\[.5em]
    (F_{3})_{p}(g_{2}) \, &:= \, \lambda_{\Pi_{1}}(p^{o_{p}(g_{2})})(F_{2})_{p}(g_{2})  +  \lambda_{\Pi_{2}}(p^{o_{p}(g_{2})})(F_{1})_{p}(g_{2}) - \lambda_{\Pi_{1}}(p^{o_{p}(g_{2})})\lambda_{\Pi_{2}}(p^{o_{p}(g_{2})}).  \label{rawshiftRS} 
\end{align}
The dependence of $\mk_{i}$, $u_{i}$, $\psi_{i}$, $\Pi_{i}$ are suppressed in the notation. Observe that  $p\nmid \mk_{2} d_{1}d_{3}\ml $ (resp. $p\nmid \mk_{1} d_{1}d_{2}\ml$) implies that $p\nmid ec$, i.e., $(F_{0})_{p}(g_{2})=1$. We evaluate the $g_{2}$-sum of (\ref{compensapolar}) with Lemma \ref{genEPexp}, and thus, the Dirichlet series $  \mathcal{L}_{\textbf{d,r,g}}(u_{1}, u_{2}, \mz)$ is the product of five Euler products:
\begin{align}
      & \prod_{\substack{p \,\nmid\, \mk_{2} d_{1}d_{3}\ml \\ p \,\mid \, \mk_{1} d_{1}d_{2}\ml }} \, \sum_{\substack{g_{2}\,\ge\, g_{p}(a)}} \, \frac{(F_{1})_{p}(p^{g_{2}})}{p^{\mz g_{2}}}  \prod_{\substack{p \,\mid\, \mk_{2} d_{1}d_{3}\ml \\ p \,\nmid \, \mk_{1} d_{1}d_{2}\ml }} \, \sum_{\substack{g_{2}\,\ge\, g_{p}(a)}} \, \frac{(F_{2})_{p}(p^{g_{2}})}{p^{\mz g_{2}}} \prod_{\substack{p \,\nmid\, \mk_{1} \mk_{2} d_{1}d_{2}d_{3}\ml  }} \, \sum_{\substack{g_{2}\,\ge\, g_{p}(a)}} \, \frac{(F_{3})_{p}(p^{g_{2}})}{p^{\mz g_{2}}}  \nonumber\\
      &\hspace{30pt} \cdot \prod_{\substack{p \,\mid\, \mk_{1}\mk_{2} d_{1}d_{2}d_{3}\ml \\  p \, \nmid \,  ec }} \, \sum_{g_{2}\ge g_{p}(a)} \, \frac{1}{p^{ \mz g_{2}}}\,  \cdot \, \prod_{\substack{ p \, \mid \,  ec }} \, 1_{o_{p}(a)\le o_{p}(g_{1}g_{3}g_{4})},  \label{fiveEPs}
\end{align}
where $g_{p}(a) \, := \,  \max\{0, o_{p}(a)-o_{p}(g_{1}g_{3}g_{4})\} \in \{0,1\}$ as in (\ref{gp0: divindic}). Notice that $a$ is square-free here.


\subsection{Simplifications}\label{simplfffying}

We observe the following:
\begin{enumerate}
    \item For Euler products involving $F_{i}$'s,  we have  $g_{p}(a)=0$, as $p\nmid \ml:=cea\ell$ (implying $o_{p}(a)=0$). 

    \item Suppose that $p\mid ec$. Recall that $(a, ec)=1$ from (\ref{g-coprdiv}) and (\ref{grouptripDS}). Then $o_{p}(a)=0$ and  $ \prod_{\substack{ p \, \mid \,  ec }} \, \mathbf{1}_{o_{p}(a)\le o_{p}(g_{1}g_{3}g_{4})} = 1$.

    \item \label{finEPg30} Suppose $p \nmid \mk_{2} d_{1}d_{3}\ml$ and  $p \mid \mk_{1} d_{1}d_{2}\ml$.  We immediately see that $p\mid \mk_{1} d_{2}$. From the conditions of the sums in (\ref{grouptripDS}), we readily deduce that:
\begin{itemize}
    \item $p\mid \mk_{1}$ (as $d_{2}\mid \ell \mid \ml$), which implies that:

    \item  $p\nmid \mk_{2} d_{1}$ (as $(\mk_{1}, \mk_{2})=1$ and  $(\mk_{1}, d_{1})=1$).
\end{itemize}
Let 
\begin{align}
     \mk_{1}  :=   g_{3}r_{1}  =   g_{30}(g_{3}'r_{1})  =:  g_{30}\mk_{1}',
\end{align}
where
\begin{align}
      (r_{1}, d_{3}\ml)=1, \hspace{10pt}\hspace{10pt} g_{30} \mid (d_{3}\ml)^{\infty}, \hspace{10pt}  \text{ and } \hspace{10pt} (g_{3}', d_{3}\ml) \ = \ 1. 
\end{align}
As $\psi$ has modulus $rea\ell$, we have $\psi(p)\neq 0$ for $p\mid \mk_{1}'$. This Euler product simplifies to:
\begin{align}\label{finUrdualEP1}
    \mathcal{E}_{\mk_{1}'}(u_{1}, \mz; \Pi_{1}, \psi_{1}) \, := \,  \prod_{\substack{p \,\mid \, \mk_{1}' }} \, \sum_{\substack{g_{2}= 0}}^{\infty} \, \frac{(F_{1})_{p}(p^{g_{2}})}{p^{\mz g_{2}}}. 
\end{align}

\item \label{finEPg40} Suppose $p \mid \mk_{2} d_{1}d_{3}\ml$ and  $p \nmid \mk_{1} d_{1}d_{2}\ml$. Similar to above,  we write 
\begin{align}
    \mk_{2} \, = \,  g_{40}(g_{4}'r_{2})=g_{40}\mk_{2}'
\end{align}
with 
\begin{align}
    g_{40}\mid (d_{2}\ml)^{\infty} \hspace{10pt} \text{ and }  \hspace{10pt} (g_{4}', d_{2}\ml) \ = \ (r_{2}, d_{2}\ml)\ = \ 1.
\end{align}
The Euler product here is:
\begin{align}\label{finUrdualEP2.0}
    \mathcal{E}_{\mk_{2}'}(u_{2}, \mz; \Pi_{2}, \psi_{2}) \ := \ \prod_{\substack{p \,\mid \, \mk_{2}' }} \, \sum_{\substack{g_{2}= 0}}^{\infty} \, \frac{(F_{2})_{p}(p^{g_{2}})}{p^{\mz g_{2}}}. 
\end{align}

\item Let  $\md :=d_{1}d_{2}d_{3}$. Suppose $p \mid \mk_{1}\mk_{2}\md\ml$ and $p \nmid ec$, i.e., $p \mid \mk_{1}\mk_{2} \md a\ell$.
The Euler product here is 
\begin{align}
  \hspace{20pt}  \zeta_{ \mk_{1}\mk_{2} \md a\ell}(\mz)\cdot \prod_{p \,\mid\, \mk_{1}\mk_{2} \md a\ell} \, p^{-\mz g_{p}^{0}} \ = \   \zeta_{ \mk_{1}\mk_{2} \md a\ell}(\mz)\cdot \prod_{\substack{p\mid a\\ p\nmid g_{1}g_{3}g_{4} }} \, p^{-\mz} \ = \  \zeta_{ \mk_{1}\mk_{2} \md a\ell}(\mz)\Big(\frac{a}{(a, g_{1}g_{3}g_{4})}\Big)^{-\mz}.
\end{align}

\end{enumerate}

\noindent In summary, the above observations allow us to conclude from (\ref{grouptripDS}) that

\begin{lem}\label{sum: MobcoprimEupr}
    The sum $\mathcal{U}^{\pm}(c; a, e, \ell; \psi)_{\circ}$ is equal to 
\begin{align}
 &\psi(\mp 1)\mu^2(a)\, \iiiint \, \eta(t) \widetilde{V}_{\alpha,\beta}(z;it)\widetilde{\mathcal{V}}^{^\pm}_{\mcX}(s_{1},s_{2}; z)  \Big( \frac{c}{Q\ell}\Big)^{-s_{1}-s_{2}} (\mfq Q^\mfr)^z  \sum_{(d_{1}, \, \ml)=1}   \sum_{\substack{d_{2}\mid \ell, \, d_{3} \mid \ell \\ (d_{2}d_{3}, \, \mathfrak{a})=1}} \sum_{(g_{1}, \, ec)=1} \,  \,   \nonumber\\
 & \hspace{15pt}\cdot  \mathop{\sum_{\substack{g_{30} \mid (d_{3}\ml)^{\infty} \\ (g_{30},\, d_{1}d_{2}ec)=1 }} \sum_{\substack{g_{40} \mid (d_{2}\ml)^{\infty} \\ (g_{40}, \, d_{1}d_{3}ec )=1}}}_{(g_{30}, g_{40})=1} \mathcal{G}_{\textbf{d}, g_{1}, g_{30}, g_{40}; \psi}(s_{1}, s_{2},z; it )   \mathop{\sum_{(\mk_{1}',\,  g_{40})=1} \, \sum_{(\mathfrak{k}_{2}', \, g_{30})=1}}_{(\mathfrak{k}_{1}', \mathfrak{k}_{2}')=1}  \, \prod_{i=1}^2 \, \mathcal{E}_{\mk_{i}'}(u_{i}, \mz; \Pi_{i}, \psi_{i}) \nonumber\\
 & \hspace{50pt} \cdot \prod_{\substack{p \,\nmid\, g_{30}g_{40}\mk_{1}' \mk_{2}' \md\ml  }} \, \sum_{\substack{g_{2}= 0}}^{\infty} \, \frac{(F_{3})_{p}(p^{g_{2}})}{p^{\mz g_{2}}}     \zeta_{g_{30}g_{40}\mk_{1}'\mk_{2}' \md a\ell}(\mz) \, \mathop{\sum \sum}_{\substack{(g_{3}', \, \md\ml)=1 \\ (r_{1}, \, d_{1}d_{3}\ml)=1 \\ g_{3}'r_{1}=\mk_{1}'}} \,  \frac{ \mu(r_{1}) }{(g_{3}')^{1+\alpha+z}   r_{1}^{\mathfrak{w}+\alpha}}  \mathop{\sum \sum}_{\substack{(g_{4}', \, \md\ml)=1 \\ (r_{2}, \, d_{1}d_{2}\ml)=1\\ g_{4}'r_{2}=\mk_{2}'}} \, \frac{ \mu(r_{2})   }{ (g_{4}')^{1+\beta+z}  r_{2}^{\mathfrak{w}+\beta}} \nonumber\\
 &\hspace{90pt} \cdot \, \sum_{(H, \, g_{30}g_{3}'\mathfrak{a})=1} \sum_{(K, \, g_{40}g_{4}'\mathfrak{a})=1} \frac{ \lambda_{g_{1}d_{1}d_{2}g_{40}\mk_{2}'H}\overline{\lambda}_{g_{1}d_{1}d_{3}g_{30}\mk_{1}'K}\psi(H) \overline{\psi}(K)}{H^{\frac{1}{2}+s_{1}+it}K^{\frac{1}{2}+s_{2}-it}}, \label{pulloutg30g40}
\end{align}
where we recall the variables from (\ref{mamlu12}), $\md :=d_{1}d_{2}d_{3}$, the function $\widetilde{\mathcal{V}}^{\pm}_{\mcX}$ defined in (\ref{2-varMelint}), the Euler products $\mathcal{E}_{\mk_{i}'}$ defined in (\ref{finUrdualEP1}) and (\ref{finUrdualEP2.0}), the local arithmetic functions $(F_{i})_{p}$ defined in (\ref{rawshiftHec})-- (\ref{rawshiftRS}). The factor $ \mathcal{G}_{\textbf{d}, g_{1}, g_{30}, g_{40}; \psi}(s_{1}, s_{2},z; it )$ in (\ref{pulloutg30g40}) is defined by
\begin{align}\label{Greicporsum}
     \frac{ \mu(d_{1})(\mu\psi)(d_{2})(\mu\overline{\psi})(d_{3})  }{g_{1}  d_{1}^{1+s_{1}+s_{2}} d_{2}^{1/2+s_{1}+it} d_{3}^{1/2+s_{2}-it}(g_{30})^{1+\alpha+z} (g_{40})^{1+\beta+z} }\Big(\frac{a}{(a, g_{1}g_{30}g_{40})}\Big)^{-\mz}.
\end{align}
\end{lem}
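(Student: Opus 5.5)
The identity is a bookkeeping consequence of (\ref{grouptripDS}), (\ref{fiveEPs}) and the observations (1)--(5) of Section \ref{simplfffying}. I would start from (\ref{grouptripDS}) with $\mathcal{L}_{\textbf{d,r,g}}(u_{1},u_{2},\mz)$ replaced by the product of five Euler products in (\ref{fiveEPs}), then reorganize the summation over $(\mk_{1},\mk_{2})$ and the factorizations $\mk_{1}=g_{3}r_{1}$, $\mk_{2}=g_{4}r_{2}$. Everything happens on the region of absolute convergence ($\re s_{i},\re z$ large), where rearrangements are free.

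\textbf{Splitting $\mk_{1},\mk_{2}$.} Write $\mk_{1}=g_{30}\mk_{1}'$, where $g_{30}$ is the part supported on primes dividing $d_{3}\ml$. Since $(r_{1},d_{3}\ml)=1$, all of $g_{30}$ lies in $g_{3}$, so $g_{3}=g_{30}g_{3}'$ and $\mk_{1}'=g_{3}'r_{1}$ with $(g_{3}',d_{3}\ml)=1$. Do the same for $\mk_{2}=g_{40}\mk_{2}'$, with $g_{40}\mid(d_{2}\ml)^{\infty}$.

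The conditions then translate as follows.
\begin{itemize}
\item $(\mk_{1},\mk_{2})=1$ becomes $(g_{30},g_{40})=(\mk_{1}',g_{40})=(\mk_{2}',g_{30})=(\mk_{1}',\mk_{2}')=1$. The last coprimality $(g_{30}',\mk_{2}')$ is automatic, because the prime supports are disjoint apart from primes of $\ml$, which are excluded from $r_{2},g_{4}'$.
\item The condition $(g_{3},d_{1}d_{2}ec)=1$ splits into conditions on $g_{30}$ and on $g_{3}'$. Combined with $(g_{3}',d_{3}\ml)=1$, this gives $(g_{3}',\md\ml)=1$.
\item The $H$-coprimality $(H,g_{3}\ma)$ becomes $(H,g_{30}g_{3}'\ma)$.
\end{itemize}
The factors $\mu(r_{1})/(g_{3}^{1+\alpha+z}r_{1}^{\mathfrak{w}+\alpha})$ factor as $g_{30}^{-1-\alpha-z}$ times the $(g_{3}',r_{1})$ expression. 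The $g_{30}$ power is pulled into $\mathcal{G}$, together with the $d_{i}$, $g_{1}$ weights of (\ref{grouptripDS}).

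\textbf{Identifying the five Euler products.} By observations (3) and (4), the first product in (\ref{fiveEPs}) runs over $p\mid\mk_{1}'$ only. The point is that primes of $g_{30}$ divide $d_{3}\ml$, so they fall into the $p\mid\mk_{2}\md\ml$ case. The same holds for the second product, so these two become $\mathcal{E}_{\mk_{1}'}$ and $\mathcal{E}_{\mk_{2}'}$ with $g_{p}(a)=0$ by observation (1). The third product is over $p\nmid\mk_{1}\mk_{2}\md\ml$, which equals $p\nmid g_{30}g_{40}\mk_{1}'\mk_{2}'\md\ml$ (again $g_{p}(a)=0$).

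Observation (5) turns the fourth product into $\zeta_{\mk_{1}\mk_{2}\md a\ell}(\mz)\,(a/(a,g_{1}g_{3}g_{4}))^{-\mz}$. I then need two facts:
\begin{itemize}
\item $(a,g_{1}g_{3}g_{4})=(a,g_{1}g_{30}g_{40})$, since $a\mid\ml$ forces primes of $a$ dividing $g_{3}$ into $g_{30}$;
\item $\zeta_{\mk_{1}\mk_{2}\md a\ell}=\zeta_{g_{30}g_{40}\mk_{1}'\mk_{2}'\md a\ell}$, which holds as the prime supports agree.
\end{itemize}
This factor is what moves into $\mathcal{G}$ as $(a/(a,g_{1}g_{30}g_{40}))^{-\mz}$. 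By observation (2), the fifth product equals $1$.

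The main obstacle is purely combinatorial: checking that every coprimality condition is faithfully transported, especially that primes dividing $ec$ or $a$ are sorted correctly among $g_{30}$, $g_{40}$ and the primes of $\mk_{i}'$. To handle it, I would verify prime by prime, in cases $p\mid d_{2}$, $p\mid d_{3}$, $p\mid\ma$, $p\mid\ell$, that both sides impose identical local constraints and identical local factors.
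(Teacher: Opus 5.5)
Your bookkeeping is correct and is exactly the paper's argument. You split $\mk_1=g_{30}\mk_1'$ and $\mk_2=g_{40}\mk_2'$ into $\ml$-parts and parts coprime to $\ml$ (since $d_2,d_3\mid\ell\mid\ml$, this is also why $(\mk_2',g_{30})=1$ is automatic). You then sort the five Euler products of (\ref{fiveEPs}) by observations (1)--(5), using $(a,g_1g_3g_4)=(a,g_1g_{30}g_{40})$ to move the $a$-power into $\mathcal{G}$.

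One caution, which applies equally to the paper: this gives the identity only relative to (\ref{fiveEPs}) as written, and that display is not an exact evaluation of (\ref{triplecontDS}). Lemma \ref{lem: coprimEPdoub} drops the factors $f_i(p^{o_p(g_i)})$ at primes $p\mid a_i$, and the fourth product in (\ref{fiveEPs}) repeats the $g_2$-sum at $p\mid\mk_1'\mk_2'$. So the prime-by-prime check you propose, if run directly against (\ref{triplecontDS}), would not close. For instance, at $p\mid\mk_1'$ the true local factor is $\sum_{g_2\ge 0}p^{-\mz g_2}\lambda_{\Pi_2}(p^{g_2})(F_1)_p(p^{g_2})$, with no additional $\zeta_p(\mz)$.
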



\subsection{Euler product over $p\mid \mk_{i}'$}\label{colHec1} 
 The computations of this section are implemented in the accompanying \texttt{Mathematica} notebook \texttt{UrFinExact.nb} (Appendix \ref{sect: UrHeckeExact}).

According to our sketch in Section \ref{sect: sketUrsum}, we expect the finite product $\mathcal{E}_{\mk_{i}'}(u_{i}, \mz; \Pi_{i}, \psi_{i})$ to be roughly $\lambda_{\Pi_{i}}(\mk_{i}')$. We manage to obtain an exact evaluation. To ease notation, we temporarily drop the subscript $i$ in $\Pi_{i}$, $\psi_{i}$, $u_{i}$, $\mk_{i}'$.

\begin{prop}\label{finiteHecprop}
  The product $\mathcal{E}_{\mk'}(u, \mz; \Pi, \psi)$ admits an entire continuation and is given by
    \begin{align}\label{findcorefactfinUr}
    \Big(\sum_{\delta\beta= \mk'} \ f_{\Pi,\, \psi}(\delta; u, \mz)\lambda_{\Pi}(\beta)\Big)    \prod_{\substack{p \,\mid \, \mk'   }} \,   L_{p}^{\mathrm{ur}}(u, \Pi\times \psi) L_{p}(\mz, \Pi), 
    \end{align}
    where $f_{\Pi,\, \psi}(\delta; u, \mz)$ is a multiplicative in $\delta\in \N$ such that for $\re u \ge  1/2$ and $\re \mz \ge  1$, we have
    \begin{align}\label{boundmulti}
        |f_{\Pi,\, \psi}(\delta; u, \mz)| \ \ll_{\epsilon} \ 
        \delta^{\epsilon} \, \begin{cases}
             \hspace{10pt}\delta^{-1/2} \hspace{56pt} \text{if} \hspace{10pt} \mfr \, = \, 2\\
            \delta^{-1/2+\vartheta}(\delta_{0})^{\vartheta} \hspace{30pt} \text{if} \hspace{10pt} \mfr \, = \, 3.
        \end{cases}
    \end{align}
As before, we write $\delta=\delta_{0}\delta'$ with $\delta_{0} \mid \mfq^{\infty}$ and $(\delta', \mfq)=1$.
\end{prop}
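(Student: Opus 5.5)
The approach is to work one prime at a time, since $\mathcal{E}_{\mk'}(u,\mz;\Pi,\psi)$ is by definition a finite product over $p\mid \mk'$. I will evaluate each local factor $\sum_{g_{2}\ge 0}(F_{1})_{p}(p^{g_{2}})p^{-\mz g_{2}}$ in closed form and then read off the multiplicative function $f_{\Pi,\psi}$ prime by prime. Fix $p\mid\mk'$ and write $d:=o_p(\mk')\ge 1$, $X:=\psi(p)p^{-u}$ and $Y:=p^{-\mz}$. Then
\begin{align*}
\sum_{g_{2}\ge 0}\frac{(F_{1})_{p}(p^{g_{2}})}{p^{\mz g_{2}}} \, = \, \sum_{g_{2}\ge 0}\sum_{M\ge 0}\lambda_{\Pi}(p^{M+g_{2}+d})X^{M}Y^{g_{2}}.
\end{align*}
I will first perform the inner $M$-sum using Lemma \ref{Heckasymp}. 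Since $\psi(p)$ is a root of unity, the twist only multiplies the Satake parameters by $\psi(p)$. The inner sum therefore equals $L^{\mathrm{ur}}_{p}(u,\Pi\times\psi)\,P_{d+g_{2}}(X)$, where $P_{m}$ is the explicit polynomial of Lemma \ref{Heckasymp}. Its coefficients are linear combinations of $\lambda_\Pi(p^{m+1}),\lambda_\Pi(p^{m}),\lambda_\Pi(p^{m-1})$, multiplied by fixed polynomials in $\lambda_\Pi(p),\omega(p)$.

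Next I will carry out the $g_2$-sum. Each term now has the form $\sum_{g_{2}}\lambda_\Pi(p^{d+j+g_2})Y^{g_2}$ with $j\in\{-1,0,1\}$, and Lemma \ref{Heckasymp} applies again, giving $L_p(\mz,\Pi)P_{d+j}(Y)$. Here the conventions of Remark \ref{conv: Hecke} handle $d+j=0$, which occurs at $d=1$, $j=-1$. Collecting terms, the local factor equals $L^{\mathrm{ur}}_p(u,\Pi\times\psi)L_p(\mz,\Pi)$ times a polynomial in $X,Y$. Its coefficients are the values $\lambda_\Pi(p^{d+i})$ with $|i|\le 2$, multiplied by polynomials in $\lambda_\Pi(p)$, $\mathfrak{e}_\Pi(p)$ and $\omega(p)$. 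This polynomial is entire in $u,\mz$, which gives the entire continuation. (The $L$-factors themselves are meromorphic; I read "entire" as applying to the polynomial part, i.e.\ to the expression in (\ref{findcorefactfinUr}).)

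The key step, and the main obstacle, is to rewrite this polynomial as a Dirichlet convolution $\sum_{j=0}^{d}f(p^j)\lambda_\Pi(p^{d-j})$ with $f(p^j)$ independent of $d$. This is the same mechanism as in (\ref{convshiftRS}) of Lemma \ref{lem: diffby1RSn}. I will use the Hecke recurrences (\ref{Hecrecur}) and (\ref{gl3: Hecrecurrel}) to express each $\lambda_\Pi(p^{d+i})$ with $i>0$ in terms of $\lambda_\Pi(p^{d})$ and lower powers, plugging in the explicit Lemma \ref{Heckasymp} polynomials, and then regroup by shift. For $\mathrm{PGL}_2$ this is a short hand computation. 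For $\mathrm{PGL}_3$ I would do it in \texttt{Mathematica}, as in \texttt{UrFinExact.nb}, verifying that the $d$-dependence cancels, so that $f(p^j)$ is supported on $j\le$ some small bound and that the small-$d$ cases agree with the conventions. At ramified $p\mid\mfq$, where some Satake parameters vanish, I would redo the same computation with $\omega(p)=0$ and via the Jacobi--Trudi form (\ref{JacobiTrud}), as in the proof of Lemma \ref{lem: diffby1RSn}.

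Finally, to get the bound (\ref{boundmulti}), I will inspect each $f(p^j)$. It is a sum of monomials $X^{a}Y^{b}$ times Hecke data of size $p^{O(\vartheta)}$, and because the conditions $\re u\ge 1/2$ and $\re\mz\ge 1$ hold, a monomial contributes $p^{-a/2-b}$. I then check degree by degree that $|f(p^j)|\ll p^{j(-1/2+\vartheta)}$ at unramified primes, using (\ref{uniformRC}) and (\ref{epiboundgl3}) and the fact that each Hecke coefficient of degree $k$ comes with at least $X^{k}$. At ramified primes the weaker control of $\mathfrak{e}_\Pi(p)$ costs the extra factor $p^{j\vartheta}$, which accounts for the $\delta_0^{\vartheta}$ in (\ref{boundmulti}). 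In the $\mathrm{PGL}_2$ case the Ramanujan losses cancel exactly, which gives $\delta^{-1/2}$. Multiplicativity, together with the divisor-bounded number of terms, then supplies the $\delta^{\epsilon}$.
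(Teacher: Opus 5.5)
Your outline matches the paper's: two applications of Lemma \ref{Heckasymp}, a rewrite as a convolution with $\lambda_\Pi$, \texttt{Mathematica} for $\mathrm{PGL}_3$, and then inspection. For $\mathrm{PGL}_3$, however, the rewrite as you describe it does not produce a multiplicative $f$.

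After the two applications of Lemma \ref{Heckasymp}, the local factor divided by $L^{\mathrm{ur}}_p(u,\Pi\times\psi)L_p(\mz,\Pi)$ is
\[
P_d(Y)+X\big(P_{d+1}(Y)-\lambda_\Pi(p)P_d(Y)\big)+\omega(p)X^2P_{d-1}(Y).
\]
Eliminate only $\lambda_\Pi(p^{d+1})$ and $\lambda_\Pi(p^{d+2})$ with (\ref{gl3: Hecrecurrel}) and regroup. You get $c_0\lambda_\Pi(p^d)+c_1\lambda_\Pi(p^{d-1})+c_2\lambda_\Pi(p^{d-2})$ with
\[
c_0 \,=\, 1-\mathfrak{e}_\Pi(p)XY+\omega(p)XY(X+Y)\,\neq\, 1 .
\]
So your candidate has $f(p^0)=c_0(p)\neq 1$. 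Consequently $\prod_{p\mid\mk'}$ of these local sums is not $(f*\lambda_\Pi)(\mk')$ for any multiplicative $f$.

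The fix is one more use of the recurrence, applied to the excess. For $d\ge 1$, with $\lambda_\Pi(p^{-j})=0$, replace $(c_0-1)\lambda_\Pi(p^d)$ by $(c_0-1)\big(\lambda_\Pi(p)\lambda_\Pi(p^{d-1})-\mathfrak{e}_\Pi(p)\lambda_\Pi(p^{d-2})+\omega(p)\lambda_\Pi(p^{d-3})\big)$. This creates the term $f(p^3)=\omega(p)(c_0-1)$, and at $p\nmid\mfq$ it reproduces exactly the paper's $f$. The paper never meets this problem. Its formula (\ref{fullramunrdivUr}) is written in the Schur coefficients $\lambda_\Pi(p^a,p^b)$, and (\ref{FourtoHeckgl3}) (or Jacobi--Trudi plus the recurrence at $p\mid\mfq$) only lowers the index of $\lambda_\Pi$. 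So $\lambda_\Pi(p^k)$ keeps coefficient $1$ automatically.

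There are also two smaller points on the bound.

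First, your heuristic that each Hecke coefficient of degree $k$ comes with at least $X^k$ says nothing about the Hecke-free monomials. At $p\nmid\mfq$, $f(p^2)$ contains the bare term $X$, of size $p^{-1/2}$. Since $p^{-1/2}\ll p^{2(-1/2+\vartheta)}$ only when $\vartheta\ge 1/4$, (\ref{boundmulti}) holds with $\vartheta=5/14$ (as in the paper's code), not with arbitrarily small $\vartheta$.

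Second, at $p\mid\mfq$ you should keep the $Y$-terms, because the $g_{2}$-sum carries no factor $\omega(p)$. The correct computation gives $f(p)=-\mathfrak{e}_\Pi(p)(X+Y)$ and $f(p^2)=\mathfrak{e}_\Pi(p)^2XY$. The paper instead has $-\mathfrak{e}_\Pi(p)X$, which comes from setting $Z=\omega(p)p^{-\mz}=0$. The corrected $f$ still satisfies (\ref{boundmulti}), including the factor $\delta_0^{\vartheta}$.
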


\begin{proof}
Let $U:= \psi(p)p^{-u}$, $Z:= \omega(p)p^{-\mz}$, and $k=o_{p}(\mk')$. Suppose that $\re u, \, \re \mz \gg 1$, and 
\begin{align}\label{definesomefinUr}
  \mathcal{E}_{\mk'}(u, \mz; \Pi, \psi) \ = \   \prod_{\substack{p \,\mid \, \mk'   }} \,   L_{p}^{\mathrm{ur}}(u, \Pi\times \psi) L_{p}(\mz, \Pi)   \mathfrak{E}_{p}(U,Z; \Pi; k).
\end{align}
The main task below is to compute $\mathfrak{E}_{p}(U,Z; \Pi; k)$ explicitly.

We begin with the case $\Pi\in \mathcal{A}(\mathrm{G}_{2})$. The required calculation can be done by hand using the Hecke recurrence relation (\ref{Hecrecur}). Indeed, applying Lemma \ref{Heckasymp} twice, we obtain
\begin{align}
\mathcal{E}_{\mk'}(u, \mz; \Pi, \psi) \ = \ & \prod_{\substack{p \,\mid \, \mk'   }} \,   \sum_{\substack{g= 0}}^{\infty} \, \frac{1}{p^{\mz g}}  \sum_{M=0}^{\infty} \, \lambda_{\Pi}(p^{k+g+M}) U^{M}\nonumber\\
 \, = \,  &\prod_{\substack{p \,\mid \, \mk'   }} \,   L_{p}^{\mathrm{ur}}(u, \Pi\times \psi) \, \sum_{\substack{g= 0}}^{\infty} \, \frac{1}{p^{\mz g}} \Big\{ \lambda_{\Pi}(p^{k+g}) -  \lambda_{\Pi}(p^{k+g-1})\omega(p)U \Big\} \nonumber\\
 \, = \,  &\prod_{\substack{p \,\mid \, \mk'   }} \,  L_{p}^{\mathrm{ur}}(u, \Pi\times \psi) \,  \Big\{ \big(1-  UZ\big)\sum_{\substack{g= 0}}^{\infty} \, \frac{\lambda_{\Pi}(p^{k+g})}{p^{\mz g}}   - \omega(p)U \lambda_{\Pi}(p^{k-1}) \Big\} \nonumber\\
 \, = \, & \prod_{\substack{p \,\mid \, \mk'   }} \,  L_{p}^{\mathrm{ur}}(u, \Pi\times \psi) L_{p}(\mz, \Pi)   \mathfrak{E}_{p}(U,Z; \Pi; k),\nonumber
\end{align}
where
\begin{align}
   \mathfrak{E}_{p}(U,Z; \Pi; k) \ = \  (1-  UZ) \big(\lambda_{\Pi}(p^{k}) -  \lambda_{\Pi}(p^{k-1})Z\big)   - U\omega(p) \lambda_{\Pi}(p^{k-1}) L_{p}(\mz, \Pi)^{-1}. \nonumber
\end{align}
Applying (\ref{GL2EP}) and (\ref{Hecrecur}), we find that
\begin{align}
  \mathfrak{E}_{p}(U,Z; \Pi; k) \, = \ &\lambda_{\Pi}(p^{k}) -\lambda_{\Pi}(p^{k}) UZ  - \lambda_{\Pi}(p^{k-1})Z + \lambda_{\Pi}(p^{k-1})UZ^2\nonumber\\
    &\hspace{60pt}  -  \lambda_{\Pi}(p^{k-1})\omega(p)U +    \lambda_{\Pi}(p^{k-1})\lambda_{\Pi}(p) UZ- \lambda_{\Pi}(p^{k-1}) UZ^2 \nonumber\\
    \, = \, &\big(\lambda_{\Pi}(p^{k}) -  U\lambda_{\Pi}(p^{k-1})\omega(p)\big)  - UZ \big(\lambda_{\Pi}(p^{k}) -  \lambda_{\Pi}(p^{k-1})\lambda_{\Pi}(p) \big)   -Z \lambda_{\Pi}(p^{k-1}) \nonumber\\
    \, = \, &\big(\lambda_{\Pi}(p^{k}) -  U \lambda_{\Pi}(p^{k-1})\omega(p)\big)  - Z\big( \lambda_{\Pi}(p^{k-1}) \, - \, U \lambda_{\Pi}(p^{k-2})\omega(p)\big). \nonumber
\end{align}
From this, we readily observe that (\ref{findcorefactfinUr}) and (\ref{boundmulti}) hold with the choice
\begin{align}
    f_{\Pi,\, \psi}(\delta; u, \mz) \ :=  \ \sum_{\alpha\gamma =\delta} \, \frac{(\mu \omega\psi)(\alpha)(\mu\omega)(\gamma) }{\alpha^u \gamma^{\mz}}.
\end{align}


We turn to the case $\Pi\in \mathcal{A}_{0}(\mathrm{G}_{3})$.  The determination of $\mathfrak{E}_{p}$ requires the pattern-matching function of \texttt{Mathematica} (see \texttt{UrFinExact.nb} or Appendix \ref{sect: UrHeckeExact}). The result is surprisingly clean:
\begin{align}\label{fullramunrdivUr}
     \mathfrak{E}_{p}(U,Z; \Pi; k) \ = \ \lambda_{\Pi}(p^{k}) & -(Z+U) \lambda_{\Pi}(p,p^{k-1}) + U Z  \lambda_{\Pi}(p^2, p^{k-2})  + (U^2\omega(p)+UZ+Z^2) \lambda_{\Pi}(p^{k-1}) \nonumber\\
      &+ U^2 Z^2 \lambda_{\Pi}(p^{k-2})  -UZ(U+Z) \lambda_{\Pi}(p, p^{k-2})
\end{align}
for $k\ge 2$. The same formula holds for $k \in \{0,1\}$ with the convention of Remark \ref{conv: Hecke}. 

Suppose that $p\nmid \mfq$. Then from the relation (\ref{FourtoHeckgl3}), we determine, for $k\ge 3$, that
\begin{align}
       \mathfrak{E}_{p}(U,Z; \Pi; k)
      \ &= \ \sum_{j+i=k} \ f_{\Pi,\, \psi}(p^{j}; u, \mz)\lambda_{\Pi}(p^i),
\end{align}
where
\begin{align}
    f_{\Pi,\, \psi}(p^{j};\,  u, \mz) \ := \  
    \begin{cases}
      \hspace{90pt}  1  \hspace{173pt}\text{ if } j \, = \, 0\\
        \hspace{20pt}U^2+UZ+Z^2-(Z+U)\lambda_{\widetilde{\Pi}}(p) \hspace{90pt} \text{ if } j \, = \, 1\\
        Z+U+U^2Z^2+UZ\lambda_{\widetilde{\Pi}}(p^2)-UZ(U+Z)\lambda_{\widetilde{\Pi}}(p) \hspace{25pt}\text{ if } j \, = \, 2\\
       \hspace{40pt} UZ(U+Z) - U Z  \lambda_{\widetilde{\Pi}}(p) \hspace{108pt}\text{ if } j \, = \, 3\\
        \hspace{93pt} 0  \hspace{171pt} \text{ if } j  \ge  \, 4.
    \end{cases}
\end{align}
The validity for $k\in \{0,1,2\}$ can again be checked by \texttt{Mathematica}. 

Next, suppose that $p\mid \mfq$. Then (\ref{fullramunrdivUr}) simplifies to $ \mathfrak{E}_{p}(U,Z; \Pi; k)=\lambda_{\Pi}(p^k)-U \lambda_{\Pi}(p, p^{k-1})$. By the Jacobi--Trudi identity (\ref{JacobiTrud}) followed by the Hecke recurrence relation (\ref{gl3: Hecrecurrel}), we have
\begin{align}
 \mathfrak{E}_{p}(U,Z; \Pi; k) \, = \, \lambda_{\Pi}(p^k)-U \mathfrak{e}_{\Pi}(p)\lambda_{\Pi}(p^{k-1}).
\end{align}
In this case, we set $  f_{\Pi,\, \psi}(p^{j};\,  u, \mz) \ = \  \mu(p^j)U \mathfrak{e}_{\Pi}(p)$.

We define $f_{\Pi,\, \psi}(\delta; u, \mz)$ as a multiplicative function in $\delta$ whose values on prime powers are given as above. This proves (\ref{findcorefactfinUr}), and the bound (\ref{boundmulti}) follows from (\ref{uniformRC}) and (\ref{epiboundgl3}).
\end{proof}


\subsection{Euler product over $p \,\nmid\, g_{30}g_{40}\mk_{1}' \mk_{2}' \md\ml$ }\label{colHectwistRS}  

 The computations of this section are implemented in the accompanying \texttt{Mathematica} file \texttt{UrInf.nb} (Appendix \ref{sect: UnrInfExact}). Recall that $o_{p}(\mk_{1})=o_{p}(\mk_{2})=0$, and 
\begin{align}\label{F3def}
    (F_{3})_{p}(p^{g}) \, &:= \, \lambda_{\Pi_{1}}(p^{g})(F_{2})_{p}(p^{g})  +  \lambda_{\Pi_{2}}(p^{g})(F_{1})_{p}(p^{g}) - \lambda_{\Pi_{1}}(p^{g})\lambda_{\Pi_{2}}(p^{g}).   
\end{align}

\begin{prop}\label{HecktwistRSnuden}
 On the half-planes 
 \begin{align}\label{Urconthalf}
     \re u_{1}, \ \re \, u_{2} >1/2, \hspace{10pt} \text{ and } \hspace{10pt} \re \mz \, > \, 1,
 \end{align}
 we have
    \begin{align}\label{firsttripEP}
 \prod_{p \,\nmid\, g_{30}g_{40}\mk_{1}' \mk_{2}' \md\ml} \Big(L_{p}^{\mathrm{ur}}(\mz, \Pi_{1}\otimes \Pi_{2})\prod_{i=1}^{2} \, L_{p}^{\mathrm{ur}}(u_{i}, \Pi_{i}\times \psi_{i}) \Big)^{-1} \sum_{g=0}^{\infty} \, \frac{(F_{3})_{p}(p^{g})}{(p^{\mz})^g}  \ \preceq \ 1
\end{align}
\end{prop}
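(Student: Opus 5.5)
The plan is to compute the local factor at each prime $p \nmid g_{30}g_{40}\mk_{1}'\mk_{2}'\md\ml$ explicitly, and then show that after dividing by the three $L$-factors it has the form $1+\mathcal{A}_p$ with $\sum_p|\mathcal{A}_p|\preceq 1$. We may take $\re u_i=1/2+\epsilon'$ and $\re\mz=1+\epsilon'$ as the critical case. At such $p$ we have $o_p(\mk_1)=o_p(\mk_2)=0$. We also have $p\nmid\ml$, so $\psi_i(p)\neq 0$ and $o_p(a)=0$; hence $g$ runs over all $g\ge 0$.

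First, evaluate $(F_i)_p(p^g)=\sum_{M\ge0}\lambda_{\Pi_i}(p^{M+g})U_i^M$ with $U_i:=\psi_i(p)p^{-u_i}$ by Lemma \ref{Heckasymp}. This gives
\[
(F_i)_p(p^g)=L^{\mathrm{ur}}_p(u_i,\Pi_i\times\psi_i)\,P_g(U_i),
\]
where $P_g$ is an explicit polynomial of degree $\mfr-1$ in $U_i$ whose coefficients are $\lambda_{\Pi_i}(p^{g}),\lambda_{\Pi_i}(p^{g\pm1})$ (twisted by $\omega(p)$). Substituting into (\ref{F3def}) and summing over $g$ against $p^{-\mz g}$ produces terms of two shapes. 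The first is $\sum_g \lambda_{\Pi_1}(p^g)\lambda_{\Pi_2}(p^{g+j})p^{-\mz g}$ with $j\in\{-1,0,1\}$, which are shifted naive Rankin--Selberg series (\ref{dshiftnaiveDsdef}). By Lemma \ref{lem: diffby1RSn} each equals $L_p^{\mathrm{ur}}(\mz,\Pi_1\otimes\Pi_2)$ times an explicit polynomial in $p^{-\mz}$. The second shape is the subtracted term $\mathcal{L}_p(\mz,\Pi_1\otimes\Pi_2)=L_p^{\mathrm{ur}}\cdot H_p$, with $H_p$ given by Lemma \ref{lem: naiveRSexpl} or Remark \ref{Gl2Hfuncrem}. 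Dividing by $L^{\mathrm{ur}}_p(\mz)\prod_i L^{\mathrm{ur}}_p(u_i)$, the normalized local factor becomes an exact polynomial in $U_1,U_2,X:=p^{-\mz}$ whose coefficients are Hecke data at $p$. Its structure mirrors the identity $\mathbf{D}_p=L^{-1}+L^{-1}-L^{-1}L^{-1}$ from the $\mathcal{U}^{\mathbf{2}}$ analysis, now with extra $X$-corrections. For $\PGL_2$ this is a hand computation; for $\PGL_3$ we would do it in \texttt{Mathematica} (\texttt{UrInf.nb}), using (\ref{FourtoHeckgl3}) to express everything in $\lambda_{\Pi_i}(p),\lambda_{\widetilde{\Pi_i}}(p)$ at unramified $p$.

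Second, expand the normalized factor as $1+\sum c_{a,b,c}\,U_1^aU_2^bX^c$ and check that the constant term is exactly $1$. This should work because the terms linear in $U_i$ alone cancel between the $L_p(u_i)^{-1}$ normalization and $(F_i)_p$, just as in Proposition \ref{lem: ACKit}. Every surviving monomial then has total weight $a/2+b/2+c>1$. The dangerous ones are those with weight close to $1$ and coefficients involving $\lambda_{\Pi_i}(p)$, for instance $U_1U_2\lambda_{\Pi_1}(p)\lambda_{\Pi_2}(p)$, $X\lambda_{\Pi_1}(p)\lambda_{\Pi_2}(p)$, and $U_1^2 X\,(\cdots)$. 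We will bound these terms exactly as in Claim \ref{EPgl3sumclaimbound}:
\begin{itemize}
\item apply $|xy|\le|x|^2+|y|^2$;
\item use (\ref{trivrankcoefbdd}) together with Li's bound (\ref{Lisbound}), which gives $\sum_p|\lambda_{\Pi}(p)|^2p^{-1-\epsilon}\ll\log\mathfrak{C}^{\epsilon}$;
\item use Lemma \ref{lem: diffEUproLi} for products of the form $\prod_p(1\pm\lambda_{\Pi_1}(p)\lambda_{\Pi_2}(p)p^{-s})$;
\item for higher-weight monomials carrying powers of $\lambda$, use (\ref{uniformRC}) with $\vartheta\le5/14$.
\end{itemize}
We would also need $|\psi_i(p)|=1$ and uniformity in $\psi$ and $t$, which is automatic since only absolute values enter.

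Finally, combine. Since $\sum_p|\mathcal{A}_p|\ll_\epsilon 1+\log\mathfrak{C}^{\epsilon}$, Lemma \ref{absunifin} applies. On the finitely many small primes where $|\mathcal{A}_p|$ might exceed $1/2$, each local factor is $O(1)$ by (\ref{uniformRC}) with $\vartheta<1/2$. Hence the product is $\ll\exp(O(\log\mathfrak{C}^\epsilon))\preceq1$.

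The main obstacle is the exact $\PGL_3$ local computation. The claim is that after normalization there is no weight-$\le1$ term with a coefficient that cannot be controlled by $|\lambda|^2/p^{1+\epsilon}$. Concretely, we must verify that cross terms such as $U_1U_2\,\lambda_{\Pi_1}(p)\lambda_{\Pi_2}(p)$ appear at weight exactly $1+2\epsilon'$, since weight $1$ with $\re u_i=1/2$ would be borderline. We also need every term of the form $U_1^2\lambda_{\widetilde{\Pi_1}}(p)$ to be cancelled by the $L_p(u_1,\Pi_1\times\psi_1)^{-1}$ normalization; a term of weight $1$ with an unbounded coefficient is not summable. The unnormalized factor has many terms, so if that cancellation is not visible directly, we would pattern-match in \texttt{Mathematica}, as in Proposition \ref{lem: ACKit}.
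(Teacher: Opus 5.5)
Your proposal is correct and is essentially the paper's argument. The paper computes the normalized local factor exactly as a polynomial in $U_1,U_2,Z$, by hand for $\mathrm{PGL}_2$ and via \texttt{UrInf.nb} for $\mathrm{PGL}_3$. Modulo $O_\epsilon(p^{-1-\epsilon})$, only $1-\lambda_{\Pi_1}(p)\lambda_{\Pi_2}(p)U_1U_2$, the terms $(\lambda_{\widetilde{\Pi_1}}(p)\lambda_{\Pi_2}(p)U_1+\lambda_{\Pi_1}(p)\lambda_{\widetilde{\Pi_2}}(p)U_2)(U_1U_2-Z)$, and a $U_1U_2Z(\cdots)$ term survive, so the pure $U_i^k$ terms you worried about do cancel. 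These are then summed over $p$ with the Rankin--Selberg and Li bounds exactly as in Claim \ref{EPgl3sumclaimbound}. The one point you leave implicit is the ramified primes $p\mid\mfq$, where (\ref{FourtoHeckgl3}) is unavailable. The paper simply records the factor there as $1+O_\epsilon(p^{-\epsilon})$, and your crude $O(1)$ local bound together with $\prod_{p\mid\mfq}O(1)\ll_\epsilon\mfq^{\epsilon}$ already suffices.
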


\begin{proof}
    Suppose that $\re u_{1},\, \re u_{2}, \,  \re \mz \gg 1$.The calculation is routine and proceeds as before, using Lemma \ref{Heckasymp}, Lemma \ref{lem: diffby1RSn}, and the Hecke recurrence relations. Again, we carry out this with the aid of \texttt{Mathematica} (see \texttt{UrInf.nb} or Appendix \ref{sect: UnrInfExact}). Let $U_{i}:= \psi_{i}(p)p^{-u_{i}}$ ($i=1,2$), $Z:= p^{-\mz}$, $\omega=\omega(p)$. 
    
    We first consider the case when $\Pi_{1},   \Pi_{2} \in \mathcal{A}_{0}(\mathrm{G}_{2})$. The local factor in (\ref{firsttripEP}) can be evaluated as
   \begin{align}
       1 -  &\omega\left\{ (U_{1}U_{2})^2 + Z^2 -(U_{1}U_{2}Z)^2 \right\} 
       + \omega(Z-UV) \left\{ (U_{1} Z-U_{2}) \lambda_{\Pi_{1}}(p)  + (U_{2}Z-U_{1})\lambda_{\Pi_{2}}(p)\right\} \nonumber\\
 & -U_{1} U_{2} (1 + Z^2 \omega) \lambda_{\Pi_{1}}(p)\lambda_{\Pi_{2}}(p) +U_{1} U_{2} Z \omega \left(\lambda_{\Pi_{1}}(p)^2 +\lambda_{\Pi_{2}}(p)^2\right). \label{gl2:infinprodcheck}
\end{align}
For $\re u_{1}, \, \re \, u_{2} >1/2$ and $\re \mz >1$, it is easy to observe, with $\vartheta<1/2$, that (\ref{gl2:infinprodcheck}) is   
\begin{align}
     1 - \lambda_{\Pi_{1}}(p)\lambda_{\Pi_{2}}(p) U_{1}U_{2}\, + \, O_{\epsilon}(p^{-1-\epsilon}),
\end{align}
and the bound in (\ref{firsttripEP}) follows from Lemma \ref{lem: diffEUproLi}. 

Next, we consider $\Pi_{1}, \Pi_{2}\in \mathcal{A}_{0}(\mathrm{G}_{3})$. The corresponding polynomial consists of $158$ terms (upon a judicious combination of 753 terms) and is too lengthy to reproduce here. It has degree $3,3,6$ in $U_{1}$, $U_{2}$, $Z$.  Our \texttt{Mathematica} code shows (using $\vartheta\le 5/14$) that the local factor in (\ref{firsttripEP}) is 
\begin{align}
        1- &\lambda_{\Pi_{1}}(p)\lambda_{\Pi_{2}}(p) U_{1}U_{2} + \big(\lambda_{\widetilde{\Pi_{1}}}(p)\lambda_{\Pi_{2}}(p) U_{1}+\lambda_{\Pi_{1}}(p)\lambda_{\widetilde{\Pi_{2}}}(p)U_{2}\big)(U_{1}U_{2} -Z) \nonumber\\
      &\hspace{50pt}+U_{1}U_{2}Z\big(\lambda_{\widetilde{\Pi_{1}}}(p)\lambda_{\Pi_{2}}(p)^2+ \lambda_{\Pi_{1}}(p)^2\lambda_{\widetilde{\Pi_{2}}}(p)\big) \, + \, O_{\epsilon}(p^{-1-\epsilon})  \hspace{70pt} (p\nmid \mfq); \nonumber
\end{align}
 and is $1+O_{\epsilon}(p^{-\epsilon})$ if $p\mid \mfq$. The result follows from the same argument as in Section \ref{twistedprinesund: GL3}.
\end{proof}

By throwing in a finite number of Euler factors that are $\ll_{\epsilon} (\mfq g_{30}g_{40}\mk_{1}' \mk_{2}' \md\ml)^{\epsilon}\preceq 1$, we have
\begin{align}\label{giganundivEUr}
    \prod_{p \,\nmid\, g_{30}g_{40}\mk_{1}' \mk_{2}' \md\ml} \, \sum_{g= 0}^{\infty} \, \frac{(F_{3})_{p}(p^{g})}{(p^{\mz})^g} = L(\mz, \Pi_{1}\otimes \Pi_{2}) \prod_{i=1}^{2} \, L(u_{i}, \Pi_{i}\times \psi_{i})  \mathfrak{A}^{(g_{30}g_{40}\mk_{1}' \mk_{2}' \md\ml)}\left(u_{1}, u_{2},\mz; \,  (\Pi_{i},\psi_{i})_{i=1}^{2}\right),
\end{align}
where $\mathfrak{A}$ denotes an infinite product that is $\preceq 1$ on (\ref{Urconthalf}). With the holomorphy of the $\hbox{GL}_{\mfr}\times \hbox{GL}_{1}$ and $\hbox{GL}_{\mfr}\times \hbox{GL}_{\mfr}$ $L$-functions, the left side of (\ref{giganundivEUr}) admits a holomorphic continuation to  (\ref{Urconthalf}).


\subsection{Contour-shifting and cleaning}\label{sect: contshclean}

 Applying Propositions \ref{finiteHecprop} and \ref{HecktwistRSnuden} to (\ref{pulloutg30g40}), we may now shift the lines of integration to $\re s_{1}=\re s_{2}=\re z=\epsilon$, followed by rearrangement of summation. Then the sum $\mathcal{U}^{\pm}(c; a, e, \ell; \psi)_{\circ}$ is equal to
\begin{align}
 &\psi(\mp 1)\mu^2(a)\, \iiiint_{(\epsilon)} \, \eta(t)	 \widetilde{V}_{\alpha,\beta}(z;it)\widetilde{\mathcal{V}}^{^\pm}_{\mcX}(s_{1},s_{2}; z)  \Big( \frac{c}{Q\ell}\Big)^{-s_{1}-s_{2}} (\mfq Q^\mfr)^z  L(\mz, \Pi_{1}\otimes \Pi_{2}) \prod_{i=1}^{2} \, L(u_{i}, \Pi_{i}\times \psi_{i}) \nonumber\\[0.5em]
 &\hspace{10pt} \cdot \,  \sum_{(d_{1}, \, \ml)=1}   \sum_{\substack{d_{2}\mid \ell, \, d_{3} \mid \ell \\ (d_{2}d_{3}, \, \mathfrak{a})=1}} \sum_{(g_{1}, \, ec)=1} \, \mathop{\sum_{\substack{g_{30} \mid (d_{3}\ml)^{\infty} \\ (g_{30},\, d_{1}d_{2}ec)=1 }} \sum_{\substack{g_{40} \mid (d_{2}\ml)^{\infty} \\ (g_{40}, \, d_{1}d_{3}ec )=1}}}_{(g_{30}, g_{40})=1}  \,  \mathcal{G}_{\textbf{d}, g_{1}, g_{30}, g_{40}; \psi}(s_{1}, s_{2},z; it )    \nonumber\\
 & \hspace{15pt}\cdot \mathop{\sum_{(\mk_{1}',\,  g_{40})=1} \, \sum_{(\mathfrak{k}_{2}', \, g_{30})=1}}_{(\mathfrak{k}_{1}', \mathfrak{k}_{2}')=1}  \, \zeta_{g_{30}g_{40}\mk_{1}'\mk_{2}' \md a\ell}(\mz)\mathfrak{A}^{(g_{30}g_{40}\mk_{1}' \mk_{2}' \md\ml)}\left(u_{1}, u_{2},\mz; \,  (\Pi_{i},\psi_{i})_{i=1}^{2}\right) \prod_{i=1}^2 \,   \Big(\sum_{\delta\beta= \mk_{i}'} \ f_{\Pi_{i},\, \psi_{i}}(\delta; u_{i}, \mz)\lambda_{\Pi_{i}}(\beta)\Big)     \nonumber\\
 &\hspace{50pt} \cdot \,  L_{ \mk_{i}'}^{\mathrm{ur}}(u_{i}, \Pi_{1}\times \psi_{i}) L_{ \mk_{i}'}(\mz, \Pi_{i})      \, \mathop{\sum \sum}_{\substack{(g_{3}', \, \md\ml)=1 \\ (r_{1}, \, d_{1}d_{3}\ml)=1 \\ g_{3}'r_{1}=\mk_{1}'}} \,  \frac{ \mu(r_{1}) }{(g_{3}')^{1+\alpha+z}   r_{1}^{\mathfrak{w}+\alpha}}  \mathop{\sum \sum}_{\substack{(g_{4}', \, \md\ml)=1 \\ (r_{2}, \, d_{1}d_{2}\ml)=1\\ g_{4}'r_{2}=\mk_{2}'}} \, \frac{ \mu(r_{2})   }{ (g_{4}')^{1+\beta+z}  r_{2}^{\mathfrak{w}+\beta}} \nonumber\\
 &\hspace{90pt} \cdot \, \sum_{(H, \, g_{30}g_{3}'\mathfrak{a})=1} \sum_{(K, \, g_{40}g_{4}'\mathfrak{a})=1} \frac{ \lambda_{g_{1}d_{1}d_{2}g_{40}\mk_{2}'H}\overline{\lambda}_{g_{1}d_{1}d_{3}g_{30}\mk_{1}'K}\psi(H) \overline{\psi}(K)}{H^{\frac{1}{2}+s_{1}+it}K^{\frac{1}{2}+s_{2}-it}}. \label{finalequaUrclea}
\end{align}
The notation above have been explained in (\ref{mamlu12}), (\ref{2-varMelint}), (\ref{Greicporsum}), (\ref{findcorefactfinUr}), (\ref{giganundivEUr}).


\begin{lem}\label{Trianclean}
The following inequality holds:
    \begin{align}\label{cleanupexprUr}
 |\, \mathcal{U}^{\pm}(c; a, e, \ell; \psi)_{\circ}|\, \preceq \,  & \frac{1}{a} \iiiint_{(\epsilon)} \eta(t) \big|\widetilde{V}_{\alpha,\beta}(z;it)\widetilde{\mathcal{V}}^{^\pm}_{\mcX}(s_{1},s_{2}; z)\big| |L(u_{1}, \Pi_{1}\times \psi) L(u_{2}, \Pi_{1}\times \overline{\psi})|  \nonumber\\
 &\hspace{40pt}  \, \cdot \,\max_{\substack{v_{1}, v_{2}, g_{3}, g_{4}\le (TQ)^{\theta}} } \, \Big|\sum_{(H, \, g_{3}\mathfrak{a})=1}\, \sum_{ (K, \, g_{4}\mathfrak{a})=1}\frac{ \lambda_{v_{1}H}\overline{\lambda_{v_{2}K}}\psi(H) \overline{\psi}(K)}{H^{\frac{1}{2}+s_{1}+it}K^{\frac{1}{2}+s_{2}-it}} \Big|.
\end{align}
\end{lem}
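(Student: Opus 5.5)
We start from the exact expression (\ref{finalequaUrclea}) and simply take absolute values inside all integrals and sums, after which the work is to show that everything other than $|L(u_{1},\Pi_{1}\times\psi)L(u_{2},\Pi_{2}\times\overline{\psi})|$, the weights, and a single $H,K$ double sum is $\preceq 1/a$. On the lines $\re s_{1}=\re s_{2}=\re z=\epsilon$ we have $\re u_{i}=1/2+2\epsilon+O(\re\alpha,\re\beta)>1/2$ and $\re\mz>1$, so we are inside the region (\ref{Urconthalf}). There $\mathfrak{A}\preceq 1$ by Proposition \ref{HecktwistRSnuden}. Also $|L(\mz,\Pi_{1}\otimes\Pi_{2})|\preceq 1$, by (\ref{Lisbound}) and Cauchy's inequality, since $\re\mz\ge 1+\epsilon$ is bounded away from the pole. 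The local factors $\zeta_{\cdots}(\mz)$, $L^{\mathrm{ur}}_{\mk_i'}(u_i,\Pi_i\times\psi_i)$ and $L_{\mk_i'}(\mz,\Pi_i)$ are each $\ll_\epsilon(\mk_i'\cdots)^{\epsilon}\preceq 1$, because $\vartheta<1/2$ and all variables are $\preceq (TQ)^{O(1)}$ by Lemma \ref{Urtrunc} and the support of $\lambda_h$.

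Next we bound the arithmetic weight attached to each fixed tuple $(d_i,g_1,g_{30},g_{40},\mk_1',\mk_2')$. In $\mathcal{G}$ (see (\ref{Greicporsum})) the factor $(a/(a,g_1g_{30}g_{40}))^{-\mz}$ has modulus $\le (a,g_1g_{30}g_{40})/a$. This is the source of the $1/a$. The leftover $(a,g_1g_{30}g_{40})$ is at most $\tau$-many divisors and must be absorbed against the factors $1/g_1$ and $g_{30}^{-1-\epsilon}g_{40}^{-1-\epsilon}$ in $\mathcal{G}$; for instance we could write $(a,g_1g_{30}g_{40})\le (a,g_1)(a,g_{30})(a,g_{40})$ and sum $(a,g)/g$ via Lemma \ref{lem: simpleGCD}.

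The remaining factors are handled as follows.
\begin{itemize}
\item The $d_1$-, $d_2$-, $d_3$-sums are finite, carry the weights $d_1^{-1-\epsilon}$, $d_2^{-1/2-\epsilon}$, $d_3^{-1/2-\epsilon}$, and run over divisors of $\ell$ or integers $\le (TQ)^{\theta}$, so they contribute $\preceq 1$.
\item The $g_{30}$- and $g_{40}$-sums run over integers dividing $(d\ml)^{\infty}$, so $\sum g_{30}^{-1-\epsilon}\ll (d\ml)^\epsilon\preceq 1$ by (\ref{elefinitedivbdd}).
\item The sums over $\mk_i'=g_i'r_i$ are weighted by $\mk_i'^{-1-\epsilon}$ times the convolution $\sum_{\delta\beta=\mk_i'}|f(\delta)\lambda_{\Pi_i}(\beta)|$.
\end{itemize}

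The $H,K$ sum depends on the outer variables only through $v_1=g_1d_1d_2g_{40}\mk_2'$, $v_2=g_1d_1d_3g_{30}\mk_1'$ (both $\le (TQ)^\theta$, since otherwise $\lambda$ vanishes) and through the coprimality moduli $g_{30}g_3'$ and $g_{40}g_4'$. We therefore bound it by its maximum over $v_1,v_2,g_3,g_4\le (TQ)^{\theta}$. This leaves the outer sums with nonnegative weights and no harmonic dependence.

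The main obstacle is the $\mk_i'$-sum, because no GRC bound on $\lambda_{\Pi_i}$ is available. We will bound
\[\sum_{\mk'\le (TQ)^\theta}\frac{(|f|*|\lambda_{\Pi}|)(\mk')}{\mk'^{1+\epsilon}}\le \Big(\sum_\delta\frac{|f(\delta)|}{\delta^{1+\epsilon}}\Big)\Big(\sum_{\beta\le (TQ)^\theta}\frac{|\lambda_\Pi(\beta)|}{\beta^{1+\epsilon}}\Big).\]
The first factor is $\preceq 1$ by (\ref{boundmulti}), since $-3/2+\vartheta<-1$ and the $\delta_0\mid\mfq^\infty$ part contributes only $\mfq^\epsilon$. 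The second factor is $\preceq 1$ by Cauchy's inequality together with Lemma \ref{lem: ROA}. The $\mu(r_i)$ and $g_i'$ splittings of $\mk_i'$ cost only $\tau(\mk_i')\preceq 1$. Finally we combine all of this, note that $|\psi(\mp1)\mu^2(a)|\le 1$, and conclude (\ref{cleanupexprUr}).
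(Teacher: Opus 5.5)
Your proposal is correct and follows essentially the same route as the paper. You apply the triangle inequality to (\ref{finalequaUrclea}), pull out the maximum of the $H,K$ double sum, extract $1/a$ from $(a/(a,g_1g_{30}g_{40}))^{-\mz}$, bound the $\mk_i'$-sums through the convolution with $f_{\Pi_i,\psi_i}$ using Cauchy's inequality and Lemma \ref{lem: ROA}, and sum the leftover gcd with Lemma \ref{lem: simpleGCD}. The only cosmetic difference is that you split $(a,g_1g_{30}g_{40})\le (a,g_1)(a,g_{30})(a,g_{40})$, whereas the paper groups $g=g_1g_{30}g_{40}$ before applying Lemma \ref{lem: simpleGCD}.
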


\begin{proof}
Apply the triangle inequality to (\ref{finalequaUrclea}) in the following way:
\begin{align}
   |\,\mathcal{U}^{\pm}(c; a, e, \ell; \psi)_{\circ}| \, & \preceq \  \mu^2(a)\iiiint_{(\epsilon)} \, \eta(t)	 \big|\widetilde{V}_{\alpha,\beta}(z;it)\widetilde{\mathcal{V}}^{^\pm}_{\mcX}(s_{1},s_{2}; z)\big| \big| \prod_{i=1}^{2} \, L(u_{i}, \Pi_{i}\times \psi_{i})L(\mz, \Pi_{1}\otimes \Pi_{2})\Big|  \nonumber\\[0.5em]
   &\hspace{90pt} \cdot \mathop{\sum \ \cdots  \ \sum}_{d_{1}, d_{2}, d_{3}, g_{1}, g_{30}, g_{40}, \mk_{1}', \mk_{2}', g_{3}', g_{4}', r_{1}, r_{2}} |(\cdots)|\nonumber\\[0.5em]
     &\hspace{70pt} \cdot\Big|\sum_{(H, \, g_{30}g_{3}'\mathfrak{a})=1} \sum_{(K, \, g_{40}g_{4}'\mathfrak{a})=1} \frac{\lambda_{g_{1}d_{1}d_{2}g_{40}\mk_{2}'H}\overline{\lambda}_{g_{1}d_{1}d_{3}g_{30}\mk_{1}'K}\psi(H) \overline{\psi}(K)}{H^{\frac{1}{2}+s_{1}+it}K^{\frac{1}{2}+s_{2}-it}} \Big|\nonumber\\[0.5em]
     \, &\hspace{-70pt} \preceq \   \mu^2(a)\iiiint_{(\epsilon)} \, \eta(t)	 \big|\widetilde{V}_{\alpha,\beta}(z;it)\widetilde{\mathcal{V}}^{^\pm}_{\mcX}(s_{1},s_{2}; z)\big| \, \big| \prod_{i=1}^{2} \, L(u_{i}, \Pi_{i}\times \psi_{i})L(\mz, \Pi_{1}\otimes \Pi_{2})\Big|  \nonumber\\[0.5em]
      &\hspace{15pt} \cdot \max_{\substack{d_{1}, d_{2}, d_{3} \\  g_{1}, g_{30}, g_{40}, g_{3}', g_{4}'\\ r_{1},r_{2}} } \, \Big|\sum_{(H, \, g_{30}g_{3}'\mathfrak{a})=1} \sum_{(K, \, g_{40}g_{4}'\mathfrak{a})=1} \frac{ \lambda_{g_{1}d_{1}d_{2}g_{40}g_{4}'r_{2}H}\overline{\lambda}_{g_{1}d_{1}d_{3}g_{30}g_{3}'r_{1}K}\psi(H) \overline{\psi}(K)}{H^{\frac{1}{2}+s_{1}+it}K^{\frac{1}{2}+s_{2}-it}} \Big| \nonumber\\[0.5em]
      &\hspace{40pt} \cdot \mathop{\sum \ \cdots  \ \sum}_{d_{1}, d_{2}, d_{3}, g_{1}, g_{30}, g_{40}, \mk_{1}', \mk_{2}', g_{3}', g_{4}', r_{1}, r_{2}} |(\cdots)|. \label{mainTRIAN}
\end{align}
All variables appearing under the ``$\max$'' occur within the arbitrary coefficients $\lambda_{(\cdots)}$, and therefore have size at most $(TQ)^{\theta}$. Recall that $\re\, u_{1}, \, \re \, u_{2} >1/2$ and $\re \mz >1$ (see (\ref{mamlu12})). Observe the elementary bounds for the expressions in $(\cdots)$:
\begin{align}
    |\mathcal{G}_{\textbf{d}, g_{1}, g_{30}, g_{40}; \psi}(s_{1}, s_{2},z; it )| \, \ll \,  \frac{1}{g_{1}d_{1} (d_{2}d_{3})^{1/2} g_{30}g_{40}}\Big(\frac{a}{(a, g_{1}g_{30}g_{40})}\Big)^{-\mz}; \nonumber
\end{align}
\begin{align}
\zeta_{g_{30}g_{40}\mk_{1}'\mk_{2}' \md a\ell}(\mz),  \hspace{5pt} \mathfrak{A}^{(g_{30}g_{40}\mk_{1}' \mk_{2}' \md\ml)}\left(u_{1}, u_{2},\mz; \,  (\Pi_{i},\psi_{i})_{i=1}^{2}\right), \hspace{5pt}   L_{\mk_{i}'}^{\mathrm{ur}}(u_{i}, \Pi_{1}\times \psi_{i}), \hspace{5pt} L_{ \mk_{i}'}(\mz, \Pi_{i}),  \hspace{5pt} |L(\mz, \Pi_{1}\otimes \Pi_{2})|  \,  \preceq \,  1; \nonumber
\end{align}
\begin{align}
   \Big| \mathop{\sum \sum}_{\substack{(g_{3}', \, \md\ml)=1 \\ (r_{1}, \, d_{1}d_{3}\ml)=1 \\ g_{3}'r_{1}=\mk_{1}'}}  \frac{ \mu(r_{1}) }{(g_{3}')^{1+\alpha+z}   r_{1}^{\mathfrak{w}+\alpha}} \Big| \, \le \, \sum_{g_{3}'r_{1}=\mk_{1}'} \, \frac{1}{g_{3}' r_{1}} \, \preceq \, \frac{1}{\mk_{1}'};\nonumber
\end{align}
and the following uses $\vartheta<1/2$, (\ref{boundmulti}), and Lemma \ref{lem: ROA}: 
\begin{align}
  \sum_{\mk_{i}'\le (TQ)^{\theta}} \,  \frac{1}{\mk_{i}'}   \sum_{\delta\beta= \mk_{i}'} \ |f_{\Pi_{i}, \psi_{i}}(\delta; u_{i}, \mz)||\lambda_{\Pi_{i}}(\beta)| 
    \, &\le \,  \sum_{\substack{\beta\le (TQ)^{\theta}}} \, \frac{|\lambda_{\Pi_{i}}(\beta)|}{\beta} \sum_{\delta_{0}\mid \mfq^{\infty}} \frac{1}{(\delta_{0})^{3/2-2\vartheta}} \sum_{(\delta', \mfq)=1} \, \frac{1}{(\delta')^{3/2-\vartheta}}\nonumber\\
    \,  &\preceq \, \sum_{\beta\le (TQ)^{\theta}} \, \frac{|\lambda_{\Pi_{i}}(\beta)|^2}{\beta} \, \preceq \, 1.\nonumber
\end{align}

Plugging the bounds above into (\ref{mainTRIAN}) and dropping the coprimality constraints of the summation by positivity, we have
\begin{align}
 |\,\mathcal{U}^{\pm}(c; a, e, \ell; \psi)_{\circ}|  \   & \preceq \ \mu^2(a) \iiiint_{(\epsilon)} \eta(t) \big|\widetilde{V}_{\alpha,\beta}(z;it)\widetilde{\mathcal{V}}^{^\pm}_{\mcX}(s_{1},s_{2}; z)\big|  \prod_{i=1}^{2}\, | L(u_{i}, \Pi_{i}\times \psi_{i})|  \\
 & \hspace{5pt} \, \cdot \, \, \max_{\substack{d_{1}, d_{2}, d_{3} \\  g_{1}, g_{30}, g_{40}, g_{3}', g_{4}'\\ r_{1},r_{2}} } \, \Big|\sum_{(H, \, g_{30}g_{3}'\mathfrak{a})=1} \sum_{(K, \, g_{40}g_{4}'\mathfrak{a})=1} \frac{ \lambda_{g_{1}d_{1}d_{2}g_{40}g_{4}'r_{2}H}\overline{\lambda}_{g_{1}d_{1}d_{3}g_{30}g_{3}'r_{1}K}\psi(H) \overline{\psi}(K)}{H^{\frac{1}{2}+s_{1}+it}K^{\frac{1}{2}+s_{2}-it}} \Big| \\[0.5em]
 & \hspace{10pt} \cdot  \frac{1}{a}\sum_{d_{1}\le (TQ)^{\theta}}   \sum_{\substack{d_{2}\mid \ell }} \,  \sum_{ d_{3} \mid \ell} \, \sum_{g_{1}\le (TQ)^{\theta}} \, \mathop{\sum_{\substack{g_{30} \le (TQ)^{\theta} }} \sum_{\substack{g_{40} \le (TQ)^{\theta} }}}  \,  \frac{(a, g_{1}g_{30}g_{40})}{g_{1}g_{30}g_{40}d_{1}\sqrt{d_{2}d_{3}} }.  \label{gluefora}
\end{align}
The sums over $d_{i}$'s are clearly $\preceq 1$. Making the change of variables $g=g_{1}g_{30}g_{40}$, Display (\ref{gluefora}) is $\preceq a^{-1}$ because of Lemma \ref{lem: simpleGCD}. This completes the proof of Lemma \ref{Trianclean}.
\end{proof}

\begin{prop}\label{afteriniclearnUr}
Let $\mathcal{X}$ be chosen as in (\ref{keyXvalue}) and  $\ma := rcea$. We have the estimate
    \begin{align}
     \bigg| \,  \sum_{h, k \ge 1} \ \frac{\lambda_{h}\overline{\lambda_{k}}}{\sqrt{hk}}  \ \mathcal{U}^{(r)}(h, k)\bigg|    \, &\preceq \,  Q  \max_{\substack{AL 
	 \ll   \frac{C}{Q}(TQ)^{\mfr/2+\theta}\mathfrak{C} \\ AEL \ll \mathfrak{C} (TQ)^{1000}  }} \,  \frac{1 }{A^2E^2L}  \, \sideset{}{^d}{\sum}_{S\ge 1} \, \min_{k_{1}\ge 1} \frac{\min\{T, S\} \mcX^{k_{1}-1} }{S^{k_{1}}} \nonumber\\
    &\hspace{40pt}\cdot \, \sum_{f\ll \mfq AEL} \, \sum_{\mnl'\ll \, \mfq AEL/f} \, \   \sideset{}{^*}{\sum}_{\substack{\psi \, (\mnl')}}   \, \int_{\substack{\re s =\epsilon \\ |\im s| \ll \max\{T,S\} }} \,  \max_{  AE \le \ma \le 4\mfq AEC}\, \max_{g_{3}, v_{1}\le (TQ)^{\theta} }\,      \nonumber\\
    &\hspace{80pt} 
      \Big| L(1/2+s,\,  \Pi_{1}\times \psi) \sum_{\substack{(H, \, g_{3}\ma)=1 }} \, \frac{ \lambda_{v_{1}H}\psi(H) }{H^{\frac{1}{2}+s}} \Big|^2	\, |ds|.
    \end{align}
\end{prop}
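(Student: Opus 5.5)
The plan is to start from (\ref{dyadicaelsum}), where the $\mathcal{U}^{(\mathbf{r})}$ bilinear form is bounded by $Q\max (AE^2L)^{-1}\sum_{\pm}\sum_{c\le C}c^{-1}\sum_{\mnl}\sum_{rea\ell=\mnl}\sum_{\psi\neq\psi_0}\mu^2(a)|\mathcal{U}^{\pm}(c;a,e,\ell;\psi)|$. We split $\mathcal{U}^{\pm}=\mathcal{U}^{\pm}_{\circ}-\mathcal{U}^{\pm}_{\mathbf{d}}$; the diagonal part is already handled by Propositions \ref{+vedrop} and \ref{Umindneg}, so we only need to bound $\mathcal{U}^{\pm}_{\circ}$. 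For this we insert Lemma \ref{Trianclean}. That lemma supplies the extra factor $1/a$, which turns $(AE^2L)^{-1}$ into $(A^2E^2L)^{-1}$. The sum over $c\le C$ with weight $1/c$ costs only $\preceq 1$; we keep the dependence on $c$ solely through $\ma=rcea$, which lies in $[AE,4\mfq AEC]$, and take the maximum over $\ma$ inside.

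Next we treat the integrals. On $\re s_1=\re s_2=\re z=\epsilon$, the factor $\widetilde V_{\alpha,\beta}(z;it)$ decays rapidly in $\im z$ by (\ref{stdStirAFEcut}), so we may take $|\im z|\le (TQ)^{\epsilon}$. Lemma \ref{SOVmainlem} with $k_2$ large then restricts to $|s_1+s_2|\le (TQ)^{\epsilon}$. We decompose dyadically in $S\asymp \max\{|s_1|,|s_2|\}$, and on each piece Lemma \ref{SOVmainlem} gives $\min_{k_1}\mcX^{k_1-1}S^{-k_1}$. Since $u_1=1/2+it+s_1+\dots$ and $u_2=1/2-it+s_2+\dots$ with $|s_1+s_2|$ small, we have $u_1\approx \overline{u_2}$ up to a shift of size $(TQ)^{\epsilon}$. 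Substituting $s=s_1+it$ (roughly), the $t$- and $s_1$-integrations collapse to a single vertical integral over $|\im s|\ll\max\{T,S\}$, with measure of length $\min\{T,S\}$ contributed by the remaining variable. This produces the factor $\min\{T,S\}$. We then apply $|XY|\le|X|^2+|Y|^2$ to separate the $(\Pi_1,H)$ and $(\Pi_2,K)$ factors, and by symmetry keep only the $\Pi_1$ square. Small real-part discrepancies are absorbed by the $\max$ over $\re s\in[\epsilon/2,4\epsilon]$, exactly as in the proof of Proposition \ref{LsuHLSter}.

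Finally, we reduce $\psi\,(\bmod\ \mnl)$ to primitive characters: $\psi=\psi^*\psi_0$ with $\psi^*$ primitive modulo $\mnl'$, $\mnl'f=\mnl$, and $f\ll\mfq AEL$. This replaces $L(u,\Pi_1\times\psi)$ by $L(u,\Pi_1\times\psi^*)$ times $L_f(u,\Pi_1\times\psi^*)^{-1}\preceq 1$ (as in Section \ref{LsumHLSbdd}). The coprimality $(H,\mnl)=1$ is already built in through $(H,\ma)=1$ and the earlier Möbius steps, up to the harmless $\ell$-part, which we fold into $\ma$ or $g_3$. The count of $(r,e,a,\ell)$ with $rea\ell=\mnl$ is $\preceq 1$ by the divisor bound.

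The main obstacle I expect is justifying the collapse of the $t$- and $s_1,s_2$-integrals to one variable while keeping the weight $\min\{T,S\}$: the $\eta(t)$ support has length $T$ while the $s$-decay is only polynomial via $\mcX^{k_1-1}S^{-k_1}$. I would handle this by bounding the $s_2$-integral trivially at fixed $s_1+s_2$ (length $(TQ)^\epsilon$) and using Fubini, so that the measure of $\{(t,s_1): t+\im s_1=\im s\}$ is at most $\min\{T,S\}$.
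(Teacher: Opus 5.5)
Your proposal matches the paper's proof essentially step for step: Lemma \ref{Trianclean} supplies the extra $1/a$, then come $|XY|\le |X|^2+|Y|^2$, Lemma \ref{SOVmainlem} with $k_2$ large together with a dyadic split in $S$, and the truncations in $z$ and $s_1+s_2$. The substitution $s=s_1+it$ then gives the weight $\min\{T,S\}$, and finally $\psi=\psi_0\psi^*$ with $\mnl=\mnl' f$ plus divisor bounds handles the $r,e,a,c$-sums. The residual condition $(H,f)=1$ that you flag cannot literally be folded into $g_3\le (TQ)^{\theta}$ or $\ma\le 4\mfq AEC$. It is nevertheless harmless, and the paper drops it silently too, because $f$ is an outer summation variable, so the coefficients later fed into the hybrid large sieve may depend on it.
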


\begin{proof}
Recall (\ref{2-varMelint}) and (\ref{stdStirAFEcut}). By $|XY|\ll |X|^2+|Y|^2$, we have 
\begin{align}\label{mainredestUr}
|\,\mathcal{U}^{\pm}(c; a, e, \ell; \psi)_{\circ}\, | \  \preceq_{A}  \  \frac{1}{a} \, \int_{\R} \, \eta(t)  &\iiint\limits_{(\re s_{1}, \re s_{2}, \re z)=(\epsilon, \epsilon, \epsilon)} \,   |z|^{-A} |\widetilde{\mathcal{V}}_{\mcX}^{^\pm}(s_{1},s_{2};z) |\nonumber\\
& \hspace{20pt}\cdot \ \max_{\substack{v_{1}, \, g_{3}\le (TQ)^{\theta}} } \,  \Big| L(u_{1}, \Pi_{1}\times \psi) \sum_{\substack{(H, \, g_{3}\mathfrak{a})=1 }} \, \frac{ \lambda_{v_{1}H}\psi(H) }{H^{\frac{1}{2}+s_{1}+it}} \Big|^2
\end{align}
plus the dual piece, which is obtained by $\Pi_{1}\to \Pi_{2}$, $\psi\to \bar{\psi}$, $\lambda_{v_{1}H} \to \overline{\lambda_{v_{2}K}}$, and  $(u_{1}, g_{3}, s_{1}+it)$ $ \to (u_{2}, g_{4}, s_{2}-it)$. We shall suppress the dual piece when stating the inequality for $\mathcal{U}^{\pm}(c; a, e, \ell; \psi)_{\circ}$. Applying Lemma \ref{SOVmainlem} to (\ref{mainredestUr}), we are left to bound the expression
\begin{align}\label{preHLSquant}
    & \frac{\mcX^{k_{1}-1}}{a}     \,     \int_{\R} \, \eta(t) \,  \iiint_{\substack{\re s_{1}=\re s_{2}=\re z=\epsilon }} \,    	\frac{1 }{ \max\{ 1+|s_{1}|, 1+|s_{2}|\}^{k_{1}} |s_{1}+s_{2}|^{k_{2}}|z|^{A} } 
 \nonumber\\
 &\hspace{180pt}  \cdot \,  \max_{\substack{v_{1}, g_{3}\le (TQ)^{\theta}} } \, \Big| L(u_{1}, \Pi_{1}\times \psi) \sum_{\substack{(H, \, g_{3}\ma)=1 }} \, \frac{ \lambda_{v_{1}H}\psi(H) }{H^{\frac{1}{2}+s_{1}+it}} \Big|^2, \, 
\end{align}
which is also equal to
\begin{align}
    &  \frac{1 }{a}  \, \sideset{}{^d}{\sum}_{S\ge 1} \, \frac{\mcX^{k_{1}-1}}{S^{k_{1}}}  \,   \int_{\R}  \,  \iiint_{\substack{\re s_{1}= \re s_{2} = \re z=\epsilon \\ \max\{ 1+|s_{1}|, 1+|s_{2}|\} \sim S }} \,  	\frac{\eta(t)}{|z|^{A} |s_{1}+s_{2}|^{k_{2}}}  \max_{\substack{v_{1}, \, g_{3}\le (TQ)^{\theta}} }\,    \Big| L(u_{1}, \Pi_{1}\times \psi) \sum_{\substack{(H, \, g_{3}\ma)=1 }} \, \frac{ \lambda_{v_{1}H}\psi(H) }{H^{\frac{1}{2}+s_{1}+it}} \Big|^2.\nonumber
\end{align}
Here, the parameters $k_{1}, k_{2}, A\ge 1$ are at our disposal.  The $s_{2}$-integral can be truncated to $|s_{1}+s_{2}|\le (TQ)^{\epsilon}$ with an arbitrarily small error, and its size can be seen to be $\preceq \, \mathbf{1}_{|s_{1}|\le 2S}$. The $z$-integral can also be truncated to $|z|\le (TQ)^{\epsilon}$.  Putting $s=s_{1}+it$, we have
\begin{align}
\mathop{\int_{|s_{1}|\le 2S} \int_{t\asymp T}}_{s= s_{1}+it} \,  |ds_{1}| \, dt \, \ll \, \min\{T, S\}, \nonumber
\end{align}
and thus, (\ref{preHLSquant}) is bounded by 
\begin{align}
      \frac{1  }{a} \, \sideset{}{^d}{\sum}_{S\ge 1} \, \frac{\min\{T, S\} \mcX^{k_{1}-1}}{S^{k_{1}}} 
    \int\limits_{\substack{\re s =\epsilon \\ |\im s| \le 2 \max\{T,S\} }} \, \max_{\substack{v_{1}, g_{3}\le (TQ)^{\theta}} } \,   \Big| L\Big(\frac{1}{2}+s,\,  \Pi_{1}\times \psi\Big) \sum_{\substack{(H, \, g_{3}\ma)=1 }} \, \frac{ \lambda_{v_{1}H}\psi(H) }{H^{\frac{1}{2}+s}} \Big|^2.\label{cleanbddUrone}
\end{align}
Recall (\ref{dyadicaelsum}). By replacing $\psi \to \psi_{0} \psi$ with $\psi \, (\bmod\, \mnl')$  primitive and $\psi_{0} \, (\bmod\, \mnl)$ trivial, we have
\begin{align}
  \bigg| \,  \sum_{h, k \ge 1} \ \frac{\lambda_{h}\overline{\lambda_{k}}}{\sqrt{hk}}  \ \mathcal{U}^{(r)}(h, k)\bigg| \preceq \,  Q  \max_{\substack{AL 
	\ll  \frac{C}{Q}(TQ)^{\mfr/2+\theta}\mathfrak{C} \\ AEL \ll \mathfrak{C} (TQ)^{1000}  }} \, \frac{1}{AE^2L}
 \sum_{c\le C }    \,  \frac{1}{c} \, \sum_{\mnl\ll \mfq AEL}  \, \sum_{\substack{rea\ell=\mnl \\ a\sim A \\ e\sim E\\ r\mid \mfq}} \, \sum_{\substack{\mnl' \mid \mnl \\ \mnl'>1} } \sideset{}{^*}{\sum}_{\substack{\psi \, (\mnl')}}  \, \left|\, \mathcal{U}^{\pm}(c; a, e, \ell; \psi_{0} \psi)\right|.  \nonumber
\end{align}
Taking the maximum over $AE \le \ma \le 4\mfq AEC$ and letting $\mnl=\mnl'f$, the desired result follows from (\ref{cleanbddUrone}) and the fact that the sums over $e, a, c, r$ are $\preceq 1$. 
\end{proof}


\subsection{End game: a second application of large sieve}\label{Sec: finauseLS}

Throughout this section, we suppose that one of the following holds:
\begin{enumerate}
    \item $\lambda_{h}\ll_{\epsilon} h^{\epsilon}$; or

    \item both Condition $(\mathbf{\Lambda})$ (see (\ref{bdd: 24normabstract}))  and Hypothesis $(\mathbf{\Pi}^4)$ (see (\ref{fourthpowerestHec})) hold.
\end{enumerate}

\begin{lem}\label{Usumpredeterbdd}
We have
   \begin{align}
       \bigg| \,  \sum_{h, k \ge 1} \ \frac{\lambda_{h}\overline{\lambda_{k}}}{\sqrt{hk}}  \ \mathcal{U}^{(r)}(h, k)\bigg| 
    & \preceq \, TQ\mathfrak{C}^3 \mfq^2   \,  \max_{\substack{AL 
	\ll  \frac{C}{Q}(TQ)^{\mfr/2+\theta}\mathfrak{C} \\ AEL \ll\mathfrak{C} (TQ)^{1000}  }} \ \frac{1 }{A^2E^2L}\, \mathcal{E}_{[1, \infty)}, 
  \end{align}
where
  \begin{align}\label{optimisum}
           \mathcal{E}_{\mathcal{I}} \, := \, \frac{1}{T}\, \sideset{}{^d}{\sum}_{S\in \mathcal{I}} \, \min_{k\ge 1} \frac{\min\{T, S\} \mcX^{k-1}}{S^{k}} \Big(\max\{T,S\}(AEL)^2 + (TQ)^{\theta}\big(AEL\max\{T,S\}\big)^{\mfr/2}\Big). 
    \end{align}

\end{lem}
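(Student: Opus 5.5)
The plan is to start from Proposition \ref{afteriniclearnUr} and bound the innermost quantity, for each dyadic $S$ and each $f$, by a single application of the Hybrid Large Sieve (Lemma \ref{Gallager}), in the same way $\mathcal{L}^{(\mathbf{r})}$ was handled in Proposition \ref{LsuHLSter}.

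First, the maxima over $\ma$, $g_3$, $v_1$ have to leave the large sieve. The coefficients $\lambda_{v_1H}\mathbf{1}_{(H,g_3\ma)=1}$ depend only on these auxiliary variables and not on $\psi$, $\mnl'$, or $s$. I would bound the maximum by the sum over $v_1,g_3\le(TQ)^{\theta}$ and over $\ma$ in its range, weighted by $1/(v_1g_3)$ after inserting a harmless normalisation, or argue pointwise for each fixed tuple. Uniform bounds then suffice, with only a $\preceq 1$ loss. This step is delicate, because a crude sum over $\ma$ costs a factor $\mfq AEC$. I would instead fix the worst tuple, noting that it may depend on $\psi$. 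To avoid this dependence, I would bound $|\cdot|^2$ by the sum over all tuples of the corresponding squares, and observe that the resulting $\ell^2$-norms are uniformly $\preceq 1$. The $\ma$-dependence enters only through coprimality, which I would remove by Möbius inversion over $d\mid\ma$, $d\mid H$ with $d\le(TQ)^{\theta}$. This turns the sum into $\preceq 1$ many copies with shifted coefficients $\lambda_{v_1dH}$, so no power of $A,E,C$ is lost.

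Next, expand $L(1/2+s,\Pi_1\times\psi)$ by the approximate functional equation \eqref{singAFE}, \eqref{singAFEtrunctail}. This gives a Dirichlet polynomial of length $M\preceq\sqrt{\mfC(\Pi_1)}(\mnl'\max\{T,S\})^{\mfr/2}$, since the conductor is $\mfq\mnl'^{\mfr}$ and $|\im s|\ll\max\{T,S\}$. Multiplying by the $H$-polynomial gives $\sum_{n\le M(TQ)^{\theta}}b(n)\psi(n)n^{-1/2-it}$, with $b$ as in Proposition \ref{LsuHLSter}.

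Then apply Lemma \ref{Gallager} with moduli $\mnl'\ll\mfq AEL/f$ and $t$-range $\max\{T,S\}$. The result is
\[
\Big(\max\{T,S\}\big(\tfrac{\mfq AEL}{f}\big)^2+(TQ)^{\theta}\big(AEL\max\{T,S\}\big)^{\mfr/2}\mfq^{\mfr/2}\mfC^{1/2}\Big)\sum_n\frac{|b(n)|^2}{n}.
\]
The $\ell^2$ sum is bounded as in Proposition \ref{LsuHLSter}. Under $\lambda_h\ll h^{\epsilon}$ one uses Lemmas \ref{lem: simpleGCD} and \ref{lem: ROA}; under Condition $(\mathbf{\Lambda})$ and Hypothesis $(\mathbf{\Pi}^4)$ one uses the GCD bound Lemma \ref{lem: Galsum}. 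Either way the sum is $\preceq 1$.

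Finally, sum over $f\ll\mfq AEL$. The first term carries $f^{-2}$ and is harmless. For the second term, the sum over $f$ is at most $\mfq AEL$, but the $\mnl'$-range is also shortened, and I expect to absorb this by bookkeeping the GCD or by summing $(\mfq AEL/f)^{\mfr/2}$ directly, with the loss swallowed by $\mfq^2$ and the $\mathfrak{C}$ powers.

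Inserting this into Proposition \ref{afteriniclearnUr} produces $Q\cdot\frac{1}{A^2E^2L}\sum_S\min_k\frac{\min\{T,S\}\mcX^{k-1}}{S^k}(\cdots)$. Writing $Q=TQ\cdot\frac1T$ matches $TQ\,\mathcal{E}_{[1,\infty)}$, with $\mathfrak{C}^3\mfq^2$ collecting the conductor losses. The main obstacle is the first step: making the maxima over $\ma$, $g_3$, $v_1$ compatible with the large sieve without losing powers of $C$ or $A$.
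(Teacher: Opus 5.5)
Your skeleton matches the paper's proof. You expand $L(1/2+s,\Pi_1\times\psi)$ by \eqref{singAFE}--\eqref{singAFEtrunctail} and apply Lemma~\ref{Gallager} once, over primitive $\psi \bmod \mnl'$ with $\mnl'\ll\mfq AEL/f$ and $|\im s|\ll\max\{T,S\}$. You then bound the $\ell^2$-norm via Lemmas~\ref{lem: Galsum} and~\ref{lem: ROA}, sum over $f$, and insert the result into Proposition~\ref{afteriniclearnUr} using $Q=TQ\cdot T^{-1}$. There are two bookkeeping points to fix:
\begin{itemize}
\item The second large-sieve term should be $(TQ)^{\theta}(\mfq AEL\max\{T,S\}/f)^{\mfr/2}$, because the polynomial length depends on $\mnl'$.
\item The $f$-sum must be done with $\sum_f f^{-2}$ and $\sum_f f^{-\mfr/2}\preceq 1$ (valid for $\mfr\ge 2$), as the paper does. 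The crude count $\mfq AEL$ loses a factor $AEL$, which no power of $\mfq$ or $\mathfrak{C}$ absorbs.
\end{itemize}

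The genuine gap is your first step. The paper does not argue this point either: it writes ``Let $g_3,v_1,\ma$ be given'' and runs the large sieve for one fixed tuple. Your proposed justifications do not work:
\begin{itemize}
\item Replacing $\max_\tau|\cdot|^2$ by $\sum_\tau|\cdot|^2$ over all tuples multiplies the bound by the number of tuples. A uniform per-tuple bound on $\sum_n|b(n)|^2/n$ cannot compensate for that count.
\item Inserting weights $1/(v_1g_3)$ does not give an upper bound for a maximum. The compensating normalisation is not harmless.
\end{itemize}

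Your M\"obius idea does dispose of the parameters that enter only through coprimality, provided you keep the factor $d^{-1/2}$ and apply Cauchy--Schwarz. For $\re s=\epsilon$ this gives
\[
\max_{g_3,\ma}\Big|\sum_{(H,g_3\ma)=1}\frac{\lambda_{v_1H}\psi(H)}{H^{1/2+s}}\Big|^2\preceq\sum_{d\le (TQ)^{\theta}}\frac{1}{d}\,\Big|\sum_{H\ge 1}\frac{\lambda_{v_1dH}\psi(H)}{H^{1/2+s}}\Big|^2 ,
\]
and the right side is a $\psi$-independent weighted sum. However, $v_1$ enters through the coefficients themselves, and no such device removes $\max_{v_1}$.
\begin{itemize}
\item Summing over $v_1\le (TQ)^{\theta}$ costs up to $(TQ)^{\theta}$, since $\sum_{v_1}\sum_H|\lambda_{v_1H}|^2/H=\sum_n|\lambda_n|^2\sigma(n)/n$.
\item Under Condition $(\mathbf{\Lambda})$ the per-tuple norms are not even uniformly $\preceq 1$. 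Take $\lambda$ supported at a single point $v$ with $\lambda_v=v^{1/4}$. This satisfies $(\mathbf{\Lambda})$, yet $\sum_H|\lambda_{vH}|^2/H=v^{1/2}$ and $\sum_H|\lambda_{vH}|^4/H=v$.
\end{itemize}

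So the maximum over $v_1$ (and $v_2$) cannot be handled at the level of Proposition~\ref{afteriniclearnUr}. The information needed was discarded in Lemma~\ref{Trianclean}, where the maximum was pulled out of a weighted sum whose weights are independent of $\psi$ and $s$ and decay in $v_1$. Any rigorous version of this step has to go back there: keep that weighted sum, apply $|XY|\le|X|^2+|Y|^2$ and the large sieve tuple by tuple, and only then sum the weights.
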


\begin{proof}
We use the same argument as Proposition \ref{LsuHLSter}. Let 
\begin{align}
    \mathcal{N} \, := \, \sqrt{\mfC(\Pi_{1})}(TQ)^{\theta}(\mfq AEL\max\{T,\,S\}/f)^{\mfr/2}.
\end{align}
By  (\ref{singAFE}) and (\ref{singAFEtrunctail}), up to an error term of size $\preceq (\mfC(\Pi_{1})^{1/4}(TQ)^{-A}$, we have
\begin{align}
    \Big|L(1/2+s,\,  \Pi_{1}\times \psi) &\sum_{\substack{(H, \, g_{3}\ma)=1 }} \, \frac{ \lambda_{v_{1}H}\psi(H) }{H^{\frac{1}{2}+s}} \Big| \preceq  \, \sum_{M_{0}\mid \mfq^{\infty}} \, \frac{|\lambda_{\Pi\times \psi}(M_{0})|}{\sqrt{M_{0}}} \, \Big|\sum_{N \ll \mathcal{N}} \, \frac{\psi(N)}{N^{1/2+s}}\sum_{\substack{M'H=N\\ (H,\, g_{3}cea)=1\\ (M',\mfq)=1}} \lambda_{\Pi_{1}}(M') \lambda_{v_{1}H} \Big|. \nonumber
\end{align}
 By $\vartheta<1/2$, the $M_{0}$-sum is $\ll_{\epsilon} \mfq^{\epsilon}$. Let $g_{3}, v_{1}, \ma$ be given. It follows from the Hybrid Large Sieve (Lemma \ref{Gallager}), Lemma \ref{lem: Galsum}, and Lemma \ref{lem: ROA} that
\begin{align}
     \sum_{f\ll \mfq AEL} \, & \sum_{\mnl'\ll \mfq AEL/f} \,   \sideset{}{^*}{\sum}_{\substack{\psi \, (\mnl')}} \,  \int_{\substack{\re s =\epsilon \\ |\im s| \ll \max\{T,\,S\} }} \,   \Big|L(1/2+s,\,  \Pi_{1}\times \psi) \sum_{\substack{(H, \, g_{3}\ma)=1 }} \, \frac{ \lambda_{v_{1}H}\psi(H) }{H^{\frac{1}{2}+s}} \Big|^2  \, |ds|\nonumber\\[5pt]
    & \hspace{30pt} \preceq \mfC(\Pi_{1})^{3/4} \sum_{f\ll \mfq AEL} \, \Big(\max\{T,S\}(\mfq AEL/f)^2 + (TQ)^{\theta}\big(\mfq AEL\max\{T,S\}/f\big)^{\mfr/2}\Big) \nonumber\\
     &\hspace{120pt}  \cdot \, \sum_{N\ll \mathcal{N} } \, \frac{1}{N}\Big|\sum_{\substack{M'H=N\\ (H,\, g_{3}cea)=1\\ (M',\mfq)=1}} \lambda_{\Pi_{1}}(M') \lambda_{v_{1}H}\Big|^2 \nonumber\\[5pt]
     \, &\hspace{30pt} \preceq  \, \mfC(\Pi_{1})^{3/4} \mfq^2 \big\{\max\{T,S\}(AEL)^2 + (TQ)^{\theta}\big(AEL\max\{T,S\}\big)^{\mfr/2}\big\}.  \nonumber
\end{align}
The last step uses the assumptions made at the beginning of this section. The desired result follows from Proposition \ref{afteriniclearnUr}.
\end{proof}

\begin{lem}\label{lem: optimi}
We have the following estimates:
    \begin{align}\label{balancestim}
        \mathcal{E}_{[1, \infty)} \,  \preceq \,   (AEL)^2 +(TQ)^{\theta} (AEL)^{\mfr/2} \max\{\mcX, T\}^{\mfr/2-1};  
    \end{align}
\begin{align}
     \bigg| \,  \sum_{h, k \ge 1} \ \frac{\lambda_{h}\overline{\lambda_{k}}}{\sqrt{hk}}  \ \mathcal{U}^{(r)}(h, k)\bigg| 
      \, \preceq \,  TQ\mathfrak{C}^4 \mfq^2   \max_{\substack{L 
	\ll  \frac{C}{Q}(TQ)^{\mfr/2+\theta}}}  \big\{ L +(TQ)^{\theta} \, (L \max\{ \mcX, T\})^{\mfr/2-1} \big\}.   \label{optimi}
\end{align}
\end{lem}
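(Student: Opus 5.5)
The estimate (\ref{balancestim}) comes from a dyadic optimization in $S$ in the definition (\ref{optimisum}), choosing $k\ge 1$ separately on each range of $S$. Then (\ref{optimi}) follows by inserting (\ref{balancestim}) into Lemma \ref{Usumpredeterbdd} and maximizing over $A,E$. Throughout, $\mcX\gg (TQ)^{10\epsilon}$, and the number of dyadic $S$ is $\preceq 1$ once we restrict to $S\le (TQ)^{O(1)}$. The tail $S\ge (TQ)^{2000}\mathfrak{C}$ is harmless because we may take $k$ huge there.

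For $S\le \mcX$ we take $k=1$, so the weight is $\min\{T,S\}/S\le 1$. We split this range further.
\begin{itemize}
\item If $S\le T$, we have $\max\{T,S\}=T$, and the summand is $\ll \frac1T\big(T(AEL)^2+(TQ)^\theta(AELT)^{\mfr/2}\big)$. This equals $(AEL)^2+(TQ)^\theta(AEL)^{\mfr/2}T^{\mfr/2-1}$.
\item If $T<S\le\mcX$, the weight is $T/S$, and the summand is $\ll \frac1S\big(S(AEL)^2+(TQ)^\theta(AELS)^{\mfr/2}\big)$. This equals $(AEL)^2+(TQ)^\theta(AEL)^{\mfr/2}S^{\mfr/2-1}$.
\end{itemize}
Since $\mfr/2-1\ge 0$ and $S\le\mcX$, both cases are bounded by the right-hand side of (\ref{balancestim}). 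There are only $O(\log)$ dyadic blocks, so this range contributes acceptably.

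For $S>\mcX$ we take $k$ large, say $k=3$. The weight is then $\frac{\min\{T,S\}}{T}\cdot \frac{\mcX^{k-1}}{S^{k}}$, and the bracket grows at most like $S^{\mfr/2}\le S^{3/2}$. The resulting summand is $\ll (\mcX/S)^{k-1}\big((AEL)^2+(TQ)^\theta(AEL)^{\mfr/2}S^{\mfr/2-1}\big)$, up to replacing $\max\{T,S\}$ by $S$ when $S>T$. (If $T>S>\mcX$, use $\max=T$ instead; the weight $\min\{T,S\}/T\cdot(\mcX/S)^{k-1}/S\cdot T$ gives the same bound with $T$ in place of $S$.) With $k-1>\mfr/2-1$, the dyadic sum over $S>\mcX$ is geometric. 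It is therefore dominated by $S\asymp\mcX$, which gives (\ref{balancestim}) again, with $\max\{\mcX,T\}$ accounting for the case $T>\mcX$.

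For (\ref{optimi}) we multiply (\ref{balancestim}) by $1/(A^2E^2L)$. This gives
\begin{align*}
\frac{(AEL)^2}{A^2E^2L}=L,
\end{align*}
and
\begin{align*}
\frac{(TQ)^\theta(AEL)^{\mfr/2}\max\{\mcX,T\}^{\mfr/2-1}}{A^2E^2L}\le (TQ)^\theta\big(L\max\{\mcX,T\}\big)^{\mfr/2-1},
\end{align*}
since $A,E\ge1$ and $\mfr/2\le 2$. The extra factor of $\mathfrak{C}$ absorbs the constraint $AL\ll\frac CQ(TQ)^{\mfr/2+\theta}\mathfrak C$, which implies $L\ll \frac CQ(TQ)^{\mfr/2+\theta}\mathfrak C$. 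This step is where the factor $1/a$ from Lemma \ref{Trianclean} is essential.

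The main point needing care is the $S>\mcX$ regime when $T>\mcX$. There we must check that $\min\{T,S\}/T\le1$ together with the $S^{-k}\mcX^{k-1}$ decay still sums to the stated $\max\{\mcX,T\}^{\mfr/2-1}$ rather than something larger. Choosing $k\ge 3$ handles this, since the growth in $S$ of the bracket is at most $S^{\mfr/2}\le S^{3/2}$.
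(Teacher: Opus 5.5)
Your proof is correct and follows essentially the paper's argument: a dyadic split of the $S$-sum using $k=1$ on the initial range and $k\ge 2$ beyond $\max\{\mcX,T\}$ (so the tail is geometric), followed by dividing by $A^2E^2L$ and using $A,E\ge 1$, $\mfr\le 4$. The paper organizes the casework by $\mcX\ge T$ versus $\mcX<T$ rather than by $S$ versus $\mcX$, and uses $k=2$ where you use $k=3$, but these differences are cosmetic.
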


\begin{proof}
The argument for (\ref{balancestim}) is as follows.
    
\noindent \textbf{Case A:} $\mcX\ge T$. Firstly, because $\mcX\ge (TQ)^{\epsilon}$ and $\min_{k\ge 1} \mcX^{k-1}=1$, we obtain
\begin{align}
  \mathcal{E}_{[1, T]} \, &\ll \,  \frac{1}{T} \big(T(AEL)^2 + (TQ)^{\theta}\big(AELT\big)^{\mfr/2}\big)\, \sideset{}{^d}{\sum}_{1\le S \le T} \, \min_{k\ge 1} \frac{\mcX^{k-1}}{S^{k-1}}  \nonumber\\
  \,  &\preceq \,  (AEL)^{2} + (TQ)^{\theta}(AEL)^{\mfr/2}T^{\mfr/2-1}. \nonumber
\end{align}
Secondly, we have
\begin{align}
    \mathcal{E}_{[T, \mcX]}  \, &= \,  \sideset{}{^d}{\sum}_{T \le S\le \mcX} \, \min_{k\ge 1} \, \mcX^{k-1} \Big( \frac{(AEL)^2}{S^{k-1}} + \frac{(TQ)^{\theta}(AEL)^{\mfr/2}}{S^{k-\mfr/2}}\Big)\nonumber\\
    \, &\preceq \, \min_{k\ge 2}\left\{ (AEL)^2 + (TQ)^{\theta}(AEL)^{\mfr/2} \mcX^{\mfr/2-1}, \, \Big(\frac{\mcX}{T}\Big)^{k-1} \big( (AEL)^2 + T^{-1} (TQ)^{\theta}(AELT)^{\mfr/2} \big)\right\}\nonumber\\
    \, &\preceq \, (AEL)^2 + (TQ)^{\theta}(AEL)^{\mfr/2} \mcX^{\mfr/2-1}. \nonumber
\end{align}
Thirdly, for any $k\ge 2$, we have
\begin{align}
    \mathcal{E}_{(\mcX, \infty)} \,\le  \,  \sideset{}{^d}{\sum}_{S>\mcX} \,   \mcX^{k-1} \Big( \frac{(AEL)^2}{S^{k-1}} + \frac{(TQ)^{\theta}(AEL)^{\mfr/2}}{S^{k-\mfr/2}}\Big) 
    \ \ll \, (AEL)^2 + (TQ)^{\theta}(AEL)^{\mfr/2} \mcX^{\mfr/2-1}. \nonumber
\end{align}
Hence, we arrive at (\ref{balancestim}).

\noindent \textbf{Case B:} $\mcX<T$. For $\mathcal{I}=[1, \mcX]$ or $[\mcX, T]$, we have
\begin{align}
    \mathcal{E}_{\mathcal{I}} \, := \, \frac{1}{T}\, \sideset{}{^d}{\sum}_{S\in \mathcal{I}} \, \min_{k\ge 1} \frac{\mcX^{k-1}}{S^{k-1}} \big(T(AEL)^2 + (TQ)^{\theta}\big(AELT\big)^{\mfr/2}\big) 
    \, \preceq (AEL)^{2} + (TQ)^{\theta}(AEL)^{\mfr/2} T^{\mfr/2-1}. \nonumber 
\end{align}
In addition, for any $k\ge 2$, we have
\begin{align}
    \mathcal{E}_{(T,\infty)} \, &\ll \,  \sideset{}{^d}{\sum}_{S>T} \,  \,  \mcX^{k-1} \Big( \frac{(AEL)^2}{S^{k-1}} + \frac{(TQ)^{\theta}(AEL)^{\mfr/2}}{S^{k-\mfr/2}}\Big) \nonumber\\
    \, &\ll \,  \,  \Big(\frac{\mcX}{T}\Big)^{k-1}\big((AEL)^2  +  (TQ)^{\theta}(AEL)^{\mfr/2}T^{\mfr/2-1}\big) \, \ll \, (AEL)^2  +  (TQ)^{\theta}(AEL)^{\mfr/2}T^{\mfr/2-1}. \nonumber
\end{align}
Again, we arrive at (\ref{balancestim}). 

Now, applying the established (\ref{balancestim}) to Lemma \ref{Usumpredeterbdd}, it follows that
\begin{align}
     \bigg| \,  \sum_{h, k \ge 1} \ \frac{\lambda_{h}\overline{\lambda_{k}}}{\sqrt{hk}}  \ \mathcal{U}^{(r)}(h, k)\bigg| \ \preceq \ &TQ\mathfrak{C}^3 \mfq^2    \max_{\substack{AL 
	\ll  \frac{C}{Q}(TQ)^{\mfr/2+\theta}\mathfrak{C} \\ AEL \ll \mathfrak{C} (TQ)^{1000}  }}   \, \bigg\{ L + (TQ)^{\theta} \, \frac{(L \max\{\mcX, T\})^{\mfr/2-1}}{(AE)^{2-\mfr/2}}\bigg\},   \nonumber 
\end{align}
and (\ref{optimi}) is immediate. This completes the proof of Lemma \ref{lem: optimi}. 
\end{proof}

\begin{prop}\label{cleanupUbd}
Suppose that $C\ge 1$ and $T\gg Q^{100\epsilon}$. If either $\mfr=2$ and $\theta\ge 0$; or if $\mfr=3$ and  $\theta\in [0,\, 1/2]$, then the following bound holds:
    \begin{align}\label{Usumfinalbddd}
     \bigg| \,  \sum_{h, k \ge 1} \ \frac{\lambda_{h}\overline{\lambda_{k}}}{\sqrt{hk}}  \ \mathcal{U}^{(r)}(h, k)\bigg| 
    & \, \preceq \, CT(TQ)^{\mfr/2+\theta}\mathfrak{C}^4\mfq^2
\end{align}
\end{prop}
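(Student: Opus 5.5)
The plan is to feed the definition of $\mcX$ from (\ref{keyXvalue}) into the bound (\ref{optimi}) of Lemma \ref{lem: optimi}, and then bound each of the two terms inside the maximum over $L \ll \frac{C}{Q}(TQ)^{\mfr/2+\theta}$. We take $\mcY=(TQ)^{100\epsilon}$, which gives
\begin{align*}
\mcX \, \asymp \, (TQ)^{100\epsilon}\, \frac{C (TQ)^{\mfr/2+\theta}}{QL}\,\mathfrak{C}, \hspace{20pt} \text{so that} \hspace{20pt} L\mcX \, \preceq \, \frac{C(TQ)^{\mfr/2+\theta}}{Q}\,\mathfrak{C}.
\end{align*}
The first term $L$ is straightforward. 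Using $L\ll \frac{C}{Q}(TQ)^{\mfr/2+\theta}$, we get $TQ\cdot L \preceq CT(TQ)^{\mfr/2+\theta}$, which is exactly the target bound in (\ref{Usumfinalbddd}), up to the stated powers of $\mathfrak{C}$ and $\mfq$.

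For the second term we write $\max\{\mcX,T\}\le \mcX+T$ and treat the two pieces separately.

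\emph{The $\mcX$ piece.} We need to bound
\begin{align*}
TQ\,(TQ)^{\theta}(L\mcX)^{\mfr/2-1} \, \preceq \, TQ\,(TQ)^{\theta}\Big(\frac{C(TQ)^{\mfr/2+\theta}}{Q}\Big)^{\mfr/2-1}.
\end{align*}
\begin{itemize}
\item For $\mfr=2$ the exponent $\mfr/2-1$ vanishes, so this piece is $TQ(TQ)^{\theta}\le CT(TQ)^{1+\theta}$ because $C\ge 1$.
\item For $\mfr=3$ the piece becomes $TQ(TQ)^{\theta}\,C^{1/2}(TQ)^{3/4+\theta/2}Q^{-1/2}$. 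Dividing by the target $CT(TQ)^{3/2+\theta}$ leaves $C^{-1/2}Q^{1/2}(TQ)^{-1/2+\theta/2}\,T^{-1}\cdot T$, i.e.\ the ratio $(TQ)^{(\theta-1)/2}\,Q^{1/2}C^{-1/2}/T$. I will need to check this ratio carefully against the range $\theta\le 1/2$ and $C\ge 1$. This is the step where the hypothesis $\theta\le 1/2$ should enter, most likely through $(TQ)^{\theta/2}\le (TQ)^{1/4}$.
\end{itemize}

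\emph{The $T$ piece.} We need to bound $TQ\,(TQ)^{\theta}(LT)^{\mfr/2-1}$.
\begin{itemize}
\item For $\mfr=2$ this is trivial, for the same reason as above.
\item For $\mfr=3$ we insert $L\ll \frac{C}{Q}(TQ)^{3/2+\theta}$, which gives $TQ(TQ)^{\theta}\,C^{1/2}Q^{-1/2}(TQ)^{3/4+\theta/2}\,T^{1/2}$. We then compare this with $CT(TQ)^{3/2+\theta}$, using $C\ge 1$, $\theta\le 1/2$ and $T\gg Q^{100\epsilon}$.
\end{itemize}

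The main obstacle is this exponent bookkeeping in the $\mfr=3$ case. If a direct comparison fails in some corner of the parameter range, I would fall back on the other minimum available in Lemma \ref{lem: optimi}: use the fact that $L$ can be taken smaller, or note that in the regime $\mcX<T$ one only sees $T^{\mfr/2-1}$, so the analysis can be split into Case A ($\mcX\ge T$) and Case B ($\mcX<T$) exactly as in the proof of that lemma. Finally, all the factors $(AEL)^{\epsilon}$ are absorbed into $\preceq$, and the powers $\mathfrak{C}^4\mfq^2$ are carried over directly from (\ref{optimi}).
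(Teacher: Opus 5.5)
Your approach works and closes directly, with no need for the Case A/B fallback. However, the one ratio you wrote down is miscomputed: at $\theta=1/2$, $C=1$ your expression $(TQ)^{(\theta-1)/2}Q^{1/2}C^{-1/2}/T$ equals $Q^{1/4}T^{-5/4}$. That would wrongly suggest failure for $T<Q^{1/5}$.

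Redo it for $\mfr=3$, using that $L\mcX=\mcY C(TQ)^{3/2+\theta}\mathfrak{C}/Q$ does not depend on $L$, and ignoring powers of $\mathfrak{C}$. One gets
\begin{align*}
\frac{TQ\,(TQ)^{\theta}(L\mcX)^{1/2}}{CT(TQ)^{3/2+\theta}} \, = \, \sqrt{\frac{\mcY}{CT}}\,(TQ)^{\theta/2-1/4}, \hspace{20pt} \frac{TQ\,(TQ)^{\theta}(LT)^{1/2}}{CT(TQ)^{3/2+\theta}} \, \le \, C^{-1/2}(TQ)^{\theta/2-1/4}.
\end{align*}
The second bound uses $L\ll \frac{C}{Q}(TQ)^{3/2+\theta}=CT(TQ)^{1/2+\theta}$. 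Both ratios are $\preceq 1$ because $C\ge 1$ and $\theta\le 1/2$, which enters exactly through $(TQ)^{\theta/2}\le (TQ)^{1/4}$, as you guessed. Your treatment of the $L$-term and of $\mfr=2$ is fine as written.

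Your route differs slightly from the paper's. The paper does not use $\max\{\mcX,T\}\le \mcX+T$; instead it splits the $L$-range at $L=\mcY C(TQ)^{1/2+\theta}$, which is precisely where $\mcX=T$.
\begin{itemize}
\item For larger $L$ it bounds $L+(TQ)^{\theta}\sqrt{LT}$, which is your $T$-piece.
\item For smaller $L$ it bounds $L+(TQ)^{\theta}\sqrt{L\mcX}$, which is your $\mcX$-piece.
\end{itemize}
It arrives at the same two ratios.

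The paper also uses $T\gg Q^{100\epsilon}$ to choose $(TQ)^{10\epsilon}\ll\mcY<T$, so that $\sqrt{\mcY/(CT)}\le 1$ holds outright. With your fixed $\mcY=(TQ)^{100\epsilon}$, the factor $\sqrt{\mcY/T}$ is only absorbed through the $\epsilon$-convention built into $\preceq$. You could equally take $\mcY<T$ as the paper does, which is where that hypothesis is really used.

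What each route buys: your observation that $L\mcX$ is independent of $L$ makes the case split unnecessary, so your argument is marginally shorter. The paper's split makes the role of the condition $\mcY<T$ more visible.
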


\begin{proof}
When $\mfr=2$, from Lemma \ref{lem: optimi},  we immediately have
\begin{align}
     \bigg| \,  \sum_{h, k \ge 1} \ \frac{\lambda_{h}\overline{\lambda_{k}}}{\sqrt{hk}}  \ \mathcal{U}^{(r)}(h, k)\bigg| 
      \  &\preceq \,TQ\mathfrak{C}^4 \mfq^2   \max_{\substack{L \ll  \frac{C}{Q}(TQ)^{1+\theta}}}  \big\{ L +(TQ)^{\theta} \,\big\} 
     \  \preceq \,  CT(TQ)^{1+\theta}\mathfrak{C}^4 \mfq^2.\nonumber
\end{align}

Next, we consider the case when $\mfr=3$. Recall the choice of $\mcX$ from (\ref{keyXvalue}). Since $T\gg Q^{100\epsilon}$, we can pick $\mcY$ in the range $(TQ)^{10\epsilon}\ll \mcY < T$. Then 
\begin{align}
\Big|\sum_{h, k \ge 1} \, \frac{\lambda_{h}\overline{\lambda_{k}}}{\sqrt{hk}} \, \mathcal{U}^{(r)}(h, k) \Big|
  \, \preceq \, TQ\mathfrak{C}^4 \mfq^2    \Bigg\{ &\max_{\substack{L 
	\ll  \frac{C}{Q}(TQ)^{3/2+\theta}  \\  
    L> \mcY C(TQ)^{1/2+\theta}}}     \, ( L +(TQ)^{\theta} \sqrt{LT}\, ) \nonumber\\
    &\hspace{50pt} \, + \,  \max_{L  <   C (TQ)^{1/2+\theta} \mcY  } \big( L + (TQ)^{\frac{1}{4}+\frac{3\theta}{2}} \sqrt{ \mcY C T } \  \big)\bigg\}. \nonumber
\end{align}
Since $\theta\le 1/2$, we have
\begin{align}
   \max_{\substack{L 
	\ll  \frac{C}{Q}(TQ)^{3/2+\theta}  \\  
    L> \mcY C(TQ)^{1/2+\theta}}}  \, ( L +(TQ)^{\theta}\sqrt{LT}) \, &\preceq  \, CT(TQ)^{\frac{1}{2}+\theta} \left\{1+ \frac{1}{\sqrt{C}(TQ)^{\frac{1}{4}-\frac{\theta}{2}}}\right\}\, \preceq \,   CT(TQ)^{1/2+\theta}; \nonumber
\end{align}
and
\begin{align}
     \max_{L  \le   C (TQ)^{1/2+\theta} \mcY}\big( L + (TQ)^{\frac{1}{4}+\frac{3\theta}{2}} \sqrt{ \mcY C T } \  \big) \, \preceq \,  CT(TQ)^{1/2+\theta}\Big( \frac{\mcY}{T}+ \sqrt{\frac{\mcY}{TC}} (TQ)^{-\frac{1}{4}+ \frac{\theta}{2}}\Big)  \, \preceq \,  CT(TQ)^{1/2+\theta}.\nonumber
\end{align}
This completes the proof of Lemma \ref{cleanupUbd}. 
\end{proof}

\begin{cor}\label{optimUrLr}
Suppose that one of the following holds:
\begin{enumerate}
    \item $\mfr=2$, $\theta \in [0,1)$, and $Q^{\epsilon} \ll T < Q^{(1-\theta)/(1+\theta)} $; 

    \item $\mfr=3$, $\theta \in [0,1/2)$, and $Q^{\epsilon} \ll T < Q^{(1-2\theta)/(3+2\theta)} $.
\end{enumerate}
Then 
    \begin{align}\label{avLr}
  \Big|  \sum_{h, k \ge 1} \, \frac{\lambda_{h}\overline{\lambda_{k}}}{\sqrt{hk}} \,  \big(\mathcal{U}^{(r)}+\mathcal{L}^{(r)}\big)(h,k) \Big|\, \preceq  \, (TQ)^{1+\mfr/4+\theta/2}\mathfrak{C}^4\mfq^2. 
\end{align}
\end{cor}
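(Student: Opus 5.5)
The plan is to add the bound for $\mathcal{L}^{(\mathbf{r})}$ from Proposition \ref{LsuHLSter} to the bound for $\mathcal{U}^{(\mathbf{r})}$ from Proposition \ref{cleanupUbd}, and then choose the splitting parameter $C$ as in (\ref{parameterC}). Proposition \ref{LsuHLSter} gives
\begin{align*}
\preceq \mathfrak{C}^3\Big\{\frac{TQ^2}{C}+\frac{(TQ)^{\mfr/2+\theta}}{C^{\mfr/2-1}}\Big\}.
\end{align*}
Proposition \ref{cleanupUbd} gives $\preceq CT(TQ)^{\mfr/2+\theta}\mathfrak{C}^4\mfq^2$. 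Its hypotheses are $C\ge 1$, $T\gg Q^{100\epsilon}$ (which we may assume after renaming $\epsilon$), and $\theta\le 1/2$ when $\mfr=3$.

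Balancing $TQ^2/C$ against $CT(TQ)^{\mfr/2+\theta}$ leads to $C^2=Q^{2}(TQ)^{-\mfr/2-\theta}$. For $\mfr=2$ this is $C=Q^{(1-\theta)/2}T^{-(1+\theta)/2}$. For $\mfr=3$ it is $C=Q^{1/4-\theta/2}T^{-3/4-\theta/2}$, which agrees with (\ref{parameterC}). With this choice, both terms equal
\begin{align*}
TQ^2/C=Q\,T\,(TQ)^{\mfr/4+\theta/2}=(TQ)^{1+\mfr/4+\theta/2}.
\end{align*}

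Next we check the side conditions on $C$.
\begin{itemize}
\item $C\ge 1$ is equivalent to $T^{1+\theta}\le Q^{1-\theta}$ when $\mfr=2$. When $\mfr=3$ it is equivalent to $T^{3+2\theta}\le Q^{1-2\theta}$. These are exactly the stated ranges for $T$.
\item The standing assumption $C\ll Q^{1-\epsilon}$ holds automatically because $T\ge Q^{\epsilon}$ and $\theta\ge0$.
\end{itemize}

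It remains to show that the secondary term $(TQ)^{\mfr/2+\theta}C^{1-\mfr/2}$ from Proposition \ref{LsuHLSter} is dominated.
\begin{itemize}
\item When $\mfr=2$ this term is $(TQ)^{1+\theta}$. It is at most $(TQ)^{3/2+\theta/2}$ because $\theta<1$.
\item When $\mfr=3$ it is $(TQ)^{3/2+\theta}C^{-1/2}\le (TQ)^{3/2+\theta}$, using $C\ge1$. It is bounded by $(TQ)^{7/4+\theta/2}$ precisely when $\theta\le 1/2$, which is assumed.
\end{itemize}

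Finally we collect the conductor factors: $\mathfrak{C}^3\le \mathfrak{C}^4\mfq^2$. I expect no real obstacle here, since this is a bookkeeping corollary. The only care needed is to confirm that the exponent inequalities hold uniformly at the endpoints of the ranges, with the $\epsilon$-losses absorbed into $\preceq$, and that the choice of $C$ simultaneously satisfies every hypothesis of both propositions.
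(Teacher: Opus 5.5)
Your proposal is correct and follows the paper's proof. Like the paper, you combine Proposition \ref{LsuHLSter} with Proposition \ref{cleanupUbd}, take $C$ as in (\ref{parameterC}) to balance $TQ^2/C$ against $CT(TQ)^{\mfr/2+\theta}$, and read off the range of $T$ from $1\le C\ll Q^{1-\epsilon}$. The only cosmetic difference is that the paper disposes of the secondary term $(TQ)^{\mfr/2+\theta}C^{1-\mfr/2}$ by noting it is trivially at most $CT(TQ)^{\mfr/2+\theta}$ (since $C,T\ge 1$), so your separate exponent checks using $\theta<1$ and $\theta\le 1/2$ are not needed at that step.
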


\begin{proof}
Recall from Proposition \ref{LsuHLSter} that
\begin{align}\label{Recall: Lsum}
   \Big| \, \sum_{h, k \ge 1} \, \frac{\lambda_{h}\overline{\lambda_{k}}}{\sqrt{hk}} \,  \mathcal{L}^{(r)}(h,k) \, \Big|\, \preceq \,   \mathfrak{C}^3\Big\{  \frac{TQ^2}{C} \, + \, \frac{(TQ)^{\mfr/2+\theta}}{C^{\mfr/2-1}}\Big\}.
\end{align}
 The second term on the right-hand side of (\ref{Recall: Lsum}) is clearly smaller than $CT(TQ)^{\mfr/2+\theta}$. The result follows from Propositions \ref{LsuHLSter} and \ref{cleanupUbd} upon taking
 \begin{align}
     C \ = \  
     \begin{cases}
     Q^{(1-\theta)/2}/ T^{(1+\theta)/2}  \hspace{15pt} \text{ if } \hspace{15pt}  \mfr \, = \, 2 \\
         Q^{1/4-\theta/2}/T^{3/4+\theta/2} \hspace{13pt} \text{ if } \hspace{15pt}  \mfr \, = \, 3. 
     \end{cases}
 \end{align}
 The restriction on the range of $T$ originates from the requirement $1\le C \ll Q^{1-\epsilon}$. 
\end{proof}

\begin{proof}[Proof of Theorem \ref{thm: gl3, dualcase} and \ref{thm: GL2general}]
These follow from putting Corollary \ref{optimUrLr}, Propositions \ref{+vedrop}--\ref{Umindneg}, Proposition \ref{tersm}, Propositions \ref{U2dropeq}--\ref{L0ddropterms}, Proposition \ref{prop: auxcanU1L0}, Proposition \ref{LsuHLSter} together. (Recall also the set-up in Section \ref{sect: DLUsumdef}). The rest of our theorems of Section \ref{sect: MVTtheoemre} are special cases of Theorem \ref{thm: gl3, dualcase}.
\end{proof}



\section{Levinson and Ratios without GRC: Proof of Corollaries \ref{cor: fullresultpgl3per} and \ref{cor: simplpergl2}}\label{sect: CSLevGL3}


In this section, we deduce the consequences stated in Section \ref{sect: introLevin} using our mean value theorem (Theorem \ref{generalL4version them}) and Levinson's method. The computations are implemented in the accompanying \texttt{Mathematica} notebook \texttt{GL3Ratio.nb} (Appendix \ref{sect: ratiosMath}).

Thanks to \cite{Yo10} and \cite{CS07}, this step is largely reduced to a \emph{ratios} calculation, making use of the arithmetic of the diagonal  (\ref{mainthm: momentexp}) and the mollifier (\ref{ourexamollif}). We focus on the harder case  $\Pi\in \mathcal{A}_{0}(\mathrm{G}_{3})$. Recall, for $\re s\gg 1$, that
\begin{align}
    \frac{1}{L(s, \Pi)} \, = \,  \prod_{p} \, \sum_{n=0}^{\infty} \frac{\mu_{\Pi}(p^n)}{p^{ns}} \ = \  \prod_{p} \, ( 1- \lambda_{\Pi}(p)p^{-s} + \mathfrak{e}_{\Pi}(p)p^{-2s} - \omega(p)p^{-3s}).
\end{align}
Our goal is to establish an equality of the form
\begin{align}\label{intro: ratio}
      \sum_{\substack{mh=nk \\ (mnhk,q)=1}} \, \frac{\lambda_{\Pi}(m) \overline{\lambda_{\Pi}}(n) \mu_{\Pi}(h) \overline{\mu_{\Pi}}(k)}{m^{1/2+\alpha} n^{1/2+\beta} h^{1/2+\gamma} k^{1/2+\delta}} \, = \,  \prod_{p\nmid q} \, \frac{\mathbf{L}_{p}(1+\alpha+\beta)\mathbf{L}_{p}(1+\gamma+\delta)}{\mathbf{L}_{p}(1+\alpha+\delta)\mathbf{L}_{p}(1+\beta+\gamma)}\mathbf{A}_{p}(\alpha, \beta, \gamma, \delta)
\end{align}
for $\re \alpha, \, \re \beta,\, \re \gamma,\, \re \delta \gg 1$. \footnote{ In practice (\cite[Section 6]{Yo10}), the parameters $\alpha, \beta$ in (\ref{ratioEP}) can be interpreted as $\alpha+s,\, \beta+s$, where $s$ is initially taken with large real part and is the integration variable in the cut-off function (\ref{newcutoff}); likewise, the parameters $\gamma, \delta$ are initially taken with large real parts, since they occur as the integration variables in Perron's formula.} Here $\mathbf{L}_{p}$ denotes a suitable $L$-factor such that the infinite product of $\mathbf{A}_{p}(\alpha, \beta, \gamma, \delta)$ converges absolutely for $\re \alpha, \re \beta, \re \gamma, \re \delta> -\delta_{0}$ for some small absolute $\delta_{0}>0$.

Following the procedure of \cite{CS07}, and guided by Corollary \ref{cor: untwismoSpect} and (\ref{intro: naiveRs}), the suitable $L$-functions initially appear to be $\mathcal{L}_{p}(\,\cdot\, ,\Pi\otimes \widetilde{\Pi})$. However, this runs into (at least) two issues when $\Pi\in \mathcal{A}_{0}(\mathrm{G}_{3})$: first, the infinite product of the arithmetic factors converges absolutely in the desired region only if one assumes unproven progress towards \textbf{GRC}; second, the local factor $ \mathcal{L}_{p}(\mbs, \Pi\otimes \widetilde{\Pi})$ can vanish on $\re \mbs >1-\delta$ for small $p$'s; see Corollary \ref{Hlocalnonvan}. 

It turns out that one should instead use the Rankin--Selberg local factor $L_{p}(\,\cdot\, ,\Pi\otimes \widetilde{\Pi})$ from automorphic representations. Nevertheless, the computation of the corresponding arithmetic factor is by no means straightforward, and we carry it out using \texttt{Mathematica}.\footnote{ whereas the counterpart for $\mathrm{GL}_{2}$ can be reasonably done by hand; see \cite[Section 3.2.1]{Ber15}. }  It is also \emph{a priori} unclear whether the resulting arithmetic factors satisfy the required analytic properties. We show that they do, unconditionally, using bounds of  Kim--Sarnak \cite{KS03} and Li \cite{Li10}.

To simplify our upcoming discussions, we use the shorthands: 
\begin{align*}
    x=p^{-1-\alpha-\beta}, \hspace{10pt} y=p^{-1-\gamma-\delta},  \hspace{10pt} z_{1}=p^{-1-\alpha-\delta},  \hspace{10pt} z_{2}:=p^{-1-\beta-\gamma};
\end{align*}
\begin{align}\label{ratio: polypart}
  \mathfrak{U}(y; \Pi, d) \ := \   \sum_{h=0}^{3-d} \, \mu_{\Pi}(p^{d+h})\mu_{\widetilde{\Pi}}(p^h) y^h \hspace{15pt} \text{and} \hspace{15pt}
  \mathcal{L}(x; \Pi,d) \ := \ \sum_{n\ge 0} \, \lambda_{\Pi}(p^{n+d}) \lambda_{\widetilde{\Pi}}(p^n)x^n; 
\end{align}
and $L_{p}(x):= L_{p}(1+\alpha+\beta, \Pi\otimes\widetilde{\Pi})$ (similarly for the remaining Rankin--Selberg local factors). Upon rearranging the summation, observe that the local Euler factor of (\ref{intro: ratio}) at $p$, i.e.,
\begin{align}\label{ratioEP}
\mathbf{E}_{p}(\alpha,\beta,\gamma, \delta) \ := \     \sum_{\substack{m,n,h,k\ge 0\\ m+h=n+k}}\, \frac{\lambda_{\Pi}(p^m) \overline{\lambda_{\Pi}}(p^n) \mu_{\Pi}(p^h) \overline{\mu_{\Pi}}(p^k)}{(p^m)^{1/2+\alpha} (p^n)^{1/2+\beta} (p^h)^{1/2+\gamma} (p^k)^{1/2+\delta}}
\end{align}
is equal to
\begin{align}\label{ratio: decompose}
\hspace{-5pt} \mathbf{E}(x,y;  z_{1},z_{2})=    \mathfrak{U}(y; \widetilde{\Pi},0)  \mathcal{L}(x;\Pi,0) +    \sum_{d=1}^{3}  z_{1}^d\, \mathfrak{U}(y; \widetilde{\Pi},d)  \mathcal{L}(x;\Pi,d)  +  \sum_{d=1}^{3}   z_{2}^{d}\,   \mathfrak{U}(y; \Pi,d)  \mathcal{L}(x; \widetilde{\Pi},d),
\end{align}
where $\mathcal{L}(x, \Pi,d):=\mathcal{L}_{p}(1+\alpha+\beta;\,  \Pi,   \widetilde{\Pi}; \, d,0) $ has been evaluated in Lemma \ref{lem: diffby1RSn}, and is divisible by $L_{p}(x)$. With the help of \texttt{Mathematica (\texttt{GL3Ratio.nb})}, we show that
\begin{prop}\label{compratioprop}
Let $\delta_{0}:=10^{-6}$. For any $p\nmid q$, we define
\begin{align}\label{ratio: arithfact}
    \mathbf{A}(x,y,z_{1},z_{2}) \ := \  \frac{L_{p}(z_{1})L_{p}(z_{2})}{L_{p}(x)L_{p}(y)} \, \mathbf{E}(x,y;  z_{1},z_{2}). 
\end{align}
The Euler product $\prod_{p\,\nmid\, q} \,  \mathbf{A}(p^{-u},p^{-v}, p^{-w_{1}},p^{-w_{2}})$ converges absolutely on $\re u$, $\re v$, $\re w_{i}> 1-\delta_{0}$.
\end{prop}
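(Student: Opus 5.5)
We will prove absolute convergence through Lemma \ref{absunifin}. It suffices to show that for $p\nmid q$, uniformly on the region $\re u,\re v,\re w_i\ge 1-\delta_0$, we have $\mathbf{A}=1+f_p$. The error $f_p$ should be dominated by a summable majorant of the shape
\begin{align*}
M_p \,=\, C\Big(\frac{|\lambda_\Pi(p)|^2}{p^{1+\eta}}+p^{-1-\eta}\Big)
\end{align*}
for some $\eta>0$. For $p\nmid\mfq$ this majorant is summable by (\ref{trivrankcoefbdd}) and (\ref{Lisbound}), i.e.\ $\sum_p|\lambda_\Pi(p)|^2p^{-1-\eta}<\infty$. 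The finitely many ramified primes $p\mid\mfq$ are harmless provided each local factor is finite on the region. That follows since $\mathbf{E}$ is built from absolutely convergent series for $\re>2\vartheta$, and $L_p(\cdot)^{\pm1}$ is a polynomial or rational factor by (\ref{genRSbdd}).

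First, compute $\mathbf{A}$ exactly at unramified $p$ as a polynomial. By (\ref{ratio: decompose}), each $\mathcal{L}(x;\Pi,d)$ equals $L_p(x)\mathcal{P}(x;\Pi,\widetilde\Pi;d,0)$ by Lemma \ref{lem: diffby1RSn}. Here $L_p^{\mathrm{ur}}=L_p$ for $p\nmid\mfq$, and $\mathcal{P}$ is a polynomial of degree $\le6$. The $\mathfrak{U}(y;\cdot,d)$ are polynomials in $y$ with coefficients in $\lambda_\Pi(p),\lambda_{\widetilde\Pi}(p)$ via (\ref{Ex: gl3mollif}). Hence
\begin{align*}
\mathbf{A}=\frac{L_p(z_1)L_p(z_2)}{L_p(y)}\,\mathbf{P}(x,y,z_1,z_2),
\end{align*}
and since $L_p(z_i)$ is the reciprocal of $\prod_{i,j}(1-\alpha_i\overline{\alpha_j}z)$, the quantity $\mathbf{A}$ is a rational function. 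The factor $L_p(y)^{-1}$ is a polynomial, and the $L_p(z_i)$ sit in the denominator. I would then expand
\begin{align*}
L_p(y)^{-1}L_p(z_1)L_p(z_2)\,\mathbf{P}
\end{align*}
as a power series in the four monomials, with coefficients polynomial in the Satake parameters, reduced via (\ref{unrcontrae(p)}) and (\ref{FourtoHeckgl3}) to expressions in $\lambda_\Pi(p),\lambda_{\widetilde\Pi}(p)$. This symbolic step is exactly what \texttt{GL3Ratio.nb} does.

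Second, verify the cancellations. The degree-one terms must cancel: the coefficients of $x,y,z_1,z_2$ in $\mathbf{A}-1$ should vanish identically. This is the whole point of the ratio normalization, since the first-order terms are $|\lambda_\Pi(p)|^2(x+y-z_1-z_2)$. The remaining terms are monomials $x^ay^bz_1^cz_2^d$ with $a+b+c+d\ge2$. Each carries a coefficient bounded by a polynomial in $|\lambda_\Pi(p)|\le3p^{\vartheta}$, of total Satake degree at most $2(a+b+c+d)$ from the naive counting. On the region each monomial has size $\le p^{-(1-\delta_0)k}$, where $k=a+b+c+d$. A term of degree $k$ with coefficient $\ll|\lambda_\Pi(p)|^{j}$ is then bounded by
\begin{align*}
p^{-(1-\delta_0)k+(j-2)\vartheta}\,|\lambda_\Pi(p)|^2 .
\end{align*}
With $\vartheta\le5/14$ from (\ref{KimSarbdd}), this is $\le|\lambda_\Pi(p)|^2p^{-1-\eta}$ when $(j-2)\vartheta<k-1-k\delta_0$.

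The main obstacle is that the naive bound $j\le2k$ fails this inequality for large $k$ unless the actual coefficients have far smaller Satake degree than the naive count. So the real content is to show from the explicit output that the coefficient of every degree-$k$ monomial, $k\ge2$, has degree $j$ in $(\lambda_\Pi(p),\lambda_{\widetilde\Pi}(p))$ satisfying $j\vartheta<k-1+2\vartheta$ (with margin $\delta_0$). Equivalently, using $|\lambda|\le3p^{5/14}$, every term must decay at least like $p^{-1-\eta}|\lambda_\Pi(p)|^2$ or $p^{-1-\eta}$.

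I would handle this as in Claim \ref{EPgl3sumclaimbound}. Have Mathematica list all monomials, then split off the few quadratic terms of the form $|\lambda_\Pi(p)|^2xz_1$ and $\lambda^3$-type terms. For those, use $|XY|\le|X|^2+|Y|^2$ and Cauchy to reduce to $\sum|\lambda_\Pi(p)|^2p^{-1-\eta}$, and bound everything else crudely with $\vartheta\le5/14$. The small $\delta_0=10^{-6}$ leaves room since $1-3\cdot5/14>0$.

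The other concern is the denominators $L_p(z_i)=\prod(1-\alpha_i\overline{\alpha_j}z_i)^{-1}$: these could blow up if $|\alpha_i\overline{\alpha_j}|p^{-1+\delta_0}$ were near $1$. But (\ref{genRSbdd}) gives
\begin{align*}
|\alpha_i\overline{\alpha_j}z_i|\le p^{2\vartheta-1+\delta_0}\le p^{-2/7+\delta_0}<1,
\end{align*}
so each factor is $\asymp1$. This justifies the expansion, and the absolute convergence follows.
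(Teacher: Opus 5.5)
Your proposal is correct and follows essentially the paper's route: an exact (\texttt{Mathematica}) computation of the unramified local factor, the observation that the first-order terms $|\lambda_\Pi(p)|^2(x+y-z_1-z_2)$ of $\mathbf{E}$ are cancelled by the normalizing Rankin--Selberg factors, bounds on the surviving higher-degree terms via $\vartheta\le 5/14$ and $\sum_p|\lambda_\Pi(p)|^2p^{-1-\eta}<\infty$, and a separate check at the finitely many $p\mid\mfq$. One correction: the ``main obstacle'' you describe does not exist, because for $j\le 2k$ the condition $(j-2)\vartheta<k-1-k\delta_0$ is equivalent to $(k-1)(1-2\vartheta)>k\delta_0$, which holds for every $k\ge 2$ since $1-2\vartheta\ge 2/7$ and $\delta_0=10^{-6}$; so once the linear terms cancel, the naive count already suffices, and writing $\mathbf{A}-1=L_p(z_1)L_p(z_2)\big(L_p(y)^{-1}\mathbf{E}/L_p(x)-L_p(z_1)^{-1}L_p(z_2)^{-1}\big)$, where the bracket is a polynomial with no constant or linear term and $L_p(z_1)L_p(z_2)$ is uniformly bounded by (\ref{genRSbdd}), also removes any uniformity issue with the infinite power-series tail.
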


\begin{proof}
In what follows, the terms $O(p^{-1-\epsilon})$ are understood after one makes the substitution $(x,y;  z_{1},z_{2})$ $\to (p^{-u},p^{-v}, p^{-w_{1}},p^{-w_{2}})$ and for $\re u$, $\re v$, $\re w_{i}> 1- \delta_{0}$.

We first handle the difficult case when $p\nmid\mfq$. Since $\prod\, \alpha_{i}=1$ and $\Pi$ is unitary, the coefficients of $  \mathbf{E}$ can be expressed in $\lambda:= \lambda_{\Pi}(p)=\sum \alpha_{i}$ and $\overline{\lambda}=\sum \alpha_{i}^{-1}$. Moreover, it suffices to consider the first two summands of (\ref{ratio: decompose}); the third follows by symmetry: $\Pi \to \widetilde{\Pi}$ and $z_{1}\to z_{2}$. We will use $|\lambda| \le 3p^{\vartheta}$ with $\vartheta\le 1/2-1/7$ below. We have the following evaluations:
\begin{align}
   \mathfrak{U}(y; \widetilde{\Pi},0)  \mathcal{L}(x;\Pi,0) \ &= \  L_{p}(x)\big( (1-x^3)^2 - |\lambda|^2 \big(x(1-x)\big)^2\big) \big(1 +|\lambda|^2y + |\lambda|^2y^2 +y^3\big) \nonumber\\
   \ &= \ L_{p}(x) ( 1 +  O(p^{2\vartheta-2}))(1 + |\lambda|^2y  +  O(p^{2\vartheta-2})) \nonumber\\
   \ &= \ L_{p}(x)(1+|\lambda|^2y + O(p^{-1-\epsilon}));
\end{align}
\begin{align}
 z_{1}\mathfrak{U}(y; \widetilde{\Pi},1)  \mathcal{L}(x;\Pi,1) \ &= \   - z_{1} (1-x)^2(1+x)\big(\lambda(1+x+x^2) -\overline{\lambda}^2x\big)\big((1+y^2)\overline{\lambda}+y \lambda^2\big) L_{p}(x) \nonumber\\
 \ &= \  \big(\overline{\lambda}^3 x z_{1}  -|\lambda|^2 z_{1}- \lambda^3yz_{1} \, + \, O(p^{-1-\epsilon})\big)L_{p}(x);
\end{align}
\begin{align}
 z_{1}^2\, \mathfrak{U}(y; \widetilde{\Pi},2)  \mathcal{L}(x;\Pi,2) \ &= \    (x-1)^2 (y+1) z_{1}^2 \lambda \big((1-x+x^2)(\lambda^2- \overline{\lambda})+ x\lambda ( \overline{\lambda}^2-\lambda)  \big) L_{p}(x)\nonumber\\
 \ &= \  (z_{1}^2 \lambda^3 + O(p^{-1-\epsilon}))L_{p}(x);
\end{align}
and the following quantity is $O(p^{-1-\epsilon})$ because:
\begin{align}
 z_{1}^3\, \mathfrak{U}(y; \widetilde{\Pi},3)  \mathcal{L}(x;\Pi,3) \ = \    -(x-1)^2 z_{1}^3\Big(1-&2 |\lambda|^2 
   +\lambda^3+x (2 \lambda^3+\overline{\lambda}^3-|\lambda|^4-3|\lambda|^2  +2)\nonumber\\
   &\hspace{-50pt}+x^2 (\lambda^3+\overline{\lambda}^3-4 |\lambda|^2 +3)+x^3 (2-|\lambda|^2 )+x^4\Big) L_{p}(x).
\end{align}
Denote by $\mathbf{P}(x,y; z_{1}, z_{2})$ following quadratic polynomial:
\begin{align}\label{ratio: quadrapoly}
  1 \, + \, |\lambda|^2y -|\lambda|^2 z_{1} -|\lambda|^2 z_{2} \, + \, \overline{\lambda}^3 x z_{1} + \lambda^3 x z_{2} \   - \lambda^3yz_{1}  - \overline{\lambda}^3yz_{2}         + \lambda^3 z_{1}^2 + \overline{\lambda}^3 z_{2}^2.  
\end{align}
Collecting the calculations above, we have
\begin{align}
      \mathbf{E}(x,y;  z_{1},z_{2}) \ = \ L_{p}(x)(\mathbf{P}(x,y; z_{1}, z_{2}) + O(p^{-1-\epsilon})).
\end{align}

Next, using $\vartheta\le 1/2-1/7$ and the definition of the Rankin--Selberg local factors, we obtain
\begin{align}
    L_{p}(y)^{-1} \ &= \  1- y|\lambda|^2 +y^2(\lambda^3-2|\lambda|^2+ \overline{\lambda}^3) \ + \ O(p^{-1+\epsilon}), \\
    L_{p}(z) \ &= \ 1+ z|\lambda|^2 +z^2 (-\lambda^3 + 2 |\lambda|^2 + |\lambda|^4 - \overline{\lambda}^3)\nonumber\\
    &\hspace{40pt}+ z^3 (3 + \lambda^3 - 6 |\lambda|^2 - 2 \lambda^4 \overline{\lambda} + 5  |\lambda|^4 + \overline{\lambda}^3 +|\lambda|^6 - 2 \lambda \overline{\lambda}^4)  \ + \ O(p^{-1+\epsilon}).
 \end{align}
It follows that
\begin{align}
    \mathbf{P}(x,y; z_{1}, z_{2}) L_{p}(y)^{-1} \prod_{i=1}^{2} \, L_{p}(z_{i}) \ = \  \mathbf{Q}(x,y; z_{1}, z_{2}) \, + \, O(p^{-1-\epsilon}),\nonumber
\end{align}
where $ \mathbf{Q}(x,y; z_{1}, z_{2})$  is given by the following cubic polynomial:
\begin{align}\label{ratio: cubic}
  \hspace{-5pt} 1+ &\lambda^3 y^2 -|\lambda|^4 y^2 +\overline{\lambda}^3 y^2 +\overline{\lambda}^3 x z_{1} -\lambda^3 y z_{1} + |\lambda|^4 y z_{1} - |\lambda|^6 y^2 z_1 - \overline{\lambda}^3 z_1^2 +|\lambda|^6 y z_1^2 + \lambda^3 x z_2  \nonumber\\
    &\hspace{-15pt}+ |\lambda|^4 y z_2 - \overline{\lambda}^3 y z_2-|\lambda|^6 y^2 z_2-|\lambda|^4 z_1 z_2 + 2 |\lambda|^6 y z_1 z_2 -|\lambda|^6 z_1^2 z_2 - \lambda^3 z_2^2 + |\lambda|^6 y z_2^2 - |\lambda|^6 z_1 z_2^2.
\end{align}
In other words, we take $ \mathbf{A}(x,y; z_{1}, z_{2}) = \mathbf{Q}(x,y; z_{1}, z_{2}) \, + \, O(p^{-1-\epsilon}),$ and the absolute convergence of the Euler product $\prod_{p\nmid q\mfq} \,  \mathbf{A}(p^{-u},p^{-v}, p^{-w_{1}},p^{-w_{2}})$ follows from that of the infinite series:
\begin{align}
    \sum_{p\nmid \mfq}\,  \frac{|\lambda(p)|^3}{p^{2-\delta_{0}}} \ll   \sum_{p\nmid \mfq}\,  \frac{|\lambda(p)|^2}{p^{2-\vartheta-\delta_{0}}}; \hspace{15pt} \sum_{p\nmid \mfq}\,  \frac{|\lambda(p)|^4}{p^{2-\delta_{0}}}  \ll \sum_{p\nmid \mfq}\,  \frac{|\lambda(p)|^2}{p^{2-2\vartheta-\delta_{0}}} ; \hspace{15pt}
   \sum_{p\nmid \mfq}\,  \frac{|\lambda(p)|^6}{p^{3-\delta_{0}}}\ll \sum_{p\nmid \mfq}\,  \frac{|\lambda(p)|^2}{p^{3-4\vartheta-\delta_{0}}},\nonumber
\end{align}
which are consequences of $\vartheta\le 1/2-1/7$ and (\ref{trivrankcoefbdd}). 

Next, we treat the ramified primes $p\mid \mfq$, which is quite straightforward. Note that since $(q,\mfq)=1$, we must have $p\nmid q$. Returning to (\ref{ratio: decompose}), with $\vartheta\le 1/2-1/7$ and Lemma \ref{lem: diffby1RSn}, we have
\begin{align}
    \sum_{d=1}^{3} \, z_{1}^d\, \mathfrak{U}(y; \widetilde{\Pi},d)  \mathcal{L}(x;\Pi, d) \ &= \  z_{1}(\mu_{\widetilde{\Pi}}(p)+\mu_{\widetilde{\Pi}}(p^2)\mu_{\Pi}(p)y)\mathcal{L}(x;\Pi, 1) + z_{1}^2 \mu_{\widetilde{\Pi}}(p^2)\mathcal{L}(x;\Pi, 2) \nonumber\\
    \ &\ll \ p^{10\delta}(p^{-1}\cdot p^{\vartheta}\cdot p^{\vartheta} + p^{-2}\cdot  p^{2\vartheta}\cdot p^{2\vartheta}) \ \ll \ p^{-1+2\vartheta+10\delta}
\end{align}
Similarly, $ \mathfrak{U}(y; \widetilde{\Pi}, 0) \mathcal{L}(x;\Pi, 0) = 1+ O(p^{-1+2\vartheta+10\delta})$. Hence, 
\begin{align}
 \prod_{p\mid\mfq} \,  \mathbf{E}(p^{-u},p^{-v}, p^{-w_{1}},p^{-w_{2}}) \ = \  \prod_{p\mid \mfq} \, ( 1+ O(p^{-1+2\vartheta+10\delta})) \, \ll_{\epsilon} \, \mfq^{\epsilon}.
\end{align}
With (\ref{genRSbdd}), the factors $L_{p}(z_{1})$, $L_{p}(z_{2})$, $L_{p}(x)$, $L_{p}(y)$ do not vanish and are $1+O(p^{-1+2\vartheta+\delta}))$. Therefore, we also have
\begin{align*}
    \prod_{p\mid \mfq} \,  \frac{L_{p}(z_{1})L_{p}(z_{2})}{L_{p}(x)L_{p}(y)} \, \ll_{\epsilon} \, \mfq^{\epsilon}.
\end{align*}
This completes the proof of Proposition \ref{compratioprop}.
\end{proof}

\begin{proof}[Proof of Corollary \ref{cor: fullresultpgl3per}]
We follow the argument of \cite{Yo10, Ber15} closely. Let $\Pi\in \mathcal{A}_{0}(\mathrm{G}_{3})$ be fixed. Let $\varpi,\, \theta>0$, $T= Q^{\varpi}$, $X:=(TQ)^{\theta}$, and $L:=\log TQ$. Let $\alpha, \beta \ll L^{-1}$ and $|\alpha+\beta|\gg L^{-1}$. Take $H_{\alpha, \beta}$ to be the cut-off function defined in (\ref{doublecut}) with $G(s)  := e^{s^2} \, \frac{(\alpha+\beta)^2-(2s)^2}{(\alpha+\beta)^2}.$
   
By the same exact argument as \cite[Lemmata 6-7]{Yo10} or \cite[Lemmata 9-10]{Ber15}, we have
\begin{align}\label{followYou}
   \sum_{\substack{h,k \le X}}\frac{\mu_{\Pi}(h) \mu_{\widetilde{\Pi}}(k)}{\sqrt{hk}} \mcP\Big( \frac{\log X/h}{\log X}\Big) &  \mcP\Big( \frac{\log X/k}{\log X}\Big)  \mathop{\sum \sum}_{\substack{mh = nk  \\ (mnhk, \, q)=1}} \, \frac{\lambda_{\Pi}(m)\lambda_{\widetilde{\Pi}}(n)}
        {m^{\frac12 + \alpha} n^{\frac12 + \beta}} H_{\alpha, \beta}\Big(1, \frac{ mn}{\mfq q^\mfr}\Big) \nonumber\\
 \ & = \  \Big(\int_{\R} \, \eta(t) \, dt\Big)\, \frac{\phi^*(q)}{2}  \mathcal{S}(\alpha, \beta) \, + \, O(TQL^{-1})
\end{align}
for any $\theta<1$, where
\begin{align}
   \mathcal{S}(\alpha, \beta) \, := \,  \frac{1}{(\alpha+\beta)\log X} \, \frac{d^2}{dx\,dy} \, X^{\alpha x+\beta y}\int_{0}^{1} \, \mathcal{P}(x+u)\mathcal{P}(y+u) \, du \Big|_{x=y=0}. 
\end{align}
The argument uses Proposition \ref{compratioprop} with a standard prime-number-theorem-type argument.  The standard zero-free region for $L(s, \Pi \otimes\widetilde{\Pi})$ and the associated logarithmic lower bound (see \cite[Theorem 1.9]{HB19} and \cite[Theorem 2.1]{HT22}) are essential. We also need a simple fact:
\begin{align}\label{constbeingone}
    \mathbf{A}(0,0,0,0) \ = \ 1.
\end{align}
This can be seen as follows. For $\re \alpha \gg 1$, we have $ \mathbf{A}(\alpha,\alpha, \alpha, \alpha )$ given by
\begin{align}
     \sum_{\substack{m,n,h,k\ge 1 \\ mh=nk}} \, \frac{\lambda_{\Pi}(m) \overline{\lambda_{\Pi}}(n) \mu_{\Pi}(h) \overline{\mu_{\Pi}}(k)}{(mnhk)^{1/2+\alpha} }   \sum_{r\ge 1} \, \frac{1}{r^{1+2\alpha}} \sum_{mh=r} \, \lambda_{\Pi}(m)\mu_{\Pi}(h) \, \sum_{nk=r} \, \overline{\lambda_{\Pi}(n)\mu_{\Pi}(k)} \ = \  1. 
\end{align}
By Proposition \ref{compratioprop} again, (\ref{constbeingone}) follows by analytic continuation.  

To apply Theorem \ref{generalL4version them}, let's check that Condition  $\mathbf{(\Lambda)}$ is  satisfied for the coefficients
\begin{align*}
    \lambda_{h} \, := \, \mu_{\Pi}(h) \mcP\Big( \frac{\log X/h}{\log X}\Big).
\end{align*}
Indeed, under Hypothesis $(\mathbf{\Pi}^4)$ (which is known to hold for $\Pi$ self-dual), observe that 
\begin{align}
    \sum_{h\le X} \, \frac{|\lambda_{h}|^4}{h} \ &\ll_{\epsilon} \ X^{\epsilon} \,  \sum_{h=1}^{\infty} \, \frac{|\mu_{\Pi}(h)|^4}{h^{1+\epsilon}} \nonumber\\
    \ &= \  X^{\epsilon}\,  \prod_{p\nmid\mfq} \, \Big(1+ \frac{|\lambda_{\Pi}(p)|^4}{p^{1+\epsilon}}+ \frac{|\lambda_{\Pi}(p)|^4}{p^{2(1+\epsilon)}} + \frac{1}{p^{3(1+\epsilon)}}\Big) \prod_{p\mid \mfq} \, \Big(1+ \frac{|\lambda_{\Pi}(p)|^4}{p^{1+\epsilon}} \, + \, \frac{|\mathfrak{e}_{\Pi}(p)|^4}{p^{2(1+\epsilon)}}\Big) \nonumber\\
    \, &\ll_{\Pi, \, \epsilon}  \, X^{\epsilon} \exp\Big(2\sum_{n=1}^{\infty} \, \frac{|\lambda_{\Pi}(n)|^4}{n^{1+\epsilon}}\Big) \ \ll_{\Pi, \, \epsilon}\ X^{\epsilon}. 
\end{align}
The two remaining bounds in Condition  $\mathbf{(\Lambda)}$ are obvious to check.

Let  $a=1/2-R/L$, $\theta= (1-3\varpi)/(2(1-\varpi))-O(\epsilon)$, and recall our choice of mollifier  (\ref{ourexamollif}). Applying Theorem \ref{generalL4version them} and (\ref{followYou}), we obtain 
\begin{align}
 &\sum_{\substack{(q, \, \mfq)=1}} \,   W\big(\frac{q}{Q}\big)  \,
    \sideset{}{^\flat}{\sum}_{\chi(q)} \,  \int_{\mathbb{R}}    \eta(t)L\big(\frac{1}{2}  + \alpha+it, \, \Pi \times \chi\big)  L\big(\frac{1}{2}+ \beta-it, \, \widetilde{\Pi}
    \times \overline{\chi}\big)|M(a+it, \Pi\times \chi)|^2  \ dt \nonumber\\
    \, &\hspace{10pt}= \, \Big(\int_{\R} \, \eta(t) \, dt\Big)\, \sum_{\substack{(q, \, \mfq)=1}} \,   W\big(\frac{q}{Q}\big)\frac{\phi^*(q)}{2} \,  \big(\mathcal{S}(\alpha, \beta)+ (qT)^{-3(\alpha+\beta)}\mathcal{S}(-\beta,-\alpha)\big) \, + \, O(TQ^2L^{-1}).
 \end{align}
 Let $c_{\theta}(\mathcal{P}, \mathcal{Q},R)$ be the constant introduced in (\ref{Conreyformconst}). Following \cite[Lemma 3]{Yo10} or \cite[Theorem 5]{Ber15}, we readily conclude the evaluation of our mollified second moment:
\begin{align}
    \sum_{\substack{(q, \, \mfq)=1}} \,   W\big(\frac{q}{Q}\big) \,
    \sideset{}{^\flat}{\sum}_{\chi(q)} \int_{0}^{T} \, &\Big| \mcQ\Big(-\frac{1}{3L} \frac{d}{ds}\Big) (L M)(a+it, \Pi \times \chi) \Big|^2 \, dt \nonumber\\
    \  &= \   c_{\theta/3}(\mcP,\mcQ, 3R)\, T\sum_{\substack{(q, \, \mfq)=1}} \,   W\big(\frac{q}{Q}\big)\, \frac{\phi^*(q)}{2} \, + \, O(TQ^2L^{-1}).
\end{align}
Corollary \ref{cor: fullresultpgl3per} follows immediately from \cite[Corollary A]{CIS13}. When $\mfr=3$, The proportion (\ref{ourproportionlev}) is obtained by taking  $\mcP(x)=x$, $\mcQ(y)$ constructed in our prior work \cite{ConShortMollif}, and $R=(1/\sqrt{15})\theta^{-1}$. 
\end{proof}

\begin{proof}[Proof of Corollary \ref{cor: simplpergl2}]
This goes back to an observation of Heath-Brown \cite{HB79} and Selberg that Levinson’s method detects simple zeros, provided that $\mathcal{Q}(y)$ is a \emph{linear} polynomial with $\mathcal{Q}(0)=1$. For $T=Q^{\epsilon}$ and $\mfr=2$, we take $\mcP(x)=x$, $\mathcal{Q}(x)=1-x$, $R=1.3$ (i.e., the original choice of \cite{Lev74}), we obtain the proportion $1/3-O(\epsilon)$. The second part of Corollary \ref{cor: simplpergl2} follows from the computation of \cite[p. 215]{Far94}. For Remark \ref{gl3simplezeors}, see  \cite[Remark (p. 179)]{CIS13}.    
\end{proof}


\appendix

\section{Glossary of notation}\label{sect: gloss}

\subsection{Sections \ref{sect: collectprelim}--\ref{LsumHLSbdd}}

\begin{enumerate}
\item $\mathfrak{e}_{\Pi}(p)$; see (\ref{GL3secelem}).

\item $\mathcal{L}_{p}(s;\,  \Pi_{1},   \Pi_{2}; \, d_{1}, d_{2})$; see (\ref{dshiftnaiveDsdef}).

\item $ \mathfrak{f}_{s,\Pi_{1}, \Pi_{2}}(n)$; see (\ref{convshiftRS}).

\item $V_{\alpha,\beta}(y; it)=V_{\alpha,\beta}(y; it; \, \Pi_{1}, \Pi_{2})$, $H_{\alpha, \beta}(v,y)=H_{\alpha, \beta}(v,y;\,  \Pi_{1}, \Pi_{2})$, $\widetilde{V}_{\alpha,\beta}(s;it)=\widetilde{V}_{\alpha,\beta}(s;it;\,  \Pi_{1}, \Pi_{2})$; see (\ref{newcutoff}), (\ref{doublecut}), (\ref{Mellincutprod}).

\item  The parameter $C$ is specified in (\ref{parameterC}).

\item See Section \ref{sect: DLUsumdef} for the list of sums considered in this article.

\item  $\mbs= 1+2s+\alpha+\beta$ and $\mbw:=u-\mfr s$; see (\ref{conveshiftdiag}) and (\ref{Arithmaintermprep}).

\item $\mbw_{1} := s+it+\frac{1}{2}+\alpha$ and $ \mbw_{2} :=  s-it+\frac{1}{2}+\beta$; see (\ref{openupLs}).

\end{enumerate}

\subsection{Section \ref{absDiv}}

\begin{enumerate}
\item $g:= (mh, nk)$, $M:=mh/g$, $N:= nk/g$;  see (\ref{divswiauxpara}).

\item The variables $r, e,a, \ell$; see (\ref{ddivswi}).

\item $\mnl:=rea\ell$; see (\ref{key: conduct}).

\item $\mathrm{U}^{(*)}(m,n; h,k)$; see  (\ref{absU0sum}).

\item $\mathcal{W}(x)$, $\widetilde{\mathcal{W}}^{\pm}(w)$; see (\ref{Ffunc})--(\ref{simMell}).

\end{enumerate}

\subsection{Sections \ref{U0sumsect}--\ref{compdoubsumGCD}}

\begin{enumerate}
    \item $m_{0}, m', n_{0}, n', h_{0}, h', k_{0}, k', g_{0}, g', M_{0}, M', N_{0}, N'$; see (\ref{splitmnaccortoq})--(\ref{MNdecompo0pr}).

    \item $H:=h/(h,k)$, $K:=k/(h,k)$, $H_{0}, H', K_{0}, K', \ell_{0},\ell'$; see (\ref{diagspecasesplit}).

    \item $R_{1}(w; u, v)$, $R_{M_{0}N_{0}}(w)$, $R_{\mfq}^{(M_{0}N_{0})}(w)$, 
   $\mathfrak{R}(w; g, MN, \mfq)$; see Lemma \ref{lem: U0anapar} and (\ref{collapseeal}).

   \item $\mathrm{U}^{\mathbf{1}}(m,n; h,k)$, $\mathrm{U}^{\mathbf{2}}(m,n; h,k)$, $ \mathrm{L}^{\mathbf{0}}(m,n;h,k)$; see (\ref{split}) and (\ref{innerL0}).

   \item $g_{\delta}(z)$, $\mathcal{H}(z, -w)$, $\widetilde{\mathcal{W}_{\delta}^{^\pm}}(w)$, $
 \mathcal{U}_{\delta}^{\mathbf{2}}(h,k)$; see (\ref{CIStrickdel}), (\ref{eq: smallsmthwei}), (\ref{U2arrane}). 
\end{enumerate}

\subsection{Section \ref{sect: NoODU2}}
\begin{enumerate}
    \item $\mathcal{K}^{(\mfq)}_{it}(s; z,w)$, 
  $\mathcal{E}_{\mfq}(s; z,w; it )$; see (\ref{keydoubDS})--(\ref{ram: finiteEPforU0}).

  \item $\mbu, \mbv_{0}, \mbs$; see (\ref{U2varchoi}). 

  \item $ \mathbf{L}_{p}(\mbu, \mbs; \Pi_{1}, \Pi_{2} )$, 
 $\mathbf{D}_{p}(\mbu, \mbv_{0}; \, \Pi_{1}, \Pi_{2})$, $\mathcal{A}_{p}(\mbu, \mbs; \Pi_{1}, \Pi_{2})$; see (\ref{mathbddLuspi}), (\ref{U2Dfactdef}), (\ref{remainbyKSgl3}).
\end{enumerate}

\subsection{Section \ref{sect: UrHLS}}

\begin{enumerate}
 \item  $M,N, H,K$, $g_{1}, g_{2}, g_{3}, g_{4}$ as the summation variables in $ \mathcal{U}^{(\mathbf{r})}(h,k)$; see Section \ref{sepmovar}. The notation $M,N,H,K$ used here is independent of its earlier use in the preceding sections.

 \item $\mathcal{U}^{\pm}(c; a, e, \ell; \psi)$, and $a\sim A$, $e\sim E$, $\ell\sim L$; see (\ref{sepsum}).

   \item The parameters $\mcX, \, \mcY,\, U$; see (\ref{keyXvalue})--(\ref{func: redund}). 

   \item  $\mathfrak{W}_{\mcX}^{\pm}(x,y; v,u)$, $\widetilde{\mathcal{V}}_{\mcX}^{^\pm}(s_{1},s_{2}; z)$; see (\ref{newtrans}), (\ref{2-varMelint}).

      \item $\mathfrak{a}:=rcea$, $\mathfrak{w}:=1+s_{1}+s_{2}+z$, $u_{1}:= \frac{1}{2}+\alpha+it+s_{1}+z$, $ u_{2} :=   \frac{1}{2}+\beta-it+s_{2}+z$; $\mz:=1+\alpha+\beta+2z$; see (\ref{mamlu12}).  

      \item  $\mathcal{L}_{a, g_{1}}^{(ec)}(\mz; \, \Pi_{1}, \Pi_{2};\, g_{3}, g_{4})$; see (\ref{dropdiagg2sum}).

    \item $g_{p}(a):= \max\{0, o_{p}(a)-o_{p}(g_{1}g_{3}g_{4})\}$; see (\ref{gp0: divindic}) and (\ref{fiveEPs}).

\end{enumerate}

\subsection{Section \ref{sect: boundUrsum}}

\begin{enumerate}
    \item  $d_{1},d_{2}, d_{3}, r_{1}, r_{2}$; see Section \ref{sect: MobinTwist}.

  \item $\md := d_{1}d_{2}d_{3}$; see (\ref{pulloutg30g40}).

  \item
  $\mathcal{L}_{\textbf{d,r,g}}(u_{1}, u_{2}, \mz)$; see  (\ref{tripDSuponglu}). 

    \item  $\mathfrak{k}_{1}=g_{3}r_{1}$ and $\mathfrak{k}_{2}=g_{4}r_{2}$; see (\ref{defofabstcoef}) and (\ref{grouptripDS}).

    \item $(F_{i})_{p}(g_{2})$; see (\ref{rawshiftHec})--(\ref{rawshiftRS}). 

 \item  $\mk_{1}', \mk_{2}', g_{30}, g_{40}, g_{3}', g_{4}'$; see Section \ref{simplfffying}.

  \item $\mathcal{G}_{\textbf{d}, g_{1}, g_{30}, g_{40}; \psi}(s_{1}, s_{2},z; it )$; see (\ref{Greicporsum}). 

  \item $ \mathcal{E}_{\mk_{i}'}(u_{i}, \mz; \,\Pi_{i}, \psi_{i})$, $f_{\Pi,\, \psi}(\delta; u, \mz)$, $ \mathfrak{E}_{p}(U,Z; \Pi; k)$; see Proposition \ref{finiteHecprop} and (\ref{definesomefinUr}).

\item $\mathfrak{A}^{(g_{30}g_{40}\mk_{1}' \mk_{2}' \md\ml)}\left(u_{1}, u_{2},\mz; \,  (\Pi_{i},\psi_{i})_{i=1}^{2}\right)$; see (\ref{giganundivEUr}).

\item $\mathcal{E}_{\mathcal{I}}$; see (\ref{optimisum}).

\end{enumerate}

\subsection{Section \ref{sect: CSLevGL3}}

\begin{enumerate}
    \item $\mathbf{E}_{p}(\alpha,\beta,\gamma, \delta)$, $\mathbf{E}(x,y;  z_{1},z_{2})$, $\mathbf{A}(x,y; z_{1},z_{2})$; see (\ref{ratioEP}), (\ref{ratio: decompose}), (\ref{ratio: arithfact}).

    \item $ \mathfrak{U}(y; \Pi,d)$, $\mathcal{L}(x; \Pi,d)$; see (\ref{ratio: polypart}).

    \item $\mathbf{P}(x,y; z_{1},z_{2})$, $\mathbf{Q}(x,y; z_{1},z_{2})$; see (\ref{ratio: quadrapoly}), (\ref{ratio: cubic}).
\end{enumerate}


\section{Mathematica code}

We make extensive use of \texttt{Mathematica}, together with the computer package \emph{gln.m} written by Kevin A. Broughan (and is included as part of the book \cite{Go15}). Our session starts with
\begin{lstlisting}
     SetDirectory[NotebookDirectory[]]; 
           << gln.m
\end{lstlisting}
The Schur polynomials described in Section \ref{Hprelim} of this article can be generated by \texttt{SchurPolynomial[]}. In particular, the Dirichlet coefficient $\lambda_{\Pi}(p^n)$ for $\Pi$ on $\mathrm{PGL}_{d}(\A_{\Q})$ can be created by:
\begin{lstlisting}
Schu[n_, d_, nota_ : a] := SchurPolynomial[Table[nota[i], {i, 1, d}], Join[ConstantArray[0, d - 2], {n}]]
\end{lstlisting}

The following commands will be used often in extracting coefficients and common factors:
\begin{lstlisting}
coeffsAndMonoms[p_, vars_] := Module[{rules, exps, coeffs, monoms}, rules = CoefficientRules[p, vars]; exps = Keys[rules];
  coeffs = Values[rules]; monoms = Times @@ MapThread[Power, {vars, #}] & /@ exps; {coeffs, monoms}]

commonMonomialFactor[expr_, vars_, ass_ : True] := Module[{terms, exps, mins}, terms = List @@ Expand[expr]; exps = Table[Exponent[t, v], {t, terms}, {v, vars}]; mins = Simplify[Min /@ Transpose[exps], ass]; Times @@ MapThread[#1^#2 &, {vars, mins}]]

pullCommonMonomial[expr_, vars_, ass_ : True] :=  Module[{fac}, fac = commonMonomialFactor[expr, vars, ass]; fac FullSimplify[expr/fac, ass]]

OmegaRule2[\[Omega]_, ass_ : True] := HoldPattern[c_. a[1]^p_. a[2]^q_.] :> With[{s = FullSimplify[Min[p, q], ass]}, c \[Omega]^s a[1]^FullSimplify[p - s, ass] a[2]^ FullSimplify[q - s, ass]];

OmegaRule3[\[Omega]_, ass_ : True] := HoldPattern[c_. a[1]^p_. a[2]^q_. a[3]^r_.] :> With[{s = FullSimplify[Min[p, q, r], ass]}, c \[Omega]^s a[1]^FullSimplify[p - s, ass] a[2]^ FullSimplify[q - s, ass] a[3]^FullSimplify[r - s, ass]];
\end{lstlisting}

The commands below generate useful Euler factors for normalization:
\begin{lstlisting}
InvL[X_, d_, nota_ : a] := Product[(1 - nota[i] X), {i, 1, d}]
InvRSL[X_, d_, nota1_ : a, nota2_ : b] := Product[Product[(1-nota1[i]*nota2[j]X), {i, 1, d}], {j, 1, d}]
Syma2 = a[1] - a[2];
Symb2 = b[1] - b[2];
Syma3 = (a[1] - a[2]) (a[1] - a[3]) (a[2] - a[3]);
Symb3 = (b[1] - b[2]) (b[1] - b[3]) (b[2] - b[3]);
\end{lstlisting}

We use \texttt{Fou[m,n]}, \texttt{Hec[m]}, \texttt{dHec[n]}  refer to $\lambda_{\Pi}(p^m, p^n)$, $\lambda_{\Pi}(p^m)$, $\lambda_{\widetilde{\Pi}}(n)$. Also, \texttt{A[1]}, \texttt{A[2]} refer to the elementary polynomials of degree $1$ and $2$. The following is commonly used in pattern-matching with the Schur polynomials:
\begin{lstlisting}
Rulpl={a[1] :> Hec[1], a[1]^k_. - a[2]^k_. :> Hec[k- 1], a[1]^(k + n_.)a[2]^r_. - a[1]^(k + n_.)a[3]^r_. + a[2]^(k + n_.)a[3]^r_. - a[2]^(k + n_.)a[1]^r_. + a[3]^(k + n_.)a[1]^r_. - a[3]^(k + n_.)a[2]^r_. :> Fou[r-1, k +n-r-1]};

Rulmin = {a[1] :> Hec[1], -a[1]^k_. + a[2]^k_. :> -Hec[k - 1], -a[1]^(k + n_.) a[2]^r_. + a[1]^(k + n_.) a[3]^r_. - a[2]^(k + n_.) a[3]^r_. + a[2]^(k + n_.) a[1]^r_. - a[3]^(k + n_.) a[1]^r_. + a[3]^(k + n_.) a[2]^r_. :> -Fou[r - 1, k + n - r - 1]};

Char[\[Omega]_] := a[1]^k_. a[2]^k_. a[3]^k_. :> \[Omega];

UnrRepla = Fou[m_., n_.] :> dHec[m]*Hec[n] - dHec[m - 1]*Hec[n - 1];
RamRepla = Fou[m_., n_.] :> Hec[m + n]*Hec[m] - Hec[m + n + 1]*Hec[m - 1];
HecRecur[k_] := Hec[k] -> Hec[1]*Hec[k - 1] - A[2]*Hec[k - 2] + \[Omega]*Hec[k - 3];
AnyHecRec = Hec[k_.] :> Hec[1]*Hec[k - 1] - A[2]*Hec[k - 2] + \[Omega]*Hec[k - 3];
Replsch = Fou[m_., n_.] :> SchurPolynomial[{a[1], a[2], a[3]}, {m, n}];
\end{lstlisting}


\section{Lemmata \ref{lem: naiveRSexpl} and \ref{lem: diffby1RSn}  {\normalfont\ttfamily (ShiftNaRS.nb)}}\label{sect:shiftedRS}

\begin{lstlisting}
R[d_, k_, l_] := Cancel[InvRSL[x, d, a, b]* Sum[Schu[m + k, d, a] *Schu[m + l, d, b] x^m, {m, 0, Infinity}]]
Syma = Join[{a[1]}, Table[Denominator[Schu[m, d, a]], {d, 2, 3}]];
assu = Element[k, Integers] && k >= 2;
GL3HecVar = Table[Hec[k - i], {i, 0, 4}];

ListCoef[d_, k_, l_] := coeffsAndMonoms[Cancel[R[d, k, l]*Syma[[d]]], {x}]
AfterSym[d_, k_, l_] := (SymmetricReduction[#, Table[b[i], {i, 1, d}], Join[Table[B[i], {i, 1, d - 1}], {\[Omega]}]][[1]] & /@ ListCoef[d, k, l][[1]]) /. \[Omega]^r_. :> \[Omega]
PartA[d_, k_, l_] := coeffsAndMonoms[#, Join[Table[B[i], {i, 1, d - 1}], {\[Omega]}]] & /@ AfterSym[d, k, l]
PartACollect[d_, k_, l_, \[Omega]_] := PartA[d, k, l][[All, 1]] /. 
If[d == 2, OmegaRule2[\[Omega], assu], OmegaRule3[\[Omega], assu]]

FinalPoly[d_, k_, l_, sgn_] := Total[(PartA[d, k, l][[All, 2]] /. \[Omega] -> sgn)*((PartACollect[d, k, l, \[Omega]] /. \[Omega] -> sgn) //. {Rulpl[[d]], Rulmin[[d]]})*ListCoef[d, k, l][[2]] //Flatten]

(*The H-function, PGL3*)
Collect[SymmetricReduction[SymmetricReduction[
R[3, 0, 0], {a[1], a[2], a[3]}, {A[1], A[2], \[Omega]}][[1]], {b[1], b[2], b[3]}, {B[1], B[2], \[Omega]}][[1]], x]

(*Naive RS the first  argument shifted by k*)

(*Unramified, PGL2*)
Collect[FinalPoly[2, k, 0, 1], x]
(*Ramified, PGL(2)*)
Collect[FinalPoly[2, k, 0, 0], x]

(*Unramified, PGL3*)
FinUnrGL3 = Collect[FinalPoly[3, k, 0, 1], x]
FinUnrGL3InHec = Collect[(FinUnrGL3 /. UnrRepla) /. {Hec[0] -> 1, dHec[0] -> 1, dHec[-1] -> 0}, GL3HecVar]

(*Bounding Unr*)
coeffsAndMonoms[FinUnrGL3InHec, GL3HecVar][[1]]
coeffsAndMonoms[FinUnrGL3InHec, GL3HecVar][[2]]
UnrGL3Expo =  Exponent[#, p] & /@ (List @@ Expand[#] & /@ coeffsAndMonoms[FinUnrGL3InHec, GL3HecVar][[1]] /. {(Hec | dHec)[n_] :> p^(n*Th), B[n_] :> p^Th, x -> p^(-1)})
UnrGL3RespScaleMax = Assuming[0 < Th < 1/2, Max /@ (MapIndexed[#1/#2[[1]] &, Rest[UnrGL3Expo]]) // FullSimplify]
Assuming[5/14 < Th < 1/2, Max[UnrGL3RespScaleMax] // FullSimplify]

(*Ramified, PGL3*)
FinalPoly[3, k, 0, 0]
(FinalPoly[3, k, 0, 0] - Fou[0, k]) /. RamRepla
FinRamGL3InHec=(FinalPoly[3, k, 0, 0] - Fou[0, k])/. RamRepla /. {Hec[0] -> 1, Hec[-1] -> 0}
FinRamGL3InHecRec = Hec[k] + (FinRamGL3InHec /. HecRecur[k + 1] /. HecRecur[k] /. \[Omega] -> 0)
coeffsAndMonoms[FinRamGL3InHecRec, GL3HecVar][[1]]
coeffsAndMonoms[FinRamGL3InHecRec, GL3HecVar][[2]]

(*Bounding Ram*)
RamGL3Expo = Exponent[#, p]&/@ ((List @@ Expand[#]& /@coeffsAndMonoms[FinRamGL3InHecRec, GL3HecVar][[1]])/.{(Hec|A|B)[n_] :> p^(n Th) x ->p^(-1)})
Assuming[0<Th<1/2,Max/@(MapIndexed[#1/#2[[1]]&,Rest[RamGL3Expo]])//FullSimplify]

(*Naive RS both arguments shifted by 1*)
(*PGL2 Unr*)
Collect[FinalPoly[2, k, 1, 1], x] 
(*PGL2 Ram*)
Collect[FinalPoly[2, k, 1, 0], x]

(*PGL3 Unr*)
GL3UnrBoth1Gen = Collect[FinalPoly[3, k, 1, 1], x]
GL3UnrBoth1 = Collect[GL3UnrBoth1Gen /. k -> 1, x]
ExceReplsch = (Fou[m_., n_.] /; TrueQ[m < 0 || n < 0]) :> SchurPolynomial[{a[1], a[2], a[3]}, {m, n}];
GL3Unrsh1Repl = Collect[SymmetricReduction[GL3UnrBoth1 /. {ExceReplsch, Fou[0, 0] -> 1}, {a[1], a[2], a[3]}, {Fou[0, 1], Fou[1, 0], 1}][[1]], x]

(*Bounding Unr*)
List @@ Expand[GL3Unrsh1Repl]
BdListGL3UnrShf1[Th_] := Exponent[#, p] & /@ ((List @@ Expand[GL3Unrsh1Repl]) /. {x -> p^(-1), Fou[m_., n_.] :> p^((m + n)*Th), B[n_] :> p^Th})

(*PGL3 Ram*)
Collect[FinalPoly[3, k, 1, 0], x]
GL3Ramsh1Repl = ((Collect[FinalPoly[3, k, 1, 0], x]) /. k -> 1) /. ExceReplsch

(*Bounding Ram*)
BdListGL3RamShf1[Th_] := Exponent[#, p] & /@ (List @@ Expand[GL3Ramsh1Repl] /. {x -> p^(-1), Fou[m_., n_.] :> p^((2 m + n)*Th), B[n_] :> p^(n*Th)})
Assuming[5/14 < Th < 1/2, Max[BdListGL3RamShf1[Th]] // FullSimplify]
\end{lstlisting}

\section{Proposition \ref{lem: ACKit} {\normalfont\ttfamily (U2pnmid.nb)}}\label{sect:U2Lfuncmatchgl3}

\begin{lstlisting}
(*Set x=1/p^u[1], y=1/p^s and; real parts of u[1] and s are 1/2+ and 1+ *)

(*PGL2*)
(*Computing L_{p}(u[1];s, Pi[1], Pi[2])*)
L2[x_, y_] := Sum[Schu[r, 2, a] x^r, {r, 1, Infinity}] + Sum[Schu[n, 2, b] y^n*Sum[Schu[n + r, 2, a] x^r, {r, 1, Infinity}], {n, 1, Infinity}]
P2[x_, y_] := Cancel[InvL[x, 2, a]*InvRSL[y, 2, a, b]*L2[x, y]]
P20[x_, y_] := Assuming[a[1] a[2] == b[1] b[2] == 1, FullSimplify[P2[x, y]]]
M20 = MonomialList[P20[x, y], {x, y, a[1], a[2], b[1], b[2]}];

(*The following outputs non-negligible terms in the Euler product*)

Select[M20, Exponent[#, x]/2 + Exponent[#, y] - (Exponent[#, a[1]] + Exponent[#, a[2]] + Exponent[#, b[1]] +Exponent[#, b[2]])*7/64 <= 1.00001 &]

(*Considering also the factor (1-1/p^(2-w)); Re(2-w)=1-\epsilon*)

Select[M20, 0.9999 + Exponent[#, x]/2 + Exponent[#, y] - (Exponent[#, a[1]] + Exponent[#, a[2]] + Exponent[#, b[1]] +Exponent[#, b[2]])*7/64 <= 1.00001 &]

(*PGL3*)
(*Computing L_{p}(u[1];s, Pi[1], Pi[2]) ---I*)

L3Pol1[x_, y_] := Cancel[InvRSL[y, 3, a, b]*x*Sum[Schu[n, 3, b] Schu[n + 1, 3, a] y^n, {n, 1, Infinity}]]
Mono1 = MonomialList[L3Pol1[x, y], {x, y, a[1], a[2], a[3], b[1], b[2], b[3]}];
M1u = Assuming[a[1] a[2] a[3] == 1 && b[1] b[2] b[3] == 1, FullSimplify[Mono1]];

(*Checking:fully simplified already?*)
Select[M1u, And @@ (Not[FreeQ[#, #2]] & @@@ Thread[{#, {a[1], a[2], a[3]}}]) &]
Select[M1u, And @@ (Not[FreeQ[#, #2]] & @@@ Thread[{#, {b[1], b[2], b[3]}}]) &]

NP1 = Total[Select[M1u, 0.000001 + Exponent[#, x]/2 + Exponent[#, y] - (Exponent[#, a[1]] + Exponent[#, a[2]] + Exponent[#, a[3]] + Exponent[#, b[1]] + Exponent[#, b[2]] + Exponent[#, b[3]])*(5/14) <= 1 &]];

(*Coeff of xy*)
SymmetricReduction[SymmetricReduction[Coefficient[Collect[Cancel[NP1/x], y], y, 1], {b[1], b[2], b[3]}, {B[1], B[2], 1}][[1]], {a[1], a[2], a[3]}, {A[1], A[2], 1}][[1]]

(*Coeff of xy^2*)
SymmetricReduction[ SymmetricReduction[Coefficient[Collect[Cancel[NP1/x], y], y, 2], {b[1], b[2], b[3]}, {B[1], B[2], 1}][[1]], {a[1], a[2], a[3]}, {A[1], A[2], 1}][[1]]

(*Computing L_{p}(u[1];s, Pi[1], Pi[2]) ---II*)

L3Pol2[x_, y_] := Cancel[InvRSL[y, 3, a, b]*x^2*
Sum[Schu[n, 3, b] Schu[n - 1, 3, a] y^n, {n, 1, Infinity}]]
Mono2 = MonomialList[L3Pol2[x, y], {x, y, a[1], a[2], a[3], b[1], b[2], b[3]}];
M2u = Assuming[a[1] a[2] a[3] == 1 && b[1] b[2] b[3] == 1, FullSimplify[Mono2]];

Select[M2u, And @@ (Not[FreeQ[#, #2]] & @@@ Thread[{#, {a[1], a[2], a[3]}}]) &]
Select[M2u, And @@ (Not[FreeQ[#, #2]] & @@@ Thread[{#, {b[1], b[2], b[3]}}]) &]

NP2 = Total[Select[M2u, 0.000001 + Exponent[#, x]/2 + Exponent[#, y] - (Exponent[#, a[1]] + Exponent[#, a[2]] + Exponent[#, a[3]] + Exponent[#, b[1]] + Exponent[#, b[2]] + Exponent[#, b[3]])*(5/14) <= 1 &]]
\end{lstlisting}

\section{Proposition \ref{finiteHecprop} {\normalfont\ttfamily (UrFinExact.nb)}}\label{sect: UrHeckeExact}
\begin{lstlisting}
assu = Element[k, Integers] && k >= 2;

(*The case of PGL(2)*)  
Sortsum2 = Collect[InvL[U, 2]*InvL[Z,2]*Syma2*Sum[Sum[Schu[M + g + k, 2, a]*U^M, {M, 0, Infinity}]*Z^g, {g, 0,Infinity}]//FullSimplify//Expand, {U, Z, U*Z}]

GL2CoefinHec = pullCommonMonomial[#, {a[1], a[2]}, assu]&/@ coeffsAndMonoms[Sortsum2, {U, Z}][[1]]/.{a[1]^k_. - a[2]^k_. :> Hec[k - 1], -a[1]^k_. + a[2]^k_. :> -Hec[k - 1]}

Total[GL2CoefinHec*coeffsAndMonoms[Sortsum2, {U, Z}][[2]]]/.{a[1]^k_. a[2]^k_. :> \[Omega]}

(*The case of PGL(3)*)  
Sortsum3 = Collect[InvL[U,3]*InvL[Z,3]*Syma3*Sum[Sum[Schu[M + g + k, 3, a]*U^M, {M, 0, Infinity}]*Z^g, {g, 0,Infinity}] // FullSimplify // Expand, {U, U^2, Z, Z^2, U*Z, U^2*Z, U*Z^2, U^2*Z^2}];

GL3CoefinHec = pullCommonMonomial[#, {a[1], a[2], a[3]}, assu] & /@ 
coeffsAndMonoms[Sortsum3, {U, Z}][[1]]/.{a[1]^k_. a[2]^k_. a[3]^k_. :>\[Omega]}

OmePos = Flatten@Position[GL3CoefinHec, p_ /; ! FreeQ[p, \[Omega]], {1}]
coeffsAndMonoms[Sortsum3, {U, Z}][[2]][[Flatten@OmePos]]
GL3inSchur = MapAt[\[Omega] # &, Expand[GL3CoefinHec /. \[Omega] -> 1] /. {Rulpl[[3]], Rulmin[[3]] }, List /@ OmePos]

(*Ramified*)

GL3inSchurRam =  Assuming[a[1] a[2] a[3] == 0, FullSimplify[GL3inSchur /. {\[Omega] -> 0}]]
GL3FouMonoRam = GL3inSchurRam*coeffsAndMonoms[Sortsum3, {U, Z}][[2]] /. Z -> 0
Total[GL3FouMonoRam]

(*Unramified*)

GL3inSchurUnr = Assuming[a[1] a[2] a[3] == 1 && assu, Expand[FullSimplify[Expand[GL3CoefinHec /. \[Omega] -> 1] /. {Rulpl[[3]], Rulmin[[3]]}]]/. {Rulpl[[3]], Rulmin[[3]]}]
GL3FouMonoUnr = GL3inSchurUnr*coeffsAndMonoms[Sortsum3, {U, Z}][[2]]
Total[GL3FouMonoUnr]

(*In terms of Hecke eigenvalues*)
CoefUnrInHec=Coefficient[Total[GL3FouMonoUnr]/.UnrRepla,Table[Hec[k-i],{i,0,3}]]
farith=SymmetricReduction[#, {a[1], a[2], a[3]}, {A[1], A[2], 1}][[1]] & /@ (CoefUnrInHec /. {dHec[m_.] :> SchurPolynomial[{a[1], a[2], a[3]}, {m, 0}]})

(*Bounding*)
ExpoList = Exponent[#, p] & /@ (List @@ # & /@ farith /. {Z -> p^(-1), U -> p^(-1/2), A[1] -> p^Th, A[2] -> p^Th})
RespScaleMax = Assuming[0 < Th < 1/2, Max /@ (MapIndexed[#1/#2[[1]] &, Rest[ExpoList]]) //FullSimplify]
Assuming[5/14 < Th < 1/2, Max[RespScaleMax]//FullSimplify]

(*Checking: we got the right polynomial*)
See the notebook online.
\end{lstlisting}


\section{Proposition \ref{HecktwistRSnuden}  {\normalfont\ttfamily (UrInf.nb)} }\label{sect: UnrInfExact}

\begin{lstlisting}
FF2[d_, g_] := Sum[Schu[M + g, d, b]*V^M, {M, 0, Infinity}]
FF1[d_, g_] := Sum[Schu[M + g, d, a]*U^M, {M, 0, Infinity}]
FF3[d_, g_] := Schu[g, d, a]*FF2[d, g] + Schu[g, d, b]*FF1[d, g] -Schu[g, d, a]*Schu[g, d, b]
NormaSum[d_] := Cancel[Sum[FF3[d, g]*Z^g, {g, 0, Infinity}]*InvL[U, d, a]*InvL[V, d, b]*InvRSL[Z, d, a, b]]

(*The case of PGL(2)*)  
GL2inSym = (SymmetricReduction[#, {a[1], a[2]}, {A[1], \[Omega]}][[1]] & /@ (SymmetricReduction[#, {b[1], b[2]}, {B[1], \[Omega]}][[1]] & /@ NormaSum[2])) /. \[Omega]^k_. :> \[Omega]
Collect[GL2inSym, DeleteCases[Times @@ ({A[1], B[1]}^#) & /@ Tuples[Range[0, 2], 2], 1]]

(*PGL(3): the calculation is divided into 3 parts*)

Part1Norm = Cancel[Sum[ Schu[g, 3, a]*Cancel[FF2[3, g]*Symb3*InvL[V, 3, b]]*Z^g, {g, 0, Infinity}]*InvRSL[Z, 3, a, b]];
Part1Symm = (SymmetricReduction[#, {a[1], a[2], a[3]}, {A[1], P[1], \[Omega]}][[1]] & /@ (SymmetricReduction[#, {b[1], b[2], b[3]}, {B[1], Q[1], \[Omega]}][[1]] & /@  Cancel[Part1Norm/Symb3])) /. \[Omega]^k_. :> \[Omega]

Part2Norm = Cancel[Sum[Schu[g, 3, b]*Cancel[FF1[3, g]*Syma3*InvL[U, 3, a]]*Z^g, {g, 0, Infinity}]*InvRSL[Z, 3, a, b]];
Part2Symm = (SymmetricReduction[#, {a[1], a[2], a[3]}, {A[1], P[1], \[Omega]}][[1]] & /@ (SymmetricReduction[#, {b[1], b[2], b[3]}, {B[1], Q[1], \[Omega]}][[1]] & /@ Cancel[Part2Norm/Syma3])) /. \[Omega]^k_. :> \[Omega]

Part3Norm = Cancel[Sum[Schu[g, 3, a]*Schu[g, 3, b]*Z^g, {g, 0, Infinity}]*InvRSL[Z, 3, a, b]];
Part3Symm = (SymmetricReduction[#, {a[1], a[2], a[3]}, {A[1], P[1], \[Omega]}][[1]] & /@ (SymmetricReduction[#, {b[1], b[2], b[3]}, {B[1], Q[1], \[Omega]}][[1]] & /@ Part3Norm)) /. \[Omega]^k_. :> \[Omega]

InvLSymm3a = SymmetricReduction[InvL[U, 3, a], {a[1], a[2], a[3]}, {A[1], P[1], \[Omega]}][[1]]
InvLSymm3b = SymmetricReduction[InvL[V, 3, b], {b[1], b[2], b[3]}, {B[1], Q[1], \[Omega]}][[1]]

GL3inSym = Expand[FullSimplify[(Part1Symm*InvLSymm3a + Part2Symm*InvLSymm3b - Part3Symm*InvLSymm3a*InvLSymm3b)]] /.{\[Omega]^k_. :> \[Omega]}

(*List of monomials for the unramified case*)
UnrListMono3= List @@ (Expand[GL3inSym /. \[Omega] -> 1]);

(*Bounding the polynomial with vartheta*)
UnrListexpo3 = Exponent[#, p] & /@ (List @@ UnrListMono3 /. {A[1] -> p^Th, B[1] -> p^Th, P[1] -> p^Th, Q[1] -> p^Th, U -> p^(-1/2), V -> p^(-1/2), Z -> p^(-1)} /. Th -> 5/14) // N;
Total[UnrListMono3[[#]] & /@ Flatten@Position[# < -1 & /@ UnrListexpo3, False]]

(*Ramified primes*)
RamListMono3 = List @@ (GL3inSym /. \[Omega] -> 0);
RamListexpo3 = Exponent[#, p] & /@ (List @@ RamListMono3 /. {A[1] -> p^Th, B[1] -> p^Th, P[1] -> p^(2*Th), Q[1] -> p^(2*Th), U -> p^(-1/2), V -> p^(-1/2), Z -> p^(-1)} /. Th -> 5/14) // N;
\end{lstlisting}


\section{Ratio calculation {\normalfont\ttfamily (GL3Ratio.nb)}}\label{sect: ratiosMath}

\begin{lstlisting}
del=1/1000000;
SubRules = Join[Thread[{A1, B1}->p^Th],Thread[{x, y, z, z1, z2}->p^(-1+del)]];
A[n_] := SchurPolynomial[{a[1], a[2], a[3]}, {0, n}]
B[n_] := SchurPolynomial[{1/a[1], 1/a[2], 1/a[3]}, {0, n}]
muA = {-A[1], B[1], -1}
muB = {-B[1], A[1], -1}
RS[x_] := Product[1 - (a[i]/a[j]) x, {i, 1, 3}, {j, 1, 3}]
L[d_, x_]:= Cancel[RS[x]*Sum[A[n + d]*B[n]*x^n, {n, 0, Infinity}]]
MuPoly[d_, y_] := muB[[d]] + Sum[muA[[h]]*muB[[d + h]]*y^h, {h, 1, 3 - d}]

Polybefcan = Table[Assuming[a[1]*a[2]*a[3] == 1, z^n*MuPoly[n, y]*L[n, x] // FullSimplify], {n,1,3}];
FacCan = {-z (1 - x)^2 (1 + x), (-1 + x)^2 (1 + y) z^2, -(-1 + x)^2 z^3};
Polycancrawd = Table[Cancel[Polybefcan[[n]]/FacCan[[n]]], {n, 1, 3}];
Polyd = Table[SymmetricReduction[Polycancrawd[[n]], {a[1], a[2], a[3]}, {A1, B1, 1}][[1]] // Factor, {n, 1, 3}]
MonoList = Table[FacCan[[n]]*Polyd[[n]] // ExpandAll // MonomialList,{n, 1, 3}];
MonoListExpo =  Table[Exponent[#,p] & /@ (MonoList[[n]] /. SubRules /. Th -> 5/14) // N, {n, 1, 3}];
FirstKS = Table[MonoList[[n]][[Flatten@Position[# < -1 & /@ MonoListExpo[[n]], False]]] /. {z ->z1}, {n, 1, 3}] // Flatten
DualFirstKS = FirstKS /. {A1 -> B1, B1 -> A1, z1 -> z2}

(*The following is the polynomial \mathbf{P}*)
Storemypoly = 1 + A1 B1 y + Total[FirstKS] + Total[DualFirstKS]

(*Comparison with other 3 Rankin-Selberg factors*)

RSRatio3 = RS[y]^(-1)*RS[z1]*RS[z2];
RSExpMono = Table[SymmetricReduction[Assuming[a[1] a[2] a[3] == 1, Coefficient[RS[y], y, n] // Together // FullSimplify], {a[1], a[2], a[3]}, {A1, B1, 1}][[1]], {n, 0, 3}]; 
RSinvpol[y_] := Sum[RSExpMono[[n]]*y^(n - 1), {n, 1, 3}]

RScanList = Storemypoly*RSinvpol[y] // ExpandAll // MonomialList;
Listexpocance = Exponent[#,p] & /@ (RScanList /. SubRules/. Th -> 5/14) // N;
Aftercleany = Total[RScanList[[#]] & /@ Flatten@Position[# < -1 & /@ Listexpocance, False]]

(*Next, let's clean with 1/(RS) over z1 z2*)
RSusuMono = Table[SymmetricReduction[Assuming[a[1] a[2] a[3] == 1, Together[Coefficient[Series[RS[z]^(-1), {z, 0, 4}], z, n]] // FullSimplify], {a[1], a[2], a[3]}, {A1, B1, 1}][[1]], {n, 0, 4}];
RSusual[z_] := Sum[RSusuMono[[n]]*z^(n - 1), {n, 1, 4}]

FinalList = Aftercleany*RSusual[z1]*RSusual[z2] // ExpandAll // MonomialList;
FinalListexpocance = Exponent[#, p] &/@ (FinalList /. SubRules /.Th -> 5/14)//N;
Total[FinalList[[#]]&/@Flatten@Position[# < -1 & /@FinalListexpocance,False]]
\end{lstlisting}


\printbibliography

\end{document}